\newcites{preface}{Bibliography}
\newcites{sdp}{Bibliography}
\newcites{pop}{Bibliography}
\newcites{cs}{Bibliography}
\newcites{roundoff}{Bibliography}
\newcites{lip}{Bibliography}
\newcites{ncsparse}{Bibliography}
\newcites{ts}{Bibliography}
\newcites{cstssos}{Bibliography}
\newcites{opf}{Bibliography}
\newcites{ncts}{Bibliography}
\newcites{jsr}{Bibliography}
\newcites{misc}{Bibliography}
\newcites{app}{Bibliography}
\pgfplotsset{compat=1.11}
\definecolor{linkcol}{rgb}{0,0,0.4}
\definecolor{citecol}{rgb}{0.5,0,0}
\newtheorem{assumption}[theorem]{Assumption}
\renewcommand{\epsilon}{\varepsilon}
\let\headruleORIG\headrule
\renewcommand{\headrule}{\color{black} \headruleORIG}
\def\cleardoublepage{\clearpage\if@twoside \ifodd\c@page\else%
  \hbox{}%
  \thispagestyle{empty}%              % Empty header styles
  \newpage%
  \if@twocolumn\hbox{}\newpage\fi\fi\fi}
\newcommand{\qed}{\hfill \ensuremath{\Box}} 
\newif\ifdraft
\def\a{{\alpha}}
\def\b{\beta}
\def\g{\gamma}
\def\d{\delta}
\def\TT{{\mathscr{T}}}
\def\ba{\mathbf{a}}
\def\cb{\mathbf{c}}
\def\e{\mathbf{e}}
\def\f{\mathbf{f}}
\def\frakg{\mathfrak{g}}
\def\lowerb{b}
\def\oneb{\mathbf{1}}
\def\q{\mathbf{q}}
\newcommand{\bs}{\mathbf{s}}
\def\vb{\mathbf{v}}
\def\bv{\mathbf{v}}
\def\bu{\mathbf{u}}
\def\bw{\mathbf{w}}
\def\w{\mathbf{w}}
\def\x{\mathbf{x}}
\def\xb{\mathbf{x}}
\def\y{\mathbf{y}}
\def\z{\mathbf{z}}
\def\A{\mathbf{A}}
\def\B{\mathbf{B}}
\def\Cb{\mathbf{C}}
\def\CC{{\mathscr{C}}}
\def\D{\mathbf{D}}
\def\E{\mathbf{E}}
\def\F{\mathbf{F}}
\def\G{\mathbf{G}}
\def\K{\mathbf{K}}
\def\M{\mathbf{M}}
\def\Nb{\mathbf{N}}
\def\P{\mathbf{P}}
\def\Qb{\mathbf{Q}}
\def\Rb{\mathbf{R}}
\def\Tb{\mathbf{T}}
\def\U{\mathbf{U}}
\def\V{\mathbf{V}}
\def\X{\mathbf{X}}
\def\Y{\mathbf{Y}}
\def\Zb{\mathbf{Z}}
\def\I{\mathbf{I}}
\def\RR{{\mathscr{R}}}
\def\o{{\mathbf{o}}}
\def\i{\hbox{\bf{i}}}
\newcommand{\st}{\text{s.t.~}}
\newcommand{\resp}{resp.~}
\def\Mbb{\mathbb{M}}
\def\Sbb{\mathbb{S}}
\def\Hbb{\mathbb{H}}
\def\Bbb{\mathbb{B}}
\def\Dbb{\mathbb{D}}
\DeclareMathOperator{\cdeg}{cdeg}
\def\cyc{\overset{\text{cyc}}{\sim}}
\DeclareMathOperator{\II}{II}
\DeclareMathOperator{\Trace}{tr}
\DeclareMathOperator{\cov}{cov}
\DeclareMathOperator{\trace}{trace}
\def\RX{\R \langle \underline{x} \rangle}
\newcommand{\RXI}[1]{\R \langle \underline{x}, I_{#1} \rangle }
\newcommand{\SymRXI}[1]{\Sym \R \langle \underline{x}, I_{#1} \rangle }
\def\SigmaX{\Sigma \langle \underline{x} \rangle}
\newcommand{\SigmaXI}[1]{\Sigma \langle \underline{x},I_{#1} \rangle }
\def\W{\mathbf{W}}
\def\ux{\underline x}
\def\SymRX{\Sym \RX}
\def\cA{\mathcal A}
\def\cD{\mathcal{D}}
\def\cH{\mathcal H}
\def\cM{\mathcal{M}}
\def\sA{\mathscr A}
\def\sC{\mathscr C}
\def\AA{{\mathcal{A}}}
\def\bsigma{{\boldsymbol{\sigma}}}
\newcommand{\R}{\mathbb{R}}
\newcommand{\Q}{\mathbb{Q}}
\newcommand{\C}{\mathbb{C}}
\newcommand{\N}{\mathbb{N}}
\newcommand{\Z}{\mathbb{Z}}
\DeclareMathOperator{\Basis}{\mathscr{B}}
\DeclareMathOperator{\bigo}{\mathcal{O}}
\DeclareMathOperator{\cs}{cs}
\DeclareMathOperator{\csts}{cs-ts}
\DeclareMathOperator{\opt}{opt}
\DeclareMathOperator{\ts}{ts}
\DeclareMathOperator{\tr}{tr}
\def\sparse{\cs}
\def\SDSOS{\hbox{\rm{SDSOS}}}
\DeclareMathOperator{\Conv}{conv}
\DeclareMathOperator{\diag}{Diag}
\newcommand{\relu}{{\rm ReLU}}
\def\var{\hbox{\rm{var}}}
\DeclareMathOperator{\New}{\mathcal{N}}
\DeclareMathOperator{\rank}{rank}
\DeclareMathOperator{\supp}{supp}
\DeclareMathOperator{\Sym}{Sym}
\DeclareMathOperator{\vol}{vol}
\newcommand{\vge}{\mathbin{\rotatebox[origin=c]{90}{$\ge$}}}
\newcommand{\lonenorm}[1]{\Vert #1 \Vert_1}
\newcommand{\psdp}{p_\text{sdp}}
\newcommand{\dsdp}{d_\text{sdp}}
\newcommand{\epsdp}{\epsilon_\text{sdp}}
\newcommand{\coq}{\text{\sc Coq}}
\newcommand{\hol}{\text{\sc Hol-light}}
\newcommand{\ncsostools}{\mathtt{NCSOStools}}
\newcommand{\sparsepop}{\mathtt{SparsePOP}}
\newcommand{\realtofloat}{\mathtt{Real2Float}}
\newcommand{\sthreefp}{\mathtt{s3fp}}
\newcommand{\fptaylor}{\mathtt{FPTaylor}}
\newcommand{\rosa}{\text{\sc Rosa}}
\newcommand{\tssos}{\text{\sc TSSOS}}
\newcommand{\spot}{\text{\sc SPOT}}
\newcommand{\sdpa}{\text{\sc SDPA}}
\newcommand{\ncpoltosdpa}{\textsc{Ncpol2sdpa}}
\newcommand{\gloptipoly}{\textsc{GloptiPoly}}
\newcommand{\yalmip}{\textsc{Yalmip}}
\newcommand{\mosek}{\textsc{Mosek}}
\newcommand{\sdpbound}{\mathtt{cs\_sdp}}
\newcommand{\sdpboundfun}[3]{\mathtt{cs\_sdp}(#1, #2, #3)}
\newcommand{\sdppoly}{\mathtt{cs\_sdp}}
\newcommand{\sdppolyfun}[3]{\mathtt{cs\_sdp}(#1, #2, #3)}
\newcommand{\iaboundfun}[2]{\mathtt{ia\_bound}(#1, #2)}
\newcommand{\iabound}{\mathtt{ia\_bound}}
\newcommand{\bop}{\mathtt{bop}}
\newcommand{\precrnd}{\text{prec}}
\newcommand{\sparsegns}{\texttt{SparseGNS}}
\newcommand{\sparseeiggns}{\texttt{SparseEigGNS}}
\def\NF{\hbox{\rm{NF}}}
\DeclareMathOperator*{\argmax}{arg\,max}
\newcommand{\mintednoline}[1]{\begin{minted}[mathescape,               
               numbersep=5pt,
               gobble=0,
               frame=lines,
               framesep=2mm]{csharp}
#1               
\end{minted}}         
\title{Sparse Polynomial Optimization: Theory and Practice}
\author{Victor Magron \and Jie Wang}
\date{\today}
\begin{document}
\maketitle
\tableofcontents
%!TEX root = main.tex

\chapter*{List of Acronyms}
%\mtcaddchapter[List of Acronyms]

% Define here acronyms used in the manuscript. Copy paster the example for each new acromnym you would like to use

\begin{acronym}
\renewcommand{\\}{}
\acro{moment-SOS}{moment-sums of squares}
\acro{SDP}{semidefinite programming}
\acro{SOS}{sum of squares}
\acro{LP}{linear programming}
\acro{PSD}{positive semidefinite}
\acro{LMI}{linear matrix inequality}
\acro{RIP}{running intersection property}
\acro{POP}{polynomial optimization problem}
\acro{GMP}{generalized moment problem}
\acro{CS}{correlative sparsity}
\acro{csp}{correlative sparsity pattern}
\acro{CSSOS}{CS-adpated moment-SOS}
\acro{QCQP}{quadratically constrained quadratic program}
\acro{nc}{noncommutative}
\acro{SOHS}{sum of Hermitian squares}
\acro{GNS}{Gelfand-Naimark-Segal}
\acro{TS}{term sparsity}
\acro{tsp}{term sparsity pattern}
\acro{TSSOS}{TS-adpated moment-SOS}
\acro{CS-TSSOS}{CS-TS adpated moment-SOS}
\acro{CPOP}{complex polynomial optimization problem}
\acro{JSR}{joint spectral radius}
\acro{SONC}{sum of nonnegative circuits}
\acro{CTP}{constant trace property}

\end{acronym}

\chapter*{List of Symbols}

\addcontentsline{toc}{chapter}{List of Symbols}

\begin{longtable}{ll}
  $\N$ & $\{0,1,2,\ldots\}$\\
  $\N^*$ & $\{1,2,\ldots\}$\\
  $\Q$ & the field of rational numbers\\
  $\R$ & the field of real numbers\\
  $\C$ & the field of complex numbers\\
  $\mathbf{0}$&the zero vector\\
  $\x=(x_1,\ldots,x_n)$ & a tuple of real variables\\
  $\supp(f)$&the support of the polynomial $f$\\
  $[m]$&$\{1,2,\ldots,m\}$\\
  $|\cdot|$&the cardinality of a set or 1-norm of a vector\\
  $\R^{n\times m}$&the set of $n\times m$ real matrices\\
  $\Sbb_n$ & the set of real symmetric $n\times n$ matrices\\
  $\Sbb_n^+$ & the set of $n\times n$ PSD matrices\\
  $\langle \A,\,\B \rangle$ & the trace of $\A\B$ for $\A,\B\in\Sbb_n$\\
  $\I_n$ & the $n\times n$ identity matrix\\
  $\M \succcurlyeq 0$& $\M$ is a PSD matrix\\
  $F,G,H$&graphs\\
  $G(V,E)$&a graph with nodes $V$ and edges $E$\\
  $V(G)$ (resp. $E(G)$)&the node (resp. edge) set of the graph $G$\\
  $\B_G$ & the adjacency matrix of the graph $G$ with unit diagonal\\
  $G\subseteq H$& $G$ is a subgraph of $H$\\
  $G'$&a chordal extension of the graph $G$\\
  $\Sbb(G)$ & the set of real symmetric matrices with sparsity pattern $G$\\
  $\Pi_{G}$ & the projection from $\Sbb_{|V(G)|}$ to the subspace $\Sbb(G)$\\
  $\frakg = \{g_1,\dots,g_m\}$ & a set of polynomials defining the constraints\\
  $\R[\x]$ & the ring of real $n$-variate polynomials\\
  $\R[\x]_{2d}$ & the set of real $n$-variate polynomials of degree at most $2d$\\
  $\Sigma[\x]$ & the set of SOS polynomials\\
  $\Sigma[\x]_{d}$ & the set of SOS polynomials of degree at most $2d$\\
  $\cM(\frakg)$ & the quadratic module generated by $\frakg$\\
  $\cM(\frakg)_{r}$&the $r$-truncated quadratic module generated by $\frakg$\\
  $\X$ & a basic semialgebraic set\\
  $\mu, \nu$&measures\\
  $\N^{n}_r$ & $\{\a\in\N^{n} \mid \sum_{j=1}^{n} \alpha_j \leq r \}$\\
  $d_j$ & the ceil of half degree of $g_j \in \frakg$ \\
  $r$ & relaxation order \\
  $r_{\min}$ & minimum relaxation order \\
  $\y$ & a moment sequence\\
  $L_\y$& the linear functional associated to $\y$\\
  $\M_r(\y)$&the $r$-th order moment matrix associated to $\y$\\
  $\M_r(g\y)$ & the $r$-th order localizing matrix associated to $\y$ and $g$\\
  $\delta_{\mathbf{a}}$ & the Dirac measure centered at $\mathbf{a}$ \\
  $p$ & number of variable cliques \\
  $s$ & sparse order \\
  $\ux = (x_1,\dots, x_n)$&a tuple of noncommutating variables\\
  $\RX$ & the ring of real nc $n$-variate polynomials\\
  $\W_r$ & the vector of nc monomials of degree at most $r$\\
  $\SigmaX$ & the set of SOHS polynomials\\
  $\cD_{\frakg}$&the nc semialgebraic set associated to $\frakg$\\
\end{longtable}
%!TEX root = main.tex
\chapter*{Preface}

Consider the following list of problems arising from various distinct fields:

\begin{itemize}

\item Design certifiable algorithms for robust geometric perception in the presence of a large amount of outliers;
\item Minimizing a sum of rational fractions to estimate the fundamental matrix in epipolar geometry;
\item Computing the maximal roundoff error bound for the output of a numerical program;
\item Certifying the robustness of a deep neural network;
\item Computing the maximum violation level of Bell inequalities;
\item Verifying the stability of a networked system or a control system under deadline constraints;
\item Approximate stability regions of differential systems, such as reachable sets or positively invariant sets;
\item Finding a maximum cut in a graph;
\item Minimizing the generator fuel cost under alternative current power-flow constraints.
\end{itemize}

All these important applications related to computer vision, computer arithmetic, deep learning, entanglement in quantum information, graph theory and energy networks, can be successfully tackled within the
framework of polynomial optimization, an emerging field with growing research efforts in
the last two decades.
One key advantage of these techniques is their ability to model a wide range of problems
using optimization formulations.
Polynomial optimization heavily relies on the \ac{moment-SOS} approach proposed by Lasserre \citepreface{Las01sos}, which provides certificates for positive polynomials.
The problem of minimizing a polynomial over a set of polynomial (in)-equalities is an NP-hard non-convex problem.
It turns out that this problem can be cast as an infinite-dimensional linear problem over a set of probability measures.
Thanks to powerful results from real algebraic geometry \citepreface{Putinar1993positive}, one can convert this linear problem into a nested sequence of finite-dimensional convex problems.
At each step of the associated hierarchy, one needs to solve a fixed size semidefinite program (an optimization program with a linear cost and constraints over matrices with nonnegative eigenvalues), which can be in turn solved with efficient numerical tools.
On the practical side however, there is \emph{no-free lunch} and such optimization methods usually encompass severe scalability issues.
The underlying reason is that for optimization problems involving polynomials in $n$ variables of degree at most $2d$, the size of the matrices involved at step $r \geq d$ of Lasserre's hierarchy of \ac{SDP} relaxations is proportional to $\binom{n+r}{r}$.
Fortunately, for many applications, including the ones formerly mentioned, we can \emph{look at the problem in the eyes} and exploit the inherent data structure arising from the cost and constraints describing the problem, for instance sparsity or symmetries.

\begin{framed}
This book presents several research efforts to tackle this scientific challenge with important
computational implications, and provides the development of alternative optimization
schemes that scale well in terms of computational complexity, at least in some identified class
of problems.
\end{framed}
~\\
The presented algorithmic framework in this book mainly exploits the sparsity structure of the input data to solve large-scale polynomial optimization problems.
%The very first successful attempts have been achieved by \citepreface{}.
For unconstrained problems involving a few terms, a first remedy consists of
reducing the size of the relaxations by discarding the terms which never appear in the
support of the \ac{SOS} decompositions.
This technique, based on a result by Reznick \citepreface{Reznick78}, consists of computing the Newton polytope of the input polynomial (the convex hull of the support of this polynomial) and selecting only monomials with supports lying in half of this polytope.
%For constrained optimization, existing workarounds are based on exploiting a potential symmetry \citepreface{} pattern arising in the input polynomials, or bounded degree \ac{LP}/\ac{SDP} relaxations \citepreface{}.

We present sparsity-exploiting hierarchies of relaxations, for either unconstrained or constrained polynomial optimization problems.
By contrast with the dense hierarchies, they provide faster approximation of the solution in practice but also come with the same theoretical convergence guarantees.
Our framework is not restricted to \emph{static} polynomial optimization, and we expose hierarchies
of approximations for values of interest arising from the analysis of dynamical systems.
We also present various extensions to problems involving noncommuting variables, e.g., matrices of arbitrary size or quantum physic operators.

At this point, we would like to emphasize the existence of alternatives to the positivity certificates based on sparse \ac{SOS} decompositions.
Instead of computing \ac{SOS} decompositions with \ac{SDP}, one can compute other positivity certificates based on \ac{LP} for Bernstein decompositions or Krivine-Stengle certificates, geometric/second-order cone programming for nonnegative circuits and scaled diagonally dominant \ac{SOS}, relative entropy programming for arithmetic-geometric-exponentials.
This book also presents an overview of these various alternative decompositions. 

A second point to emphasize is that the concept of sparsity is inherent to many scientific fields, and we outline some similarities and differences with the algorithmic framework presented in this book.
In the context of machine learning, statistics, or signal processing, exploiting sparsity boils down to select variables or features, usually with $\ell_1$-norm regularization \citepreface{beck2009fast}.
It is commonly employed to make the model or the prediction more interpretable or less expensive to use.
In other words, even if the underlying problem does not admit sparse solutions, one still hopes to be able to find the best sparse approximation.
A similar situation occurs in the context of dynamical systems with sparse state constraints and dynamics, where the set of trajectories is not necessarily sparse.
In the context of algebraic geometry, people have considered sparse systems of polynomial equations, where \emph{sparse} means that the set of terms appearing in each equation is fixed.
Bernshtein's theorem \citepreface{bernshtein1975number} is a key ingredient as it provides an accurate bound for the expected number of complex roots, based on the mixed volume of the Newton polytopes of polynomials describing the system.
We similarly exploit support information given by Newton polytopes for our term-sparsity based hierarchies, presented in Part II.\\
~\\
This book is organized as follows:

~\\
\textbf{Chapter \ref{chap:sdpsparse}} recalls some preliminary background on semidefinite programming, sparse
matrix theory.

~\\
\textbf{Chapter \ref{chap:densehierarchy}} outlines the basic concepts of the \ac{moment-SOS} hierarchy in polynomial optimization.

~\\
\textbf{Part I}
~\\

The first part of the book focuses on the notion of "correlative sparsity", occurring when there
are few correlations between the variables of the input problem.
This research investigation was initially developed by \citepreface{Waki06} and \citepreface{Las06}.

~\\
\textbf{Chapter \ref{chap:cs}} is concerned with this first sparse variant of the \ac{moment-SOS} hierarchy, based on correlative sparsity.

~\\
\textbf{Chapter \ref{chap:roundoff}} explains how to apply the sparse \ac{moment-SOS} hierarchy to provide efficiently upper bounds on roundoff errors of floating-point nonlinear programs.

~\\
\textbf{Chapter \ref{chap:lip}} focuses on robustness certification of deep neural networks, in particular via Lipschitz constant estimation.

~\\
\textbf{Chapter \ref{chap:ncsparse}} describes a very distinct application for optimization of polynomials in noncommuting
variables.
We outline promising research perspectives in quantum information theory.

~\\
\textbf{Part II}
~\\

The second part of the book presents a complementary framework, where we show how to
exploit a distinct notion of sparsity, called "term sparsity", occurring when there are a small number of terms involved in the input problem by comparison with the fully dense case.

~\\
\textbf{Chapter \ref{chap:tssos}} focuses on this second sparse variant of the \ac{moment-SOS} hierarchy, based on term sparsity.

~\\
\textbf{Chapter \ref{chap:cstssos}} explains how to combine correlative and term sparsity.
%and eigenvalue/trace optimization of noncommutative polynomials.

~\\
\textbf{Chapter \ref{chap:opf}} extends this term sparsity framework to complex polynomial optimization and shows how the resulting scheme can handle optimal power flow problems with tens of thousands of variables and constraints.

~\\
\textbf{Chapter \ref{chap:ncts}} extends the framework of exploiting term sparsity to noncommutative polynomial optimization (namely, eigenvalue optimization).

~\\
\textbf{Chapter \ref{chap:sparsejsr}} is concerned with the application of this term sparsity framework to analyze the stability of various control
systems, either coming from the networked systems literature or systems under deadline
constraints.

~\\
\textbf{Chapter \ref{chap:misc}} presents alternative algorithms to improve the scalability of polynomial optimization methods. 
First, we present algorithms based on sums of nonnegative circuit
polynomials, recently introduced classes of
nonnegativity certificates for sparse polynomials, which are independent of well-known
methods based on sums of squares.
Then, we outline existing methods to speed-up the computation of the semidefinite relaxations. 
~\\
~\\
\textbf{Appendix}
~\\

At the end of the book, we describe how to use various solvers available either in MATLAB or
Julia.
This dedicated appendix aims at guiding practitioners to solve optimization problems involving sparse polynomials.

~\\
\textbf{Appendix \ref{chap:matlab}} explains how to implement \ac{moment-SOS} relaxations with software packages $\gloptipoly$ and Yalmip.

~\\
\textbf{Appendix \ref{chap:julia}} focuses on our sparsity exploiting algorithms, implemented in the {\tt TSSOS} library available at \href{https://github.com/wangjie212/TSSOS}{https://github.com/wangjie212/TSSOS}.

\begin{itemize}
\item For the sake of conciseness and clarity of exposition, most proofs are postponed to ease the reading.
When the proof is either short or simple, we sometimes include it right after its corresponding statement. Otherwise, we refer to this proof in the \emph{Notes and sources} section at the end of the corresponding chapter.
\item Some of the theorems are framed in the book, in order to emphasize their specific importance.
\end{itemize}

%\bibliographystylepreface{alpha}
%\bibliography{sparsebook}
%\bibliographypreface{preface,sdp,sparsemat,pop,cs,roundoff,lip,ncsparse,ts,cstssos,opf,jsr,misc,app}

% Prelim
\part*{Preliminary background}

\chapter{Semidefinite programming and sparse matrices}\label{chap:sdpsparse}

In this chapter and the next one, we describe the foundations on which several parts of our work lie.
Semidefinite programming and sparse matrices are described in this chapter while Chapter \ref{chap:densehierarchy} is dedicated to the \ac{moment-SOS} hierarchy of \ac{SDP} relaxations, now widely used to certify lower bounds of polynomial optimization problems.

\section{SDP and interior-point methods}\label{sec:sdp}
Even though \ac{SDP} is not our main topic of interest, several encountered problems can be cast as such programs.

First, we introduce some useful notations. We consider the vector space $\Sbb_n$ of real symmetric $n\times n$ matrices, which is equipped with the usual inner product\index{inner product} $\langle \A,\,\B \rangle = \tr(\A\B)$ for $\A,\B\in\Sbb_n$\index{trace}.
% Given $\A\in\Sbb_n$, let $\lambda_{\sup}(\A)$ (\resp $\lambda_{\inf}(\A)$) be the maximum (\resp minimum) eigenvalue of $\A$.
Let $\I_n$\index{$\I_n$} be the $n \times n$ identity matrix. 
%The \emph{Frobenius}\index{Frobenius} norm of a matrix $\A\in\Sbb_n$\index{$\Sbb_n$} is defined by
%$\Vert\A\Vert_{F}\coloneqq \sqrt{\tr(\A^2)}$.
A matrix $\M\in\Sbb_n$ is called \emph{\ac{PSD}} (\resp \emph{positive definite}) if $\x^\intercal \M \x \geq 0$ (\resp $>0$), for all $\x\in\R^n$. In this case, we write $\M \succcurlyeq 0$ and
define a partial order by writing $\A \succcurlyeq \B$ (\resp $\A \succ B$) if and only if $\A - \B$ is positive semidefinite (\resp positive definite). The set of $n\times n$ \ac{PSD} matrices is denoted by $\Sbb_n^+$.

In semidefinite programming, one minimizes a linear objective function subject to a \ac{LMI}. The variable of the problem is the vector $\y \in \R^m$ and the input data of the problem are the vector $\cb \in \R^m$ and symmetric matrices $\F_0,\dots,\F_m \in \Sbb_n$. The primal semidefinite program is defined as follows:

\begin{equation}\label{eq:sdp_primal}
\begin{aligned}
\psdp \coloneqq  & \inf_{\y \in \R^m} & & \cb^\intercal \y\\
& \,\,\,\,\st & & \F(\y) \succeq 0
\end{aligned}
\end{equation}
where \[ \F(\y) \coloneqq  \F_0 + \sum_{i = 1}^m y_i \F_i.\]

The primal problem~\eqref{eq:sdp_primal} is convex since the linear objective function and the linear matrix inequality constraint are both convex. We say that $\y$ is primal feasible (\resp strictly feasible) if $\F(\y) \succeq 0$ (\resp $\F(\y) \succ 0$).
Furthermore, we associate the following dual problem with the primal problem~\eqref{eq:sdp_primal}:

\begin{equation}\label{eq:sdp_dual}
\begin{aligned}
\dsdp \coloneqq  & \sup_{\G \in \Sbb_n} & & - \langle \F_0, \G \rangle \\
& \,\,\, \st & & \langle \F_i, \G \rangle = c_i, \quad i \in [m]\\
&     & & \G \succeq 0
\end{aligned}
\end{equation}

The variable of the dual program~\eqref{eq:sdp_dual} is the real symmetric matrix $\G \in \Sbb_n$. We say that $\G$ is dual feasible (\resp strictly feasible) if $\langle \F_i, \G \rangle = c_i$, $i \in [m]$  and $\G \succeq 0$ (\resp $\G \succ 0$).

We will describe briefly the primal-dual interior-point method (used for instance by $\sdpa$ \citesdp{sdpa}, $\mosek$ \citesdp{mosek}), that solves the following primal-dual optimization problem:

\begin{equation}\label{eq:sdp_primal_dual}
\begin{cases}
\inf\limits_{\y \in \R^m, \G \in \Sbb_n} & \eta (\y, \G)  \\
\quad\,\,\,\,\st & \langle \F_i, \G \rangle = c_i, \quad i \in [m]\\
& \F(\y) \succeq 0,\G \succeq 0
\end{cases}
\end{equation}
where $\eta (\y, \G) \coloneqq  \cb^\intercal \y + \langle \F_0, \G \rangle$.

We notice that the objective function $\eta$ of the program~\eqref{eq:sdp_primal_dual} is the difference between the objective function of the primal program~\eqref{eq:sdp_primal} and its dual version~\eqref{eq:sdp_dual}. We call this function the duality gap. Let us suppose that $\y$ is primal feasible and $\G$ is dual feasible, then $\eta$ is nonnegative. Indeed, we have

\begin{equation}\label{eq:sdp_gap}
\eta (\y, \G) = \sum_{i = 1}^m \langle \F_i, \G \rangle y_i + \langle \F_0, \G \rangle = \langle \F(\y), \G \rangle \geq 0.
\end{equation}
The last inequality comes from the fact that the matrices $\F(\y)$ and $\G$ are both \ac{PSD}.

Then, one can easily prove that the nonnegativity of $\eta$ implies the following inequalities:

\begin{equation}\label{eq:sdp_ineqs}
\dsdp \leq - \langle \F_0, \G \rangle \leq \cb^\intercal \y \leq \psdp.
\end{equation}

Our problems that can be cast as \ac{SDP}s satisfy certain assumptions, so that there exists a (strictly feasible) primal-dual optimal solution (i.e., a primal strictly feasible $\y$ solving~\eqref{eq:sdp_primal} and a dual strictly feasible $\G$ solving~\eqref{eq:sdp_dual}). Then, all inequalities in~\eqref{eq:sdp_ineqs} become equalities and there is no duality gap ($\eta (\y, \G) = 0$):

\begin{equation}\label{eq:sdp_eqs}
\dsdp = - \langle \F_0, \G \rangle = \cb^\intercal \y = \psdp.
\end{equation}

Thus, we will assume that such a primal-dual optimal solution exists in the sequel.
We also introduce the barrier function\index{barrier function}

\begin{equation}\label{eq:barrier}
\Phi (\y) \coloneqq 
\left\{
\begin{array}{ll}
\log \det ( \F(\y)^{-1} ) & \text{if } \F(\y) \succ 0, \\
+\infty & \text{otherwise}.
\end{array}
\right.
\end{equation}

This barrier function exhibits several nice properties: $\Phi$ is strictly convex, analytic and self-concordant. The unique minimizer $\y^{\opt}$ of $\Phi$ is called the analytic center of the \ac{LMI} $\F(\y) \succeq 0$.
This self-concordant barrier function guarantees that the number of iterations of the interior-point method is bounded by a polynomial in the dimension ($n$ and $m$) and the number of accuracy digits of the solution.

\if{
We report the best known complexity bounds results for SDPs.
For this, let us consider an instance~$(e)$ of the primal \ac{SDP} program \eqref{eq:sdp_primal}.
An $\epsdp$-solution of~$(e)$ is a solution $\y \in \R^m$ of the following feasibility problem:

\begin{equation}\label{eq:sdp_eps}
\begin{aligned}
c^\intercal\y -\psdp&\leq\epsdp\\
F(\y) & \succeq- \epsdp I_n \enspace.
\end{aligned}
\end{equation}

We note $\text{Compl}(e, \epsdp)$ the number of real arithmetic operations needed to obtain an $\epsdp$-solution of~$(e)$ and   $\text{Digits}(e, \epsdp)$ the number of accuracy digits in an $\epsdp$-solution of~$(e)$.
The arithmetic complexity of an $\epsdp$-solution is:
\begin{equation}\label{eq:sdp_compl}
\text{Compl}(e, \epsdp) \coloneqq   O(1) (1 + \sqrt{n}) (m^3 + n^2 m^2  + m n ^3) \text{Digits}(e, \epsdp) \enspace,
\end{equation}
where the number of accuracy digits is given by the following:
\begin{equation}\label{eq:sdp_digits}
\text{Digits}(e, \epsdp) \coloneqq  \log \Bigl(\frac{\text{Size}(e) + \lonenorm{\text{Data}(e)} + \epsdp}{\epsdp^2} \Bigr) \enspace ,
\end{equation}
with Size$(e) = $  and Data$(e) = $.
}\fi

\section{Chordal graphs and sparse matrices}\label{sec:sparsemat}
We briefly recall some basic notions from graph theory. An {\em (undirected) graph} $G(V,E)$ or simply $G$ consists of a set of nodes $V$ and a set of edges $E\subseteq\{\{v_i,v_j\}\mid v_i\ne v_j,(v_i,v_j)\in V\times V\}$. For a graph $G$, we use $V(G)$ and $E(G)$ to indicate the node set of $G$ and the edge set of $G$, respectively. The {\em adjacency matrix} of a graph $G$ is denoted by $\B_G$ for which we put ones on its diagonal. For two graphs $G,H$, we say that $G$ is a {\em subgraph} of $H$ if $V(G)\subseteq V(H)$ and $E(G)\subseteq E(H)$, denoted by $G\subseteq H$. For a graph $G(V,E)$, a {\em cycle} of length $k$ is a set of nodes $\{v_1,v_2,\ldots,v_k\}\subseteq V$ with $\{v_k,v_1\}\in E$ and $\{v_i, v_{i+1}\}\in E$, for $i\in[k-1]$. A {\em chord} in a cycle $\{v_1,v_2,\ldots,v_k\}$ is an edge $\{v_i, v_j\}$ that joins two nonconsecutive nodes in the cycle. A {\em clique} $C\subseteq V$ of $G$ is a subset of nodes where $\{v_i,v_j\}\in E$ for any $v_i,v_j\in C$. If a clique is not a subset of any other clique, then it is called a {\em maximal clique}.
\begin{definition}[chordal graph]
A graph is called a {\em chordal graph} if all its cycles of length at least four have a chord. 
\end{definition}
The notion of chordal graphs plays an important role in sparse matrix theory. In particular, it is known that maximal cliques of a chordal graph can be enumerated efficiently in linear time in the number of nodes and edges of the graph. See e.g.\,\citesdp{gavril1972algorithms,va} for the details.

The maximal cliques $I_1,\dots,I_p$ of a chordal graph (possibly after some reordering) satisfy the so-called \emph{\ac{RIP}}, i.e., for every $k\in[p-1]$, it holds
\begin{align}\label{eq:RIP}
	\left(I_{k+1} \cap \bigcup_{j \leq k} I_j \right)\subseteq I_i \quad \text{for some } i \leq k.
\end{align}
The \ac{RIP} actually gives an equivalent characterization of chordal graphs.

\begin{theorem}\label{thm:rip}
A connected graph is chordal if and only if its maximal cliques after an appropriate ordering satisfy the \ac{RIP}.
\end{theorem}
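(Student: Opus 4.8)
The plan is to prove both directions separately. For the easy direction ($\Leftarrow$), suppose the maximal cliques of $G$ admit an ordering $I_1,\dots,I_p$ satisfying the \ac{RIP}~\eqref{eq:RIP}. I would argue by induction on the number of cliques $p$, showing that a graph whose maximal cliques satisfy the \ac{RIP} must be chordal. The base case $p=1$ is immediate: a single clique has no chordless cycle of length $\geq 4$. For the inductive step, note that $H \coloneqq G[I_1 \cup \dots \cup I_{p-1}]$ is chordal by induction (its maximal cliques are among $I_1,\dots,I_{p-1}$ and still satisfy \ac{RIP}), and by~\eqref{eq:RIP} the set $S \coloneqq I_p \cap (I_1 \cup \dots \cup I_{p-1})$ is contained in some $I_i$ with $i \leq p-1$, hence $S$ is a clique. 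Then $G$ is obtained from $H$ by gluing the clique $I_p$ along the common clique $S$; any cycle of length $\geq 4$ in $G$ either lies entirely in $H$ (done by induction) or entirely in $I_p$ (a clique, so it has a chord) or else it must pass through $S$ twice, and the two vertices of $S$ it visits are adjacent, giving a chord.

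For the harder direction ($\Rightarrow$), suppose $G$ is connected and chordal. The key structural fact I would invoke is that every chordal graph has a \emph{perfect elimination ordering}: an ordering $v_1,\dots,v_n$ of the vertices such that for each $j$, the later neighbors $\{v_k : k > j, \{v_j,v_k\}\in E\}$ form a clique. (This is Dirac's / Fulkerson--Gross theorem; I would either cite it via \citesdp{gavril1972algorithms,va} as the excerpt already references that literature, or sketch it quickly: a chordal graph has a simplicial vertex — one whose neighborhood is a clique — which can be found via the existence of minimal vertex separators that are cliques; delete it and recurse, using that chordality is preserved under vertex deletion.) Given a perfect elimination ordering, I would then construct the clique tree / run the standard maximal-clique enumeration: processing vertices in elimination order and recording, for each $v_j$, the clique $\{v_j\} \cup \{\text{later neighbors}\}$, and keeping only the maximal ones. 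Ordering the resulting maximal cliques by the smallest elimination-index of a vertex they contain yields an ordering; I would verify~\eqref{eq:RIP} by showing that when $I_{k+1}$ is "introduced", its intersection with all previously introduced cliques is governed by the first vertex $v$ of $I_{k+1}$ in elimination order, whose later-neighborhood clique contains that intersection, and that this clique sits inside one of the earlier $I_i$.

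The main obstacle is the $\Rightarrow$ direction, specifically establishing the perfect elimination ordering and then carefully verifying that the induced clique ordering satisfies~\eqref{eq:RIP} — the bookkeeping of which clique contains the running intersection $I_{k+1}\cap\bigcup_{j\le k}I_j$ is the delicate part, since one must track how maximal cliques overlap along the separators of the clique tree. A cleaner alternative, which I would likely prefer to present, is to bypass perfect elimination orderings and instead use the clique-tree characterization directly: a connected graph is chordal iff it has a clique tree (a tree on the maximal cliques with the induced-subtree / junction-tree property), and then a depth-first traversal of the clique tree produces an ordering $I_1,\dots,I_p$ in which each $I_{k+1}$'s parent $I_i$ (with $i \le k$) satisfies $I_{k+1}\cap\bigcup_{j\le k}I_j \subseteq I_{k+1}\cap I_i \subseteq I_i$ by the junction-tree property. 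This reduces the problem to the equivalence "chordal $\iff$ has a clique tree," which is the standard Blair--Peyton result and can be cited. Either way, connectedness is used only to ensure the clique tree is a single tree rather than a forest, which is exactly why the hypothesis appears in the statement.
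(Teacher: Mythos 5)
Your proof is correct in both directions, but note that the paper itself does not prove Theorem \ref{thm:rip}: in its \emph{Notes and sources} it simply defers to Theorem 3.4 or Corollary 1 of Blair and Peyton \citesdp{blair1993introduction}. Your preferred route for the hard direction --- a connected chordal graph admits a clique tree, and a rooted (DFS/BFS) traversal of that tree yields an ordering in which each clique's running intersection with its predecessors is contained in its parent, by the clique-intersection property --- is precisely the argument of that cited reference, so you are in effect reconstructing the proof the authors outsource; the alternative via a perfect elimination ordering is equally standard. The converse direction via induction and clique-sums is also sound; the one step you gloss is that the maximal cliques of $H=G[I_1\cup\dots\cup I_{p-1}]$ are \emph{exactly} $I_1,\dots,I_{p-1}$ (not merely ``among'' them): any edge or clique of $H$ contained only in $I_p$ lies in $I_p\cap\bigcup_{j<p}I_j$, which \eqref{eq:RIP} places inside some earlier $I_i$, so every maximal clique of $H$ coincides with some $I_j$, $j\le p-1$, the prefix ordering still satisfies \eqref{eq:RIP}, and moreover there are no edges between $I_p\setminus V(H)$ and $V(H)\setminus I_p$, so $G$ really is the clique-sum you describe and the crossing-the-separator chord argument goes through. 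One small caveat on your closing remark: connectedness is not actually load-bearing for the equivalence, since when a new connected component is started the running intersection is empty and \eqref{eq:RIP} holds vacuously; the hypothesis appears only because the clique-tree formulation (a single tree rather than a forest) is traditionally stated for connected graphs.
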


Any non-chordal graph $G(V,E)$ can always be extended to a chordal graph $G'(V,E')$ by adding appropriate edges to $E$, which is called a {\em chordal extension} of $G(V,E)$. The chordal extension of $G$ is usually not unique. We use the symbol $G'$ to indicate a specific chordal extension of $G$. For graphs $G\subseteq H$, we assume that $G'\subseteq H'$ always holds for our purpose. 
For a graph $G$, among all chordal extensions of $G$, there is a particular one $G'$ which makes every connected component of $G$ to be a clique. Accordingly, a matrix with adjacency graph $G'$ is block diagonal (after an appropriate permutation on rows and columns): each block corresponds to a connected component of $G$. We call this chordal extension the {\em maximal} chordal extension.
Besides, we are also interested in smallest chordal extensions. By definition, a \emph{smallest chordal extension} is a chordal extension with the smallest clique number (i.e., the maximal size of maximal cliques). However, computing a smallest chordal extension is generally NP-complete \citesdp{arnborg1987complexity}. Therefore in practice we compute approximately smallest chordal extensions instead with efficient heuristic algorithms; see \citesdp{treewidth} for more detailed discussions.

\begin{example}\label{ex:graph}
	Let us consider the graph $G(V, E)$ represented in Figure \ref{fig:graph}, with the set of nodes $V = \{1,2,3,4,5,6 \}$ and
	\[
	E = \{ \{1,2\}, \{1,3\}, \{1,4\}, \{1,5\}, \{1,6\}, \{2,3\}, \{2,5\}, \{3,6\}, \{5,6\} \}.
	\]
	and the corresponding adjacency matrix 
	\[
	\B_G = 
	\begin{bmatrix}
		1 & 1 & 1 & 1 & 1 & 1 \\
		1 & 1 & 1 & 0 & 1 & 0 \\
		1 & 1 & 1 & 0 & 1 & 1 \\
		1 & 0 & 0 & 1 & 0 & 0 \\
		1 & 1 & 1 & 0 & 1 & 1 \\
		1 & 0 & 1 & 0 & 1 & 1 
	\end{bmatrix}.
	\]
	One example of cycle of length $3$ is $\{1,5,6\}$ and one example of cycle of length $4$ is $\{6,3,2,5\}$.
	Note that this graph is not chordal since there is no chord in this latter cycle.
	It is enough to add en edge between the nodes $2$ and $6$ (or alternatively between the nodes $3$ and $5$) to obtain a chordal extension of $G$.
\end{example}
\begin{figure}[!t]	
	\centering
	\includegraphics[scale=0.6]{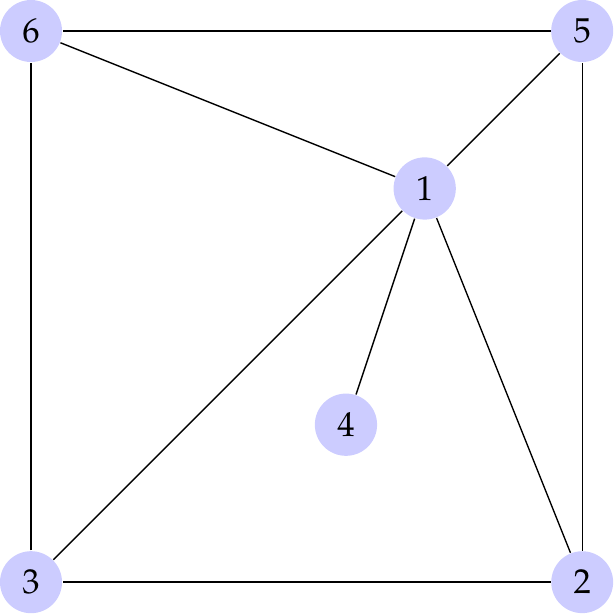}
	\caption{The graph from Example~\ref{ex:graph}.}
	\label{fig:graph}
\end{figure}

Let $n\in\N^*$.
Given a graph $G(V,E)$ with $V=[n]$, a symmetric matrix $\Qb$ with rows and columns indexed by $V$ is said to have sparsity pattern $G$ if $\Qb_{ij}=\Qb_{ij}=0$ whenever $i\ne j$ and $\{i,j\}\notin E$. Let $\Sbb(G)$ be the set of real symmetric matrices with sparsity pattern $G$. The \ac{PSD} matrices with sparsity pattern $G$ form a convex cone
\begin{equation}\label{sec2-eq5}
\Sbb^+_{|V|}\cap\Sbb(G)=\{\Qb\in\Sbb(G)\mid \Qb\succeq0\}.
\end{equation}

A matrix in $\Sbb(G)$ exhibits a block structure: each block corresponds to a maximal clique of $G$. Figure \ref{qbd} depicts an instance of such block structures.
Note that there might be overlaps between blocks because different maximal cliques may share nodes.

\begin{figure}[htbp]
\centering
\begin{tikzpicture}
\draw (0,0) rectangle (4,4);
\filldraw[fill=blue, fill opacity=0.3] (0,2.7) rectangle (1.3,4);
\filldraw[fill=blue, fill opacity=0.3] (0.7,1.8) rectangle (2.2,3.3);
\filldraw[fill=blue, fill opacity=0.3] (1.8,1.2) rectangle (2.8,2.2);
\fill (3.2,0.8) circle (0.3ex);
\fill (3.4,0.6) circle (0.3ex);
\fill (3.6,0.4) circle (0.3ex);
\end{tikzpicture}
\caption{An instance of block structures for matrices in $\Sbb(G)$. The blue area indicates the positions of possible nonzero entries.}\label{qbd}
\end{figure}
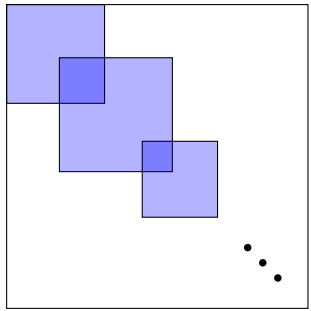

Given a maximal clique $C$ of $G(V,E)$, we define a matrix $\Rb_{C}\in\R^{|C|\times |V|}$ by
\begin{equation}\label{sec2-eq6}
[\Rb_{C}]_{ij}=\begin{cases}
1, &\textrm{if }C(i)=j,\\
0, &\textrm{otherwise},
\end{cases}
\end{equation}
where $C(i)$ denotes the $i$-th node in $C$, sorted with respect to the ordering compatible with $V$. Note that $\Qb_{C}=\Rb_{C}\Qb \Rb_{C}^\intercal\in \Sbb_{|C|}$ extracts a principal submatrix $\Qb_C$ indexed by the clique $C$ from a symmetry matrix $\Qb$, and $\Qb = \Rb_{C}^\intercal \Qb_{C} \Rb_{C}$ inflates a $|C|\times|C|$ matrix $\Qb_{C}$ into a sparse $|V|\times |V|$ matrix $\Qb$.

When the sparsity pattern graph $G$ is chordal, the cone $\Sbb^+_{|V|}\cap\Sbb(G)$ can be
decomposed as a sum of simple convex cones, as stated in the following theorem.
\begin{theoremf}\label{th:sparsesdpsum}
Let $G(V,E)$ be a chordal graph and assume that $C_1,\ldots,C_p$ are the list of maximal cliques of $G$.
Then a matrix $\Qb\in\Sbb^+_{|V|}\cap\Sbb(G)$ if and only if there exist $\Qb_{k}\in\Sbb^+_{|C_k|}$ for $k\in[p]$ such that $\Qb=\sum_{k=1}^p \Rb_{C_k}^\intercal \Qb_{k} \Rb_{C_k}$.
\end{theoremf}

Given a graph $G(V,E)$ with $V=[n]$, let $\Pi_{G}$ be the projection from $\Sbb_{|V|}$ to the subspace $\Sbb(G)$, i.e., for $\Qb\in\Sbb_{|V|}$,
\begin{equation}\label{sec2-eq7}
[\Pi_{G}(\Qb)]_{ij}=\begin{cases}
\Qb_{ij}, &\textrm{if }i=j\textrm{ or }\{i,j\}\in E,\\
0, &\textrm{otherwise}.
\end{cases}
\end{equation}
We denote by $\Pi_{G}(\Sbb^+_{|V|})$ the set of matrices in $\Sbb(G)$ that have a \ac{PSD} completion, i.e.,
\begin{equation}\label{sec2-eq8}
\Pi_{G}(\Sbb^+_{|V|})=\left\{\Pi_{G}(\Qb)\mid \Qb \in\Sbb^+_{|V|}\right\}.
\end{equation}
One can easily check that the \ac{PSD} completable cone $\Pi_{G}(\Sbb^+_{|V|})$ and the \ac{PSD} cone $\Sbb^+_{|V|}\cap\Sbb(G)$ form a pair of dual cones in $\Sbb(G)$.
Moreover, for a chordal graph $G$, the decomposition result for the cone $\Sbb^+_{|V|}\cap\Sbb(G)$ in Theorem \ref{th:sparsesdpsum} leads to the following characterization of the \ac{PSD} completable cone $\Pi_{G}(\Sbb^+_{|V|})$.
\begin{theoremf}
\label{th:sparsesdpproj}
Let $G(V,E)$ be a chordal graph and assume that $C_1,\ldots,C_p$ are the list of maximal cliques of $G$. Then a matrix $\Qb\in\Pi_{G}(\Sbb^+_{|V|})$ if and only if $\Qb_{k}=\Rb_{C_k} \Qb \Rb_{C_k}^\intercal\succeq0$ for all $k\in[p]$.
\end{theoremf}

Theorem \ref{th:sparsesdpsum} and Theorem \ref{th:sparsesdpproj} play an important role in sparse semidefinite programming since they admit us to decompose an \ac{SDP} with chordal sparsity pattern into an \ac{SDP} of smaller size, which yields significant computational improvement if the sizes of related maximal cliques are small.

\section{Notes and sources}
\label{sec:sourcesdp}
\ac{SDP} is relevant to a wide range of applications.
The interested reader can find more details on the connection between \ac{SDP} and combinatorial optimization in \citesdp{gvozdenovic2009block}, control theory in \citesdp{boyd1994linear}, positive semidefinite matrix completion in \citesdp{laurent2009matrix}.
A survey on semidefinite programming is available in the paper of Vandenberghe and  Boyd \citesdp{vandenberghe1996semidefinite}.
We emphasize the fact that \ac{SDP}s can be solved efficiently by software e.g., SeDuMi \citesdp{sedumi}, CSDP \citesdp{csdp}, SDPA \citesdp{sdpa}, $\mosek$ \citesdp{mosek}.

We refer to~\citesdp{nesterov1994interior} for more details on barrier functions.
Detailed complexity bounds related to \ac{SDP} solving with interior-point methods can be found in Section 4.6.3 from \citesdp{ben2001lectures}.
With prescribed accuracy, the time complexity of \ac{SDP} (in terms of arithmetic operations) is polynomial with resepct to the number of variables with an exponent greater than $3$; see~\citesdp[Chapter 4]{ben2001lectures} for more details.
%, after considering, for the sake of simplicity, that all involved matrices are symmetric matrices with a single block.
%The related definitions of accuracy digits are given in Section 4.1.2 ~\citesdp{ben2001lectures}.

For more details about sparse matrices and chordal graphs, the reader is referred to the survey \citesdp{Vandenberghe15}.
%We refer to, e.g., \citesdp{Blair93,Fulkerson65,Golumbic04} for algorithms to enumerate maximal cliques of chordal graphs.
Theorem \ref{th:sparsesdpsum} and Theorem \ref{th:sparsesdpproj} are stated as Theorem 9.2 and Theorem  10.1 in \citesdp{Vandenberghe15}, respectively, and were derived much earlier in \citesdp{agler1988positive} and \citesdp{grone1984}, respectively.

The equivalence stated in Theorem \ref{thm:rip} could be read from Theorem 3.4 or Corollary 1 of \citesdp{blair1993introduction}.

%\bibliographystylesdp{alpha}
%\bibliographysdp{preface,sdp,sparsemat,pop,cs,roundoff,lip,ncsparse,ts,cstssos,opf,jsr,misc,app}
\providecommand{\etalchar}[1]{$^{#1}$}

%!TEX root = main.tex
\chapter{Polynomial optimization and the moment-SOS hierarchy}
\label{chap:densehierarchy}

Polynomial optimization focuses on minimizing or maximizing a polynomial under a set of polynomial inequality constraints.
A polynomial is an expression involving addition, subtraction and multiplication of variables and coefficients.
An example of polynomial in two variables $x_1$ and $x_2$ with rational coefficients is $f(x_1, x_2) = 1/3 + x_1^2 + 2 x_1 x_2 + x_2^2$.
\emph{Semialgebraic} sets are defined with conjunctions and disjunctions of polynomial inequalities with real coefficients.
For instance the two-dimensional unit disk is a semialgebraic set defined as the set of all points $(x_1, x_2)$ satisfying the (single) inequality $1 - x_1^2 - x_2^2 \geq 0$.

In general, computing the \emph{exact} solution of a \ac{POP} over a semialgebraic set is an NP-hard problem. In practice, one can at least try to compute an \emph{approximation} of the solution by considering a \emph{relaxation} of the problem instead of the problem itself.
The approximated solution may not satisfy all the problem constraints but still gives useful information about the exact solution.
Let us illustrate this by considering the minimization of the above polynomial $f(x_1, x_2)$ on the unit disk.
One can replace this disk by a larger set, for instance the product of intervals $[-1, 1] \times [-1, 1]$.
Using basic interval arithmetic, one easily shows that the range of $f$ belongs to $[-4/3, 4/3]$.
Next, one can replace the monomials $x_1^2$, $x_1 x_2$ and $x_2^2$ by three new variables $y_1$, $y_2$ and $y_3$, respectively.
One can relax the initial problem by \ac{LP}, with a cost of $1/3 + y_1 + 2 y_2 + y_3$ and one single linear inequality constraint $1 - y_1 - y_3 \geq 0$.
By hand-solving or by using an \ac{LP} solver, one finds again a lower bound of $-4/3$.
Even if \ac{LP} gives more accurate bounds than interval arithmetic in general, this does not yield any improvement on this example.

One way to obtain more accurate lower bounds is to rely on more sophisticated techniques from the field of convex optimization, e.g.,~\ac{SDP}.
In the seminal paper~\citepop{Las01sos} published in 2001, Lasserre  introduced a hierarchy of relaxations allowing to obtain a converging sequence of lower bounds for the minimum of a polynomial over a semialgebraic set.
Each lower bound is computed by \ac{SDP}.

The idea behind Lasserre's hierarchy is to tackle the \emph{infinite-dimensional} initial problem by solving several \emph{finite-dimensional} primal-dual \ac{SDP} problems.
%
%\begin{itemize}
The primal is a \emph{moment} problem, that is an optimization problem where variables are the moments of a Borel measure. The first moment is related to means, the second moment is related to variances, etc. Lasserre showed in~\citepop{Las01sos} that \ac{POP} can be cast as a particular instance of the \ac{GMP}.
In a nutshell, the primal moment problem approximates Borel measures.
The dual is an \ac{SOS} problem, where the variables are the coefficients of \ac{SOS} polynomials (e.g., $(1/\sqrt{3})^2 + (x_1+x_2)^2$).
It is known that not all positive polynomials can be written with \ac{SOS} decompositions. However, when the set of constraints satisfies certain assumptions (slightly stronger than compactness) then one can represent positive polynomials with weighted \ac{SOS} decompositions.
In a nutshell, the dual \ac{SOS} problem approximates positive polynomials.
The \ac{moment-SOS} approach can be used on the above example with either three moment variables or \ac{SOS} of degree 2 to obtain a lower bound of $1/3$.
For this example,  the exact solution is obtained at the first step of the hierarchy. There is no need to go further, i.e.,~to consider primal with moments of greater order (e.g.~the integrals of $x_1^3$, $x_1^2 x_2$, $x_1^4$) or dual with \ac{SOS} polynomials of degree 4 or 6. The reason is that for convex quadratic problems, the first step of the hierarchy gives the exact solution!

In the sequel, we recall more formally some preliminary background material on the mathematical tools related to the \ac{moment-SOS} hierarchy.

\if{
For more general problems involving polynomials, there are several difficulties encountered while using the \ac{moment-SOS} hierarchy for concrete  applications.
When the initial problem involves $n$ variables, the  $r$-th step relaxation of the \ac{moment-SOS} hierarchy involves the rapidly prohibitive cost of $\binom{n+r}{r}$ \ac{SDP} variables.
We are interested in improving the scalability of the hierarchy by exploiting the specific sparsity structure of the polynomials involved in real-world problems.
Important applications arise from various fields, including computer arithmetic (roundoff error bounds), quantum information (noncommutative optimization), optimal power-flow and deep learning.
}\fi

Given $n,d\in\N$, let $\R[\x]$ (resp.~$\R[\x]_{2d}$) stand for the vector space of real $n$-variate polynomials (resp. of degree at most $2d$) in the variable $\x=(x_1,\ldots,x_n) \in \R^n$. A polynomial $f\in\R[\x]$ can be written as $f(\x)=\sum_{\a\in\A}f_{\a}\x^{\a}$ with $\A\subseteq\N^n$ and $f_{\a}\in\R, \x^{\a}=x_1^{\alpha_1}\cdots x_n^{\alpha_n}$. The \emph{support} of $f$ is defined by $\supp(f):=\{\a\in\A\mid f_{\a}\ne0\}$.
A basic compact semialgebraic set $\X$ is a finite conjunction of polynomial superlevel sets.
Namely, given $m \in \N^*$ and a set of polynomials $\frakg \coloneqq  \{g_1,\ldots,g_m \} \subset \R[\x]$, one has
\begin{align}\label{eq:defX}
 \X \coloneqq  & \{\x \in \R^n : g(\x)  \geq 0 \text{ for all } g \in \frakg \} \nonumber \\
  = & \{\x \in \R^n : g_1(\x)  \geq 0, \dots, g_m(\x) \geq 0 \}.
\end{align}
Many sets can be described as such basic closed semialgebraic sets, and the related description is not unique.
Consider for instance the 2-dimensional hypercube $\X = [0, 1]^2$.
A first possible description is given by
\begin{align}\label{ex:defX}
\X = \,& \{(x_1,x_2) \in \R^2 : x_1 - x_1^2  \geq 0 , x_2 - x_2^2  \geq 0 \} \\
 = \,& \{(x_1,x_2) \in \R^2 : g(x_1,x_2)  \geq 0 \text{ for all } g \in \frakg \}, \nonumber
\end{align}
with $\frakg = \{x_1 - x_1^2, x_2 - x_2^2 \}$.
A second one is given by taking $\frakg = \{x_1, 1 - x_1, x_2, 1-x_2 \}$.
\section{Sums of squares and quadratic modules}
\label{sec:sos}
Let $\Sigma[\x]$ stand for the cone of \ac{SOS} polynomials and let $\Sigma[\x]_{d}$ denote the cone of \ac{SOS} polynomials of degree at most $2d$, namely $\Sigma[\x]_{d} \coloneqq  \Sigma[\x] \cap \R[\x]_{2d}$.
Note that all \ac{SOS} polynomials with real coefficients are nonnegative on $\R^n$.
For instance, $\sigma_0 = \frac{1}{2} (x_1 + x_2 - \frac{1}{2})^2$ is a square in $n = 2$ variables of degree $2d = 2$, and is thus obviously  nonnegative on $\R^n$.

For the ease of further notation, we set $g_0(\x) \coloneqq  1$, and $d_j \coloneqq \lceil \deg (g_j) / 2 \rceil$, for all $j = 0,\dots,m$.
Given a basic compact semialgebraic set $\X$ as above and an integer $r\in\N^*$, let $\cM(\frakg)$ be the quadratic module generated by $g_1, \dots, g_m$:
\begin{align*}
	\cM(\frakg) \coloneqq  \left\{\sum_{j=0}^{m} \sigma_j(\x) {g_j} (\x) : \sigma_j \in \Sigma[\x], j = 0, \dots, m  \right\},
\end{align*}
and let $\cM(\frakg)_{r}$ be the $r$-truncated quadratic module:
\begin{align*}
\cM(\frakg)_{r}\coloneqq\left\{\sum_{j=0}^{m}\sigma_j(\x){g_j}(\x):\sigma_j\in\Sigma[\x]_{r-d_j}, j = 0,\dots,m\right\}.
\end{align*}
A first important remark is that all polynomials belonging to $\cM(\frakg)$ are positive on $\X$.
\begin{example}\label{ex:putinar}
To illustrate this later point, let us take the polynomial $f = x_1 x_2$ in two variables, and the 2-dimensional hypercube $X = [0, 1]^2$ described with the basic closed semialgebraic set given in \eqref{ex:defX}, with $\frakg = \{x_1 (1-x_1), x_2 (1 - x_2) \}$.
Let us consider the following decomposition of $f + \frac{1}{8}$:
\[
x_1 x_2 + \frac{1}{8} =  \frac{1}{2} (x_1 + x_2 - \frac{1}{2})^2 \cdot 1 + \frac{1}{2} \cdot x_1 (1 - x_1) + \frac{1}{2} \cdot x_2 (1 - x_2) .
\]
This later decomposition of degree 2 proves that $f = x_1 x_2 + \frac{1}{8}$ lies in the $1$-truncated quadratic module $\cM(\frakg)_1$, with $\sigma_0 = \frac{1}{2} (x_1 + x_2 - \frac{1}{2})^2$, $\sigma_1 = \sigma_2 = \frac{1}{2}$, $g_0 = 1$, $g_1 = x_1 (1 - x_1)$ and $g_2 = x_2 (1 - x_2)$.
Since $\sigma_0, \sigma_1, \sigma_2$ are nonnegative on $\R^2$ and $g_1,g_2$ are nonnegative on $\X$ (by definition),  the above decomposition \emph{certifies} that $f + \frac{1}{8} \geq 0$ on the hypercube $[0, 1]^2$.
This yields in particular that $-\frac{1}{8}$ is a lower bound on the minimum of $f$ on the hypercube.
Since the minimum of $f$ on the hypercube is obviously $0$, it is natural to ask if one can compute a lower bound greater than $-\frac{1}{8}$.
The answer is positive: for all arbitrary small $\varepsilon > 0$, there exists a decomposition of $f + \varepsilon$ in $\cM(\frakg)_r$, for some positive integer $r$ depending on $\varepsilon$.
\end{example}
A second important remark is that $\cM(\frakg)_r \subseteq \cM(\frakg)_{r+1}$, for all $r\in\N^*$, since all \ac{SOS} polynomials of degree $2 r$ can be viewed as \ac{SOS} polynomials of degree $2 r + 2$.

To guarantee the convergence behavior of the relaxations presented in the sequel, we rely on the fact that polynomials which are positive on $\X$ lie in $\cM(\frakg)_r$ for some $r \in \N^*$.
The existence of such \ac{SOS}-based representations is guaranteed when the following condition holds.
\begin{assumption}[Archimedean]
\label{hyp:archimedean}
There exists $N>0$ such that  $N - \| \x \|_2^2\in\cM(\frakg)$.
\end{assumption}
A quadratic module $\cM(\frakg)$ for which Assumption~\ref{hyp:archimedean} holds is said to be {\em Archimedean}.
%one of the polynomials of the set $\frakg$ is equal to
\begin{theorem}[Putinar's Positivstellensatz]
\label{th:putinar}
Assume that the set $\X$ is defined in \eqref{eq:defX} and the quadratic module $\cM(\frakg)$ is Archimedean.
%Suppose that the set $\X$ defined in \eqref{eq:defX} satisfies Assumption~\ref{hyp:archimedean}.
Then any polynomial positive on $\X$ belongs to $\cM(\frakg)$.
\end{theorem}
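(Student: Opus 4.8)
The plan is to prove Putinar's Positivstellensatz (Theorem~\ref{th:putinar}) by first establishing that the Archimedean property gives us strong compactness-type control on the quadratic module, and then running the classical ``denominator-free'' argument that upgrades strict positivity to membership in $\cM(\frakg)$. First I would record the elementary consequences of Assumption~\ref{hyp:archimedean}: if $N - \|\x\|_2^2 \in \cM(\frakg)$ then every coordinate function $x_i$ and more generally every polynomial is bounded (in the sense that some $\lambda - p$ and $\lambda + p$ lie in $\cM(\frakg)$ for $\lambda$ large), and in particular $\X$ is compact and contained in the ball of radius $\sqrt{N}$. The key algebraic observation is that $\cM(\frakg)$ is then an \emph{Archimedean quadratic module} in the ring-theoretic sense, so the abstract machinery applies.

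The main line of attack is the standard one via representation theory of Archimedean quadratic modules (the separation/Zorn's lemma approach). I would consider the polynomial $f$ with $f > 0$ on $\X$, suppose for contradiction that $f \notin \cM(\frakg)$, and use the fact that $\cM(\frakg)$ is a convex cone in the vector space $\R[\x]$ to separate $f$ from $\cM(\frakg)$ by a linear functional $L : \R[\x] \to \R$ with $L \geq 0$ on $\cM(\frakg)$ and $L(f) \leq 0$, normalized so that $L(1) = 1$. The Archimedean hypothesis forces $L$ to be ``bounded'': because $N - \|\x\|_2^2 \in \cM(\frakg)$, one gets $L(\|\x\|_2^2) \leq N$, and iterating this on products one shows $L$ extends to a continuous positive functional on $C(\X)$. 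By the Riesz representation theorem $L$ is integration against a probability measure $\mu$; since $L \geq 0$ on each $\sigma g_j$ and a localization argument shows $\mu$ is supported on $\X$, we get $L(f) = \int_\X f \, d\mu > 0$, contradicting $L(f) \leq 0$. Hence $f \in \cM(\frakg)$. The subtlety requiring care is the step showing the separating functional is genuinely represented by a \emph{positive} measure supported on $\X$ rather than just a signed measure on a ball — this uses that $L$ being nonnegative on all $\sigma_0 \in \Sigma[\x]$ makes the associated bilinear form $(p,q) \mapsto L(pq)$ positive semidefinite, so one can pass to a GNS-type Hilbert space and apply the spectral theorem to the multiplication operators, which are bounded precisely by Archimedeanness.

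The hard part will be the functional-analytic core: verifying that the merely algebraically-defined functional $L$ (a priori only linear on polynomials and nonnegative on the cone) is automatically continuous for the sup-norm on $\X$, so that it extends to $C(\X)$ and Riesz applies. This is exactly where one needs the Archimedean property in full strength — it is not enough that $\X$ be compact; one needs the \emph{algebraic certificate} $N - \|\x\|_2^2 \in \cM(\frakg)$ so that boundedness of $L$ on monomials is forced rather than merely true. An alternative, more self-contained route that avoids heavy operator theory would be to prove it by induction on the number of variables, or to invoke Schm\"udgen's theorem on the finite conjunction of products $\prod_{j \in S} g_j$ and then use an Archimedean-specific trick (Jacobi's or P\'olya-type arguments) to remove the products; but in a book of this kind I would simply cite the GNS/separation proof as the cleanest and defer the details of the spectral-theoretic step to the \emph{Notes and sources} section, since it is standard and lengthy.
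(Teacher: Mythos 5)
The paper itself does not prove Theorem~\ref{th:putinar}: as its \emph{Notes and sources} section indicates, the result is Putinar's representation theorem and the proof is deferred to Putinar's original 1993 article. Your sketch is the standard functional-analytic proof found in that literature (Hahn--Banach separation from the Archimedean quadratic module, followed by a GNS/spectral-theorem representation of the separating functional), so in spirit it is the same argument the book relies on, and its overall architecture is sound.

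Two steps deserve more care than your sketch gives them. First, the separation step: in the infinite-dimensional space $\R[\x]$ one cannot separate an arbitrary point from an arbitrary convex cone; what makes the separation theorem applicable is precisely that Assumption~\ref{hyp:archimedean} turns $1$ into an order unit of $\cM(\frakg)$, i.e., for every $p\in\R[\x]$ there is $\lambda>0$ with $\lambda\pm p\in\cM(\frakg)$, so $1$ is an algebraic interior point of the cone and a nonzero linear $L$ with $L\geq 0$ on $\cM(\frakg)$ and $L(f)\leq 0$ exists; moreover $L(1)>0$ is forced (if $L(1)=0$, the bound $|L(p)|\leq\lambda L(1)$ would make $L$ vanish identically), which justifies the normalization $L(1)=1$. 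Second, your intermediate claim that $L$ ``extends to a continuous positive functional on $C(\X)$'' and that Riesz then applies is circular as stated: a bound of the form $|L(p)|\leq C\,\sup_{\X}|p|$ is essentially equivalent to the theorem being proved, since the algebraic data only control $L(p)$ through coefficient-dependent constants $\lambda$, not through $\sup_{\X}|p|$. The non-circular route is the one you mention at the end: the form $(p,q)\mapsto L(pq)$ is positive semidefinite, the GNS multiplication operators by $x_1,\dots,x_n$ are commuting, self-adjoint and bounded exactly because $N-\|\x\|_2^2\in\cM(\frakg)$, their joint spectral measure represents $L$ by a probability measure $\mu$, and the localizing positivity $L(\sigma g_j)\geq 0$ for all $\sigma\in\Sigma[\x]$ forces $\supp(\mu)\subseteq\X$; then $0\geq L(f)=\int_{\X}f\,\mathrm{d}\mu>0$ gives the contradiction. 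With these two points made explicit, your plan is a complete and correct proof; the alternative you mention via Schm\"udgen's theorem plus Jacobi-type arguments would also work but is heavier than needed here.
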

Assumption~\ref{hyp:archimedean} is slightly stronger than compactness.
Indeed, compactness of $\X$ already ensures that each variable has finite lower and upper bounds.
One (easy) way to ensure that Assumption~\ref{hyp:archimedean} holds is to add a redundant constraint involving a well-chosen $N$ depending on these bounds, in the definition of $\X$.

\section{Borel measures and moment matrices}
\label{sec:meas}
Given a compact set $\A \subseteq \R^n$, we denote by $\mathscr{M}(\A)$ the vector space of finite signed Borel measures supported on $\A$, namely real-valued functions from the Borel $\sigma$-algebra $\mathcal{B}(\A)$.
The support of a measure $\mu \in \mathscr{M}(\A)$ is defined as the closure of the set of all points $\x$ such that $\mu(\B) \neq 0$ for any open neighborhood $\B$ of $\x$.
We denote by $\mathscr{C}(\A)$ the Banach space of continuous functions on $\A$ equipped with the sup-norm.
Let $\mathscr{C}(\A)'$ stand for the topological dual of $\mathscr{C}(\A)$ (equipped with the sup-norm), i.e.,~the set of continuous linear functionals of $\mathscr{C}(\A)$.
By a Riesz identification theorem, $\mathscr{C}(\A)'$ is isomorphically identified with
$\mathscr{M}(\A)$ equipped with the total variation norm denoted by $\|
\cdot\|_{\text{TV}}$.
Let $\mathscr{C}_+(\A)$ (resp.~$\mathscr{M}_+(\A)$) stand for the cone of nonnegative elements of $\mathscr{C}(\A)$ (resp. $\mathscr{M}(\A)$).
The topology in $\mathscr{C}_+(\A)$ is the strong topology of uniform convergence in contrast with the weak-star topology in $\mathscr{M}_+(\A)$.

With $\X$ being a basic compact semialgebraic set, the restriction of the Lebesgue measure on a subset $\A \subseteq \X$ is
$\lambda_\A (\mathrm{d}\x) \coloneqq  \mathbf{1}_\A(\x)\mathrm{d}\x $,
where $\mathbf{1}_\A : \X \to \{0, 1\}$ stands for the indicator function of $\A$, namely $\mathbf{1}_\A(\x) = 1$ if $\x \in \A$ and
 $\mathbf{1}_\A(\x) = 0$ otherwise.
A sequence $\y\coloneqq (y_\a)_{\a \in \N^n}\subseteq\R$ is said to have a representing measure on $\X$ if there exists $\mu \in \mathscr{M}(\X)$ such that $y_\a = \int \x^\a \mu(\mathrm{d}\x)$ for all $\a \in \N^n$, where we use the multinomial notation $\x^{\a} \coloneqq  x^{\alpha_1}_1 x^{\alpha_2}_2 \cdots x^{\alpha_n}_n$.

Assume that $\mu, \nu \in \mathscr{M}_+(\X)$ have the same moments $\y$, namely $y_{\a} = \int_\X \x^\a\,\mathrm{d} \mu = \int_{\X} \x^\a \,\mathrm{d} \nu$, for all $\a \in \N^n$.
Let us fix $f \in \mathscr{C}(\X)$.
Since $\X$ is compact, the Stone-Weierstrass theorem implies that the polynomials are dense in $\mathscr{C}(\X)$, so $\int_\X f \,\mathrm{d}\mu = \int_{\X} f\, \mathrm{d}\nu$.
Since $f$ was arbitrary, the above equality holds for any $f \in \mathscr{C}(\X)$, which implies that $\mu = \nu$.
Therefore, any finite Borel measures supported on $\X$ is \emph{moment determinate}.

The moments of the Lebesgue measure on $\A$ are denoted by
\begin{equation}\label{momb}
y^\A_{\a} \coloneqq  \int \x^{\a} \lambda_\A\,\mathrm{d}\x \in \R , \quad \a \in \N^n.
\end{equation}
The Lebesgue volume of $\A$ is $\vol \A \coloneqq  y^\A_{\mathbf{0}} = \int \lambda_\A \,\mathrm{d}\x$.
%
%\section{Moment and localizing matrices}
%
For all $r \in \N$, let us define $\N^{n}_r \coloneqq  \{ \a \in \N^{n} \mid \sum_{j=1}^{n} \alpha_j \leq r \}$, whose cardinality is $\binom{n+r}{r}$.
Then a polynomial $f \in \R[\x]$ is written as follows:
\[\x \mapsto f(\x) =\sum_{\a \in\N^n} f_{\a} \x^\a, \]
and $f$ is identified with its vector of coefficients $\f=(f_{\a})_{\a\in\N^n}$ in the standard monomial basis $(\x^\a)_{\a\in\N^n}$.

Given a real sequence $\y =(y_{\a})_{\a \in \N^n}$, let us define the linear functional $L_\y : \R[\x] \to \R$ by $L_\y(f) \coloneqq  \sum_{\a} f_{\a} y_{\a}$, for every polynomial $f=\sum_{\a} f_{\a} \x^{\a}$.
Coming back to the previous 2-dimensional example from Chapter \ref{sec:sos}, with $f = x_1 x_2$, $g_1 = x_1 - x_1^2$ and $g_2 = x_2 - x_2^2$, we have $L_\y(f) = y_{1 1}$, $L_\y(g_1) = y_{1 0} - y_{2 0}$ and $L_\y(g_2) = y_{0 1} - y_{0 2}$.

Then, we associate to $\y$ the so-called {\it moment matrix} $\M_r(\y)$ of order $r$, that is the real symmetric matrix  with rows and columns indexed by $\N_r^n$ and the following entrywise definition:
\[
[\M_r(\y)]_{\b,\g} \coloneqq  L_\y(\x^{\b + \g}) , \quad
\forall \b, \g \in \N_r^n.
\]

Given $g \in \R[\x]$, we also associate to $\y$ and $g$
the so-called {\it localizing matrix} of order $r$, that is the real symmetric matrix $\M_r(g \,\y)$ with rows and columns indexed by $\N_r^{n}$ and the following entrywise definition:
\[
[\M_r(g \, \y)]_{\b, \g} \coloneqq  L_\y(g(\x) \, \x^{\b + \g}) , \quad
\forall \b, \g \in \N_r^n.
\]
Let $\X$ be a basic compact semialgebraic set as in \eqref{eq:defX}.
Then one can check that if $\y$ has a representing measure $\mu\in\mathscr{M}_+(\X)$ then
$\M_{r}(g_j \, \y) \succeq 0$, for all $j=0,\dots,m$.

Let us give a simple example to illustrate the construction of moment and localizing matrices.
\begin{example}
\label{ex:loc_matrix}
Let us take $n = 2$ and $r = 2$.
The moment matrix $\M_2 (\y)$ is indexed by $\N_2^2 = \{(0, 0), (1, 0), (0, 1), (2, 0), (1, 1), (0, 2) \}$ and can be written as follows:
\[
\M_2 (\y) =
\begin{bmatrix}
  1 &  \mid &   y_{1, 0} & y_{0, 1} & \mid &  y_{2, 0} & y_{1, 1} & y_{0, 2} \\
    &  -     &   -       &  -       & - &  - & - & -                                            \\
  y_{1, 0} & \mid & y_{2, 0} & y_{1, 1} & \mid &  y_{3, 0} & y_{2, 1} & y_{1, 2} \\
  y_{0, 1} & \mid & y_{1, 1} & y_{0, 2} & \mid &  y_{2, 1} & y_{1, 2} & y_{0, 3} \\
  &  -     &   -       &  -       & - &  - & - & -                                            \\
  y_{2, 0} & \mid & y_{3, 0} & y_{2, 1} & \mid &  y_{4, 0} & y_{3, 1} & y_{2, 2} \\
  y_{1, 1} & \mid & y_{2, 1} & y_{1, 2} & \mid &  y_{3, 1} & y_{2, 2} & y_{1, 3} \\
  y_{0, 2} & \mid & y_{1, 2} & y_{0, 3} & \mid &  y_{2, 2} & y_{1, 3} & y_{0, 4}
 \end{bmatrix}.
\]
Next, consider the  polynomial $g_1 (\xb) = x_1 - x_1^2$ of degree 2.
From the first-order moment matrix:
\[
\M_1(\y) =
\begin{bmatrix}
  1 &  \mid &   y_{1, 0} & y_{0, 1} \\
    &  -   &   -      &  -     \\
  y_{1, 0} & \mid & y_{2, 0} & y_{1, 1} \\
  y_{0, 1} & \mid & y_{1, 1} & y_{0, 2}
 \end{bmatrix},
\]
we obtain the following localizing matrix:
\[
\M_1(g_1 \y) =
\begin{bmatrix}
  y_{1, 0} - y_{2, 0}  &  y_{2, 0} - y_{3, 0}  & y_{1, 1} - y_{2, 1} \\
  y_{2, 0} - y_{3, 0}  & y_{3, 0} - y_{4, 0} & y_{2, 1} - y_{3, 1} \\
  y_{1, 1} - y_{2, 1}  & y_{2, 1} - y_{3, 1} & y_{1, 2} - y_{2, 2}
 \end{bmatrix}.
\]
For instance, the last entry $[\M_1(g_1\y)]_{3, 3}$ is equal to $L_{\y} (g_1 (\xb) \cdot x_2 \cdot x_2) = L_\y (x_1 x_2^2 -  x_1^2 x_2^2) = y_{1, 2} - y_{2, 2}$.

\end{example}

\section{The moment-SOS hierarchy}
\label{sec:hierarchy}
Let us consider the general \ac{POP}
\begin{align}\label{eq:pop}
\P: \quad f_{\min}=\inf_\x\,\{f(\x): \x\in\X\},
\end{align}
where $f$ is a polynomial and $\X$
is a basic closed semialgebraic set as in \eqref{eq:defX}.
It happens that this problem can be cast as an \ac{LP} over probability measures, namely,
\begin{align}\label{eq:gmppop}
f_{\text{meas}}\coloneqq  \inf_{\mu \in\mathscr{M}_+(\X)} \left\{ \int_{\X} f \,\mathrm{d}\mu : \int_\X \,\mathrm{d} \mu = 1 \right\}.
\end{align}
To see that $f_{\text{meas}} = f_{\min}$ holds, let us consider a global minimizer $\x^{\opt} \in \R^n$ of $f$ on $\X$ and consider the Dirac measure $\mu = \delta_{\x^{\opt}}$ supported on this point.
Note that this Dirac (probability) measure is feasible for the \ac{LP} stated in \eqref{eq:gmppop}, with value $\int_{\X} f \,\mathrm{d}\mu = f(\x^{\opt}) = f_{\min}$, which implies  that $\inf_{\mu \in\mathscr{M}_+(\X)} \{ \int_{\X} f \,\mathrm{d}\mu : \int_\X \mathrm{d} \mu = 1 \} \leq f_{\min} $.
For the other direction, let us consider a measure $\mu$ feasible for \ac{LP} \eqref{eq:gmppop}.
Then, simply observe that since $f(\x) \geq f_{\min}$, for all $\x \in \X$, the feasibility of $\mu$ implies that $\int_{\X} f \,\mathrm{d}\mu \geq \int_{\X} f_{\min} \,\mathrm{d}\mu = f_{\min} \int_{\X}  \,\mathrm{d}\mu = f_{\min}$.
Since it is true for any feasible solution, one has $\inf_{\mu \in\mathscr{M}_+(\X)} \{ \int_{\X} f \,\mathrm{d}\mu : \int_\X \,\mathrm{d} \mu = 1 \} \geq f_{\min} $.
Another way to state this equality is to write
\begin{align}
\label{eq:pospop}
f_{\min}=\sup_{b}\,\{b : f - b \geq 0 \text{ on } \X\},
\end{align}
which is an \ac{LP} over nonnegative polynomials, and to notice that the dual \ac{LP} of \eqref{eq:pospop} is \ac{LP}  \eqref{eq:gmppop}.
The equality then follows from the zero duality gap in infinite-dimensional \ac{LP}.

After reformulating $\P$ as \ac{LP} \eqref{eq:gmppop} over probability measures, one can then build a hierarchy of moment relaxations for the later problem.
This is done by using the fact that the condition $\mu\in\mathscr{M}_+(\X)$ can be relaxed as
$\M_{r-d_j}(g_j \, \y) \succeq 0$, for all $j=0,\dots,m$, and all $r \geq d_j = \lceil \deg (g_j) / 2 \rceil$.

%One can approximate $f_{\min}$ with a hierarchy of \ac{SDP} relaxations, for which efficient modern software are available.
Letting $r_{\min} \coloneqq  \max\,\{\lceil \deg(f) / 2 \rceil, d_1,\dots, d_m \}$, at step $r \geq r_{\min}$ of the hierarchy, one considers the following primal \ac{SDP} program:
\begin{align}\label{primalj}
\P^r: \quad
\begin{array}{rll}
f^r \coloneqq  &\displaystyle\inf_{\y}\displaystyle &L_\y(f)\\
&\rm{s.t.}&\M_{r}(\y)\succeq0\\
&&\M_{r-d_j}(g_j\,\y)\succeq0,\quad j\in[m]\\ 
&&y_{\mathbf{0}}=1
\end{array}
\end{align}
Before considering the corresponding dual \ac{SDP} program, let us remind that the moment and localizing matrices $\M_{r-d_j}(g_j\,\y)$ have entries which are linear in $\y$.
Namely, one has $\M_{r-d_j}(g_j\,\y) = \sum_{\a \in \N^n_{2r}} \Cb^j_\a \, y_\a$; the matrix $\Cb^j_\a$ has rows and columns indexed by $\N^n_{r- d_j}$ with $(\b,\g)$-entry equal to $\sum_{\b+\g+\d = \a} \, g_{j,\d}$.
In particular for $m=0$, one has $g_0 = 1$ and the matrix $\B_\a \coloneqq   \Cb^0_\a$ has $(\b,\g)$-entry equal to $1_{\b + \g = \a}$, where $1_{\a = \b}$ stands for the function which returns 1 if $\a = \b$ and 0 otherwise.
With $t_j\coloneqq \binom{n+r-d_j}{r-d_j}$, the dual of \ac{SDP} \eqref{primalj} is then the following \ac{SDP}:
\begin{equation}\label{dualj}
\begin{cases}
\sup\limits_{\G_j,b}&b\\
\rm{s.t.}&f_\a-b1_{\a=\mathbf{0}}=\displaystyle\sum_{j=0}^m\langle \Cb^j_\a,\G_j\rangle, \quad\a\in\N^n_{2r}\\
%\mbox{s.t.}
&\G_j \in \Sbb_{t_j}^+, \quad j=0,\ldots,m
\end{cases}
\end{equation}
We can rewrite the equality constraints from \ac{SDP} \eqref{dualj} in a more concise way, namely as $f - b \in \cM(\frakg)_r$.
To see this, let us first note that a sum of squares $\sigma$ of degree $2 r$ can be written as $\vb^\intercal \G  \vb$, with
\[
\vb\coloneqq  (1, x_1,\dots,x_n,x_1^2,x_1 x_2,\dots, x_1^r,\dots,x_n^r),
\]
being the vectors of all monomials of degree at most $r$, and $\G \succeq 0$.
The $\a$-coefficient of $\sigma = \vb^\intercal \G \vb$ is equal to $\langle \B_\a, \G \rangle$.
Similarly, for any $j \in [m]$ and \ac{SOS} $\sigma_j$ of degree at most $2 (r-d_j)$, one can write $\sigma_j = \vb_j^\intercal \G_j \vb_j$, with $\vb_j$ being the vector of all monomials of degree at most $r - d_j$, and $\G_j \succeq 0$.
One can also check that the $\a$-coefficient of $\sigma_j g_j$ is equal to  $\langle \Cb^j_\a, \G_j \rangle $.
Therefore, \ac{SDP} \eqref{dualj} is equivalent to the following optimization problem over \ac{SOS} polynomials:
\begin{equation}\label{dualjsos}
\begin{cases}
\sup\limits_{\sigma_j, b}&b\\
\rm{s.t.}&f-b=\displaystyle\sum_{j=0}^m \sigma_j g_j\\
&\sigma_j \in \Sigma[\x]_{r-d_j}, \quad j=0,\ldots,m
\end{cases}
\end{equation}
or more concisely as
\begin{equation}\label{dualjqm}
\sup_{b}\,\{b: \:f -\lowerb \in \cM(\frakg)_r \}.
\end{equation}
The dual \ac{SDP} \eqref{dualjqm} is obtained by replacing the nonnegativity condition $f - \lowerb \geq 0$ on $\X$ of the dual \ac{LP} \eqref{eq:pospop} by the more restrictive condition $f -\lowerb \in \cM(\frakg)_r$.
The sequences of \ac{SDP} programs \eqref{primalj} and \eqref{dualjqm} are called the \emph{moment} hierarchy and the \ac{SOS} hierarchy, respectively.
In the sequel, we refer to the sequence of primal-dual programs \eqref{primalj}--\eqref{dualjqm} as the \ac{moment-SOS} hierarchy.
\begin{theoremf}\label{th:cvgdense}
Under Assumption \ref{hyp:archimedean}, the hierarchy of primal-dual \ac{moment-SOS} relaxations \eqref{primalj}--\eqref{dualjqm} provides nondecreasing sequences of lower bounds converging to the global optimum $f_{\min}$ of $\P$ \eqref{eq:pop}.
\end{theoremf}
~\\
The above theorem provides the theoretical convergence guarantee of the \ac{moment-SOS} hierarchy.
\begin{remark}
\label{rk:equality}
Even though we only included inequality constraints in the definition of $\X$ for the sake of simplicity, equality constraints can be treated in a dedicated way.
For each equality constraint $h(\x) = 0$, with $h \in \R[\x]$, one adds the localizing constraint $\M_{r-d_h}(h \, \y) = 0$, with $d_h \coloneqq  \lceil \deg (h) / 2 \rceil $, in the primal moment program  \eqref{primalj}. 
Similarly, in the dual \ac{SOS} program \eqref{dualjsos}, one adds a term $\tau h$ to the sum $\sum_{j=0}^m \sigma_j g_j$, with $\tau \in \R[\x]_{2 r - 2 d_h}$.
\end{remark}
In practice, it is possible to obtain finite convergence of the hierarchy, which is the topic of the next section.
\section{Minimizer extraction}
\label{sec:extract}
Here we describe sufficient conditions to obtain finite convergence of the \ac{moment-SOS} hierarchy and extract the global minimizers of the polynomial $f$ on $\X$.
\begin{theoremf}
\label{th:extract}
Consider the sequence of primal moment relaxations defined in \eqref{primalj}.
If for some $r \geq r_{\min}$, \ac{SDP} \eqref{primalj} has an optimal solution $\y$ which satisfies
\begin{align}
\label{eq:rank}
\rank \M_{r'}(\y) = \rank \M_{r'-r_{\min}}(\y) \text{ for some }  r' \leq r,
\end{align}
then $f^r = f_{\min}$ and the infinite-dimensional \ac{LP} \eqref{eq:gmppop} has an optimal solution $\mu \in \mathscr{M}(\X)_+$, which is finitely supported on $t = \rank \M_{r'}(\y)$ points of $\X$, or equivalently $t$ global minimizers of $f$ on $\X$.
\end{theoremf}
If the rank stabilization (also called \emph{flatness}) condition \eqref{eq:rank} is satisfied, then finite convergence occurs, namely the \ac{SDP} relaxation \eqref{primalj} is exact with optimal value $f^r = f_{\min}$.
In addition, one can extract $\rank \M_{r'}(\y)$ global minimizers of $f$ on $\X$ with the following algorithm.

\begin{algorithm}\caption{${\tt Extract}$}\label{alg:extract}
%\caption{$\mathtt{extract}$: the extraction algorithm to compute global minimizers of polynomial optimization problems.}
\begin{algorithmic}[1]
\Require The moment matrix $\M_{r'}(\y)$ of rank $t$ satisfying the flatness condition \eqref{eq:rank}
\Ensure The $t$ points $\x(i) \in \X$, $i \in [t]$, global minimizers of Problem $\P$ \eqref{eq:pop}
\State Compute the Cholesky factorization $\Cb \Cb^\intercal = \M_{r'}(\y)$ \label{line:extractchol}
\State Reduce $\Cb$ to a column echelon form $\U$ \label{line:extractechelon}
\State Compute from $\U$ the multiplication matrices $\Nb_i$, $i\in [n]$ \label{line:extractmult}
\State Compute $\Nb \coloneqq  \sum_{i=1}^n \lambda_i \Nb_i$ with randomly generated coefficients $\lambda_i$ \label{line:extractrand}
\State Compute the Schur decomposition $\Nb = \Qb \Tb \Qb^\intercal$ \label{line:extractschur}
\State Compute the column vectors $\{\q_j\}_{1 \le j\le t}$ of $\Qb$ \label{line:extractcol}
\State Return $x_i(j) \coloneqq  \q_j^\intercal \Nb_i \q_j$, $i\in[n]$, $j\in[t]$ \label{line:extractout}
\end{algorithmic}
\end{algorithm}

\begin{proposition}\label{prop:extract}
The procedure $\mathtt{Extract}$ described in Algorithm~\ref{alg:extract} is sound and returns $t$ global optimizers of Problem $\P$ \eqref{eq:pop}.
\end{proposition}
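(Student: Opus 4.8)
The plan is to verify that the algorithm $\mathtt{Extract}$ correctly recovers the finitely many atoms of a representing measure $\mu$ whose existence is guaranteed by Theorem~\ref{th:extract}. By that theorem, the flatness condition \eqref{eq:rank} forces $f^r = f_{\min}$ and the existence of an optimal $\mu \in \mathscr{M}(\X)_+$ that is $t$-atomic, where $t = \rank \M_{r'}(\y)$. Write $\mu = \sum_{j=1}^t \lambda_j \delta_{\x(j)}$ with $\lambda_j > 0$ and distinct atoms $\x(j) \in \X$. The key algebraic object is the quotient ring $\R[\x]/I$, where $I$ is the ideal of polynomials vanishing on $\{\x(1),\dots,\x(t)\}$; this quotient has dimension exactly $t$, and the columns of the truncated moment matrix (equivalently, of a Cholesky factor $\Cb$ with $\Cb\Cb^\intercal = \M_{r'}(\y)$) span a model of it once we pass to a column echelon form $\U$ that picks out a monomial basis $\{\x^{\b_1},\dots,\x^{\b_t}\}$ of $\R[\x]/I$.

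First I would establish that reducing $\Cb$ to column echelon form $\U$ exposes a basis $\mathcal B = \{\x^{\b_1},\dots,\x^{\b_t}\}$ of the quotient, using flatness to guarantee that multiplication by each $x_i$ does not leave the span of the rows indexed up to order $r'$ — this is the standard consequence of the rank condition (a ``flat extension'' argument), which lets one read off well-defined multiplication matrices $\Nb_i$ representing the maps $[p] \mapsto [x_i \, p]$ on $\R[\x]/I$ in the basis $\mathcal B$. Second, I would invoke the fact that the atoms $\x(j)$ are precisely the common zeros of $I$, and that the multiplication operators $\Nb_1,\dots,\Nb_n$ commute and are simultaneously diagonalizable, with common eigenvectors $\q_j$ (the columns of $\Qb$ from the Schur decomposition of a generic combination $\Nb = \sum_i \lambda_i \Nb_i$; genericity of the $\lambda_i$ ensures $\Nb$ has $t$ distinct eigenvalues, hence a genuine eigenbasis and a diagonal $\Tb$). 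Third, the eigenvalue of $\Nb_i$ on $\q_j$ equals the $i$-th coordinate $x_i(j)$ of the $j$-th atom; since in the orthonormal Schur basis this eigenvalue is extracted as the Rayleigh quotient $\q_j^\intercal \Nb_i \q_j$, line~\ref{line:extractout} returns exactly $x_i(j)$. Finally, since each $\x(j)$ is an atom of a measure feasible for \eqref{eq:gmppop} with optimal value $f_{\min}$, each $\x(j)$ lies in $\X$ and is a global minimizer of $f$ on $\X$, so the output set is a set of $t$ global optimizers.

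The main obstacle I anticipate is the first step: rigorously justifying that flatness \eqref{eq:rank} implies the moment matrix comes from a $t$-atomic measure and that the column space of $\M_{r'}(\y)$, after echelon reduction, is closed under the multiplication-by-$x_i$ maps so the $\Nb_i$ are well-defined and commute. This is Curto–Fialkow-type flat-extension theory and is really the content behind Theorem~\ref{th:extract}; here it may be cleanest to cite Theorem~\ref{th:extract} for the existence of $\mu$ and then only argue that the algorithm correctly reads the atoms off the multiplication matrices — i.e., reduce the proposition to the classical eigenvalue method for solving zero-dimensional polynomial systems (Stetter–Möller), whose correctness is the diagonalization argument above. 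A secondary, minor point to check is that the randomized combination in line~\ref{line:extractrand} yields distinct eigenvalues with probability one, which holds because the atoms are distinct, so the linear forms $\lambda \mapsto \sum_i \lambda_i x_i(j)$ separate the $j$'s outside a measure-zero set of $\lambda$.
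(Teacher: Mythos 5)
Your proposal is correct and follows essentially the same route as the paper's proof: flatness yields a $t$-atomic representing measure, the Cholesky factor reduced to column echelon form exposes a generating monomial basis, the multiplication matrices $\Nb_i$ are read off, and a random combination with distinct eigenvalues (a.s.) is Schur-decomposed so that the atom coordinates are recovered as $\q_j^\intercal \Nb_i \q_j$. Your framing via the quotient ring $\R[\x]/I$ and the Stetter--M\"oller eigenvalue method simply makes explicit the algebra that the paper sketches through the relations $\vb_{r'}(\x(j)) = \U\w(\x(j))$ and $\Nb_i\w(\x) = x_i\w(\x)$, so there is no substantive difference.
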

\begin{proof}
Since the flatness condition \eqref{eq:rank} is satisfied, $\y$ is the moment sequence of a $t$-atomic Borel measure $\mu$ supported on $\X$. Namely, there are $t$ points $\x(1),\dots,\x(t) \in \X$ such that
\[
\mu = \sum_{j=1}^t \kappa_j \delta_{\x(j)}, \quad \kappa_j > 0 , \quad \sum_{j=1}^t \kappa_j = 1.
\]
By construction of the moment matrix $\M_{r'}(\y)$, one has
\[
\M_{r'}(\y) = \sum_{j=1}^r \kappa_j \vb_{r'} (\x(j)) \vb_{r'}^\intercal (\x(j)) = \V \D \V^\intercal,
\]
where the $j$-th column of $\V$ is $\vb_{r'} (\x(j))$ and $\D$ is a $t\times t$ diagonal matrix with diagonal $(\kappa_j)_{1 \le j \le t}$.
One can extract a Cholesky factor $\Cb$ as in Step \ref{line:extractchol}, for instance via singular value decomposition.
The following steps of the extraction algorithm consist of transforming $\Cb$ into $\V$ by suitable column operations.
The reduction of $\Cb$ to a column echelon form in Step \ref{line:extractechelon} is  done by Gaussian elimination with column pivoting.
By construction of the moment matrix, each row of $\U$ is indexed by a monomial $\x^{\a}$ involved in the vector $\vb_{r'}$.
Pivot elements in $\U$ correspond to monomials $\x^{\b(j)}$, $j\in[t]$ of the basis generating the $t$ solutions.
Namely, if $\w = (\x^{\b(1)}, \x^{\b(1)}, \dots, \x^{\b(t)})$ denotes this generating basis, then
\[
\vb_{r'}(\x(j)) = \U \w(\x(j)), \quad j \in [t].
\]
Overall, extracting the global minmizers boils down to solving the above systems of equations.
To solve this system, we compute at Step \ref{line:extractmult} each multiplication matrix
$\Nb_i$, $i\in[n]$, which contains the coefficients of the monomials $x_i \x^{\b(j)}$, $j\in[t]$, namely which satisfy
\[
\Nb_i \w (\x) = x_i \w (\x).
\]
The entries of the global minimizers are all eigenvalues of the multiplication matrices.
Since $\w(\x(j))$ is an eigenvector common to all multuplication matrices, one builds the random combination $\Nb$ of Step \ref{line:extractrand}, which ensures with probability 1 that its eigenvalues are all distinct with $1$-dimensional eigenspaces.
The Shur decomposition of Step \ref{line:extractschur} gives the decomposition $\Nb = \Qb \Tb \Qb^\intercal$ with an orthogonal matrix $\Qb$ and an upper triangular matrix $\Tb$ with eigenvalues of $\Nb$ sorted in increasing order along the diagonal. \qed
\end{proof}

\begin{example}
\label{ex:extract}
Consider the polynomial optimization problem $\P$ \eqref{eq:pop} with $f = -(x_1-1)^2 - (x_1-x_2)^2 - (x_2-3)^2$ and $\X = \{\x \in \R^2 : 1 - (x_1-1)^2 \geq 0, 1 - (x_1-x_2)^2 \geq 0, 1 - (x_1-3)^2 \geq 0 \}$.
The first \ac{SDP} relaxation outputs $f^1 = -3$ and $\rank M_1(\y) = 3$, while the second one outputs $f^2 = -2$ and the rank stabilizes with $\rank M_1(\y) = \rank M_2(\y) = 3$.
Therefore the flatness condition holds, which implies that $f_{\min} = f^2 = -2$.
The monomial basis is $\vb_2(\x) = (1, x_1, x_2, x_1^2, x_1 x_2, x_2^2)$.
The column echelon form $\U$ of the Cholesky factor of $\M_2(\y)$ is given by
\[\begin{bmatrix}
1 & \\
0 & 1 \\
0 & 0 & 1\\
-2 & 3 & 0\\
-4 & 2 & 2\\
-6 & 0 & 5
\end{bmatrix}.\]
Pivot entries correspond to the generating basis $\w(\x) = (1, x_1, x_2)$.
Therefore the entries of the 3 global minimizers satisfy the following system of polynomial equations:
\begin{align*}
x_1^2 &= -2 + 3 x_1\\
x_1 x_2 &= -4 + 2 x_1 + 2 x_2\\
x_2^2 &= -6 + 5 x_2.
\end{align*}
The multiplication matrices by $x_1$ and $x_2$ can be extracted from rows in $\U$ as follows:
\[
\Nb_1 = \begin{bmatrix}
0 & 1 & 0\\
-2 & 3 & 0\\
-4 & 2 & 2
\end{bmatrix}, \quad
\Nb_2 = \begin{bmatrix}
0 & 0 & 1\\
-4 & 2 & 2\\
-6 & 0 & 5
\end{bmatrix}.
\]
After selecting a random convex combination of $\Nb_1$ and $\Nb_2$ and computing the orthogonal matrix in the corresponding Schur decomposition, we obtain the 3 minimizers $\x(1) = (1, 2)$, $\x(2) = (2, 2)$ and $\x(3) = (2, 3)$.
\end{example}
\section{Notes and sources}

The representation of positive polynomials stated in Theorem \ref{th:putinar} is the well renowned Putinar's representation and is proved in \citepop{Putinar1993positive}.
Based on this representation, the convergence of the \ac{moment-SOS} hierarchy, stated in Theorem \ref{th:cvgdense}, has been proved in \citepop{Las01sos}.

The Riesz identification theorem can be found for instance in~\citepop{lieb2001analysis}.
We refer the interested reader to~\citepop[Section 21.7]{Royden} and \citepop[Chapter IV]{alexander2002course} or~\citepop[Section 5.10]{Luenberger97} for functional analysis, measure theory and applications in convex optimization.

The finite convergence of the \ac{moment-SOS} hierarchy has been proved to hold generically under Assumption~\ref{hyp:archimedean} in \citepop{nie2014optimality}.
The statement of Algorithm~\ref{alg:extract} extracting global minimizers and its correctness proof are available in \citepop{Henrion05} (combined with ideas from \citepop{lasserre2008semidefinite}).
The robustness of this algorithm has been studied in \citepop{klep2018minimizer}. It was proved that Algorithm~\ref{alg:extract} works under some generic conditions and Assumption~\ref{hyp:archimedean} in \citepop{nie2013certifying}.
An interpretation of some wrong results, due to numerical inaccuracies, already observed when solving \ac{SDP} relaxations for \ac{POP} on a double precision floating point \ac{SDP} solver is provided in \citepop{lasserre2019sdp}.

%\bibliographystylepop{alpha}
%\bibliographypop{preface,sdp,sparsemat,pop,cs,roundoff,lip,ncsparse,ts,cstssos,opf,jsr,misc,app}

% Part I
\part{Correlative sparsity}
\chapter{The moment-SOS hierarchy based on correlative sparsity}\label{chap:cs}
In this chapter, we describe how to exploit sparsity arising in the data of \ac{POP}s from the perspective of variables, which leads to the notion of \emph{\ac{CS}}.
We start to explain how to build the \ac{csp} graph in Chapter \ref{sec:cs}.
Then, we provide in Chapter \ref{sec:csmom} an infinite-dimensional \ac{LP} formulation over probability measures for \ac{POP}s, based on \ac{CS}.
This \ac{LP} program is then handled with a \ac{CS}-adapted \ac{moment-SOS} hierarchy of \ac{SDP} relaxations, stated in Chapter \ref{sec:cshierarchy}.
An alternative approach based on bounded degree \ac{SOS} is given in Chapter \ref{sec:sbsos}.
Chapter \ref{sec:csextract} focuses on minimizer extraction.
Eventually, we explain in Chapter \ref{sec:csrat} how to extend this \ac{CS} exploitation scheme to optimization over rational functions.

\section{Correlative sparsity}\label{sec:cs}
Recall that a general \ac{POP} is formulized as
\begin{align}\label{eq:popeff}
\P:\quad f_{\min}=\inf_{\x}\,\{f(\x):\x\in\X\},
\end{align}
where $\X=\{\x\in\R^n:g_1(\x)\geq0,\dots,g_m(\x)\geq0\}$.
Roughly speaking, the exploitation of \ac{CS} in the \ac{moment-SOS} hierarchy for $\P$ consists of two steps:
\begin{enumerate}[(1)]
	\item decompose the variables $\x$ into a set of cliques according to the correlations between variables emerging in the input polynomial system;
	\item construct a sparse \ac{moment-SOS} hierarchy with respect to the former decomposition of variables.
\end{enumerate}

Let us proceed with more details. Recall $d_j\coloneqq\lceil\deg(g_j)/2\rceil$ for $j\in[m]$ and $r_{\min}\coloneqq\max\,\{\lceil\deg(f)/2\rceil,d_1,\dots,d_m\}$. Fix from now on a relaxation order $r\ge r_{\min}$. Let $J'\coloneqq\{j\in[m]\mid d_j=r\}$ which is possibly nonempty only when $r=r_{\min}$.
We define the \emph{\ac{csp}} graph $G^{\rm{csp}}(V, E)$ associated to \ac{POP} \eqref{eq:popeff} whose node set is $V=\{1,2,\ldots,n\}$ and whose edge set $E$ satisfies $\{i,j\}\in E$ if one of following conditions holds:
\begin{enumerate}[(i)]
    \item there exists $\a\in\supp(f)\cup\bigcup_{j\in J'}\supp(g_j)$ such that $\{i,j\}\subseteq\supp(\a)$;
    \item there exists $k\in[m]\setminus J'$ such that $\{i,j\}\subseteq\bigcup_{\a\in\supp(g_k)}\supp(\a)$,
\end{enumerate}
where $\supp(\a)\coloneqq \{k\in[n]\mid\alpha_k\ne0\}$ for $\a=(\alpha_1,\dots,\alpha_n)\in\N^n$.
Let $(G^{\rm{csp}})'$ be a chordal extension of $G^{\rm{csp}}$\footnote{If $G^{\rm{csp}}$ is already a chordal graph, then we do not need the chordal extension.} and $\{I_k\}_{k=1}^p$ be the list of maximal cliques of $(G^{\rm{csp}})'$ with $n_k\coloneqq |I_k|$ so that the \ac{RIP} \eqref{eq:RIP} holds.
Let $\R[\x,I_k]$ denote the ring of polynomials in the $n_k$ variables $\{x_i\}_{i\in I_k}$ for $k\in[p]$.
By construction, one can decompose the objective function $f$ as $f = f_1 + \dots + f_p$ with $f_k \in \R[\x,I_k]$ for all $k\in[p]$ (similarly for $g_j$ with $j\in J'$).
We then partition the constraint polynomials $g_j, j\in[m]\setminus J'$ into groups $\{g_j\mid j\in J_k\}, k\in[p]$ which satisfy
\begin{enumerate}[(i)]
    \item $J_1,\ldots,J_p\subseteq[m]\setminus J'$ are pairwise disjoint and $\bigcup_{k=1}^pJ_k=[m]\setminus J'$;
    \item for any $k\in[p]$ and any $j\in J_k$, $\bigcup_{\a\in\supp(g_j)}\supp(\a)\subseteq I_k$,
\end{enumerate}
so that $g_j \in \R[\x,I_k]$ for all $k\in[p]$ and $j \in J_k$.
%Note also that RIP \eqref{eq:RIP} always holds when $p=2$.
%This property depends on the ordering and so, can be satisfied possibly after some relabelling of the $I_k$'s. Moreover, if not satisfied, one may enforce \eqref{eq:RIP} but at the price of enlarging some of the subsets $I_k$.
%If $I_1,\dots,I_p$ are the maximal cliques of a chordal graph then  is satisfied possibly after some reordering of the cliques.
%
%For the applications presented in this chapter, we will not necessarily have to rely on the chordal extension of the \ac{csp} graph.
%One sufficient way to ensure the convergence of the \ac{CS}-based hierarchies of \ac{SDP} relaxations is to check that \eqref{eq:RIP} holds.
In addition, suppose that Assumption~\ref{hyp:archimedean} holds. Then all variables involved in \ac{POP} \eqref{eq:popeff} are bounded.
To guarantee global convergence of the hierarchy that will be presented later, we need to add some redundant quadratic constraints to the description of the POP. We summarize all above in the following assumption.
\begin{assumption}\label{hyp:cs}
Consider \ac{POP} \eqref{eq:popeff}. The two index sets $[n]$ and $[m]$ are decomposed/partitioned into $\{I_1,\dots,I_p\}$ and $\{J', J_1,\dots,J_p\}$, respectively, such that
\begin{enumerate}[(i)]
\item The objective function $f$ can be decomposed as $f = f_1 + \dots + f_p$ with $f_k \in \R[\x,I_k]$ for $k\in[p]$ and the same goes for the constraint polynomial $g_j$ with $j\in J'$;
\item For all $k\in[p]$ and $j \in J_k$, $g_j \in \R[\x,I_k]$;
\item The \ac{RIP} \eqref{eq:RIP} holds for $I_1,\dots,I_p$ (possibly after some reordering);
\item For all $k\in[p]$, there exists $N_k>0$ such that one of the constraint polynomials is $N_k - \sum_{i \in I_k} x_i^2$.
\end{enumerate}
\end{assumption}

\begin{example}\label{ex:cs}
Consider an instance of \ac{POP} \eqref{eq:popeff} with
$f(\x) = x_2 x_5 + x_3 x_6 - x_2 x_3  - x_5 x_6
+ x_1 ( - x_1 +  x_2 +  x_3  - x_4 +  x_5 +  x_6)$ and
\[\X= \{ \x \in \R^n \, : \, g (\x) \geq 0, \text{ for all } g \in \frakg\},\]
with $\frakg = \{ (6.36 - x_1) (x_1 - 4), \dots, (6.36 - x_6) (x_6 - 4) \}$.
%Here one can choose the constant $N = 6 \cdot 6.36^2$ so that $N - \sum_{i=1}^n x_i^2 \geq 0$, for all $\x \in \X$.
%Adding this inequality constraint in the description of $\X$ would immediately ensure that Assumption~\ref{hyp:archimedean} is fulfilled.
%However, it is not required to add this constraint as the quadratic module $\cM(\frakg)$ is Archimedean. \\
%
Here, there are $n = 6$ variables and the number of constraints is $m = 6$.
%
%For $r=1$, the dense \ac{SDP} relaxation involves $\binom{n + 2 r}{2 r} = \binom{6 + 2}{2} = 28$ variables and provides a lower bound of $20.755$ for $f_{\min}$.
%The dense \ac{SDP} relaxation at $r=2$ involves $\binom{6 + 4}{4} = 210$ variables and provides a tighter lower bound of $20.8608$.
The related \ac{csp} graph $G^{\rm{csp}}$ is depicted in Figure~\ref{fig:csp_deltax}.
%\[
%\begin{bmatrix}
%  1 & 1 & 1 & 1 & 1 & 1 \\
%  1 & 1 & 1 & 0 & 1 & 0 \\
%  1 & 1 & 1 & 0 & 0 & 1 \\
%  1 & 0 & 0 & 1 & 0 & 0 \\
%  1 & 1 & 0 & 0 & 1 & 1 \\
%  1 & 0 & 1 & 0 & 1 & 1
% \end{bmatrix}.
%\]
%
\begin{figure}[!t]
\centering
\includegraphics[width=4cm]{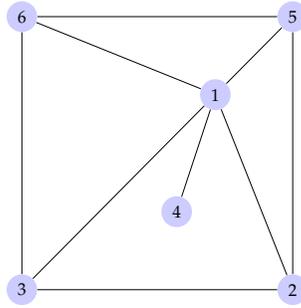}
\caption{The \ac{csp} graph for $f$ over $\X$ from Example~\ref{ex:sparsepop}.}
\label{fig:csp_deltax}
\end{figure}
After adding an edge between nodes 3 and 5, the resulting graph $(G^{\rm{csp}})'$ is chordal with maximal cliques $I_1 = \{1, 4\}$, $I_2 = \{1, 2, 3, 5\}$, $I_3 = \{1, 3, 5, 6\}$.
Here $p=3$ and one can write $f = f_1 + f_2 + f_3$ with
\begin{align*}
f_1 &= - x_1 x_4,\\
f_2 &= -x_1^2 + x_1 x_2 + x_1 x_3 - x_2 x_3 + x_2 x_5,\\
f_3 &= - x_5 x_6 + x_1 x_5 + x_1 x_6 + x_3 x_6.
\end{align*}
For the relaxation order $r=r_{\min}=1$, let $J'=[6]$ and $J_k=\emptyset$ for $k\in[3]$; for the relaxation order $r\ge2$, let $J'=\emptyset$ and $J_1=\{1,4\}$, $J_2=\{2,3,5\}$, $J_3=\{6\}$. Then Assumption \ref{hyp:cs}(i)-(ii) hold.
In addition, $I_1 \cap I_2 = \{1\} \subseteq I_3$, and so \ac{RIP} \eqref{eq:RIP}, or equivalently Assumption \ref{hyp:cs} (iii), holds.
For each $i \in [n]$, one has $6.36^2 - x_i^2 \geq 0$ for all $\x \in \X$, and so one can select $N_1 = 2\cdot 6.36^2$, $N_2 = N_3 = 4\cdot 6.36^2$ and add the redundant constraints $N_k - \sum_{i \in I_k} x_i^2 \geq 0$, $k \in [p]$ in the description of $\X$, so that Assumption \ref{hyp:cs} (iv) holds as well.
\end{example}

\section{A sparse infinite-dimensional LP formulation}
\label{sec:csmom}
In this section, we assume $J'=\emptyset$. We now introduce a \ac{CS} variant of the dense infinite-dimensional \ac{LP} \eqref{eq:gmppop} formulation over probability measures  stated in Chapter \ref{sec:hierarchy}.
The idea is to define a new measure for each subset $I_k$, $k \in [p]$, supported on a set $\X_k$ described by the constraints which only depend on the variables indexed by $I_k$, namely,
\[
\X_k \coloneqq  \{\x \in \R^{n_k} : g_j(\x) \geq 0, j \in J_k \},\text{ for } k\in[p].
\]
So $\X$ can be equivalently described as
\begin{align}
\label{eq:csX}
\X = \{\x \in \R^n : (x_i)_{i \in I_k} \in \X_k, k \in [p] \}.
\end{align}
Similarly, for all $j,k \in [p]$ such that $I_j \cap I_k \neq \emptyset$, define
\[
\X_{jk} = \X_{kj} \coloneqq  \{(x_i)_{i \in I_j \cap I_k} : (x_i)_{i \in I_j} \in \X_j , (x_i)_{i \in I_k} \in \X_k \}.
\]
Afterwards, for each $k\in[p]$ we define the projection $\pi_k: \mathscr{M}_+(\X) \to \mathscr{M}_+(\X_k)$ of the space of Borel measures supported on $\X$ on the space of Borel measures supported on $\X_k$, namely, for all $\mu \in \mathscr{M}_+(\X)$,
\[\pi_k \mu(\B) \coloneqq\mu(\{\x : \x \in \X, (x_i)_{i\in I_k} \in\B\}),\]
for each Borel set $\B \in \mathcal{B}(\X_k)$.
We define similarly the projections $\pi_{jk}$ for all $j,k \in [p]$ such that $I_j \cap I_k \neq \emptyset$.
For each $k\in[p-1]$, we also rely on the set
\[
U_k \coloneqq  \{j \in \{k+1,\dots,p \} : I_j \cap I_k \neq \emptyset\}.
\]
Then the \ac{CS} variant of \eqref{eq:gmppop} reads as follows:
\begin{align}\label{eq:csgmppop}
\begin{array}{rll}
f_{\cs}\coloneqq&\displaystyle\inf_{\mu_k}&  \,\displaystyle\sum_{k=1}^p \int_{\X_k} f_k(\x)\,\mathrm{d} \mu_k(\x) \\
&\rm{s.t.}& \pi_{jk} \mu_j = \pi_{kj} \mu_k, \quad j\in U_k, k\in [p-1]\\
&&\displaystyle\int_{\X_k}\,\mathrm{d}\mu_k(\x) = 1,\quad k \in[p]\\
&& \mu_k\in\mathscr{M}_+(\X_k), \quad k \in[p]
\end{array}
\end{align}
To prove $f_{\cs} = f_{\min}$ under Assumption \ref{hyp:cs}, we rely on the following auxiliary lemma,  illustrated in Figure \ref{diag:marginal} in the case $p = 2$.
This lemma uses the fact that one can disintegrate a probability measure on a product of Borel spaces into a marginal and a so-called \textit{stochastic} kernel.
Given two Borel
spaces $\X$, $\Zb$, a stochastic kernel $q(\mathrm{d} \x | \z)$ on $\X$ given $\Zb$ is defined by (1) $q(\mathrm{d} \x | \z) \in \mathscr{M}_+(\X)$ for all $\z \in \Zb$ and (2) the function $\z \mapsto q(\B | \z)$ is $\mathcal{B}(\Zb)$-measurable for all $\B \in \mathcal{B}(\Zb)$.
\begin{lemma}
\label{lemma:marginal}
Let $[n] = \cup_{k=1}^p I_k$ with $n_k = |I_k|$, $\X_k \subseteq \R^{n_k}$ be given compact sets, and let $\X \subseteq \R^n$ be defined as in \eqref{eq:csX}.
Let $\mu_1 \in \mathscr{M}_+(\X_1),\dots, \mu_p \in \mathscr{M}_+(\X_p)$ be measures satisfying the equality constraints of \ac{LP} \eqref{eq:csgmppop}.
If \ac{RIP} \eqref{eq:RIP} holds for $I_1,\ldots,I_p$, then there exists a probability measure $\mu \in \mathscr{M}_+(\X)$ such that
\begin{align}
\label{eq:marginal}
\pi_k \mu = \mu_k,
\end{align}
for all $k\in[p]$, that is, $\mu_k$ is the marginal of $\mu$ on $\R^{n_k}$, i.e., with respect to variables indexed by $I_k$.
\end{lemma}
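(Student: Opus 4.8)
The plan is to argue by induction on the number $p$ of cliques, peeling off the last clique $I_p$ and gluing $\mu_p$ onto a measure assembled from $\mu_1,\dots,\mu_{p-1}$ by means of a disintegration (stochastic kernel). The base case $p=1$ is immediate, taking $\mu=\mu_1$. Throughout I would write $\x_S\coloneqq(x_i)_{i\in S}$ for the sub-tuple of $\x$ indexed by a subset $S\subseteq[n]$, and $\pi_S$ for the corresponding marginalization of a measure.

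For the inductive step I would first set $J\coloneqq\bigcup_{k=1}^{p-1}I_k$ and $\X^{(p-1)}\coloneqq\{\x\in\R^{|J|}:(x_i)_{i\in I_k}\in\X_k,\ k\in[p-1]\}$. The subfamily $\mu_1,\dots,\mu_{p-1}$ still satisfies the marginal equalities of \eqref{eq:csgmppop} relative to $I_1,\dots,I_{p-1}$ (these form a subset of the original constraints), and \ac{RIP} \eqref{eq:RIP} is inherited by $I_1,\dots,I_{p-1}$; hence the induction hypothesis yields a probability measure $\nu\in\mathscr{M}_+(\X^{(p-1)})$ with $\pi_k\nu=\mu_k$ for $k\in[p-1]$. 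Applying \ac{RIP} \eqref{eq:RIP} with $k=p-1$, the overlap $S\coloneqq I_p\cap J$ is contained in $I_s$ for some $s\le p-1$, and since $I_s\subseteq J$ one also gets $I_p\cap I_s=S$.

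If $S=\emptyset$, then $I_p$ is disjoint from all earlier cliques and I would simply take $\mu\coloneqq\nu\otimes\mu_p$. Otherwise the pair $(s,p)$ occurs in the constraints of \eqref{eq:csgmppop} (as $s<p$ and $I_s\cap I_p=S\ne\emptyset$), so that $\pi_S\nu=\pi_S\mu_s=\pi_{sp}\mu_s=\pi_{ps}\mu_p=\pi_S\mu_p\eqqcolon\lambda$; that is, $\nu$ and $\mu_p$ have the same marginal $\lambda$ on the variables indexed by $S$. Since $\X_p$ is compact and $\R^{n_p}$ is Polish, I would disintegrate $\mu_p$ over $\lambda$, obtaining a stochastic kernel $q(\mathrm{d}\x_{I_p\setminus S}\mid\x_S)$ with $\mu_p(\mathrm{d}\x_{I_p})=q(\mathrm{d}\x_{I_p\setminus S}\mid\x_S)\,\lambda(\mathrm{d}\x_S)$, and then define on $\R^n=\R^{|J\cup I_p|}$
\[
\mu(\mathrm{d}\x)\coloneqq q(\mathrm{d}\x_{I_p\setminus S}\mid\x_S)\,\nu(\mathrm{d}\x_J).
\]
The verifications are then routine: $\mu$ is a probability measure because $q(\cdot\mid\x_S)$ is a probability kernel and $\nu$ is a probability measure; $\mu$ is supported on $\X$ because $\nu$-almost every point satisfies $(x_i)_{i\in I_k}\in\X_k$ for $k\le p-1$ while, for $\lambda$-almost every $\x_S$, $q(\cdot\mid\x_S)$ is supported on $\{\x_{I_p\setminus S}:(\x_S,\x_{I_p\setminus S})\in\X_p\}$, so that \eqref{eq:csX} applies; and $\pi_k\mu=\pi_k\nu=\mu_k$ for $k\le p-1$ (the $I_k$-coordinates all lie in $J$), while marginalizing $\mu$ onto the $I_p$-coordinates and using $\pi_S\nu=\lambda$ together with the fact that $q$ depends only on $\x_S$ recovers $q(\mathrm{d}\x_{I_p\setminus S}\mid\x_S)\,\lambda(\mathrm{d}\x_S)=\mu_p$, i.e.\ $\pi_p\mu=\mu_p$. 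This establishes \eqref{eq:marginal}.

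The hard part will be the careful bookkeeping of the many projections, and above all the verification that the induction hypothesis genuinely applies — that \ac{RIP} descends to $I_1,\dots,I_{p-1}$ and, crucially, that $S=I_p\cap J$ sits inside one single earlier clique $I_s$ rather than merely inside the union; this last point is exactly what \ac{RIP} \eqref{eq:RIP} provides, and it is where the running intersection hypothesis is indispensable. The disintegration step itself is standard once one observes that the $S$-marginals of $\nu$ and $\mu_p$ coincide.
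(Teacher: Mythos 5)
Your proof is correct and follows essentially the same route as the paper: induction on $p$ together with a stochastic-kernel (disintegration) gluing of measures over their common overlap marginal. The only real difference is one of completeness and symmetry: the paper writes out only the $p=2$ case, gluing with two kernels $\nu_1,\nu_2$ over the common marginal $\nu=\pi_{12}\mu_1=\pi_{21}\mu_2$ and deferring the general case to the cited reference, whereas you carry out the actual inductive step — disintegrating only $\mu_p$ against the inductively built $\nu$ and using the \ac{RIP} to place $S=I_p\cap\bigcup_{k<p}I_k$ inside a single earlier clique $I_s$ (so that the single pairwise constraint $\pi_{sp}\mu_s=\pi_{ps}\mu_p$ yields the matching marginals), which is precisely the point where the running intersection property is indispensable.
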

\begin{figure}[!t]
\centering
\begin{tikzcd}[row sep=3ex,column sep=1ex]
&  \mathscr{M}_+(\X) \arrow[ddl,"\pi_1"']  \arrow[ddr,"\pi_2"]  \\
{} \\
 \mathscr{M}_+(\X_1) \arrow[dddr, "\pi_{21}"'] & & \mathscr{M}_+(\X_2) \arrow[dddl, "\pi_{12}"]   \\
{} \\
&&  \\
&  \mathscr{M}_+(\X_{12})
\end{tikzcd}
\caption{Illustration of Lemma \ref{lemma:marginal} in the case $p=2$.}
\label{diag:marginal}
\end{figure}
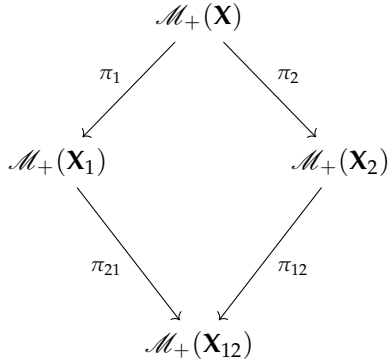
\begin{proof}
The proof boils down to constructing $\mu$ by induction on $p$.
If $p=1$ and $I_1 = [n]$, the configuration corresponds exactly to the dense \ac{LP}  \eqref{eq:gmppop} formulation from Chapter \ref{sec:hierarchy}, and one can simply take $\mu = \mu_1$.
For the sake of conciseness, we only provide a proof for the case $p=2$.
Let $I_{12} \coloneqq  I_1 \cap I_2$ with cardinality $n_{12}$.
If $I_{12} = \emptyset$, then one has $\X = \X_1 \times \X_2$ and we can simply define $\mu$ as the product measure of $\mu_1$ and $\mu_2$:
\[
\mu(\A \times \B)\coloneqq  \mu_1(\A) \times \mu_2(\B),
\]
for all $\A \in \mathcal{B}(\R^{n_1}), \B \in \mathcal{B}(\R^{n_2})$.
This product measure $\mu$ satisfies \eqref{eq:marginal}.

Next, let us focus on the hardest case where $I_{12} \neq \emptyset$.
Let $\overline \pi_k$ be the natural projection with respect to $I_k \backslash I_{12}$ and let us define the Borel set
$\Y_k \coloneqq  \{\overline \pi_k(\x) : \x \in \X_k \} \in \mathcal{B}(\R^{n_k - n_{12}})$.
It follows that $\mu_1, \mu_2$ can be seen as probability measures on the cartesian products $\Y_1 \times \X_{12} = \X_1$ and $\X_{12} \times \Y_2 = \X_2$, respectively.
Let $\nu_1$ and $\nu_2$ be the stochastic kernels of $\mu_1$ and $\mu_2$, respectively.
Since $\pi_{12} \mu_1 = \pi_{21} \mu_2 =: \nu$, one can disintegrate $\mu_1$ and $\mu_2$ as
\begin{align*}
\mu_1 (\A \times \B)& =  \int_\B \nu_1(\A | \x) \nu (\mathrm{d} \x) , \quad \forall \A \in \mathcal{B}(\Y_1), \B \in \mathcal{B}(\R^{n_{12}}),
\\
\mu_2 (\Cb \times \B)& = \int_\B \nu_2(\Cb | \x) \nu (\mathrm{d} \x) , \quad \forall \A \in \mathcal{B}(\Y_2), \B \in \mathcal{B}(\R^{n_{12}}).
\end{align*}
Then, one can define the measure $\mu \in \mathscr{M}_+(\Y_1 \times \R^{n_{12}} \times \Y_2)$ as follows:
\[
\mu(\A \times \B \times \Cb) = \int_\B \nu_1(\A | \x) \nu_2(\Cb | \x) \nu(\mathrm{d} \x),
\]
for every Borel rectangle $\A \times \B \times \Cb \in \mathcal{B}(\Y_1) \times \mathcal{B}(\X_{12}) \times \mathcal{B}(\Y_2)$.
In particular if $\A = \Y_1$, one has $\nu_1(\A | \x) = 1$ $\nu$-a.e., and $\mu(\Y_1 \times \B \times \Cb) = \int_\B  \nu_2(\Cb | \x) \nu(\mathrm{d} \x)  = \mu_2 (\B \times \Cb)$, implying that $\mu_2$ is the marginal of $\mu$ on $\X_{12} \times \Y_2 = \X_2$.
Similarly, $\mu_1$ is the marginal of $\mu$ on $\Y_1 \times \X_{12} = \X_1$, yielding the desired result.
%Eventually, one has $\mu(\Y_1 \times \B \times \Y_2) = \int_\B  \nu(d \x) = \nu(\B)$, which proves that $\nu$ is the marginal of $\mu$ on $\X_{12}$, the desired result.
\qed
\end{proof}
Now, we are ready to prove that \ac{LP} \eqref{eq:csgmppop} is not just a relaxation of the dense \ac{LP} \eqref{eq:gmppop}.
\begin{theoremf}\label{th:cspop}
Consider \ac{POP} \eqref{eq:popeff}.
If Assumption \ref{hyp:cs} holds, then $f_{\cs} = f_{\min}$.
\end{theoremf}
\begin{proof}
The first inequality $f_{\cs} \leq f_{\min}$ is easy to show: let $\ba$ be a global minimizer of $f$ on $\X$, assumed to exist thanks to the compactness hypothesis.
Let $\mu = \delta_{\ba}$ be the Dirac measure concentrated on $\ba$, and $\mu_k \coloneqq  \pi_k \mu$ be its projection on $\mathscr{M}_+(\X_k)$, for each $k\in[p]$.
Namely, $\mu_k$ is the Dirac measure concentrated on $(a_i)_{i \in I_k} \in \X_k$, and is in particular a probability measure supported on $\X_k$.
For each pair $j, k$ such that $I_j \cap I_k \neq \emptyset$, the measure $\pi_{jk} \mu_j$ is the Dirac measure concentrated on $(a_i)_{i \in I_j \cap I_k} \in \X_{jk}$, and so is $\pi_{kj} \mu_k$.
Therefore, each measure $\mu_k$ is a feasible solution of \eqref{eq:csgmppop}.
In addition, the objective value of \ac{LP} \eqref{eq:csgmppop} is equal to $\sum_{k=1}^p f_k(\ba) = f_{\min}$, which proves the first inequality.

To prove the other inequality $f_{\cs} \geq f_{\min}$, let us fix a feasible solution $(\mu_k)$ of \ac{LP} \eqref{eq:csgmppop}.
By Lemma \ref{lemma:marginal}, there exists a probability measure $\mu \in \mathscr{M}_+(\X)$ such that $\pi_k \mu = \mu_k$, for each $k \in [p]$.
Then, one has
\[
\sum_{k=1}^p \int_{\X_k} f_k\,\mathrm{d} \mu_k = \sum_{k=1}^p \int_{\X_k} f_k\,\mathrm{d} \mu = \int_{\X} \sum_{k=1}^p  f_k \,\mathrm{d} \mu = \int_{\X} f \mathrm{d} \mu \geq f_{\min}.
\]
\qed
\end{proof}
\section{The CS-adpated moment-SOS hierarchy}\label{sec:cshierarchy}
In this section, we continue assuming $J'=\emptyset$. For $k\in[p]$, a moment sequence $\y\subseteq\R$ and $g\in\R[\x,I_k]$, let $\M_r(\y, I_k)$ (resp. $\M_r(g \, \y, I_k)$)
be the moment (resp. localizing) submatrix obtained from $\M_r(\y)$ (resp. $\M_r(g \, \y)$) by retaining only those rows and columns indexed by $\b\in\N_r^n$ of $\M_r(\y)$ (resp. $\M_r(g\y)$) with
%with $\beta_i = 0$ if $i \notin I_k$.
$\supp(\beta)\subseteq I_k$.
\begin{example}\label{ex:csmoment}
Consider again Example \ref{ex:cs}. The moment matrix $\M_1(\y, I_1)$ is indexed by the support vectors  $(0, 0, 0, 0, 0, 0), (1, 0, 0, 0, 0, 0), (0, 0, 0, 1, 0, 0)$ (corresponding to the monomials $1$, $x_1$ and $x_4$, respectively) and reads as follows:
\[
\M_1(\y) =
\begin{bmatrix}
  1 &  \mid &   y_{1, 0, 0, 0, 0, 0} & y_{0, 0, 0, 1, 0, 0} \\
    &  -   &   -      &  -     \\
  y_{1, 0, 0, 0, 0, 0} & \mid & y_{2, 0, 0, 0, 0, 0} & y_{1, 0, 0, 1, 0, 0} \\
  y_{0, 0, 0, 1, 0, 0} & \mid & y_{1, 0, 0, 1, 0, 0} & y_{0, 0, 0, 2, 0, 0}
 \end{bmatrix}.
\]
\end{example}

With $r \ge r_{\min}$, the moment hierarchy based on \ac{CS} for \ac{POP} \eqref{eq:csgmppop} is defined as
\begin{equation}\label{eq:csmultimom}
\begin{cases}
\inf\limits_{\y_k} &\sum_{k=1}^p L_{\y_k}(f_k)\\
\rm{ s.t.}&\M_{r}(\y_k, I_k) \succeq 0,\quad k\in[p]\\
&\M_{r-d_j}(g_j \y_k, I_k) \succeq 0,\quad j\in J_k, k\in[p]\\
&L_{\y_k}(\x^\a) = L_{\y_j}(\x^\a),\a\in\N^n_{2r},\supp(\a) \subseteq I_k \cap I_j,  j\in U_k, k \in[p]\\
&L_{\y_k}(1)=1, \quad  k\in[p]
\end{cases}
\end{equation}
Note that \ac{SDP} \eqref{eq:csmultimom} is equivalent to the following program:
\begin{equation}\label{eq:csmom}
\P_{\cs}^r:\quad
\begin{cases}
\inf\limits_{\y} &L_{\y}(f)\\
\rm{ s.t.}&\M_{r}(\y, I_k)\succeq0,\quad k\in[p]\\
&\M_{r-d_j}(g_j\y, I_k)\succeq0,\quad j\in J_k, k\in[p]\\
&y_{\mathbf{0}}=1
\end{cases}
\end{equation}
with optimal value denoted by $f_{\cs}^{r}$.
Indeed, for any sequence $\y = (y_\a)_{\a \in \N_{2r}^n}$, one can define $\y_k \coloneqq  \{ y_\a : \a \in \N_{2r}^n , \supp(\a) \subseteq I_k \}$, for all $k \in [p]$.
One obviously has $L_{\y_k}(1)=1$, and each moment matrix $\M_{r}(\y_k, I_k)$ is equal to $\M_{r}(\y, I_k)$ (and similarly for the localizing matrices).
In addition, if $I_k \cap I_j \neq \emptyset$ and $\supp(\a) \subseteq I_k \cap I_j$, then
\begin{align*}
L_{\y_k}(\x^\a)  = \{y_{\a} : \supp(\a) \subseteq I_k \cap I_j \}  = L_{\y_j}(\x^{\a}).
\end{align*}
%
%or sometimes by $f_{\min}^{r}$, when it is clear from the context that this bound is obtained thanks to an \ac{SDP} after exploiting \ac{CS}.

Let $\Sigma[\x,I_k] \subseteq \R[\x,I_k]$ be the corresponding cone of \ac{SOS} polynomials. Then the dual of \eqref{eq:csmom} is
\begin{equation}\label{eq:cssos}
\begin{cases}
\sup\limits_{b, \sigma_{k,j}} & b  \\
\rm{ s.t.} & f - b = \sum_{k=1}^p \left(\sigma_{k,0} + \sum_{j \in J_k} \sigma_{k, j} g_j\right)\\
&\sigma_{k,0},\sigma_{k,j} \in \Sigma [\x,I_k], \quad j\in J_k, k\in[p]\\
& \deg (\sigma_{k,0} ), \deg (\sigma_{k,j} g_j) \leq 2 r, \quad j\in J_k, k\in[p]
\end{cases}
\end{equation}
In the following, we refer to \eqref{eq:csmom}--\eqref{eq:cssos} as the \ac{CSSOS} hierarchy.
To prove that the sequence $(f_{\cs}^r)_{r\ge r_{\min}}$ converges to the global optimum $f_{\min}$ of the original \ac{POP} \eqref{eq:popeff}, we rely on Lemma \ref{lemma:marginal}.
\begin{theoremf}\label{th:cscvg}
Consider \ac{POP} \eqref{eq:popeff}. If Assumption \ref{hyp:cs} holds, then the \ac{CSSOS} hierarchy \eqref{eq:csmom}--\eqref{eq:cssos} provides a nondecreasing sequence of lower bounds $(f_{\cs}^r)_{r\ge r_{\min}}$ converging to $f_{\min}$.
\end{theoremf}

\begin{remark}
Despite the convergence guarantee stated in Theorem~\ref{th:cscvg}, note that \ac{SDP}~\eqref{eq:csmom} is a relaxation of the dense \ac{SDP}~\eqref{primalj} in general, and one can have $f^r_{\cs} < f^r $ for some relaxation order $r$.
The underlying reason is that the situation here is different from the case of \ac{PSD} matrix completion (Theorem \ref{th:sparsesdpproj}).
Namely, there is no guarantee that one can obtain a \ac{PSD} matrix completion $\M_{r}(\y)$ from the submatrices $\M_{r}(\y, I_k)$, $k\in[p]$ because of the specific Hankel structure of $\M_{r}(\y)$.
At the end of Appendix \ref{sec:tssos_pop}, we provide a Julia script showing such conservatism behavior.
\end{remark}

As a corollary of Theorem \ref{th:cscvg}, we obtain the following representation result, which is a \ac{CS} version of Putinar's Positivstellensatz.
\begin{theoremf}\label{th:sparseputinar}
Let $f \in \R[\x]$ be positive on a basic compact semialgebraic set $\X$ as in \eqref{eq:defX}. Let Assumption \ref{hyp:cs} hold.
Then,
\begin{align}\label{eq:sparseputinar}
f = \sum_{k=1}^p \left(\sigma_{k,0} + \sum_{j \in J_k} \sigma_{k,j} g_j\right),
\end{align}
for some polynomials $\sigma_{k,0},\sigma_{k,j} \in \Sigma [\x,I_k]$, $j\in J_k$, $k\in[p]$.
\end{theoremf}
%Note that in the sparse Putinar's representation \eqref{eq:sparseputinar}, each \ac{SOS} polynomial $\sigma_{k,j}$ depends only on a subset of variables indexed by $I_k$, related to the correlative sparsity pattern associated to the initial \ac{POP} \eqref{eq:popeff}.
%Note also that for each $j \in [m]$, one can select a single $k$ such that $j \in J_k$, then consider a single \ac{SOS} multiplier $\sigma_{k,j} \in \Sigma[\x,I_k]$, so that the total number of \ac{SOS} multipliers is $(m+p)$.
%This is in contrast with the dense Putinar's representation \eqref{dualjsos} given in Chapter~\ref{sec:hierarchy}.

Let us compare the computational cost of the \ac{CSSOS} hierarchy \eqref{eq:cssos} with the dense hierarchy \eqref{dualjsos}.
For this, we define $\tau \coloneqq\max_{k\in[p]} |I_k| = \max_{k\in[p]} n_k$, that is, $\tau$ is the maximal size of the subsets $I_1,\dots,I_p$.
\begin{enumerate}[(1)]
\item The dense \ac{SOS} formulation \eqref{dualjsos} involves $m+1$ \ac{SOS} polynomials in $n$ variables of degree at most $2r$, yielding $m+1$ \ac{SDP} matrices of size at most $\binom{n + r}{r}$ and $\binom{n+2r}{2r}$ equality constraints.
\item The \ac{CSSOS} formulation \eqref{eq:cssos} involves $p+m$ \ac{SOS} polynomials in at most $\tau$ variables and of degree at most $2r$, yielding $p+m$ \ac{SDP} matrices of size at most $\binom{\tau + r}{r}$ and at most $p\binom{\tau + 2r}{2r}$ equality constraints.
\end{enumerate}
Overall, when $n$ is fixed and $r$ varies, the $r$-th step of the hierarchy involves $\bigo{(r^{2n})}$ equality constraints in the dense setting against $\bigo{(pr^{2\tau})}$ in the sparse setting.
This allows one to handle \ac{POP}s involving several hundred variables if the maximal subset size $\tau$ is small (say, $\tau\leq10$).
Furthermore, as shown in the following example, one can also benefit from the computational cost saving when $r$ increases for \ac{POP}s involving a small number of variables (say, $n\leq10$).
\begin{example}\label{ex:sparsepop}
Coming back to Example \ref{ex:cs}, let us compare  the hierarchy of dense relaxations given in Chapter \ref{sec:hierarchy} with the \ac{CS} variant.
For $r=1$, the dense \ac{SDP} relaxation \eqref{dualjsos} involves $\binom{n + 2 r}{2 r} = \binom{6 + 2}{2} = 28$ equality constraints and provides a lower bound of $f^1 = 20.755$ for $f_{\min}$.
The dense \ac{SDP} relaxation \eqref{dualjsos} with $r=2$ involves $\binom{6 + 4}{4} = 210$ equality constraints and provides a tighter lower bound of $f^2 = 20.8608$.
For $r=2$, the sparse \ac{SDP} relaxation \eqref{eq:cssos} involves $\binom{2 + 4}{4} + 2 \binom{4 + 4}{4} = 155$ equality constraints and provides the same bound $f^2_{\cs} = f^2 = 20.8608$.
In Appendix \ref{chap:matlab}, we provide the MATLAB script allowing one to obtain these results.
The dense \ac{SDP} relaxation with $r=3$ involves $924$ equality constraints against $448$ for the sparse variant.
This difference becomes significant while considering the polynomial time complexity of solving \ac{SDP}, already mentioned in Chapter \ref{sec:sourcesdp}.
\end{example}
\section{A variant with SOS of bounded degrees}\label{sec:sbsos}
In certain situations, alternative representations for positive polynomials can be more interesting as one can bound in advance the degree of the \ac{SOS} polynomials involved.
\begin{theoremf}\label{th:sbsos}
Let us fix an $r\in\N$ and let Assumption \ref{hyp:cs} hold with $J'=\emptyset$. Suppose that, possibly after scaling, $0 \leq g_j \leq 1$ on $\X$ for each $j\in[m]$.
If $f$ is positive on $\X$, then
\begin{align}\label{eq:sbsos}
f = \sum_{k=1}^p \left( \sigma_k + \sum_{\a,\b \in \N^{|J_k|}} c_{k,\a\b} \prod_{j\in J_k} g_j^{\alpha_j} (1-g_j)^{\beta_j} \right),
\end{align}
for some finitely many real scalars $c_{k,\a\b} \geq 0$ and \ac{SOS} polynomials $\sigma_k\in\Sigma[\x,I_k]$ with $\deg(\sigma_k)\le2r$.
\end{theoremf}
The representation from Theorem \ref{th:sbsos} is called the \emph{sparse bounded \ac{SOS} (SBSOS)} representation.
After replacing the right-hand side in the equality constraint of  \eqref{eq:cssos} by an SBSOS representation, we obtain the following SBSOS hierarchy, indexed by an integer $s\in\N^*$:
\begin{equation}\label{eq:sbsoshierarchy}
\begin{cases}
\sup\limits_{b, \sigma_{k},c_{k,\a \b}} & b  \\
\quad\rm{s.t.} & f - b = \sum_{k=1}^p \left( \sigma_k + \sum_{|\a+\b| \leq s} c_{k,\a \b} \prod_{j\in J_k} g_j^{\alpha_j} (1-g_j)^{\beta_j} \right)\\
&c_{k,\a\b}\geq0,|\a+\b|\leq s, \quad k\in[p],\a,\b\in\N^{|J_k|}\\
&\sigma_{k}\in\Sigma[\x,I_k],\deg(\sigma_k)\le2r,\quad j\in J_k, k\in[p]
\end{cases}
\end{equation}
While the degree of each $\sigma_{k}$ is a priori fixed and at most $2 r$, one can increase the degree $s$ of each term $\prod_{j\in J_k} g_j^{\alpha_j} (1-g_j)^{\beta_j}$, which boils down to multiplying the polynomials describing the constraint set $\X$.

Regarding the computational cost, \ac{SDP}~\eqref{eq:sbsoshierarchy} involves $p$ \ac{LMI} constraints (associated to the $\sigma_k$) of maximal size $ \binom{\tau+r}{r}$ (recall $\tau=\max_{k\in[p]}|I_k|$), which does not depend on $s$.
In addition, the number of coefficients $c_{k,\a\b}$ is equal to $\binom{|J_k| + s}{s}$.
Therefore, the SBSOS hierarchy offers a computational benefit when each $|J_k|$ is relatively small.

\section{Minimizer extraction}\label{sec:csextract}
As for the standard dense \ac{moment-SOS} hierarchy stated in Chapter \ref{sec:hierarchy}, one can also detect finite convergence of the \ac{CSSOS} hierarchy and extract global minimizers with a dedicated extraction algorithm --- the \ac{CS} variant of Algorithm \ref{alg:extract}.
\begin{theoremf}\label{th:csextract}
Consider \ac{POP} \eqref{eq:popeff}. Let Assumption \ref{hyp:cs} (i)--(ii) hold and let us consider the hierarchy of moment relaxations $(\P^r_{\cs})_{r \geq r_{\min}}$ defined in \eqref{eq:csmom}.
Let $a_k\coloneqq\max_{j \in J_k}\{d_j\}$ for all $k\in[p]$.
If for some $r \geq r_{\min}$, $\P^r_{\cs}$ has an optimal solution $\y$ which satisfies
\begin{align}\label{eq:csrank}
\rank \M_r(\y,I_k) = \rank \M_{r-a_k}(\y,I_k) \text{ for all } k\in[p],
\end{align}
and $\rank \M_r(\y,I_j \cap I_k) = 1$ for all pairs $(j,k)$ with $I_j \cap I_k \neq \emptyset$, then the \ac{SDP} relaxation \eqref{eq:csmom} is exact, i.e., $f_{\cs}^r = f_{\min}$.
In addition, for each $k\in[p]$, let $\Delta_k \coloneqq  \{\x(k)\} \subseteq \R^{n_k}$ be a set of solutions obtained from the extraction procedure $\mathtt{Extract}$, stated in Algorithm \ref{alg:extract} and applied to the moment matrix $\M_r(\y,I_k)$.
Then every $\x \in \R^n$ obtained by $(x_i)_{i\in I_k} = \x(k)$ for some $\x(k) \in \Delta_k$ is a global minimizer of \ac{POP} \eqref{eq:popeff}.
\end{theoremf}

Note that Assumption \ref{hyp:cs} (iii)--(iv) are not required in Theorem \ref{th:csextract}, as the rank conditions are strong enough to ensure finite convergence and extraction of a subset of global minimizers.

\section{From polynomial to rational functions}\label{sec:csrat}
Here, we consider the following optimization problem:
\begin{equation}\label{eq:sumrat}
f_{\min} \coloneqq  \inf_{\x \in \X} \sum_{i=1}^t \frac{p_i(\x)}{q_i(\x)},
\end{equation}
where $\X = \{\x \in \R^n : g_1(\x)  \geq 0, \dots, g_m(\x) \geq 0 \} $ is a compact set, all numerators and denominators are polynomials, and all denominators are positive on $\X$.

Problem \eqref{eq:sumrat} is a \emph{fractional programming} problem of a rather general form.
Here, we assume that the degree of each numerator/denominator is rather small ($\le10$), but that the number of terms $t$ can be much larger (10 to 100).
For dense data, the number of variables should also be small ($\le10$).
However, this number can be also relatively large (10 to 100) provided that the problem data exhibits \ac{CS}, as in Section \ref{sec:cs}.
%Even though problem (1) is of independent interest, our initial motivation came from some applications in computer vision, where such problems are typical.  These applications will be described elsewhere.
One naive strategy is to reduce all fractions to the same denominator and obtain a single rational fraction to minimize.
However, this approach is not adequate since the  degree  of the common denominator will be potentially large and even if $n$ is small, one might not be able to solve the first-order relaxation of the related \ac{moment-SOS} hierarchy.
A more suitable strategy for solving \eqref{eq:sumrat} is to cast it as a particular instance of the \ac{GMP}, namely the following infinite-dimensional \ac{LP} over measures:
\begin{align}\label{eq:sumratmom}
\begin{array}{rll}
f_{\text{meas}} \coloneqq  &\displaystyle\inf_{\mu_i}&  \,\displaystyle\sum_{i=1}^t \int_\X p_i(\x)\,\mathrm{d}\mu_i(\x) \\
&\,\rm{ s.t.}&\displaystyle\int_\X\x^\a q_i(\x)\,\mathrm{d} \mu_i(\x)=\displaystyle\int_\X \x^\a q_1(\x)\,\mathrm{d}\mu_1(\x),\a\in\N^n,i=2,\ldots,t\\
&&\displaystyle\int_\X q_1(\x)\,\mathrm{d}\mu_1(\x) \, = \, 1\\
&& \mu_1,\dots,\mu_t\in\mathscr{M}_+(\X)
\end{array}
\end{align}
\begin{theoremf}\label{th:sumrat}
Consider \eqref{eq:sumrat}. Let $\X$ be a compact set and assume that each $q_i$ is positive on $\X$ for all $i \in [t]$.
Then $f_{\text{meas}} = f_{\min}$.
%If the sparsity
\end{theoremf}
\begin{proof}
First, we prove that $f_{\min} \geq f_{\text{meas}}$.
Let $\ba$ be a global minimizer of $f = \sum_{i=1}^t \frac{p_i}{q_i}$ on $\X$, assumed to exist thanks to the compactness hypothesis.
For each $i \in [t]$, let $\mu_i = \frac{1}{q_i(\ba)} \delta_{\ba}$ be the Dirac measure centered at $\ba$ weighted by $\frac{1}{q_i(\ba)}$.
We then have
\[
\int_{\X} q_1(\x)\,\mathrm{d}\mu_1(\x)
= \frac{1}{q_1(\ba)} \int_{\X} q_1(\x)  \delta_{\ba}\,\mathrm{d}\x = 1,
\]
and for each $i < t$ and all $\a \in \N^n$,
\[
\int_{\X} q_i(\x) \x^\a\,\mathrm{d}\mu_i(\x) = \ba^\a,
\]
ensuring that the measures $(\mu_i)_{i\in[t]}$ are feasible for \eqref{eq:sumratmom}.
The associated optimal value is
\[
\sum_{i=1}^t \int_\X p_i(\x)\,\mathrm{d}\mu_i(\x) = \sum_{i=1}^t \frac{p_i(\ba)}{q_i(\ba)} = f(\ba) = f_{\min}.
\]
To prove the other direction, let $(\mu_i)_{i\in[t]}$ be a feasible solution of \eqref{eq:sumratmom}.
For each $i\in[t]$, let us define the measure $\nu_i$ as follows:
\[
\nu_i(\B) \coloneqq  \int_{\X\cap \B} q_i(\x)\,\mathrm{d}\mu_i(\x),
\]
for each Borel set $\B$ in the Borel $\sigma$-algebra of $\R^n$.
The support of $\nu_i$ is $\X$.
Since measures on compact sets are moment determinate (see Chapter \ref{sec:meas}), the moment equality constraints of \eqref{eq:sumratmom} imply that $\nu_i = \nu$, for each $i \in \{2,\dots, t\}$.
The constraint $\int_\X q_1\,\mathrm{d}\mu_1 = 1$ implies that $\nu_1$ is a probability measure on $\X$ (since its mass is 1).
Then one can rewrite the objective value as
\begin{align*}
	\sum_{i=1}^t \int_\X p_i\,\mathrm{d}\mu_i &= \sum_{i=1}^t \int_\X \frac{p_i}{q_i} q_i\,\mathrm{d}\mu_i
	= \sum_{i=1}^t \int_{\X} \frac{p_i}{q_i}\,\mathrm{d}\nu_1\\
	&= \int_\X f\,\mathrm{d}\nu_1 \geq \int_\X f_{\min}\,\mathrm{d}\nu_1 = f_{\min},
\end{align*}
where the inequality follows from the fact that $f \geq f_{\min}$ on $\X$.
\qed
\end{proof}
This first \ac{GMP} formulation \eqref{eq:sumratmom} allows one to handle general (possibly dense) rational programs.
When the numerators and denominators satisfy a \ac{csp} similar to the one stated in Chapter \ref{sec:cs}, one can rely on a dedicated \ac{CS} formulation.
\begin{assumption}\label{hyp:csrat}
There is a decomposition of $[n] = \cup_{i=1}^t I_i$ and a partition of $[m] = \cup_{i=1}^t J_i$ such that for each $i\in[t]$, $p_i,q_i \in \R[\x,I_i]$ (as in Assumption \ref{hyp:cs} (i) in the case of polynomials), and Assumption \ref{hyp:cs} (ii)--(iv) hold.
\end{assumption}
Then, as in Chapter \ref{sec:csmom} for each $i\in[t]$, with $n_i = |I_i|$, let us define
\[
\X_i \coloneqq  \{\x \in \R^{n_i} : g_j(\x) \geq 0, j \in J_i \},
\]
so that $\X$ can be equivalently described as
\[
\X = \{\x \in \R^n : (x_k)_{k \in I_i} \in \X_i, i \in [t] \}.
\]
Similarly, for all pairs $i,j \in [t]$ such that $I_i \cap I_j \neq \emptyset$, we define $\X_{ij}$, as well as the projection $\pi_i: \mathscr{M}_+(\X) \to \mathscr{M}_+(\X_i)$, for each $i\in[t]$, and $\pi_{ij}$ for all pairs $i,j \in [t]$ such that $I_i \cap I_j \neq \emptyset$.
Then the \ac{CS} variant of the infinite-dimensional \ac{LP} \eqref{eq:sumratmom} is given by
\begin{align}\label{eq:sparsesumratmom}
\begin{array}{rll}
f_{\cs} \coloneqq &\displaystyle\inf_{\mu_i}&  \,\displaystyle\sum_{i=1}^t \int_{\X_i} p_i(\x)\,\mathrm{d}\mu_i(\x) \\
&\,\rm{ s.t.}& \pi_{ij} (q_i\,\mathrm{d}\mu_i) = \pi_{ji} (q_j\,\mathrm{d} \mu_j), \quad j \in U_i,i \in [t-1]\\
&&\displaystyle\int_{\X_i} q_i(\x)\,\mathrm{d}\mu_i(\x) = 1,\mu_i\in\mathscr{M}_+(\X_i), \quad i \in[t]
\end{array}
\end{align}
\begin{theoremf}\label{th:csrat}
Consider \eqref{eq:sumrat}. Let $\X$ be a compact set, and assume that $q_i$ is positive on $\X$ for all $i \in [t]$.
If Assumption \ref{hyp:csrat} holds, then $f_{\cs} = f_{\min}$.
\end{theoremf}
One can then derive a \ac{CS}-adpated hierarchy of \ac{SDP} relaxations for \ac{LP} \eqref{eq:sparsesumratmom}:
\begin{equation}\label{eq:csratmom}
\P_{\cs}^r:
\begin{cases}
\displaystyle\inf_{\y_i} & \displaystyle\sum_{k=1}^t L_{\y_i}(p_i)\\
\rm{s.t.}&\M_{r}(\y_i, I_i)\succeq0 , \quad  i\in[t]\\
&\M_{r-d_j}(g_j\y_i, I_i)\succeq0,\quad j\in J_i, i\in[t]\\
& L_{\y_i}(\x^\a q_i) = L_{\y_j}(\x^\a q_j),\quad |\a| + \max\,\{\deg(q_i), \deg(q_j)\}\leq 2r\\
& \quad \quad \quad \quad \quad \quad \quad \quad \quad \quad \quad  \text{ and } \supp(\a) \subseteq I_i \cap I_j, j\in U_i, i\in[t]\\
& L_{\y_i }(q_i)=1, \quad  i\in[t]
\end{cases}
\end{equation}
where $|\a|\coloneqq \sum_{i=1}^n\alpha_i$ for $\a\in\N^n$.
\begin{example}\label{ex:rat}
From the Rosenbrock problem
\[
\inf_{\x\in\R^n} \sum_{i=1}^{n-1} \left( 100 (x_{i+1}-x_i^2)^2 + (x_i-1)^2 \right),
\]
we define the following rational optimization problem
\begin{align}\label{eq:rat_rosenbrock}
f_{\max} \coloneqq \sup_{\x\in\R^n} \sum_{i=1}^{n-1} \frac{1}{100 (x_{i+1}-x_i^2)^2 + (x_i-1)^2 }.
\end{align}
Note that Problem \eqref{eq:rat_rosenbrock} has the same critical points as the Rosenbrock problem, which yields numerical issues for local optimization solvers embedded in optimization software such as BARON or NEOS.
The global optimum $f_{\max} = n - 1$ of Problem \eqref{eq:rat_rosenbrock} is attained at $x_i = 1$, $i \in [n]$.
Each summand depends on 2 variables, so that we can define $I_i = \{i, i+1 \}$ for $i \in [n-1]$.
To bound the problem, we let $g_i=16 - x_i^2$ for $i \in [n]$.
When calling multiple times local optimization solvers with random initial guesses, we obtain local optima far away from the global optimum.
This is in deep contrast with our \ac{CS}-adpated hierarchy of \ac{SDP} \eqref{eq:csratmom}.
After selecting the minimal relaxation order $r = 1$, we obtain the global optimum for Problem \eqref{eq:rat_rosenbrock} with up to $1000$ variables.
The typical CPU time ranges from a few seconds to a few minutes on a modern standard laptop.
The BARON software (on the NEOS server) can find the global optimum in most cases when $n < 640$.
For $n \geq 640$, BARON returns wrong values (the first corresponding coordinate is equal to $-0.995$ instead of $1$).
Overall this rational problem yields numerical instabilities for such local solvers, which can be handled with the \ac{CS}-adpated \ac{SDP} relaxations.
\end{example}

\section{Notes and sources}

%For a graph, the fact that chordality is equivalent to \ac{RIP} can be deduced from Theorem 3.4 in \citecs{blair1993introduction}; see also Theorem 2.9 in \citecs{zheng2019chordal}.\\
The \ac{CSSOS} hierarchy for \ac{POP}s was first studied in \citecs{Waki06}.
Its global convergence was proved in \citecs{Las06} soon after by introducing $p$ additional quadratic constraints.
%
%Even though we assume that the subsets $I_1,\dots,I_p$ are explicitly given, one can compute such subsets using the procedure in~\citecs{Waki06}.
%
%Roughly speaking, this procedure consists of two steps.
%The first step provides the \ac{csp} graph of the variables involved in the input polynomial data.
%The second step computes the maximal cliques of a chordal extension of this \ac{csp} graph.
%Even if the computation of all maximal cliques of a graph is an NP hard problem in general, it turns out that this procedure is efficient in practice, due to the properties of chordal graphs.
%(see, e.g.,~\citencs{blair1993introduction} for more details on the properties of chordal graphs).
%
Lemma \ref{lemma:marginal} is proved in \citecs[\S~6]{Las06}.
Theorem \ref{th:sparseputinar} is stated and proved in \citecs[Corollary 3.9]{Las06}.
An alternative proof is provided in \citecs{grimm2007note}.

%This hierarchy is closely related to the work of \citecs{Waki06}, in which they were the first to exploit sparsity of data and modify (or simplify) in an appropriate way the original \ac{SDP}-relaxations defined in \citecs{Las01sos}.
%The sparse relaxations \eqref{eq:csmom}--\eqref{eq:cssos} are very close to those defined in \citecs{Waki06}, but handle $p$ additional quadratic constraints.
%These $p$ additional constraints together with the \ac{RIP} condition \eqref{eq:RIP}, are crucial to provide convergence guarantees.

The alternative SBSOS representation stated in Theorem \ref{th:sbsos} is provided in \citecs{weisser2018sparse}.
We refer to this former paper for more details on properties of the related \ac{SOS} hierarchy.

Theorem \ref{th:csextract} is stated in \citecs[Theorem 3.7]{Las06} and proved in \citecs[\S~4.2]{Las06}.

The results from Chapter \ref{sec:csrat} are mostly stated in \citecs{bugarin2016minimizing}.
The proof of Theorem \ref{th:csrat}, very similar to the one of Theorem \ref{th:cspop}, can be found in \citecs[\S~3.1]{bugarin2016minimizing}.
Example \ref{ex:rat} is provided in \citecs[\S~4.4.2]{bugarin2016minimizing}.
% The generalized Rosenbrock function appears for instance in ~\citencsparse{nash1984newton}.
The interested reader can find detailed illustrations of the \ac{CSSOS} hierarchy for rational programming in \citecs[\S~4]{bugarin2016minimizing}, together with comparison with local optimization solvers such as the BARON software \citecs{tawarmalani2005polyhedral}, publicly available on the NEOS server \citecs{czyzyk1998neos}.

%In the context of computer vision, many rational estimation problems (e.g., \citecs{kahl2008practical,hartley2008verifying}) admits a \ac{csp}, where they are formulated as least squares problems. %where each residual  is  the  difference  between  an  observed  parameter and  a  rational  recalibrated to reflect the observed behavior.
%An extension to sparse positive definite forms has been developed in \citecs{mai2020sparse}.
%Certifiably robust geometric perception with outliers has been considered in \citecs{yang2020one}.

For the more advanced problem of set approximation, in particular in the context of dynamical systems, exploiting \ac{CS} is more delicate as the set of trajectories of a system with sparse dynamics is not necessarily sparse.
Recent research efforts have been pursued for volume approximation \citecs{tacchi2021exploiting} and region of attraction \citecs{tacchi2020approximating,schlosser2020sparse}.

%\bibliographystylecs{alpha}
%\bibliographycs{preface,sdp,sparsemat,pop,cs,roundoff,lip,ncsparse,ts,cstssos,opf,jsr,misc,app}

\chapter{Application in computer arithmetic}\label{chap:roundoff}

In this chapter, we describe an optimization framework to provide upper bounds on absolute roundoff errors of floating-point nonlinear programs, involving polynomials. 
The efficiency of this framework is based on the \ac{CSSOS} hierarchy which exploits \ac{CS} of the input polynomials, as described in Chapter \ref{chap:cs}.

%\section{Bounding roundoff errors}
%
Constructing numerical programs which perform accurate computation turns out to be difficult, due to finite numerical precision of implementations such as floating-point or fixed-point representations. Finite-precision numbers induce roundoff errors,  and knowledge of the range of these roundoff errors is required to fulfill safety criteria of critical programs, as typically arising in modern embedded systems such as aircraft controllers. Such knowledge can be used in general for developing accurate numerical software, but is also particularly relevant when considering migration of algorithms onto hardware (e.g., FPGAs). The advantage of architectures based on FPGAs is that they allow more flexible choices in number representations, rather than limiting the choice between  IEEE standard single or double precision. Indeed, in this case, we benefit from a more flexible number representation while still ensuring guaranteed bounds on the program output.

Our method to bound the error is a decision procedure based on a specialized variant of the Lasserre hierarchy~\citeroundoff{Las06}, outlined in Chapter \ref{chap:cs}. 
The procedure relies on \ac{SDP} to provide sparse \ac{SOS} decompositions of nonnegative polynomials. 
Our framework handles polynomial program analysis (involving the operations $+,\times,-$) as well as extensions to the more general class of semialgebraic and transcendental programs (involving $\sqrt{\,\,\,}, /, \min, \max, \arctan, \exp$), following the approximation scheme described in~\citeroundoff{Magron15sdp}.
For the sake of conciseness, we focus in this book on polynomial programs only. 
The interested reader can find more details in the related publication \citeroundoff{toms17}.

\section{Polynomial programs}
Here we consider a given program that implements a polynomial expression $f$ with input variables $\x$ taking values in a region $\X$. 
We assume that  $\X$ is included in a box (i.e., a product of closed intervals) and that $\X$ is encoded as in \eqref{eq:defX}: 
\[ 
\X \coloneqq  \{\, \x \in \R^n \, : \, g_1 (\x) \geq 0, \dots, g_{l} (\x) \geq 0 \,\} ,
\]
for polynomial functions $g_1, \dots, g_l$. 

The type of numerical constants is denoted by $\mathtt{C}$. 
In our current implementation, the user can choose either 64 bit floating-point or arbitrary-size rational numbers.
The inductive type of polynomial expressions $f,g_1,\dots,g_l$ with coefficients in $\mathtt{C}$ is $\mathtt{pExprC}$ defined as follows:\\
~\\
$\mathtt{type \, pexprC = }$\\
$\mathtt{\, \, \, Pc \, of \, C}$\\
$\mathtt{| \, Px \, of \, positive}$\\
$\mathtt{| \, Psub \, of \, pexprC *pexprC \, \, 
| \, \, Pneg \, of \, pexprC}$\\
$\mathtt{| \, Padd \, of \, pexprC * pexprC}$\\
$\mathtt{| \, Pmul \, of \, pexprC * pexprC}$\\
~\\
The constructor $\mathtt{Px}$ takes a positive integer as argument to represent either an input or local variable.
One obtains rounded expressions using a recursive procedure $\mathtt{round}$.
We adopt the standard practice~\citeroundoff{higham2002accuracy} to approximate a real number $x$ with its closest floating-point representation $\hat{x} = x (1 + e)$, with $|e|$ is less than the machine precision $\epsilon$. 
In the sequel, we neglect both overflow and denormal range values.
% Revised The validity of this model is ensured as we neglect both overflow and denormal range values.
The operator $\hat{\cdot}$ is called the rounding operator and can be selected among rounding to nearest, rounding toward zero (resp.~$\pm\infty$). In the sequel, we assume rounding to nearest.
The scientific notation of a binary (resp.~decimal) floating-point number $\hat{x}$ is a triple $(\text{s}, \text{sig}, \exp)$ consisting of a sign bit $\text{s}$, a {\em significand} $\text{sig} \in [1, 2)$ (resp.~$[1, 10)$) and an {\em exponent} $\exp$, yielding numerical evaluation $(-1)^{\text{s}} \, \text{sig} \, 2^{\exp}$ (resp.~$(-1)^{\text{s}} \, \text{sig} \, 10^{\exp}$). 

The upper bound on the relative floating-point error is given by  $\epsilon = 2^{-\precrnd}$, where $\precrnd$ is called the {\em precision}, referring to the number of significand bits used. For single precision floating-point, one has $\precrnd = 24$. For double (resp.~quadruple) precision, one has $\precrnd = 53$ (resp.~$\precrnd=113$). Let $\F$ be the set of binary floating-point numbers.

For each real-valued operation $\bop_\R \in \{+, -, \times \}$, the result of the corresponding floating-point operation $\bop_\F \in \{\oplus, \ominus, \otimes\}$ satisfies the following when complying with IEEE 754 standard arithmetic~\citeroundoff{IEEE} (without overflow, underflow and denormal occurrences):
\begin{equation}\label{eq:roundbop}
\bop_\F(\hat{x}, \hat{y}) = \bop_\R (\hat{x}, \hat{y})(1 + e), \quad |e| \leq \epsilon = 2^{-\precrnd}.
\end{equation}

Then, we denote by $\hat{f}(\x,\e)$ the rounded expression of $f$ after applying the $\mathtt{round}$ procedure, introducing additional error variables $\e$. 

\section{Upper bounds on roundoff errors}

The algorithm $\mathtt{roundoff\_bound}$, depicted in Algorithm \ref{alg:bound}, takes as input $\x$, $\X$, $f$, $\hat{f}$, $\e$ as well as the set $\E$ of bound constraints over $\e$. 
%Here we assume that our program implementing $f$ does not involve conditional statements (this case will be discussed later in Section~\ref{sec:nonpolsdp}). 
For a given machine $\epsilon$, one has $\E \coloneqq  [-\epsilon, \epsilon]^m$, with $m$ being the number of error variables. 
This algorithm actually relies on the \ac{CSSOS} hierarchy from Chapter \ref{chap:cs}, thus $\mathtt{roundoff\_bound}$ also takes as input a relaxation order $r \in \N^*$. The algorithm provides as output an interval enclosure of the error $ \hat{f}(\x,\e) - f(\x)$ over $\K\coloneqq  \X \times \E$. 
From this interval $[f_{\min}^r, f_{\max}^r]$, one can compute $|f|_{\max}^r \coloneqq  \max\,\{- f_{\min}^r, f_{\max}^r \}$, which is a sound upper bound of the maximal absolute error $|\Delta|_{\max} \coloneqq  \max_{(\x,\e)\in \K} \mid \hat{f}(\x,\e) - f(\x) \mid $. 

\begin{algorithm}\caption{$\mathtt{roundoff\_bound}$}\label{alg:bound}
\begin{algorithmic}[1]
\Require 
input variables $\x$, input constraints $\X$, nonlinear expression $f$, rounded expression $\hat{f}$, error variables $\e$, error constraints $\E$, relaxation order $r$
\Ensure 
interval enclosure of the error $\hat{f} - f$ over $\K \coloneqq  \X \times \E$
\State Define the absolute error $\Delta(\x, \e) \coloneqq  \hat{f}(\x,\e) - f(\x)$ \label{line:r}

\State Compute $\ell(\x,\e) \coloneqq  \Delta(\x, \mathbf{0}) + \sum_{j=1}^m \frac{\partial \Delta(\x,\e)} {\partial e_j} (\x,\mathbf{0}) \, e_j$ \label{line:l}

\State Define $h \coloneqq  \Delta - \ell$ \label{line:h}

\State  $[\underline h, \overline h] \coloneqq  \iaboundfun{h}{\K}$ \label{line:iabound} \Comment{Compute bounds for $h$}
\State $[\ell_{\min}^r, \ell_{\max}^r] \coloneqq  \sdpboundfun{\ell}{\K}{r}$  \label{line:sdpbound} \Comment{Compute bounds for $\ell$}
\State \Return $[\ell_{\min}^r + \underline h,  \ell_{\max}^r + \overline h]$ 
\end{algorithmic}
\end{algorithm}

After defining the absolute roundoff error $\Delta \coloneqq  \hat{f} - f$ (Step~\ref{line:r}), one decomposes $\Delta$ as the sum of an expression $\ell$ which is affine with resepct to the error variable $\e$ and a remainder $h$. One way to obtain $\ell$ is to compute the vector of partial derivatives of $\Delta$ with resepct to $\e$ evaluated at $(\x, \mathbf{0})$ and finally to take the inner product of this vector and $\e$ (Step~\ref{line:l}). 
%The polynomial $h$ (Line~\lineref{line:h}) is obtained by computing the normalized form of $r - l$ (with type \code{polC} mentioned in Section~\ref{sec:coqbackground}).
Then, the idea is to compute a precise bound of $\ell$ and a coarse bound of $h$. 
The underlying reason is that $h$ involves error term products of degree greater than 2 (e.g.~$e_1 e_2$), yielding an interval enclosure of \textit{a priori} much smaller width, compared to the interval enclosure of $\ell$. 
One obtains the interval enclosure of $h$ using the procedure $\iabound$ implementing basic interval arithmetic (Step~\ref{line:iabound}) to bound the remainder of the multivariate Taylor expansion of $\Delta$ with resepct to $\e$, expressed as a combination of the second-error derivatives (similar as in~\citeroundoff{fptaylor15}).
The algorithm $\mathtt{roundoff\_bound}$ is very similar to the algorithm of $\fptaylor$~\citeroundoff{fptaylor15}, except that \ac{SDP} based techniques are used instead of the global optimization procedure from~\citeroundoff{fptaylor15}. Note that overflow and denormal are neglected here but one could handle them, as in~\citeroundoff{fptaylor15}, by adding additional error variables and discarding the related terms using naive interval arithmetic.

%We first describe our $\sdpbound$ optimization algorithm when implementing polynomial programs. %In this case, $\sdpbound$ calls an auxiliary procedure $\sdppoly$.
The bounds of $\ell$ are provided through the $\sdpbound$ procedure, which solves two instances of \eqref{eq:cssos}, at relaxation order $r$. We now give more explanation about this procedure.
We can map each input variable $x_i$ to the integer $i$, for all $i\in [n]$, as well as each error variable $e_j$ to $n+j$, for all $j\in [m]$. Then, define the sets $I_1 \coloneqq  [n] \cup \{n+1\}, \dots, I_m \coloneqq  [n] \cup \{n+m\}$.
Here, we take advantage of the \ac{csp} of $\ell$ by using $m$ distinct sets of cardinality $n+1$ rather than a single one of cardinality $n+m$, i.e., the total number of variables. 
Note that these subsets satisfy \eqref{eq:RIP} and one can write $\ell(\x,\e) = \Delta(\x, \mathbf{0}) + \sum_{j=1}^m \frac{\partial \Delta(\x,\e)} {\partial e_j}(\x, \mathbf{0}) \, e_j$. 
After noticing that $\Delta(\x,\mathbf{0}) = \hat{f}(\x,\mathbf{0}) - f(\x) = 0$, one can scale the optimization problems by writing 
\begin{align}\label{eq:lscale}
\ell(\x,\e) = \sum_{j=1}^m s_j (\x) e_j = \epsilon \sum_{j=1}^m s_j (\x) \frac{e_j}{\epsilon},
\end{align}
with $s_j(\x) \coloneqq  \frac{\partial \Delta(\x,\e)} {\partial e_j} (\x,\mathbf{0})$, for all $j\in[m]$. Replacing $\e$ by $\e/\epsilon$ leads to computing an interval enclosure of $\ell/\epsilon$ over $\K' \coloneqq  \X \times [-1, 1]^m$.
As usual from Assumption \ref{hyp:archimedean}, there exists an integer $N > 0$ such that $N - \sum_{i=1}^n x_i^2 \geq 0$, as the input variables satisfy box constraints.
Moreover, to fulfill Assumption~\ref{hyp:cs},  one encodes $\K'$ as follows: 
\begin{align*}
\K' \coloneqq  \{(\x,\e) \in \R^{n+m} \, : \, g_1 (\x) \geq 0, \dots, g_l(\x) \geq 0, \\
g_{l+1}(\x,e_1) \geq 0, \dots, g_{l+m} (\x, e_m) \geq 0\},
\end{align*}
with $g_{l+j}(\x, e_j) \coloneqq  N + 1 -  \sum_{i=1}^n x_i^2 - e_j^2$, for all $j\in[m]$. 
The index set of variables involved in $g_j$ is $[n]$ for all $j \in [l]$. 
The index set of variables involved in $g_{l+j}$ is $I_j$ for all $j\in[m]$. 

Then, one can compute a lower bound of the minimum of $\ell'(\x,\e) \coloneqq  \ell(\x, \e) / \epsilon = \sum_{j=1}^m s_j (\x) e_j$ over $\K'$ by solving the following \ac{CSSOS} problem:
\begin{align}\label{eq:lscalesdp1}	
\begin{array}{rll}
\ell'^{r}_{\min} \coloneqq  &\sup\limits_{b, \sigma_j} & b\\	 
&\,\rm{s.t.} & \ell' - b = \sigma_0 + \sum_{j = 1}^{l+m} \sigma_j g_j\\ 
&& \sigma_0 \in \sum_{j = 1}^m \Sigma [(\x, \e), I_j]\\
&& \sigma_j \in \Sigma[(\x,\e), J_j] , \quad j \in [l+m]\\
&& \deg (\sigma_j g_j) \leq 2 r, \quad j = 0,\dots,l+m
\end{array} 
\end{align}
A feasible solution of Problem~\eqref{eq:lscalesdp1} ensures the existence of $\sigma^1 \in \Sigma[(\x,e_1)]$, $\dots$, $\sigma^m \in \Sigma[(\x,e_m)]$ such that $\sigma_0 = \sum_{j=0}^m \sigma^j$, allowing the following reformulation:
\begin{align}
\begin{split}
\label{eq:lscalesdp2}			
\begin{array}{rll}
\ell'^{r}_{\min}= &\sup\limits_{b, \sigma^j, \sigma_j} & b \\	
&\,\,\rm{s.t.} & \ell' - b = \sum_{j=1}^m \sigma^j + \sum_{j = 1}^{l+m} \sigma_j g_j\\
&& \sigma_j \in \Sigma[\x] , \quad j \in[m]\\
&& \sigma^j  \in \Sigma [(\x, e_j)],  \deg (\sigma^j) \leq 2 r, \quad j \in [m]\\
&&\deg (\sigma_j g_j) \leq 2 r  , \quad j \in[l+m]
\end{array} 
\end{split}
\end{align}
An upper bound $\ell'^{r}_{\max}$ can be obtained by replacing $\sup$ with $\inf$ and $\ell' - b$ by $b - \ell'$ in Problem~\eqref{eq:lscalesdp2}.
Our optimization procedure $\sdppoly$ computes the lower bound $\ell'^{r}_{\min}$ as well as an upper bound $\ell'^{r}_{\max}$ of $\ell'$ over $\K'$, and then returns the interval $[\epsilon \, \ell'^{r}_{\min}, \epsilon \, \ell'^{r}_{\max}] $, which is a sound enclosure of the values of $\ell$ over $\K$.
%
%\begin{remark}

We emphasize two advantages of the decomposition $\Delta = \ell + h$ and more precisely of the linear dependency of $\ell$ with resepct to $\e$: scalability and robustness to \ac{SDP} numerical issues.
First, no computation is required to determine the \ac{csp} of $\ell$, by comparison to the general case. Thus, it becomes much easier to handle the optimization of $\ell$ with the sparse \ac{SDP} \eqref{eq:lscalesdp2} rather than with the corresponding instance of the dense relaxation~$(\P^r)$, given in \eqref{primalj}. While the latter involves $\binom{n+m+2r}{2r}$ \ac{SDP} variables, the former involves only $m \, \binom{n+1+2r}{2r}$ \ac{SDP} variables, ensuring the scalability of our framework.
In addition, the linear dependency of $\ell$ with resepct to $\e$ allows us to scale the error variables and optimize over a set of variables lying in $\K' \coloneqq  \X \times [-1, 1]^m$. It ensures that the range of input variables does not significantly differ from the range of error variables. This condition is mandatory in considering \ac{SDP} relaxations because most \ac{SDP} solvers (e.g., {\sc Mosek}~\citeroundoff{mosek}) are implemented using double precision floating-point. It is impossible to optimize $\ell$ over $\K$ (rather than $\ell'$ over $\K'$) when the maximal value $\epsilon$ of error variables is less than $2^{-53}$, due to the fact that \ac{SDP} solvers would treat each error variable term as 0, and consequently $\ell$ as the zero polynomial. Thus, this decomposition insures our framework against numerical issues related to finite-precision implementation of \ac{SDP} solvers.
%\end{remark}
%

Let us consider the interval $ [\ell_{\min}, \ell_{\max}]$, with $\ell_{\min} \coloneqq  \inf_{(\x,\e) \in \K} \ell(\x,\e)$ and $\ell_{\max} \coloneqq  \sup_{(\x,\e) \in \K} \ell(\x,\e)$.
The next lemma states that one can approximate this interval as closely as desired using the $\sdppoly$ procedure.
\begin{lemma}[Convergence of the $\sdppoly$ procedure]
\label{th:cvg_sdppoly}
Let $[\ell_{\min}^r, \ell_{\max}^r]$ be the interval enclosure returned by the procedure $\sdppolyfun{\ell}{\K}{r}$. Then the sequence $([\ell_{\min}^r, \ell_{\max}^r])_{r \in \N}$ converges to $[\ell_{\min}, \ell_{\max}]$ when $r$ goes to infinity.
\end{lemma}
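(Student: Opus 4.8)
The plan is to reduce the convergence statement to the global convergence of the \ac{CSSOS} hierarchy (Theorem~\ref{th:cscvg}), suitably applied to the two auxiliary \ac{POP}s whose optimal values are $\ell_{\min}$ and $\ell_{\max}$. First I would observe that $\ell(\x,\e) = \sum_{j=1}^m s_j(\x) e_j$ is a polynomial on $\K = \X \times \E$, with $\E = [-\epsilon,\epsilon]^m$, and that by the scaling $\e \mapsto \e/\epsilon$ described around \eqref{eq:lscale}, minimizing $\ell$ over $\K$ is equivalent (up to the factor $\epsilon$) to minimizing $\ell'(\x,\e) = \sum_{j=1}^m s_j(\x) e_j$ over $\K' = \X \times [-1,1]^m$. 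So it suffices to show that the sequences $(\ell'^{r}_{\min})_r$ and $(\ell'^{r}_{\max})_r$ produced by solving \eqref{eq:lscalesdp2} (equivalently \eqref{eq:lscalesdp1}) converge to $\ell'_{\min} \coloneqq \inf_{\K'} \ell'$ and $\ell'_{\max} \coloneqq \sup_{\K'} \ell'$ respectively.

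Next I would check that $\K'$, with the description involving $g_1,\dots,g_l$ together with the redundant constraints $g_{l+j}(\x,e_j) = N+1 - \sum_{i=1}^n x_i^2 - e_j^2 \ge 0$, together with the clique decomposition $I_j = [n]\cup\{n+j\}$ and the partition of constraint indices ($J_j = [l]$ for the $\x$-only constraints, $\{l+j\}$ attached to clique $I_j$), satisfies Assumption~\ref{hyp:cs}. Conditions (i)--(ii) hold because each $s_j(\x)e_j$ lies in $\R[\x,I_j]$ and each $g_{l+j}$ depends only on variables indexed by $I_j$; the \ac{RIP} (iii) holds for the $I_j$ since their pairwise intersections are all contained in $[n] \subseteq I_1$ — more precisely $I_j \cap I_k = [n] \cap [n] = [n]$ for $j\ne k$, wait, $[n]\subsetneq I_j$, so $I_j\cap I_k=[n]$ which is not itself one of the cliques; however one includes $[n]$ as an additional clique or notes the running intersection is absorbed, as in the standard roundoff setup (this point needs the careful bookkeeping from Chapter~\ref{chap:cs}); and condition (iv) is exactly why the constraint $g_{l+j}$ was built to contain $N+1-\sum_i x_i^2 - e_j^2$. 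Once Assumption~\ref{hyp:cs} is verified, Theorem~\ref{th:cscvg} applies directly to the \ac{POP} $\inf_{\K'}\ell'$, giving a nondecreasing sequence of lower bounds $(\ell'^{r}_{\min})_r$ with $\ell'^{r}_{\min} \to \ell'_{\min}$. Applying the same argument to $\inf_{\K'}(-\ell') = -\sup_{\K'}\ell'$ gives $\ell'^{r}_{\max} \to \ell'_{\max}$ (monotonically nonincreasing). Multiplying through by $\epsilon > 0$ and using $\ell_{\min} = \epsilon\,\ell'_{\min}$, $\ell_{\max} = \epsilon\,\ell'_{\max}$, $\ell^{r}_{\min} = \epsilon\,\ell'^{r}_{\min}$, $\ell^{r}_{\max} = \epsilon\,\ell'^{r}_{\max}$ yields the claimed convergence $[\ell_{\min}^r,\ell_{\max}^r] \to [\ell_{\min},\ell_{\max}]$.

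The main obstacle is the reconciliation of the two slightly different \ac{SOS} formulations: \eqref{eq:lscalesdp1} writes a single block $\sigma_0 \in \sum_{j=1}^m \Sigma[(\x,\e),I_j]$, whereas the generic \ac{CSSOS} program \eqref{eq:cssos} of Theorem~\ref{th:cscvg} expects one \ac{SOS} multiplier $\sigma_{k,0} \in \Sigma[\x,I_k]$ per clique. I would need to argue that these are equivalent, i.e. that the $\sigma_0$ block decomposes as $\sum_{j=1}^m \sigma^j$ with $\sigma^j \in \Sigma[(\x,e_j)]$ — precisely the reformulation step leading to \eqref{eq:lscalesdp2} in the text — and that this does not lose any feasible solutions as $r\to\infty$, so the optimal value of \eqref{eq:lscalesdp1} equals that of the clique-wise \ac{CSSOS} program at each order $r$ (or at least that the limits coincide). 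Modulo this identification, the convergence is an immediate corollary of Theorem~\ref{th:cscvg}; the rest is the routine $\epsilon$-rescaling bookkeeping, which I would not spell out in detail.
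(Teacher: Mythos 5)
Your proposal is correct and takes essentially the same route as the paper: the paper's proof consists precisely of checking that the assumptions of the sparse representation result (Theorem~\ref{th:sparseputinar}, itself a corollary of Theorem~\ref{th:cscvg}) are fulfilled for the roundoff problem on $\K'$ with the cliques $I_j=[n]\cup\{n+j\}$ and the added ball constraints $g_{l+j}$, which yields asymptotic convergence of the sparse bounds and hence of the rescaled enclosure. Your hesitation about the \ac{RIP} is unnecessary: condition \eqref{eq:RIP} only asks that each running intersection be \emph{contained} in some earlier clique (not that it be a clique itself), and here every such intersection equals $[n]\subseteq I_i$ for any $i$, so the condition holds trivially.
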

The proof of Lemma~\ref{th:cvg_sdppoly} is based on the fact that the assumptions of Theorem \ref{th:sparseputinar} are fulfilled for our specific roundoff error problem.
This result guarantees asymptotic convergence to the exact enclosure of $\ell$ when the relaxation order $r$ tends to infinity.
However, it is more reasonable in practice to keep this order as small as possible to obtain tractable \ac{SDP} relaxations. Hence, we generically solve each instance of Problem~\eqref{eq:lscalesdp2} at the minimal relaxation order, that is $r_{\min} = \max\,\{\lceil\deg\ell/2\rceil, \lceil\deg(g_j)/2\rceil,j=1,\ldots,l+m\}$. 
Afterwards, we can rely on the $\coq$ computer assistant to obtain formally certified upper bounds for the roundoff error; see \citeroundoff[\S~2.3]{toms17} for more details.

\section{Overview of numerical experiments}
We present an overview of our method and of the capabilities of related techniques, using an example.
Consider a program implementing the following polynomial expression $f$:
\begin{align*}
f(\x) \coloneqq \,&x_2 \times x_5 + x_3 \times x_6 - x_2 \times x_3  - x_5 \times x_6 \\
&+ x_1 \times ( - x_1 +  x_2 +  x_3  - x_4 +  x_5 +  x_6),
\end{align*}
where the six-variable vector $\x \coloneqq   (x_1, x_2, x_3, x_4, x_5, x_6)$ is the input of the program. Here the set $\X$ of possible input values is a product of closed intervals: $\X = [4.00, 6.36]^6$.
This function $f$ together with the set $\X$ appear in many inequalities arising from the the proof of the Kepler Conjecture~\citeroundoff{Flyspeck06}, yielding challenging global optimization problems.

The polynomial expression $f$ is obtained by performing 15 basic operations (1 negation, 3 subtractions, 6 additions and 5 multiplications). 
When executing this program with a set of floating-point numbers $\hat{\x} \coloneqq   (\hat{x}_1, \hat{x}_2, \hat{x}_3, \hat{x}_4, \hat{x}_5, \hat{x}_6) \in \X$, one actually computes a floating-point result $\hat{f}$, where all operations $+, -, \times$ are replaced by the respectively associated floating-point operations $\oplus, \ominus, \otimes$.
The results of these operations comply with IEEE 754 standard arithmetic~\citeroundoff{IEEE}. 
Here, for the sake of clarity, we do not consider real input variables.
For instance, (in the absence of underflow) one can write $\hat{x}_2 \otimes \hat{x}_5 =  (x_2 \times x_5) (1 + e_1)$, by introducing an error variable $e_1$ such that $-\epsilon \leq e_1 \leq \epsilon$, where the bound $\epsilon$ is the machine precision (e.g., $\epsilon = 2^{-24}$ for single precision). One would like to bound the absolute roundoff error $|\Delta(\x, \e)| \coloneqq  | \hat{f}(\x, \e) - f (\x) |$ over  all possible input variables $\x \in \X$ and error variable  $e_1, \dots, e_{15} \in [-\epsilon, \epsilon]$. Let us define $\E \coloneqq  [-\epsilon, \epsilon]^{15}$ and $\K \coloneqq  \X \times \E$. Then our bound problem can be cast as finding the maximum $|\Delta|_{\max}$ of $|\Delta |$ over $\K$, yielding the following nonlinear optimization problem:
\begin{align}
\begin{split}
\label{eq:roptim}
|\Delta|_{\max} \coloneqq  & \max_{(\x, \e) \in \K} | \Delta(\x, \e) | \\
 = & \ \ \max\,\{-\min_{(\x, \e) \in \K} \Delta(\x, \e), \max_{(\x, \e) \in \K} \Delta(\x,\e)\}.
\end{split}
\end{align}
One can directly try to solve these two \ac{POP}s using classical \ac{SDP} relaxations~\citeroundoff{Las01sos}.
As in~\citeroundoff{fptaylor15}, one can also decompose the error term $\Delta$ as the sum of a term $\ell(\x,\e)$, which is affine with resepct to $\e$, and a nonlinear term $h(\x,\e) \coloneqq  \Delta(\x,\e) - \ell(\x,\e)$. Then the triangular inequality yields:
\begin{equation}
\label{eq:lhoptim} 
|\Delta|_{\max}  \leq \max_{(\x, \e) \in \K} |\ell(\x, \e)| + \max_{(\x, \e) \in \K} |h(\x, \e)|. 
\end{equation}
It follows for this example that $\ell(\x,\e) = x_2 x_5 e_1 + x_3 x_6 e_2 +  (x_2 x_5 + x_3 x_6) e_3 + \dots + f(\x) e_{15} = \sum_{i=1}^{15} s_i(\x) e_i$, with $s_1(\x) \coloneqq  x_2 x_5, s_2(\x) \coloneqq  x_3 x_6, \dots, s_{15}(\x) \coloneqq  f(\x)$. The {\em Symbolic Taylor Expansions} method~\citeroundoff{fptaylor15} consists of using a simple branch and bound algorithm based on interval arithmetic to compute a rigorous interval enclosure of each polynomial $s_i$, $i \in[15]$, over $\X$ and finally obtain an upper bound of $|\ell| + |h|$ over $\K$. 
In contrast, our method uses sparse \ac{SDP} relaxations for polynomial optimization (derived from \citeroundoff{Las06}) to bound $|\ell|$ and basic interval arithmetic as in~\citeroundoff{fptaylor15} to bound $|h|$ (i.e., we use interval arithmetic to bound second-order error terms in the multivariate Taylor expansion of $\Delta$ with resepct to $\e$).
\begin{itemize}[noitemsep,nolistsep]
\item A direct attempt to solve the two polynomial problems occurring in Equation~\eqref{eq:roptim} fails as the \ac{SDP} solver (in our case $\sdpa$~\citeroundoff{sdpa}) runs out of memory. 
\item Using our method implemented in the $\realtofloat$ tool\footnote{\url{https://forge.ocamlcore.org/projects/nl-certify/}}, one obtains an upper bound of $760 \epsilon$ for $|\ell| + |h|$ over $\K$ in $0.15$ seconds. This bound is provided together with a certificate which can be formally checked inside the $\coq$ proof assistant in $0.20$ seconds.
\item After normalizing the polynomial expression and using basic interval arithmetic, one obtains 8 times more quickly a coarser bound of $922 \epsilon$. 
\item Symbolic Taylor expansions implemented in $\fptaylor$ \citeroundoff{fptaylor15} provide a more precise bound of $721 \epsilon$, but the analysis time is 28 times slower than with our implementation. Formal verification of this bound inside the $\hol$ proof assistant takes $27.7$ seconds, which is 139 times slower than proof checking with $\realtofloat$ inside $\coq$. One can obtain an even more precise bound of $528 \epsilon$ (but 37 times slower than with our implementation) by turning on the improved rounding model of $\fptaylor$ and limiting the number of branch and bound iterations to 10000. The drawback of this bound is that it cannot be formally verified.
\item Finally, a slightly coarser bound of $762 \epsilon$ is obtained with the $\rosa$ real compiler~\citeroundoff{Darulova14Popl}, but the analysis is 19 times slower than with our implementation and we cannot get formal verification of this bound.
\end{itemize}

\section{Notes and sources}
To obtain lower bounds on roundoff errors, one can rely on testing approaches, such as meta-heuristic search~\citeroundoff{Borges12Test} or under-approximation tools (e.g., $\sthreefp$~\citeroundoff{Chiang14s3fp}). Here, we are interested in efficiently handling  the complementary over-approximation problem, namely to obtain precise upper bounds on the error. This problem boils down to finding tight abstractions of linearities or non-linearities while being able to bound the resulting approximations in an efficient way.  
For computer programs consisting of linear operations, automatic error analysis can be obtained with well-studied optimization techniques based on SAT/SMT solvers~\citeroundoff{hgbk2012fmcad} and affine arithmetic~\citeroundoff{fluctuat}. However, non-linear operations are key to many interesting computational problems arising in physics, biology, controller implementations and global optimization. 
Two promising frameworks have been designed to provide upper bounds for roundoff errors of nonlinear programs. The corresponding algorithms rely on Taylor-interval methods~\citeroundoff{fptaylor15}, implemented in the $\fptaylor$ tool, and on combining SMT with interval arithmetic~\citeroundoff{Darulova14Popl}, implemented in the $\rosa$ real compiler. 
We refer the interested reader to \citeroundoff[\S~4]{toms17} for more details on the extensive experimental evaluation that we performed.

%

%\bibliographystyleroundoff{alpha}
%\bibliographyroundoff{preface,sdp,sparsemat,pop,cs,roundoff,lip,ncsparse,ts,cstssos,opf,jsr,misc,app}
\providecommand{\etalchar}[1]{$^{#1}$}

\chapter{Application in deep networks}\label{chap:lip}

The Lipschitz constant of a network plays an important role in many applications of deep learning, such as robustness certification and Wasserstein Generative Adversarial Network. We introduce an \ac{SDP} hierarchy to estimate the global and local Lipschitz constant of a multiple layer deep neural network. The novelty is to combine a polynomial lifting for ReLU function derivatives with a weak generalization of Putinar's positivity certificate.
We empirically demonstrate that our method provides a trade-off with respect to the state-of-the-art \ac{LP} approach, and in some cases we obtain better bounds in less time.

\section{Multiple layer networks}\label{sec:nn}
We focus on multiple layer networks with ReLU activations.
%We propose a computationally efficient method to give a valid upper bound on the Lipschitz constant of such networks.
Recall that a function $f$, defined on a convex set $\mathcal{X} \subseteq \mathbb{R}^n$, is $L$-Lipschitz with respect to the norm $\|\cdot\|$ if for all $\mathbf{x}, \mathbf{z} \in \mathcal{X}$, we have $|f(\mathbf{x}) - f(\mathbf{z})| \le L \|\mathbf{x}-\mathbf{z}\|$.
The Lipschitz constant of $f$ with respect to norm $\|\cdot\|$, denoted by $L_f^{\|\cdot\|}$, is the infimum of all those valid $L$'s:
\begin{align}
L_f^{\|\cdot\|} \coloneqq  \inf\,\{L: \forall \mathbf{x}, \mathbf{z} \in \mathcal{X}, |f(\mathbf{x}) - f(\mathbf{z})| \le L \|\mathbf{x}-\mathbf{z}\|\}.
\end{align}

We denote by $F$ the multiple layer neural network, $m$ the number of hidden layers, $p_0, p_1, \ldots, p_m$ the number of nodes in the input layer and each hidden layer.
For simplicity, $(p_0, p_1, \ldots, p_m)$ will denote the layer structure of network $F$.
Let $\mathbf{x}_0$ be the initial input, and $\mathbf{x}_1, \ldots, \mathbf{x}_m$ be the activation vectors in each hidden layer. Each $\mathbf{x}_i$, $i \in [m]$, is obtained by a weight $\mathbf{A}_i$, a bias $\mathbf{b}_i$, and an activation function $a$, i.e., $\mathbf{x}_i = a (\z_i )$ with $\z_i = \mathbf{A}_i \mathbf{x}_{i-1} + \mathbf{b}_i$; see Figure~\ref{fig:nn}.

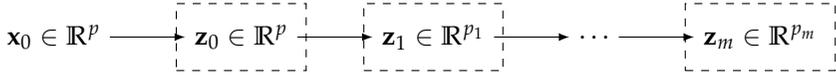
\begin{figure}[H]
\begin{tikzpicture}[scale=0.2,
init/.style={
draw,
circle,
inner sep=2pt,
font=\Huge,
join = by -latex
},
squa/.style={
draw,
inner sep=2pt,
font=\Large,
join = by -latex
},
start chain=2,node distance=10mm
]
\node[on chain=2] (top) {$\mathbf{x}_0 \in \R^p$};
\node[on chain=2,join=by -latex ] (x2) {$\mathbf{z}_0 \in \R^p$};
\node[on chain=2,join=by -latex] (w2) {$\mathbf{z}_1 \in \R^{p_1}$};
\node[on chain=2,join=by -latex] (sigma) {$\ldots$};
\node[on chain=2,join=by -latex] (output) {$\mathbf{z}_m \in \R^{p_m}$};
\node at ($(x2) + (0,1)$) (x1) {};
\node at ($(x2) + (0,-1)$) (x3) {};
\node at (w2|-x1) (w1) {};
\node at (w2|-x3) (w3) {};
\node at (output|-x1) (o1) {};
\node at (output|-x3) (o3) {};
\node[draw,dashed,fit=(w1) (w2) (w3)] {};
\node[draw,dashed,fit=(x1) (x2) (x3)] {};
\node[draw,dashed,fit=(o1) (output) (o3)] {};
\end{tikzpicture}
\caption{Description of a multiple layer neural network.}\label{fig:nn}
\end{figure}

We only consider coordinatewise application of the $\relu$ activation function, defined as $\relu (x) = \max\,\{0, x\}$ for $x \in \mathbb{R}$. The $\relu$ function is non-smooth, and we define its generalized derivative as the set-valued function $G (x)$ such that $G(x) = 1$ for $x > 0$, $G(x) = 0$ for $x < 0$ and $G(x) = \{0,1\}$ for $x = 0$.
The key reason why neural networks with $\relu$ activation function can be tackled using polynomial optimization techniques is semialgebraicity of the $\relu$ function, i.e., it can be expressed with a system of polynomial (in)equalities. For $x, u \in \mathbb{R}$, we have $u = \relu (x) = \max\,\{0,x\}$ if and only if $u(u-x) = 0, u \ge x, u \ge 0$.
Similarly, one can exploit the semialgebraicity of its derivative $\relu'$; see Figure \ref{fig:relu} and Figure \ref{fig:relu_deriv}.

\begin{figure}[H]
%    \vskip 0.2in
    \centering
    \begin{subfigure}[t]{0.45\textwidth}
        \centering
        \includegraphics[width=\textwidth]{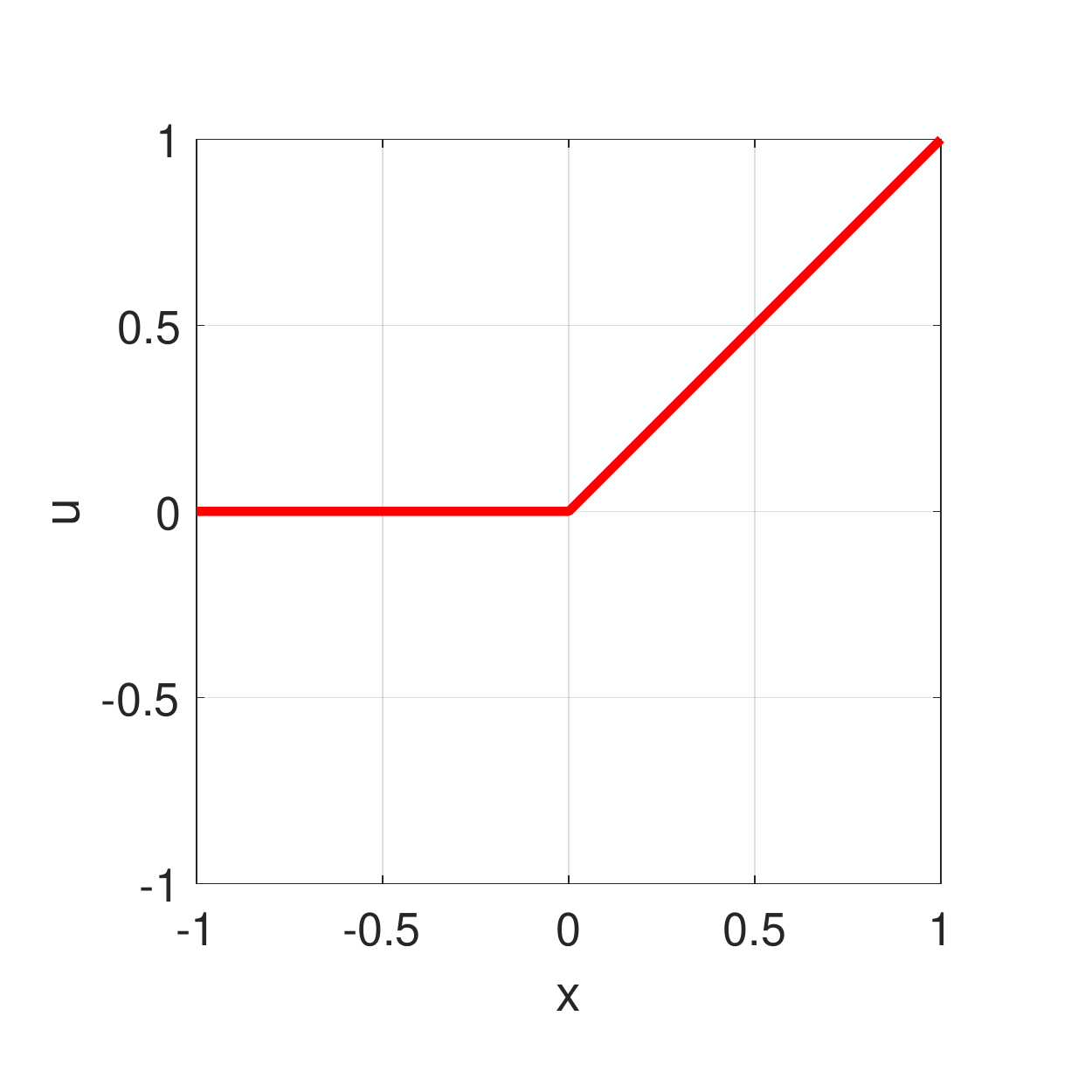}
        \caption{$u = \max\,\{x,0\}$}
    \end{subfigure}
    \hfill
    \begin{subfigure}[t]{0.45\textwidth}
        \centering
        \includegraphics[width=\textwidth]{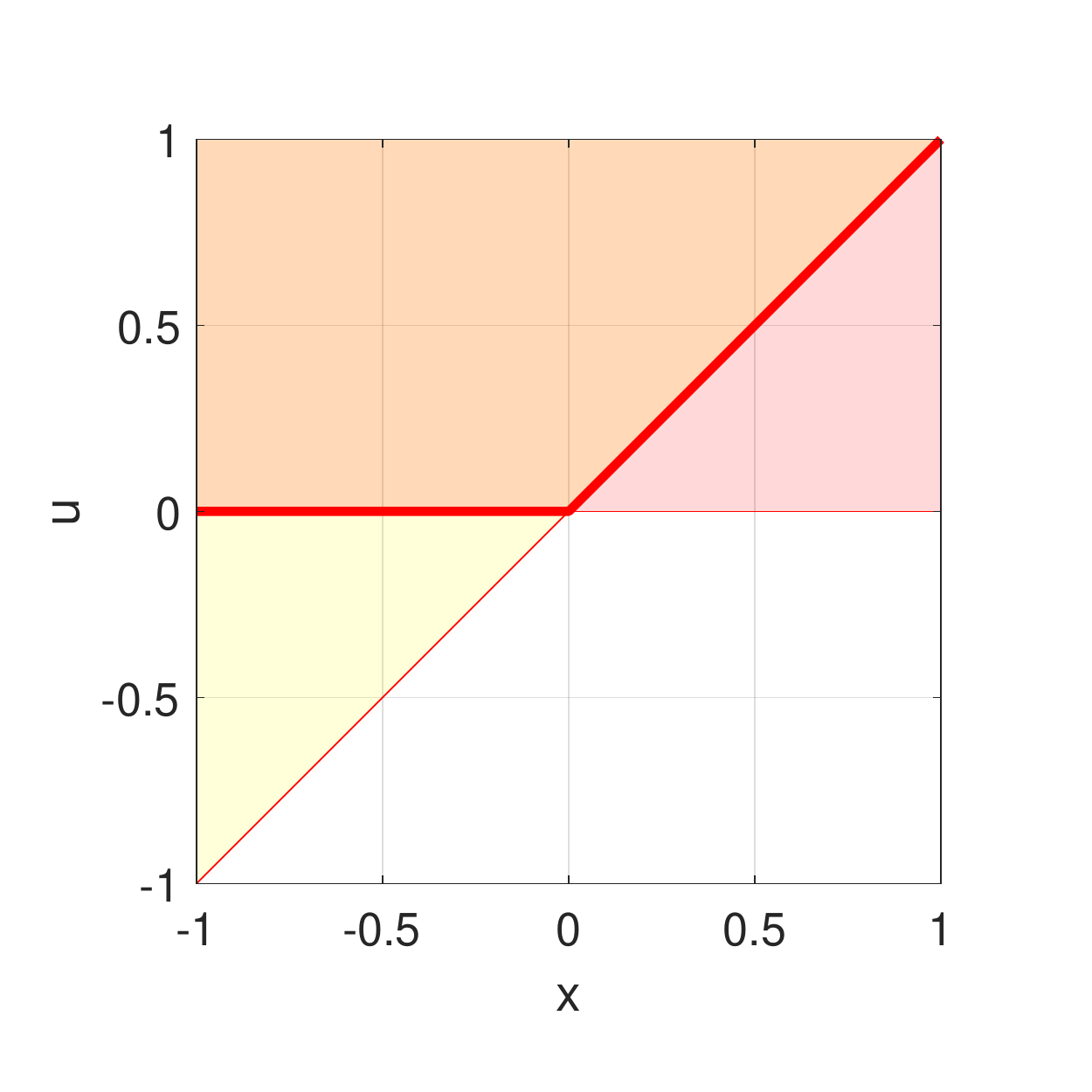}
        \caption{$u(u-x)=0, u \ge x, u \ge 0$}
    \end{subfigure}
\caption{$\relu$ (left) and its semialgebraicity (right).}
\label{fig:relu}
\end{figure}
\begin{figure}[H]
 %   \vskip 0.2in
    \centering
    \begin{subfigure}[t]{0.45\textwidth}
        \centering
        \includegraphics[width=\textwidth]{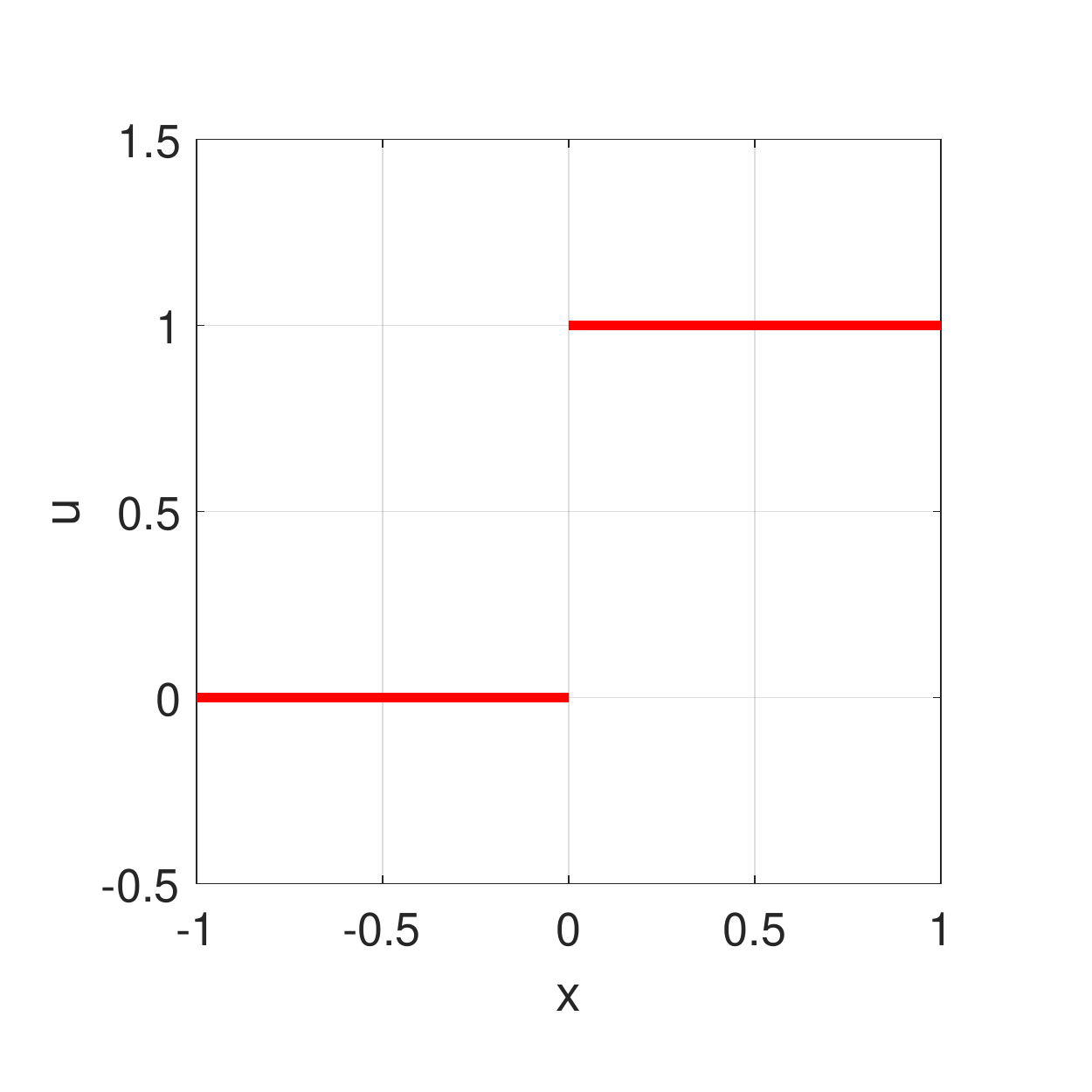}
        \caption{$u = \mathbf{1}_{\{x \ge 0\}}$}
    \end{subfigure}
    \hfill
    \begin{subfigure}[t]{0.45\textwidth}
        \centering
        \includegraphics[width=\textwidth]{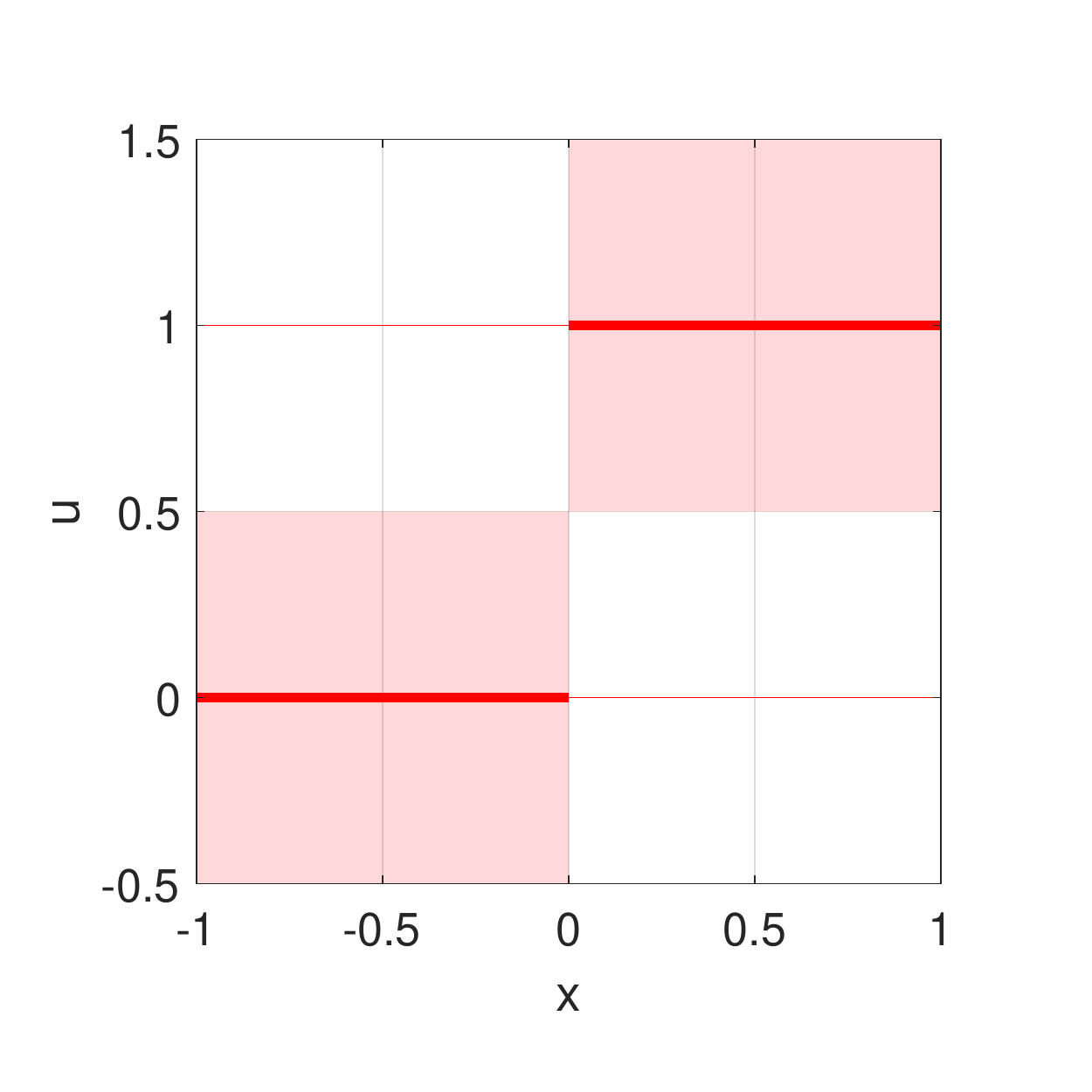}
        \caption{$u(u-1)=0, (u-\frac{1}{2})x \ge 0$}
    \end{subfigure}
\caption{$\relu'$ (left) and its semialgebraicity (right).}
\label{fig:relu_deriv}
\end{figure}

We assume that the last layer in our neural network is a softmax layer with $K$ entries, that is, the network is a classifier for $K$ labels. For each label $k \in \{1, \ldots, K\}$, the score of label $k$ is obtained by an affine product with the last activation vector, i.e., $\mathbf{c}_k^{\intercal} \mathbf{x}_m$ for some $\mathbf{c}_k \in \mathbb{R}^{p_m}$. The final output is the label with the highest score, i.e., $z = \arg \max_k \mathbf{c}_k^{\intercal} \mathbf{x}_m$. The product $\mathbf{x} \mathbf{z}$ of two vectors $\mathbf{x}$ and $\mathbf{z}$ is considered as the coordinate-wise product.

\section{Lipschitz constants}
\label{sec:lip}
Suppose we train a neural network $F$ for $K$-classifications and denote by $\mathbf{A}_i, \mathbf{b}_i, \mathbf{c}_k$ its parameters.
Thus for an input $\mathbf{x}_0 \in \mathbb{R}^{p_0}$, the targeted score of label $k$ can be expressed as $F_k (\mathbf{x}_0) = \mathbf{c}_k^{\intercal} \mathbf{x}_m$, where $\mathbf{x}_i = \relu (\mathbf{A}_i \mathbf{x}_{i-1} + \mathbf{b}_i)$, for $i \in [m]$. Let $\mathbf{z}_i = \mathbf{A}_i \mathbf{x}_{i-1} + \mathbf{b}_i$ for $i \in [m]$. By applying the chain rule on the non-smooth function $F_k$, we obtain a set valued map for $F_k$ at point any $\mathbf{x}_0$ as $G_{F_k} (\mathbf{x}_0) = (\prod_{i = 1}^{m} \mathbf{A}_i^\intercal \diag (G (\mathbf{z}_{i}))) \mathbf{c}_k$.

We fix a targeted label (label 1 for example) and omit the symbol $k$ for simplicity. We define $L_F^{\|\cdot\|}$
as the supremum of the gradient's dual norm:
\begin{align} \label{lip}
	L_F^{\|\cdot\|} = \sup_{\mathbf{x}_0 \in \Omega,\, \mathbf{v} \in G_{F_k}(\mathbf{x}_0)} \|\mathbf{v} \|_* = \sup_{\mathbf{x}_0 \in \Omega} \bigg\|\bigg(\prod_{i = 1}^{m} \mathbf{A}_i^{\intercal} \diag (G (\mathbf{z}_{i}))\bigg) \mathbf{c}\bigg\|_*,
\end{align}
where $\Omega$ is the convex input space, and $\|\cdot \|_*$ is the dual norm of $\|\cdot \|$, which is defined by $\|\mathbf{x}\|_* \coloneqq  \sup_{\|\mathbf{t}\| \le 1} \vert \langle \mathbf{t}, \mathbf{x}\rangle \vert $ for all $\mathbf{x} \in \mathbb{R}^n$. 	In general, the chain rule cannot be applied to composition of non-smooth functions \citelip{kakade2018provably, bolte2019conservative}. Hence the formulation of $G_{F_k}$ and \eqref{lip} may lead to incorrect gradients and bounds on the Lipschitz constant of the networks.
The following ensures that this is not the case and that the approach is sound.
\begin{theoremf}\label{th:lip}
If $\Omega$ is convex, then $L_F^{\|\cdot\|}$ is a Lipschitz constant for $F_k$ on $\Omega$.
\end{theoremf}
When $\Omega = \mathbb{R}^n$, $L_F^{\|\cdot\|}$ is the \emph{global} Lipschitz constant of $F$ with respect to norm $\|\cdot\|$.
In many cases we are also interested in the \emph{local} Lipschitz constant of a neural network constrained in a small neighborhood of a fixed input $\bar{\mathbf{x}}_0$.
In this situation the input space $\Omega$ is often the ball around $\bar{\mathbf{x}}_0 \in \mathbb{R}^n$ with radius $\varepsilon$: $\Omega = \{\mathbf{x}: \|\mathbf{x}-\bar{\mathbf{x}}_0\| \le \varepsilon\}$.
In particular, with the $L_{\infty}$-norm  (and using $l \le x \le u
	\Leftrightarrow (x-l) (x-u) \le 0$), the input space $\Omega$ is the basic closed semialgebraic set:
\begin{align} \label{infnorm}
\Omega & = \{\mathbf{x} \in \R^n: (\mathbf{x}- \bar{\mathbf{x}}_0 + \varepsilon) (\mathbf{x} - \bar{\mathbf{x}}_0 - \varepsilon) \le 0 \}.
\end{align}
In view of Theorem \ref{th:lip} and \eqref{lip}, the \emph{Lipschitz constant estimation problem (LCEP)} for neural networks with respect to the norm $\|\cdot\|$ is the following \ac{POP}:
\begin{equation} \label{lcep}
\begin{cases}
\sup\limits_{\mathbf{x}_i,\mathbf{u}_i,\mathbf{t}}&\mathbf{t}^{\intercal} \bigg(\prod_{i = 1}^{m} \mathbf{A}_i^{\intercal} \diag (\mathbf{u}_i)\bigg) \mathbf{c} \\
\,\rm{ s.t.}&\mathbf{u}_i (\mathbf{u}_i - 1) = 0, (\mathbf{u}_i - 1/2) (\mathbf{A}_{i} \mathbf{x}_{i-1} + \mathbf{b}_i) \ge 0, i \in [m]\\
&\mathbf{x}_{i-1} (\mathbf{x}_{i-1} - \mathbf{A}_{i-1} \mathbf{x}_{i-2} - \mathbf{b}_{i-1}) = 0\\
&\mathbf{x}_{i-1} \ge 0, \mathbf{x}_{i-1} \ge \mathbf{A}_{i-1} \mathbf{x}_{i-2} + \mathbf{b}_{i-1}, i=2,\ldots,m\\
&\mathbf{t}^2 \le 1, (\mathbf{x}_0- \bar{\mathbf{x}}_0 + \varepsilon) (\mathbf{x}_0 - \bar{\mathbf{x}}_0 - \varepsilon) \le 0
\end{cases}
\end{equation}
In \citelip{latorre2020lipschitz} the authors only use the constraint $0 \le \mathbf{u}_i \le 1$ on the variables $\mathbf{u}_i$, only capturing the Lipschitz character of the considered activation function. We could use the same constraints, and this would allow to use activations which do not have semi-algebraic representations such as the Exponential Linear Unit (ELU). However, such a relaxation, despite very general, is a lot coarser than the one we propose. Indeed, \eqref{lcep} treats an \emph{exact formulation} of the generalized derivative of the ReLU function by exploiting its semialgebraic character.

\section{Nearly sparse problems}
\label{sec:hr}
For illustration purpose, consider 1-hidden layer networks. Then in \eqref{lcep} we can define natural subsets $I_i = \{u_1^{(i)}, \mathbf{x}_0\}$, $i \in [p_1]$ (with resepct to constraints $\mathbf{u}_1 (\mathbf{u}_1 - 1) = 0$, $(\mathbf{u}_1 - 1/2) (\mathbf{A}_1 \mathbf{x}_0 + \mathbf{b}_1) \ge 0$, and $(\mathbf{x}_0 - \bar{\mathbf{x}}_0 + \varepsilon) (\mathbf{x}_0 - \bar{\mathbf{x}}_0 - \varepsilon) \le 0$); and $J_j = \{t^{(j)}\}$, $j \in [p_0]$ (with resepct to constraints $\mathbf{t}^2 \le 1$). Clearly, $I_i, J_j$ satisfy the \ac{RIP} condition \eqref{eq:RIP} and are subsets with smallest possible size. Recall that $\mathbf{x}_0 \in \mathbb{R}^{p_{0}}$.
Hence $|I_i| = 1+p_{0}$ and the maximum size of the \ac{PSD} matrices involved in the sparse Lasserre's hierarchy $\P_{\cs}^r$, given in \eqref{eq:csmom}, is $\binom{1+p_{0} + r}{r}$.
Therefore, as in real deep neural networks $p_0$ can be as large as 1000, the second-order sparse Lasserre's hierarchy $\P_{\cs}^2$,  cannot be implemented in practice.

In fact \eqref{lcep} can be considered as a ``nearly sparse'' \ac{POP}, i.e., a sparse \ac{POP} with some additional ``bad" constraints that violate the sparsity assumptions.
More precisely, suppose that $f, g_i$ and subsets $I_k$ satisfy Assumption \ref{hyp:cs}.
Let $g$ be a polynomial that violates Assumption \ref{hyp:cs} (iii), i.e., \ac{RIP} \eqref{eq:RIP}.
Then we call the \ac{POP}
	\begin{align} \label{nearly_sparse}
	& \inf\limits_{\mathbf{x} \in \mathbb{R}^n} \{f(\mathbf{x}): g(\mathbf{x}) \ge 0, g_i(\mathbf{x}) \ge 0, i\in [m]\}, \tag{Nearly}
	\end{align}
	a \emph{nearly sparse} \ac{POP} because only one constraint, namely $g\geq0$, does not satisfy the sparsity pattern corresponding to the \ac{RIP} \eqref{eq:RIP}.
This single ``bad" constraint $g\geq0$ precludes us from applying the sparse Lasserre hierarchy  \eqref{eq:csmom}.

In this situation, we propose a heuristic method which can be applied to problems with arbitrary many constraints that possibly destroy the sparsity pattern.
The key idea of our algorithm is: \textbf{(i)} Keep the ``nice" sparsity pattern defined without the bad constraints; \textbf{(ii)} Associate only low-order localizing matrix constraints to the ``bad'' constraints. In brief, the $r$-th order \emph{heuristic hierarchy} (\emph{HR-$r$}) reads as
\begin{equation}\label{mom_nearly_sparse}
\begin{cases}
\inf\limits_{\mathbf{y}}&L_{\mathbf{y}} (f)\\
\rm{ s.t.}&\mathbf{M}_1 (\mathbf{y}) \succeq 0\\
&\mathbf{M}_r (\mathbf{y}, I_k) \succeq 0, \quad k \in [l]\\ &\mathbf{M}_{r-d_i} (g_i \,\mathbf{y}, I_{k(i)}) \succeq 0,\quad i \in [m]\\
&L_{\mathbf{y}} (g) \ge 0, \quad y_{\mathbf{0}} = 1
\end{cases}\tag{HR-$r$}
\end{equation}
%
%For more illustration of this heuristic relaxation and how it is applied to estimate the Lipschitz constant of neural networks, see Appendix \ref{illhr}.
We already have a sparsity pattern with subsets $I_k$ and an additional ``bad'' constraint $g\geq0$ (assumed to be quadratic). Then
	we consider the sparse moment relaxations \eqref{eq:csmom} applied to \eqref{nearly_sparse}
	\emph{without} the bad constraint $g\geq0$ and simply add two constraints: (i) the moment constraint $\mathbf{M}_1(\mathbf{y})\succeq0$ (with full dense first-order moment matrix $\mathbf{M}_1(\mathbf{y})$),
	and (ii) the linear moment inequality constraint $L_{\mathbf{y}}(g)\geq0$ (which is the lowest-order localizing matrix constraint  $\mathbf{M}_0(g\,\mathbf{y})\succeq0$).

	To see why the full moment constraint $\mathbf{M}_1 (\mathbf{y})\succeq0$ is needed, consider the following toy problem:
\begin{align} \label{eg_sparse_opt}
	\inf_{\mathbf{x} \in \mathbb{R}^3} \{x_1 x_2 + x_2 x_3: x_1^2 + x_2^2 \le 1, x_2^2 + x_3^2 \le 1\}.
\end{align}
	Define the subsets $I_1 = \{1,2\}$, $I_2 = \{2,3\}$. It is easy to check that Assumption \ref{hyp:cs} holds.
	Define $$\mathbf{y} = \{y_{000}, y_{100}, y_{010}, y_{001}, y_{200}, y_{110}, y_{101}, y_{020}, y_{011}, y_{002}\} \in \mathbb{R}^{10}.$$ For $r = 1$, the first-order dense moment matrix reads as
	\begin{align*}
	& \mathbf{M}_1 (\mathbf{y}) = \begin{bmatrix}
	\textcolor[rgb]{1,0,1}{y_{000}} & \textcolor[rgb]{1,0,0}{y_{100}} & \textcolor[rgb]{1,0,1}{y_{010}} & \textcolor[rgb]{0,0,1}{y_{001}} \\
	\textcolor[rgb]{1,0,0}{y_{100}} & \textcolor[rgb]{1,0,0}{y_{200}} & \textcolor[rgb]{1,0,0}{y_{110}} & y_{101} \\
	\textcolor[rgb]{1,0,1}{y_{010}} & \textcolor[rgb]{1,0,0}{y_{110}} & \textcolor[rgb]{1,0,1}{y_{020}} & \textcolor[rgb]{0,0,1}{y_{011}} \\
	\textcolor[rgb]{0,0,1}{y_{001}} & y_{101} & \textcolor[rgb]{0,0,1}{y_{011}} & \textcolor[rgb]{0,0,1}{y_{002}}
	\end{bmatrix},
	\end{align*}
	whereas the sparse moment matrix $\mathbf{M}_1(\mathbf{y}, I_1)$ (resp. $\mathbf{M}_1(\mathbf{y}, I_2)$) is the submatrix of $\mathbf{M}_1 (\mathbf{y})$ taking red and pink (resp. blue and pink) entries. That is, $\mathbf{M}_1(\mathbf{y}, I_1)$ and $\mathbf{M}_1(\mathbf{y},I_2)$ are submatrices of $\mathbf{M}_1(\mathbf{y})$, obtained by restricting to rows and  columns concerned with subsets $I_1$ and $I_2$ only.

Now suppose that we need to  consider an additional ``bad'' constraint $(1-x_1 - x_2 -x_3)^2 = 0$. After developing $L_{\mathbf{y}}(g)$, one needs to consider the moment variable $y_{101}$ corresponding to the monomial $x_1x_3$ in the expansion of $g=(1-x_1-x_2-x_3)^2$, and $y_{101}$ does \emph{not} appear in the moment matrices $\mathbf{M}_1(\mathbf{y,}\,I_1)$ and $\mathbf{M}_1(\mathbf{y},\,I_2)$
	because $x_1$ and $x_3$ are not in the same subset. However $y_{101}$ appears in
	$\mathbf{M}_1(\mathbf{y})$, which is of size $n+1$.

	Now let us see how this works for problem \eqref{lcep}. First introduce new variables $\mathbf{z}_i$ with associated constraints $\mathbf{z}_i-\mathbf{A}_i \mathbf{x}_{i-1} - \mathbf{b}_i=0$, so that all ``bad'' constraints are affine. Equivalently, we may and will consider the single ``bad" constraint $g\geq 0$ with $g(\mathbf{z}_1,\ldots,\mathbf{x}_0,\mathbf{x}_1,\ldots)=-\sum_{i}\|\mathbf{z}_i-\mathbf{A}\mathbf{x}_{i-1}-\mathbf{b}_i\|^2$
	and solve \eqref{mom_nearly_sparse}.
	We briefly sketch the rationale behind this reformulation.
	Let $(\mathbf{y}^r)_{r\in\mathbb{N}}$
	be a sequence of optimal solutions of \eqref{mom_nearly_sparse}. If $r\to\infty$, then $\mathbf{y}^r\to\mathbf{y}$ (possibly for a subsequence $(r_k)_{k\in\mathbb{N}}$), and $\mathbf{y}$ corresponds to the moment sequence of a measure
	$\mu$, supported on $\{(\mathbf{x},\mathbf{z}): g_i(\mathbf{x},\mathbf{z})\geq0,\:i\in [p];\:\int g\,\mathrm{d}\mu\geq0\}$. But as $-g$ is a square, $\int g\,\mathrm{d}\mu\geq0$ implies $g=0$, $\mu$-a.e., and therefore
	$\mathbf{z}_i=\mathbf{A}\mathbf{x}_{i-1}+\mathbf{b}_i$, $\mu$-a.e.. This is why we do not need to consider the higher-order  constraints $\mathbf{M}_r(g\,\mathbf{y})\succeq0$ for $r>0$; only $\mathbf{M}_0(g\,\mathbf{y})\succeq0$ ($\Leftrightarrow L_{\mathbf{y}}(g)\geq0$) suffices. In fact, we impose the stronger linear constraints $L_{\mathbf{y}}(g)=0$ and $L_{\mathbf{y}}(\mathbf{z}_i-\mathbf{A}\mathbf{x}_{i-1}-\mathbf{b}_i)=0$ for all $i\in [p]$.

	For simplicity, assume that the neural networks have only one single hidden layer, i.e., $m = 1$. Denote by $\mathbf{A}, \mathbf{b}$ the weight and bias respectively. As in \eqref{infnorm}, we use the fact that $l \le x \le u$ is equivalent to $(x-l) (x-u) \le 0$. Then the local Lipschitz constant estimation problem
	with respect to $L_{\infty}$-norm can be written as
	\begin{equation}\label{rlcep}
		\begin{cases}
			\sup\limits_{\mathbf{x},\mathbf{u},\mathbf{z},\mathbf{t}} &\mathbf{t}^{\intercal} \mathbf{A}^{\intercal} \diag (\mathbf{u}) \mathbf{c}\\
			\,\,\rm{ s.t.} &(\mathbf{z} - \mathbf{A} \mathbf{x} - \mathbf{b})^2 = 0, \quad\mathbf{t}^2 \le 1\\
			&(\mathbf{x}- \bar{\mathbf{x}}_0 + \varepsilon) (\mathbf{x} - \bar{\mathbf{x}}_0 - \varepsilon) \le 0\\
			&\mathbf{u} (\mathbf{u} - 1) = 0, \quad (\mathbf{u} - 1/2) \mathbf{z} \ge 0
		\end{cases}\tag{LCEP$_1$}
	\end{equation}

Define the subsets of \eqref{rlcep} to be $I^i = \{x^i, t^i\}$, $J^j = \{u^j, z^j\}$ for $i \in [p_0]$, $j \in [p_1]$, where $p_0$, $p_1$ are the number of nodes in the input layer and the hidden layer, respectively.
Then the second-order ($r=2$) heuristic relaxation of \eqref{rlcep} is the following \ac{SDP}:
\begin{equation}\label{mom_rlcep}
\begin{cases}
\inf\limits_{\mathbf{y}}&L_{\mathbf{y}} (\mathbf{t}^{\intercal} \mathbf{A}^{\intercal} \diag (\mathbf{u}) \mathbf{c})\\
\rm{ s.t.}& y_{\mathbf{0}} = 1, \quad \mathbf{M}_1 (\mathbf{y}) \succeq 0\\
&L_{\mathbf{y}} (\mathbf{z} - \mathbf{A} \mathbf{x} - \mathbf{b}) = 0, \quad L_{\mathbf{y}} ((\mathbf{z} - \mathbf{A} \mathbf{x} - \mathbf{b})^2) = 0\\
&\mathbf{M}_{1} (-(x^{(i)}- \bar{x}^{(i)}_0 + \varepsilon) (x^{(i)} - \bar{x}^{(i)}_0 - \varepsilon) \mathbf{y}, I^i) \succeq 0,\quad i \in [p_0]\\
&\mathbf{M}_{1} ((1-t_i^2) \mathbf{y}, I^i) \succeq 0, \quad\mathbf{M}_2 (\mathbf{y}, I^i) \succeq 0,\quad i \in [p_0]\\
&\mathbf{M}_2 (\mathbf{y}, J^j) \succeq 0, \quad \mathbf{M}_{1} (u_j (u_j - 1) \mathbf{y}, J^j) = 0, \quad j \in [p_1]\\
&\mathbf{M}_{1} ((u_j - 1/2) z_j \mathbf{y}, J^j) \succeq 0, \quad j \in [p_1]
\end{cases}\tag{HR-2}
\end{equation}
	The $r$-th order heuristic relaxation \eqref{mom_nearly_sparse} also applies to multiple layer neural networks. However,
	if the neural network has $m$ hidden layers, then the criterion in \eqref{lcep} is of degree $m+1$.
	If $m \ge 2$, then the first-order moment matrix $\mathbf{M}_1 (\mathbf{y})$ is no longer sufficient, as
	moments of degree $>2$ are \emph{not} encoded in $\mathbf{M}_1(\mathbf{y})$ and some may not be encoded in the moment matrices $\mathbf{M}_2(\mathbf{y},\,I^i)$, if they include variables of different subsets.
See \citelip[Appendix E]{LipPOP20}  for more information to deal with higher-degree polynomial objective.

\section{Overview of numerical experiments}\label{experiments}

In this section, we provide results for the \emph{global} and \emph{local} Lipschitz constants of \emph{random} networks of fixed size $(80,80)$ and with various sparsities. We also compute bounds of a \emph{real} trained 1-hidden layer network. For all experiments we focus on the $L_{\infty}$-norm, the most interesting case
for robustness certification. Moreover, we use the Lipschitz constants computed by various methods to certify robustness of a trained network, and compare the ratio of certified inputs with different methods. Let us first provide an overview of the methods with which we compare our results:
\begin{itemize}
\item	\textbf{SHOR}: First-order dense moment relaxation (also called Shor's relaxation) applied to \eqref{lcep}.
%Note that this is different from Shor's relaxation decribed in \cite{latorre2020lipschitz} since we apply it to a different \ac{QCQP}. \\
\item	\textbf{HR-1/2}: first/second-order heuristic relaxation applied to \eqref{lcep}.
\item	\textbf{LP-3/4}: \ac{LP}-based method, called \textbf{LipOpt}, by \citelip{latorre2020lipschitz} with degree 3/4, which corresponds to \eqref{eq:sbsos} without \ac{SOS} multipliers.
\item	\textbf{LBS}: lower bound obtained by sampling 50000 random points and evaluating the dual norm of the gradient.
\end{itemize}
The reason why we list \textbf{LBS} here is because \textbf{LBS} is a valid lower bound on the Lipschitz constant.
Therefore all methods should provide a result not lower than \textbf{LBS}, a basic necessary condition of consistency.

As discussed earlier, if we want to estimate the global Lipschitz constant, we need the input space $\Omega$ to be the whole space. In consideration of numerical issues, we set $\Omega$ to be the ball of radius $10$ around the origin. For the local Lipschitz constant, we set by default the radius of the input ball as $\varepsilon = 0.1$. In both cases, we compute the Lipschitz constant with respect to the first label.  We use the (Python) code provided by \citelip{latorre2020lipschitz}\footnote{\url{https://openreview.net/forum?id=rJe4_xSFDB}.} to execute the experiments for \textbf{LipOpt} with the Gurobi solver. For \textbf{HR-2} and \textbf{SHOR}, we use the YALMIP toolbox (MATLAB) \citelip{YALMIP} with $\mosek$ as a backend to calculate the Lipschitz constants for \emph{random} networks. For \emph{trained} network, we implement our algorithm in Julia with the $\mosek$ optimizer to accelerate the computation.
%	\emph{Remark}: The crossover option \footnote{\url{https://www.gurobi.com/documentation/9.0/refman/crossover.html}} in Gurobi solver is activated by default, and it is used to transform the interior solution produced by barrier into a basic solution. We deactivate this option in our experiments since this computation is unnecessary and takes a lot of time.
In the following, running time is referred to the time taken by the \ac{LP}/\ac{SDP} solver (Gurobi/$\mosek$) and ``-'' means running out of memory during solving the \ac{LP}/\ac{SDP} model.

In \citelip{latorre2020lipschitz} a certain sparsity structure arising from a neural network was exploited. Consider a neural network $F$ with one single hidden layer, and 4 nodes in each layer. The network $F$ is said to have a sparsity $\omega=4$ if its weight matrix $\mathbf{A}$ is symmetric with diagonal blocks of size at most $2 \times 2$:
	\begin{align}
	\label{form}
	\begin{bmatrix}
	* & * & 0 & 0 \\
	* & * & * & 0 \\
	0 & * & * & * \\
	0 & 0 & * & *
	\end{bmatrix}.
	\end{align}
	Larger sparsity values refer to symmetric matrices with band structure of a given size.
	This sparsity structure \eqref{form} of the networks greatly influences the number of variables involved in the \ac{LP} program to solve in \citelip{latorre2020lipschitz}. This is in deep contrast with our method which does not require the weight matrix to be as in \eqref{form}. Hence
	when the network is fully-connected, our method is more efficient and provides tighter upper bounds.

\subsection{Lipschitz Constant Estimation}
\label{lip_exp}
\paragraph{Random networks.} Table \ref{table_bound_time} gives a brief comparison outlook of the results obtained by our method and the method in \citelip{latorre2020lipschitz}. For $(80,80)$ networks, apart from $\omega$ = 20, which is not significative, \textbf{HR-2} obtains much better bounds and is also much more efficient than \textbf{LP-3}. \textbf{LP-4} provides tighter bounds than \textbf{HR-2} but suffers more computational time, and run out of memory when the sparsity increases. For $(40,40,10)$ networks, \textbf{HR-1} is a trade-off between \textbf{LP-3} and \textbf{LP-4}, it provides tighter (resp. looser) bounds than \textbf{LP-3} (resp. \textbf{LP-4}), but takes more (resp. less) computational time.
\begin{table}[t]
\caption{Bounds of global Lipschitz constants (opt) and solver running time (in seconds) of networks of size $(80,80)$ and $(40,40,10)$.}\label{table_bound_time}
\renewcommand\arraystretch{1.2}
\centering
%\begin{sc}
\resizebox{\linewidth}{!}{
\begin{tabular}{ccccccccccc}
						\toprule
						&& \multicolumn{4}{c}{$(80,80)$} && \multicolumn{4}{c}{$(40,40,10)$} \\
						\cline{3-6}\cline{8-11}
						&& $\omega$ = 20 & $\omega$ = 40 & $\omega$ = 60 & $\omega$ = 80 && $\omega$ = 20 & $\omega$ = 40 & $\omega$ = 60 & $\omega$ = 80\\
						\midrule
						\multirow{2}{*}{\textbf{HR-2}} &opt & 1.45 & 2.05 & 2.41 & 2.68 & \multirow{2}{*}{\textbf{HR-1}} & 0.50 & 1.16 & 1.82 & 2.05 \\
						\cline{2-6}\cline{8-11}
						& time & 3.14 & 7.78 & 8.61 & 9.82 && 271.34 & 165.68 & 174.86 & 174.02 \\
						\midrule
						\multirow{2}{*}{\textbf{LP-3}} & opt & 1.55 & 2.86 & 3.85 & 4.68 &\multirow{2}{*}{\textbf{LP-3}}& 0.56 & 1.68 & 3.01 & 3.57 \\
						\cline{2-6}\cline{8-11}
						& time & 2.44 & 10.36 & 20.99 & 71.49 && 3.84 & 4.83 & 7.91 & 6.33 \\
						\midrule
						\multirow{2}{*}{\textbf{LP-4}} & opt & 1.43 & - & - & - &\multirow{2}{*}{\textbf{LP-4}}& 0.29 & 0.85 & - & - \\
						\cline{2-6}\cline{8-11}
						& time & 127.99 & - & - & - && 321.89 & 28034.27 & - & - \\
						\midrule
						\textbf{LBS} & opt & 1.05 & 1.56 & 1.65 & 1.86 & \textbf{LBS} & 0.20 & 0.48 & 0.61 & 0.62 \\
						% 		\midrule
						% 		\multirow{2}{*}{L3 - H2} & mean. & -0.029 & 0.496 & 1.06 & 1.55 \\
						% 		\cline{2-6}
						% 		& std & 0.047 & 0.083& 0.155 & 0.212 \\
						\bottomrule
\end{tabular}}
%\end{sc}
\end{table}

\paragraph{Trained Network}
	Here we use the MNIST classifier SDP-NN described in \citelip{raghunathan2018certified}\footnote{\url{https://worksheets.codalab.org/worksheets/0xa21e794020bb474d8804ec7bc0543f52/}}. The network is of size $(784, 500)$. In Table \ref{table_real}, we see that the \textbf{LP-3} algorithm runs out of memory when applied to the real network SDP-NN to compute the global Lipschitz bound. In contrast, \textbf{SHOR} and \textbf{HR-2} still work and moreover,
	\textbf{HR-2} provides tighter upper bounds than \textbf{SHOR} in both global and local cases. As a trade-off, the running time of \textbf{HR-2} is around 5 times longer than that of \textbf{SHOR}.

	\begin{table}[t]
		\caption{Comparison of bounds of global Lipschitz constants and solver running time on trained network SDP-NN obtained by \textbf{HR-2}, \textbf{SHOR}, \textbf{LP-3} and \textbf{LBS}. The network is a fully connected neural network with one hidden layer, with 784 nodes in the input layer and 500 nodes in the hidden layer. The network is for 10-classification, and we calculate the upper bound with respect to label 2.}
		\label{table_real}
		%\vskip 0.15in
		\centering

				%\begin{sc}
					\resizebox{\linewidth}{!}{
					\begin{tabular}{cccccccccc}
						\toprule
						& \multicolumn{4}{c}{Global} && \multicolumn{4}{c}{Local} \\
						\midrule
						& \textbf{HR-2} & \textbf{SHOR} & \textbf{LP-3} & \textbf{LBS} && \textbf{HR-2} & \textbf{SHOR} & \textbf{LP-3} & \textbf{LBS} \\
						\midrule
						opt & 14.56 & 17.85 & - & 9.69 && 12.70 & 16.07 & - & 8.20 \\ %time: 2869, 12246, -
						\hline
						time & 12246 & 2869 & - &  && 20596 & 4217 & - &  \\
						\bottomrule
					\end{tabular}}
				%\end{sc}

		%\vskip -0.1in
	\end{table}

\subsection{Robustness certification}\label{cert}

\paragraph{Multi-Classifier} The above SDP-NN network is a well-trained $(784,500)$ network to classify the digit images from 0 to 9. Denote the parameters of this network by $\mathbf{A} \in \mathbb{R}^{500 \times 784}, \mathbf{b}_1 \in \mathbb{R}^{500}, \mathbf{C} \in \mathbb{R}^{10 \times 500}, \mathbf{b}_2 \in \mathbb{R}^{10}$.
The score of an input $\mathbf{x}$ is denoted by $\mathbf{y}^{\mathbf{x}}$, i.e., $\mathbf{y}^{\mathbf{x}} = \mathbf{C} \cdot \text{ReLU} (\mathbf{A} \mathbf{x}_0 + \mathbf{b}_1) + \mathbf{b}_2$.
The label of $\mathbf{x}$, denoted by $r^{\mathbf{x}}$, is the index with the largest score, i.e., $r^{\mathbf{x}} = \text{arg}\max \mathbf{y}^{\mathbf{x}}$.
Suppose an input $\mathbf{x}_0$ has label $r$. For $\epsilon$ and $\mathbf{x}$ such that $\|\mathbf{x} - \mathbf{x}_0\|_{\infty} \le \epsilon$, if for all $i \ne r$, $y_i^{\mathbf{x}} - y_r^{\mathbf{x}} < 0$, then $\mathbf{x}_0$ is $\epsilon$-robust.
Alternatively, denote by $L_{i,r}^{\mathbf{x}_0, \epsilon}$ the local Lipschitz constant of function $f_{i,r} (\mathbf{x}) = y_i^{\mathbf{x}} - y_r^{\mathbf{x}}$ with respect to $L_{\infty}$-norm in the ball $\{\mathbf{x}: \|\mathbf{x} - \mathbf{x}_0\|_{\infty} \le \epsilon\}$.
Then the point $\mathbf{x}_0$ is $\epsilon$-robust if for all $i \ne r$, $f_{i,r} (\mathbf{x}_0) + \epsilon L_{i,r}^{\mathbf{x}_0, \epsilon} < 0$. Since the $28\times 28$ MNIST images are flattened and normalized into vectors taking value in $[0,1]$, we compute the local Lipschitz constant (by \textbf{HR-2}) with respect to $\mathbf{x}_0 = \mathbf{0}$ and $\epsilon = 2$, the complete value is referred to the following matrix:
$$
	\mathbf{L} = \begin{bmatrix}
	* & 7.94 & 7.89 & 8.28 & 8.64 & 8.10 & 7.66 & 8.04 & 7.46 & 8.14 \\
	7.94 & * & 7.74 & 7.36 & 7.68 & 8.81 & 8.06 & 7.55 & 7.36 & 8.66\\

	7.89 & 7.74 & * & 7.63 & 8.81 & 10.23 & 8.18 & 8.13 & 7.74 & 9.08\\

	8.28 & 7.36 & 7.63 & * & 8.52 & 7.74 & 9.47 & 8.01 & 7.37 & 7.96\\

	8.64 & 7.68 & 8.81 & 8.52 & * & 9.44 & 7.98 & 8.65 & 8.49 & 7.47\\

	8.10 & 8.81 & 10.23 & 7.74 & 9.44 & * & 8.26 & 9.26 & 8.17 & 8.55 \\

	7.66 & 8.06 & 8.18 & 9.47 & 7.98 & 8.26 & * & 10.18 & 8.00 & 9.83\\

	8.04 & 7.55 & 8.13 & 8.01 & 8.65 & 9.26 & 10.18 & * & 8.28 & 7.65\\

	7.46 & 7.36 & 7.74 & 7.37 & 8.49 & 8.17 & 8.00 & 8.28 & * & 7.87 \\

	8.14 & 8.66 & 9.08 & 7.96 & 7.47 & 8.55 & 9.83 & 7.65 & 7.87 & *
	\end{bmatrix}$$
	where $\mathbf{L} = (L_{ij})_{i\ne j}$. Note that if we replace the vector $\mathbf{c}$ in \eqref{lcep} by $-\mathbf{c}$, the problem is equivalent to the original one. Therefore, the matrix $\mathbf{L}$ is symmetric, and we only need to compute 45 Lipschitz constants (the upper triangle of $\mathbf{L}$).

%	Since the $28\times 28$ MNIST images are flattened and normalized into vectors taking value in $[0,1]$, we take different values of $\epsilon$ from 0.01 to 0.1, and compute the ratio of certified examples by the Lipschitz constants we obtain, as shown in Table \ref{mnist_cert}.
	Figure \ref{cert_eg} shows several certified and non-certified examples taken from the MNIST test dataset.
%	\begin{table}[t]
%		\caption{Comparison of ratios of certified examples for SDP-NN network by \textbf{HR-2}.}
%		\label{mnist_cert}
%		\vskip 0.15in
%		\begin{center}
%			\begin{scriptsize}
%				\begin{sc}
%					\begin{tabular}{ccccccccccc}
%						\toprule
%						$\epsilon$ & 0.01 & 0.02 & 0.03 & 0.04 & 0.05 & 0.06 & 0.07 & 0.08 & 0.09 & 0.1 \\
%						\midrule
%						Ratios & 98.65\% & 97.09\% & 95.13\% & 92.82\% & 90.23\% & 87.02\% & 83.35\% & 78.67\% & 73.51\% & 63.37\% \\
%						\bottomrule
%					\end{tabular}
%				\end{sc}
%			\end{scriptsize}
%		\end{center}
%		\vskip -0.1in
%	\end{table}
	\begin{figure}[H]
	%	\vskip 0.2in
		\centering
		\captionsetup[subfigure]{labelformat=empty}
		\begin{subfigure}[t]{\textwidth}
			\centering
			\includegraphics[width=0.09\textwidth]{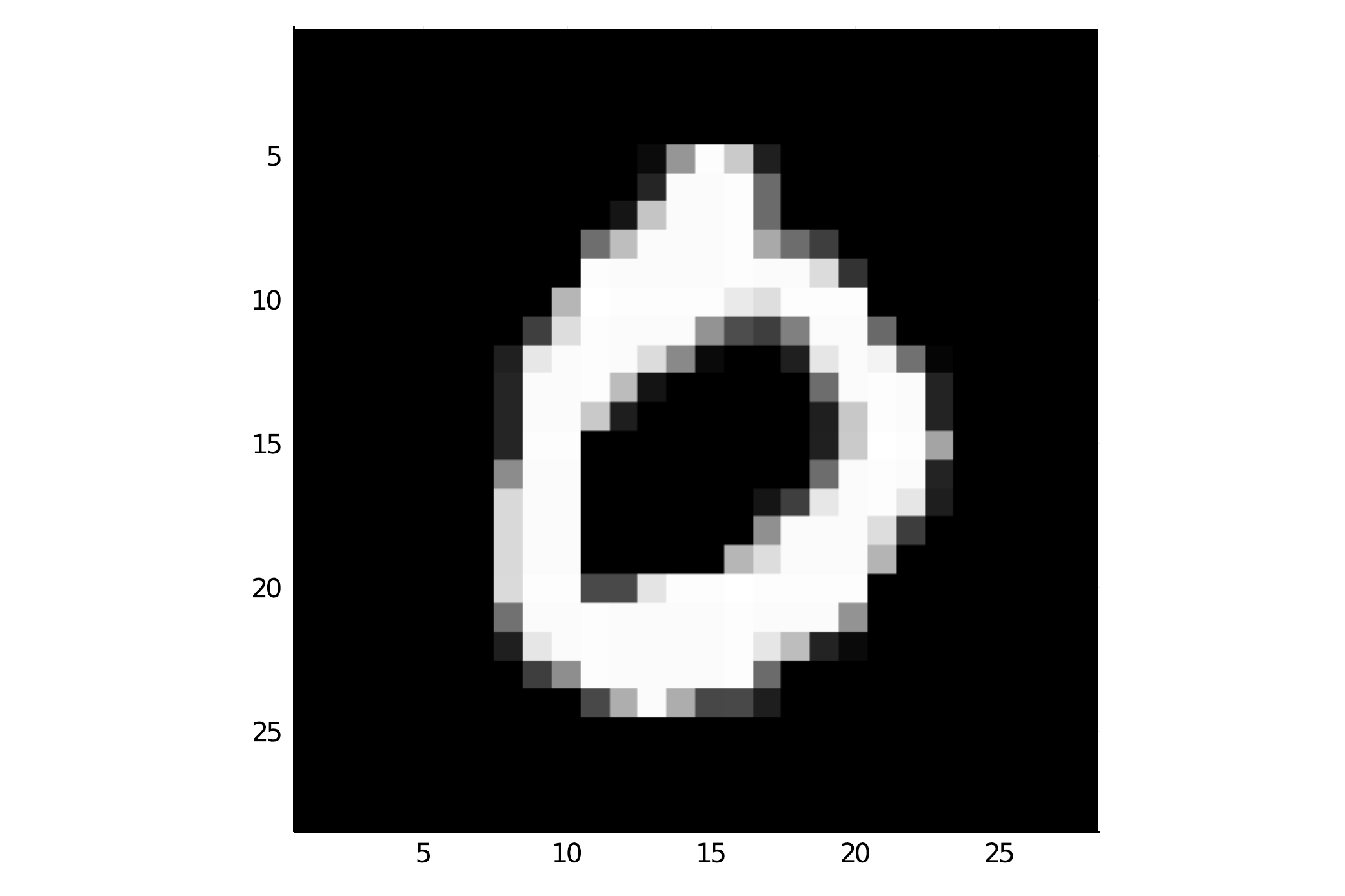}
			\includegraphics[width=0.09\textwidth]{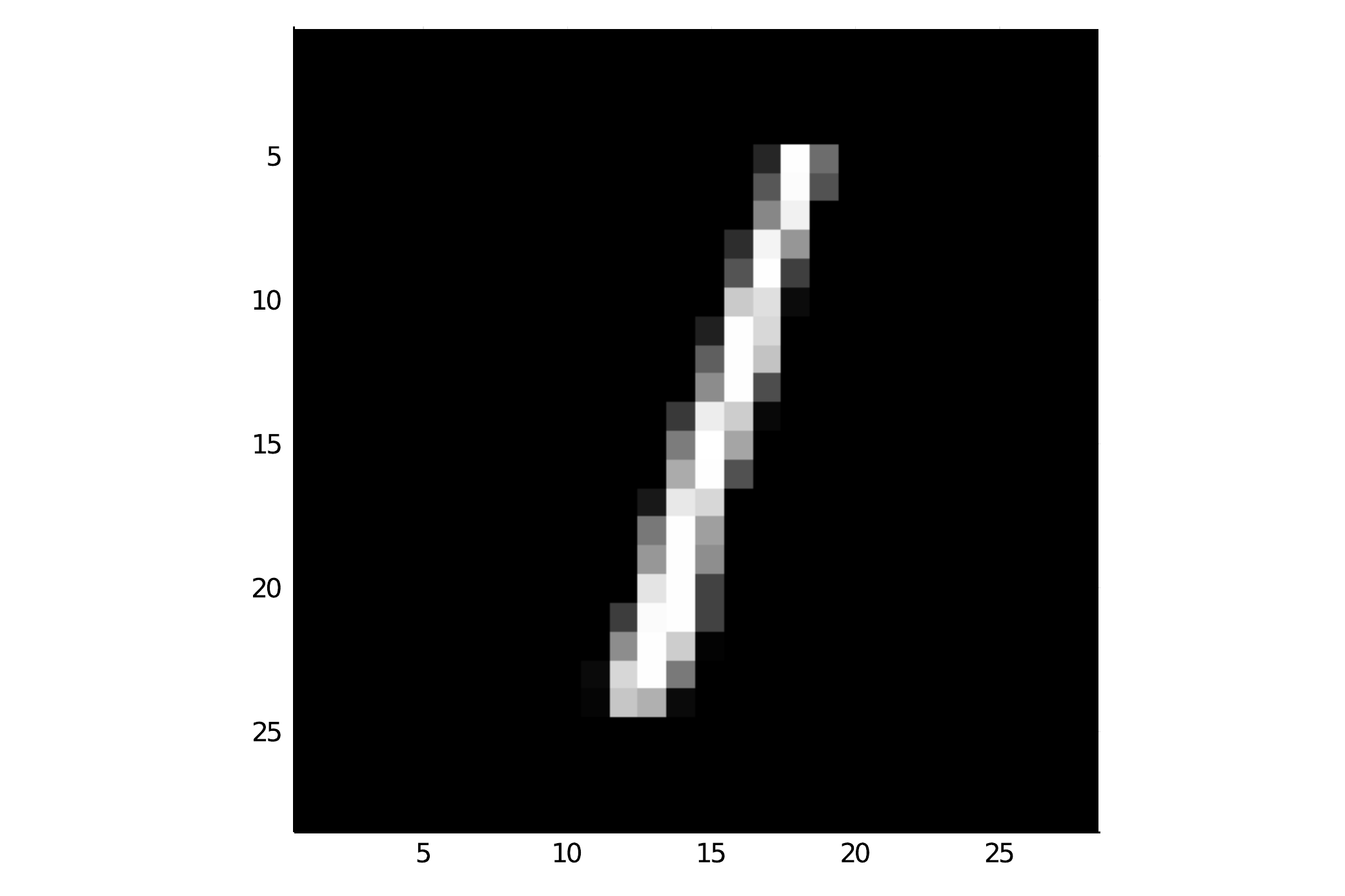}
			\includegraphics[width=0.09\textwidth]{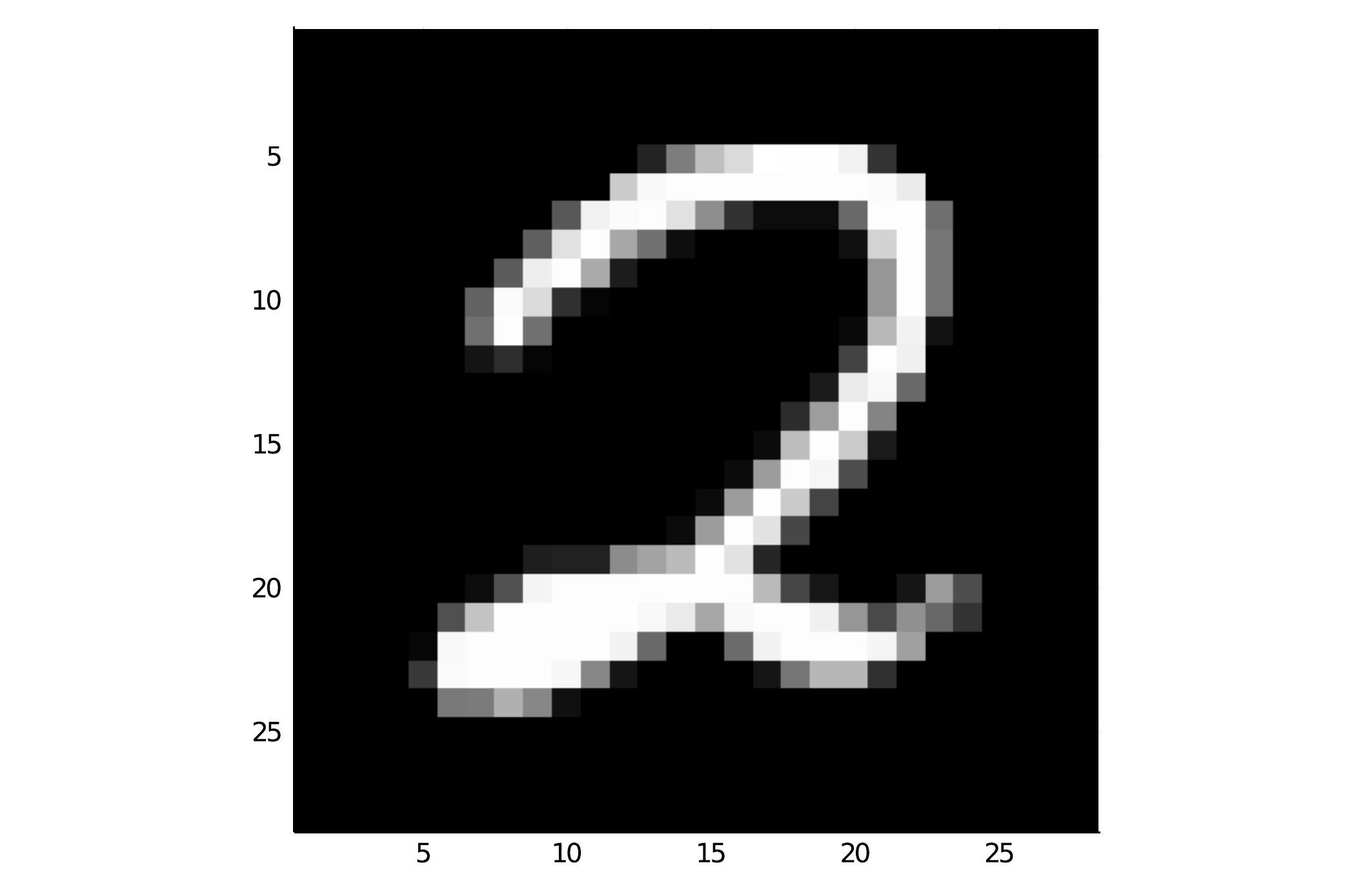}
			\includegraphics[width=0.09\textwidth]{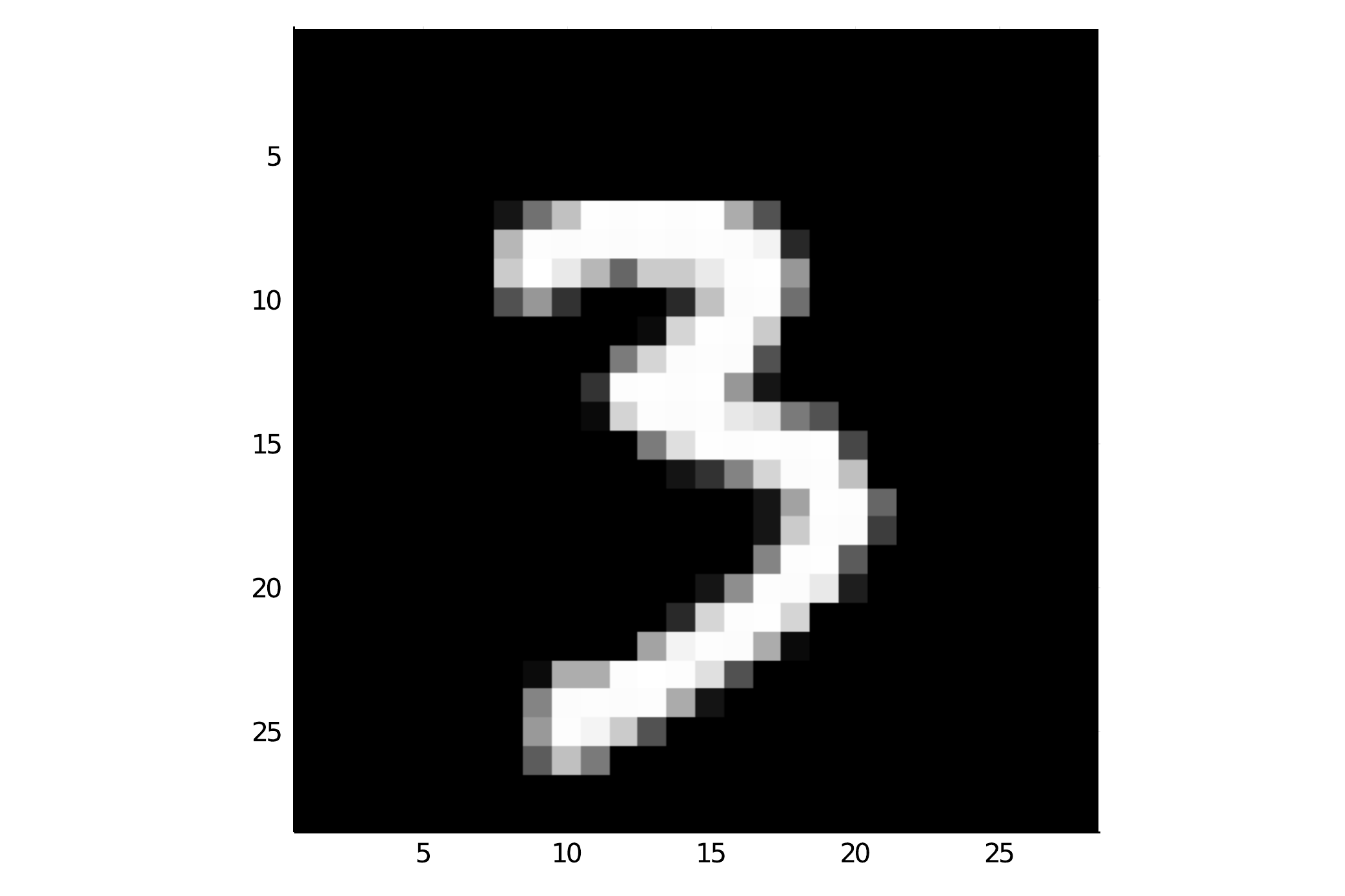}
			\includegraphics[width=0.09\textwidth]{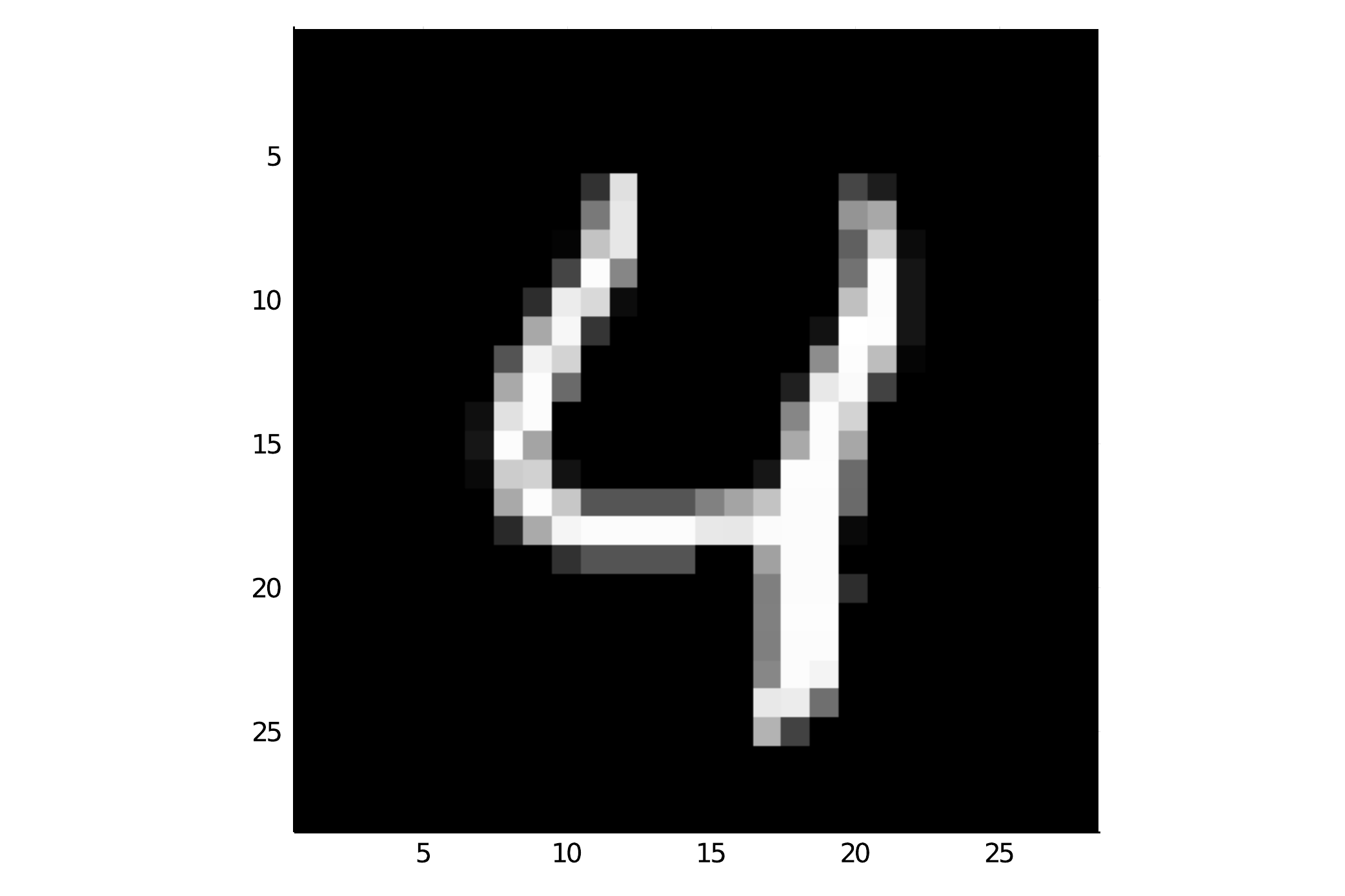}
			\includegraphics[width=0.09\textwidth]{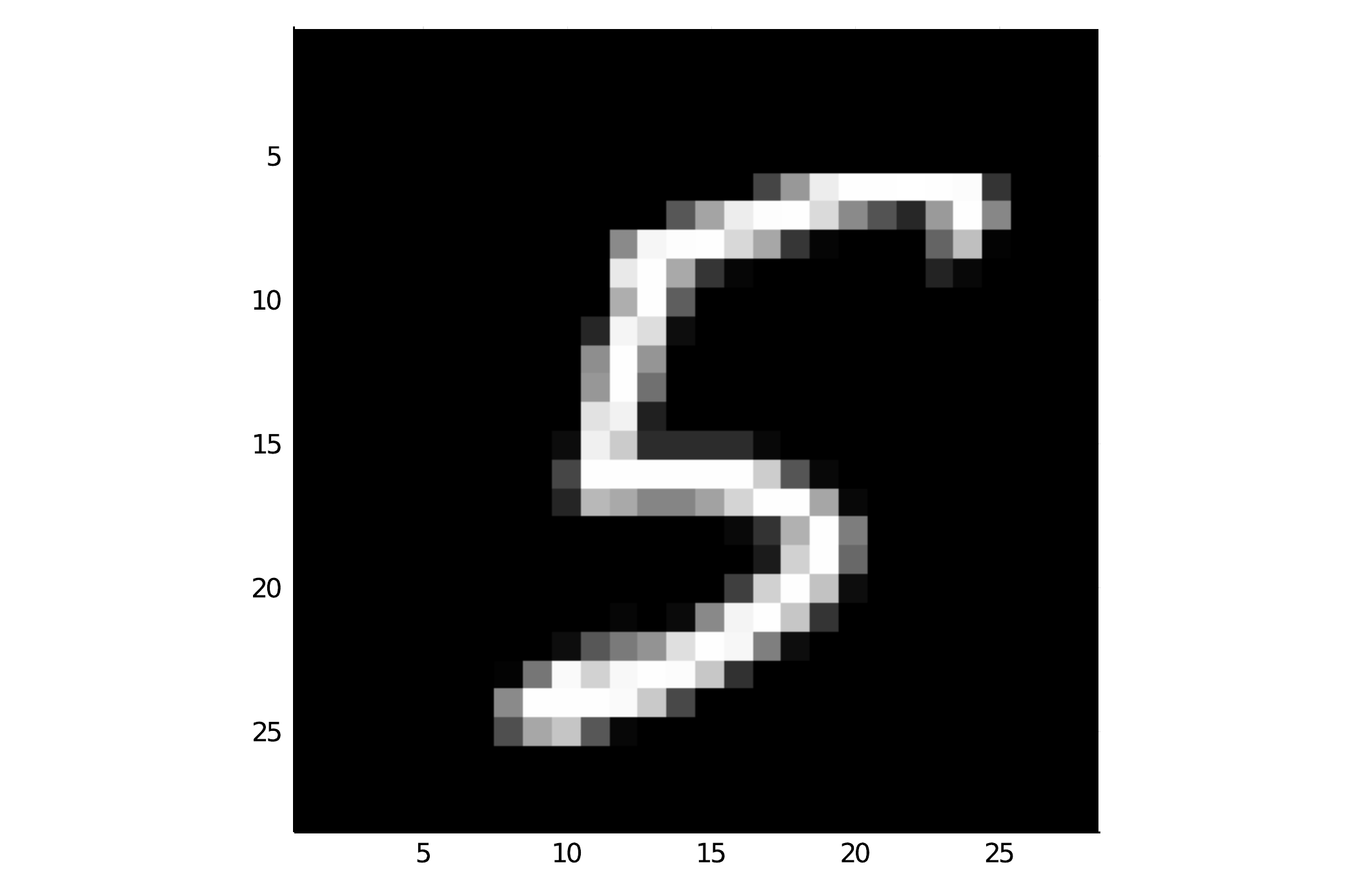}
			\includegraphics[width=0.09\textwidth]{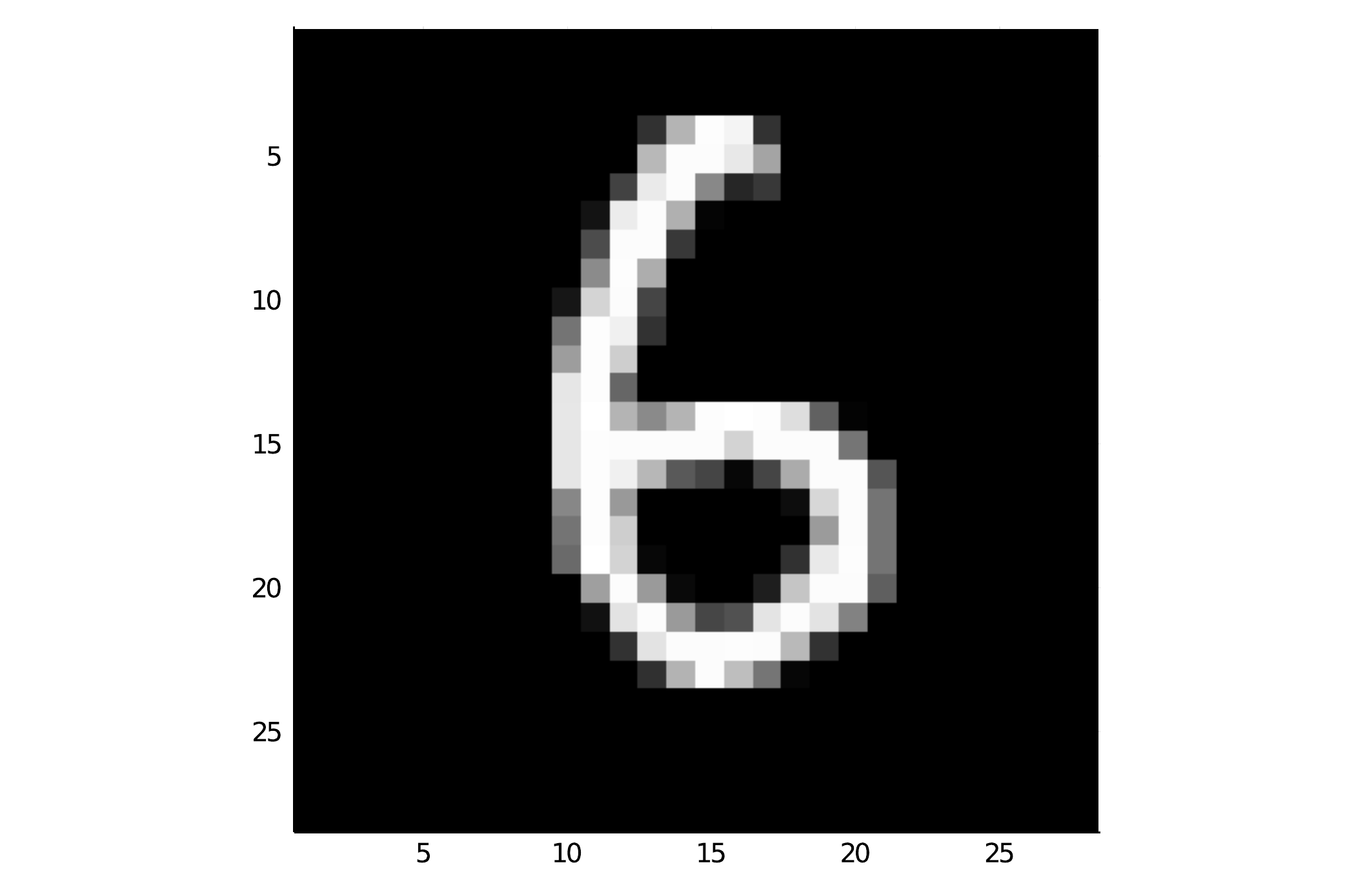}
			\includegraphics[width=0.09\textwidth]{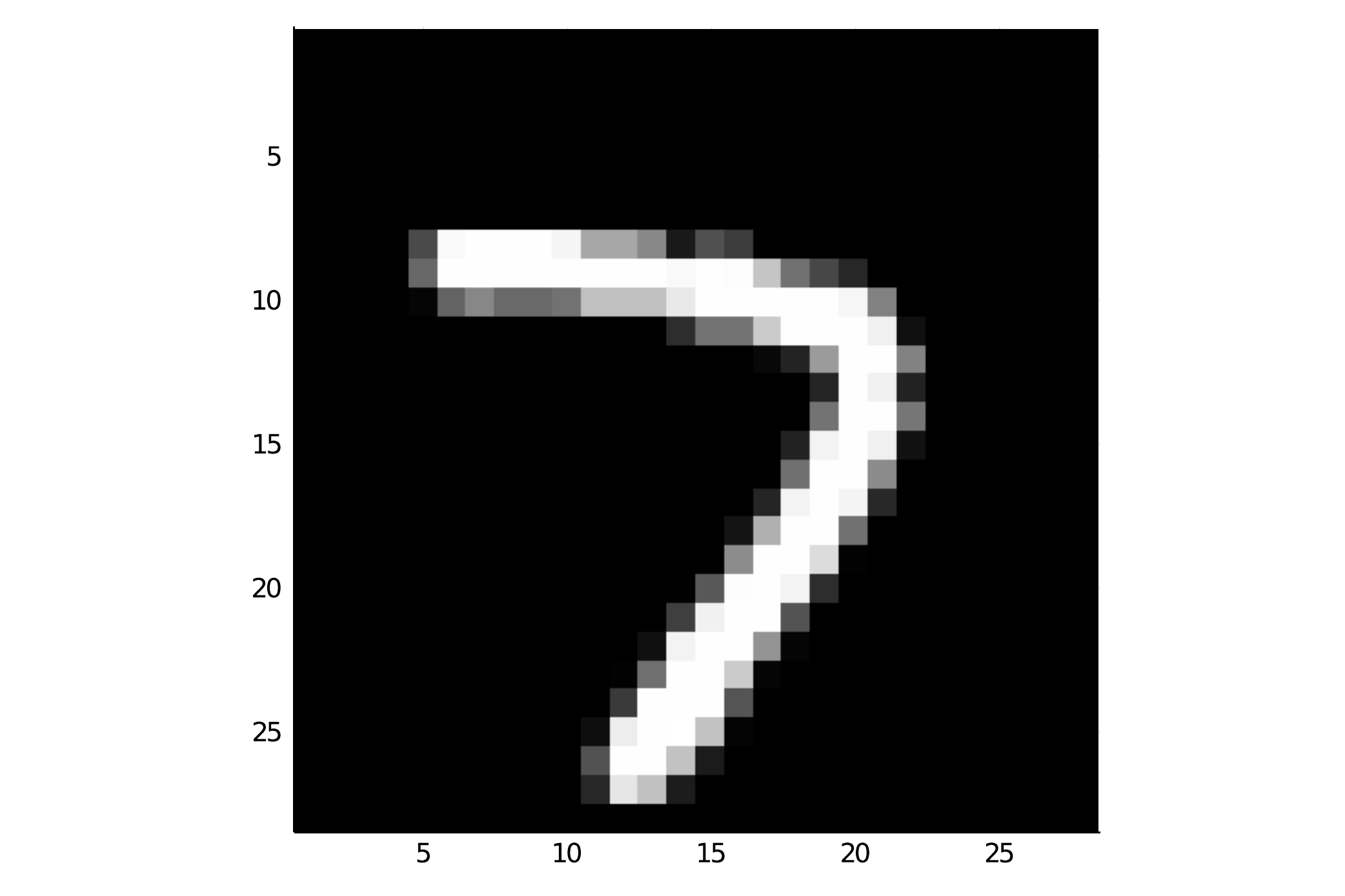}
			\includegraphics[width=0.09\textwidth]{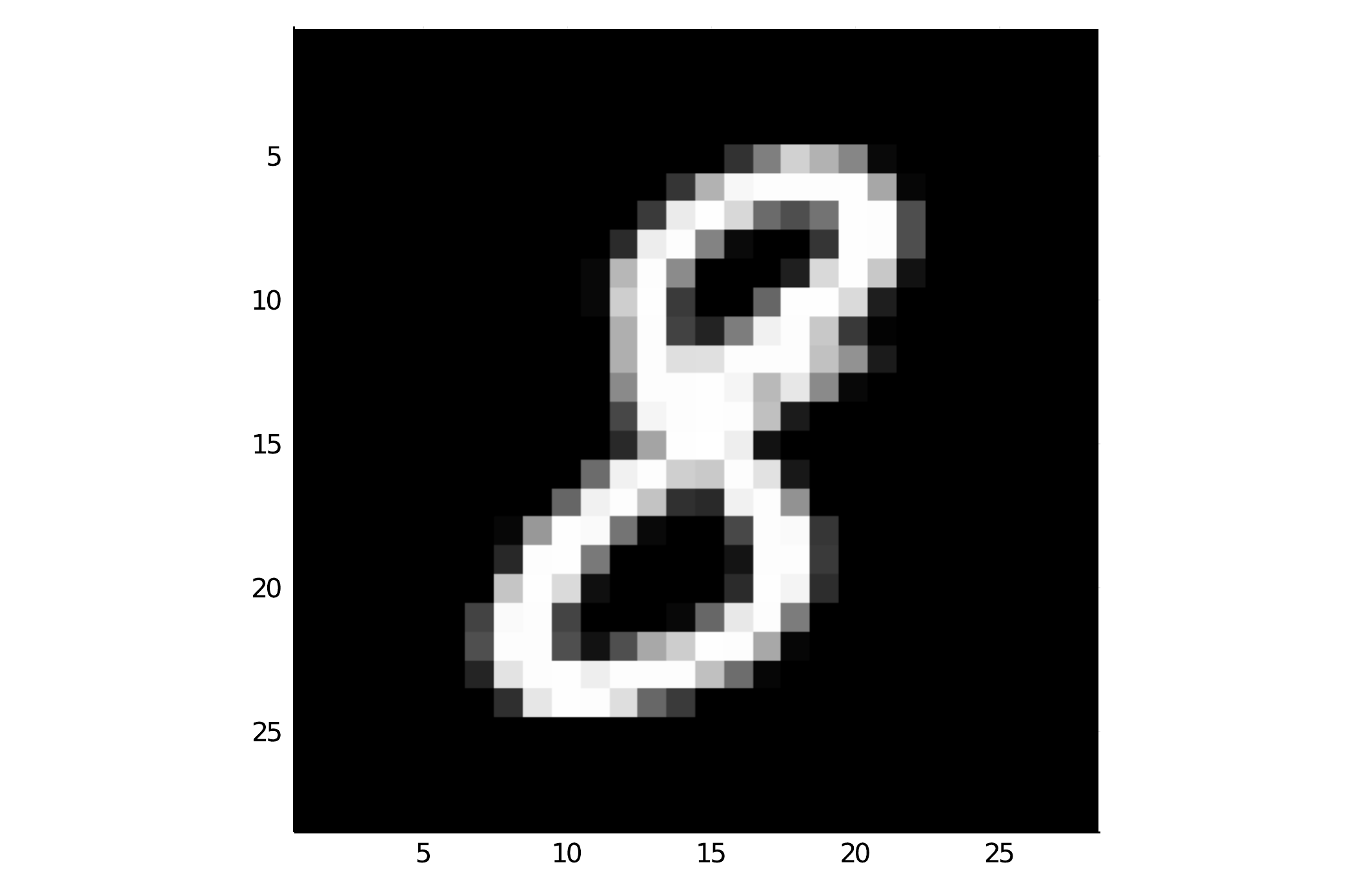}
			\includegraphics[width=0.09\textwidth]{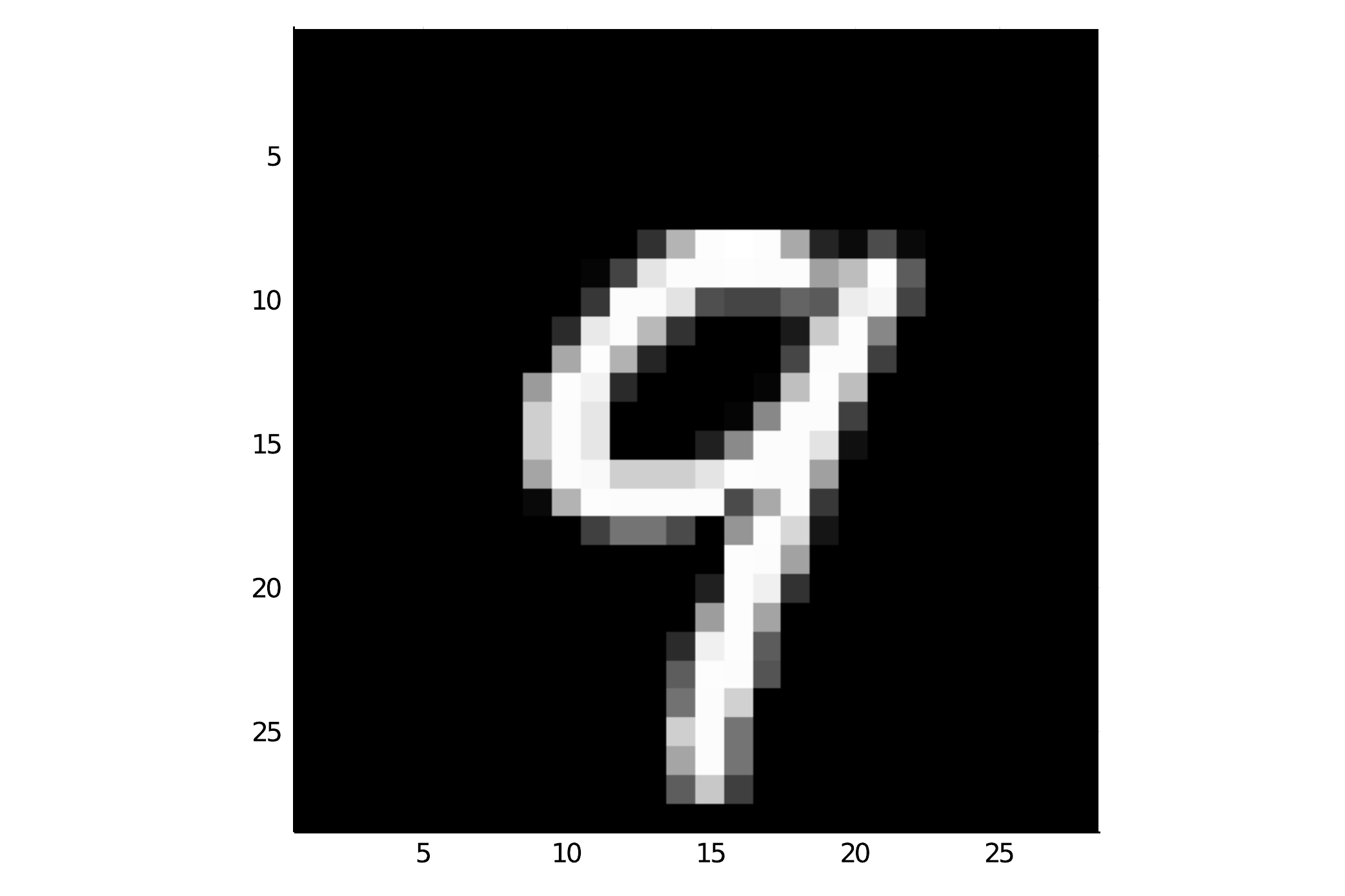}
			\caption{}
			\label{certified}
		\end{subfigure}
		\hfill
		\begin{subfigure}[t]{\textwidth}
			\centering
			\includegraphics[width=0.09\textwidth]{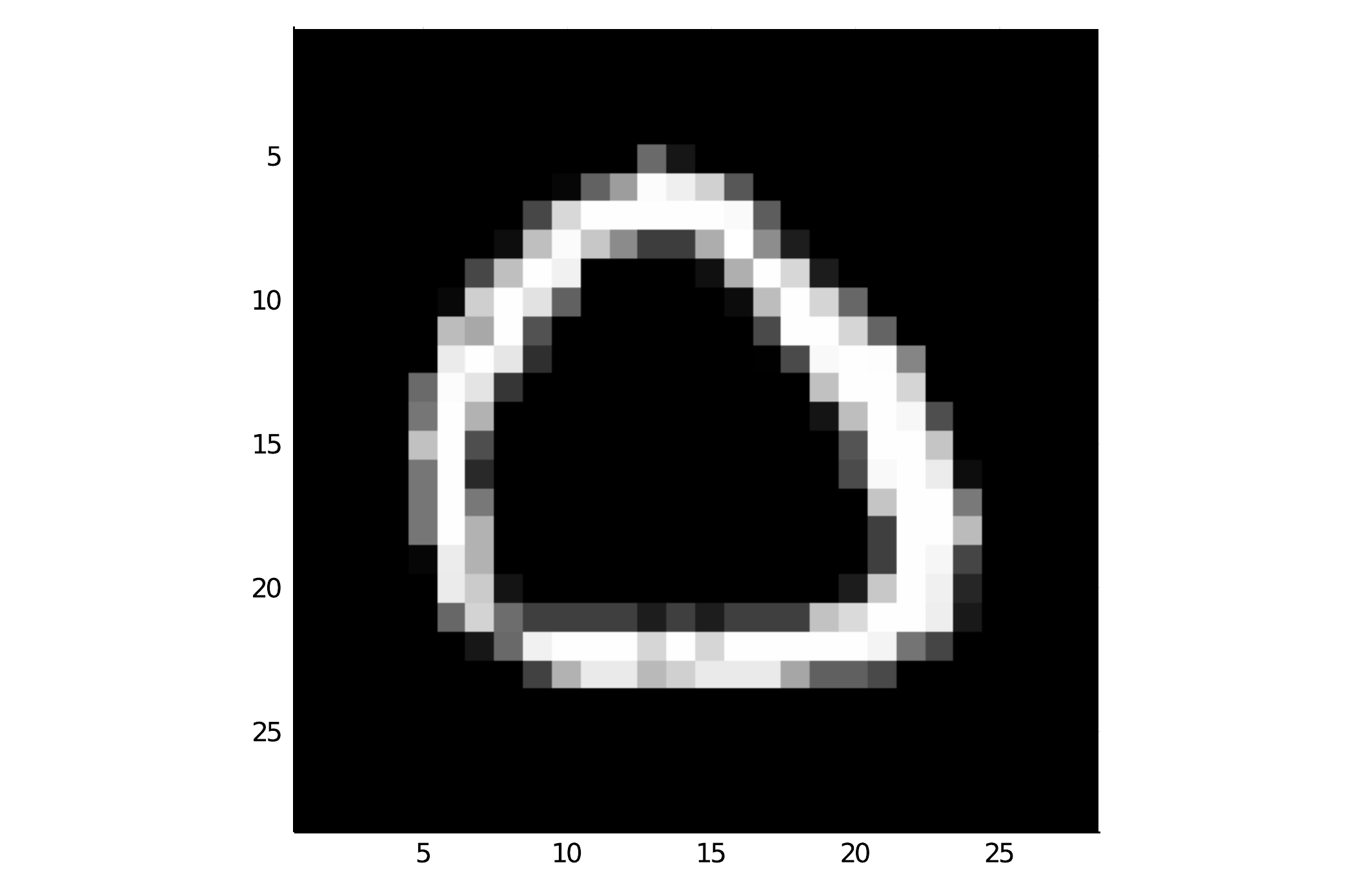}
			\includegraphics[width=0.09\textwidth]{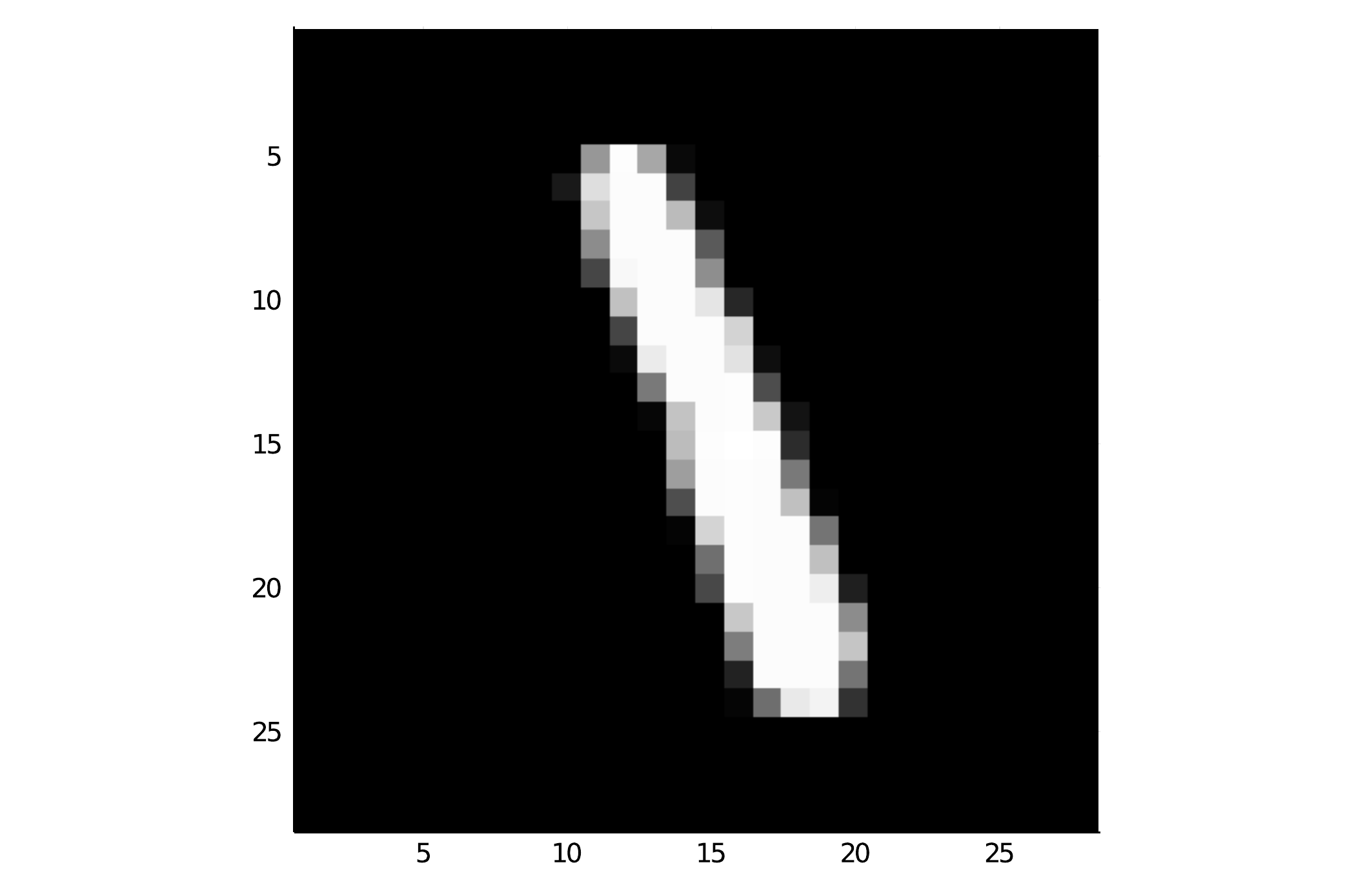}
			\includegraphics[width=0.09\textwidth]{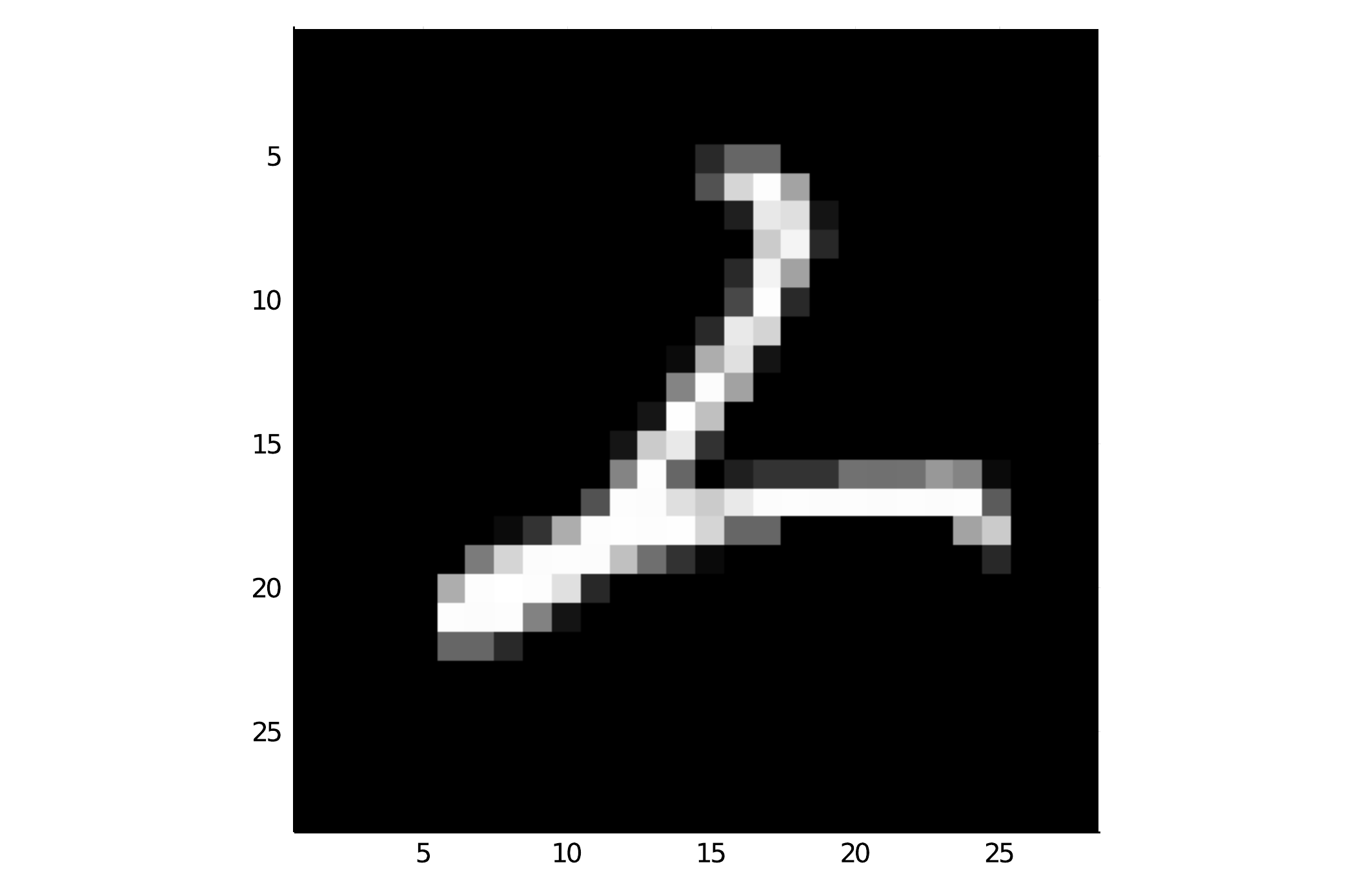}
			\includegraphics[width=0.09\textwidth]{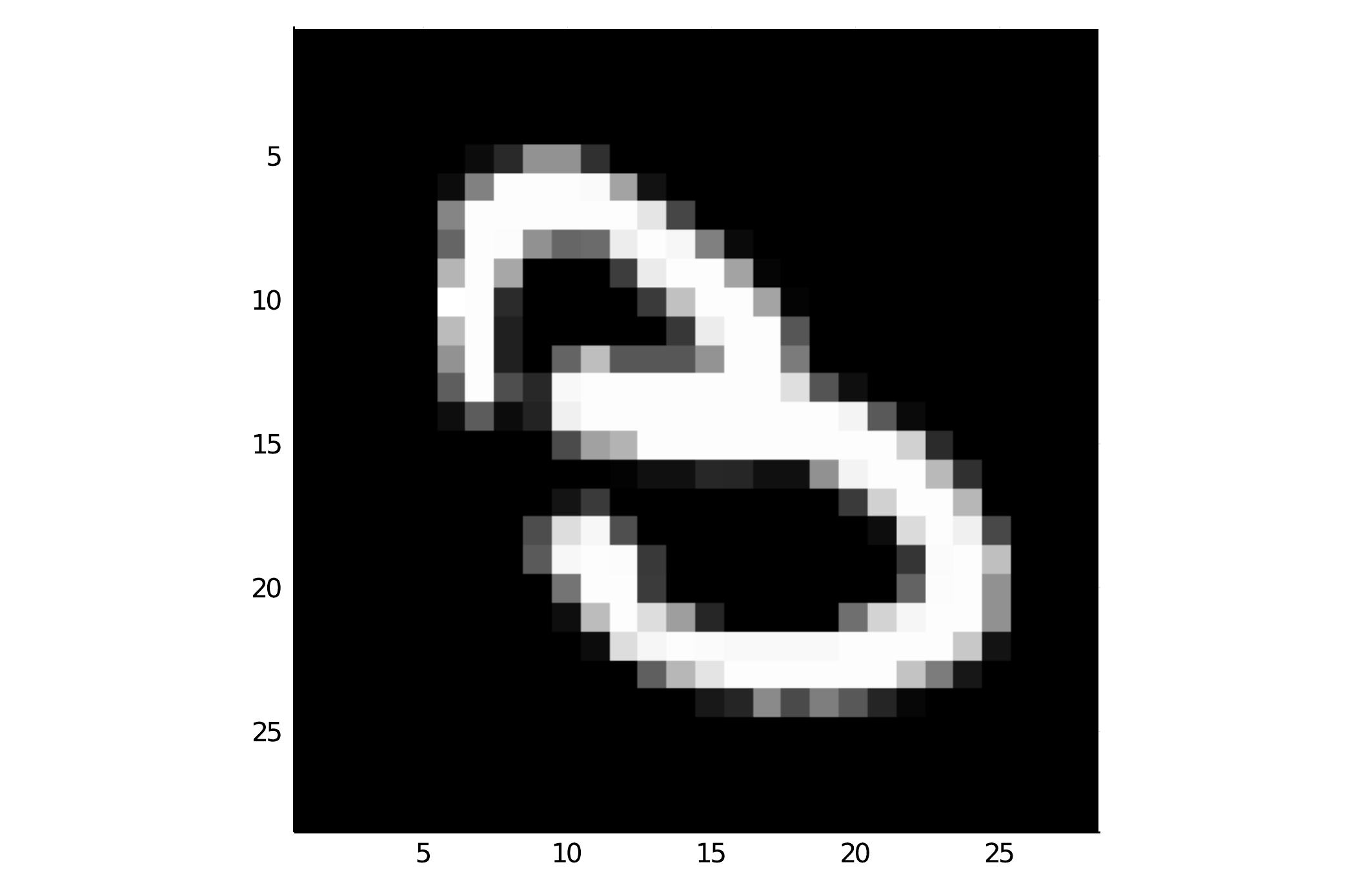}
			\includegraphics[width=0.09\textwidth]{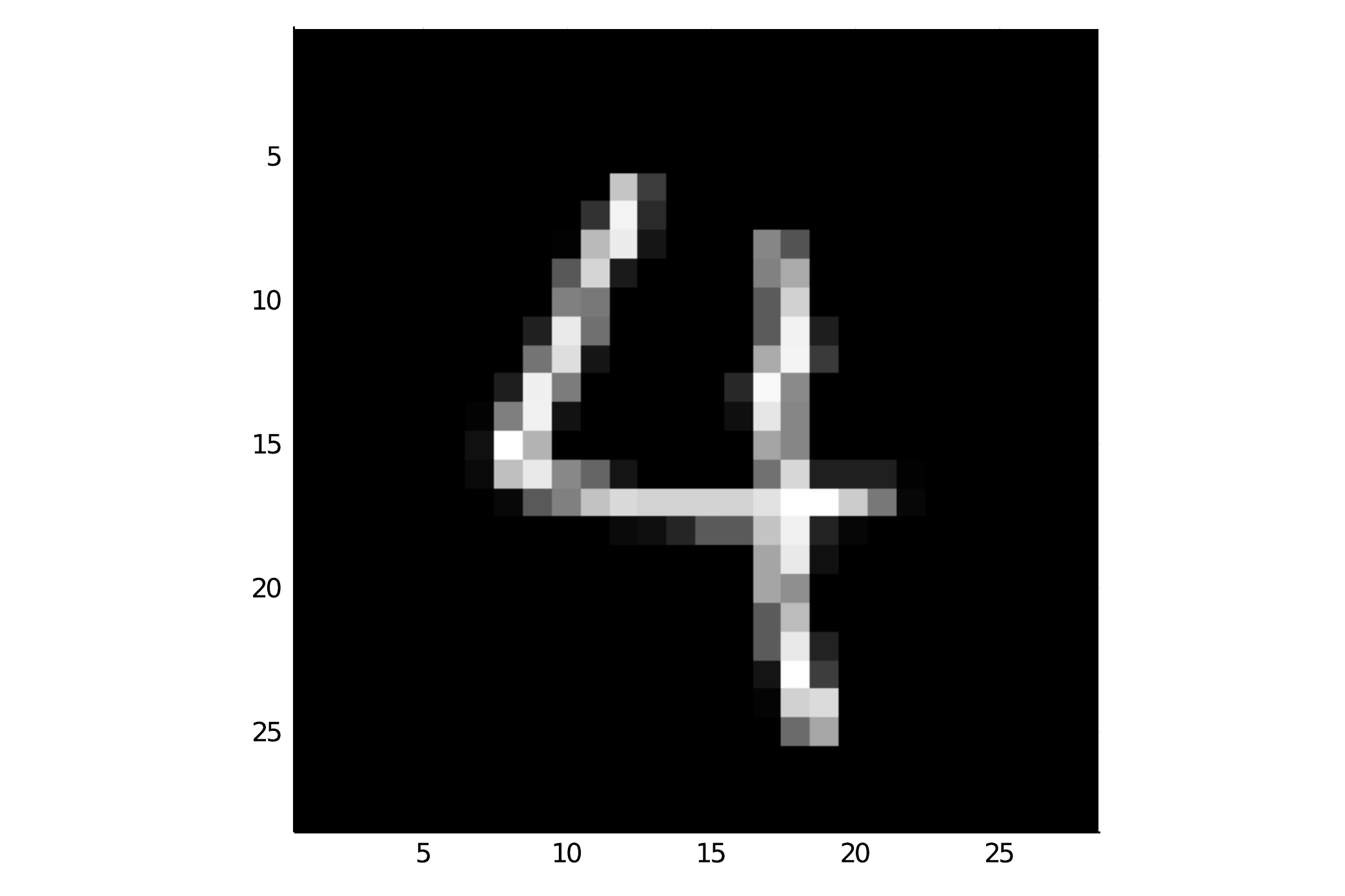}
			\includegraphics[width=0.09\textwidth]{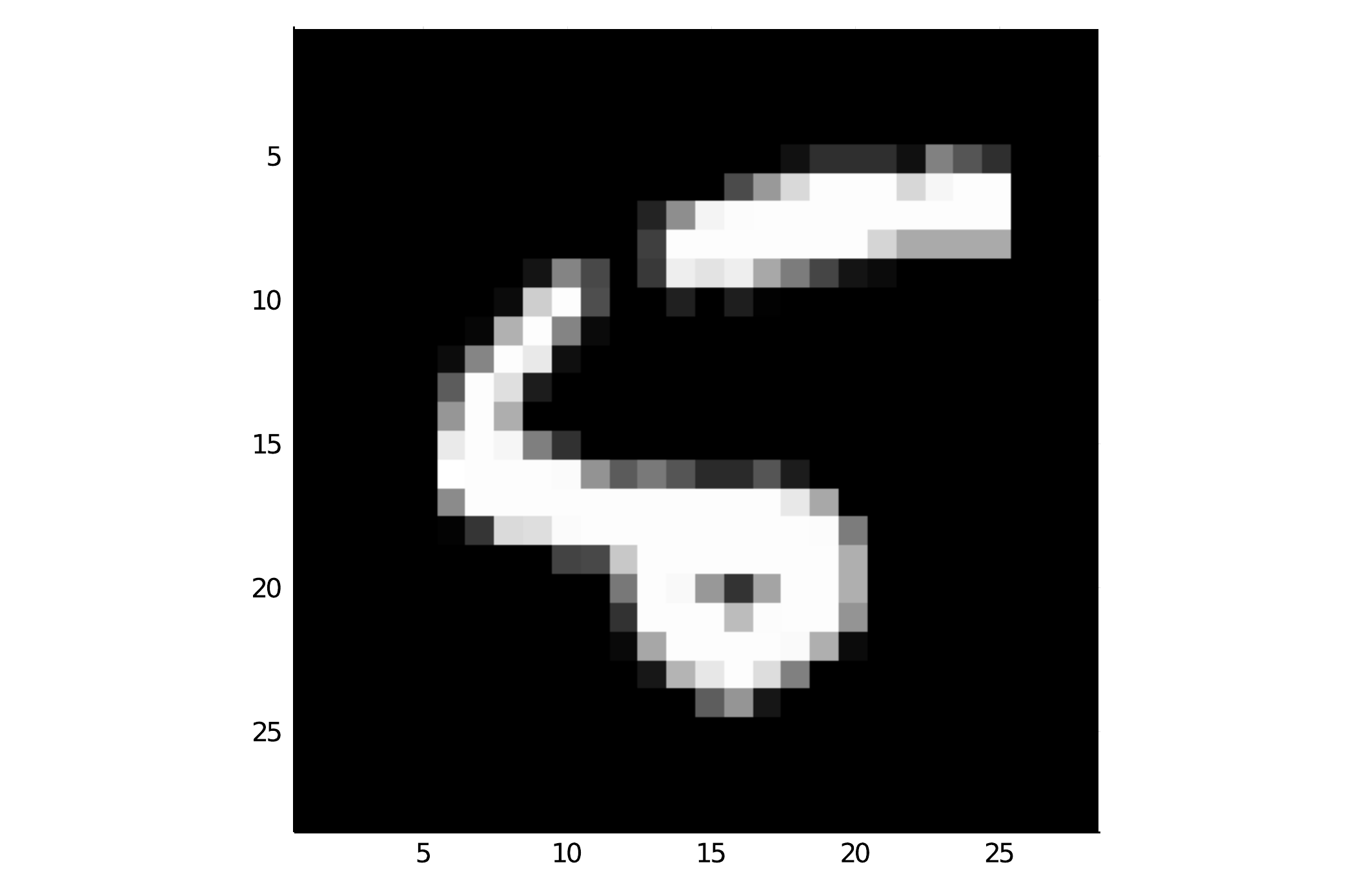}
			\includegraphics[width=0.09\textwidth]{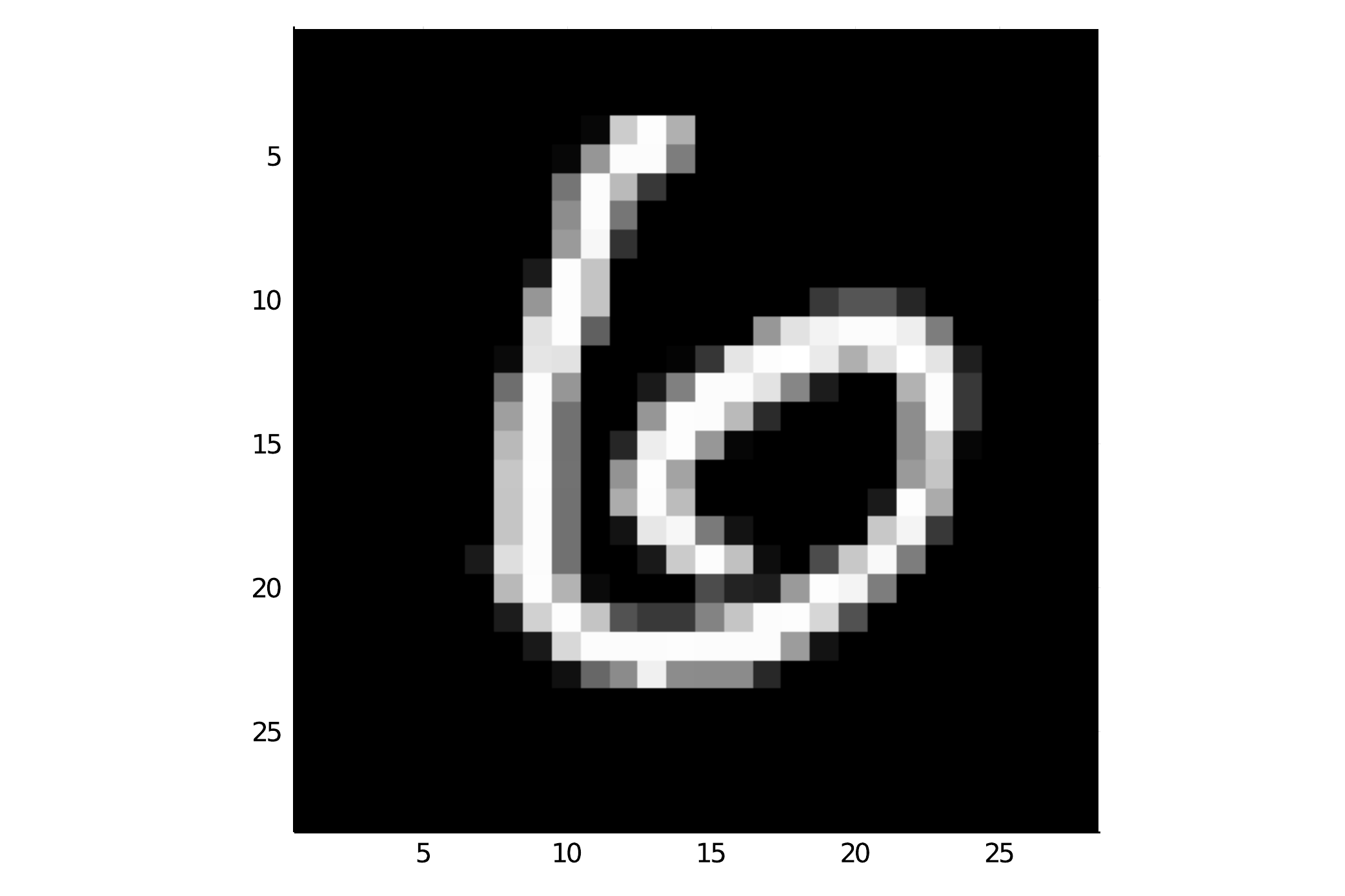}
			\includegraphics[width=0.09\textwidth]{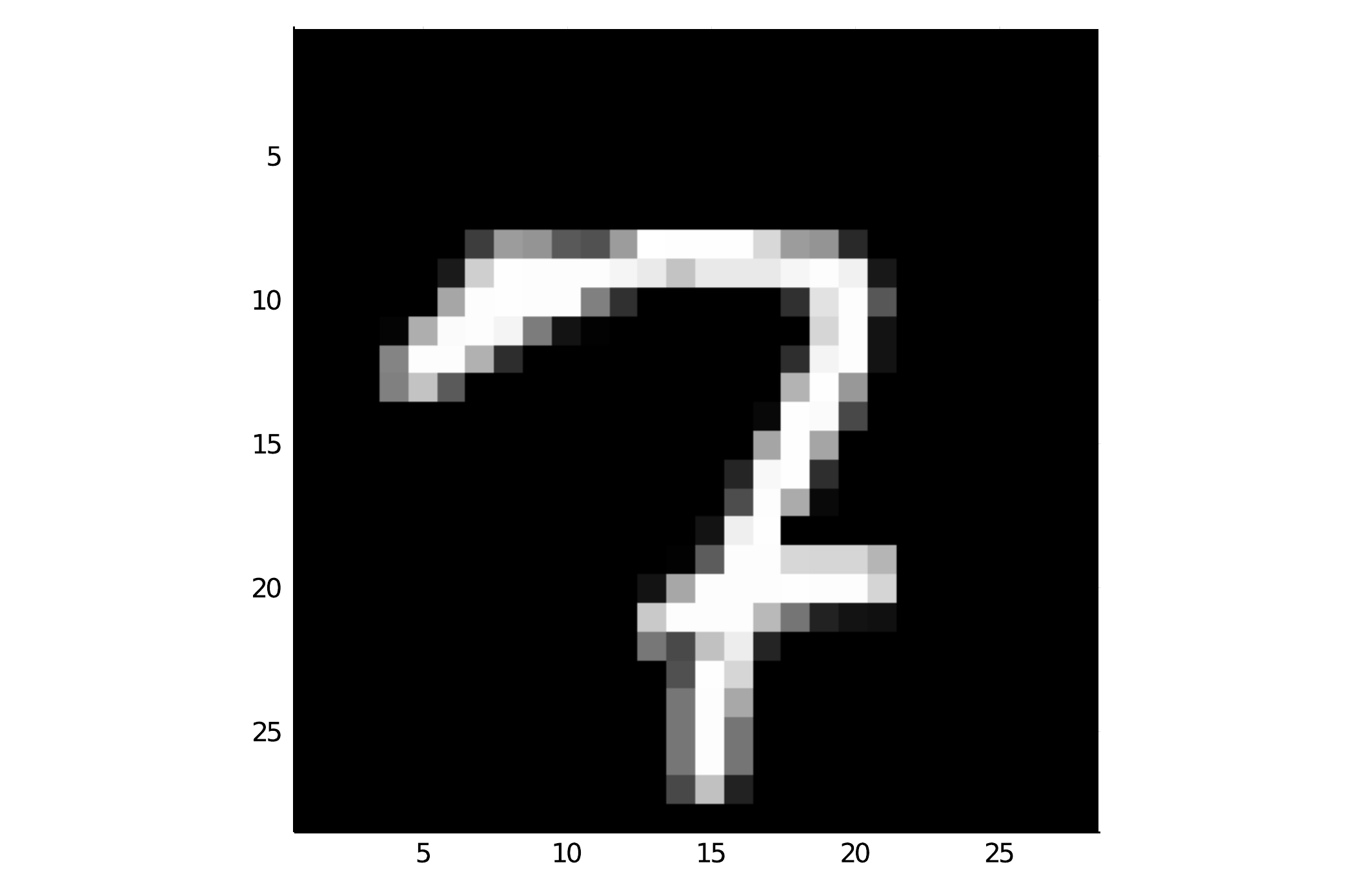}
			\includegraphics[width=0.09\textwidth]{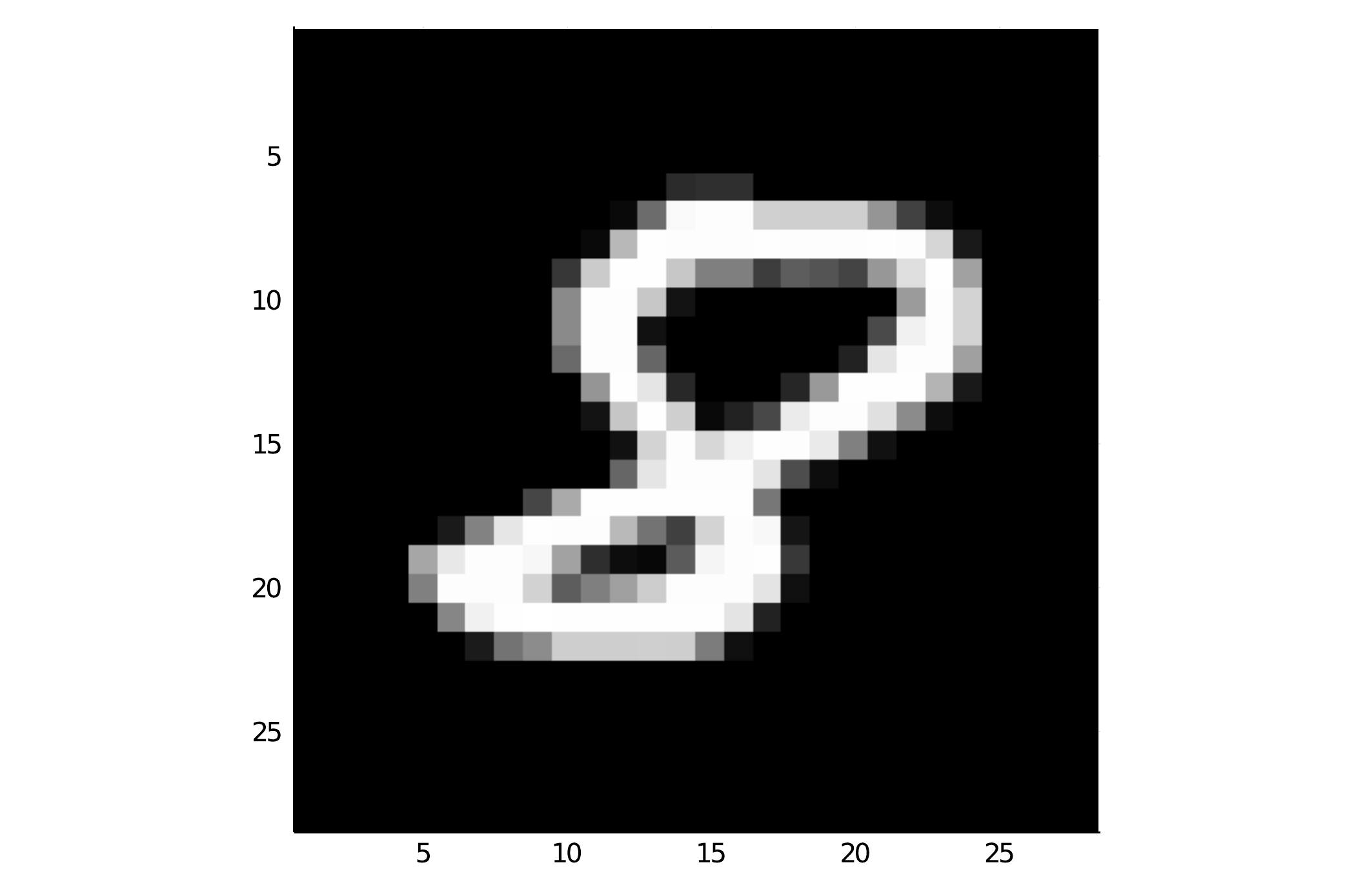}
			\includegraphics[width=0.09\textwidth]{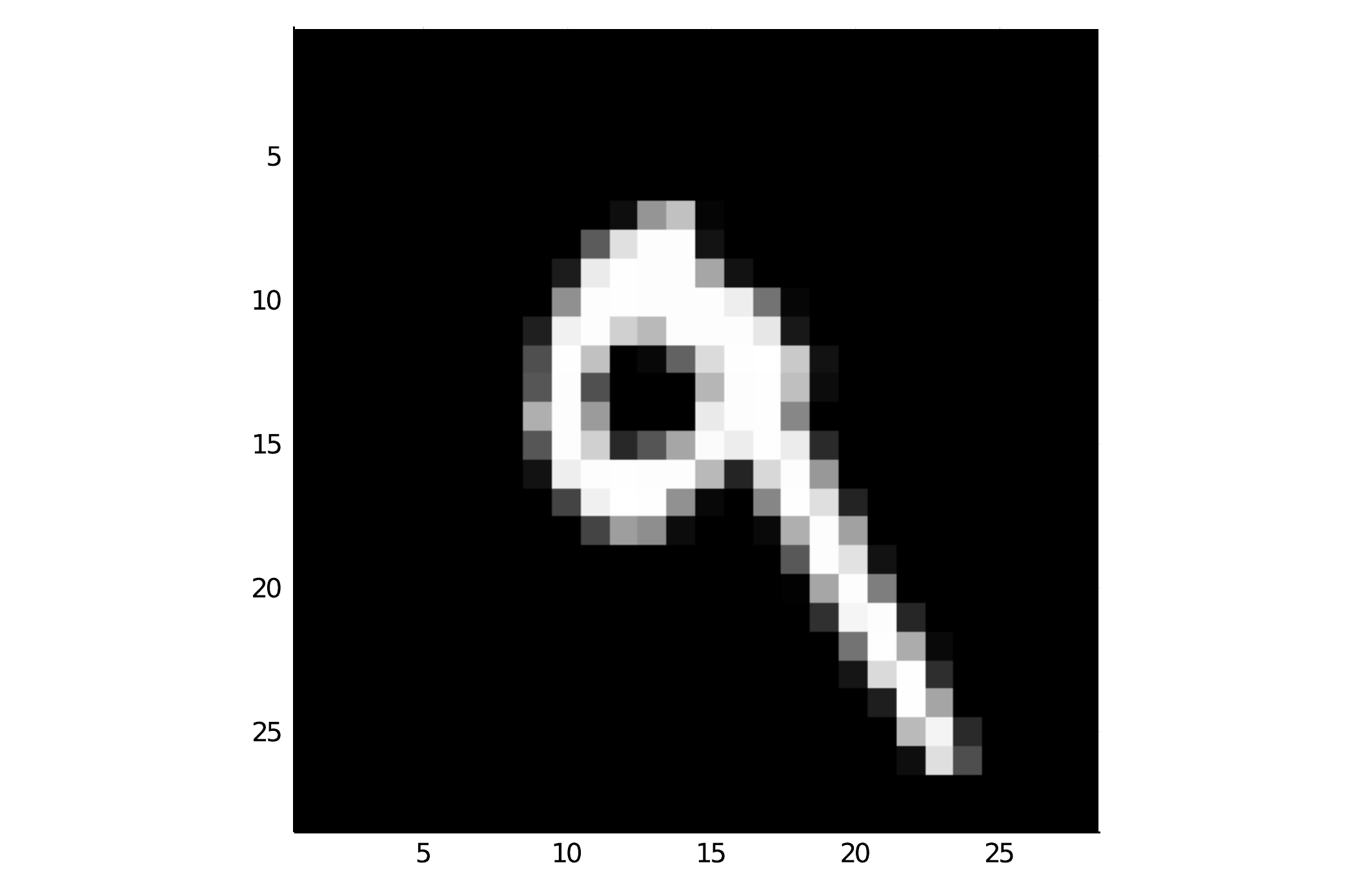}
			\caption{}
			\label{non-certified}
		\end{subfigure}
		\caption{Examples of certified points (above) and non-certified points (bellow).} \label{cert_eg}
		%\vskip -0.2in
	\end{figure}
We take different values of $\epsilon$ from 0.02 to 0.1, and compute the ratio of certified examples among the 10000 MNIST test data by the Lipschitz constants we obtain, as shown in Table \ref{mnist_cert}.
Note that for $\epsilon = 0.1$, we improve a little bit by 67\% compared to \textbf{Grad-cert} (65\%) described in \citelip{raghunathan2018certified}, as we use an exact formulation of the derivative of $\relu$ function.
	\begin{table}[t]
		\caption{Ratios of certified test examples for SDP-NN network by \textbf{HR-2}.}
		\label{mnist_cert}
		%\vskip 0.15in
		\centering
			%\begin{scriptsize}
		%		\begin{sc}
					\begin{tabular}{cccccc}
						\toprule
						$\epsilon$ & 0.02 & 0.04  & 0.06 & 0.08 & 0.1 \\
						\midrule
						Ratios  & 97.24\% & 92.84\%  & 87.10\%  & 78.34\% & 67.63\% \\
						\bottomrule
					\end{tabular}
		%		\end{sc}
		%	\end{scriptsize}
		%\end{center}
		%\vskip -0.1in
	\end{table}

\section{Notes and sources}

Various optimization frameworks have been recently used to certify the robustness of deep neural networks, including \ac{SDP} \citelip{Raghuathan18, fazlyab2019safety}, \ac{LP} \citelip{dvijotham2018dual, wong2017provable}, mixed integer programming (MIP) \citelip{tjeng2017evaluating}, outer polytope approximation \citelip{boopathy2019cnn,zhang2018efficient,weng2018evaluating,weng2018towards}, and averaged activation operators \citelip{combettes2019lipschitz}.

Upper bounds on Lipschitz constants of deep networks can be obtained by a product of the layer-wise Lipschitz constants \citelip{huster2018limitations}.
This is however extremely loose and has many limitations. Note that \citelip{virmaux2018lipschitz} proposed an improvement via a finer product.
Departing from this approach, \citelip{latorre2020lipschitz} proposed a \ac{QCQP} formulation to estimate the Lipschitz constant of neural networks.
Shor's relaxation allows to obtain a valid upper bound. Alternatively, using the \ac{LP} hierarchy, \citelip{latorre2020lipschitz} obtains tighter upper bounds. By another \ac{SDP}-based method, \citelip{fazlyab2019efficient} provides an upper bound of the Lipschitz constant.
However this method is restricted to the $L_2$-norm whereas most robustness certification problems in deep learning are rather concerned with the $L_{\infty}$-norm.
The heuristic approach presented in this chapter can also apply to nearly sparse polynomial optimization problems, coming from the general optimization literature \citelip{chen2022sublevel}, or for other types of networks, such as monotone deep equilibrium networks \citelip{mondeq21}.

The proof of Theorem \ref{th:lip} is available in \citelip[Appendix 1]{LipPOP20}.
The interested reader can find more complete results for global/local Lipschitz constants of both 1-hidden layer and 2-hidden layer networks with various sizes and sparsities in \citelip[Appendix F and G]{LipPOP20}.

%\bibliographystylelip{alpha}
%\bibliographylip{preface,sdp,sparsemat,pop,cs,roundoff,lip,ncsparse,ts,cstssos,opf,jsr,misc,app}
\providecommand{\etalchar}[1]{$^{#1}$}

\chapter{Noncommutative optimization and quantum information}\label{chap:ncsparse}
In this chapter, we handle a specific class of sparse \ac{POP}s with noncommuting variables, and adapt the concept of \ac{CS} from Section \ref{chap:cs} in this setting.
A converging hierarchy of semidefinite relaxations for eigenvalue optimization is provided.
The Gelfand-Naimark-Segal (GNS) construction is applied to extract optimizers if flatness and irreducibility conditions are satisfied.
Among the main techniques used are amalgamation results from operator algebra.
The theoretical results are utilized to compute lower bounds on minimal eigenvalue of noncommutative polynomials from the literature, in particular arising from quantum information theory.

\section{Noncommutative polynomials}
We consider a finite alphabet $x_1,\dots,x_n$ (called noncommutating variables) and generate all possible words (monomials) of finite length in these letters.
The empty word is denoted by 1.
The resulting set of words is  $\langle \ux \rangle$, with $\ux = (x_1,\dots, x_n)$.
We denote by $\RX$ the ring of real polynomials in the noncommutating variables $\ux$. An element in $\RX$ is called a \emph{\ac{nc} polynomial}.
%A \textit{monomial} is an element of the form $a_w w$, with $a_w \in \R \setminus \{0\}$ and $w \in \langle \ux \rangle$.
The \emph{support} of an nc polynomial $f=\sum_{w\in\langle \ux \rangle}a_ww$ is defined by $\supp(f)\coloneqq\{w\in\langle \ux \rangle\mid a_w\ne0\}$ and the \textit{degree} of $f$, denoted by $\deg(f)$, is the length of the longest word in $\supp(f)$. The set of nc polynomials of degree at most $r$ is denoted by $\RX_r$.
Let us denote by $\W_r$ the vector of all words of degree at most $r$ with resepct to the lexicographic order.
Note that $\W_r$ serves as a monomial basis of $\RX_r$ and the length of $\W_r$ is equal to $\bsigma(n,r) \coloneqq \sum_{i=0}^r n^i = \frac{n^{r+1}-1}{n-1}$.
The ring $\RX$ is equipped with the involution $\star$ that fixes $\R \cup \{x_1,\dots,x_n\}$ point-wise and reverses words, so that $\RX$ is the $\star$-algebra freely generated by $n$ symmetric letters $x_1,\dots,x_n$.
For instance $(x_1 x_2 + x_2^2 + 1)^\star = x_2 x_1 + x_2^2 + 1$.
The set of all \textit{symmetric elements} is defined as $\SymRX \coloneqq \{f \in \RX \mid f = f^\star  \}$.
A simple example of element of $\SymRX$ is $x_1 x_2 + x_2 x_1 + x_2^2 + 1$.
An nc polynomial of the form $g^\star g$ is called an {\em hermitian square}.
A given $f \in \SymRX$ is a \ac{SOHS} if there exist nc polynomials $h_1,\dots,h_t \in \RX$ such that $f = h_1^\star h_1 + \dots + h_t^\star h_t$.
Let $\SigmaX$ stand for the set of \ac{SOHS}. We denote by $\SigmaX_{r} \subseteq \SigmaX$ the set of \ac{SOHS} polynomials of degree at most $2 r$.
We now recall how to check whether a given $f \in \SymRX$ is an \ac{SOHS}.
The existing procedure, known as the \emph{Gram matrix method}, relies on the following proposition.
\begin{proposition}\label{prop:ncGram}
Assume that $f\in\SymRX$ is of degree at most $2d$.
Then $f \in \SigmaX$ if and only if there exists $\G_f \succeq 0$ satisfying
\begin{align}\label{eq:ncGram}
f = \W_d^\star \, \G_f \, \W_d.
\end{align}
Conversely, given such $\G_f \succeq 0$ of rank $t$, one can construct $g_1,\dots,g_t \in \RX$ of degree at most $d$ such that $f = \sum_{i=1}^t g_i^\star g_i$.
\end{proposition}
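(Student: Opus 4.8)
The plan is to prove the equivalence in two directions, both of which are standard Gram matrix arguments adapted to the noncommutative setting.

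\textbf{Forward direction.} Suppose $f = \sum_{i=1}^t g_i^\star g_i$ for some $g_i \in \RX$ of degree at most $d$ (if $f \in \SigmaX$ has degree at most $2d$, the representing hermitian squares can be taken of degree at most $d$; this is the noncommutative analogue of the Newton polytope argument, and I would cite it as known or invoke the filtration $\RX_d$). Each $g_i$ is a linear combination of the words in $\W_d$, so write $g_i = \bv_i^\intercal \W_d$ for a coefficient vector $\bv_i \in \R^{\bsigma(n,d)}$. Then $g_i^\star g_i = \W_d^\star \bv_i \bv_i^\intercal \W_d$, and summing gives $f = \W_d^\star \G_f \W_d$ with $\G_f = \sum_{i=1}^t \bv_i \bv_i^\intercal \succeq 0$. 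The only subtlety is that $\W_d^\star \M \W_d$ is not automatically unique as a function of $\M$: distinct symmetric matrices can yield the same nc polynomial because a word $w$ of degree $\le 2d$ may admit several factorizations $w = u^\star v$ with $u,v \in \W_d$. This does not affect the forward direction (we exhibit one valid $\G_f$), but it must be acknowledged.

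\textbf{Converse direction.} Given $\G_f \succeq 0$ of rank $t$ with $f = \W_d^\star \G_f \W_d$, take a Cholesky-type factorization $\G_f = \sum_{i=1}^t \bv_i \bv_i^\intercal$ with $\bv_i \in \R^{\bsigma(n,d)}$ linearly independent (e.g.\ via the eigendecomposition $\G_f = \sum_i \lambda_i \bu_i \bu_i^\intercal$ with $\lambda_i > 0$, setting $\bv_i = \sqrt{\lambda_i}\,\bu_i$). Define $g_i \coloneqq \bv_i^\intercal \W_d \in \RX$, each of degree at most $d$. Then
\[
\sum_{i=1}^t g_i^\star g_i = \sum_{i=1}^t \W_d^\star \bv_i \bv_i^\intercal \W_d = \W_d^\star \G_f \W_d = f,
\]
so $f$ is an \ac{SOHS}. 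This shows $f \in \SigmaX$, and since $f$ has degree at most $2d$, in fact $f \in \SigmaX_d$.

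\textbf{Main obstacle.} The genuinely delicate point is the claim in the forward direction that an \ac{SOHS} of degree at most $2d$ can always be represented with hermitian squares of degree at most $d$ — i.e.\ that degree cancellation cannot occur in a sum of hermitian squares. In the commutative case this is handled by the Newton polytope; in the nc case one uses that the top-degree part of $\sum_i g_i^\star g_i$ is $\sum_i (g_i^{\mathrm{top}})^\star g_i^{\mathrm{top}}$ where $g_i^{\mathrm{top}}$ collects the highest-degree words of $g_i$, and this leading form cannot vanish unless every $g_i^{\mathrm{top}} = 0$ (the words of maximal length $2\deg(g_i)$ in $(g_i^{\mathrm{top}})^\star g_i^{\mathrm{top}}$ are of the form $w^\star w$ and appear with a strictly positive coefficient equal to the squared norm of the corresponding coefficient vector). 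I would present this as a short lemma or, if the excerpt permits, cite it as the standard noncommutative Gram matrix fact. Everything else is routine linear algebra.
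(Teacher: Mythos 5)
Your proof is correct and follows essentially the same route as the standard argument the paper relies on (it cites Helton, \S 2.2, rather than reproducing the proof): express each $g_i$ as $\bv_i^\intercal \W_d$, so that $\G_f=\sum_i \bv_i\bv_i^\intercal$, and conversely factor a \ac{PSD} Gram matrix of rank $t$ to read off $t$ hermitian squares, with the degree bound $\deg g_i\le d$ secured by the no-cancellation argument on leading terms. Only a cosmetic nitpick: in your lemma sketch the coefficient of a top-degree word $w^\star w$ in $\sum_i (g_i^{\mathrm{top}})^\star g_i^{\mathrm{top}}$ is $\sum_i (c^{(i)}_w)^2$ (not the full squared norm of the coefficient vector), which still cannot all vanish unless every $g_i^{\mathrm{top}}=0$, so the conclusion stands.
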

Any symmetric matrix $\G_f$ (not necessarily positive semidefinite) satisfying \eqref{eq:ncGram} is called a \emph{Gram matrix} of $f$.

Given a set of nc polynomials $\frakg = \{g_1,\dots,g_m \} \subseteq \SymRX$, the nc semialgebraic set ${\cD_{\frakg}}$ associated to $\frakg$ is defined as follows:
\begin{align}\label{eq:DS}
\cD_{\frakg}\coloneqq\bigcup_{k\in\N^*}\{\underline{A}=(A_1,\dots,A_n)\in(\Sbb_k)^n\mid g_j(\underline{A}) \succeq 0, j\in[m]\}.
\end{align}
When considering only tuples of $k\times k$ symmetric matrices, we use the notation ${\cD_{\frakg}^k} \coloneqq {\cD_{\frakg}} \cap (\Sbb_k)^n$.
The operator semialgebraic set ${\cD_{\frakg}^\infty}$ is the set of all bounded self-adjoint operators $\underline{A}$ {on a Hilbert space $\mathcal{H}$ endowed with a scalar product $\langle \cdot \mid \cdot \rangle$, making $g(\underline{A})$ a positive semidefinite operator for all $g \in \frakg$, i.e., $\langle g(\underline{A}) v \mid v \rangle \geq 0$, for all $v \in \mathcal{H}$.
We say that an nc polynomial $f$ is positive (denoted by $f \succ 0$) on ${\cD_{\frakg}^\infty}$  if for all $\underline{A} \in {\cD_{\frakg}^\infty}$ the operator $f(\underline{A})$  is positive definite, i.e., $\langle f(\underline{A}) v \mid v \rangle > 0$, for all nonzero $v \in \mathcal{H}$.
}
%
%Let us denote $s_0 \coloneqq 1$.
The quadratic module ${\cM(\frakg)}$, generated by $\frakg$, is defined by
\begin{align}\label{eq:MS}
{\cM(\frakg)}\coloneqq \left\{ \sum_{i=1}^t  a_i^\star g_i a_i \mid t \in \N^* , a_i \in \RX , g_i \in \frakg \cup \{1\}  \right\}.
\end{align}
Given $r \in \N^*$, the truncated quadratic module ${\cM(\frakg)_r}$ of order $r$, generated by $\frakg$, is
\begin{align}\label{eq:MS2d}
{\cM(\frakg)_r} \coloneqq \left\{  \sum_{i=1}^t  a_{i}^\star g_i a_i \mid t \in \N^* , a_i \in \RX , g_j \in \frakg \cup \{1\} , \deg (a_i^\star g_i a_i) \leq 2 r  \right\}.
\end{align}
%{Let $\textbf{1}$ stand for the unit polynomial.}
A quadratic module ${\cM}$ is said to be {\em Archimedean} if for each $ a \in \RX$, there exists $N>0$ such that $N - a^\star a \in {\cM}$.
{One can show that
this is equivalent to the existence of an $N>0$ such that $N - \sum_{i=1}^n x_i^2 \in \cM$}.

We also recall the nc analog of Putinar's Positivstellensatz (Theorem \ref{th:putinar})
describing nc polynomials positive on ${\cD_{\frakg}^\infty}$ with Archimedean ${\cM(\frakg)}$.
\begin{theorem}[Helton-McCullough]\label{th:densePsatz}
Let $\{f\}\cup\frakg \subseteq \SymRX$ and assume that ${\cM(\frakg)}$ is Archimedean.
If $f(\underline{A}) \succ 0$ for all $\underline{A} \in {\cD_{\frakg}^\infty}$, then $f \in {\cM(\frakg)}$.
\end{theorem}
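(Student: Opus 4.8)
The plan is to prove the contrapositive by contradiction, combining a Hahn--Banach separation argument with a Gelfand--Naimark--Segal (GNS) construction, in direct analogy with the commutative proof of Putinar's Positivstellensatz (Theorem~\ref{th:putinar}) but with tuples of operators playing the role of point evaluations. So suppose $f\notin\cM(\frakg)$. The first step is to produce a linear functional $L\colon\SymRX\to\R$ with $L(1)=1$, $L\geq0$ on $\cM(\frakg)$, and $L(f)\leq0$. The Archimedean hypothesis is what makes this possible: from $N-\sum_i x_i^2\in\cM(\frakg)$ one deduces, by an iterated diagonal-dominance estimate on $\W_d^\star\W_d$, that for every $p\in\SymRX$ there exists $N_p>0$ with $N_p-p\in\cM(\frakg)$; equivalently, $1$ is an order unit and hence lies in the algebraic interior (core) of the convex cone $\cM(\frakg)\subseteq\SymRX$. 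Since $f$ is not in this cone, the algebraic form of the Hahn--Banach separation theorem (valid for convex sets with a core point, no closedness needed) yields a nonzero $L$ with $L\geq0$ on $\cM(\frakg)$ and $L(f)\leq0$; using $1\in\operatorname{core}(\cM(\frakg))$ once more forces $L(1)>0$, so after rescaling we may assume $L(1)=1$.

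Next I would run the GNS construction on $L$. Extend $L$ to $\RX$ by $L(a)\coloneqq L\bigl(\tfrac12(a+a^\star)\bigr)$; since $1\in\frakg\cup\{1\}$ we have $a^\star a\in\cM(\frakg)$ for every $a\in\RX$, so $\langle a,b\rangle\coloneqq L(b^\star a)$ is a positive semidefinite symmetric bilinear form on $\RX$. Let $\cN\coloneqq\{a\in\RX: L(a^\star a)=0\}$, which is a left ideal by Cauchy--Schwarz; the form descends to an inner product on $\RX/\cN$, whose completion is a real Hilbert space $\mathcal H$. Left multiplication by $x_i$ induces a symmetric operator $A_i$ on $\RX/\cN$, and the Archimedean relation gives $\sum_i\|A_i\hat a\|^2=L\bigl(a^\star(\sum_i x_i^2)a\bigr)\leq N\,L(a^\star a)=N\|\hat a\|^2$, so each $A_i$ is bounded and extends to a bounded self-adjoint operator on $\mathcal H$; set $\underline A=(A_1,\dots,A_n)$.

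It then remains to check that $\underline A\in\cD_{\frakg}^{\infty}$ and to derive the contradiction. For each $j\in[m]$ and $a\in\RX$, $a^\star g_j a\in\cM(\frakg)$ gives $\langle g_j(\underline A)\hat a,\hat a\rangle=L(a^\star g_j a)\geq0$, and since the $\hat a$ are dense in $\mathcal H$ we get $g_j(\underline A)\succeq0$, i.e.\ $\underline A\in\cD_{\frakg}^{\infty}$. Writing $\hat 1$ for the image of $1$, we have $\|\hat 1\|^2=L(1)=1\neq0$ while $\langle f(\underline A)\hat 1,\hat 1\rangle=L(f)\leq0$. This contradicts $f(\underline A)\succ0$ on $\cD_{\frakg}^{\infty}$, which would force $\langle f(\underline A)\hat 1,\hat 1\rangle>0$. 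Hence $f\in\cM(\frakg)$.

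The main obstacle is the separation step, together with the fact that $\cM(\frakg)$ need not be topologically closed: the resolution is to invoke the Archimedean property twice --- once to establish the order-unit lemma (every $p\in\SymRX$ is dominated, modulo $\cM(\frakg)$, by a scalar multiple of $1$), which puts $1$ in the core of $\cM(\frakg)$ and lets Hahn--Banach separation apply with no closedness assumption, and once to guarantee boundedness of the GNS operators $A_i$. The order-unit lemma is the genuine technical heart of the argument and merits a careful proof; the remaining GNS bookkeeping (well-definedness and symmetry of the representation, density of $\RX/\cN$, and the final evaluation $\langle f(\underline A)\hat 1,\hat 1\rangle=L(f)$) is routine.
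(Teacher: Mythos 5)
Your argument is correct, and it is essentially the standard proof of the Helton--McCullough Positivstellensatz: the paper itself does not prove Theorem~\ref{th:densePsatz} but defers to Helton and McCullough, whose argument (like yours) combines an order-unit/Eidelheit separation step made possible by Archimedeanity with a GNS representation of the separating functional. Note also that your second and third steps reproduce exactly Theorem~\ref{th:gns} of the paper, so once you have a nontrivial $L$ with $L(\cM(\frakg))\subseteq\R_{\geq 0}$, $L(1)=1$ and $L(f)\leq 0$, you could simply invoke that theorem to get $\underline{A}\in\cD_{\frakg}^{\infty}$ and a unit vector $\vb$ with $\langle f(\underline{A})\vb,\vb\rangle=L(f)\leq 0$, contradicting $f\succ 0$; the only point genuinely left to you is the order-unit lemma, which with the paper's definition of Archimedean ($N_a-a^{\star}a\in\cM(\frakg)$ for every $a$) follows in two lines from $\pm p\preceq\tfrac12(1+p^2)$ modulo hermitian squares for symmetric $p$.
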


\section{Correlative sparsity patterns}\label{chap6:sec2}

We rely on the same \ac{CS} framework as in Chapter \ref{chap:cs}. More concretely, assuming $f=\sum_{w}a_{w}w\in\SymRX$ and $\frakg=\{g_1,\ldots,g_m\}\subseteq\SymRX$, we define the \ac{csp} graph associated with $f$ and $\frakg$ to be the graph $G^{\text{csp}}$ with nodes $V=[n]$ and with edges $E$ satisfying $\{i,j\}\in E$ if one of following conditions holds:
\begin{enumerate}[(i)]
	\item there exists $w\in\supp(f)\text{ s.t. }x_i,x_j\in\var(w)$;
	\item there exists $k\in[m] \text{ s.t. }x_i,x_j\in\var(g_k)$,
\end{enumerate}
where we use $\var(g)$ to denote the set of variables effectively involved in $g\in\RX$. Let $(G^{\text{csp}})'$ be a chordal extension of $G^{\text{csp}}$ and $I_k,k\in[p]$ be the maximal cliques of $(G^{\text{csp}})'$ with cardinality being denoted by $n_k,k\in[p]$.
We denote by $\langle \ux(I_k) \rangle$ (resp. $\RXI{k}$) the set of words (resp. nc polynomials) in the $n_k$ variables $\ux(I_k) = \{x_i : i \in I_k \}$.
%Note that $\R \langle \ux, [n] \rangle = \RX$.
We also define $\SymRXI{k} \coloneqq \SymRX \cap \RXI{k}$. Let $\SigmaXI{k}$ stand for the set of \ac{SOHS} in $\RXI{k}$ and we denote by $\SigmaXI{k}_{r}$ the restriction of $\SigmaXI{k}$ to nc polynomials of degree at most $2 r$.
In the sequel, we will rely on two specific assumptions. The first one is as follows.
\begin{assumption}[Boundedness]\label{hyp:sparsity}
Let ${\cD_{\frakg}}$ be as in~\eqref{eq:DS}.
There exists $N>0$ such that $\sum_{i=1}^n x_i^2 \preceq N$, for all $\ux \in {\cD_{\frakg}^\infty}$.
\end{assumption}
Then, Assumption~\ref{hyp:sparsity} implies that $\sum_{j \in I_k} x_j^2 \preceq N$, for all $k\in[p]$.
Thus we define
\begin{align}\label{eq:additional}
g_{m+k} \coloneqq N - \sum_{j \in I_k} x_j^2, \quad k\in[p],
\end{align}
and set $m' = m + p$ in order to describe the same set ${\cD_{\frakg}}$ again as
\begin{align}\label{eq:newDS}
\cD_{\frakg}\coloneqq\bigcup_{k\in\N^*}\{\underline{A} \in (\Sbb_k)^n \mid g_j(\underline{A}) \succeq 0, j\in[m']\},
\end{align}
as well as the operator semialgebraic set ${\cD_{\frakg}^\infty}$.

The second assumption, which is the strict nc analog of Assumption \ref{hyp:cs} (i)--(iii), is as follows.
\begin{assumption}\label{hyp:sparsityRIP}
Let ${\cD_{\frakg}}$ be as in~\eqref{eq:newDS} and let $f \in \SymRX$.
The index set $J \coloneqq \{1, \dots, m' \}$ is partitioned into $p$ disjoint sets $J_1,\dots,J_p$ and the two collections $\{I_1,\dots,I_p\}$ and $\{J_1,\dots,J_p \}$ satisfy
\begin{enumerate}[(i)]
\item The objective function $f$ can be decomposed as $f = f_1 + \dots + f_p$, with $f_k \in \SymRXI{k}$ for all $k\in[p]$;
\item For all $k\in[p]$ and $j \in J_k$, $g_j \in \SymRXI{k}$;
\item The \ac{RIP} \eqref{eq:RIP} holds for $I_1,\dots,I_p$ (possibly after some reordering).
\end{enumerate}
\end{assumption}
\section{Noncommutative moment and localizing matrices}
Given a sequence $\y=(y_w)_{w\in \W_{2r}}\in\R^{\bsigma(n,2r)}$ (here we allow $r=\infty$), let us define the linear functional $L_\y : \RX_{2r} \to \R$ by $L_\y(f) \coloneqq \sum_{w} a_{w} y_{w}$, for every polynomial $f=\sum_{w} a_{w}w$ of degree at most $2r$. The sequence $\y$ is said to be {\em unital} if $y_{1} = 1$ and is said to be {\em symmetric} if $y_{w^\star} = y_{w}$ for all $w\in \W_{2r}$. Suppose $g\in\SymRX$ with $\deg(g)\le 2r$. We further associate to $\y$ the following two matrices:
\begin{enumerate}[(1)]
\item the {\em (noncommutative) moment matrix} $\M_r(\y)$ is the matrix indexed by words $u,v \in \W_r$, with $[\M_r(\y)]_{u,v} = L_{\y}(u^\star v)=y_{u^\star v}$;
\item the {\em localizing matrix} $\M_{r-\lceil\deg(g)/2\rceil}(g\y)$ is the matrix indexed by words $u,v\in\W_{r-\lceil \deg(g)/2\rceil}$, with $[\M_{r-\lceil\deg(g)/2\rceil}(g\y)]_{u,v} = L_{\y}(u^\star g v)$.
\end{enumerate}
We recall the following useful facts.
\begin{lemma}\label{lemma:moment}
Let $g\in\SymRX$ with $\deg(g)\le 2r$ and let $L$ be the linear functional associated to a symmetric sequence $\y\coloneqq (y_w)_{w\in \W_{2r}}\in\R^{\bsigma(n,2r)}$. Then,
\begin{enumerate}[\rm (1)]
\item $L_{\y}(h^\star h) \geq 0$ for all $h\in\RX_{r}$ if and only if the moment matrix $\M_r(\y) \succeq 0$;
\item $L_{\y}(h^\star g h) \geq 0$ for all $h\in\RX_{r-\lceil\deg(g)/2\rceil}$ if and only if the localizing matrix $\M_{r-\lceil\deg(g)/2\rceil}(g\y) \succeq 0$.
\end{enumerate}
\end{lemma}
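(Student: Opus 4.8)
The plan is to prove both equivalences by the standard duality between linear functionals and moment matrices in the noncommutative setting, exploiting that the entries of a symmetric sequence $\y$ are exactly the values $L_\y$ takes on words, and that hermitian squares expand into words indexed by the relevant monomial vector $\W_r$.

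First I would prove item (1). For the forward implication, fix $u \in \W_r$: then the (column) unit vector $\e_u$ satisfies $\e_u^\intercal \M_r(\y) \e_u = [\M_r(\y)]_{u,u} = L_\y(u^\star u) \geq 0$ by hypothesis, but more is needed. The clean way: take an arbitrary real vector $\mathbf{h} = (h_u)_{u \in \W_r}$ and associate the nc polynomial $h = \sum_{u \in \W_r} h_u\, u \in \RX_r$. Then $h^\star h = \sum_{u,v} h_u h_v\, u^\star v$, so by linearity of $L_\y$ and the definition of the moment matrix,
\begin{align*}
L_\y(h^\star h) = \sum_{u,v \in \W_r} h_u h_v\, L_\y(u^\star v) = \sum_{u,v \in \W_r} h_u h_v\, [\M_r(\y)]_{u,v} = \mathbf{h}^\intercal \M_r(\y)\, \mathbf{h}.
\end{align*}
Since every $h \in \RX_r$ arises this way from a unique coefficient vector $\mathbf{h}$, the condition ``$L_\y(h^\star h) \geq 0$ for all $h \in \RX_r$'' is equivalent to ``$\mathbf{h}^\intercal \M_r(\y)\, \mathbf{h} \geq 0$ for all $\mathbf{h} \in \R^{\bsigma(n,r)}$'', which is precisely $\M_r(\y) \succeq 0$. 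One small point to check along the way is that $\M_r(\y)$ is genuinely symmetric, which uses the hypothesis that $\y$ is symmetric: $[\M_r(\y)]_{u,v} = y_{u^\star v} = y_{(u^\star v)^\star} = y_{v^\star u} = [\M_r(\y)]_{v,u}$.

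For item (2) the argument is the same with $g$ inserted in the middle. Write $s \coloneqq \lceil \deg(g)/2 \rceil$ and take $\mathbf{h} = (h_u)_{u \in \W_{r-s}}$ with associated $h = \sum_{u \in \W_{r-s}} h_u\, u \in \RX_{r-s}$. Then $h^\star g h = \sum_{u,v} h_u h_v\, u^\star g v$, and since $g \in \SymRX$ has degree at most $2r - 2s \le 2r$ we have $\deg(u^\star g v) \le 2r$ so $L_\y$ is defined on it; applying linearity,
\begin{align*}
L_\y(h^\star g h) = \sum_{u,v \in \W_{r-s}} h_u h_v\, L_\y(u^\star g v) = \sum_{u,v \in \W_{r-s}} h_u h_v\, [\M_{r-s}(g\y)]_{u,v} = \mathbf{h}^\intercal \M_{r-s}(g\y)\, \mathbf{h}.
\end{align*}
Hence ``$L_\y(h^\star g h) \geq 0$ for all $h \in \RX_{r-s}$'' is equivalent to $\M_{r-s}(g\y) \succeq 0$. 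As before one should note the localizing matrix is symmetric: $[\M_{r-s}(g\y)]_{u,v} = L_\y(u^\star g v) = L_\y((u^\star g v)^\star) = L_\y(v^\star g^\star u) = L_\y(v^\star g u)$ using $g = g^\star$ and that $\y$ is symmetric, so it equals $[\M_{r-s}(g\y)]_{v,u}$.

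I do not anticipate a serious obstacle here — the result is essentially a bookkeeping identity. The only place requiring a little care is the bijection between nc polynomials of degree $\le r$ (resp. $\le r-s$) and coefficient vectors indexed by $\W_r$ (resp. $\W_{r-s}$), i.e. that $\W_r$ really is a monomial basis of $\RX_r$ (stated earlier in the excerpt) so that the correspondence $h \leftrightarrow \mathbf{h}$ is one-to-one and onto; this is what makes the quantifier ``for all $h$'' match the quantifier ``for all $\mathbf{h}$''. Everything else is linearity of $L_\y$ and unwinding the definitions of the moment and localizing matrices.
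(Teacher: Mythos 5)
Your proof is correct and is essentially the same bookkeeping argument the paper relies on (the paper defers to Lemma~1.44 of Burgdorf--Klep--Povh, which proceeds exactly via the identity $L_{\y}(h^\star h)=\mathbf{h}^\intercal \M_r(\y)\mathbf{h}$ and its localized analogue). One tiny slip in your justification for (2): the correct degree bound is $\deg(g)\le 2s$ (not $2r-2s$), which still gives $\deg(u^\star g v)\le 2(r-s)+2s=2r$, so the conclusion is unaffected.
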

\begin{definition}\label{def:flatextension}
Let $\y=(y_w)_{w\in\W_{2r+2\delta}}\in\R^{\bsigma(n,2r+2\delta)}$ and $\tilde{\y}=(y_w)_{w\in\W_{2r}}$ be its truncation. We can write the moment matrix $\M_{r+\delta}(\y)$ in block form:
\[\M_{r+\delta}(\y) = \begin{bmatrix}
\M_r(\tilde{\y})& B\\
B^\intercal & C
\end{bmatrix}.\]
We say that $\y$ is \emph{$\delta$-flat} or that $\y$ is a \emph{flat extension} of $\tilde{\y}$, if $\M_{r+\delta}(\y)$ is flat over $\M_r(\tilde{L})$, i.e., if $\rank\M_{r+\delta}(\y) = \rank\M_r(\tilde{\y})$.
\end{definition}
For a subset $I\subseteq[n]$, let us define $\M_r(\y,I)$ to be the moment submatrix obtained from $\M_r(\y)$ after retaining only those rows and columns indexed by $w\in\langle \underline{x}(I) \rangle_r$.
For $g\in\R\langle\underline{x}, I\rangle$ with $\deg(g)\le 2r$, we also define the localizing submatrix $\M_{r-\lceil\deg(g)/2\rceil}(g\y,I)$ in a similar fashion.
%
%In particular, ${{\M_r}(\y,I_k)}$ and $\M_{r-\lceil\deg(g)/2\rceil}(g\y,I_k)$ can be viewed as moment and localizing matrices with rows and columns indexed by bases of $\RXI{k}_r$ {and $\RXI{k}_{r - \lceil\deg(g)/2\rceil}$, respectively}.

\section{Sparse representations}
Here, we state our main theoretical result, which is a sparse version of
the Helton-McCullough Positivstellensatz (Theorem~\ref{th:densePsatz}).
For this, we rely on amalgamation theory for $C^\star$-algebras.

Given a Hilbert space $\mathcal{H}$, we denote by $\mathcal{B}(\mathcal{H})$ the set of bounded operators on $\mathcal{H}$.
A $C^\star$-algebra is a complex Banach algebra $\mathcal{A}$ (thus also a Banach space), endowed with a norm $\|\cdot\|$,
and with an involution $\star$ satisfying $\|xx^\star\|=\|x\|^2$
for all $x\in\mathcal A$. Equivalently, it is a
norm
closed subalgebra with involution of $\mathcal B(\mathcal H)$ for some Hilbert space $\mathcal H$.
{
Given a $C^\star$-algebra $\cA$, {a {\em state $\varphi$} is defined to be a positive linear functional of unit norm on $\cA$}, and we write often $(\cA,\varphi)$ when $\cA$ comes together with the state $\varphi$.
Given two $C^\star$-algebras $(\cA_1,\varphi_1)$ and $(\cA_2,\varphi_2)$, a homomorphism $\iota : \cA_1 \to \cA_2$ is called  {\em state-preserving}  if $\varphi_2 \circ \iota  = \varphi_1$.
Given a $C^\star$-algebra $\cA$,  a {\em unitary representation} of $\cA$ in $\cH$ is a $*$-homomorphism $\pi : \cA \to \mathcal{B} (\cH)$ which is {\em strongly continuous}, i.e., the mapping $\cA \to \cH$, $g \mapsto \pi(g) \xi$ is continuous for every $\xi \in \cH$.
}
\begin{theorem}\label{th:amalgamation}
Let
$(\mathcal{A},\varphi_0)$ and $\{(\mathcal B_k,\varphi_k) : k \in I\}$ be $C^\star$-algebras with states, and let $\iota_k$ be a state-preserving embedding of $\mathcal{A}$ into $\mathcal B_k$, for each $k\in I$.
Then there exists a $C^\star$-algebra ${\mathcal C}$ amalgamating the $(\mathcal B_k,\varphi_k)$
over $(\mathcal{A},\varphi_0)$. That is,
there is a state $\varphi$ on ${\mathcal C}$, and state-preserving homomorphisms
$j_k : \mathcal B_k \to {\mathcal C}$, such that {$j_k \circ \iota_k  = j_i \circ \iota_i$}, for all $k,i \in I$, and such that $\bigcup_{k \in I} j_k (\mathcal B_k)$ generates $\mathcal C$.
\end{theorem}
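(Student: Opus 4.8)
The plan is to build the amalgamated $C^\star$-algebra via a universal/GNS-type construction rather than trying to glue Hilbert spaces directly. First I would form the free product $\star$-algebra $\mathcal{P} \coloneqq \bigstar_{\mathcal{A}} (\mathcal{B}_k)_{k\in I}$, i.e.\ the coproduct of the $\mathcal{B}_k$ in the category of unital $\star$-algebras, amalgamated over the common subalgebra $\mathcal{A}$ (identifying $\iota_k(a)$ with $\iota_i(a)$ for all $a\in\mathcal{A}$, $k,i\in I$). On $\mathcal{P}$ one defines a candidate state $\varphi$ by the formula for free products of states with respect to the common state $\varphi_0$ on $\mathcal{A}$: on an alternating word $b_1\cdots b_n$ with $b_j\in\mathcal{B}_{k_j}\ominus\iota_{k_j}(\mathcal{A})$ (centered elements) and $k_j\ne k_{j+1}$, set $\varphi(b_1\cdots b_n)=0$, extend by $\varphi|_{\mathcal{A}}=\varphi_0$, and extend multiplicatively/linearly using the decomposition $\mathcal{B}_k=\iota_k(\mathcal{A})\oplus(\mathcal{B}_k\ominus\iota_k(\mathcal{A}))$. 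The key algebraic fact to verify here is that each $j_k:\mathcal{B}_k\to\mathcal{P}$ (the canonical inclusion) is state-preserving, $\varphi\circ j_k=\varphi_k$, and that $j_k\circ\iota_k=j_i\circ\iota_i$ holds by construction of the amalgam.

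Next I would check that $\varphi$ is a genuine state on $\mathcal{P}$ — linearity is immediate, and unitality ($\varphi(1)=\varphi_0(1)=1$) is clear, so the crux is positivity: $\varphi(p^\star p)\ge 0$ for all $p\in\mathcal{P}$. This is the step where one needs more than formal bookkeeping, and it is where I expect the main obstacle to lie. The standard route is to perform the GNS construction for each $(\mathcal{B}_k,\varphi_k)$, obtaining Hilbert spaces $\mathcal{H}_k$ with cyclic vectors $\xi_k$ and representations $\pi_k$, note that the GNS space of $(\mathcal{A},\varphi_0)$ embeds isometrically in each $\mathcal{H}_k$ (since $\iota_k$ is state-preserving), and then form the \emph{free product of the pointed Hilbert spaces} $(\mathcal{H}_k,\xi_k)$ amalgamated over $(\mathcal{H}_{\mathcal{A}},\xi_{\mathcal{A}})$ à la Voiculescu. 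On this free-product Hilbert space $\mathcal{H}$ with its canonical vacuum vector $\Omega$, each $\pi_k$ induces a $\star$-representation $\sigma_k$ of $\mathcal{B}_k$ on $\mathcal{H}$, these agree on $\mathcal{A}$, hence assemble into a $\star$-representation $\sigma:\mathcal{P}\to\mathcal{B}(\mathcal{H})$, and one verifies $\varphi(p)=\langle \sigma(p)\Omega\mid\Omega\rangle$. Positivity of $\varphi$ follows at once: $\varphi(p^\star p)=\|\sigma(p)\Omega\|^2\ge 0$.

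Finally, to produce an actual $C^\star$-algebra $\mathcal{C}$ rather than merely a $\star$-algebra with a state, I would pass to the GNS representation of $(\mathcal{P},\varphi)$: let $N_\varphi=\{p:\varphi(p^\star p)=0\}$, complete $\mathcal{P}/N_\varphi$ to a Hilbert space, obtain the GNS representation $\pi_\varphi$, and take $\mathcal{C}$ to be the norm closure of $\pi_\varphi(\mathcal{P})$ inside $\mathcal{B}(\mathcal{H}_\varphi)$ — this is a $C^\star$-algebra. The state $\varphi$ descends to a (vector) state on $\mathcal{C}$ via the cyclic vector, and the compositions $j_k'\coloneqq\pi_\varphi\circ j_k:\mathcal{B}_k\to\mathcal{C}$ are still state-preserving homomorphisms satisfying $j_k'\circ\iota_k=j_i'\circ\iota_i$; that $\bigcup_k j_k'(\mathcal{B}_k)$ generates $\mathcal{C}$ is immediate since $\mathcal{P}$ is generated by the $j_k(\mathcal{B}_k)$ and $\pi_\varphi(\mathcal{P})$ is dense in $\mathcal{C}$. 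One subtlety worth flagging: the $j_k'$ will in general only be homomorphisms, not embeddings, since passing to the GNS quotient may kill a kernel; but the theorem as stated only asks for state-preserving homomorphisms, so this is harmless. The genuinely delicate point throughout is the positivity of $\varphi$ on the amalgamated free product, and I would lean on Voiculescu's free-product-of-Hilbert-spaces machinery (equivalently, the amalgamated free product of $C^\star$-algebras over a common subalgebra with conditional-expectation-compatible states) to get it cleanly.
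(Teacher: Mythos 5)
The book itself gives no proof of Theorem \ref{th:amalgamation}; in the \emph{Notes and sources} it is quoted from Blackadar's 1978 paper on weak expectations and from Section 5 of Voiculescu's work on reduced free products, so your proposal has to stand on its own, and as written it has a genuine gap at its central step. Everything in your construction --- the definition of $\varphi$ via vanishing on alternating ``centered'' words, and the positivity argument via a free product of Hilbert spaces amalgamated over the GNS space of $(\mathcal A,\varphi_0)$ --- silently presupposes conditional expectations $E_k\colon\mathcal B_k\to\iota_k(\mathcal A)$ compatible with the states, i.e.\ $\varphi_k=\varphi_0\circ E_k$ (your own parenthetical ``conditional-expectation-compatible states'' concedes this). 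But the hypotheses only provide state-preserving embeddings, and such expectations need not exist for a general $C^\star$-inclusion. Without $E_k$, the ``decomposition'' $\mathcal B_k=\iota_k(\mathcal A)\oplus(\mathcal B_k\ominus\iota_k(\mathcal A))$ is not an $\mathcal A$-bimodule (indeed not even a vector-space) splitting of the algebra $\mathcal B_k$: it only makes sense inside the GNS Hilbert space, where the projection of $b$ onto $\overline{\pi_k(\iota_k(\mathcal A))\xi_k}$ is a vector, not an element of $\mathcal A$. Consequently the rule ``$\varphi$ vanishes on alternating centered words'' does not yield a well-defined functional on the amalgamated free $\star$-algebra: a word has no canonical centered decomposition, and the $\mathcal A$-bimodule property is precisely what would make the recipe consistent with the amalgamation relations identifying $\iota_k(a)$ with $\iota_i(a)$.

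The same missing structure undermines the positivity step. A free product of the pointed spaces $(\mathcal H_k,\xi_k)$ ``amalgamated over $(\mathcal H_{\mathcal A},\xi_{\mathcal A})$'' on which each $\mathcal B_k$ acts is not Voiculescu's scalar construction: the word spaces must be relative tensor products over $\mathcal A$ (a free product of $C^\star$-correspondences), and both these tensor products and the left action of $\mathcal B_k$ on words not beginning with index $k$ require $\mathcal A$-valued inner products, hence conditional expectations again. If instead you take plain tensor products of the scalar orthogonal complements --- which is what your sketch literally describes --- you recover the free product over $\C$, in which the copies of $\mathcal A$ are \emph{not} identified, so the key requirement $j_k\circ\iota_k=j_i\circ\iota_i$ fails. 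Finally, even when expectations do exist, the vacuum state of the amalgamated construction restricts on $\mathcal B_k$ to $\varphi_0\circ E_k$, which need not equal the prescribed $\varphi_k$. So your argument establishes the conclusion only under substantial additional hypotheses; in the stated generality one needs a different mechanism for gluing the GNS representations along $\mathcal A$ with the prescribed states, which is the actual content of the result of Blackadar that the book invokes.
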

Theorem \ref{th:amalgamation} is illustrated in Figure \ref{diag:amalgamation} in the case $I=\{1,2\}$.
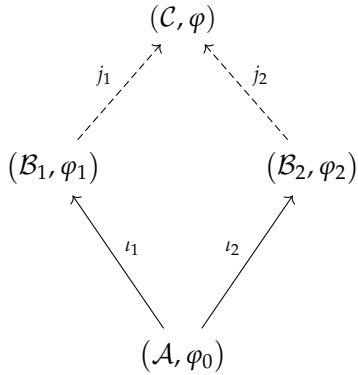
\begin{figure}[!t]
\centering
\begin{tikzcd}[row sep=3ex,column sep=1ex]
&  ({\mathcal C},\varphi) \\
{} \\
 \big(\mathcal{B}_1,\varphi_1\big) \arrow[uur,"j_1",dashed] &&  \big(\mathcal{B}_2,\varphi_2\big) \arrow[uul,"j_2"',dashed]  \\
{} \\
&& \\
&  \big(\mathcal{A},\varphi_0\big) \arrow[uuur, "\iota_2"] \arrow[uuul, "\iota_1"']
\end{tikzcd}
\caption{Illustration of Theorem~\ref{th:amalgamation} in the case $I=\{1,2\}$.}
\label{diag:amalgamation}
\end{figure}
We also recall the \ac{GNS} construction establishing a correspondence between $\star$-representations of a  $C^\star$-algebra and positive linear functionals on it.
In our context, the next result restricts to linear functionals on $\RX$ which are positive on an Archimedean quadratic module.
\begin{theorem}\label{th:gns}
Let $\frakg \subseteq \SymRX$ be given such that its quadratic module ${\cM(\frakg)}$ is Archimedean.
Let $L : \RX \to \R$ be a nontrivial linear functional with $L({\cM(\frakg)}) \subseteq \R_{\geq 0}$.
Then there exists a tuple $\underline{A} = (A_1,\dots,A_n) \in {\cD_{\frakg}^\infty}$ and a vector $\vb$ such that $L(f) = \langle f(\underline{A}) \vb , \vb \rangle$, for all $f \in \RX$.
\end{theorem}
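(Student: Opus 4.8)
The plan is to run the Gelfand--Naimark--Segal construction directly on $\RX$: turn $L$ into a semi-inner product using its positivity on $\cM(\frakg)$, pass to the associated Hilbert space, and let the $A_i$ be the operators of left multiplication by $x_i$, with the Archimedean property guaranteeing boundedness. First I would normalize $L(1)=1$; this is legitimate since $\cM(\frakg)$ contains $1$ and every hermitian square, so $L$ nontrivial and positive on $\cM(\frakg)$ forces $L(1)>0$. I take $L$ to be symmetric, $L(w^\star)=L(w)$ for all words $w$ (as is implicit in the positivity hypothesis, since $\cM(\frakg)\subseteq\SymRX$; it is also needed for the conclusion, whose right-hand side is unchanged under $f\mapsto f^\star$). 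Then $\langle p,q\rangle:=L(p^\star q)$ is a symmetric bilinear form on $\RX$, positive semidefinite because $p^\star p\in\cM(\frakg)$. Setting $\mathcal{N}:=\{p\in\RX:L(p^\star p)=0\}$, the Cauchy--Schwarz inequality for this form shows $\mathcal{N}=\{p:\langle p,q\rangle=0\text{ for all }q\}$, so $\langle\cdot,\cdot\rangle$ descends to a genuine inner product on $\RX/\mathcal{N}$; let $\mathcal{H}$ be its completion (complexify if a complex Hilbert space is wanted) and $\hat p$ the class of $p$.

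Next I define $A_i\hat p:=\widehat{x_ip}$ for $i\in[n]$. The crucial estimate --- the only place the Archimedean hypothesis is used --- comes from $N-\sum_j x_j^2\in\cM(\frakg)$ for some $N>0$: then $N-x_i^2=(N-\sum_j x_j^2)+\sum_{j\ne i}x_j^2\in\cM(\frakg)$, hence $a^\star(N-x_i^2)a\in\cM(\frakg)$ for every $a\in\RX$, giving $L\big((x_ia)^\star(x_ia)\big)\le N\,L(a^\star a)$. This shows simultaneously that $A_i$ is well defined on $\RX/\mathcal{N}$ (it preserves $\mathcal{N}$) and that $\|A_i\|\le\sqrt N$, so $A_i$ extends uniquely to a bounded operator on $\mathcal{H}$. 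Since $x_i=x_i^\star$, $\langle A_i\hat p,\hat q\rangle=L\big(p^\star x_i q\big)=\langle\hat p,A_i\hat q\rangle$, so $A_i$ is self-adjoint. Put $\underline{A}=(A_1,\dots,A_n)$ with each $A_i\in\mathcal{B}(\mathcal{H})$ self-adjoint, and $\vb:=\hat1$ (nonzero, as $L(1)=1$).

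It remains to check $\underline{A}\in\cD_{\frakg}^\infty$ and the representation formula. By definition $A_iA_j\hat q=\widehat{x_ix_jq}$, and more generally $p(\underline{A})$ acts on the dense subspace $\RX/\mathcal{N}$ as left multiplication by $p$, i.e.\ $p(\underline{A})\hat q=\widehat{pq}$. Hence for $g\in\frakg$ (so $g=g^\star$) and any $q\in\RX$, $\langle g(\underline{A})\hat q,\hat q\rangle=L(q^\star g q)\ge0$, because $q^\star g q\in\cM(\frakg)$; by density and continuity of the bounded operator $g(\underline{A})$ this gives $g(\underline{A})\succeq0$, so $\underline{A}\in\cD_{\frakg}^\infty$. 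Finally, for arbitrary $f\in\RX$ we have $f(\underline{A})\vb=\widehat{f}$, whence $\langle f(\underline{A})\vb,\vb\rangle=\langle\hat f,\hat1\rangle=L(1^\star f)=L(f)$, as required.

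The only genuinely nontrivial step is the boundedness estimate $L\big((x_ia)^\star(x_ia)\big)\le N\,L(a^\star a)$: without it one would obtain only a $\star$-representation of $\RX$ on the pre-Hilbert space $\RX/\mathcal{N}$ by possibly unbounded operators, and could neither pass to $\mathcal{B}(\mathcal{H})$ nor invoke $\cD_{\frakg}^\infty$. The remaining verifications --- that $\mathcal{N}$ equals the radical of the form, that the $A_i$ are symmetric, and that substituting the $A_i$ reproduces left multiplication so that $g(\underline{A})\succeq0$ reduces to $L(q^\star g q)\ge0$ --- are routine GNS bookkeeping.
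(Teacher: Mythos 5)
Your proof is correct and is precisely the standard GNS construction that the paper itself defers to (it cites Burgdorf–Klep–Povh, Theorem 1.27, rather than giving a proof): positivity on $\cM(\frakg)$ yields the semi-inner product, the Archimedean bound $N-x_i^2\in\cM(\frakg)$ gives well-definedness and boundedness of the left-multiplication operators, and positivity of the localizing expressions $L(q^\star g q)$ gives $g(\underline{A})\succeq 0$. Your remark that symmetry $L(f^\star)=L(f)$ is implicitly required (the right-hand side of the conclusion is invariant under $f\mapsto f^\star$) is also consistent with how the cited source sets things up, so nothing is missing.
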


Let $I_k,k\in[p]$ and $J_k,k\in[p]$ be given as in Chapter \ref{chap6:sec2}.
For $k\in[p]$, let us define
\begin{align*}
\cM(\frakg)^{k}\coloneqq\left\lbrace a_0^\star a_0+\sum_{i\in J_k} a_i^\star g_i a_i \mid a_{i}\in\RXI{k}, i\in J_k\cup\{0\}\right\rbrace
\end{align*}
and
\begin{align}\label{eq:sparseMS}
\cM(\frakg)^{\sparse} \coloneqq \cM(\frakg)^{1} + \dots + \cM(\frakg)^{p}.
\end{align}
Next, we state the main foundational result of this section.
\begin{theoremf}\label{th:sparsePsatz}
Let $\{f\}\cup\frakg\subseteq\SymRX$ and let ${\cD_{\frakg}}$ be as in~\eqref{eq:newDS} with the additional quadratic constraints~\eqref{eq:additional}.
Suppose Assumption~\ref{hyp:sparsityRIP} holds.
If $f(\underline{A}) \succ 0$ for all $\underline{A} \in {\cD_{\frakg}^\infty}$, then $f \in {\cM(\frakg)^{\sparse}}$.
\end{theoremf}
We provide an example demonstrating that sparsity without an \ac{RIP}-type condition is not sufficient to deduce sparsity in \ac{SOHS} decompositions.
\begin{example}
Consider the case of three variables $\underline x=(x_1,x_2,x_3)$
and the polynomial
\begin{align*}
f & =(x_1+x_2+x_3)^2  \\
& = x_1^2+x_2^2+x_3^2+x_1x_2+x_2x_1+x_1x_3+x_3x_1+x_2x_3+x_3x_2 \in\Sigma\langle\underline x\rangle.
\end{align*}
Then $f=f_1+f_2+f_3$, with
\[
\begin{split}
f_1 & =\frac12 x_1^2+\frac12 x_2^2+x_1x_2+x_2x_1\in\R\langle x_1,x_2\rangle, \\
f_2 & = \frac12x_2^2+\frac12x_3^2+x_2x_3+x_3x_2\in\R\langle x_2,x_3\rangle,\\
f_3 & = \frac12x_1^2+ {\frac12} x_3^2+x_1x_3+x_3x_1\in\R\langle x_1,x_3\rangle.
\end{split}
\]
However,
the sets $I_1=\{1,2\}$, $I_2=\{2,3\}$ and  $I_3=\{1,3\}$ do not satisfy the \ac{RIP} condition~\eqref{eq:RIP} and
 $f\not\in\SigmaX^{\sparse}\coloneqq\Sigma\langle x_1,x_2\rangle+\Sigma\langle x_2,x_3\rangle+\Sigma\langle x_1,x_3\rangle$ since it has a unique Gram matrix by homogeneity.

Now consider $\frakg=\{1-x_1^2,\, 1-x_2^2,\, 1-x_3^2\}$.
Then ${\cD_{\frakg}}$ is as in~\eqref{eq:newDS}, ${\cM(\frakg)^{\sparse}}$ is as in~\eqref{eq:sparseMS} and $f|_{{\cD_{\frakg}^\infty}}\succeq0$.
However, we claim that
$f-b\in {\cM(\frakg)^{\sparse}}$ if and only if $b\leq -3$.
Clearly,
\begin{align*}
	f+3=\,&(x_1+x_2)^2+(x_1+x_3)^2+(x_2+x_3)^2\\
	&+(1-x_1^2)+(1-x_2^2)+(1-x_3^2)\in {\cM(\frakg)^{\sparse}}.
\end{align*}
So one has $-3\leq\sup\,\{b : f - b \in \cM(\frakg)^{\sparse} \}$, and the dual of this latter problem is given by
\begin{equation}\label{eq:norip}
\begin{cases}
\inf\limits_{\y_k}&\sum_{k=1}^3 L_{\y_k}(f_k)\\
\rm{ s.t.}
&L_{\y_k}(1) = 1, \quad k=1,2,3\\
&L_{\y_k}(h^\star h) \succeq 0, \quad \forall h \in \RXI{k},  \quad k=1,2,3\\
&L_{\y_k}(h^\star (1 - x_i^2) h)  \succeq 0, \quad \forall h \in \RXI{k},i\in I_k,k=1,2,3\\
&L_{\y_j}|_{\R\langle \underline X(I_j\cap I_k)\rangle}=L_{\y_k}|_{\R\langle \underline X(I_j\cap I_k)\rangle}, \quad j,k=1,2,3\\
\end{cases}
\end{equation}
Hence, by weak duality, it suffices to show that there exist linear functionals
$L_{\y_k}:\RXI k\to\R$ satisfying the constraints of problem \eqref{eq:norip} and such that $\sum_k L_{\y_k}(f_k)=-3$.
Define
\[
A=\begin{bmatrix}0&1\\ 1&0\end{bmatrix}, \quad B=-A
\]
and let
\[L_{\y_k}(g)=\Trace (g(A,B)) \quad\text{ for }g\in\RXI k.\]
Since $L_{\y_k}(f_k)=-1$, the three first constraints of problem \eqref{eq:norip} are easily verified and $\sum_k L_{\y_k}(f_k)=-3$.
 For the last one, given, say
$h\in\RXI 1\cap\RXI 2=\R\langle x_2\rangle$, we have
\[
\begin{split}
{L_{\y_1}(h)}&=\Trace (h(B)), \\
L_{\y_2}(h)&=\Trace (h(A)),
\end{split}
\]
since $L_{\y_1}$ (resp.~$L_{\y_2}$) is defined on  $\R\langle x_1, x_2\rangle$ (resp.~$\R\langle x_2, x_3\rangle$) and $h$ depends only on the second (resp.~first) variable $x_2$ corresponding to $B$ (resp.~$A$).

But matrices $A$ and $B$ are orthogonally equivalent as
$UAU^\intercal=B$ for
\[
U=\left[
\begin{array}{rr}
 0 & 1 \\
 -1 & 0 \\
\end{array}
\right],
\]
whence $h(B)=h(UAU^\intercal)=Uh(A)U^\intercal$ and $h(A)$ have the same trace.
 \end{example}

\section{Sparse GNS construction}
Next, we provide the main theoretical tools to extract solutions of nc optimization problems with \ac{CS}.
To this end, we first present sparse nc versions of theorems by Curto and Fialkow.
As recalled in Section \ref{sec:extract} for the commutative case, Curto and Fialkow provided sufficient conditions for linear functionals on the set of degree $2 r$ polynomials to be represented  by integration with respect to a nonnegative measure.
The main sufficient condition to guarantee such a representation is flatness (see  Definition~\ref{def:flatextension}) of the corresponding moment matrix.
We recall this result, which relies on a finite-dimensional \ac{GNS} construction.
\begin{theorem}\label{th:dense_flat}
Let $\frakg \subseteq \SymRX$ and set $\delta \coloneqq \max\,\{ \lceil \deg (g)/2 \rceil : g \in \frakg\}$.
For $r \in \N^*$, let $L_{\y} : \RX_{2 r + 2 \delta} \to \R$ be the linear functional associated to a unital sequence  $\y=(y_w)_{w\in\W_{2r+2\delta}}\in\R^{\bsigma(n,2r+2\delta)}$ satisfying $L_{\y}({\cM(\frakg)_{r +  \delta}}) \subseteq \R_{\geq 0}$.
If $\y$ is $\delta$-flat, then there exists $\hat{A} \in {\cD_{\frakg}^r}$ for some $t \leq \bsigma(n,r)$ and a unit vector $\vb$ such that
\begin{align}\label{eq:flat_representation}
L_{\y}(g) = \langle g(\hat{A}) \vb , \vb \rangle,
\end{align}
for all $g \in \SymRX_{2r}$.
\end{theorem}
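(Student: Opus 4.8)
The plan is to combine a noncommutative flat‑extension argument with a finite‑dimensional \ac{GNS} construction, in the spirit of Curto--Fialkow. First I would pass from the truncated data to a full moment sequence: since $\y$ is $\delta$-flat, I claim it extends to a symmetric unital sequence $\tilde\y=(y_w)_{w\in\langle\ux\rangle}$ agreeing with $\y$ on $\W_{2r+2\delta}$ such that $\M_s(\tilde\y)\succeq0$ for every $s\in\N$ and $\rank\M_s(\tilde\y)=t\coloneqq\rank\M_r(\y)$ for all $s\ge r$; in particular $t\le\bsigma(n,r)$. This is the noncommutative analogue of the flat‑extension theorem: a positive moment matrix that is flat over its $r$-th principal block propagates flatness to all higher orders, each step enlarging the sequence by one degree while keeping the rank equal to $t$. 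The $\delta$-flatness hypothesis of Definition~\ref{def:flatextension}, together with $\M_{r+\delta}(\y)\succeq0$ (which follows from $L_\y(\cM(\frakg)_{r+\delta})\subseteq\R_{\ge0}$ by Lemma~\ref{lemma:moment}(1)), supplies exactly the base case. I expect this extension step to be the main technical obstacle — showing the ``shift'' relations coming from flatness at level $r+\delta$ remain consistent under one more enlargement — and I would either cite it from the literature on truncated noncommutative moment problems or prove the propagation lemma directly; everything afterwards is bookkeeping. (We may assume $\delta\ge1$, else $\frakg$ consists of constants and the statement is degenerate.)

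Given $\tilde\y$, I would perform the finite‑dimensional \ac{GNS} construction (a truncated analogue of Theorem~\ref{th:gns}). Put $N\coloneqq\{p\in\RX\mid L_{\tilde\y}(p^\star p)=0\}$. Because $\M_s(\tilde\y)\succeq0$ for all $s$, the form $(p,q)\mapsto L_{\tilde\y}(p^\star q)$ on $\RX$ is positive semidefinite, so by Cauchy--Schwarz $N$ is a left ideal and the form descends to a genuine inner product on $E\coloneqq\RX/N$. Flatness gives $\dim E=t\le\bsigma(n,r)$, and since $\rank\M_r(\y)=t$ as well, every class in $E$ has a representative in $\RX_r$. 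For $i\in[n]$ define $\hat A_i\in\mathcal B(E)$ by $\hat A_i(p+N)\coloneqq x_ip+N$: well defined because $N$ is a left ideal, and symmetric because $\tilde\y$ is symmetric and $x_i^\star=x_i$. Set $\vb\coloneqq1+N$, so $\|\vb\|^2=L_{\tilde\y}(1)=1$ by unitality. An immediate induction on words gives $w(\hat A)\vb=w+N$ for every word $w$, hence $p(\hat A)\vb=p+N$ for all $p\in\RX$. Identifying $E$ with $\R^t$ through an orthonormal basis turns $\hat A=(\hat A_1,\dots,\hat A_n)$ into a tuple of $t\times t$ real symmetric matrices.

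It then remains to check the two conclusions. For $g\in\SymRX_{2r}$ one has $\langle g(\hat A)\vb,\vb\rangle=\langle g+N,\,1+N\rangle=L_{\tilde\y}(g^\star)=L_{\tilde\y}(g)=L_\y(g)$, the last two equalities because $g=g^\star$ and $\deg g\le 2r\le 2r+2\delta$. For membership in $\cD_{\frakg}^t$, fix $g_j\in\frakg$ and an arbitrary $p+N\in E$ with representative $p\in\RX_r$; then $g_j(\hat A)(p+N)=g_jp+N$, so $\langle g_j(\hat A)(p+N),\,p+N\rangle=L_{\tilde\y}(p^\star g_jp)=L_\y(p^\star g_jp)$, and this is nonnegative since $p^\star g_jp\in\cM(\frakg)_{r+\delta}$ (its degree is at most $2r+2\delta$, as $\deg g_j\le 2\delta$). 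Hence $g_j(\hat A)\succeq0$ for every $j$, i.e.\ $\hat A\in\cD_{\frakg}^t$, and $\langle g(\hat A)\vb,\vb\rangle=L_\y(g)$ for all $g\in\SymRX_{2r}$, which is the assertion.

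In summary, the only nontrivial ingredient is the noncommutative flat‑extension theorem invoked in the first paragraph; the well‑definedness and symmetry of the $\hat A_i$, the normalization of $\vb$, the identity $\langle g(\hat A)\vb,\vb\rangle=L_\y(g)$, and the positivity $g_j(\hat A)\succeq0$ are all routine once $\tilde\y$ is available. I would therefore devote most of the written proof to the propagation‑of‑flatness lemma (or to a precise pointer to it), and keep the \ac{GNS} part short.
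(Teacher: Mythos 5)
Your GNS construction and the two final verifications are correct, and they coincide with the bookkeeping in the proof the paper relies on (this theorem is recalled from the literature: Pironio--Navascu\'es--Ac\'{\i}n, and Burgdorf--Klep--Povh, Theorem~1.69). The problem is your first paragraph: you route the entire content of the theorem into an unproven ``propagation of flatness'' claim, namely that a $\delta$-flat positive truncated sequence extends to a sequence $\tilde\y$ on \emph{all} words with every $\M_s(\tilde\y)\succeq 0$, constant rank $t$, and agreement with $\y$ up to degree $2r+2\delta$. This is not a citable side lemma in the free noncommutative setting: the results available in the literature are precisely the representation theorem you are asked to prove (the infinite flat extension is normally obtained \emph{afterwards}, by setting $\tilde y_w:=\langle w(\hat A)\vb,\vb\rangle$ once $(\hat A,\vb)$ exist), so the citation route is circular, and ``prove it directly'' is where all the work lives. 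Note also why it is not routine: with only truncated data the left-ideal property of the kernel, which you get for free from the full extension via Cauchy--Schwarz, does not follow from positivity alone, because the estimate would involve values of $L_\y$ at degrees beyond $2r+2\delta$ (e.g.\ showing $x_iu\in N$ for $u\in N\cap\RX_r$ by naive Cauchy--Schwarz needs $L_\y(u^\star x_i^4u)$, of degree up to $2r+4>2r+2\delta$ when $\delta=1$). Flatness is exactly the substitute for this missing step, and your proposal never uses it for that purpose.

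The repair is to skip the extension and run your argument directly on the truncated data, which is what the standard proof does. Put $N:=\{p\in\RX_{r+\delta}: L_\y(p^\star p)=0\}$ and $E:=\RX_{r+\delta}/N$; positivity of $L_\y$ on $\cM(\frakg)_{r+\delta}$ gives $\M_{r+\delta}(\y)\succeq0$, so the form descends to an inner product on $E$. Flatness now yields two facts: (i) $\dim E=\rank\M_r=t\le\bsigma(n,r)$ and every class has a representative of degree at most $r$; (ii) kernel invariance in the admissible degree window: if $u\in N$ with $\deg(x_iu)\le r+\delta$, then $L_\y(q^\star x_iu)=L_\y((x_iq)^\star u)=0$ for all $q\in\RX_{r+\delta-1}$, and for an arbitrary $p\in\RX_{r+\delta}$ one writes $p=p_0+n$ with $p_0\in\RX_r$ and $n\in N$ (using (i)) and kills the $n$-term by Cauchy--Schwarz \emph{inside} $\RX_{r+\delta}$; hence $L_\y(p^\star x_iu)=0$ for all such $p$ and $x_iu\in N$. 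This makes $\hat A_i(p+N):=x_ip+N$ well defined on $E$, and from there your own paragraphs go through verbatim: $\vb=1+N$ is a unit vector, $w(\hat A)\vb=w+N$ for $|w|\le r$, so $\langle g(\hat A)\vb,\vb\rangle=L_\y(g)$ for $g\in\SymRX_{2r}$, and $\langle g_j(\hat A)(p+N),p+N\rangle=L_\y(p^\star g_jp)\ge0$ for $\deg p\le r$ because $p^\star g_jp\in\cM(\frakg)_{r+\delta}$, giving $\hat A\in\cD_{\frakg}^t$ (the exponent $r$ in the statement is a typo for $t$, as you correctly assumed). So the architecture of your proof is right, but as written it defers its only nontrivial step; replacing the extension claim by the flatness-based kernel-invariance argument above closes the gap and recovers the cited proof.
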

We now give the sparse version of Theorem~\ref{th:dense_flat}.
\begin{theoremf}\label{th:sparse_flat}
Suppose $r\in\N^*$. Let $\frakg \subseteq \SymRX_{2r}$, and assume ${\cD_{\frakg}}$ is as in~\eqref{eq:newDS} with the additional quadratic constraints~\eqref{eq:additional}.
Suppose Assumption~\ref{hyp:sparsityRIP}(i) holds.
Set $\delta \coloneqq \max\,\{\lceil\deg(g)/2\rceil:g\in\frakg\}$.
Let $L_{\y} : \RX_{2 r + 2 \delta} \to \R$ be the linear functional associated to a unital sequence  $\y=(y_w)_{w\in\W_{2r+2\delta}}\in\R^{\bsigma(n,2r+2\delta)}$ satisfying $L_{\y}({\cM(\frakg)_{r +  \delta}}) \subseteq \R_{\geq 0}$.
Assume that the following holds:
\begin{enumerate}
\item[(H1)] ${{\M_{r+\delta}}(\y,I_k)}$ and ${{\M_{r+\delta}}(\y,I_k\cap I_j)}$ are $\delta$-flat, for all $j,k\in[p]$.
\end{enumerate}
Then, there exist finite-dimensional Hilbert spaces $\cH(I_k)$ with dimension $t_k$, for all $k\in[p]$,
Hilbert spaces
$\cH(I_j \cap I_k)\subseteq\cH(I_j),\cH(I_k)$ for all pairs $(j,k)$ with $I_j \cap I_k \neq 0$, and operators $\hat A^k$, $\hat A^{j k}$,  acting on them, respectively.
Further, there are unit vectors $\vb^j\in\cH(I_j)$ and $\vb^{jk}\in\cH(I_j\cap I_k)$ such that
\begin{equation}\label{eq:desired}
\begin{split}
L_{\y}(f) & = \langle f(\hat{A^j}) \vb^j , \vb^j \rangle \quad \text{for all } f\in\RXI j_{2r},\\
L_{\y}(g) & = \langle g(\hat{A^{jk}}) \vb^{jk} , \vb^{jk} \rangle  \quad \text{for all } g\in\R \langle \underline{X}( I_j \cap I_k) \rangle_{2r}.
\end{split}
\end{equation}

Assuming that for all pairs $(j,k)$ with $I_j \cap I_k \neq {\emptyset}$, one has
\begin{enumerate}
\item[(H2)] the matrices $(\hat A_i^{j k})_{i \in I_j \cap I_k}$ have no common complex invariant subspaces,
\end{enumerate}
 then there exist $\underline{A} \in {\cD_{\frakg}^t}$, with $t \coloneqq t_1 \cdots t_p$, and a unit vector $\vb$ such that
\begin{align}\label{eq:sparse_flat_representation}
L_{\y}(f) = \langle f(\underline{A}) \vb , \vb \rangle,
\end{align}
for all $f\in\sum_k \RXI k_{2r}$.
\end{theoremf}
\begin{figure}[ht]
\centering
\begin{tikzcd}[row sep=3ex,column sep=1ex]
& \mathcal A \\
{} \\
\mathcal A(I_1) \arrow[uur,"j_1",dashed] && \mathcal A(I_2) \arrow[uul,"j_2"',dashed]  \\
{} \\
& \mathcal A(I_1\cap I_2) \arrow[uur, "\iota_2"'] \arrow[uul, "\iota_1"]
\end{tikzcd}
\caption{Amalgamation of finite-dimensional $C^\star$-algebras.}\label{diag:amalgamSmall}
\label{fig:2}
\end{figure}
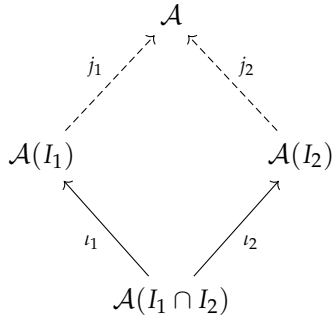
\begin{example}[Non-amalgamation in finite-dimensional algebras]
\label{ex:no_finite_amalgam}
Given $I_1$ and $I_2$, suppose $\cA(I_1\cap I_2)$ is generated by the $2\times2$ diagonal matrix
\[A^{1 2} =\begin{bmatrix}
1 \\ &2
\end{bmatrix},
\]
and assume $\cA(I_1)=\cA(I_2)= \Mbb_3(\R)$.
(Observe that $\cA(I_1\cap I_2)$ is the algebra of all diagonal matrices.)
For each $k \in \{1,2\}$, let us define $\iota_k(A)\coloneqq A \oplus k$, for all $A \in \cA(I_1\cap I_2)$.
We claim that there is no finite-dimensional
$C^\star$-algebra
 $\cA$ amalgamating the above Figure \ref{fig:2}.
Indeed, by the Skolem-Noether theorem, every homomorphism $\Mbb_n(\R)\to \Mbb_m(\R)$ is of the form
$x\mapsto P^{-1}(x\otimes \I_{m/n})P$ for some invertible $P$; in particular, $n$ divides $m$.
If a desired $\cA$ existed, then the matrices
$(A^{12}\oplus 1)\otimes \I_k$ and $(A^{12}\oplus 2)\otimes \I_k$ would be similar.
But they are not as is easily seen from eigenvalue multiplicities.
\end{example}
\if{
\begin{remark}
\label{rk:gns_sparse}

\end{remark}
}\fi
As in the dense case, we can summarize the sparse \ac{GNS} construction procedure described in the proof of Theorem~\ref{th:sparse_flat} into an algorithm, called $\sparsegns$ (see \citencsparse[Algorithm 4.6]{ncsparse}).
%, stated in \citencsparse[Algorithm 4.6]{ncsparse}, for the case $p=2$ (the general case is similar).

\section{Eigenvalue optimization}\label{sec:eig}
We provide \ac{SDP} relaxations allowing one to under-approximate the smallest eigenvalue that a given nc polynomial can attain on a tuple of symmetric matrices from a given semialgebraic set.
%The unconstrained case is handled in Section~\ref{sec:unconstr_eig}, where we show how to compute a lower bound on the smallest eigenvalue via solving an \ac{SDP}.
%The constrained case is handled in Section~\ref{sec:constr_eig}, where we derive a hierarchy of lower bounds converging to the minimal eigenvalue, assuming that the quadratic module is archimedean and that RIP holds (Assumption~\ref{hyp:sparsityRIP}).
%
We first recall the celebrated Helton-McCullough theorem stating the equivalence between \ac{SOHS} and positive semidefinite nc polynomials.
\begin{theorem}[Helton-McCullough]\label{th:Helton}
Given $f \in \SymRX$, $f(\underline{A}) \succeq 0$, for all $\underline{A} \in (\Sbb_k)^n$, $k\in\N^*$, if and only if $f \in \SigmaX$.
\end{theorem}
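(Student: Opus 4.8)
The plan is to prove the two implications separately; only the forward one — that positive semidefiniteness on every matrix tuple forces an \ac{SOHS} decomposition — carries real content, and I would follow the route of Helton and McCullough. The converse is immediate: if $f = \sum_{i=1}^t h_i^\star h_i$ with $h_i \in \RX$, then for every tuple $\underline{A} = (A_1,\dots,A_n)$ of real symmetric matrices one has $g^\star(\underline{A}) = g(\underline{A})^\intercal$ for all $g \in \RX$ (the involution reverses words and each $A_i$ is symmetric), hence $f(\underline{A}) = \sum_i h_i(\underline{A})^\intercal h_i(\underline{A}) \succeq 0$.

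For the forward direction I would argue by contraposition. Assume $f \in \SymRX$ is not an \ac{SOHS} and set $d \coloneqq \lceil \deg(f)/2 \rceil$, so $f \notin \SigmaX_{d}$. By Proposition~\ref{prop:ncGram} the truncated cone $\SigmaX_{d}$ is a closed convex cone in the finite-dimensional space $\SymRX_{2d}$, so the Hahn--Banach separation theorem provides a linear functional $L \colon \SymRX_{2d} \to \R$ with $L(f) < 0$ and $L(\sigma) \ge 0$ for all $\sigma \in \SigmaX_{d}$. Extending $L$ to $\RX_{2d}$ by $L(p) \coloneqq L(\tfrac12(p + p^\star))$ makes it symmetric; replacing $L$ by $L + \varepsilon\,\mathrm{ev}_0$ for small $\varepsilon > 0$, where $\mathrm{ev}_0(p) = p(0)$ is nonnegative on $\SigmaX_{d}$, and rescaling, we may assume $L(1) = 1$. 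Then $(p,q) \mapsto L(q^\star p)$ is a positive semidefinite Hermitian form on $\RX_{d}$.

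The core is a finite-dimensional \ac{GNS}-type construction: set $N \coloneqq \{p \in \RX_d \mid L(p^\star p) = 0\}$, form $\mathcal{H} \coloneqq \RX_d / N$ with inner product $\langle \bar p, \bar q \rangle \coloneqq L(q^\star p)$, and take the unit vector $\vb \coloneqq \bar 1$. One would like multiplication by $x_i$ to induce, on the classes of degree at most $d-1$, a symmetric partial operator on $\mathcal{H}$, and then to extend it to a genuine self-adjoint operator $A_i$ on all of $\mathcal{H}$. This is exactly where the noncommutative (matrix) nature enters — in contrast with the commutative Archimedean setting of Theorem~\ref{th:gns} there is no norm bound available — and making it rigorous is, I expect, the main obstacle: the operator need not even be well defined at the degree-$(d-1)$ ``edge'', and controlling this edge, together with the self-adjoint extension (which for a symmetric operator on a finite-dimensional inner-product space always exists), requires the careful, essentially inductive, argument of Helton and McCullough.

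Granting self-adjoint $A_i$ that restrict to multiplication by $x_i$ on classes of degree at most $d-1$, the proof concludes as follows. For a word $w = x_{i_1}\cdots x_{i_k}$ with $k \le 2d$, choose $l$ with $\max(0,k-d) \le l \le \min(k,d)$ and split $w = (x_{i_l}\cdots x_{i_1})^\star (x_{i_{l+1}}\cdots x_{i_k})$; applying the $A_j$ to $\vb$ stays within degree $d$ on each factor, and using $A_j^\star = A_j$ one obtains $\langle w(\underline{A})\vb,\vb\rangle = L(w)$. By linearity $\langle f(\underline{A})\vb,\vb\rangle = L(f) < 0$, so $f$ fails to be positive semidefinite on the tuple $\underline{A}$, which establishes the contrapositive and hence the theorem.
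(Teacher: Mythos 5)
The book itself does not prove Theorem~\ref{th:Helton}: it is recalled as a celebrated result and its proof is deferred, in the Notes and sources, to Helton and to McCullough. So the comparison is with the argument of those references, which is indeed the separation-plus-truncated-GNS route you outline. Your converse direction is complete, and your closing computation, splitting a word $w$ of degree at most $2d$ as $u^\star v$ with both halves of degree at most $d$ to get $\langle w(\underline{A})\vb,\vb\rangle=L(w)$, is exactly how the argument must end. But the forward direction is not a proof as written, for the reason you yourself flag and then ``grant''.

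The genuine gap is the construction of the self-adjoint $A_i$, and with your normalization it cannot be repaired by hand-waving: the perturbation $L+\varepsilon\,\mathrm{ev}_0$ only guarantees $L(1)>0$, so the form $(p,q)\mapsto L(q^\star p)$ may remain degenerate, and multiplication by $x_i$ need not descend to $\RX_d/N$. Indeed, for $p\in N\cap\RX_{d-1}$ one gets $L(q^\star x_i p)=L((x_iq)^\star p)=0$ by Cauchy--Schwarz only when $\deg q\le d-1$; when $\deg q=d$ the word $x_iq$ leaves the truncation and nothing forces $L(q^\star x_i p)=0$, so $x_ip$ may fail to lie in $N$ --- precisely your ``edge''. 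The missing idea is a stronger perturbation: replace $\varepsilon\,\mathrm{ev}_0$ by $\delta\Lambda$, where $\Lambda(w)\coloneqq\langle w(\underline{B})\xi,\xi\rangle$ for a tuple $\underline{B}$ of symmetric matrices and a vector $\xi$ chosen so that the vectors $v(\underline{B})\xi$, $v\in\W_d$, are linearly independent (a truncated shift-plus-adjoint construction on the span of words of length at most $d$ does this); then the moment matrix of $\Lambda$ is positive definite, $L+\delta\Lambda$ is still nonnegative on $\SigmaX_d$ and negative at $f$ for small $\delta$, and now $N=\{0\}$. The GNS space is simply $\RX_d$, multiplication by $x_i$ is a well-defined symmetric partial map $\RX_{d-1}\to\RX_d$, and the finite-dimensional self-adjoint extension you mention (block completion) is all that is needed --- no delicate inductive argument remains, and your final computation finishes the proof. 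A secondary inaccuracy: closedness of $\SigmaX_{d}$ does not follow from Proposition~\ref{prop:ncGram} alone; it requires the bounded-Gram-matrix compactness argument (the McCullough--Putinar result the book cites in connection with Proposition~\ref{prop:sparseclosed}), which you should either cite or supply.
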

In contrast with the constrained case where we obtain the analog of Putinar's Positivstellensatz in Theorem~\ref{th:sparsePsatz}, there is no sparse analog of Theorem \ref{th:Helton}, as shown in the following example.
{
\begin{lemma}
\label{lemma:nosparseHelton}
There exist polynomials which are sparse sums of hermitian squares but are not sums of sparse hermitian squares.
\end{lemma}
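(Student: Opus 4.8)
The plan is to prove the lemma by exhibiting an explicit witness, essentially the polynomial already considered in the example preceding the statement. I would take three noncommuting variables $\underline x = (x_1,x_2,x_3)$ and set $f = (x_1+x_2+x_3)^2 \in \SymRX$. On the one hand, $f = h^\star h$ with $h = x_1+x_2+x_3$, so $f \in \SigmaX$; on the other hand, expanding $f$ and distributing its nine monomials gives $f = f_1+f_2+f_3$ with $f_1 \in \R\langle x_1,x_2\rangle$, $f_2 \in \R\langle x_2,x_3\rangle$, $f_3 \in \R\langle x_1,x_3\rangle$, i.e. $f$ respects the correlative sparsity pattern associated with the cliques $I_1 = \{1,2\}$, $I_2 = \{2,3\}$, $I_3 = \{1,3\}$ (which do not satisfy the \ac{RIP}). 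Thus $f$ is a sparse \ac{SOHS} in the required sense, and everything reduces to showing $f \notin \SigmaX^{\sparse} := \Sigma\langle x_1,x_2\rangle + \Sigma\langle x_2,x_3\rangle + \Sigma\langle x_1,x_3\rangle$.

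To that end I would argue by contradiction: suppose $f = \sigma_1+\sigma_2+\sigma_3$ with $\sigma_k \in \SigmaXI{k}$. Evaluating at the zero tuple yields $\sigma_k(0) \geq 0$ and $\sum_k \sigma_k(0) = 0$, so each $\sigma_k$ has zero constant term; since an \ac{SOHS} with no constant term cannot involve constants in its hermitian-square summands, each $\sigma_k$ is homogeneous of degree $2$. Hence each $\sigma_k$ admits a Gram matrix $G_k \succeq 0$ indexed by the degree-one words $\{x_i : i \in I_k\}$; padding with zeros, view $G_k$ as a symmetric $3\times 3$ matrix supported on the principal block indexed by $I_k$. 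Then $G_1+G_2+G_3$ is a Gram matrix of $f$ with respect to $(x_1,x_2,x_3)$, but because $f$ is homogeneous of degree $2$ its Gram matrix is unique, equal to the all-ones matrix. Comparing off-diagonal entries forces $(G_1)_{12} = (G_2)_{23} = (G_3)_{13} = 1$, and comparing diagonal entries forces $(G_1)_{11}+(G_3)_{11} = (G_1)_{22}+(G_2)_{22} = (G_2)_{33}+(G_3)_{33} = 1$.

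The contradiction then comes from $2\times 2$ positivity: $G_1 \succeq 0$ with $(G_1)_{12}=1$ forces $(G_1)_{11}(G_1)_{22} \geq 1$, while the diagonal relations together with nonnegativity give $(G_1)_{11},(G_1)_{22} \in [0,1]$; hence $(G_1)_{11} = (G_1)_{22} = 1$, so $(G_3)_{11} = (G_2)_{22} = 0$, and then $G_2 \succeq 0$ is impossible since its $(2,2)$-entry vanishes while its $(2,3)$-entry equals $1$. This proves $f \notin \SigmaX^{\sparse}$, hence the lemma. All the computations are elementary; the only point I would take care over — and the mild obstacle here — is the clean justification of the reduction to the homogeneous, constant-free case and of the fact that a Gram matrix of the sum $\sigma_1+\sigma_2+\sigma_3$ is the sum of the (zero-padded) Gram matrices of the summands. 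Once that bookkeeping is settled, uniqueness of the degree-$2$ Gram form and the scalar inequality above finish the argument.
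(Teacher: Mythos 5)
Your proof is correct, but it establishes the lemma with a different witness and by a different mechanism than the paper. The paper's proof takes the non-homogeneous degree-4 polynomial $f = v\,\G_f\,v^\star$ of \eqref{eq:polySohs}, sparse with respect to the \emph{two} cliques $I_1=\{x_1,x_2\}$, $I_2=\{x_2,x_3\}$; it invokes the Newton chip method to show that every Gram matrix of $f$ has the one-parameter form \eqref{eq:gram}, observes that a decomposition in $\Sigma\langle x_1,x_2\rangle+\Sigma\langle x_2,x_3\rangle$ would force the cross-clique entry $\alpha$ to vanish, and checks that $\G_f$ with $\alpha=0$ is not positive semidefinite. You instead reuse $(x_1+x_2+x_3)^2$ with the three cliques $\{1,2\},\{2,3\},\{1,3\}$ --- the polynomial the paper already employs earlier, in the constrained non-\ac{RIP} discussion --- and exclude a sparse decomposition via uniqueness of the degree-2 Gram matrix plus a $2\times2$ minor argument. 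Your computation is sound and more elementary (no Newton chip needed), but it buys less in context: your cliques violate the \ac{RIP}, whereas the paper deliberately chooses a witness whose two cliques satisfy it trivially, so that the lemma contrasts sharply with Theorem~\ref{th:sparsePsatz} and shows that no \ac{RIP}-type hypothesis can rescue a sparse analog of the Helton--McCullough theorem in the unconstrained setting; your example leaves that stronger point untouched. Two details to tighten in your write-up: the inference ``zero constant term, hence each $\sigma_k$ is homogeneous of degree $2$'' also requires the standard leading-term argument (the top-degree parts of the hermitian squares form a nonzero \ac{SOHS}, so they cannot cancel across the sum, which bounds the degree of the $g_{k,i}$ by one), and you should state explicitly that for a homogeneous quadratic the Gram matrix in the word basis $(x_1,x_2,x_3)$ is literally unique because the words $x_ix_j$ are linearly independent in $\RX$.
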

\begin{proof}
%\label{ex:nosparseHelton}\rm
Let $v=\begin{bmatrix}x_1&x_1x_2&x_2&x_3&x_3x_2 \end{bmatrix}$,
\begin{equation}\label{eq:gram}
\G_f=\left[\begin{array}{rrrrr}
1&-1&-1&0&\alpha\\
-1&2&0&-\alpha&0\\
-1&0&3&-1&9\\
0&-\alpha&-1&6&-27\\
\alpha&0&9&-27&142
 \end{array}\right],\qquad \alpha\in\R,
\end{equation}
and consider
\begin{equation}\label{eq:polySohs}
\begin{split}
f&=v\G_fv^\star \\
&= x_1^2-x_1 x_2-x_2 x_1+3
   x_2^2-2
   x_1 x_2 x_1+2 x_1 x_2^2 x_1 \\
   &\phantom{=\ }
-x_2 x_3-x_3 x_2+6 x_3^2
   +9 x_2^2 x_3+9 x_3  x_2^2-54
   x_3 x_2 x_3 +142
   x_3 x_2^2 x_3.
\end{split}
\end{equation}
The polynomial $f$ is clearly sparse with resepct to $I_1=\{x_1,x_2\}$ and $I_2=\{x_2,x_3\}$. Note that the matrix $\G_f$ is positive semidefinite if and only if  $0.270615 \lesssim \alpha \lesssim 1.1075$, whence $f$  is a sparse polynomial that is an \ac{SOHS}.

We claim that $f\not\in\SigmaXI1+\SigmaXI2$, i.e., $f$ is not a sum of sparse hermitian squares. By the Newton chip method  only monomials in $v$ can appear in an \ac{SOHS} decomposition of $f$. Further, every Gram matrix of $f$ in the monomial basis $v$ is of the form \eqref{eq:gram}. However, the matrix $\G_f$ with $\alpha=0$ is not positive semidefinite, and hence $f\not\in\SigmaXI1+\SigmaXI2$.
\end{proof}
}
\subsection{Unconstrained eigenvalue optimization}\label{sec:unconstr_eig}
Let $\I_k$ stand for the $k\times k$ identity matrix.
Given $f \in \SymRX$ of degree $2 d$, the smallest eigenvalue of $f$ is obtained by solving the following  optimization problem:
\begin{align}\label{eq:eigmin}
\lambda_{\min}(f) \coloneqq \inf\,\{\langle f(\underline{A}) \vb,  \vb \rangle : \underline{A}\in(\Sbb_k)^n, k\in\N^*,\| \vb \|_2 = 1\}.
\end{align}
The optimal value $\lambda_{\min}(f)$ of Problem~\eqref{eq:eigmin} is the greatest lower bound on the eigenvalues of $f(\underline{A})$ over all $n$-tuples $\underline{A}$ of real symmetric matrices.
Problem~\eqref{eq:eigmin} can be rewritten as follows:
\begin{equation}\label{eq:eigmin2}
\begin{array}{rll}
\lambda_{\min}(f) = &\sup\limits_{b} & b \\
&\,\rm{ s.t.} & f(\underline{A})-b\I_k \succeq 0, \quad \forall \underline{A} \in (\Sbb_k)^n, k\in\N^*
\end{array}
\end{equation}
which is in turn equivalent to
\begin{equation}\label{eq:eigmin_dual}
\begin{array}{rll}
\lambda^d(f) \coloneqq &\sup\limits_{b}  & b \\
&\,\rm{ s.t.}& f(\ux) - b \in \SigmaX_{d}
\end{array}
\end{equation}
as a consequence of Theorem~\ref{th:Helton}.

The dual of \ac{SDP}~\eqref{eq:eigmin_dual} is
\begin{equation}
\label{eq:eigmin_primal}
\begin{array}{rll}
\eta^d(f) \coloneqq &\inf\limits_{{\y}}  & L_{\y}(f)  \\
&\,\rm{ s.t.} & {y_{1} = 1} , \quad {{\M_d}(\y)} \succeq 0\\
\end{array}
\end{equation}
%\quad & ({{\M_d}(L)})_{u,v} = ({{\M_d}(L)})_{w,z}  , \quad \text{for all } u^\star v = w^\star z , \\
%
One can compute $\lambda_{\min}(f)$ by solving a single \ac{SDP},  either \ac{SDP}~\eqref{eq:eigmin_primal} or
\ac{SDP}~\eqref{eq:eigmin_dual}, since there is no duality gap between these two programs, that is, one has $\eta^d(f) = \lambda^d(f) = \lambda_{\min}(f)$.

%Now, we apply Theorem~\ref{th:sparseHelton} (the sparse variant of Theorem~\ref{th:Helton})
Now, we address eigenvalue optimization for a given  sparse nc polynomial $f = f_1 + \dots + f_p$ of degree $2 d$, with $f_k \in \SymRXI{k}_{2d}$, for all $k\in[p]$.
For all $k\in[p]$, let $\G_{f_k}$ be a Gram matrix associated to $f_k$.
The sparse variant of \ac{SDP}~\eqref{eq:eigmin_primal} is
\begin{equation}
\label{eq:sparse_eigmin_primal}
\begin{array}{rll}
\eta^{d}_{\cs}(f) \coloneqq &\inf\limits_{{\y}}  & L_{\y}(f)\\
&\,\rm{ s.t.} & y_{1} = 1, \quad {{\M_d}(\y,I_k)}  \succeq 0, k\in[p]
\end{array}
\end{equation}
whose dual is the sparse variant of \ac{SDP}~\eqref{eq:eigmin_dual}:
\begin{equation}
\label{eq:sparse_eigmin_dual}
\begin{array}{rll}
\lambda^{d}_{\cs}(f) \coloneqq &\sup\limits_{b}&b\\
&\,\rm{ s.t.} & f - b \in \SigmaXI{1}_{d} + \dots + \SigmaXI{p}_{d}
\end{array}
\end{equation}
To prove that there is no duality gap between \ac{SDP}~\eqref{eq:sparse_eigmin_primal} and \ac{SDP}~\eqref{eq:sparse_eigmin_dual}, we need the following result.
\begin{proposition}
\label{prop:sparseclosed}
The set $\SigmaX^{\sparse}_d$ is a closed convex subset of $\RXI{1}_{2 d} + \dots + \RXI{p}_{2 d}$.
\end{proposition}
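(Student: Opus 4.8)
The plan is to show that $\SigmaX^{\sparse}_d = \SigmaXI{1}_d + \dots + \SigmaXI{p}_d$ is closed by reducing to closedness of each summand cone $\SigmaXI{k}_d$ and then invoking a standard lemma on sums of closed cones. First I would recall that, by the Gram matrix method (Proposition~\ref{prop:ncGram}), the cone $\SigmaXI{k}_d$ is the image of the PSD cone $\Sbb_{N_k}^+$ (with $N_k = \bsigma(n_k,d)$, the length of the word vector $\W_d$ in the variables $\ux(I_k)$) under the linear map $\G \mapsto \W_d^\star\,\G\,\W_d$. A linear image of a closed convex cone need not be closed in general, but here one can argue closedness directly: if a sequence $f^{(\ell)} = \sum_{i} (h_i^{(\ell)})^\star h_i^{(\ell)} \in \SigmaXI{k}_d$ converges to some $f$, then the coefficients of $f^{(\ell)}$ (in particular those of the highest-degree symmetric monomials $u^\star u$ with $|u| = d$) are bounded, which forces the Gram matrices to lie in a bounded subset of $\Sbb_{N_k}^+$; passing to a convergent subsequence of Gram matrices and using continuity of $\G \mapsto \W_d^\star\,\G\,\W_d$ yields a PSD Gram matrix for $f$, so $f \in \SigmaXI{k}_d$. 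This is the usual compactness argument for closedness of truncated SOHS cones and I would cite it rather than reproduce it in full.

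Next I would address the sum. The key extra ingredient, which distinguishes the sparse setting from a generic sum of closed cones, is the \ac{RIP} structure of the cliques $I_1,\dots,I_p$ (Assumption~\ref{hyp:sparsityRIP}(iii)). The standard fact I would invoke is: if $K_1,\dots,K_p$ are closed convex cones in finite-dimensional spaces and their sum satisfies a suitable "no common recession direction" condition, then $K_1 + \dots + K_p$ is closed. Concretely, here one checks that if $f_k \in \SigmaXI{k}_{2d}$ for each $k$ and $f_1 + \dots + f_p = 0$, then each $f_k = 0$: indeed, evaluating $\sum_k f_k = 0$ on tuples of commuting scalars (i.e. specializing to $\R^n$) already shows the commutative collapses vanish, and more carefully, since $f_k \succeq 0$ as an nc polynomial and the top-degree parts live in disjoint-enough monomial sets (controlled by the cliques), one gets $f_k = 0$ for all $k$. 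This "pointed sum" property, combined with closedness of each $\SigmaXI{k}_d$, gives closedness of the sum by the finite-dimensional cone lemma (e.g. the fact that the sum of closed cones $K_1,\dots,K_p$ is closed whenever $x_1 + \dots + x_p = 0$ with $x_k \in K_k$ implies all $x_k = 0$, which can be proved by induction on $p$ using that $K_1 + K_2$ is then closed).

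I expect the main obstacle to be the verification that $\SigmaXI{1}_{2d} + \dots + \SigmaXI{p}_{2d}$ is \emph{pointed} in the sense above, i.e. that a cancellation $\sum_k f_k = 0$ with each $f_k$ a bounded-degree SOHS in its own clique variables forces every $f_k = 0$. One cannot simply use positivity on all matrix tuples, since a single $f_k$ being a sum of hermitian squares is already nonnegative; the point is rather that $-f_k = \sum_{j \neq k} f_j$ would then also be a sum of hermitian squares in the variables $\ux(I_j)$, $j \neq k$, while $f_k$ involves monomials genuinely supported in $I_k$. I would handle this by a Newton-chip / support argument: the highest-degree monomials appearing in any hermitian square $h^\star h$ with $h \in \RXI{k}_d$ are of the form $u^\star u$ with $u$ a word of length $d$ in $\ux(I_k)$, and these cannot be cancelled by squares coming from other cliques except on the overlaps $I_j \cap I_k$; running this degree-by-degree, together with the RIP ordering of the cliques, peels off the summands and yields $f_k = 0$. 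Once pointedness is in hand, the closedness of the sum is routine, and the embedding $\SigmaX^{\sparse}_d \subseteq \RXI{1}_{2d} + \dots + \RXI{p}_{2d}$ is immediate from the definitions of the $\SigmaXI{k}_d$.
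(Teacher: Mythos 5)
Your overall architecture is genuinely different from the proof the paper relies on. The paper (see its Notes and sources) defers to the sparse analogue of the McCullough--Putinar closedness result for truncated \ac{SOHS} cones, which is a one-shot compactness argument applied to the whole sparse cone: given $f^{(\ell)}\to f$ with $f^{(\ell)}=\sum_{k}\sum_i \bigl(h^{(\ell)}_{k,i}\bigr)^\star h^{(\ell)}_{k,i}$ and $h^{(\ell)}_{k,i}\in\RXI{k}_d$, one applies the linear functional $L$ determined by $L(u^\star v)=\delta_{u,v}$; it returns the sum of the squared coefficient norms of all the $h^{(\ell)}_{k,i}$ (equivalently, the traces of all the clique Gram matrices), so convergence of $f^{(\ell)}$ bounds every clique Gram matrix simultaneously, and a convergent subsequence of \ac{PSD} Gram matrices exhibits $f\in\SigmaX^{\sparse}_d$. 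Your route (closedness of each $\SigmaXI{k}_d$, cited from the dense case, plus a Rockafellar-type criterion for closedness of a sum of closed cones under joint pointedness) is a legitimate alternative in finite dimension and could be made to work, but as written it has a gap at its central step.

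The gap is the verification that $\sum_{k}f_k=0$ with $f_k\in\SigmaXI{k}_d$ forces every $f_k=0$. Your reason for rejecting the evaluation argument is mistaken: if the sum vanishes identically, then for every tuple $\underline{A}$ of symmetric matrices (of any size) and every vector $\vb$ the nonnegative numbers $\langle f_k(\underline{A})\vb,\vb\rangle$ sum to zero, hence $f_k(\underline{A})=0$ for all $\underline{A}$, and an nc polynomial vanishing under all such evaluations is the zero polynomial; so each $f_k=0$. This is precisely the ``positivity on all matrix tuples'' argument you discard, it needs no support analysis, and it settles pointedness in two lines. The substitute you propose instead, a Newton-chip/degree-peeling argument ``together with the \ac{RIP} ordering of the cliques,'' is not carried out, and the delicate case, cancellation between hermitian squares from different cliques on monomials supported in the overlaps $I_j\cap I_k$, is exactly what is left unexplained; worse, it imports Assumption~\ref{hyp:sparsityRIP}, which the proposition does not assume (the paper stresses right after that Theorem~\ref{th:sparse_eig_nogap} requires no \ac{RIP}), so your plan would at best prove a weaker, \ac{RIP}-conditional statement. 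A further minor imprecision: in the nc setting the coefficient of $u^\star u$ in $\W_d^\star\,\G\,\W_d$ also collects off-diagonal Gram entries $\G_{v,w}$ with $v^\star w=u^\star u$ and $|v|\neq|w|$, so boundedness of those coefficients alone does not bound the Gram matrices; the clean normalization is again the functional $L$ above, though since you cite the dense closedness result rather than reprove it, this point is not fatal.
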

From Proposition~\ref{prop:sparseclosed}, we obtain the following theorem which does not require Assumption~\ref{hyp:sparsityRIP}.
\begin{theoremf}
\label{th:sparse_eig_nogap}
Let $f \in \SymRX$ of degree $2 d$, with $f = f_1 + \dots + f_p$, $f_k \in \SymRXI{k}_{2d}$, for all $k\in[p]$. %, and assume that $f$ satisfies Assumption~\ref{hyp:sparsityRIP}.
Then, one has $\lambda^{d}_{\cs}(f) = \eta^{d}_{\cs}(f)$, i.e., there is no duality gap between \ac{SDP}~\eqref{eq:sparse_eigmin_primal} and \ac{SDP}~\eqref{eq:sparse_eigmin_dual}.
\end{theoremf}
\begin{remark}
\label{rk:sparsevsdense}
By contrast with the dense case, it is not enough to compute the solution of \ac{SDP}~\eqref{eq:sparse_eigmin_primal} to obtain the optimal value $\lambda_{\min}(f)$ of the unconstrained optimization problem~\eqref{eq:eigmin}.
However, one can still compute a certified lower bound  $\lambda^{d}_{\cs}(f)$ by solving a single \ac{SDP}, either in the primal form~\eqref{eq:sparse_eigmin_primal} or in the dual form~\eqref{eq:sparse_eigmin_dual}.
Note that the related computational cost is potentially much less expensive.
Indeed, \ac{SDP}~\eqref{eq:sparse_eigmin_dual} involves \ac{PSD} matrices of size at most $\max\,\{\bsigma(n_k,d)\}_{k=1}^p$ and $\sum_{k=1}^p \bsigma(n_k,2 d)$ equality constraints.
This is in contrast with the dense version~\eqref{eq:eigmin_dual}, which involves \ac{PSD} matrices of size $\bsigma(n,d)$ and $\bsigma(n,2 d)$ equality constraints.
\end{remark}
\subsection{Constrained eigenvalue optimization}
\label{sec:constr_eig}
Here, we focus on providing lower bounds for the constrained eigenvalue optimization of nc polynomials.
Given $f \in \SymRX$ and $\frakg = \{g_1,\dots,g_{m} \}$ $\subseteq \SymRX$ as in~\eqref{eq:DS}, let us define  $\lambda_{\min} (f, \frakg)$ as follows:
\begin{align}
\label{eq:constr_eigmin}
\lambda_{\min}(f,\frakg) \coloneqq \inf\,\{\langle f(\underline{A}) \vb , \vb \rangle : \underline{A} \in {\cD_{\frakg}^\infty}, \| \vb \| = 1 \},
\end{align}
which is, as for the unconstrained case, equivalent to
\begin{equation}\label{eq:constr_eigmin2}
\begin{array}{rll}
\lambda_{\min}(f,\frakg) = &\sup\limits_{b}  & b \\
&\,\rm{ s.t.} & f(\underline{A}) - b \I_k \succeq 0, \quad \forall \underline{A} \in {\cD_{\frakg}^\infty}.
\end{array}
\end{equation}
As usual, let $d_j \coloneqq \lceil \deg(g_j) / 2 \rceil$ for each $j\in[m]$, and let $$r_{\min} \coloneqq \max\,\{\lceil \deg(f) / 2 \rceil, d_1, \dots, d_m \}.$$
One can approximate $\lambda_{\min} (f, \frakg)$ from below via the following hierarchy of \ac{SDP} programs, indexed by $r \geq r_{\min}$:
\begin{equation}
\label{eq:constr_eigmin_dual}
\begin{array}{rll}
\lambda^r (f,\frakg) \coloneqq &\sup\limits_{b}  & b \\
&\,\rm{ s.t.}& f - b \in {\cM(\frakg)_r}
\end{array}
\end{equation}
The dual of \ac{SDP}~\eqref{eq:constr_eigmin_dual} is
\begin{equation}
\label{eq:constr_eigmin_primal}
\begin{array}{rll}
\eta^{r}(f,\frakg) \coloneqq &\inf\limits_{{\y}}  & L_{\y}(f)  \\
&\,\rm{ s.t.} & {y_{1} = 1},\quad{{\M_r}(\y)} \succeq 0\\
&& {{\M_{r-d_j}}(g_j \y)} \succeq 0, \quad j\in[m]\\
\end{array}
\end{equation}
%
%where $G_f$ is a Gram matrix for $f$.\\
%
Under additional assumptions, this hierarchy of primal-dual \ac{SDP}~\eqref{eq:constr_eigmin_dual}-\eqref{eq:constr_eigmin_primal} converges to the optimal value of the constrained eigenvalue problem.
\begin{theorem}
\label{th:constr_eig}
{Assume that ${\cD_{\frakg}}$ is as in~\eqref{eq:newDS} with the additional quadratic constraints~\eqref{eq:additional}} and that the quadratic module $\cM(\frakg)$ is Archimedean.
Then the following holds for $f \in \SymRX$:
\begin{align}
\label{eq:cvg_eig}
\lim_{r \to \infty} \eta^{r}(f,\frakg) = \lim_{r \to \infty}  \lambda^r (f,\frakg) = \lambda_{\min} (f, \frakg).
\end{align}
\end{theorem}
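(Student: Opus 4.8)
The plan is to mimic the classical convergence proof of Lasserre's hierarchy, the key ingredient being the Helton--McCullough Positivstellensatz (Theorem~\ref{th:densePsatz}). First I would record the ``easy'' chain $\lambda^r(f,\frakg)\le\eta^{r}(f,\frakg)\le\lambda_{\min}(f,\frakg)$ for every $r\ge r_{\min}$. The left inequality is weak duality between the primal--dual pair \eqref{eq:constr_eigmin_dual}--\eqref{eq:constr_eigmin_primal}. For the right one, pick any $\underline{A}\in\cD_{\frakg}^\infty$ acting on a Hilbert space $\mathcal H$ and any unit vector $\vb\in\mathcal H$, and set $y_w\coloneqq\langle w(\underline{A})\vb,\vb\rangle$ for $w\in\W_{2r}$. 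By Lemma~\ref{lemma:moment}, $\M_r(\y)\succeq0$ since $L_\y(h^\star h)=\|h(\underline{A})\vb\|^2\ge0$, and $\M_{r-d_j}(g_j\y)\succeq0$ since $L_\y(h^\star g_j h)=\langle g_j(\underline{A})h(\underline{A})\vb,h(\underline{A})\vb\rangle\ge0$ using $g_j(\underline{A})\succeq0$; together with $y_{1}=1$ this makes $\y$ feasible for \eqref{eq:constr_eigmin_primal} with value $L_\y(f)=\langle f(\underline{A})\vb,\vb\rangle$, and taking the infimum over $(\underline{A},\vb)$ gives $\eta^{r}(f,\frakg)\le\lambda_{\min}(f,\frakg)$. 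I would also note that $\lambda^r$ is nondecreasing because $\cM(\frakg)_r\subseteq\cM(\frakg)_{r+1}$, and that Archimedeanity of $\cM(\frakg)$ makes $\lambda_{\min}(f,\frakg)$ finite (it bounds the norms of all $\underline{A}\in\cD_{\frakg}^\infty$, hence those of $f(\underline{A})$).

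The substantive step is the matching lower bound $\lim_{r\to\infty}\lambda^r(f,\frakg)\ge\lambda_{\min}(f,\frakg)$. Fix $\epsilon>0$ and set $c\coloneqq\lambda_{\min}(f,\frakg)-\epsilon$. For every $\underline{A}\in\cD_{\frakg}^\infty$ one has $\langle f(\underline{A})\vb,\vb\rangle\ge\lambda_{\min}(f,\frakg)$ for all unit $\vb$, i.e. $f(\underline{A})\succeq\lambda_{\min}(f,\frakg)\I$, so $f(\underline{A})-c\I\succeq\epsilon\I\succ0$; that is, $f-c$ is positive on $\cD_{\frakg}^\infty$. Since $\cM(\frakg)$ is Archimedean (the extra constraints \eqref{eq:additional} are harmless for this), Theorem~\ref{th:densePsatz} applied to $f-c$ yields $f-c\in\cM(\frakg)$. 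A membership in $\cM(\frakg)$ involves only finitely many summands $a_i^\star g_i a_i$ of bounded degree, so in fact $f-c\in\cM(\frakg)_{r}$ for some $r$; hence $c$ is feasible in \eqref{eq:constr_eigmin_dual} at that order and $\lambda^{r}(f,\frakg)\ge c=\lambda_{\min}(f,\frakg)-\epsilon$. Letting $r\to\infty$ and then $\epsilon\to0$ finishes this direction.

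Combining the two parts with a squeeze gives
\[
\lambda_{\min}(f,\frakg)\ \le\ \lim_{r\to\infty}\lambda^{r}(f,\frakg)\ \le\ \lim_{r\to\infty}\eta^{r}(f,\frakg)\ \le\ \lambda_{\min}(f,\frakg),
\]
which is exactly \eqref{eq:cvg_eig} (and in particular shows both limits exist). I expect the only delicate point to be the transition ``positive on $\cD_{\frakg}^\infty$ $\Rightarrow$ lies in $\cM(\frakg)_{r}$ for an explicit $r$'': this is where the strict $\epsilon$-perturbation (turning a merely nonnegative nc polynomial into a strictly positive one, so that Theorem~\ref{th:densePsatz} applies) and the Archimedean hypothesis are genuinely used. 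Everything else is routine conic/weak duality together with the moment-matrix bookkeeping of Lemma~\ref{lemma:moment}; no operator-algebraic machinery (amalgamation, the sparse GNS construction) is needed here, in contrast with the sparse statements.
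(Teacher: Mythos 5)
Your proof is correct and follows essentially the same route as the paper: the easy inequalities come from weak duality and the moment sequences $y_w=\langle w(\underline{A})\vb,\vb\rangle$, and the convergence hinges on applying the Helton--McCullough Positivstellensatz (Theorem~\ref{th:densePsatz}) to the strictly positive perturbation $f-(\lambda_{\min}(f,\frakg)-\epsilon)$ and truncating the resulting membership in $\cM(\frakg)$, which is exactly the ingredient the paper identifies as the core of its argument.
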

The main ingredient of the proof is the nc analog of Putinar's Positivstellensatz, stated in Theorem~\ref{th:densePsatz}.\\

Let ${\cD_{\frakg}}$ be as in~\eqref{eq:newDS} with the additional quadratic constraints~\eqref{eq:additional}.
Let ${\cM(\frakg)^{\sparse}}$ be as in~\eqref{eq:sparseMS} and let us define ${\cM(\frakg)_r^{\sparse}}$ in the same way as the truncated quadratic module ${\cM(\frakg)_r}$ in~\eqref{eq:MS2d}.
Now, let us state the sparse variant of the primal-dual hierarchy~\eqref{eq:constr_eigmin_dual}-\eqref{eq:constr_eigmin_primal} of lower bounds for $\lambda_{\min} (f, \frakg)$.

For $r \geq r_{\min}$, the sparse variant of \ac{SDP}~\eqref{eq:constr_eigmin_primal} is
\begin{equation}
\label{eq:sparse_constr_eigmin_primal}
\begin{array}{rll}
\eta^{r}_{\cs}(f,\frakg) \coloneqq &\inf\limits_{{\y}}  & L_{\y}(f)  \\
&\,\rm{ s.t.}& {y_{1} = 1},\quad{{\M_r}(\y,I_k)} \succeq 0, k\in[p]\\
&&  {{\M_{r-d_j}}(g_j\y,I_k)} \succeq 0 ,  \quad j \in J_k,k\in[p]
\end{array}
\end{equation}
whose dual is the sparse variant of \ac{SDP}~\eqref{eq:constr_eigmin_dual}:
\begin{equation}
\label{eq:sparse_constr_eigmin_dual}
\begin{array}{rll}
\lambda_{\cs}^{r} (f,\frakg) \coloneqq &\sup\limits_{b} & b \\
&\,\rm{ s.t.}& f - b \in \cM(\frakg)^{\sparse}_r.
\end{array}
\end{equation}
An $\varepsilon$-neighborhood of 0 is the set $\mathcal{N}_{\varepsilon}$ defined for a given $\varepsilon > 0$ by
\[
\mathcal{N}_{\varepsilon} \coloneqq \bigcup_{k \in \N^*} \left\lbrace (A_1,\dots,A_n) \in (\Sbb_k)^n : \varepsilon^2 - \sum_{i=1}^n A_i^2 \succeq 0  \right\rbrace.
\]
%
%\begin{lemma}
%\label{lemma:epsneighborhood}
%If $h \in \RX$ vanishes on an $\varepsilon$-neighborhood of 0, then $h = 0$.
%\end{lemma}
%
\begin{proposition}
\label{prop:constr_eigmin_slater}
Let $\{f\}\cup\frakg\subseteq\SymRX$. Assume that ${\cD_{\frakg}}$ contains an $\varepsilon$-neighborhood of 0 and that ${\cD_{\frakg}}$ is as in~\eqref{eq:newDS} with the additional quadratic constraints~\eqref{eq:additional}.
Then \ac{SDP}~\eqref{eq:sparse_constr_eigmin_primal} admits strictly feasible solutions. As a result, there is no duality gap between \ac{SDP}~\eqref{eq:sparse_constr_eigmin_primal} and its dual \eqref{eq:sparse_constr_eigmin_dual}.
\end{proposition}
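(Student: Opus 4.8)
The plan is to exhibit a strictly feasible point for the semidefinite program \eqref{eq:sparse_constr_eigmin_primal}; the absence of a duality gap then follows from conic strong duality. Indeed, once a Slater point is produced, the feasible set of \eqref{eq:sparse_constr_eigmin_primal} is nonempty, and it is compact: the localizing constraints $\M_{r-d_j}(g_j\y,I_k)\succeq0$ attached to the additional constraints $g_{m+k}=N-\sum_{i\in I_k}x_i^2$ of \eqref{eq:additional} force $|y_w|$ to be bounded for all $|w|\le2r$ by the standard Archimedean-boundedness estimate, while $\M_r(\y,I_k)\succeq0$ and $y_1=1$ cut out a closed set. Hence $\eta^r_{\cs}(f,\frakg)$ is finite and attained, so a Slater point for \eqref{eq:sparse_constr_eigmin_primal} gives $\lambda^r_{\cs}(f,\frakg)=\eta^r_{\cs}(f,\frakg)$ with the dual \eqref{eq:sparse_constr_eigmin_dual} attained.

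To build a Slater point I would use a normalized trace evaluation at a generic small matrix tuple. Since $\cD_{\frakg}$ contains an $\varepsilon$-neighborhood of $0$, for every $M$ the body $\cB_M\coloneqq\{\underline{A}\in(\Sbb_M)^n:\sum_{i=1}^nA_i^2\preceq\varepsilon^2\I_M\}$ lies inside $\cD_{\frakg}$ and is full-dimensional in $(\Sbb_M)^n$. Choose $M$ large enough that, for each clique $I_k$, some tuple of $M\times M$ symmetric matrices makes the words of $\W_r$ in the variables $\ux(I_k)$ act as linearly independent operators — one exists in dimension $\bsigma(n_k,r)$ (the representation on $\R^{\W_r}$ sending $x_i\cdot e_w$ to $e_{x_iw}$ or $0$), and it may be padded with zero blocks and rescaled by a small $t\neq0$ without losing independence (a word of length $\ell$ merely rescales by $t^\ell$) so as to land in $\cB_M$. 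Then pick $\underline{A}\in\cB_M$ \emph{generic}, put $y_w\coloneqq\tfrac1M\tr(w(\underline{A}))$ for $w\in\W_{2r}$, and let $\y=(y_w)$; this sequence is unital and symmetric, and $\underline{A}\in\cD_{\frakg}$ makes all $g_j(\underline{A})\succeq0$, so $\y$ is feasible for \eqref{eq:sparse_constr_eigmin_primal}.

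It remains to verify strictness. If $v$ is the coefficient vector of $h\in\RXI{k}_r$, then $v^\intercal\M_r(\y,I_k)v=L_{\y}(h^\star h)=\tfrac1M\|h(\underline{A})\|_F^2$, which is positive for $h\neq0$ because a generic $\underline{A}$ avoids the proper subvariety of $\cB_M$ on which some nonzero polynomial of degree $\le r$ in the clique variables vanishes (properness is witnessed by the padded tuple just described); hence $\M_r(\y,I_k)\succ0$. For a constraint $g_j$ with $j\in J_k$, writing $G\coloneqq g_j(\underline{A})\succeq0$ and $H\coloneqq h(\underline{A})$ with $h\in\RXI{k}_{r-d_j}$, one gets $v^\intercal\M_{r-d_j}(g_j\y,I_k)v=L_{\y}(h^\star g_j h)=\tfrac1M\tr(H^\star GH)\ge0$, and this vanishes only if $g_j(\underline{A})h(\underline{A})=0$. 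For $g_j=g_{m+k}$ we have $g_j(\underline{A})\succ0$ for generic $\underline{A}$ (as $\sum_iA_i^2\prec\varepsilon^2\I_M$ generically), so strictness follows from $h(\underline{A})\neq0$, already established. For an original $g_j$ — which we may take to be $\not\equiv0$, a vacuous constraint being discardable — the free algebra $\RX$ is an integral domain, so $g_jh\neq0$; taking any matrix tuple on which $g_jh$ does not vanish and rescaling it into $\mathcal{N}_{\varepsilon}$ (the value there is a nonzero polynomial in the scaling parameter) exhibits a point of $\cB_M$ off $\{g_j(\underline{A})h(\underline{A})=0\}$, so this locus is again a proper subvariety that a generic $\underline{A}$ escapes; therefore $\M_{r-d_j}(g_j\y,I_k)\succ0$.

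The main obstacle is the strict definiteness of the localizing matrices attached to the \emph{original} $g_j$, because $g_j(\underline{A})$ may be singular for every admissible $\underline{A}$ (e.g. when the constraint has the form $x_i^2\ge0$). What rescues the argument is the pair of facts used above: $\RX$ is an integral domain, so $g_jh\neq0$ whenever $g_j,h\neq0$; and a nonzero nc polynomial cannot vanish identically on the full-dimensional body $\cB_M$, by the homogeneity/rescaling trick. Everything else — the reduction to Slater, compactness, unitality, and the moment-matrix estimate — is routine.
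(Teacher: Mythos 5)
Your overall strategy---producing a strictly feasible pseudo-moment sequence from normalized traces of evaluations at symmetric tuples inside the $\varepsilon$-ball, and then invoking conic duality---is in the same spirit as the construction behind the paper's proof (the functional built there is likewise tracial). However, there is a genuine gap in the step you yourself single out as the main obstacle: strict positive definiteness of the localizing matrices attached to the \emph{original} constraints $g_j$. For each fixed nonzero $h\in\RXI{k}_{r-d_j}$ you correctly exhibit a point of your ball $\cB_M$ outside $Z_h\coloneqq\{\underline{A}: g_j(\underline{A})h(\underline{A})=0\}$, so each $Z_h$ is a proper subvariety; but you then conclude that one generic $\underline{A}$ escapes \emph{all} the $Z_h$ simultaneously. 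The nonzero $h$'s form a continuum, and a union of continuum many measure-zero sets need not be negligible. The correct single-polynomial packaging of ``escape every $Z_h$'' is the nonvanishing of $\det \M_{r-d_j}(g_j\y(\underline{A}),I_k)$, and to show this determinant is not identically zero on $\cB_M$ you would need precisely the simultaneous good point you are trying to construct---so the genericity argument is circular at this step (it is fine for the moment matrices and for the added constraints $g_{m+k}$, where $g_{m+k}(\underline{A})\succ0$ in the interior, since there a single determinant witness is available). The standard repair, and essentially what the cited source does, is to average instead of evaluating at one point: set $y_w=\int_{\cB_M}\tfrac1M\tr\bigl(w(\underline{A})\bigr)\,d\mu(\underline{A})$ for a measure $\mu$ with full support on $\cB_M$ (or a positively weighted sum over a countable dense family of tuples). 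Then strict positivity of $L_{\y}(h^\star h)$ and of $L_{\y}(h^\star g_j h)$ is checked one $h$ at a time: the integrand is nonnegative on $\cB_M$ and, by your per-$h$ witness together with continuity, positive on a nonempty open set, so the integral is strictly positive; no simultaneity is required, and your rescaling argument plus the fact that $\RX$ is a domain is then exactly what is needed.

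Two smaller points. First, your nondegeneracy witness (the truncated shifts $e_w\mapsto e_{x_iw}$) is not a tuple of \emph{symmetric} matrices, while your genericity and the identity $L_{\y}(u^\star v)=\tfrac1M\tr\bigl(u(\underline{A})^\intercal v(\underline{A})\bigr)$ live inside $(\Sbb_M)^n$; a polynomial that is nonzero at some general tuple may still vanish identically on the symmetric ones. This is easily repaired by symmetrizing, $X_i\coloneqq S_i+S_i^\intercal$, and observing that $w(X)e_1$ equals $e_{w^{\mathrm{rev}}}$ plus terms indexed by strictly shorter words, so the words of degree at most $r$ remain linearly independent; alternatively, quote the standard fact that a nonzero nc polynomial of bounded degree does not vanish on all tuples of symmetric matrices of sufficiently large size. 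Second, the compactness/attainment discussion for the primal is unnecessary: strict feasibility of \eqref{eq:sparse_constr_eigmin_primal} alone already yields zero duality gap with \eqref{eq:sparse_constr_eigmin_dual} (and attainment in the dual).
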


Moreover, we have the following convergence result implied by Theorem \ref{th:sparsePsatz}.
\begin{theorem}\label{th:sparse_constr_eig}
Let $\{f\}\cup\frakg \subseteq \SymRX$. Assume that  ${\cD_{\frakg}}$ is as in~\eqref{eq:newDS} with the additional quadratic constraints~\eqref{eq:additional}.
Let Assumption~\ref{hyp:sparsityRIP} hold.
Then, one has
\begin{align}
\label{eq:sparse_cvg_eig}
\lim_{r \to \infty} \eta_{\cs}^{r}(f,\frakg) = \lim_{r \to \infty}  \lambda_{\cs}^{r} (f,\frakg) = \lambda_{\min} (f, \frakg).
\end{align}
\end{theorem}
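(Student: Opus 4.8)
The plan is to mimic the proof of the dense convergence result (Theorem~\ref{th:constr_eig}), replacing the use of the dense Helton--McCullough Positivstellensatz (Theorem~\ref{th:densePsatz}) by its sparse analog, which is exactly Theorem~\ref{th:sparsePsatz}. First I would record the easy inequalities. By Lemma~\ref{lemma:moment}, for any feasible $\y$ in \ac{SDP}~\eqref{eq:sparse_constr_eigmin_primal} and any $\underline A\in{\cD_{\frakg}^\infty}$ with unit vector $\vb$, evaluating $L_{\y}$ on the hermitian-square and localizing terms shows that $\eta_{\cs}^r(f,\frakg)\le\langle f(\underline A)\vb,\vb\rangle$, hence $\eta_{\cs}^r(f,\frakg)\le\lambda_{\min}(f,\frakg)$ for every $r\ge r_{\min}$. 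Monotonicity in $r$ is immediate since $\cM(\frakg)_r^{\sparse}\subseteq\cM(\frakg)_{r+1}^{\sparse}$, so $(\eta_{\cs}^r)_{r}$ is nondecreasing and bounded above; moreover $\lambda_{\cs}^r(f,\frakg)\le\eta_{\cs}^r(f,\frakg)$ by weak duality, with equality by Proposition~\ref{prop:constr_eigmin_slater} (the $\varepsilon$-neighborhood hypothesis there is guaranteed here because, under Assumption~\ref{hyp:sparsityRIP} together with the added constraints~\eqref{eq:additional}, $0\in{\cD_{\frakg}}$ and the redundant balls make ${\cD_{\frakg}}$ contain a small $\mathcal N_\varepsilon$). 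So it suffices to prove $\lim_{r\to\infty}\lambda_{\cs}^r(f,\frakg)\ge\lambda_{\min}(f,\frakg)$.

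For the reverse inequality I would fix $\varepsilon>0$ and consider $f-\lambda_{\min}(f,\frakg)+\varepsilon$. By definition of $\lambda_{\min}(f,\frakg)$ this nc polynomial is positive on ${\cD_{\frakg}^\infty}$: indeed $\langle (f-\lambda_{\min}(f,\frakg)+\varepsilon)(\underline A)\vb,\vb\rangle\ge\varepsilon>0$ for all $\underline A\in{\cD_{\frakg}^\infty}$ and unit $\vb$, so $(f-\lambda_{\min}(f,\frakg)+\varepsilon)(\underline A)\succ 0$. Since Assumption~\ref{hyp:sparsityRIP} holds and ${\cD_{\frakg}}$ is in the form~\eqref{eq:newDS} with the quadratic constraints~\eqref{eq:additional} (so that Archimedeanity is available and the sparse quadratic module $\cM(\frakg)^{\sparse}$ is well defined), Theorem~\ref{th:sparsePsatz} applies and yields $f-\lambda_{\min}(f,\frakg)+\varepsilon\in\cM(\frakg)^{\sparse}$. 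A membership in $\cM(\frakg)^{\sparse}$ is a finite sum $\sum_{k=1}^p\big(a_{k,0}^\star a_{k,0}+\sum_{i\in J_k}a_{k,i}^\star g_i a_{k,i}\big)$, so it lies in $\cM(\frakg)_r^{\sparse}$ once $r$ is large enough to accommodate the degrees of all the $a_{k,i}$. For that $r$, the value $b=\lambda_{\min}(f,\frakg)-\varepsilon$ is feasible for \ac{SDP}~\eqref{eq:sparse_constr_eigmin_dual}, so $\lambda_{\cs}^r(f,\frakg)\ge\lambda_{\min}(f,\frakg)-\varepsilon$. Letting $r\to\infty$ and then $\varepsilon\to 0$ gives $\lim_{r\to\infty}\lambda_{\cs}^r(f,\frakg)\ge\lambda_{\min}(f,\frakg)$, which combined with the previous paragraph yields the chain of equalities~\eqref{eq:sparse_cvg_eig}.

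The only genuinely nontrivial input is Theorem~\ref{th:sparsePsatz}, whose proof rests on the $C^\star$-algebra amalgamation machinery (Theorem~\ref{th:amalgamation}) and the \ac{GNS} construction (Theorem~\ref{th:gns}); assuming that result, the argument above is essentially bookkeeping. The one point that needs a little care, and which I expect to be the main (minor) obstacle, is making sure the hypotheses of Theorem~\ref{th:sparsePsatz} are exactly met: one must check that adding the redundant ball constraints~\eqref{eq:additional} neither changes ${\cD_{\frakg}}$ nor breaks the \ac{RIP}/decomposition structure of Assumption~\ref{hyp:sparsityRIP}, and that the Archimedean property needed there is inherited from Assumption~\ref{hyp:sparsity} (via $N-\sum_{i}x_i^2\in\cM(\frakg)^{\sparse}$, since $N-\sum_{i}x_i^2=\sum_{k=1}^p\frac{1}{p}\big(N-\sum_{i\in I_k}x_i^2\big)+\cdots$ can be arranged with the per-clique constraints $g_{m+k}$). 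Once this is in place, the positivity-to-membership step and the degree-truncation step are routine.
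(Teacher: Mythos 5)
Your proof is correct and follows essentially the same route as the paper: the convergence is obtained by applying the sparse Positivstellensatz (Theorem~\ref{th:sparsePsatz}) to $f-\lambda_{\min}(f,\frakg)+\varepsilon$, truncating the resulting finite representation to land in $\cM(\frakg)^{\sparse}_r$ for large $r$, and sandwiching via $\lambda_{\cs}^{r}(f,\frakg)\le\eta_{\cs}^{r}(f,\frakg)\le\lambda_{\min}(f,\frakg)$. One caveat: your appeal to Proposition~\ref{prop:constr_eigmin_slater} is both unjustified (the hypotheses here do not force $\cD_{\frakg}$ to contain an $\varepsilon$-neighborhood of $0$; constraints such as $x_i-1/3\succeq 0$ exclude it) and unnecessary, since weak duality together with your Positivstellensatz step already yields both limits.
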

As for the unconstrained case, there is no sparse variant of the ``perfect'' Positivstellensatz, for constrained eigenvalue optimization over convex nc semialgebraic sets \citencsparse[Chapter 4.4]{burgdorf16}, such as those associated either to the sparse nc ball $\Bbb^{\sparse} \coloneqq \{1 - \sum_{i \in I_1} x_i^2, \dots, 1 - \sum_{i \in I_p} x_i^2 \}$ or the nc polydisc $\Dbb \coloneqq \{1 - x_1^2,\dots,$ $1-x_n^2 \}$.
Namely, for an nc polynomial $f$ of degree $2 d + 1$,
 computing only \ac{SDP}~\eqref{eq:sparse_eigmin_primal} with optimal value $\lambda_{\sparse}^{d+1}(f, \frakg)$ when $\frakg = \Bbb^{\sparse}$ or $\frakg = \Dbb$
 does not suffice
 to obtain the value of $\lambda_{\min}(f,\frakg)$.
This is explained in Example~\ref{ex:nosparseEigBall} below.
\begin{example}
\label{ex:nosparseEigBall}
Let us consider a randomly generated cubic polynomial $f = f_1 + f_2$ with
\begin{align*}
f_1 = \,& 4 - x_1 + 3x_2 - 3x_3 - 3x_1^2 - 7x_1x_2 +   6x_1x_3 - x_2x_1 -5x_3x_1 + 5x_3x_2 \\
& -  5x_1^3 - 3x_1^2 x_3 + 4x_1x_2x_1 -  6x_1x_2x_3 + 7x_1x_3x_1
 + 2x_1x_3x_2 -   x_1x_3^2 \\
 & - x_2x_1^2 + 3x_2x_1x_2 -   x_2x_1x_3 - 2 x_2^3 - 5 x_2^2 x_3
 -   4x_2x_3^2 - 5x_3x_1^2 \\
 & + 7x_3x_1x_2 +   6x_3x_2x_1 - 4x_3x_2x_2 - x_3^2 x_1 -   2x_3^2 x_2 + 7x_3^3 , \\
f_2  = \,& -1 + 6x_2 + 5x_3 + 3x_4 - 5x_2^2 + 2x_2x_3 +   4x_2x_4 - 4x_3x_2 + x_3^2 - x_3x_4 \\
& +   x_4x_2 - x_4x_3 + 2x_4^2 - 7x_2^3 +   4x_2x_3^2 + 5x_2x_3x_4 - 7x_2x_4x_3 -   7x_2x_4^2 \\
& + x_3x_2^2 + 6x_3x_2x_3 -   6x_3x_2x_4 - 3x_3^2 x_2 - 7x_3^2x_4 +   6x_3x_4x_2 \\
& - 3x_3x_4x_3 - 7x_3x_4^2 +   3x_4x_2^2 - 7x_4x_2x_3 - x_4x_2x_4 -   5x_4x_3^2  \\
& + 7x_4x_3x_4 + 6x_4^2 x_2 -   4 x_4^3,
\end{align*}
and the nc polyball $\frakg = \Bbb^{\sparse} = \{1-x_1^2-x_2^2-x_3^2, 1-x_2^2-x_3^2 - x_4^2 \}$ corresponding to $I_1 = \{1,2,3\}$ and $I_2 = \{2,3,4\}$.
Then, one has $
\lambda^2_{\sparse} (f, \frakg ) \simeq
-27.536 < \lambda^3_{\sparse} (f,\frakg) \simeq -27.467
\simeq \lambda_{\min}^2 (f,\frakg) = \lambda_{\min} (f,\frakg)$.
In Appendix \ref{sec:tssos_nc}, we provide a Julia script to compute these bounds.
\end{example}
\subsection{Extracting optimizers}
Here, we explain how to extract a pair
of optimizers
$(\underline{A},\vb)$ for the eigenvalue optimization problems
when the flatness and irreducibility conditions of Theorem~\ref{th:sparse_flat} hold.
We apply the $\sparsegns$ procedure on the optimal solution of \ac{SDP}~\eqref{eq:sparse_eigmin_primal} in the unconstrained case or \ac{SDP}~\eqref{eq:sparse_constr_eigmin_primal} in the constrained case.
\begin{proposition}
\label{prop:sparse_eig_flat}
Given $f$ as in Theorem~\ref{th:sparse_eig_nogap}, let us assume that \ac{SDP}~\eqref{eq:sparse_eigmin_primal} (with $d$ being replaced by $d+1$) yields an optimal solution $\y$ associated to $\eta_{\cs}^{d+1}(f)$.
If the sequence $\y$ satisfies the  flatness (H1) and irreducibility (H2) conditions stated in Theorem~\ref{th:sparse_flat}, then one has
\[\lambda_{\min} (f) = \eta_{\cs}^{d+1}(f) = L_{\y}(f).\]
\end{proposition}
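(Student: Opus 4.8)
The plan is to prove the two inequalities $\eta_{\cs}^{d+1}(f)\le\lambda_{\min}(f)$ and $\eta_{\cs}^{d+1}(f)\ge\lambda_{\min}(f)$ separately; the displayed equality $\eta_{\cs}^{d+1}(f)=L_{\y}(f)$ is then nothing but the hypothesis that $\y$ is an optimal solution of \ac{SDP}~\eqref{eq:sparse_eigmin_primal} (with $d$ replaced by $d+1$) attaining the optimal value.

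For the easy direction, I would argue as in Remark~\ref{rk:sparsevsdense}. If $b$ is feasible for the sparse dual \ac{SDP}~\eqref{eq:sparse_eigmin_dual} at order $d+1$, then $f-b\in\SigmaXI{1}_{d+1}+\dots+\SigmaXI{p}_{d+1}\subseteq\SigmaX$, so by Theorem~\ref{th:Helton} the operator $f(\underline A)-b\I_k$ is positive semidefinite for every tuple $\underline A\in(\Sbb_k)^n$ and every $k$; by the definition~\eqref{eq:eigmin} this forces $b\le\lambda_{\min}(f)$. Taking the supremum gives $\lambda_{\cs}^{d+1}(f)\le\lambda_{\min}(f)$, and since there is no duality gap between \ac{SDP}~\eqref{eq:sparse_eigmin_primal} and \ac{SDP}~\eqref{eq:sparse_eigmin_dual} by Theorem~\ref{th:sparse_eig_nogap}, we conclude $\eta_{\cs}^{d+1}(f)=\lambda_{\cs}^{d+1}(f)\le\lambda_{\min}(f)$.

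The decisive direction is to apply the unconstrained specialization of Theorem~\ref{th:sparse_flat} to the optimal moment sequence $\y$. The feasibility constraints $\M_{d+1}(\y,I_k)\succeq0$ of \ac{SDP}~\eqref{eq:sparse_eigmin_primal} give, via the sparse form of Lemma~\ref{lemma:moment}(1), the positivity hypothesis $L_{\y}(h^\star h)\ge0$ for all $h\in\RXI{k}_{d+1}$; the splitting $f=f_1+\dots+f_p$ with $f_k\in\SymRXI{k}_{2d}$ is exactly Assumption~\ref{hyp:sparsityRIP}(i); and (H1), (H2) are assumed. Taking $r=d$ and $\delta=1$, Theorem~\ref{th:sparse_flat} produces a tuple $\underline A\in(\Sbb_t)^n$ with $t=t_1\cdots t_p$ and a unit vector $\vb$ such that $L_{\y}(g)=\langle g(\underline A)\vb,\vb\rangle$ for every $g\in\sum_k\RXI{k}_{2d}$. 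Since $f$ lies in this space, $L_{\y}(f)=\langle f(\underline A)\vb,\vb\rangle\ge\lambda_{\min}(f)$ by~\eqref{eq:eigmin}, whence $\eta_{\cs}^{d+1}(f)=L_{\y}(f)\ge\lambda_{\min}(f)$. Combining the two inequalities yields $\lambda_{\min}(f)=\eta_{\cs}^{d+1}(f)=L_{\y}(f)$.

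The main obstacle I anticipate is the careful invocation of Theorem~\ref{th:sparse_flat} in the constraint-free setting: that theorem is stated for a set $\cD_{\frakg}$ described as in~\eqref{eq:newDS} together with the redundant constraints~\eqref{eq:additional}, whereas here no constraints are present. I would need to check that its proof — the amalgamated finite-dimensional \ac{GNS} construction illustrated in Figure~\ref{diag:amalgamSmall}, together with hypotheses (H1) on flatness and (H2) on irreducibility — carries over verbatim when $\frakg$ is empty, the constraint polynomials being used there only to certify membership $\underline A\in\cD_{\frakg}^t$, which is vacuous in the unconstrained case. I would also make explicit that "flatness at relaxation order $d+1$" of the submatrices $\M_{d+1}(\y,I_k)$ and $\M_{d+1}(\y,I_j\cap I_k)$ is precisely the $\delta$-flatness hypothesis (H1) with $r=d$ and $\delta=1$. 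Everything else is bookkeeping.
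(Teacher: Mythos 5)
Your proof is correct and follows essentially the same route as the paper's (deferred to the source reference): the inequality $\eta_{\cs}^{d+1}(f)\le\lambda_{\min}(f)$ is the standard relaxation/weak-duality observation, and the reverse inequality is obtained exactly as intended, by applying the sparse flat-extension/GNS result (Theorem~\ref{th:sparse_flat} with $\frakg=\emptyset$, $r=d$, $\delta=1$) to the optimal sequence $\y$ to produce $(\underline{A},\vb)$ with $L_{\y}(f)=\langle f(\underline{A})\vb,\vb\rangle\ge\lambda_{\min}(f)$. Your caveat about invoking Theorem~\ref{th:sparse_flat} in the unconstrained setting is well placed but not a gap: the constraint polynomials there only serve to certify $\underline{A}\in\cD_{\frakg}^t$, which is vacuous when $\frakg$ is empty, precisely as in the dense unconstrained flat case.
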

We can extract optimizers for the unconstrained minimal eigenvalue problem~\eqref{eq:eigmin} thanks to the following algorithm.

%\begin{figure}[!t]
\begin{algorithm}\caption{$\sparseeiggns$}\label{algorithm:sparse_eig}
%\caption{$\sparseeiggns$}
\begin{algorithmic}[1]
\Require $f\in\SymRX_{2d}$ satisfying Assumption~\ref{hyp:sparsityRIP}
\Ensure $\underline{A}$ and $\vb$
\State Compute $\eta_{\cs}^{d+1}(f)$ by solving \ac{SDP}~\eqref{eq:sparse_eigmin_primal}
\If{\ac{SDP}~\eqref{eq:sparse_eigmin_primal} is unbounded or its optimum is not attained}
\State Stop
\EndIf
\State Let ${\M_{d+1}}(\y)$ be an optimizer of \ac{SDP}~\eqref{eq:sparse_eigmin_primal}
\State Compute $\underline{A},\vb \coloneqq \sparsegns (\M_{d+1}(\y))$
%From ${{\M}(L)}$, build a matrix $H_{\hat L}$ of $\hat L$ satisfying the assumptions of Theorem~\ref{th:sparse_flat} %\Comment{we get $\hat L$ such that $\hat L (f) = L (f) = \lambda_{\min} (f)$}
%\State
%
\end{algorithmic}
\end{algorithm}
%\end{figure}
%
In the constrained case, the next result is a direct corollary of Theorem~\ref{th:sparse_flat}.
\begin{corollary}\label{th:sparse_cons_eig}
Let $\{f\}\cup\frakg\subseteq\SymRX$, and assume that ${\cD_{\frakg}}$ is as in~\eqref{eq:newDS} with the additional quadratic constraints~\eqref{eq:additional}.
Suppose Assumptions~\ref{hyp:sparsityRIP}(i)-(ii) hold.
Let $\y$ be an optimal solution of \ac{SDP}~\eqref{eq:sparse_constr_eigmin_primal} with optimal value $\eta_{\cs}^{r}(f,\frakg)$ for $r \geq r_{\min} + \delta$, such that $\y$ satisfies the assumptions of Theorem~\ref{th:sparse_flat}.
Then, there exist $t \in \N^*$, $\underline{A} \in {\cD_{\frakg}^t}$ and a unit vector $\vb$ such that
\[
\lambda_{\min}(f,\frakg) =
\langle f(\underline{A}) \vb , \vb \rangle =
\eta_{\cs}^{r}(f,\frakg).
\]
\end{corollary}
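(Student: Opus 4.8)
The plan is to combine the sparse flat extension theorem (Theorem \ref{th:sparse_flat}) with the primal-dual convergence of the sparse constrained hierarchy (Theorem \ref{th:sparse_constr_eig}) and Proposition \ref{prop:constr_eigmin_slater}. First I would observe that, since $\y$ is feasible for \ac{SDP}~\eqref{eq:sparse_constr_eigmin_primal} at order $r \geq r_{\min} + \delta$ (with $\delta \coloneqq \max\{\lceil \deg(g)/2\rceil : g \in \frakg\}$), the localizing constraints $\M_{r-d_j}(g_j \y, I_k) \succeq 0$ together with Lemma \ref{lemma:moment} guarantee that $L_\y$ is nonnegative on the truncated sparse quadratic module $\cM(\frakg)_{r'}^{\sparse}$ for $r' = r - \delta$; in particular, writing $r = r' + \delta$, the hypotheses of Theorem \ref{th:sparse_flat} on $L_\y$ (namely $L_\y(\cM(\frakg)_{r'+\delta}) \subseteq \R_{\geq 0}$, restricted to the sparse pieces) are met, and Assumptions~\ref{hyp:sparsityRIP}(i)--(ii) supply the decomposition $f = f_1 + \dots + f_p$ and the clique-membership $g_j \in \SymRXI{k}$ for $j \in J_k$ that Theorem \ref{th:sparse_flat} requires.

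Next I would invoke Theorem \ref{th:sparse_flat} directly: under the flatness hypothesis (H1) on the submatrices $\M_{r'+\delta}(\y, I_k)$ and $\M_{r'+\delta}(\y, I_k \cap I_j)$ and the irreducibility hypothesis (H2) on the local operator tuples $(\hat A_i^{jk})_{i \in I_j \cap I_k}$, there exist $t = t_1 \cdots t_p \in \N^*$, a tuple $\underline{A} \in (\Sbb_t)^n$, and a unit vector $\vb$ such that $L_\y(h) = \langle h(\underline{A}) \vb, \vb \rangle$ for all $h \in \sum_k \RXI{k}_{2r'}$, in particular for $h = f$ and for each $h = g_j$ with $j \in J_k$. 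The point that needs a short argument is that this $\underline{A}$ actually lies in $\cD_{\frakg}^t$: for $j \in J_k$ and any $w \in \langle \underline{x}(I_k) \rangle_{r'-d_j}$, the localizing constraint gives $L_\y(w^\star g_j w) \geq 0$, and since $w^\star g_j w \in \RXI{k}_{2r'}$ the representation yields $\langle g_j(\underline{A}) w(\underline{A})\vb, w(\underline{A})\vb\rangle \geq 0$; because $\M_{r'+\delta}(\y, I_k)$ is flat over $\M_{r'}(\y, I_k)$, the vectors $\{w(\underline{A})\vb : w \in \langle \underline{x}(I_k)\rangle_{r'}\}$ span the ambient space on which $g_j(\underline{A})$ acts, so $g_j(\underline{A}) \succeq 0$ for every $j$, hence $\underline{A} \in \cD_{\frakg}^t$.

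Finally, I would close the loop on optimality. On the one hand $\underline{A} \in \cD_{\frakg}^t \subseteq \cD_{\frakg}^\infty$ and $\|\vb\| = 1$ give, by definition~\eqref{eq:constr_eigmin}, $\langle f(\underline{A})\vb, \vb\rangle \geq \lambda_{\min}(f, \frakg)$. On the other hand, $\langle f(\underline{A})\vb,\vb\rangle = L_\y(f) = \eta_{\cs}^r(f,\frakg)$ by construction, and $\eta_{\cs}^r(f,\frakg) \leq \lambda_{\min}(f,\frakg)$ always holds because \ac{SDP}~\eqref{eq:sparse_constr_eigmin_primal} is a relaxation (feeding a true optimizer's moments into it is feasible, as in the proof of Theorem \ref{th:sparse_constr_eig}); Proposition \ref{prop:constr_eigmin_slater} ensures this primal value is attained and equals the dual value so there is no subtlety about $\inf$ versus $\min$. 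Chaining these, $\lambda_{\min}(f,\frakg) \leq \langle f(\underline{A})\vb,\vb\rangle = \eta_{\cs}^r(f,\frakg) \leq \lambda_{\min}(f,\frakg)$, forcing equality throughout, which is exactly the claimed identity. The main obstacle I anticipate is the bookkeeping in the middle step — verifying that the flat extension theorem applies at the shifted order $r' = r - \delta$ and that the extracted $\underline{A}$ respects \emph{all} the constraints $g_j$, not merely the objective — but this is precisely what the flatness of the clique submatrices buys us, so it should go through cleanly.
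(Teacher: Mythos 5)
Your proposal is correct and follows essentially the same route as the paper: the result is obtained by applying Theorem~\ref{th:sparse_flat} (at the shifted order $r-\delta$, which your bookkeeping handles properly) to the optimal moment sequence $\y$, and then sandwiching $\lambda_{\min}(f,\frakg)\le\langle f(\underline{A})\vb,\vb\rangle=L_{\y}(f)=\eta_{\cs}^{r}(f,\frakg)\le\lambda_{\min}(f,\frakg)$. Note only that your middle step re-deriving $\underline{A}\in\cD_{\frakg}^{t}$ is redundant, since membership in $\cD_{\frakg}^{t}$ is already part of the conclusion of Theorem~\ref{th:sparse_flat}, and the appeal to Proposition~\ref{prop:constr_eigmin_slater} is unnecessary because the existence of an optimal $\y$ is hypothesized.
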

\if{
\begin{remark} \rm
\end{remark}
}\fi

\begin{example}
Consider the sparse polynomial $f=f_1+f_2$ from Example~\ref{ex:nosparseEigBall}. The moment matrix ${{\M_3}(\y)}$ obtained
by solving \eqref{eq:sparse_constr_eigmin_primal} with $r=3$
satisfies the  flatness (H1) and irreducibility (H2) conditions of Theorem~\ref{th:sparse_flat}.
 We can thus apply the $\sparsegns$ algorithm yielding
 \begin{align*}
A_1&=\left[\begin{array}{rrrr}
   0.0059 &  0.0481 &   0.1638&    0.4570\\
    0.0481&  -0.2583 &   0.5629&   -0.2624\\
    0.1638&   0.5629  &  0.3265 &  -0.3734\\
    0.4570&  -0.2624   &-0.3734  & -0.2337
\end{array}\right] \\
A_2&=\left[\begin{array}{rrrr}
   -0.3502 &   0.0080&    0.1411&    0.0865\\
    0.0080  & -0.4053 &   0.2404 &  -0.1649\\
    0.1411   & 0.2404  & -0.0959  &  0.3652\\
    0.0865   &-0.1649   & 0.3652   & 0.4117
\end{array}\right] \\
A_3&=\left[\begin{array}{rrrr}
 -0.7669 &  -0.0074 &  -0.1313  & -0.0805\\
   -0.0074&   -0.4715&   -0.2238 &   0.1535\\
   -0.1313 &  -0.2238 &   0.0848  & -0.3400\\
   -0.0805  &  0.1535  & -0.3400   &-0.2126
\end{array}\right] \\
A_4&=\left[\begin{array}{rrrr}
    0.3302&   -0.1839&    0.1811&   -0.0404\\
   -0.1839 &  -0.1069 &   0.5114 &  -0.0570\\
    0.1811  &  0.5114  &  0.1311  & -0.3664\\
   -0.0404   &-0.0570   &-0.3664   & 0.4440
\end{array}\right]
\end{align*}
 where
 \[
f(\underline A)
=\left[\begin{array}{rrrr}
   -10.3144 &   3.9233 &  -5.0836 &   -7.7828\\
    3.9233  &  1.8363  &  4.5078 &  -7.5905\\
   -5.0836   & 4.5078   &-19.5827  &  13.9157\\
   -7.7828   &-7.5905    &13.9157   & 8.3381
\end{array}\right]
 \]
 has  minimal eigenvalue $-27.4665$ with unit eigenvector
 \[\vb=\begin{bmatrix} 0.1546 &  -0.2507 &   0.8840  & -0.3631\end{bmatrix}^\intercal.\]
 In this case all the ranks involved are equal to four. So
$A_2$ and $A_3$ are computed from ${{\M_3}(\y,I_1\cap I_2)}$, after
an appropriate basis change $A_1$ (and the same $A_2,A_3$) is obtained
from ${{\M_3}(\y,I_1)}$, and finally $A_4$ is computed from ${{\M_3}(\y,I_2)}$.
\end{example}

%In \citencsparse[\S~6]{ncsparse}, the interested reader can find more details about \ac{SDP} relaxations allowing one to under-approximate the smallest trace of an nc polynomial on a semialgebraic set.

\if{
\section{Trace optimization}
\label{sec:trace}

The aim of this section is to provide SDP relaxations allowing one to under-approximate the smallest trace of an nc polynomial on a semialgebraic set.
In Section~\ref{sec:sparse_tracial_gns}, we provide a sparse tracial representation for tracial linear functionals.
In Section~\ref{sec:unconstr_trace}, we address the unconstrained trace minimization problem.
As in Section~\ref{sec:unconstr_eig}, we compute a lower bound on the smallest trace via SDP.
The constrained case is handled in Section~\ref{sec:constr_trace}, where we derive a hierarchy of lower bounds converging to the minimal trace, assuming that the quadratic module is Archimedean and that \ac{RIP} holds (Assumption~\ref{hyp:sparsityRIP}).
Most proofs are similar to the ones of eigenvalue problems addressed in Section~\ref{sec:eig}.
%, so our treatment here is more concise.

We start this section by introducing useful notations about commutators and trace zero polynomials.
Given $g,h \in \RX$, the nc polynomial $[g,h] \coloneqq g h - h g$ is called a \emph{commutator}.
Two nc polynomials $g, h \in \RX$ are called \emph{cyclically equivalent} ($g  \cyc h$) if $g - h$ is a sum of commutators.
Given $\frakg \subseteq \SymRX$ with corresponding quadratic module $\cM(\frakg)$ and truncated variant ${\cM(\frakg)_d}$, one defines $\Theta(\frakg)_d \coloneqq \{g \in \SymRX_{2d} : g \cyc h \text{ for some } h \in {\cM(\frakg)_d} \}$ and $\Theta(\frakg) \coloneqq \bigcup_{d \in \N} \Theta(\frakg)_d$.
In this case, $\Theta(\frakg)$ stands for the \emph{cyclic quadratic module} generated by $\frakg$ and $\Theta(\frakg)_d$ stands for the \emph{truncated cyclic quadratic module} generated by $\frakg$.\\
For $\frakg \subseteq \SymRX$  and ${\cD_{\frakg}}$ as in~\eqref{eq:newDS} with the additional quadratic constraints~\eqref{eq:additional}, let us define $\Theta(\frakg)^k_d \coloneqq \{g \in \SymRX_{2 d} : g \cyc h \text{ for some } h \in \cM(\frakg)^k_d \}$, $\Theta(\frakg)^k \coloneqq \bigcup_{d \in \N} \Theta(\frakg)^k_d$, for all $k\in[p]$ and the sum
\begin{align}
\label{eq:sparse_cyclic_module}
\Theta(\frakg)^{\sparse}_d \coloneqq \Theta(\frakg)^1_d + \dots + \Theta(\frakg)^p_d ,
\end{align}
as well as $\Theta(\frakg)^{\sparse} \coloneqq \bigcup_{d \in \N} \Theta(\frakg)^{\sparse}_d$.
If $\frakg$ is empty, we drop the $\frakg$ in the above notations.\\
The normalized trace of a matrix $\A \in \Sbb_t$ is given by $\Trace \A = \frac{1}{t} \trace \A$.
An nc polynomial $g \in \SymRX$ is called a \emph{trace zero} nc polynomial if $\Trace (g(\underline{A})) = 0$, for all $\underline{A} \in \Sbb^n$.
This is equivalent to $g \cyc 0$ (see e.g.~\citencsparse[Proposition~2.3]{klep2008sums}).\\
%We also recall the definition of the cyclic degree, which generalizes the degree of an nc polynomial.
%Given $\underline{\alpha} = (\alpha_1,\dots,\alpha_n) \in \R^n$, the $\underline{\alpha}$-degree of a word $w \in \underline{X}$ is defined by $\deg_{\underline{\alpha}} \coloneqq \sum_{i=1}^n \alpha_i d_{w,i}$, where $d_{w,i}$ denotes how many times $X_i$ appears in $w$.
%For a given nc polynomial $g = \sum g_w w \in  \RX$, one has $\deg_{\underline{\alpha}} g \coloneqq \max_{g_w \neq 0} \deg_{\underline{\alpha}} w $.
%The \emph{cyclic-$\underline{\alpha}$-degree} of a polynomial $g$ is given by {$\cdeg_{\underline{\alpha}} g \coloneqq \min_{h \cyc g} \deg_{\underline{\alpha}} h$}.
%$\cdeg g \coloneqq \cdeg_{(1,\dots,1)} g$.}
{For a given nc polynomial $g$, the cyclic degree of $g$, denoted by $\cdeg(g)$, is
the smallest degree of a polynomial cyclically equivalent to $g$.}
\subsection{Sparse tracial representations}
\label{sec:sparse_tracial_gns}
~\\
The next theorem  allows one to obtain a sparse tracial representation of a tracial linear functional, under the same  flatness and irreducibility conditions stated in Theorem~\ref{th:sparse_flat}.
This is a sparse variant of~\citencsparse[Theorem~1.71]{burgdorf16}.
\begin{theorem}
\label{th:sparse_flat_tracial}
Let $\frakg  \subseteq \SymRX_{2d}$,  and assume that the semialgebraic set ${\cD_{\frakg}}$ is as in~\eqref{eq:newDS} with the additional quadratic constraints~\eqref{eq:additional}.
Let Assumption~\ref{hyp:sparsityRIP}(i) hold.
Set $\delta \coloneqq \max \{ \lceil \deg (g)/2 \rceil : g \in \frakg\cup {\{1\}} \}$.
Let $L : \RX_{2 d + 2 \delta} \to \R$ be a unital tracial linear functional satisfying $L(\Theta(\frakg)^{\sparse}_d) \subseteq \R^{\geq 0}$.
Assume that the flatness (H1) and irreducibility (H2) conditions of Theorem~\ref{th:sparse_flat} hold.
Then there are finitely many $n$-tuples $\underline{A}^{(j)}$ of symmetric matrices in ${\cD_{\frakg}^r}$ for some $r  \in \N$,  and positive scalars $\lambda_j$ with $\sum_j \lambda_j = 1$, such that for all $f \in \RXI{1}_{2 d} + \dots + \RXI{p}_{2 d}$, one has:
\begin{align}
\label{eq:tracial}
L(f) =  \sum_j \lambda_j \Trace {f ( \underline{A}^{(j)} ) } .
\end{align}
\end{theorem}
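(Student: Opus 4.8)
We adapt the proof of Theorem~\ref{th:sparse_flat}, replacing everywhere the (plain) finite-dimensional \ac{GNS} vector state by a \emph{tracial} state furnished by its tracial refinement, and then gluing the clique data with the amalgamation Theorem~\ref{th:amalgamation}.

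\emph{Step 1 (local tracial representations).} Fix $k\in[p]$ and restrict $L$ to $\RXI{k}_{2d+2\delta}$. By Assumption~\ref{hyp:sparsityRIP}(i) and the definition of $\Theta(\frakg)^{\sparse}_d$, this restriction is a unital tracial linear functional that is nonnegative on $\Theta(\frakg)^{k}_{d}$, which encodes (up to cyclic equivalence) the moment and localizing data of the constraints $g_j$, $j\in J_k$; moreover, hypothesis (H1) says that its moment matrix $\M_{r+\delta}(\y,I_k)$ is $\delta$-flat. Applying the dense tracial flat-extension theorem \citencsparse[Theorem~1.71]{burgdorf16} to $L|_{\RXI{k}}$ yields a finite-dimensional $C^\star$-algebra $\cA(I_k)$, a unital $\star$-representation $\pi_k:\RXI{k}\to\cA(I_k)$ with $\underline{A}^k\coloneqq\pi_k(\ux(I_k))\in\cD_{\frakg}^{t_k}$ (the localizing constraints are met since $g_j(\underline{A}^k)\succeq0$ for $j\in J_k$), and a faithful tracial state $\tau_k$ on $\cA(I_k)$ such that $L(f_k)=\tau_k(\pi_k(f_k))$ for all $f_k\in\RXI{k}_{2d}$. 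The same argument over each nonempty intersection $I_j\cap I_k$, using $\delta$-flatness of $\M_{r+\delta}(\y,I_j\cap I_k)$, produces $(\cA(I_j\cap I_k),\tau_{jk},\pi_{jk})$ with $L(g)=\tau_{jk}(\pi_{jk}(g))$ for $g\in\R\langle\underline{X}(I_j\cap I_k)\rangle_{2d}$.

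\emph{Step 2 (amalgamation).} Since $\M_{r+\delta}(\y,I_j\cap I_k)$ is the common principal submatrix of $\M_{r+\delta}(\y,I_j)$ and $\M_{r+\delta}(\y,I_k)$ indexed by words in $\ux(I_j\cap I_k)$, the \ac{GNS} Hilbert spaces nest, giving state-preserving $\star$-embeddings $\iota:\cA(I_j\cap I_k)\hookrightarrow\cA(I_j)$ and $\cA(I_j\cap I_k)\hookrightarrow\cA(I_k)$ that fix the common variables and satisfy $\tau_j\circ\iota=\tau_{jk}=\tau_k\circ\iota$. Theorem~\ref{th:amalgamation} then amalgamates the family $\{(\cA(I_k),\tau_k)\}_k$ over the $(\cA(I_j\cap I_k),\tau_{jk})$, yielding a $C^\star$-algebra $\cC$ with a state $\varphi$ and state-preserving homomorphisms $j_k:\cA(I_k)\to\cC$ that agree on the overlaps, with $\cC$ generated by $\bigcup_k j_k(\cA(I_k))$. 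The images of the variables are consistent across cliques, so they assemble into a single tuple $\underline{A}$ with $g(\underline{A})\succeq0$ for every $g\in\frakg$, i.e.\ $\underline{A}$ lies blockwise in $\cD_{\frakg}$. As Example~\ref{ex:no_finite_amalgam} shows, $\cC$ need not be finite-dimensional in general; but hypothesis (H2) forces each overlap tuple $(\hat{A}_i^{jk})_{i\in I_j\cap I_k}$ to act without common complex invariant subspace, and then the Skolem--Noether theorem lets one realize each successive amalgamation as a tensor product over a full matrix algebra, exactly as in the proof of Theorem~\ref{th:sparse_flat}, so that $\cC$ is finite-dimensional of dimension at most $t_1\cdots t_p$.

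\emph{Step 3 (traciality and conclusion).} Since every $\tau_k$ is tracial and $\cC$ is finite-dimensional and generated by the $j_k(\cA(I_k))$, the amalgam state $\varphi$ is again tracial: a tracial state on a finite-dimensional $C^\star$-algebra $\bigoplus_j\Mbb_{t_j}$ is a convex combination $\sum_j\lambda_j\Trace(\cdot|_{\Mbb_{t_j}})$ of normalized block traces, and the tensor-product-over-matrix-algebra construction of Step~2 carries normalized traces to normalized traces. Discarding the blocks with $\lambda_j=0$, let $\underline{A}^{(j)}$ be the image of $\ux$ in the remaining simple block $\Mbb_{t_j}$, a tuple of $t_j\times t_j$ real symmetric matrices; a principal block of a positive semidefinite direct sum being positive semidefinite, $\underline{A}^{(j)}\in\cD_{\frakg}^{t_j}$. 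Then for any $f=\sum_k f_k$ with $f_k\in\RXI{k}_{2d}$,
\[
L(f)=\sum_k\tau_k\bigl(\pi_k(f_k)\bigr)=\sum_k\varphi\bigl(j_k(\pi_k(f_k))\bigr)=\varphi\bigl(f(\underline{A})\bigr)=\sum_j\lambda_j\,\Trace f(\underline{A}^{(j)}),
\]
with $\lambda_j>0$ and $\sum_j\lambda_j=1$, which is the claimed representation~\eqref{eq:tracial}.

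The main obstacle is Step~2: Theorem~\ref{th:amalgamation} alone produces a (generically infinite-dimensional) $C^\star$-algebra, and it is exactly condition (H2) together with Skolem--Noether that rescues finite-dimensionality, as in Theorem~\ref{th:sparse_flat}. The only genuinely new point over that proof is checking that the amalgam state stays tracial, which must be read off the explicit tensor-product construction rather than deduced abstractly, since the reduced amalgamated free product of tracial states need not be tracial in general. \qed
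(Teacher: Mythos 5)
Your proof is correct and follows essentially the same route as the one the book defers to for this statement (the tracial variant of the sparse GNS argument behind Theorem~\ref{th:sparse_flat}): per-clique tracial GNS representations obtained from the dense tracial flat-extension theorem, glued by amalgamation over the overlap algebras, with (H2) guaranteeing via Burnside/Skolem--Noether that the overlaps are full matrix algebras so the amalgam remains finite-dimensional of size at most $t_1\cdots t_p$, and the resulting tracial state decomposed by Artin--Wedderburn into a convex combination of normalized block traces. Your closing remark -- that traciality of the amalgamated state must be read off the explicit tensor-product construction (equivalently, from uniqueness of the trace on the full-matrix overlap factor, which also makes the embeddings automatically state-preserving) rather than from the abstract amalgamation Theorem~\ref{th:amalgamation} -- is precisely the one point where the tracial case differs from the eigenvalue case, and you handle it correctly.
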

\subsection{Unconstrained trace optimization}
\label{sec:unconstr_trace}
~\\
Given $f \in \SymRX$, the \emph{trace-minimum} of $f$ is obtained by solving the following  optimization problem
\begin{align}
\label{eq:tracemin}
\Trace_{\min}(f) \coloneqq \inf \{\Trace f(\underline{A}) : \underline{A} \in \Sbb^n \} ,
\end{align}
which is equivalent to
\begin{align}
\label{eq:tracemin2}
\Trace_{\min}(f) = \sup \{ b : \Trace (f - b) (\underline{A}) \geq 0 , \forall \underline{A} \in \Sbb^n \} ,
\end{align}
If the cyclic degree of $f$ is odd, then $\Trace_{\min}(f) = -\infty$, thus let us assume that $2d = \cdeg(f)$.
To approximate $\Trace_{\min}(f)$ from below, one considers the following relaxation:
\begin{align}
\label{eq:trace_dual}
\Trace^d(f) = \sup \{ b : f - b \in \Theta_d \} ,
\end{align}
whose dual is
\begin{equation}
\label{eq:trace_primal}
\begin{aligned}
L_{\Theta}^d(f) \coloneqq \inf\limits_{{\y}} \quad  & \langle {{\M_d}(\y)}, \G_f \rangle  \\
\rm{ s.t.}
\quad & ({{\M_d}(\y)})_{u,v} = ({{\M_d}(\y)})_{w,z}  , \quad \text{for all } u^\star v \cyc w^\star z , \\
\quad & {y_{1}} = 1 , \quad  {{\M_d}(\y)} \succeq 0 , \\
\quad & {L : \RX_{2 d}  \to \R \,%, \quad L
\text{ linear}},
\end{aligned}
\end{equation}
One has $ \Trace^d(f) = L_{\Theta}^d(f) \leq \Trace_{\min}(f)$, where the inequality comes from~\citencsparse[Lemma~5.2]{burgdorf16} and the equality results from the strong duality between SDP~\eqref{eq:trace_primal} and SDP~\eqref{eq:trace_dual}, see e.g.~\citencsparse[Theorem~5.3]{burgdorf16} for a proof.
In addition, if the optimizer ${{\M_d}(\y)}^{\opt}$ of SDP~\eqref{eq:trace_primal} satisfies the flatness condition, i.e., the linear functional underlying ${{\M_d}(\y)}^{\opt}$ is $1$-flat (see Definition~\ref{def:flatextension}), then the above relaxations are exact and one has $ \Trace^d(f) = L_{\Theta}^d(f) = \Trace_{\min}(f)$. This exactness result is stated in~\citencsparse[Theorem~5.4]{burgdorf16}.

For a given nc polynomial $f = f_1 + \dots + f_p$, with $f_k \in \SymRXI{k}_{2d}$, for all $k\in[p]$, we consider the following sparse variant of SDP~\eqref{eq:trace_primal}:
\begin{equation}
\label{eq:sparse_trace_primal}
\begin{aligned}
L^{d}_{\Theta,\sparse}(f) = \inf\limits_{{\y}} \quad  & \sum_{k=1}^p \langle {{\M_d}(\y,I_k)}, \G_{f_k} \rangle \\
\rm{ s.t.}
\quad & ({{\M_d}(\y,I_k)})_{u,v} = ({{\M_d}(\y,I_k)})_{w,z}  , \quad \text{for all } u^\star v \cyc w^\star z ,  \\
\quad & y_{1} = 1, \quad {{\M_d}(\y,I_k)}  \succeq 0 ,   \quad k\in[p],\\
\quad & {L : \RXI{1}_{2 d} + \dots + \RXI{p}_{2 d} \to \R \,%, \quad L
\text{ linear}},
\end{aligned}
\end{equation}
whose dual is the sparse variant of SDP~\eqref{eq:trace_dual}:
\begin{equation}
\label{eq:sparse_trace_dual}
\begin{aligned}
\Trace_{\sparse}^{d}(f) = \sup\limits_{\lambda} \quad  & \lambda  \\
\rm{ s.t.}
\quad & f  - \lambda \in \Theta^{\sparse}_d .
\end{aligned}
\end{equation}
Now, we are ready to state the sparse variant of~\citencsparse[Theorem~5.3]{burgdorf16}.
\begin{theorem}
\label{th:sparse_trace_nogap}
Let $f \in \SymRX$ of degree $2 d$, with $f = f_1 + \dots + f_p$, $f_k \in \SymRXI{k}_{2d}$, for all $k\in[p]$. %, and assume that $f$ satisfies Assumption~\ref{hyp:sparsityRIP}.
There is no duality gap between SDP~\eqref{eq:sparse_trace_primal} and SDP~\eqref{eq:sparse_trace_dual}, namely $\Trace_{\sparse}^{d}(f) = L^{d}_{\Theta,\sparse}(f)$.
\end{theorem}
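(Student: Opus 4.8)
The plan is to reproduce, in the tracial setting, the argument behind the analogous eigenvalue statement Theorem~\ref{th:sparse_eig_nogap} (which rests on the closedness Proposition~\ref{prop:sparseclosed}) together with the dense trace case \citencsparse[Theorem~5.3]{burgdorf16}. First I would record weak duality $\Trace^{d}_{\sparse}(f)\le L^{d}_{\Theta,\sparse}(f)$, which is routine: one checks that $\langle\M_d(\y,I_k),\G_{f_k}\rangle=L_{\y}(f_k)$ for any Gram matrix $\G_{f_k}$ of $f_k$, so the objective of \eqref{eq:sparse_trace_primal} is just $L_{\y}(f)$; and if $b$ is feasible for \eqref{eq:sparse_trace_dual}, writing $f-b=\sum_{k=1}^p\theta_k$ with $\theta_k\cyc\rho_k$, $\rho_k\in\SigmaXI{k}_{d}$, then for every feasible $\y$ of \eqref{eq:sparse_trace_primal} one gets $L_{\y}(f)-b=\sum_k L_{\y}(\theta_k)=\sum_k L_{\y}(\rho_k)=\sum_k\langle\M_d(\y,I_k),\G_{\rho_k}\rangle\ge0$, using that the tracial constraints make $L_{\y}$ constant on cyclic classes in each $\RXI{k}_{2d}$, that $\rho_k$ admits a positive semidefinite Gram matrix, and that $\M_d(\y,I_k)\succeq0$. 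So everything reduces to showing that the primal optimum equals the dual optimum.

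The heart of the matter is a sparse closedness statement, which I would isolate as: the sparse truncated cyclic quadratic module $\Theta^{\sparse}_d=\Theta^{1}_d+\dots+\Theta^{p}_d$ is a closed convex cone in $V\coloneqq\Sym\big(\RXI{1}_{2d}+\dots+\RXI{p}_{2d}\big)$. To prove it I would first observe, straight from the definitions, that $\Theta^{k}_d=\SigmaXI{k}_{d}+W^k$ where $W^k\coloneqq\{w\in\SymRXI{k}_{2d}:w\cyc0\}$ is the finite-dimensional subspace of trace-zero symmetric nc polynomials in $\ux(I_k)$; summing over $k$ yields $\Theta^{\sparse}_d=\SigmaX^{\sparse}_d+W$ with $W\coloneqq W^1+\dots+W^p$ a linear subspace and $\SigmaX^{\sparse}_d$ closed by Proposition~\ref{prop:sparseclosed}. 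The decisive point is the transversality $\SigmaX^{\sparse}_d\cap W=\{0\}$: if $\sigma=\sum_k\sigma_k$ with $\sigma_k\in\SigmaXI{k}_{d}$ lies in $W$, then $\sigma\cyc0$, hence $0=\Trace\sigma(\underline{A})=\sum_k\Trace\sigma_k(\underline{A})$ for all $\underline{A}\in\Sbb^n$, and since each summand is nonnegative (a sum of hermitian squares has nonnegative normalized trace), each $\sigma_k\cyc0$, so each $\sigma_k=0$ (a sum of hermitian squares cyclically equivalent to $0$ vanishes identically). Writing $q\colon V\to V/W$ for the quotient map, this says $\ker q\cap\SigmaX^{\sparse}_d=\{0\}$; since a closed convex cone $K$ in a finite-dimensional space with $K\cap\ker q=\{0\}$ has closed image under $q$ (if $q(\sigma_n)\to y$ with $\|\sigma_n\|\to\infty$, a cluster point of $\sigma_n/\|\sigma_n\|$ would be a nonzero element of $K\cap\ker q$, a contradiction, so $(\sigma_n)$ is bounded), the image $q(\SigmaX^{\sparse}_d)$ is closed, and therefore $\Theta^{\sparse}_d=q^{-1}\big(q(\SigmaX^{\sparse}_d)\big)$ is closed.

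With closedness in hand, I would conclude by the standard conic separation argument exactly as in \citencsparse[Theorem~5.3]{burgdorf16}: whenever $\Trace^{d}_{\sparse}(f)>-\infty$ the supremum in \eqref{eq:sparse_trace_dual} is attained, so $f-\Trace^{d}_{\sparse}(f)\in\Theta^{\sparse}_d$, and a supporting hyperplane of the closed cone $\Theta^{\sparse}_d$ at this boundary point, normalized so that its underlying linear functional $L$ satisfies $L(1)=1$, is a feasible point of \eqref{eq:sparse_trace_primal} with objective value $\Trace^{d}_{\sparse}(f)$; combined with weak duality this gives $L^{d}_{\Theta,\sparse}(f)=\Trace^{d}_{\sparse}(f)$, the degenerate cases being handled as in the dense reference. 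The main obstacle is precisely the closedness claim: the cones $\Theta^{k}_d$ are far from pointed, since they contain the overlapping subspaces $W^k$, so their sum is not obviously closed, and the only thing that rescues the argument is the decomposition $\Theta^{\sparse}_d=\SigmaX^{\sparse}_d+W$ together with the transversality $\SigmaX^{\sparse}_d\cap W=\{0\}$, the latter hinging on nonnegativity of the normalized trace on hermitian squares.
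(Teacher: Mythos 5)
Your proof is correct and takes essentially the route this statement is meant to be proved by: the paper itself states the theorem without proof (as the sparse variant of the dense tracial no-gap result, deferring details to its source), and the intended argument is exactly yours — weak duality, then closedness of the sparse truncated cyclic quadratic module obtained from Proposition~\ref{prop:sparseclosed} together with the transversality $\SigmaX^{\sparse}_d\cap W=\{0\}$ (nonnegativity of normalized traces of hermitian squares, plus the fact that an SOHS cyclically equivalent to $0$ vanishes identically), and finally the conic separation argument as in the dense case and in Theorem~\ref{th:sparse_eig_nogap}. The two points you gloss over are genuinely minor: the paper's definition lets $g$ range over all of $\SymRX_{2d}$ in $\Theta^k_d$, so one must either restrict to the sparse subspace or redistribute the trace-zero correction across cliques (routine, since cyclic classes of words are variable-homogeneous), and the possible degeneracy $L(1)=0$ of the separating functional is handled by the usual perturbation with a strictly feasible unital tracial functional (e.g.\ averaged traces of evaluations at random symmetric tuples), as in the dense reference you invoke.
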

As for unconstrained eigenvalue optimization, one can retrieve the solution of the initial trace minimization problem under the same assumptions as Theorem~\ref{th:sparse_flat}.
This is stated in the next proposition, which is the sparse variant of~\citencsparse[Theorem~5.4]{burgdorf16}.
\begin{proposition}
\label{prop:sparse_trace_flat}
Let $f$ be as in Theorem~\ref{th:sparse_trace_nogap}, and  assume that SDP~\eqref{eq:sparse_trace_primal} admits an optimal  solution ${{\M_d}(\y)}$.
If the linear functional $L$ underlying ${{\M_d}(\y)}$ satisfies the flatness (H1) and irreducibility (H2) conditions stated in Theorem~\ref{th:sparse_flat}, then
\[ \Trace_{\sparse}^{d}(f) = L^{d}_{\Theta,\sparse}(f) = \Trace_{\min}(f) . \]
\end{proposition}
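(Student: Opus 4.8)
The plan is to read off a tracial representation of the linear functional $L$ underlying the optimal moment matrix $\M_d(\y)$ from the sparse flat-extension result, and then squeeze $\Trace_{\sparse}^{d}(f)=L^{d}_{\Theta,\sparse}(f)$ between $\Trace_{\min}(f)$ on both sides.

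First I would unpack the constraints of SDP~\eqref{eq:sparse_trace_primal}. The equalities $(\M_d(\y,I_k))_{u,v}=(\M_d(\y,I_k))_{w,z}$ for $u^\star v\cyc w^\star z$, together with $y_{1}=1$ and $\M_d(\y,I_k)\succeq0$ for all $k\in[p]$, say precisely that the associated functional $L:\RXI{1}_{2d}+\dots+\RXI{p}_{2d}\to\R$ is unital, tracial, and satisfies $L(h^\star h)\ge0$ for all $h\in\RXI{k}_{d}$ (Lemma~\ref{lemma:moment}(1)); since every element of $\Theta^{k}_d$ is cyclically equivalent to some element of $\SigmaXI{k}_d$, this gives $L(\Theta^{\sparse}_d)\subseteq\R_{\ge0}$. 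Moreover Assumption~\ref{hyp:sparsityRIP}(i) holds by the very form of $f$, and (H1)--(H2) are assumed, so $L$ meets all hypotheses of Theorem~\ref{th:sparse_flat_tracial} (applied with empty constraint set $\frakg=\emptyset$, which makes the additional quadratic constraints~\eqref{eq:additional} and the Archimedean-type conditions vacuous). That theorem then yields finitely many $n$-tuples $\underline A^{(j)}\in\Sbb^n$ of symmetric matrices and positive scalars $\lambda_j$ with $\sum_j\lambda_j=1$ such that
\[
L(g)=\sum_j\lambda_j\Trace g(\underline A^{(j)})\qquad\text{for all }g\in\RXI{1}_{2d}+\dots+\RXI{p}_{2d}.
\]

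Next I would identify the optimal value. Writing $f_k=\W_{d}(I_k)^\star\G_{f_k}\W_{d}(I_k)$, where $\W_{d}(I_k)$ is the vector of words of degree at most $d$ in $\ux(I_k)$, expanding the pairing gives $\langle\M_d(\y,I_k),\G_{f_k}\rangle=L\!\left(\W_{d}(I_k)^\star\G_{f_k}\W_{d}(I_k)\right)=L(f_k)$, which in particular does not depend on the chosen Gram matrix. Summing over $k$ and using $f=\sum_k f_k$ yields $L^{d}_{\Theta,\sparse}(f)=\sum_k L(f_k)=L(f)$. Combining this with the tracial representation and with $\Trace f(\underline A^{(j)})\ge\Trace_{\min}(f)$ for every $j$, I obtain $L^{d}_{\Theta,\sparse}(f)=\sum_j\lambda_j\Trace f(\underline A^{(j)})\ge\Trace_{\min}(f)$. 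Conversely, for any $b$ with $f-b\in\Theta^{\sparse}_d$ the polynomial $(f-b)(\underline A)$ is cyclically equivalent to a sum of hermitian squares, hence has nonnegative trace for every $\underline A\in\Sbb^n$; thus $\Trace f(\underline A)\ge b$ for all $\underline A$, so $\Trace_{\sparse}^{d}(f)\le\Trace_{\min}(f)$. Finally Theorem~\ref{th:sparse_trace_nogap} gives $\Trace_{\sparse}^{d}(f)=L^{d}_{\Theta,\sparse}(f)$, and the chain
\[
\Trace_{\min}(f)\ \ge\ \Trace_{\sparse}^{d}(f)\ =\ L^{d}_{\Theta,\sparse}(f)\ \ge\ \Trace_{\min}(f)
\]
forces equality throughout, which is the assertion.

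The substance sits entirely in Theorem~\ref{th:sparse_flat_tracial} (which itself rests on the amalgamation Theorem~\ref{th:amalgamation} and the sparse flat-extension analysis behind Theorem~\ref{th:sparse_flat}); the remainder is bookkeeping, and will mostly parallel the dense trace case. The only mild care needed in the write-up is the reduction of the unconstrained trace problem to the setting of Theorem~\ref{th:sparse_flat_tracial} by taking $\frakg=\emptyset$, together with checking that $\sum_k\langle\M_d(\y,I_k),\G_{f_k}\rangle$ is Gram-matrix-independent so that it can legitimately be read off as $L(f)$.
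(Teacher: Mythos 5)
The paper never proves Proposition~\ref{prop:sparse_trace_flat} in the text: it is stated as the sparse analogue of the dense exactness result of Burgdorf, Klep and Povh, with the argument deferred to the cited reference, so there is no in-paper proof to compare against line by line. Your proof is exactly the intended route and is correct: read the constraints of SDP~\eqref{eq:sparse_trace_primal} as saying that $L$ is unital, tracial and nonnegative on $\Theta^{\sparse}_d$, invoke the sparse flat tracial representation (Theorem~\ref{th:sparse_flat_tracial}) under (H1)--(H2) to write $L$ as a convex combination of normalized traces at matrix tuples, identify the optimal value of the SDP with $L(f)$ via the Gram pairing (and note its Gram-independence), and close the sandwich using the soundness bound $\Trace_{\sparse}^{d}(f)\le\Trace_{\min}(f)$ (traces kill commutators, hermitian squares have nonnegative trace) together with the absence of a duality gap from Theorem~\ref{th:sparse_trace_nogap}.

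One caveat on the reduction you flag yourself: you cannot literally apply Theorem~\ref{th:sparse_flat_tracial} ``with $\frakg=\emptyset$,'' because with the paper's convention $\delta=\max\{\lceil\deg(g)/2\rceil : g\in\frakg\cup\{1\}\}$ this would force $\delta=0$, making the flatness hypothesis vacuous, whereas the finite-dimensional GNS/amalgamation construction genuinely requires $1$-flatness, i.e., flatness of $\M_d(\y,I_k)$ and $\M_d(\y,I_k\cap I_j)$ over their order-$(d-1)$ truncations. This is precisely what the proposition's hypothesis ``(H1) as stated in Theorem~\ref{th:sparse_flat}'' encodes (there $\delta\ge1$ because of the quadratic ball constraints), and it matches the $1$-flatness required in the dense unconstrained trace result. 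With (H1) read this way, the ball/boundedness constraints play no role (the representation is finite dimensional), and your argument goes through unchanged; so this is a parametrization slip in the write-up rather than a gap in the mathematics.
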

In practice, Proposition~\ref{prop:sparse_trace_flat} allows one to derive an algorithm similar to the $\sparseeiggns$ procedure (described in Algorithm~\ref{algorithm:sparse_eig}) to find flat optimal solutions for the unconstrained trace problem.
\subsection{Constrained trace optimization}
\label{sec:constr_trace}
~\\
In this subsection, we provide the sparse tracial version of Lasserre's hierarchy to minimize the trace of a noncommutative polynomial on a semialgebraic set.
Given $f \in \SymRX$ and $\frakg = \{g_1,\dots,g_{m} \}$ $\subset \SymRX$ as in~\eqref{eq:DS}, let us define  $\Trace_{\min} (f, S)$ as follows:
\begin{align}
\label{eq:constr_trace}
\Trace_{\min}(f,\frakg) \coloneqq \inf \{\Trace f(\underline{A}) : \underline{A} \in {\cD_{\frakg}} \} .
\end{align}
%
%Elements in the cyclic quadratic module model polynomials which are positive with respect to the trace.
Since an infinite-dimensional Hilbert space does not admit a trace, we obtain lower bounds on the minimal trace by considering a particular subset of ${\cD_{\frakg}^\infty}$.
This subset is obtained by restricting from the algebra of all bounded operators $\mathcal{B}(\mathcal{H})$ on a Hilbert space $\mathcal{H}$ to finite von Neumann algebras~\citencsparse{Takesaki03} of type I and type II.
%As in the dense case,
We introduce $\Trace_{\min}(f,\frakg)^{\II_1}$ as the trace-minimum of $f$ on ${\cD_{\frakg}^{\II_1}}$.
This latter set is defined as follows (see~\citencsparse[Definition~1.59]{burgdorf16}):
\begin{definition}
\label{def:DSII}
Let $\mathcal{F}$ be a type-$\II_1$-von Neumann algebra~\citencsparse[Chapter~5]{Takesaki03}.
Let us define $\cD_{\frakg}^{\mathcal{F}}$ as the set of all tuples $\underline{A} = (A_1,\dots,A_n) \in \mathcal{F}^n$ making $g(\underline{A})$ a positive semidefinite operator for every $g \in \frakg$.
The von Neumann semialgebraic set ${\cD_{\frakg}^{\II_1}}$ generated by $\frakg$ is defined as
\[
{\cD_{\frakg}^{\II_1}} \coloneqq \bigcup_{\mathcal{F}} \mathcal{D}_{\frakg}^{\mathcal{F}} ,
\]
where the union is over all type-$\II_1$-von Neumann algebras with separable predual.
\end{definition}
By~\citencsparse[Proposition~1.62]{burgdorf16}, if $f\in \Theta(\frakg)$, then $\Trace f(\underline{A}) \geq 0$, for all $A \in {\cD_{\frakg}}$ and $A \in {\cD_{\frakg}^{\II_1}}$.
Since ${\cD_{\frakg}}$ can be modeled by ${\cD_{\frakg}^{\II_1}} $, one has $\Trace_{\min}(f,\frakg)^{\II_1} \leq \Trace_{\min}(f,\frakg)$.
With $r_{\min}$ being defined as in Section~\ref{sec:constr_eig},
one can approximate $\Trace_{\min}(f,\frakg)^{\II_1}$ from below via the following hierarchy of SDP programs, indexed by $r \geq r_{\min}$:
\begin{align}
\label{eq:constr_trace_dual}
\Trace^{r}(f,\frakg) = \sup \{ b : f - b \in \Theta(\frakg)_r \} ,
\end{align}
whose dual is
\begin{equation}
\label{eq:constr_trace_primal}
\begin{aligned}
L_{\Theta}^r(f,\frakg) \coloneqq \inf\limits_{{\y}} \quad  & \langle {{\M_r}(\y)}, \G_f \rangle  \\
\rm{ s.t.}
\quad & ({{\M_r}(\y)})_{u,v} = ({{\M_r}(\y)})_{w,z}  , \quad \text{for all } u^\star v \cyc w^\star z , \\
\quad & y_{1} = 1, \\
\quad  & {{\M_r}(\y)} \succeq 0 ,
\quad  {{\M_{r-d_j}}(g_j \y)} \succeq 0 ,    \quad j \in [m] ,\\
\quad & {L : \RX_{2 d}  \to \R \,%, \quad L
 \text{ linear}}.
\end{aligned}
\end{equation}
If the quadratic module $\cM(\frakg)$ is Archimedean, the resulting hierarchy of SDP programs provides a sequence of lower bounds $\Trace^r (f,\frakg)$ monotonically converging to $\Trace_{\min}(f,\frakg)^{\II_1}$, see e.g. ~\citencsparse[Corollary~3.5]{burgdorf16}.

Next, we present a sparse variant hierarchy of SDP programs providing a sequence of lower bounds $\Trace_{\sparse}^{r} (f,\frakg)$ monotonically converging to $\Trace_{\min}(f,\frakg)^{\II_1}$.
Let $\frakg \cup \{f\} \subseteq \SymRX$  and let ${\cD_{\frakg}}$ be as in~\eqref{eq:newDS} with the additional quadratic constraints~\eqref{eq:additional}.
Let us define the sparse variant of SDP~\eqref{eq:constr_trace_primal}, indexed by $r \geq r_{\min}$:
\begin{equation}
\label{eq:sparse_constr_trace_primal}
\begin{aligned}
L^{r}_{\Theta,\sparse}(f,\frakg) = \inf\limits_{{\y}} \quad  & \sum_{k=1}^p \langle {{\M_r}(\y,I_k)}, \G_{f_k} \rangle \\
\rm{ s.t.}
\quad & ({{\M_r}(\y,I_k)})_{u,v} = ({{\M_r}(\y,I_k)})_{w,z}  , \quad \text{for all } u^\star v \cyc w^\star z ,  \\
\quad & y_{1} = 1, \\
\quad & {{\M_r}(\y,I_k)} \succeq 0 , \quad k\in[p] , \\
\quad &  {{\M_{r-d_j}}(g_jL,I_k)} \succeq 0 ,  \quad j\in J_k , \quad k\in[p] , \\
\quad & {L : \RXI{1}_{2 d} + \dots + \RXI{p}_{2 d} \to \R \,%, \quad L
\text{ linear}}.
\end{aligned}
\end{equation}
whose dual is the sparse variant of SDP~\eqref{eq:constr_trace_dual}:
\begin{align}
\label{eq:sparse_constr_trace_dual}
\Trace_{\sparse}^{r}(f,\frakg) = \sup \{ b : f - b \in \Theta(\frakg)_d^{\sparse} \} ,
\end{align}
With the same conditions as the ones assumed in Proposition~\ref{prop:constr_eigmin_slater} for constrained eigenvalue optimization, SDP~\eqref{eq:sparse_constr_trace_primal} admits strictly feasible solutions, so there is no duality gap between SDP~\eqref{eq:sparse_constr_trace_primal} and SDP~\eqref{eq:sparse_constr_trace_dual}.
The proof is the same since the constructed linear functional in Proposition~\ref{prop:constr_eigmin_slater} is tracial.
In order to prove convergence of the hierarchy of bounds given by the SDP~\eqref{eq:sparse_constr_trace_primal}-\eqref{eq:sparse_constr_trace_dual},
we need the following proposition, which is the sparse variant of~\citencsparse[Proposition ~1.63]{burgdorf16}.
\begin{proposition}
\label{prop:sparse_tracial_psatz}
Let $\frakg \cup \{f\} \subseteq \SymRX$  and let ${\cD_{\frakg}}$ be as in~\eqref{eq:newDS} with the additional quadratic constraints~\eqref{eq:additional}.
Let Assumption~\ref{hyp:sparsityRIP} hold.
Then the following are equivalent:
\begin{itemize}
\item[(i)] $\Trace f(\underline{A}) \geq 0$ for all $\underline{A} \in {\cD_{\frakg}^{\II_1}}$;
\item[(ii)] for all $\varepsilon > 0$, there exists $g \in {\cM(\frakg)^{\sparse}}$ with $f + \varepsilon \cyc g$.
\end{itemize}
\end{proposition}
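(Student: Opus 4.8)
The statement to prove is Proposition~\ref{prop:sparse_tracial_psatz}, a sparse tracial Positivstellensatz asserting the equivalence of (i) $\Trace f(\underline A)\ge0$ on $\cD_{\frakg}^{\II_1}$ and (ii) for every $\varepsilon>0$ there is $g\in\cM(\frakg)^{\sparse}$ with $f+\varepsilon\cyc g$.

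\medskip

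\textbf{Plan.} The implication (ii)$\Rightarrow$(i) is the easy direction and I would dispatch it first. Suppose $f+\varepsilon\cyc g$ with $g\in\cM(\frakg)^{\sparse}$. By the dense tracial fact \citencsparse[Proposition~1.63]{burgdorf16} (or directly: a sum of commutators has zero trace on any type-$\II_1$ von Neumann algebra, and each $a_i^\star g_i a_i$ has nonnegative trace when $g_i(\underline A)\succeq0$), we get $\Trace(f(\underline A))+\varepsilon = \Trace(g(\underline A))\ge0$ for every $\underline A\in\cD_{\frakg}^{\II_1}$. Since $\cM(\frakg)^{\sparse}\subseteq\cM(\frakg)$, the positivity of the trace of $g$ is immediate. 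Letting $\varepsilon\downarrow0$ yields $\Trace f(\underline A)\ge0$.

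\medskip

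\textbf{The main work is (i)$\Rightarrow$(ii).} The strategy is to mirror the proof architecture already used for the sparse eigenvalue Positivstellensatz (Theorem~\ref{th:sparsePsatz}), replacing the Helton--McCullough/GNS apparatus by its tracial counterpart. Concretely, fix $\varepsilon>0$. I would argue by contradiction: if $f+\varepsilon\not\cyc g$ for any $g\in\cM(\frakg)^{\sparse}$, then $f+\varepsilon$ lies outside the cone $\Theta(\frakg)^{\sparse}$ (the cyclic quadratic module attached to the sparse module, as defined just above in~\eqref{eq:sparse_cyclic_module}). One should check $\Theta(\frakg)^{\sparse}$ is a closed convex cone in the appropriate finite-union-of-finite-dimensional topology on $\sum_k\RXI{k}$ (closedness here uses Assumption~\ref{hyp:sparsityRIP} together with the Archimedean property coming from the added constraints~\eqref{eq:additional}, exactly as in Proposition~\ref{prop:sparseclosed} for the non-cyclic case). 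Then a Hahn--Banach/Minkowski separation produces a tracial linear functional $L$ on $\sum_k\RXI{k}$ with $L(\Theta(\frakg)^{\sparse})\subseteq\R_{\ge0}$, $L(1)>0$ (normalize to $L(1)=1$), and $L(f+\varepsilon)<0$, i.e. $L(f)<-\varepsilon<0$.

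\medskip

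Next I would invoke a tracial analogue of the sparse GNS / amalgamation machinery. On each clique $\RXI{k}$ the restriction $L_k$ of $L$ is a unital tracial functional nonnegative on $\Theta(\frakg)^k$, so by the dense tracial GNS result one obtains a finite von Neumann algebra $\mathcal N_k$ of type $\II_1$ with trace $\tau_k$ and a tuple $\underline A^k\in\cD_{\frakg}^{\mathcal N_k}$ with $L_k = \tau_k\circ(\text{evaluation at }\underline A^k)$ (on the relevant degree range; note we may need to work with the Archimedean closure of $L$ so that its GNS representation lands in bounded operators, as in Theorem~\ref{th:gns}). The overlaps $\RXI{j}\cap\RXI{k}=\R\langle\underline x(I_j\cap I_k)\rangle$ carry compatible tracial functionals; using the \ac{RIP} (Assumption~\ref{hyp:sparsityRIP}(iii)) and the amalgamation theorem for $C^\star$-algebras with states (Theorem~\ref{th:amalgamation}), applied clique by clique along a running-intersection ordering, I would glue the $(\mathcal N_k,\tau_k)$ over the subalgebras generated by the shared variables into a single algebra $(\mathcal N,\tau)$ with a consistent tuple $\underline A\in\cD_{\frakg}^{\mathcal N}$; since each amalgamated piece is a von Neumann algebra of type $\II_1$ one arranges (passing to the appropriate completion / taking a von Neumann closure, possibly tensoring with the hyperfinite $\II_1$ factor to absorb type-I summands) that $\mathcal N$ is itself type $\II_1$, hence $\underline A\in\cD_{\frakg}^{\II_1}$. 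Then $\Trace f(\underline A)=\tau(f(\underline A))=\sum_k\tau(f_k(\underline A))=\sum_k L_k(f_k)=L(f)<0$, contradicting (i). This yields $f+\varepsilon\cyc g$ for some $g\in\cM(\frakg)^{\sparse}$, as required.

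\medskip

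\textbf{Main obstacle.} The delicate point is the tracial amalgamation step: Theorem~\ref{th:amalgamation} gives amalgamation in the $C^\star$ category with states, but one needs the glued object to support a \emph{faithful normal trace} and to be of type $\II_1$ so that $\underline A$ genuinely lies in $\cD_{\frakg}^{\II_1}$. Controlling the von Neumann type under amalgamated free products of tracial von Neumann algebras (and ensuring the construction respects the \ac{RIP}-ordered induction without creating obstructions as in Example~\ref{ex:no_finite_amalgam}) is where the real care is needed; I would lean on the known tracial-state amalgamation results and the fact that finite von Neumann algebras are stable under the relevant completions, handling type-I components by the standard trick of tensoring with a $\II_1$ factor. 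The remaining steps—closedness of $\Theta(\frakg)^{\sparse}$, separation, and the per-clique GNS—are routine adaptations of arguments already present in the excerpt.
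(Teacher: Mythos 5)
Your overall architecture — the easy direction via trace-zero commutators plus trace-positivity of elements of $\cM(\frakg)^{\sparse}$, and the hard direction via separation, clique-wise tracial GNS and an RIP-driven gluing — is the intended one (the book states this proposition without proof, deferring to its source, and the source argument is exactly this tracial mirror of Theorem~\ref{th:sparsePsatz}). However, one of your justifications is wrong and the other is left unresolved at the crucial point. The closedness of the sparse cyclic quadratic module that you invoke to obtain the separating functional is neither available nor needed: Proposition~\ref{prop:sparseclosed} concerns a \emph{truncated} SOHS cone in a finite-dimensional space, whereas the untruncated cyclic module is in general not closed in the finest locally convex topology — and if it were, your argument run with $\varepsilon=0$ would prove the $\varepsilon$-free statement that $\Trace f(\underline{A})\geq 0$ on $\cD_{\frakg}^{\II_1}$ forces $f\cyc g$ for some $g\in\cM(\frakg)^{\sparse}$, which is strictly stronger than (ii); the $\varepsilon$ is there precisely because only a closure is controlled. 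The correct tool is order-unit (Eidelheit) separation: thanks to the added clique constraints, $1$ is an order unit, hence a core point, of the sparse cyclic module inside $\sum_k \SymRXI{k}$, so the point $f+\varepsilon$ lying outside the cone can be separated by a nonzero functional with no closedness assumption; the RIP plays no role in this step (it enters only in the gluing), contrary to your parenthetical.

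The second issue is the gluing itself: Theorem~\ref{th:amalgamation} amalgamates $C^\star$-algebras with \emph{states}, and the amalgamated state it produces has no reason to be tracial, whereas traciality is exactly what you need in order to land in $\cD_{\frakg}^{\II_1}$ and contradict (i). What is required is the amalgamated free product of tracial von Neumann algebras over the overlap subalgebras with respect to their trace-preserving conditional expectations; the identification of the two copies of each overlap algebra is trace-preserving because the restrictions of $L$ to polynomials in the shared variables coincide, so GNS uniqueness for tracial states applies, and the resulting finite von Neumann algebra carries a canonical trace restricting to the clique traces, after which tensoring with the hyperfinite $\II_1$ factor upgrades the type, as you note. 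You correctly flag this as the main obstacle but leave it as an appeal to "known tracial-state amalgamation results"; since this is the heart of the sparse statement — it is where Assumption~\ref{hyp:sparsityRIP}(iii) is actually used — the proposal has a genuine gap there, even though the remaining steps (the easy implication, the per-clique tracial GNS using the Archimedean clique constraints as in Theorem~\ref{th:gns}, and the final evaluation giving $L(f)\leq-\varepsilon<0$) are sound.
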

Proposition~\ref{prop:sparse_tracial_psatz} implies the following convergence property.
\begin{corollary}
\label{coro:sparse_tracial_cvg}
Let $\frakg \cup \{f\} \subseteq \SymRX$  and let ${\cD_{\frakg}}$ be as in~\eqref{eq:newDS} with the additional quadratic constraints~\eqref{eq:additional}.
Let Assumption~\ref{hyp:sparsityRIP} hold.
Then
\[
\lim_{r \to \infty} \Trace^{r}_{\sparse}(f,\frakg) = \lim_{r \to \infty} L_{\Theta,\sparse}^{r}(f,\frakg) = \Trace_{\min}(f,\frakg)^{\II_1}
 .\]
\end{corollary}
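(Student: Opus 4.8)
The plan is to follow the same template as the convergence proof for constrained eigenvalue optimization (Theorem~\ref{th:sparse_constr_eig}), with the sparse tracial Positivstellensatz (Proposition~\ref{prop:sparse_tracial_psatz}) playing the role that Theorem~\ref{th:sparsePsatz} plays there. Write $\vartheta \coloneqq \Trace_{\min}(f,\frakg)^{\II_1}$. The proof rests on three bounds, to be established in turn: (a) $\Trace^{r}_{\sparse}(f,\frakg) \leq L^{r}_{\Theta,\sparse}(f,\frakg)$ for all $r \geq r_{\min}$, which is weak duality between SDP~\eqref{eq:sparse_constr_trace_primal} and SDP~\eqref{eq:sparse_constr_trace_dual}, together with the nesting $\Theta(\frakg)^{\sparse}_r \subseteq \Theta(\frakg)^{\sparse}_{r+1}$ (higher-degree multipliers are allowed), which makes $(\Trace^{r}_{\sparse}(f,\frakg))_r$ a nondecreasing, hence convergent, sequence; (b) $L^{r}_{\Theta,\sparse}(f,\frakg) \leq \vartheta$; and (c) $\lim_{r\to\infty}\Trace^{r}_{\sparse}(f,\frakg) \geq \vartheta$. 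Combining (a)--(c), the chain $\Trace^{r}_{\sparse}(f,\frakg) \leq L^{r}_{\Theta,\sparse}(f,\frakg) \leq \vartheta$ is squeezed against a left-hand side converging to $\vartheta$, which yields the claimed double equality.

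For (b) I would take any tuple $\underline{A} \in \cD_{\frakg}^{\II_1}$, lying in a finite type-$\II_1$ von Neumann algebra equipped with its normalized trace $\Trace$, and consider the unital tracial linear functional $L \colon g \mapsto \Trace g(\underline{A})$ on $\RX$. Its restriction to $\sum_{k} \RXI{k}_{2d}$ is feasible for SDP~\eqref{eq:sparse_constr_trace_primal}: each $\M_r(\y,I_k)$ is a principal submatrix of the positive semidefinite full moment matrix of $L$, each localizing submatrix $\M_{r-d_j}(g_j L, I_k)$ is positive semidefinite since $g_j(\underline{A}) \succeq 0$, the equalities $(\M_r(\y,I_k))_{u,v} = (\M_r(\y,I_k))_{w,z}$ for $u^\star v \cyc w^\star z$ hold because $\Trace$ vanishes on commutators, and $y_{1}=1$. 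Reading off the Gram representations $f_k = \W_{d}(I_k)^\star \G_{f_k} \W_{d}(I_k)$, the objective value equals $\sum_k \langle \M_r(\y,I_k),\G_{f_k}\rangle = \sum_k L(f_k) = L(f) = \Trace f(\underline{A})$. Taking the infimum over all such $\underline{A}$ gives $L^{r}_{\Theta,\sparse}(f,\frakg) \leq \vartheta$ (the sparse counterpart of the dense fact \citencsparse[Proposition~1.62]{burgdorf16}).

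For (c), fix $b < \vartheta$. Then for every $\underline{A} \in \cD_{\frakg}^{\II_1}$ one has $\Trace (f - b)(\underline{A}) = \Trace f(\underline{A}) - b \geq \vartheta - b > 0$, so condition~(i) of Proposition~\ref{prop:sparse_tracial_psatz} holds for the polynomial $f - b$, which still satisfies Assumption~\ref{hyp:sparsityRIP} since $b$ is a scalar (it only shifts $f_1$). By the implication (i)$\Rightarrow$(ii), for each $\varepsilon>0$ there is $h_{\varepsilon}\in\cM(\frakg)^{\sparse}$ with $(f-b)+\varepsilon \cyc h_{\varepsilon}$, i.e.\ $f-(b-\varepsilon)\in\Theta(\frakg)^{\sparse}$. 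As $h_{\varepsilon}$ has finite degree, $f-(b-\varepsilon)\in\Theta(\frakg)^{\sparse}_{r}$ for all sufficiently large $r$, so $\Trace^{r}_{\sparse}(f,\frakg)\geq b-\varepsilon$ eventually; letting $r\to\infty$ and then $\varepsilon\to0$ gives $\lim_r \Trace^{r}_{\sparse}(f,\frakg)\geq b$, and since $b<\vartheta$ was arbitrary, $\lim_r \Trace^{r}_{\sparse}(f,\frakg)\geq\vartheta$.

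I expect the main obstacle to be step (b): one must check carefully that passing from a tracial moment functional on a type-$\II_1$ von Neumann algebra to the \emph{sparse} primal SDP~\eqref{eq:sparse_constr_trace_primal} is lossless --- namely that restricting the moment and localizing matrices to the principal blocks indexed by $\langle\ux(I_k)\rangle$ still produces a feasible point with the correct objective through the $\G_{f_k}$, and that the blockwise cyclicity constraints are automatically satisfied. Once this is in place, the degree bookkeeping in (c) (choosing $r$ large enough that $h_{\varepsilon}$ lies in $\Theta(\frakg)^{\sparse}_{r}$) is routine, and the remainder reduces to the abstract sandwiching argument already used for the eigenvalue hierarchy in Theorem~\ref{th:sparse_constr_eig}.
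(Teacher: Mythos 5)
Your proposal is correct and follows essentially the same route as the paper: the paper derives the corollary from Proposition~\ref{prop:sparse_tracial_psatz} together with the previously noted duality between \eqref{eq:sparse_constr_trace_primal} and \eqref{eq:sparse_constr_trace_dual} and the fact that tracial functionals coming from points of ${\cD_{\frakg}^{\II_1}}$ are primal feasible, which is exactly your sandwich $\Trace^{r}_{\sparse}\leq L^{r}_{\Theta,\sparse}\leq \Trace_{\min}(f,\frakg)^{\II_1}$ squeezed by the Positivstellensatz applied to $f-b$. Your steps (a)--(c) are just the explicit write-up of that argument, so no essential difference.
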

To extract solutions of constrained trace minimization problems, we rely on the following variant of Theorem~\ref{th:sparse_flat_tracial}. It is, in turn, the tracial analog of Theorem~\ref{th:sparse_flat}.
\begin{proposition}
\label{prop:sparse_constr_trace_flat}
Let $\frakg  \subseteq \SymRX_{2d}$,  and assume that the semialgebraic set ${\cD_{\frakg}}$ is as in~\eqref{eq:newDS} with the additional quadratic constraints~\eqref{eq:additional}.
Let Assumption~\ref{hyp:sparsityRIP}(i) hold.
Set $\delta \coloneqq \max \{ \lceil \deg (g)/2 \rceil : g \in \frakg \cup {1} \}$.
Let ${{\M_r}(\y)}$ be an optimal solution of SDP~\eqref{eq:sparse_trace_primal} with value $L_{\Theta,\sparse}^{r}(f,\frakg)$, for $r \geq d + \delta$, such that $L$ satisfies the flatness (H1) and irreducibility (H2) conditions of Theorem~\ref{th:sparse_flat}.
Then there are finitely many $n$-tuples $\underline{A}^{(j)}$ of symmetric matrices in ${\cD_{\frakg}^t}$ for some $t \in \N$, and positive scalars $\lambda_j$ with $\sum_j \lambda_j = 1$ such that
\[L(f) =  \sum_j \lambda_j \Trace {f ( \underline{A}^{(j)})}.\]
In particular, one has $\Trace_{\min}(f, \frakg) = \Trace_{\min}(f,\frakg)^{\II_1} = L_{\Theta, \sparse}^{r}(f, \frakg)$.
\end{proposition}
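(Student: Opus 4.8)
The statement to prove is Proposition~\ref{prop:sparse_constr_trace_flat}, which asserts that a flat, irreducible optimal solution of the sparse tracial SDP~\eqref{eq:sparse_trace_primal} yields a finite convex combination of matrix tuples in $\cD_{\frakg}^t$ realizing the functional, hence exactness of the sparse relaxation for constrained trace minimization.

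The plan is to reduce this to the sparse tracial representation theorem, Theorem~\ref{th:sparse_flat_tracial}, which is the tracial analog of Theorem~\ref{th:sparse_flat}. First I would verify that the hypotheses of Theorem~\ref{th:sparse_flat_tracial} are satisfied: the linear functional $L$ underlying ${\M_r}(\y)$ is unital and tracial (the tracial condition is enforced by the cyclic equivalence constraints $({\M_r}(\y,I_k))_{u,v}=({\M_r}(\y,I_k))_{w,z}$ for $u^\star v\cyc w^\star z$ in the SDP~\eqref{eq:sparse_trace_primal}), it is nonnegative on the truncated sparse cyclic quadratic module $\Theta(\frakg)^{\sparse}_d$ because the localizing-matrix constraints ${\M_{r-d_j}}(g_j L,I_k)\succeq0$ together with the moment-matrix constraints exactly encode $L(\Theta(\frakg)^{\sparse}_d)\subseteq\R_{\geq0}$ via Lemma~\ref{lemma:moment}, and by hypothesis $L$ satisfies the flatness (H1) and irreducibility (H2) conditions. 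Applying Theorem~\ref{th:sparse_flat_tracial} then produces the finitely many $n$-tuples $\underline{A}^{(j)}\in\cD_{\frakg}^t$ and positive scalars $\lambda_j$ with $\sum_j\lambda_j=1$ satisfying $L(f)=\sum_j\lambda_j\Trace f(\underline{A}^{(j)})$ for all $f\in\RXI{1}_{2d}+\dots+\RXI{p}_{2d}$, which is the first displayed conclusion.

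Next I would derive the exactness chain $\Trace_{\min}(f,\frakg)=\Trace_{\min}(f,\frakg)^{\II_1}=L_{\Theta,\sparse}^{r}(f,\frakg)$. On one side, we already have the general inequalities $\Trace_{\min}(f,\frakg)^{\II_1}\leq\Trace_{\min}(f,\frakg)$ and $L_{\Theta,\sparse}^{r}(f,\frakg)\leq\Trace_{\min}(f,\frakg)^{\II_1}$ (the latter because $\Theta(\frakg)^{\sparse}_d\subseteq\Theta(\frakg)$ so feasibility for the sparse dual implies feasibility for the dense dual, combined with $\Trace_{\sparse}^{r}(f,\frakg)=L^{r}_{\Theta,\sparse}(f,\frakg)$ by the no-duality-gap result analogous to Theorem~\ref{th:sparse_trace_nogap}). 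For the reverse direction, since $\underline{A}^{(j)}\in\cD_{\frakg}^t\subseteq\cD_{\frakg}$ and each gives a genuine trace of $f$, the value $\sum_j\lambda_j\Trace f(\underline{A}^{(j)})$ is an admissible candidate for the infimum defining $\Trace_{\min}(f,\frakg)$, so $\Trace_{\min}(f,\frakg)\leq\sum_j\lambda_j\Trace f(\underline{A}^{(j)})=L(f)=L^{r}_{\Theta,\sparse}(f,\frakg)$, where the last equality holds because $L$ is the optimizer. Chaining these inequalities forces all four quantities to coincide.

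I expect the main obstacle to be the careful verification that the cyclic-equivalence and localizing constraints in the sparse SDP~\eqref{eq:sparse_trace_primal} translate precisely into the hypothesis $L(\Theta(\frakg)^{\sparse}_d)\subseteq\R^{\geq0}$ required by Theorem~\ref{th:sparse_flat_tracial}, including matching the degree bookkeeping ($\delta=\max\{\lceil\deg(g)/2\rceil:g\in\frakg\cup\{1\}\}$ and $r\geq d+\delta$) so that all relevant hermitian squares $a_i^\star g_i a_i$ of degree at most $2(d+\delta)$ are tested by the localizing matrices of order $r-d_j$. The remaining steps—invoking the cited representation theorem and assembling the inequality chain—are essentially formal once the setup is in place, so this proof is a short corollary-style argument rather than an independent development; in the paper one would simply record it in the \emph{Notes and sources} section with a reference to the tracial version of~\citencsparse{burgdorf16}.
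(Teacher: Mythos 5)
The paper itself gives no in-text proof of this proposition: it is stated (in the trace-optimization section) precisely as ``a variant of Theorem~\ref{th:sparse_flat_tracial}'', with the details deferred to the source article, and your overall route is exactly that intended one — check that the optimal $L$ is unital, tracial and nonnegative on the truncated sparse cyclic quadratic module, invoke the sparse tracial representation theorem to get $L(f)=\sum_j\lambda_j\Trace f(\underline{A}^{(j)})$ with $\underline{A}^{(j)}\in\cD_{\frakg}^t$, and then close a sandwich of inequalities. Your first conclusion is therefore fine, modulo the degree/traciality bookkeeping that you yourself flag (the cyclic-equivalence constraints make $L$ vanish on the relevant commutators, so nonnegativity on $\cM(\frakg)^{\sparse}$, which Lemma~\ref{lemma:moment} extracts from the moment and localizing constraints, transfers to $\Theta(\frakg)^{\sparse}_d$).

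There is, however, one concrete flaw in your inequality chain. To get $L_{\Theta,\sparse}^{r}(f,\frakg)\le\Trace_{\min}(f,\frakg)^{\II_1}$ you pass through the dual bound $\Trace_{\sparse}^{r}(f,\frakg)\le\Trace_{\min}(f,\frakg)^{\II_1}$ and then invoke ``no duality gap analogous to Theorem~\ref{th:sparse_trace_nogap}''. That theorem is the \emph{unconstrained} result (closedness of the sparse cone); in the constrained setting the paper only obtains absence of a duality gap under a Slater-type hypothesis ($\cD_{\frakg}$ containing an $\varepsilon$-neighborhood of $0$, as in Proposition~\ref{prop:constr_eigmin_slater}), which is not assumed in the proposition, and weak duality alone gives $L_{\Theta,\sparse}^{r}(f,\frakg)\ge\Trace_{\sparse}^{r}(f,\frakg)$, i.e.\ the wrong direction — so as written your chain does not close. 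The standard, duality-free fix is to note that the moment SDP is a genuine relaxation of trace minimization over $\cD_{\frakg}^{\II_1}$: any $\underline{A}\in\cD_{\frakg}^{\II_1}$ with its (normalized) trace induces a feasible sequence $y_w=\Trace\bigl(w(\underline{A})\bigr)$ (the moment and localizing submatrices are PSD, the cyclic-equivalence constraints hold by traciality of $\Trace$, and $y_1=1$) whose objective value is $\Trace f(\underline{A})$; hence $L_{\Theta,\sparse}^{r}(f,\frakg)\le\Trace_{\min}(f,\frakg)^{\II_1}$. Combining this with $\Trace_{\min}(f,\frakg)^{\II_1}\le\Trace_{\min}(f,\frakg)\le\sum_j\lambda_j\Trace f(\underline{A}^{(j)})=L(f)=L_{\Theta,\sparse}^{r}(f,\frakg)$ then yields the asserted equalities.
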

As in the dense case~\citencsparse[Algorithm~5.1]{burgdorf16}, one can rely on Proposition~\ref{prop:sparse_constr_trace_flat} to provide a randomized algorithm to look for flat optimal solutions for the constrained trace problem~\eqref{eq:constr_trace}.
}\fi

\section{Overview of numerical experiments}

The aim of this section is to provide experimental comparison between the bounds given by the dense hierarchy
and the ones produced by our \ac{CS} variant.
%For the sake of conciseness, we focus on minimal eigenvalue computation.
%
The numerical results were obtained with the Julia package \texttt{NCTSSOS} employing $\mosek$ as an \ac{SDP} solver. The computation was carried out on Intel(R) Core(TM) i9-10900 CPU@2.80GHz with 64G RAM.

\subsection{An unconstrained problem}
In Table~\ref{table:unc}, we report results obtained for minimizing the eigenvalue of the nc variant of the chained singular function~\citencsparse{conn1988testing}:
\begin{align*}
	f =\sum_{i \in J} \bigl( (x_i + 10 x_{i+1})^2 &+ 5 (x_{i+2} - x_{i+3})^2 \\
	&+ (x_{i+1} - 2 x_{i+2})^4 + 10 (x_{i} - 10 x_{i+3})^4\bigr),
\end{align*}
where $J=[n-3]$ and $n$ is a multiple of $4$. We compute lower bounds on the minimal eigenvalue of $f$ for $n=40,80,120,160,200,240$.
For each value of $n$, ``mb'' stands for maximal sizes of PSD blocks involved either in the sparse relaxation \eqref{eq:sparse_eigmin_primal} or the dense relaxation \eqref{eq:eigmin_primal}.
As one can see, the size of the \ac{SDP} programs is significantly reduced after exploiting \ac{CS}, which is consistent with Remark~\ref{rk:sparsevsdense}.
In addition, the sparse approach turns out to be much more efficient and scalable than the dense approach.

\begin{table}[htbp]
\caption{Sparse versus dense approaches for minimizing eigenvalues of the chained singular function. mb: maximal size of PSD blocks, opt: optimum, time: running time in seconds. ``-'' indicates an out of memory error.}\label{table:unc}
\centering
\begin{tabular}{c|ccc|ccc}
\hline
\multirow{2}{*}{$n$}  & \multicolumn{3}{c|}{{\bf sparse}} & \multicolumn{3}{c}{{\bf dense}} \\
\cline{2-7}
& mb & opt & time & mb & opt & time \\
\hline
40 & 13 & 0 &  0.17 & 157 &0 &  142 \\
80  & 13 & 0 &  0.43 & - & -  & - \\
120 &13 &0 &  0.65 & - & -  & - \\
160 &13 &0   &0.89  & - & -  & - \\
200 &13 &$-0.0014$   &1.02  & - & -  & - \\
240 &13 &$-0.0016$  &1.28 & - & -  & - \\
\hline
\end{tabular}
\end{table}

\subsection{Bell inequalities}\label{sec:bell}
Upper bounds on quantum violations of Bell inequalities can be computed using eigenvalue maximization of nc polynomials.
The classical (also most concise) Bell inequality states that $v^\star (A_1 \otimes B_1 + A_1 \otimes B_2 + A_2 \otimes B_1 - A_2 \otimes B_2) v$ is at most $2$ for all separable states $v \in \C^k \otimes \C^k$ and self-adjoint $A_j, B_j \in \C^{k\times k}$ with $A_j^2 = B_j^2 = \I_k$.
The so-called Tsirelson's bound implies that the above quantity is at most $2 \sqrt{2}$ when one allows arbitrary states.
This bound on the maximum violation level can be obtained by eigenvalue-maximizing $a_1 b_1 + a_1 b_2 + a_2 b_1 - a_2 b_2$ under the constraints $a_j^2 = b_j^2 = 1$ and $a_i b_j =  b_j a_i$.
To show the potential benefits of our approach based on \ac{CS}, we consider the Bell inequality, called $I_{3322}$, and compute upper bounds of its maximum violation level.
The associated objective function is $f = a_1 (b_1+b_2+b_3) + a_2 (b_1+b_2-b_3) + a_3 (b_1-b_2) - a_1 - 2 b_1 - b_2$.
The set of constraints is $a_j^2=a_j$, $b_j^2=b_j$, and  $a_i b_j =  b_j a_i$.
% By defining $\ux = (a_1,a_2,a_3,b_1,b_2,b_3)$,  one can choose $I_k = \{1,2,3,k\}$, for $k \in [3]$, so that the associated pattern satisfies~\eqref{eq:RIP}.
Table \ref{table:bell} compares the efficiency and accuracy of the sparse approach with the dense one, for different relaxation orders. It can be seen that the sparse approach spends much less time while providing almost the same bounds.
%The computation of the corresponding dense bound takes about 1 day with a standard laptop, while the sparse computation at $r = 6$ ends after roughly 1 hour.

\begin{table}[htbp]
\caption{Sparse versus dense approaches for maximizing the violation level of the Bell inequality $I_{3322}$. $r$ denotes the relaxation order.}\label{table:bell}
\centering
\begin{tabular}{c|ccc|ccc}
	\hline
	\multirow{2}{*}{$r$}  & \multicolumn{3}{c|}{{\bf sparse}} & \multicolumn{3}{c}{{\bf dense}} \\
	\cline{2-7}
	& mb & opt & time & mb & opt & time\\
	\hline
	2 & 28 & 0.2509398 &  0.01 & 13 &0.2590718 &  0.01 \\
	3  & 88 & 0.2508758 &  0.22 & 25 & 0.2512781  & 0.02 \\
	4 &244 &0.2508754 &  8.40 & 41 & 0.2509057  & 0.02 \\
	5 &628 &0.2508752   &456  & 61 & 0.2508774  & 0.04 \\
	6 &- &-  &- &85  & 0.2508754 & 0.09 \\
	%7 &- &-  &- & 113 & 0.2508754  & 0.22 \\
	%8 &- &-  &- & 145 & 0.2508754  & 0.45 \\
	%9 &- &-  &- & 181 & 0.2508754  & 1.02 \\
	%10 &- &-  &- & 221 & 0.2508754  & 3.00 \\
	%11 &- &- &- & 265 & 0.2508753  & 8.98 \\
	\hline
\end{tabular}
%
%\begin{tabular}{c|c|c}
%$r$ & dense & sparse\\
%\hline
%2 & 0.2509397 & 0.2550008 \\
%3 & 0.2508756 &  0.2511592 \\
%4 & - & 0.2508917 \\
%5 & - & 0.2508763 \\
%6 & - & 0.2508754 \\
%\hline
%\end{tabular}
\end{table}

\section{Notes and sources}
The main results presented in this chapter have been published in \citencsparse{ncsparse}.
Applications of interest connected with noncommutative optimization arise from quantum theory and quantum information science~\citencsparse{navascues2008convergent,pozas2019bounding} as well as control theory~\citencsparse{skelton1997unified,engineeringFRAG}.
Further motivation relates to the generalized Lax conjecture~\citencsparse{lax1957differential}, where the goal is to obtain computer-assisted proofs based on \ac{SOHS} in Clifford algebras~\citencsparse{netzer2014hyperbolic}.
The verification of noncommutative polynomial trace inequalities has also been motivated by a conjecture formulated by Bessis, Moussa and Villani (BMV) in 1975~\citencsparse{bessis1975monotonic}, which has been recently proved by Stahl \citencsparse{stahl2013proof} (see also the Lieb and Seiringer reformulation~\citencsparse{lieb2004equivalent}).
Further efforts focused on applications arising from bipartite quantum correlations~\citencsparse{Gribling18}, and matrix factorization ranks in~\citencsparse{Gribling19}.
In a related analytic direction, there has been recent progress on multivariate generalizations of the Golden-Thompson inequality and the Araki-Lieb-Thirring inequality~\citencsparse{GTineq1,GTineq2}.

There is a plethora of prior research in quantum information theory
involving reformulating problems as optimization of noncommutative polynomials.
One famous application is to characterize the set of quantum correlations.
Bell inequalities \citencsparse{bell1964einstein} provide a method to investigate entanglement, which allows two or more parties to be correlated in a non-classical way, and is often studied through the set of bipartite quantum correlations.
Such correlations consist of the conditional probabilities that two physically separated parties can generate by performing measurements on a shared entangled state.
These conditional probabilities satisfy some  inequalities  classically, but violate them in the quantum realm \citencsparse{clauser1969proposed}.

In this context, a given noncommutative polynomial in $n$ variables and of degree $2d$ is \ac{PSD} if and only if it decomposes as a \ac{SOHS}~\citencsparse{Helton02,McCullSOS}.
In practice, an \ac{SOHS} decomposition can be computed by solving an \ac{SDP} involving \ac{PSD} matrices of size $O(n^d)$, which is even larger than the size of the matrices involved in the commutative case.
\ac{SOHS} decompositions are also used for constrained optimization, either to minimize eigenvalues or traces of noncommutative polynomial objective functions, under noncommutative polynomial (in)equality constraints.
The optimal value of such constrained problems can be approximated, as closely as desired, while relying on the noncommutative analogue of Lasserre's hierarchy~\citencsparse{pironio2010convergent,cafuta2012constrained,nctrace}.
The $\ncsostools$~\citencsparse{cafuta2011ncsostools,burgdorf16} library can compute such approximations for optimization problems involving polynomials in noncommuting variables.
By comparison with the commutative case, the size $O(n^r)$ of the \ac{SDP} matrices at a given step $r$ of the noncommutative hierarchy becomes intractable even faster.

A remedy for unconstrained problems is to rely on the adequate noncommutative analogue of the standard Newton polytope method, which is called the \emph{Newton chip method} (see e.g., \citencsparse[\S2.3]{burgdorf16}) and can be further improved with the \emph{augmented Newton chip method} (see e.g.,~\citencsparse[\S2.4]{burgdorf16}), by removing certain terms which can never appear in an \ac{SOHS} decomposition of a given input.
As in the commutative case, the Newton polytope method cannot be applied for constrained problems.
When one cannot go from step $r$ to step $r+1$ in the hierarchy because of the computational burden, one can always consider matrices indexed by all terms of degree $r$ plus a fixed percentage of terms of degree $r+1$.
This is used for instance to compute tighter upper bounds for maximum violation levels of Bell inequalities~\citencsparse{pal2009quantum}.
Another trick, implemented in the $\ncpoltosdpa$ library~\citencsparse{wittek2015algorithm}, consists of exploiting simple equality constraints, such as ``$x^2 = 1$'', to derive substitution rules for variables involved in the \ac{SDP} relaxations.
Similar substitutions are performed in the commutative case by $\gloptipoly$~\citencsparse{gloptipoly}.
%Apart from such heuristic procedures, there is, to the best of our knowledge, no general method to exploit additional structure, such as sparsity, of (un)constrained noncommutative \ac{POP}.

Proposition \ref{prop:ncGram} can be found, e.g., in~\citencsparse[\S2.2]{Helton02}.
The noncommutative analog of Putinar's Positivstellensatz is due to Helton and McCullough \citencsparse[Theorem~1.2]{Helton04}.
Lemma \ref{lemma:moment} is proved in \citencsparse[Lemma~1.44]{burgdorf16}.
Theorem \ref{th:gns} is proved in   \citencsparse[Theorem~1.27]{burgdorf16}.
For more details on amalgamation theory for $C^\star$-algebras, see, e.g., \citencsparse{Amalgam78,Voi83}.
Theorem \ref{th:amalgamation} can be found in \citencsparse{Amalgam78} or \citencsparse[Section 5]{Voi83}.
This amalgamation theory serves to prove our sparse representation result \ref{th:sparsePsatz}, originally stated in \citencsparse[Theorem 3.3]{ncsparse}.

The notion of flatness was exploited in a noncommutative  setting for the first time by McCullough \citencsparse{McCullSOS} in his proof of the Helton-McCullough theorem, cf.~\citencsparse[Lemma 2.2]{McCullSOS}.
In the dense case \citencsparse{pironio2010convergent} (see also ~\citencsparse[Chapter 21]{anjos2011handbook} and~\citencsparse[Theorem~1.69]{burgdorf16}) provides a first noncommutative variant for the eigenvalue problem.  See~\citencsparse{nctrace} for a similar construction for the trace problem.
The sparse version of the flat extension theorem is stated in \citencsparse[Theorem 4.2]{ncsparse} and
the {\tt SparseGNS} algorithm is explicitly given in \citencsparse[Algorithm 4.6]{ncsparse} for the case of two subsets of variables (the general case is similar).
Theorem~\ref{th:sparse_flat} can be seen as a noncommutative variant of the result by Lasserre stated in~\citencsparse[Theorem~3.7]{Las06}, related to the minimizer extraction in the context of sparse polynomial optimization.
In the sparse commutative case, Lasserre assumes flatness of each moment matrix indexed by the canonical basis of $\R[\x,I_k]_r$, for each $k\in [p]$, which is similar to our flatness condition~(H1).
{
The difference is that this technical flatness condition on each $I_k$ adapts to the degree of the constraint polynomials in variables in $I_k$, resulting in an adapted parameter $\delta_k$ instead of global $\delta$.
We could assume the same in Theorem~\ref{th:sparse_flat} but for the sake of simplicity, we assume that these parameters are all equal.
}
In addition, Lasserre assumes that each moment matrix indexed by the canonical basis of $\R[\ux, I_j \cap I_k)]_r$ is of rank one, for all pairs $(j,k)$ with $I_j \cap I_k \neq \emptyset$, which is the commutative analog of our irreducibility condition~(H2).

The absence of duality gap for unconstrained eigenvalue minimization is derived in the dense case, e.g., in~\citencsparse[Theorem~4.1]{burgdorf16} and relies on ~\citencsparse[Proposition~3.4]{mccullough2005noncommutative}, which says that the set of \ac{SOHS} polynomials is closed in the set of nc polynomials.
Proposition \ref{prop:sparseclosed} is a sparse version of this latter result, leading to Theorem \ref{th:sparse_eig_nogap}, originally proved in \citencsparse[Theorem 5.4]{ncsparse}.

The ``perfect'' Positivstellensatz for constrained eigenvalue optimization over convex nc semialgebraic set (e.g., the nc ball or the nc polydisc) is stated in~\citencsparse[\S4.4]{burgdorf16} or \citencsparse{hkmConvex}.
%Example~\ref{ex:nosparseEigBall} implies that there is no sparse variant of~\citencsparse[Corollary~4.18]{burgdorf16} for the sparse nc ball.
Proposition~\ref{prop:sparse_eig_flat} and Corollary~\ref{th:sparse_cons_eig} are the sparse variants of~\citencsparse[Proposition~4.4]{burgdorf16} and~\citencsparse[Theorem~4.12]{burgdorf16}, in the unconstrained and constrained settings, respectively.

As in the dense case~\citencsparse[Algorithm~4.2]{burgdorf16}, one can provide a randomized algorithm to look for flat optimal solutions for the constrained eigenvalue problem~\eqref{eq:constr_eigmin}.
The underlying reason which motivates this randomized approach is  work by Nie, who derives in~\citencsparse{NieRand14} a hierarchy of \ac{SDP} programs, with a random objective function, that converges to a flat solution (under mild assumptions).

The interested reader can find more about trace minimization of sparse polynomials in~\citencsparse[\S~6]{ncsparse} and more detailed numerical experiments in~\citencsparse[\S~7]{ncsparse}.
For a detailed account about maximal violation levels of  Bell inequalities (in particular the one mentioned in Table \ref{table:bell}), we refer to~\citencsparse{pal2009quantum}.
\ac{CS} can be exploited in a similar way to solve trace polynomial optimization problems \citencsparse{klep2022optimization}.

%\bibliographystylencsparse{alpha}
%\bibliographyncsparse{preface,sdp,sparsemat,pop,cs,roundoff,lip,ncsparse,ts,cstssos,opf,jsr,misc,app}
\providecommand{\etalchar}[1]{$^{#1}$}

% Part II
\part{Term sparsity}\label{part:ts}
\chapter{The moment-SOS hierarchy based on term sparsity}\label{chap:tssos}
As emphasized earlier for distinct applications, exploiting \ac{CS} arising from a \ac{POP} may allow to significantly reduce the computational cost of the related hierarchy of \ac{SDP} relaxations assuming that the \ac{csp} is sufficiently sparse.
Nevertheless a \ac{POP} can be fairly sparse (namely, involving only a small number of terms) whereas its \ac{csp} is (nearly) dense. For instance, if some constraint (e.g., $1-\Vert\x\Vert_2^2\geq0$) involves all variables, then the \ac{csp} is dense.
On the other hand, instead of exploiting sparsity from the perspective of \emph{variables}, one can also exploit sparsity from the perspective of \emph{terms}, which leads to the notions of \emph{\ac{TS}} and \emph{\ac{tsp}}.
%This is the route followed in this section.

%In \citets{wang2} we exploited the TS occurring in the description of a \ac{POP} so as to define a sparsity-adapted version of the \ac{moment-SOS} hierarchy, which scales much better with the size of the initial \ac{POP}.
Roughly speaking, the \ac{tsp} can be also represented by a graph, which is called the \ac{tsp} graph.
But unlike the \ac{csp} graph, the nodes of the \ac{tsp} graph are monomials (coming from a monomial basis) and the edges of the graph grasp the links between monomials emerging from the related \ac{SOS} decomposition.
We are able to design an iterative procedure to enlarge the \ac{tsp} graph in order to iteratively exploit \ac{TS} for the given \ac{POP}.
Each iteration consists of two successive operations: (i) a support extension followed by (ii) a chordal extension.
%We first propose to replace the second step from \citets{wang2} by a chordal extension operation.
In doing so we obtain a finite ascending chain of graphs:
\begin{equation*}
    G^{(1)}\subseteq G^{(2)}\subseteq \cdots\subseteq G^{(s)}=G^{(s+1)}.
\end{equation*}
%Step (ii) consists of either performing completion of the connected components for each graph or performing an approximately minimum chordal extension.
%In the sequel, we only consider the chordal extension operation for Step (ii).
%The interested reader can find more details about the TSSOS hierarchy relying on block-closure operation in \citets{tssos}.
Then combining this iterative procedure with the standard \ac{moment-SOS} hierarchy results in a two-level \ac{moment-SOS} hierarchy involving PSD blocks. When the sizes of the blocks are small, the associated \ac{SDP} relaxations can be drastically much cheaper to solve.

To some extent, \ac{TS} (working on the monomial level) is finer than \ac{CS} (working on the variable level) in describing sparsity for a \ac{POP}.
%If a \ac{POP} is sparse in the sense of \ac{CS} (i.e. the \ac{csp} graph is not complete), then it must be sparse in the sense of \ac{TS} (i.e. the \ac{tsp} graph is not complete), while the converse is not necessarily true.
A natural idea for solving large-scale \ac{POP}s then is: first exploiting \ac{CS} to decompose variables into a set of cliques, and second exploiting \ac{TS} for each subsystem involving only one variable clique to further reduce the size of \ac{SDP}s.
This idea has been successfully carried out in \citets{cstssos} and will be presented later on, in Chapter \ref{chap:cstssos}.

Chapter \ref{sec:tssos-uncons} focuses on exploiting \ac{TS} for unconstrained \ac{POP}s.
We prove in Chapter \ref{sec:sdsos} that the resulting scheme is always more accurate than the framework based on {\em scaled diagonally dominant} sum of squares (SDSOS).
Chapter \ref{sec:tssos-cons} presents a \ac{TS} variant of the \ac{moment-SOS} hierarchy of \ac{SDP} relaxations for general \ac{POP}s with compact constraints.
Next, we provide in Chapter \ref{sec:tssos-basis} more sophisticated algorithms to reduce even further the size of the resulting \ac{SDP} relaxations.
Chapter \ref{sec:signtssos} explains the relation between the block structures arising from the \ac{TS}-adapted relaxations and the one provided by sign symmetries.
Eventually, we illustrate in Chapter \ref{sec:benchtssos} the efficiency and the accuracy of the \ac{TS}-adapted relaxations on benchmarks coming from the global optimization literature and networked systems stability.

%\section{Exploiting term sparsity with Newton polytopes}
%TODO Reznick

\section{The TSSOS hierarchy for unconstrained POPs}\label{sec:tssos-uncons}
In this section, we describe an iterative procedure to exploit \ac{TS} for the primal-dual \ac{moment-SOS} relaxations of unconstrained \ac{POP}s. Recall the formulation of an unconstrained \ac{POP}:
\begin{equation}\label{upop}
\P:\quad f_{\min}\coloneqq\inf\,\{f(\x):\x\in\R^n\}=\sup\,\{b:f-b\ge0\},
\end{equation}
where $f=\sum_{\a}f_{\a}\x^{\a}\in\R[\x]$. Suppose that $f$ is of degree $2d$ with $\supp(f)=\sA$ (w.l.o.g. assuming $\mathbf{0}\in\sA$) and $\x^{\Basis}\coloneqq(\x^{\b})_{\b\in\Basis}$ is a monomial basis arranged with resepct to any fixed ordering. For convenience, we slightly abuse notation in the sequel and denote by $\Basis$ (resp.~$\b$) instead of $\x^{\Basis}$ (resp.~$\x^{\b}$) a monomial basis (resp.~a monomial).
One may choose $\Basis$ to be the standard monomial basis $\N^n_{d}$. But when $f$ is sparse, the following theorem due to Reznick \citets{reznick} allows us to use a (possibly) smaller monomial basis by considering Newton polytopes. Recall that for a polynomial $f\in\R[\x]$, the \emph{Newton polytope} of $f$, denoted by $\New(f)$, is the convex hull generated by its support.
\begin{theorem}\label{reznick}
If $f,p_i\in\R[\x]$ and $f=\sum_{i}p_i^2$, then $\New(p_i)\subseteq\frac{1}{2}\New(f)$.
\end{theorem}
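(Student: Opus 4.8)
The plan is to reduce the statement to a fact about the vertices of the combined Newton polytope of the summands, together with a non-cancellation argument for top-order terms.

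First I would set $Q \coloneqq \conv\bigl(\bigcup_i \supp(p_i)\bigr)$, so that $\New(p_i) \subseteq Q$ for every $i$; it therefore suffices to prove $Q \subseteq \frac12 \New(f)$. Since $\frac12 \New(f)$ is convex and $Q$ is the convex hull of a finite point set, it is enough to show that every vertex $\a$ of $Q$ lies in $\frac12 \New(f)$, that is, $2\a \in \New(f)$.

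Next I would exploit that a vertex $\a$ of the polytope $Q = \conv(S)$, with $S \coloneqq \bigcup_i \supp(p_i)$ finite, is an exposed point of $Q$ and belongs to $S$: there is a linear functional $\ell$ on $\R^n$ with $\ell(\a) > \ell(\g)$ for all $\g \in S \setminus \{\a\}$. Expanding $f = \sum_i p_i^2$, the coefficient of $\x^{2\a}$ is
\[
f_{2\a} = \sum_i \sum_{\substack{\b + \g = 2\a \\ \b,\g \in \supp(p_i)}} (p_i)_{\b}\,(p_i)_{\g}.
\]
For each term occurring here one has $\ell(\b) + \ell(\g) = 2\ell(\a)$ with $\ell(\b), \ell(\g) \le \ell(\a)$, which forces $\ell(\b) = \ell(\g) = \ell(\a)$, hence $\b = \g = \a$ by strict maximality of $\ell$ at $\a$. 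So the only surviving contribution is $f_{2\a} = \sum_i (p_i)_{\a}^2 > 0$, the positivity coming from $\a \in S$, which means $(p_j)_{\a} \neq 0$ for at least one $j$. Thus $2\a \in \supp(f) \subseteq \New(f)$.

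Combining the two steps, every vertex of $Q$ lies in $\frac12 \New(f)$; by convexity $Q \subseteq \frac12 \New(f)$, and a fortiori $\New(p_i) \subseteq Q \subseteq \frac12 \New(f)$ for each $i$, as claimed. The main obstacle I anticipate is the bookkeeping around the first reduction — justifying cleanly that a vertex of $\conv(S)$ for a finite set $S$ is both contained in $S$ and exposed (so that a supporting functional with a \emph{strict} maximum can be selected), and then tracking that this strictness annihilates every cross term in the expansion of $\x^{2\a}$. Everything else is elementary.
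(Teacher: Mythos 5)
Your proof is correct, and it is essentially the classical argument of Reznick that the paper cites (the book itself gives no proof, only the reference): pick a vertex $\a$ of $\conv\bigl(\bigcup_i\supp(p_i)\bigr)$, expose it by a linear functional, observe that the coefficient of $\x^{2\a}$ in $f$ reduces to $\sum_i (p_i)_{\a}^2>0$ since no cross term can achieve the strictly maximal value of $\ell$, and conclude by convexity. The two facts you flag as bookkeeping — that an extreme point of the convex hull of a finite set belongs to the set, and that vertices of a polytope are exposed — are standard, so there is no gap.
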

As an immediate corollary, we can take the integer points in half of the Newton polytope of $f$ to form a monomial basis, i.e.,
\begin{equation}\label{sec2-eq3}
\Basis=\frac{1}{2}\New(f)\cap\N^n\subseteq\N^n_{d}.
\end{equation}

\begin{example}
Let $f=4x_1^4x_2^6+x_1^2-x_1x_2^2+x_2^2$. Then $\supp(f)=\{(4,6),(2,0),(1,2),$ $(0,2)\}$ and $\Basis=\frac{1}{2}\New(f)\cap\N^n=\{(1,0),(2,3),(0,1),(1,2),(1,1)\}$ (Figure \ref{newton}).

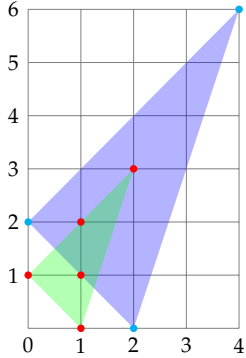
\begin{figure}[htbp]
\centering
\begin{tikzpicture}[scale=0.7]
\draw[help lines] (0,0) grid (4,6);
\fill[fill=blue,fill opacity=0.3] (2,0)--(4,6)--(0,2);
\fill[fill=cyan] (2,0) circle (2pt);
\fill[fill=cyan] (4,6) circle (2pt);
\fill[fill=cyan] (1,2) circle (2pt);
\fill[fill=cyan] (0,2) circle (2pt);
\fill[fill=green,fill opacity=0.3] (1,0)--(2,3)--(0,1);
\fill[fill=red] (1,0) circle (2pt);
\fill[fill=red] (2,3) circle (2pt);
\fill[fill=red] (0,1) circle (2pt);
\fill[fill=red] (1,2) circle (2pt);
\fill[fill=red] (1,1) circle (2pt);
\node[below] (1) at (1,0) {{\footnotesize $1$}};
\node[below] (11) at (0,0) {{\footnotesize $0$}};
\node[below] (2) at (2,0) {{\footnotesize $2$}};
\node[below] (3) at (3,0) {{\footnotesize $3$}};
\node[below] (4) at (4,0) {{\footnotesize $4$}};
\node[left] (5) at (0,1) {{\footnotesize $1$}};
\node[left] (6) at (0,2) {{\footnotesize $2$}};
\node[left] (7) at (0,3) {{\footnotesize $3$}};
\node[left] (8) at (0,4) {{\footnotesize $4$}};
\node[left] (9) at (0,5) {{\footnotesize $5$}};
\node[left] (10) at (0,6) {{\footnotesize $6$}};
\end{tikzpicture}
\caption{The monomial basis given by half of the Newton polytope (marked by red).}\label{newton}
\end{figure}
\end{example}

Given a monomial basis $\Basis$ and a sequence $\y\subseteq\R$, the moment matrix $\M_{\Basis}(\y)$ associated with $\Basis$ and $\y$ is the block of the moment matrix $\M_{d}(\y)$ indexed by $\Basis$. Then the moment relaxation of $\P$ in the monomial basis $\Basis$ is given by
\begin{equation}\label{sec4-eq3}
\P_{\text{mom}}:\quad\begin{array}{rll}
f_{\text{mom}} \coloneqq&\inf\limits_{\y}&L_{\y}(f)\\
&\rm{ s.t.}&\M_{\Basis}(\y)\succeq0\\
&&y_{\mathbf{0}}=1
\end{array}
\end{equation}

%In the following, we will consider graphs with $V=\Basis$ as the set of nodes. Suppose that $G(V,E)$ is such a graph.
%We define two operations on $G$: {\em support extension} and {\em chordal extension}. The support extension of $G$, denoted by $\SE(G)$, is the graph with nodes $\mathscr{B}$ and with edges
%$$E(\SE(G))\coloneqq\{\{\b,\g\}\mid(\b,\g)\in V\times V,\, \b\ne\g,\,\b+\g\in\supp(G)\}.$$

%Any specific chordal extension of $G$ is denoted by $G'$.

%\begin{remark}
%For a graph $G(V,E)$, the chordal extension of $G$ is usually not unique. A chordal extension with the least number of edges is called a {\em minimum chordal extension}. Finding a minimum chordal extension of a graph is an NP-hard problem in general. Fortunately, several heuristic algorithms, such as the minimum degree ordering, are known to efficiently produce a good approximation \citets{am,heg}.
%%In this paper, we always use an approximately minimum chordal extension for a graph.
%\end{remark}
%
%In the sequel, we assume that for graphs $G,H$ with the same set of nodes, if $E(G)\subseteq E(H)$, then $E(G')\subseteq E(\overline{H})$. This assumption is reasonable since any chordal extension of $H$ must be also a chordal extension of $G$.

For a graph $G(V,E)$ with $V\subseteq\N^n$, let the support of $G$ be given by
\begin{equation}
\supp(G) \coloneqq \left\{\b+\g\mid\b=\g\text{ or }\{\b,\g\}\in E\right\}.
\end{equation}
We define $G^{\text{tsp}}$ to be the graph with nodes $V=\Basis$ and with edges
\begin{equation}\label{e0}
    E(G^{\text{tsp}})=\left\{\{\b,\g\}\mid\b\ne\g\in V,\,\b+\g\in\sA\cup(2\Basis)\right\},
\end{equation}
which is called the \ac{tsp} graph associated with $f$.

Starting with the initial graph $G^{(0)}=G^{\text{tsp}}$, we now define a sequence of graphs $(G^{(s)})_{s\ge1}$ by iteratively performing two successive operations:\\
(1) {\bf support extension}. Let $F^{(s)}$ be the graph with nodes $V$ and with edges
\begin{equation}\label{es}
    E(F^{(s)})=\left\{\{\b,\g\}\mid\b\ne\g\in V,\,\b+\g\in\supp(G^{(s-1)})\right\}.
\end{equation}
(2) {\bf chordal extension}. Let $G^{(s)}=\bigl(F^{(s)}\bigr)'$.\\

\begin{example}\label{ex:support}
Let us consider the graph $G(V,E)$ with $$V=\{1,x_1,x_2,x_3,x_2x_3,x_1x_3,x_1x_2\} \text{ and } E=\left\{\{1,x_2x_3\},\{x_2,x_1x_3\}\right\}.$$
Figure \ref{fg:support} illustrates the support extension of $G$.
\begin{figure}[!t]
\centering
{\tiny
\begin{tikzpicture}[every node/.style={circle, draw=blue!50, thick, minimum size=6mm}]
\node (n1) at (0,0) {$1$};
\node (n2) at (2,0) {$x_1$};
\node (n3) at (4,0) {$x_2$};
\node (n4) at (6,0) {$x_3$};
\node (n5) at (1,-2) {$x_2x_3$};
\node (n6) at (4,-2) {$x_1x_3$};
\node (n7) at (6,-2) {$x_1x_2$};
\draw (n1)--(n5);
\draw (n3)--(n6);
\draw[dashed] (n2)--(n5);
\draw[dashed] (n3)--(n4);
\draw[dashed] (n4)--(n7);
\end{tikzpicture}}
\caption{The support extension of $G$ in Example \ref{ex:support}. The dashed edges are added after support extension.}\label{fg:support}
\end{figure}
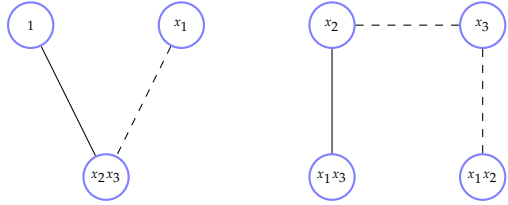
\end{example}

By construction, one has $G^{(s)}\subseteq G^{(s+1)}$ for $s\ge1$ and therefore the sequence of graphs $(G^{(s)})_{s\ge1}$ stabilizes after a finite number of steps.
%\begin{example}
%Consider the following graph $G(V,E)$ with $V=\{x_1,x_2,x_3,x_4,x_5,x_6\}$ and $E=\{\{x_1,x_2\},\{x_2,x_3\},\{x_3,x_4\},\{x_4,x_5\},\{x_5,x_6\},\{x_6,x_1\}\}.$
%See Figure \ref{chordal} for the chordal extension $G'$ of $G$.
%\begin{figure}[!t]
%\centering
%{\tiny
%\begin{tikzpicture}[every node/.style={circle, draw=blue!50, thick, minimum size=7.5mm}]
%\node (n2) at (90:2) {$x_1$};
%\node (n3) at (30:2) {$x_2$};
%\node (n4) at (330:2) {$x_3$};
%\node (n5) at (270:2) {$x_4$};
%\node (n6) at (210:2) {$x_5$};
%\node (n1) at (150:2) {$x_6$};
%\draw (n2)--(n3);
%\draw[dashed] (n2)--(n4);
%\draw[dashed] (n2)--(n5);
%\draw[dashed] (n2)--(n6);
%\draw (n3)--(n4);
%\draw (n4)--(n5);
%\draw (n5)--(n6);
%\draw (n6)--(n1);
%\draw (n1)--(n2);
%\end{tikzpicture}}
%\caption{The chordal extension $G'$ of $G$. The dashed edges are added in the process of chordal extension.}\label{chordal}
%\end{figure}
%\end{example}
Following what we introduced in Chapter \ref{sec:sparsemat}, we denote by $\Pi_{G^{(s)}}(\Sbb^+_{|\Basis|})$ the set of matrices in $\Sbb(G^{(s)})$ that have a \ac{PSD} completion, and denote by $\B_{G^{(s)}}$ the adjacency matrix of $G^{(s)}$.
If $f$ is sparse, by replacing $\M_{\Basis}(\y)\succeq0$ with the weaker condition $\M_{\Basis}(\y)\in\Pi_{G^{(s)}}(\Sbb^+_{|\Basis|})$ in \eqref{sec4-eq3}, we then obtain a sparse moment relaxation of \eqref{upop} for each $s\ge1$:
\begin{equation}\label{sec4-ukmom}
\P_{\ts}^s:\quad
\begin{cases}
\inf\limits_{\y} &L_{\y}(f)\\
\rm{ s.t.}&\B_{G^{(s)}}\circ \M_{\Basis}(\y)\in\Pi_{G^{(s)}}(\Sbb_{|\Basis|}^+)\\
&y_{\mathbf{0}}=1
\end{cases}
\end{equation}
with optimum denoted by $f_{\ts}^{s}$.
% or sometimes by $f_{\min}^{k}$, when it is clear from the context that this bound is obtained thanks to an \ac{SDP} after exploiting \ac{TS}.
We call $(\P_{\ts}^s)_{s\ge1}$ the \ac{TSSOS} hierarchy for $\P$ and call $s$ the \emph{sparse order}.

%\begin{remark}\label{rm-usos}
%We set $\sA\cup(2\N)^n$ rather than $\sA$ in \eqref{e0} to improve the feasibility of $\P_{\ts}^1$.
%To see this, consider the polynomial $f=1+x+x^4$ and $\sA=\supp(f)=\{0,1,4\}$. Take the monomial basis $\Basis=\{0,1,2\}$. If we set $\sA$ in \eqref{e0}, then
%$\B_{G^{(1)}}=\begin{bmatrix}
%1&1&0\\
%1&1&0\\
%0&0&1\\
%\end{bmatrix}.$
%It is clear that $\P_{\ts}^1$ for any $\lambda$. So
%the corresponding \ac{SDP} $(\text{P}^1)^*$ is infeasible in this case. On the other hand, if we set $\sA\cup(2\N)^n$ in \eqref{e0}, then
%$\B_{G^{(1)}}=\begin{bmatrix}
%1&1&1\\
%1&1&1\\
%1&1&1\\
%\end{bmatrix}$ and $\P_{\ts}^1$ is feasible now.
%\end{remark}

\begin{remark}
The intuition behind the support extension operation is that once one position related to $y_{\a}$ in the moment matrix $\M_{\Basis}(\y)$ is ``activated'' in the sparsity pattern, then all positions related to $y_{\a}$ in $\M_{\Basis}(\y)$ should be ``activated''.
In addition, Theorems \ref{th:sparsesdpsum} and \ref{th:sparsesdpproj} provide the rationale behind the mechanism of the chordal extension operation.
\end{remark}

\begin{theorem}\label{sec3-thm}
The sequence $(f_{\ts}^{s})_{s\ge 1}$ is monotonically nondecreasing and $f_{\ts}^{s}\le f_{\text{mom}}$ for all $s\ge1$.
\end{theorem}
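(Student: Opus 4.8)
The statement has two parts: monotonicity of the sequence $(f_{\ts}^{s})_{s\ge1}$ and the upper bound $f_{\ts}^{s}\le f_{\text{mom}}$ for all $s$. Both should follow from comparing feasible sets of the relaxations $\P_{\ts}^s$ as $s$ varies, together with the dense relaxation $\P_{\text{mom}}$ from \eqref{sec4-eq3}. The first thing I would establish is the chain of graph inclusions: as already noted right after the definition of the iterative procedure, $G^{(s)}\subseteq G^{(s+1)}$ for all $s\ge1$, obtained because the support extension operation can only add edges and the chordal extension respects inclusions (here I rely on the standing convention $G\subseteq H\implies G'\subseteq H'$ from Chapter \ref{sec:sparsemat}). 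In the limit, the stabilized graph $G^{(\infty)}$ is a subgraph of the complete graph on $\Basis$, whose adjacency matrix corresponds to the full (dense) moment block $\M_{\Basis}(\y)$.

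For monotonicity, I would argue that a larger sparsity-pattern graph yields a \emph{more restrictive} constraint, hence a larger (better) lower bound. Concretely, suppose $G\subseteq H$ are both graphs on the node set $\Basis$. I would show that any $\y$ feasible for $\P_{\ts}^{s+1}$ (i.e., with $\B_{G^{(s+1)}}\circ\M_{\Basis}(\y)\in\Pi_{G^{(s+1)}}(\Sbb_{|\Basis|}^+)$) is also feasible for $\P_{\ts}^{s}$. The key point is the behavior of the PSD-completable cone under taking subgraphs: if $\Qb$ restricted to the pattern $H$ has a PSD completion, then so does its further restriction to the smaller pattern $G\subseteq H$ — indeed the same completion matrix works, since projecting it onto $\Sbb(G)$ and then onto $\Sbb(H)$ agree on the $G$-entries. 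Equivalently, $\Pi_{G}\bigl(\Pi_{H}(\Sbb_{|\Basis|}^+)\bigr)=\Pi_{G}(\Sbb_{|\Basis|}^+)$. This gives $\B_{G^{(s)}}\circ\M_{\Basis}(\y)\in\Pi_{G^{(s)}}(\Sbb_{|\Basis|}^+)$, so the feasible set of $\P_{\ts}^{s+1}$ is contained in that of $\P_{\ts}^{s}$; since both minimize the same linear functional $L_{\y}(f)$, we get $f_{\ts}^{s}\le f_{\ts}^{s+1}$. One subtlety to check carefully: whether $f_{\ts}^{s}$ even depends on $f$ only through $\Basis$ and $\sA$ in a way that makes $L_{\y}(f)$ well-defined on the feasible set — this is fine because the Hadamard masking $\B_{G^{(s)}}\circ\M_{\Basis}(\y)$ only zeroes out entries $y_{\b+\g}$ with $\b+\g\notin\supp(G^{(s)})\supseteq\sA$, so every moment appearing in $L_{\y}(f)$ is still constrained/present.

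For the bound $f_{\ts}^{s}\le f_{\text{mom}}$, I would take the limiting graph or simply observe that the complete graph $K$ on $\Basis$ contains every $G^{(s)}$, and that $\Pi_{K}(\Sbb_{|\Basis|}^+)=\Sbb_{|\Basis|}^+$ (a fully dense pattern imposes no completion constraint). Hence any $\y$ feasible for the dense relaxation $\P_{\text{mom}}$, i.e., with $\M_{\Basis}(\y)\succeq0$, automatically satisfies $\B_{G^{(s)}}\circ\M_{\Basis}(\y)\in\Pi_{G^{(s)}}(\Sbb_{|\Basis|}^+)$ by the subgraph-restriction property used above (with $H=K$). Therefore the feasible set of $\P_{\text{mom}}$ is contained in that of $\P_{\ts}^{s}$ for every $s$, and minimizing $L_{\y}(f)$ over the larger set gives a value no greater: $f_{\ts}^{s}\le f_{\text{mom}}$.

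\textbf{Main obstacle.} The only genuinely non-routine point is the precise statement and justification of the monotonicity of PSD-completability under passing to a subgraph — namely that $\Qb\in\Pi_{H}(\Sbb_{|\Basis|}^+)$ implies $\Pi_{G}(\Qb)\in\Pi_{G}(\Sbb_{|\Basis|}^+)$ whenever $G\subseteq H$. This is intuitively clear (restrict an existing completion), but I would want to phrase it cleanly as a lemma, being careful about the Hadamard-product notation $\B_{G^{(s)}}\circ\M_{\Basis}(\y)$ versus the projection $\Pi_{G^{(s)}}$, and about the fact that the two coincide here because the diagonal and the activated off-diagonal entries are exactly those kept by $\B_{G^{(s)}}\circ(\cdot)$. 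Everything else is bookkeeping with feasible sets and the common objective $L_{\y}(f)$.
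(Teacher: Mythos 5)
Your proposal is correct and follows essentially the same route as the paper: the nested graphs $G^{(s)}\subseteq G^{(s+1)}$ (all subgraphs of the complete graph on $\Basis$) make each $\P_{\ts}^{s}$ a relaxation of $\P_{\ts}^{s+1}$ and of $\P_{\text{mom}}$, which yields both monotonicity and $f_{\ts}^{s}\le f_{\text{mom}}$. The only minor difference is that the paper justifies the feasible-set containment via Theorem \ref{th:sparsesdpproj} (each maximal clique of $G^{(s)}$ lies in a maximal clique of $G^{(s+1)}$), whereas you argue directly that a PSD completion for the larger pattern restricts to one for the smaller pattern ($\Pi_{G}\circ\Pi_{H}=\Pi_{G}$ for $G\subseteq H$) — an equally valid and slightly more elementary observation that does not even require chordality.
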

\begin{proof}
The inclusion $G^{(k)}\subseteq G^{(k+1)}$ implies that each maximal clique of $G^{(k)}$ is a subset of some maximal clique of $G^{(k+1)}$. Thus by Theorem \ref{th:sparsesdpproj}, we see that $\P_{\ts}^s$ is a relaxation of $\P_{\ts}^{s+1}$ (and also a relaxation of $\P_{\text{mom}}$). This yields the desired conclusions. \qed
\end{proof}

As a consequence of Theorem \ref{sec3-thm}, we obtain the following hierarchy of lower bounds for the optimum of $\P$:
\begin{equation}\label{cliquehier}
f_{\ts}^1 \le f_{\ts}^2 \le\cdots\le\ f_{\text{mom}} \le f_{\min}.
\end{equation}

%When we use an approximately minimum chordal extension, we say that \eqref{sec4-ukmom} (and its associated sequence \eqref{cliquehier}) is the {\em chordal-TSSOS} hierarchy for $\P$.
%The {\em maximal} chordal extension of a graph is the one that completes every connected component of the graph.
%If we use the maximal chordal extension in \eqref{sec3-graph}, then we say that \eqref{sec4-ukmom} is the {\em block-TSSOS} hierarchy.

If the maximal chordal extension is chosen for the chordal extension operation, then we can show (see \citets{tssos} for more details) that the sequence $(f_{\ts}^{s})_{s\ge1}$ converges to the global optimum
$f_{\min}$. Otherwise, there is no guarantee of such convergence as illustrated by the following example.
\begin{example}\label{ex1}
Consider the commutative version of the polynomial from \eqref{eq:polySohs}:
\begin{align*}
	f=\,&x_1^2-2x_1x_2+3x_2^2-2x_1^2x_2+
	2x_1^2x_2^2-2x_2x_3+6x_3^2\\
	&+18x_2^2x_3
	-54x_2x_3^2+142x_2^2x_3^2.
\end{align*}
The monomial basis computed from the Newton polytope is $\{1,x_1,x_2,x_3,x_1x_2,$ $x_2x_3\}$.
Figure \ref{tsp} shows the \ac{tsp} graph $G^{\text{tsp}}$ (without dashed edges) and its smallest chordal extension $G^{(1)}$ (with dashed edges) for $f$. The graph sequence $(G^{(s)})_{s\ge1}$ stabilizes at $s=1$. Solving $\P_{\ts}^1$, we obtain $f_{\ts}^1 \approx-0.00355$ while $f_{\text{mom}} = f_{\min} = 0$. On the other hand, note that $G^{\text{tsp}}$ has only one connected component. So with the maximal chordal extension, we immediately get the complete graph and it follows $f_{\ts}^1=f_{\text{mom}}=0$ in this case.

\begin{figure}[htbp]
\centering
{\tiny
\begin{minipage}{0.45\linewidth}
\begin{tikzpicture}[every node/.style={circle, draw=blue!50, thick, minimum size=6mm}]
\node (n2) at (0,0) {$x_1$};
\node (n3) at (2,0) {$x_2$};
\node (n4) at (4,0) {$x_3$};
\node (n5) at (0,2) {$x_1x_2$};
\node (n1) at (2,2) {$1$};
\node (n6) at (4,2) {$x_2x_3$};
\draw (n2)--(n3);
\draw (n3)--(n4);
\draw (n6)--(n4);
\draw (n1)--(n6);
\draw (n1)--(n5);
\draw (n2)--(n5);
\draw (n3)--(n6);
\draw[dashed] (n2)--(n1);
\draw[dashed] (n3)--(n1);
\end{tikzpicture}
\end{minipage}
\begin{minipage}{0.45\linewidth}
\begin{tikzpicture}[every node/.style={circle, draw=blue!50, thick, minimum size=6mm}]
\node (n2) at (0,0) {$x_1$};
\node (n3) at (2,0) {$x_2$};
\node (n4) at (4,0) {$x_3$};
\node (n5) at (0,2) {$x_1x_2$};
\node (n1) at (2,2) {$1$};
\node (n6) at (4,2) {$x_2x_3$};
\draw (n2)--(n3);
\draw (n3)--(n4);
\draw (n6)--(n4);
\draw (n1)--(n6);
\draw (n1)--(n5);
\draw (n2)--(n5);
\draw (n3)--(n6);
\draw[dashed] (n2)--(n1);
\draw[dashed] (n3)--(n1);
\draw[dashed] (n3)--(n5);
\draw[dashed] (n4)--(n5);
\draw[dashed] (n2)--(n6);
\draw[dashed] (n1)--(n4);
\draw[dashed] (0,1.72) arc (180:360:2 and 0.4);
\draw[dashed] (4,0.28) arc (0:180:2 and 0.4);
\end{tikzpicture}
\end{minipage}}
\caption{The \ac{tsp} graph $G^{\text{tsp}}$ and a smallest chordal extension (left) as well as the maximal chordal extension (right) for Example \ref{ex1}.}\label{tsp}
\end{figure}
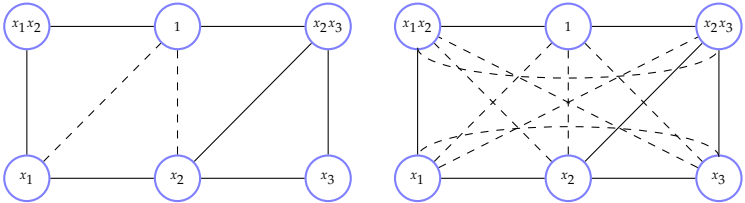
\end{example}

%Even though there is no theoretical convergence guarantees for the chordal-TSSOS hierarchy, the convergence often occurs in practice (see \citets[\S~6]{chordaltssos}).

For each $s\ge1$, the dual \ac{SDP} of \eqref{sec4-ukmom} is
\begin{equation}\label{sec4-uksos}
\begin{cases}{}
\sup\limits_{\G,b}& b\\
\text{ s.t.}&\langle\G,\B_{\a}\rangle=f_{\a}-b 1_{\a=\mathbf{0}},\quad\forall\a\in\supp(G^{(s)})\\
&\G\in\Sbb^+_{|\Basis|}\cap\Sbb(G^{(s)})
\end{cases}
\end{equation}
where $\B_{\a}$ has been defined after \eqref{dualj}.

\begin{proposition}\label{chap7:prop1}
For each $s\ge1$, there is no duality gap between $\P_{\ts}^s$ and its dual \eqref{sec4-uksos}.
\end{proposition}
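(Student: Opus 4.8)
The plan is to obtain strong duality from a Slater-type condition applied to the moment side. First I would note that, by construction, \eqref{sec4-ukmom} and \eqref{sec4-uksos} form a primal--dual pair of conic linear programs over the two mutually dual cones $\Pi_{G^{(s)}}(\Sbb_{|\Basis|}^+)$ and $\Sbb_{|\Basis|}^+\cap\Sbb(G^{(s)})$ sitting inside $\Sbb(G^{(s)})$ (this duality of cones is recalled in Chapter~\ref{sec:sparsemat}); in particular weak duality, namely $f_{\ts}^s\ge{}$ (optimal value of \eqref{sec4-uksos}), is automatic since \eqref{sec4-uksos} is the conic dual of \eqref{sec4-ukmom}. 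Hence it suffices to exhibit a strictly feasible point for one of the two programs and invoke the conic duality theorem.

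The primal $\P_{\ts}^s$ is the convenient side. I would take $\y$ to be the sequence of moments, truncated to the degrees appearing in $\Basis$, of the uniform probability measure $\mu$ on the Euclidean unit ball of $\R^n$. Then $y_{\mathbf{0}}=\mu(\R^n)=1$, and for every nonzero vector $v=(v_{\b})_{\b\in\Basis}$ the polynomial $p_v\coloneqq\sum_{\b\in\Basis}v_{\b}\x^{\b}$ is a nonzero polynomial (distinct monomials are linearly independent), so $v^{\intercal}\M_{\Basis}(\y)v=\int p_v^2\,\mathrm{d}\mu>0$ because $\mu$ is not supported on a proper algebraic hypersurface; therefore $\M_{\Basis}(\y)\succ0$. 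Consequently $\B_{G^{(s)}}\circ\M_{\Basis}(\y)=\Pi_{G^{(s)}}(\M_{\Basis}(\y))$ admits the positive definite completion $\M_{\Basis}(\y)$ itself, so it lies in the interior of the \ac{PSD}-completable cone $\Pi_{G^{(s)}}(\Sbb_{|\Basis|}^+)$ relative to $\Sbb(G^{(s)})$, and together with $y_{\mathbf{0}}=1$ this shows that $\y$ is strictly feasible for $\P_{\ts}^s$.

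Finally I would apply the conic duality theorem: strict feasibility of the primal $\P_{\ts}^s$ forces $f_{\ts}^s$ to coincide with the optimal value of its dual \eqref{sec4-uksos} (and forces the dual optimum to be attained when $f_{\ts}^s$ is finite; both values are $-\infty$ otherwise), which is precisely the claimed absence of a duality gap. The main point that needs care is the identification of the interior of $\Pi_{G^{(s)}}(\Sbb_{|\Basis|}^+)$ inside $\Sbb(G^{(s)})$ with the set of matrices in $\Sbb(G^{(s)})$ admitting a positive definite completion: this holds because $\Pi_{G^{(s)}}$ maps $\Sbb_{|\Basis|}$ onto $\Sbb(G^{(s)})$, hence is an open linear map, so it carries the open cone of positive definite matrices to an open subset of $\Pi_{G^{(s)}}(\Sbb_{|\Basis|}^+)$. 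I expect this bookkeeping with the completable cone, plus the routine check that the possible unboundedness of $f_{\ts}^s$ from below creates no gap, to be the only mildly delicate ingredient; everything else is the standard conic strong-duality argument.
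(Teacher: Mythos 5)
Your proposal is correct and follows essentially the same route as the paper: the paper's proof also establishes strict feasibility (Slater's condition) of the primal moment relaxation $\P_{\ts}^s$ — citing Proposition~3.1 of Lasserre's paper for a positive definite moment matrix and Theorem~\ref{th:sparsesdpproj} for the passage to the PSD-completable cone — and then concludes by conic strong duality. Your explicit Slater point (moments of the uniform measure on the unit ball) and the openness argument for $\Pi_{G^{(s)}}$ simply spell out the ingredients the paper invokes by citation.
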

\begin{proof}
This easily follows from the fact that $\P_{\ts}^s$ satisfies Slater's condition by Proposition 3.1 of \citets{Las01sos} and Theorem \ref{th:sparsesdpproj}. \qed
\end{proof}

\section{Comparison with SDSOS}\label{sec:sdsos}
SDSOS polynomials were introduced and studied in \citets{ahmadi2014dsos} as cheaper alternatives to \ac{SOS} polynomials in the context of polynomial optimization. %For more details the interested reader is referred to \citets{ahmadi2014dsos}.
More concretely, a symmetric matrix $\G\in\Sbb_t$ is \emph{diagonally dominant} if $\G_{ii}\ge\sum_{j\ne i}|\G_{ij}|$ for $i \in [t]$, and is \emph{scaled diagonally dominant} if there exists a positive definite $t\times t$ diagonal matrix $\D$ such that $\D \G \D$ is diagonally dominant.
We say that a polynomial $f\in\R[\x]$ is a \emph{scaled diagonally dominant sum of squares} (SDSOS) polynomial if it admits a Gram matrix representation as in \eqref{sec4-uksos} (with $b = 0$) with a scaled diagonally dominant Gram matrix $\G$. We denote the set of SDSOS polynomials by $\SDSOS$.

By replacing the nonnegativity condition in $\P$ with the SDSOS condition, one obtains the SDSOS relaxation of $\P$:
\begin{equation*}
(\text{SDSOS}):\quad f_{\text{sdsos}}\coloneqq\sup\,\{b : f-b\in\SDSOS\}.
\end{equation*}
It turns out that the first value of the \ac{TSSOS} hierarchy is already better than or equal to the bound provided by the SDSOS relaxation.
\begin{theorem}\label{sec4-thm1}
With the above notation, one has $f_{\ts}^1 \ge f_{\rm{sdsos}}$.
\end{theorem}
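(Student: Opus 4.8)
The plan is to show that any SDSOS certificate $f - b \in \SDSOS$ yields a feasible point for the dual \ac{SDP} \eqref{sec4-uksos} at sparse order $s=1$, so that $f_{\ts}^1 = f_{\rm{sdsos}}^{\,\rm{dual}} \ge f_{\rm{sdsos}}$; combined with Proposition~\ref{chap7:prop1} (no duality gap for $\P_{\ts}^1$) this gives the claim. The key observation is that a scaled diagonally dominant matrix is a sum of $2\times 2$ (suitably embedded) \ac{PSD} matrices, each supported on at most two basis indices, so its sparsity graph lies inside the adjacency structure captured by $G^{(1)}$.

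First I would recall the structural fact about SDSOS from \citets{ahmadi2014dsos}: if $\G\in\Sbb_t$ is scaled diagonally dominant, then $\G = \sum_{i<j} \M_{ij}$ where each $\M_{ij}$ is \ac{PSD}, symmetric, and zero outside the $2\times 2$ principal submatrix indexed by $\{i,j\}$ (the diagonal entries are split among the blocks). Consequently, if $f - b = \Basis^\intercal \G \Basis$ with $\G$ scaled diagonally dominant and $b = f_{\rm{sdsos}}$, then expanding gives $f_\a - b\,1_{\a=\mathbf{0}} = \langle \G, \B_\a\rangle$ for all $\a$, and moreover $\G_{\b\g}\ne 0$ only when $\b=\g$ or the pair $\{\b,\g\}$ supports one of the $2\times 2$ blocks, i.e. only when the monomial $\b+\g$ actually appears: $\b+\g \in \sA \cup (2\Basis)$. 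Hence $\G \in \Sbb(G^{\text{tsp}})$, and since $G^{\text{tsp}} = G^{(0)} \subseteq G^{(1)}$, we get $\G \in \Sbb^+_{|\Basis|}\cap \Sbb(G^{(1)})$. The equality constraints in \eqref{sec4-uksos} hold for all $\a\in\supp(G^{(1)})$ because they hold for all $\a$ in the (larger) support of $\G$, which contains $\supp(G^{(0)})$; here one uses that a monomial outside $\sA$ contributes $f_\a = 0$ and $b\,1_{\a=\mathbf 0}$ is handled since $\mathbf 0 \in \sA$. Therefore $(\G, b)$ is feasible for \eqref{sec4-uksos}, giving $f_{\ts}^1 \ge b = f_{\rm{sdsos}}$.

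I would then close the argument by invoking Proposition~\ref{chap7:prop1}: since \eqref{sec4-uksos} is the dual of $\P_{\ts}^1$ and there is no duality gap, the supremum value of \eqref{sec4-uksos} equals $f_{\ts}^1$; the feasible point constructed above has objective $b = f_{\rm{sdsos}}$, so $f_{\ts}^1 \ge f_{\rm{sdsos}}$.

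The main obstacle I anticipate is bookkeeping rather than conceptual: one must be careful that the ``support'' of the scaled diagonally dominant Gram matrix $\G$ (the set of $\b+\g$ with $\G_{\b\g}\ne 0$) is contained in $\supp(G^{(0)})$ so that both the constraint set and the sparsity pattern match \eqref{sec4-uksos} at $s=1$. This is immediate once one writes $\G$ as a sum of $2\times 2$ \ac{PSD} blocks supported on pairs $\{\b,\g\}$ with $\b+\g\in\sA\cup(2\Basis) = \supp(G^{\text{tsp}})$, using that the diagonal monomials $2\b$ are always in $2\Basis$ and the off-diagonal ones must cancel or contribute to $\sA$ by the Gram identity. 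A minor subtlety is the degenerate case where some $\b+\g \notin \sA$ but the block is still nonzero; the cancellation forced by $f_\a = 0$ for $\a\notin\sA$ shows such positions in $\G$ still satisfy the required linear equality, so feasibility is preserved, and since $\G\in\Sbb(G^{(1)})$ by the Newton polytope / chordal extension construction, the reasoning goes through.
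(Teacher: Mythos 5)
There is a genuine gap, and it sits exactly where you call it a ``minor subtlety.'' Your key claim is that a scaled diagonally dominant Gram matrix $\G$ of $f-b$ automatically satisfies $\G_{\b\g}\ne 0\Rightarrow \b+\g\in\sA\cup(2\Basis)$, i.e.\ $\G\in\Sbb(G^{\text{tsp}})\subseteq\Sbb(G^{(1)})$. This is false: the decomposition of a scaled diagonally dominant matrix into $2\times2$ PSD blocks puts no restriction on \emph{which} pairs of indices carry the blocks, and the Gram identity only forces the \emph{sum} of the entries contributing to a monomial $\a\notin\sA$ to vanish, not each entry. Whenever two distinct pairs $(\b,\g)\ne(\b',\g')$ in $\Basis\times\Basis$ satisfy $\b+\g=\b'+\g'=\a\notin\sA\cup(2\Basis)$, one can have $\G_{\b\g}=-\G_{\b'\g'}\ne0$ while keeping (scaled) diagonal dominance by taking the diagonal large enough; such a $\G$ certifies $f-b\in\SDSOS$ but lies outside $\Sbb(G^{(1)})$, so it is simply infeasible for \eqref{sec4-uksos}, and your closing assertion that ``$\G\in\Sbb(G^{(1)})$ by the Newton polytope / chordal extension construction'' has no justification (the Newton polytope only determines $\Basis$, and the chordal extension adds some edges but not arbitrary ones).

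The missing idea, which is how the paper argues, is a truncation step: from the SDSOS Gram matrix $\G$ define $\tilde{\G}$ by keeping $\tilde{\G}_{\b\g}=\G_{\b\g}$ when $\b+\g\in\sA\cup(2\Basis)$ and setting $\tilde{\G}_{\b\g}=0$ otherwise. Two observations make this work: (i) for any $\a\notin\sA\cup(2\Basis)$ \emph{all} entries contributing to $\x^{\a}$ are off-diagonal and sum to $0$, so zeroing all of them leaves $(\x^{\Basis})^\intercal\tilde{\G}\x^{\Basis}=f-b$ unchanged; (ii) zeroing off-diagonal entries preserves scaled diagonal dominance, hence $\tilde{\G}\succeq0$, and by construction $\tilde{\G}\in\Sbb(G^{(1)})$. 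Then $(\tilde{\G},b)$ is feasible for \eqref{sec4-uksos} and the conclusion follows from Proposition~\ref{chap7:prop1} exactly as you intended. So your overall strategy (map an SDSOS certificate to a feasible point of the sparse dual SOS program, then use no duality gap) coincides with the paper's, but the feasibility of the certificate requires the projection onto the pattern rather than the false claim that the pattern is automatic.
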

\begin{proof}
Let $\sA=\supp(f)$ and $\Basis$ be a monomial basis. Assume $f\in\SDSOS$, i.e., $f$ admits a scaled diagonally dominant Gram matrix $\G\in\Sbb^+_{|\Basis|}$ indexed by $\Basis$. We then construct a Gram matrix $\tilde{\G}$ for $f$ by
\begin{equation*}
\tilde{\G}_{\b\g}=
\begin{cases}
\G_{\b\g},\quad&\text{if } \b+\g\in\sA\cup(2\Basis),\\
0,\quad&\text{otherwise}.
\end{cases}
\end{equation*}
It is easy to see that we still have $f=(\x^{\Basis})^\intercal\tilde{\G}\x^{\Basis}$. Note that we only replace off-diagonal entries by zeros in $\G$ to obtain $\tilde{\G}$ and replacing off-diagonal entries by zeros does not affect the scaled diagonal dominance of a matrix. Hence $\tilde{\G}$ is also a scaled diagonally dominant matrix. Moreover, we have $\tilde{\G}\in\Sbb^+_{|\Basis|}\cap\Sbb(G^{(1)})$ by construction. It follows that $(\text{SDSOS})$ is a relaxation of \eqref{sec4-uksos}. Hence $f_{\ts}^1 \ge f_{\text{sdsos}}$ by Proposition \ref{chap7:prop1}.
\qed
\end{proof}

%In addition, the first TSSOS relaxation is always equivalent to the first (classical dense) moment relaxation in the quadratic case.
%\begin{theoremf}\label{sec3-thm2}
%Suppose that $f\in\R[\x]$ in $\P$ is a quadratic polynomial. Then $f_{\ts}^1=f_{\min}^1$.
%\end{theoremf}

\section{The TSSOS hierarchy for constrained POPs}\label{sec:tssos-cons}
In this section, we describe an iterative procedure to exploit \ac{TS} for the primal-dual \ac{moment-SOS} hierarchy of constrained \ac{POP}s:
\begin{align}\label{eq:popts}
\P: \quad f_{\min}\coloneqq\inf\,\{f(\x): \x\in\X\},
\end{align}
with
\begin{equation}\label{eq:defeX}
\X = \{\x \in \R^n \mid g_1(\x)\ge0, \ldots, g_m(\x)\ge0\}.
\end{equation}
Let $\sA$ denote the union of supports involved in $\P$, i.e.,
\begin{equation}\label{supp}
\sA = \supp(f)\cup\bigcup_{j=1}^{m}\supp(g_j).
\end{equation}

Let $r_{\min}\coloneqq\max\,\{\lceil\deg(f)/2\rceil,d_1,\ldots,d_{m}\}$ with $d_j\coloneqq\lceil\deg(g_j)/2\rceil$ for $j\in[m]$.
Fix a relaxation order $r \ge r_{\min}$.
Let $g_0=1$, $d_0=0$ and $\Basis_{r,j}=\N^n_{r-d_j}$ be the standard monomial basis for $j=0,1,\ldots,m$. We define a graph $G^{\text{tsp}}$ with nodes $\Basis_{r,0}$ and edges
\begin{equation}\label{sec4-eq1}
E(G^{\text{tsp}})=\left\{\{\b,\g\}\mid\b\ne\g\in\Basis_{r,0},\,\b+\g\in\sA\cup(2\Basis_{r,0})\right\},
\end{equation}
which is called the \ac{tsp} graph associated with $\P$ or essentially $\sA$.

Now let us initialize with $G_{r,0}^{(0)}\coloneqq G^{\text{tsp}}$ and $G_{r,j}^{(0)}$ being an empty graph for $j\in[m]$. Then for each $j\in\{0\}\cup[m]$, we define a sequence of graphs $(G_{r,j}^{(s)})_{s\ge1}$ by iteratively performing two successive operations:\\
(1) {\bf support extension}. Let $F_{r,j}^{(s)}$ be the graph with nodes $\Basis_{r,j}$ and edges
\begin{equation}\label{sec4-eq2}
\begin{split}
E(F_{r,j}^{(s)})=\,&\Bigl\{\{\b,\g\}\mid\b\ne\g\in\Basis_{r,j},\\
&(\supp(g_j)+\b+\g)\cap\bigcup_{i=0}^{m}\bigl(\supp(g_i)+\supp(G_{r,i}^{(s-1)})\bigr)\ne\emptyset\Bigr\}.
\end{split}
\end{equation}
(2) {\bf chordal extension}. Let
\begin{equation}\label{sec4-graphtssos}
	G_{r,j}^{(s)}\coloneqq\bigl(F_{r,j}^{(s)}\bigr)',\quad j\in\{0\}\cup[m].
\end{equation}

Recall that the dense moment relaxation of order $r$ for $\P$ is given by
\begin{equation}\label{momconstr}
\P^r:\quad
\begin{array}{rll}
f_{\text{mom}}^r\coloneqq&\inf\limits_{\y} &L_\y(f)\\
&\rm{ s.t.}&\M_{r-d_j}(g_j\,\y)\succeq0,\quad j\in\{0\}\cup[m]\\
%&&\M_{r-d_j}(g_j\,\y)=0,\quad j\in[m+l]\backslash[m],\\
&&y_{\mathbf{0}}=1
\end{array}
\end{equation}
Let $t_j\coloneqq|\Basis_{r,j}|=\binom{n+r-d_j}{r-d_j}$. Therefore by replacing $\M_{r-d_j}(g_j\y)\succeq0$ with the weaker condition $\B_{G_{r,j}^{(s)}}\circ\M_{r-d_j}(g_j\y)\in\Pi_{G_{r,j}^{(s)}}(\Sbb^+_{t_j})$ for $j\in\{0\}\cup[m]$ in \eqref{momconstr}, we obtain the following sparse moment relaxation of $\P^r$ and $\P$ for each $s\ge1$:
\begin{equation}\label{chap7:seq1}
\P_{\ts}^{r,s}:\quad
\begin{array}{rll}
f_{\ts}^{r,s}\coloneqq&\inf\limits_{\y} &L_{\y}(f)\\
&\rm{ s.t.}&\B_{G_{r,0}^{(s)}}\circ\M_{r}(\y)\in \Pi_{G_{r,0}^{(s)}}(\Sbb^+_{t_0})\\
&&\B_{G_{r,j}^{(s)}}\circ\M_{r-d_j}(g_j\y)\in\Pi_{G_{r,j}^{(s)}}(\Sbb^+_{t_j}),\quad j\in[m]\\
%&&\B_{G_{r,j}^{(s)}}\circ\M_{r-d_j}(g_j\y)=0,\quad j\in[m+l]\backslash[m],\\
&&y_{\mathbf{0}}=1
\end{array}
\end{equation}
As in the unconstrained case, we call $s$ the sparse order.
By construction, one has $G_{r,j}^{(s)}\subseteq G_{r,j}^{(s+1)}$ for all $j,s$. Therefore, for each $j\in\{0\}\cup[m]$, the sequence of graphs $(G_{r,j}^{(s)})_{s\ge1}$ stabilizes after a finite number of steps. We denote the stabilized graph by $G_{r,j}^{(\bullet)}$ for all $j$ and the corresponding moment relaxation by $\P_{\ts}^{r,\bullet}$ with optimum $f_{\ts}^{r,\bullet}$.

For each $s\ge1$, the dual \ac{SDP} of $\P_{\ts}^{r,s}$ reads as
\begin{equation}\label{chap7:seq2}
\begin{cases}
\sup\limits_{\G_j,b} & b\\
\rm{ s.t.}&\sum_{j=0}^{m}\langle \Cb_{\a}^j , \G_j \rangle =f_{\a} - b \oneb_{\a = \mathbf{0}}, \quad\forall\a\in\bigcup_{j=0}^{m}\bigl(\supp(g_j)+\supp(G_{r,j}^{(s)})\bigr)\\
&\G_j\in\Sbb^+_{t_j}\cap\Sbb(G_{r,j}^{(s)}),\quad j\in\{0\}\cup[m]\\
%&\G_j\in\Sbb_{t_j}\cap\Sbb(G_{r,j}^{(s)}),\quad j\in[m+l]\backslash[m],
\end{cases}
\end{equation}
where $\Cb_{\a}^j$ is defined after \eqref{dualj}. The primal-dual \ac{SDP} relaxations \eqref{chap7:seq1}--\eqref{chap7:seq2}
are called the \ac{TSSOS} hierarchy associated with $\P$, which is indexed by two parameters: the relaxation order $r$ and the sparse order $s$.

\begin{theoremf}\label{sec6-thm1}
With the above notation, the following hold:
\begin{enumerate}[(i)]
  \item Assume that $\X$ has a nonempty interior. Then there is no duality gap between $\P_{\ts}^{r,s}$ and its dual \eqref{chap7:seq2} for any $r\ge r_{\min}$ and $s\ge1$.
  \item Fixing a relaxation order $r \ge r_{\min}$, the sequence $(f_{\ts}^{r,s})_{s\ge1}$ is monotonically nondecreasing and $f_{\ts}^{r,s}\le f_{\rm{mom}}^r$ for all $s \geq 1$.
  \item When the maximal chordal extension is used for the chordal extension operation, the sequence $(f_{\ts}^{r,s})_{s\ge1}$ converges to $f_{\rm{mom}}^r$ in finitely many steps.
  \item Fixing a sparse order $s\ge1$, the sequence $(f_{\ts}^{r,s})_{r\ge r_{\min}}$ is monotonically nondecreasing.
\end{enumerate}
\end{theoremf}
\begin{proof}
(i). This easily follows from the fact that $\P_{\ts}^{r,s}$ satisfies Slater's condition by Theorem 4.2 of \citets{Las01sos} and Theorem \ref{th:sparsesdpproj}.

(ii). For all $j,s$, the inclusion $G_{r,j}^{(s)}\subseteq G_{r,j}^{(s+1)}$ implies that each maximal clique of $G_{r,j}^{(s)}$ is a subset of some maximal clique of $G_{r,j}^{(s+1)}$. Hence by Theorem \ref{th:sparsesdpproj}, $\P_{\ts}^{r,s}$ is a relaxation of $\P_{\ts}^{r,s+1}$ (and also a relaxation of $\P^{r}$) from which we have that $(f_{\ts}^{r,s})_{s\ge1}$ is monotonically nondecreasing and $f_{\ts}^{r,s}\le f_{\rm{mom}}^r$ for all $s\ge1$.

(iii). Let $\y=(y_{\a})$ be an arbitrary feasible solution of $\P_{\ts}^{r,\bullet}$. We note that $\bigl\{y_{\a}\mid\a\in\bigcup_{j\in \{0\}\cup[m]}\bigl(\supp(g_j)+\supp(G_{r,j}^{(\bullet)})\bigr)\bigr\}$ is the set of decision variables involved in $\P_{\ts}^{r,\bullet}$, and $\{y_{\a}\mid\a\in\N^n_{2r}\}$ is the set of decision variables involved in $\P^{r}$ \eqref{momconstr}.
We then define a vector $\overline{\y}=(\overline{y}_{\a})_{\a\in\N^n_{2r}}$ as follows:
\begin{equation*}
\overline{y}_{\a}=\begin{cases}y_{\a},\quad&\text{if }\a\in\bigcup_{j\in \{0\}\cup[m]}\bigl(\supp(g_j)+\supp(G_{r,j}^{(\bullet)})\bigr),\\
0,\quad&\text{otherwise}.
\end{cases}
\end{equation*}
By construction and since $G_{r,j}^{(\bullet)}$ stabilizes under support extension for all $j$, we immediately have $\M_{r-d_j}(g_j\overline{\y})=\B_{G_{r,j}^{(\bullet)}}\circ \M_{r-d_j}(g_j\y)$.
As we use the maximal chordal extension for the chordal extension operation, the matrix $\B_{G_{r,j}^{(\bullet)}}\circ \M_{r-d_j}(g_j\y)$ is block-diagonal up to permutation. So from $\B_{G_{r,j}^{(\bullet)}}\circ \M_{r-d_j}(g_j\y)\in\Pi_{G_{r,j}^{(\bullet)}}(\Sbb_+^{t_{j}})$ it follows $\M_{r-d_j}(g_j\overline{\y})\succeq0$ for $j\in \{0\}\cup[m]$. Therefore $\overline{\y}$ is a feasible solution of $\P^{r}$ and so $L_{\y}(f)=L_{\overline{\y}}(f)\ge f_{\text{mom}}^{r}$.
Hence $f_{\ts}^{r,\bullet}\ge f_{\text{mom}}^{r}$ as $\y$ is an arbitrary feasible solution of $\P_{\ts}^{r,\bullet}$.
By (ii), we already have $f_{\ts}^{r,\bullet}\le f_{\text{mom}}^{r}$. Therefore, $f_{\ts}^{r,\bullet}=f_{\text{mom}}^{r}$ as desired.

(iv). The conclusion follows if we can show that $G_{r,j}^{(s)}\subseteq G_{r+1,j}^{(s)}$ for all $j,r$ since by Theorem \ref{th:sparsesdpproj} this implies that $\P_{\ts}^{r,s}$ is a relaxation of $\P_{\ts}^{r+1,s}$. Let us prove $G_{r,j}^{(s)}\subseteq G_{r+1,j}^{(s)}$ by induction on $s$. For $s=1$, from $\eqref{sec4-eq1}$, we have $G_{r,0}^{(0)}\subseteq G_{r+1,0}^{(0)}$, which implies $G_{r,j}^{(1)}\subseteq G_{r+1,j}^{(1)}$ for $j\in\{0\}\cup[m]$. Now assume that $G_{r,j}^{(s)}\subseteq G_{r+1,j}^{(s)}$, $j\in\{0\}\cup[m]$ hold for a given $s\geq 1$. Then from $\eqref{sec4-eq2}$ and by the induction hypothesis, we have $G_{r,j}^{(s+1)}\subseteq G_{r+1,j}^{(s+1)}$ for $j\in\{0\}\cup[m]$, which completes the induction and also completes the proof.
\qed
\end{proof}

By Theorem \ref{sec6-thm1}, we have the following two-level hierarchy of lower bounds for the optimum $f_{\min}$ of $\P$:
\begin{equation}\label{cliquehierc}
\begin{matrix}
f_{\ts}^{r_{\min},1}&\le& f_{\ts}^{r_{\min},2} &\le&\cdots&\le& f_{\text{mom}}^{r_{\min}} \\
\vge&&\vge&&&&\vge\\
f_{\ts}^{r_{\min}+1,1}&\le&f_{\ts}^{r_{\min}+1,2}&\le&\cdots&\le&f_{\text{mom}}^{r_{\min}+1}\\
\vge&&\vge&&&&\vge\\
\vdots&&\vdots&&\vdots&&\vdots\\
\vge&&\vge&&&&\vge\\
f_{\ts}^{r,1}&\le&f_{\ts}^{r,2}&\le&\cdots&\le&f_{\text{mom}}^r\\
\vge&&\vge&&&&\vge\\
\vdots&&\vdots&&\vdots&&\vdots\\
\end{matrix}
\end{equation}

The \ac{TSSOS} hierarchy entails a trade-off between the computational cost and the quality of the obtained lower bound via the two tunable parameters $r$ and $s$. Besides, one has the freedom to choose a specific chordal extension in \eqref{sec4-graphtssos} (e.g., maximal chordal extensions, approximately smallest chordal extensions and so on). This choice could affect the resulting sizes of PSD blocks and the quality of the related lower bound. Intuitively, chordal extensions with smaller clique numbers would lead to PSD blocks of smaller sizes and lower bounds of (possibly) lower quality while chordal extensions with larger clique numbers would lead to PSD blocks with larger sizes and lower bounds of (possibly) higher quality.

\begin{remark}\label{ts-qcqp}
If $\P$ is a \ac{QCQP}, then $\P_{\text{ts}}^{1,1}$ and $\P^{1}$ yield the same lower bound, i.e., $f_{\rm{ts}}^{1,1}=f_{\rm{mom}}^1$.
Indeed, for a \ac{QCQP}, the moment relaxation $\P^{1}$ reads as
\begin{equation*}
\begin{cases}
\inf\limits_{\y}& L_{\y}(f)\\
\rm{ s.t.}&\M_{1}(\y)\succeq0\\
&L_{\y}(g_j)\ge0,\quad j\in[m]\\
%&L_{\y}(g_j)=0,\quad j\in[m+l]\backslash[m],\\
&y_{\mathbf{0}}=1
\end{cases}
\end{equation*}
Note that the objective function and the affine constraints of $\P^{1}$ involve only the decision variables $\{y_{\mathbf{0}}\}\cup\{y_{\a}\}_{\a\in\sA}$ with $\sA=\supp(f)\cup\bigcup_{j=1}^{m}\supp(g_j)$. Hence there is no discrepancy of optima in replacing $\P^{1}$ with $\P_{\ts}^{1,1}$ by construction.
\end{remark}

%As in the unconstrained case, there is no theoretical guarantee that the chordal-TSSOS hierarchy converges to the optimal value $f_{\min}^r$ of the dense \ac{moment-SOS} relaxation but it occurs very often in practice.
%
%As in Theorem \ref{sec3-thm2}, for quadratically constrained quadratic programs, we always have $f_{\ts}^{1,1} =f_{\min}^1$.

%Dedicated algorithms allow one to obtain a possibly smaller monomial basis $\Basis$; see \citets[\S~4]{chordaltssos} for more details.

\section{Obtaining a possibly smaller monomial basis}\label{sec:tssos-basis}
The size of \ac{SDP}s arising form the \ac{TSSOS} hierarchy heavily depends on the chosen monomial basis $\Basis$ or $\Basis_{r,0}$. As we already saw in Chapter~\ref{sec:tssos-uncons}, for unconstrained \ac{POP}s the Newton polytope method usually produces a monomial basis smaller than the standard monomial basis. However, this method does not apply to constrained \ac{POP}s. Here as an optional pre-treatment,
we present an iterative procedure which not only allows us to obtain a monomial basis smaller than the one given by the Newton polytope method for unconstrained \ac{POP}s in many cases, but can also be applied to constrained \ac{POP}s.

% \subsection{An iterative procedure to generate a smaller monomial basis}
We start with the unconstrained case. Let $f\in\R[\x]$ with $\sA=\supp(f)$ and $\Basis$ be the monomial basis given by the Newton polytope method. Initializing with $\Basis_0\coloneqq\emptyset$, we iteratively define a sequence of monomial sets $(\Basis_p)_{p\ge1}$ by
\begin{equation}\label{chap7:eq3}
    \Basis_{p}\coloneqq\{\b\in\Basis\mid\exists\g\in\Basis\text{ s.t. }\b+\g\in\sA\cup(2\Basis_{p-1})\}.
\end{equation}
Consequently, we obtain an ascending chain of monomial sets:
$$\Basis_1\subseteq\Basis_2\subseteq\Basis_3\subseteq\cdots\subseteq\Basis.$$
This procedure is formulated in Algorithm \ref{alg1}.
Each $\Basis_p$ in the chain can serve as a candidate monomial basis. In practice, if indexing the unknown Gram matrix by $\Basis_p$ leads to an infeasible \ac{SDP}, then we turn to $\Basis_{p+1}$ until a feasible \ac{SDP} is retrieved.

\begin{algorithm}\caption{${\tt GenerateBasis}$}\label{alg1}
	%\caption{${\tt GenerateBasis}$}
	\begin{algorithmic}[1]
		\Require A support set $\sA$ and an initial monomial basis $\Basis$
		\Ensure An ascending chain of potential monomial bases $(\Basis_p)_{p\ge1}$
		\State $\Basis_0\leftarrow\emptyset$
		\State $p\leftarrow0$
		\While{$p=0$ or $\Basis_{p}\ne\Basis_{p-1}$}
		\State $p\leftarrow p+1$
		\State $\Basis_{p}\leftarrow\emptyset$
		\For{each pair $\b,\g$ in $\Basis$}
		\If{$\b+\g\in\sA\cup(2\Basis_{p-1})$}
		\State $\Basis_{p}\leftarrow\Basis_{p}\cup\{\b,\g\}$
		\EndIf
		\EndFor
		\EndWhile
		\State \Return $(\Basis_p)_{p\ge1}$
	\end{algorithmic}
\end{algorithm}

\begin{proposition}\label{sec5-prop1}
Let $f\in\R[\x]$ and $\Basis_*=\cup_{p\ge1}\Basis_p$ with $\Basis_p$ being defined by \eqref{chap7:eq3}. If $f\in\SDSOS$, then $f$ is an SDSOS polynomial in the monomial basis $\Basis_*$.
\end{proposition}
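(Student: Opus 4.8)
The statement asserts that if $f$ admits \emph{some} scaled diagonally dominant Gram matrix representation (in an arbitrary monomial basis, or equivalently in the Newton polytope basis $\Basis$), then it admits one already in the smaller basis $\Basis_* = \cup_{p \ge 1} \Basis_p$ produced by Algorithm~\ref{alg1}. The plan is to start from a scaled diagonally dominant Gram matrix $\G \in \Sbb^+_{|\Basis|}$ for $f$ indexed by $\Basis$ (which exists by Theorem~\ref{reznick}, since $\SDSOS \subseteq \Sigma[\x]$ and any \ac{SOS} representation uses monomials from $\frac12\New(f)$), and to show that all rows and columns of $\G$ indexed by monomials outside $\Basis_*$ must be identically zero, so that the restriction $\G|_{\Basis_*}$ is still a scaled diagonally dominant Gram matrix for $f$.

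First I would set up the key support observation: for any $\b,\g \in \Basis$, the entry $\G_{\b\g}$ can only be nonzero if $\b + \g \in \sA \cup (2\Basis)$ where $\sA = \supp(f)$ --- otherwise the monomial $\x^{\b+\g}$ would contribute a nonzero coefficient to $f = (\x^\Basis)^\intercal \G \x^\Basis$ that is not cancelled (here one uses that $\G$ is a \emph{scaled diagonally dominant}, hence \ac{PSD}, Gram matrix, but actually just the Gram equation suffices together with the combinatorial fact, exactly as in the proof of Theorem~\ref{sec4-thm1}; more care is needed because off-diagonal cancellation among several $\b+\g$ equal to the same $\a \notin \sA$ must be ruled out --- this is where scaled diagonal dominance, or rather the PSD/Newton polytope structure, is genuinely used, since a nonzero off-diagonal $\G_{\b\g}$ with $\b \ne \g$ forces nonzero diagonal entries $\G_{\b\b},\G_{\g\g}$, and then a standard extreme-point argument on $\frac12\New(f)$ shows $\x^{\b+\g}$ cannot be cancelled unless $\b+\g \in \sA$). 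Then I would argue by induction on $p$ that if $\b \in \Basis$ has a nonzero row in $\G$ --- i.e. $\G_{\b\b} \ne 0$ or $\G_{\b\g}\ne 0$ for some $\g$ --- then $\b \in \Basis_*$: the base case uses that such a $\b$ satisfies $\b + \g \in \sA \cup (2\Basis)$, and one pushes through the defining recursion \eqref{chap7:eq3} of $\Basis_p$. More precisely, I would show by induction that the submatrix of $\G$ supported on $\Basis \setminus \Basis_{p}$ has the property that its "active" part is confined to $\Basis \setminus \Basis_{p-1}$, stabilizing to show all active indices lie in $\Basis_*$.

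Having established that $\G_{\b\g} = 0$ whenever $\b \notin \Basis_*$ or $\g \notin \Basis_*$, I would let $\tilde\G \coloneqq \G|_{\Basis_* \times \Basis_*}$ be the principal submatrix. Then $f = (\x^{\Basis_*})^\intercal \tilde\G\, \x^{\Basis_*}$ still holds since the deleted rows/columns were zero. A principal submatrix of a scaled diagonally dominant matrix is scaled diagonally dominant (restrict the scaling diagonal matrix $\D$ to the same index set; diagonal dominance of $\D\G\D$ restricts to diagonal dominance of the corresponding principal submatrix because deleting rows/columns only decreases the off-diagonal absolute row sums). Hence $\tilde\G$ is a scaled diagonally dominant Gram matrix for $f$ indexed by $\Basis_*$, which is exactly the claim. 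The main obstacle, and the step deserving the most care, is the combinatorial support argument ruling out off-diagonal cancellations of monomials $\x^\a$ with $\a \notin \sA \cup (2\Basis)$: one must invoke that the corners of $\frac12\New(f)$ are among the diagonal-forced vertices and carefully track which $\a$ can appear --- this is where I would spend the bulk of the written proof, essentially mirroring and slightly extending the argument used in Theorem~\ref{sec4-thm1}.
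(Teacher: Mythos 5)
There is a genuine gap: the lemma your whole plan rests on is false. You propose to show that the given scaled diagonally dominant Gram matrix $\G$ (indexed by the Newton basis $\Basis$) already has identically zero rows and columns at every monomial outside $\Basis_*$, and to get there you claim that $\G_{\b\g}\ne 0$ forces $\b+\g\in\sA\cup(2\Basis)$, ruling out cancellations by an extreme-point argument. Cancellation at non-vertex exponents is perfectly possible, and it can keep monomials outside $\Basis_*$ alive in a perfectly good SDD Gram matrix. Concretely, take $f=x^4+y^4$, so that $\Basis=\{x^2,xy,y^2\}$, while $\Basis_1=\{x^2,y^2\}$ already stabilizes, i.e.\ $\Basis_*=\{x^2,y^2\}$. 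Ordering the basis as $(x^2,y^2,xy)$, the matrix
\[
\G=\begin{bmatrix}1&-1&0\\-1&1&0\\0&0&2\end{bmatrix}
\]
is diagonally dominant (hence scaled diagonally dominant and \ac{PSD}) and is a Gram matrix of $f$, because $-2x^2y^2+2(xy)^2=0$; yet the row indexed by $xy\notin\Basis_*$ is nonzero. So no support or induction argument can show that the \emph{given} $\G$ is supported on $\Basis_*$; the statement you would need is simply not true. (Your intermediate claim also fails on its own: for $f=(x-yz)^2+(y+xz)^2$ the resulting block-diagonal SDD Gram matrix has a nonzero entry at the exponent of $xyz$, which lies outside $\sA\cup(2\Basis)$, the two $xyz$ contributions cancelling across the two squares.)

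The paper's proof takes a different and essential route: it \emph{modifies} the Gram matrix instead of arguing it is already sparse. One restricts $\G$ to the principal submatrix indexed by $\Basis_*$ and then replaces by zero every off-diagonal entry whose exponent $\b+\g$ lies outside $\sA\cup(2\Basis_*)$. The stabilization (fixed-point) property of $\Basis_*$ — namely $\b,\g\in\Basis$ and $\b+\g\in\sA\cup(2\Basis_*)$ imply $\b,\g\in\Basis_*$ — guarantees that every exponent in $\sA\cup(2\Basis_*)$ receives exactly the same contributions as before, while all discarded entries only fed exponents outside $\supp(f)$, where the total was zero anyway; hence the Gram identity for $f$ survives. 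Both operations (taking a principal submatrix, zeroing off-diagonal entries) preserve scaled diagonal dominance, and this is precisely where the hypothesis $f\in\SDSOS$ rather than mere membership in $\Sigma[\x]$ is used: zeroing off-diagonal entries does not preserve positive semidefiniteness in general, which is why the argument is tailored to the SDSOS cone. In your proposal, only the final step (principal submatrices of SDD matrices are SDD) is sound; without the zeroing step the restricted matrix is in general not a Gram matrix of $f$ in the basis $\Basis_*$, as the example above shows.
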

\begin{proof}
Let $\Basis$ be the monomial basis given by the Newton polytope method. If $f\in\SDSOS$, then there exists a scaled diagonally dominant Gram matrix $\G\in\Sbb_+^{|\Basis|}$ indexed by $\Basis$ such that $f=(\x^{\Basis})^\intercal \G\x^{\Basis}$. We then construct a Gram matrix $\tilde{\G}\in\Sbb_+^{|\Basis_*|}$ indexed by $\Basis_*$ for $f$ as follows:
\begin{equation*}
\tilde{\G}_{\b\g}=
\begin{cases}
\G_{\b\g},\quad&\text{if } \b+\g\in\sA\cup(2\Basis_*),\\
0,\quad&\text{otherwise}.
\end{cases}
\end{equation*}
One can easily check that we still have $f=(\x^{\Basis_*})^\intercal\tilde{\G}\x^{\Basis_*}$. Let $\hat{\G}$ be the principal submatrix of $\G$ by deleting the rows and columns whose indices are not in $\Basis_*$, which is also a scaled diagonally dominant matrix. By construction, $\tilde{\G}$ is obtained from $\hat{\G}$ by replacing certain off-diagonal entries by zeros.
Since replacing off-diagonal entries by zeros does not affect the scaled diagonal dominance of a matrix, $\tilde{\G}$ is also a scaled diagonally dominant matrix. It follows that $f$ is an SDSOS polynomial in the monomial basis $\Basis_*$.
\qed
\end{proof}

\begin{remark}
By Proposition \ref{sec5-prop1}, if we use the monomial basis $\Basis_*$ for \eqref{sec4-ukmom}--\eqref{sec4-uksos}, we still have the hierarchy of optima: $$f_{\rm{sdsos}}\le f_{\ts}^1\le f_{\ts}^2\le\cdots\le f_{\rm{mom}}\le f_{\min}.$$
\end{remark}

The algorithm ${\tt GenerateBasis}$ may provide a smaller monomial basis than the one given by the Newton polytope method as the following example illustrates.
\begin{example}
Consider the polynomial $f=1+x+x^8$. The monomial basis given by the Newton polytope method is $\Basis=\{1,x,x^2,x^3,x^4\}$.
By the algorithm ${\tt GenerateBasis}$, we obtain $\Basis_1=\{1,x,x^4\}$ and $\Basis_2=\{1,x,x^2,x^4\}$. It turns out that $f$ admits no \ac{SOS} decomposition with $\Basis_1$ while $\Basis_2$ can serve as a monomial basis to represent $f$ as an \ac{SOS}.
\end{example}

For the constrained case we follow the notation of Chapter~\ref{sec:tssos-cons}. Fix a relaxation order $r$ and a sparse order $s$ of the \ac{TSSOS} hierarchy. Initializing with $\Basis_{r,0}=\N^n_r$, we iteratively perform the following two steps:

For Step 1, let the maximal cliques of $G_{r,j}^{(s)}$ be $C_{j,1},C_{j,2},\ldots,C_{j,t_j}$ for $j\in\{0\}\cup[m]$.
Let
\begin{equation}\label{sec5-eq1}
\mathscr{F}=\supp(f)\cup\bigcup_{j=1}^{m}\left(\supp(g_j)+\bigcup_{i=1}^{t_j}(C_{j,i}+C_{j,i})\right).
\end{equation}
Then call the algorithm {\tt GenerateBasis} with $\sA=\mathscr{F}$ and $\Basis=\Basis_{r,0}$ to generate a new monomial basis $\Basis_{r,0}'$.

For Step 2, with the new monomial basis $\Basis_{r,0}'$, recompute the graph $G_{r,j}^{(s)}$ for $j\in \{0\} \cup [m]$ as in Chapter~\ref{sec:tssos-cons} and then go back to Step 1.

Continue the iterative procedure until $\Basis_{r,0}'=\Basis_{r,0}$, which is the desired monomial basis.

\section[Sign symmetries and a sparse representation theorem]{Sign symmetries and a sparse representation theorem for positive polynomials}
\label{sec:signtssos}
The exploitation of \ac{TS} developed in the previous sections is closely related to {\em sign symmetries}. Intuitively, a polynomial is said to have sign symmetries if it is invariant when we change signs of some variables. For instance, the polynomial $f(x_1,x_2)=x_1^2+x_2^2+x_1x_2$ has the sign symmetry associated to $(x_1,x_2) \mapsto (-x_1,-x_2)$ as $f(-x_1,-x_2)=f(x_1,x_2)$. To be more precise, we give the following definition of sign symmetries in terms of support sets.
\begin{definition}[sign symmetry]
Given a finite set $\sA\subseteq\N^n$, the {\em sign symmetries} of $\sA$ are defined by all vectors $\bs\in\Z_2^n\coloneqq\{0,1\}^n$ such that $\bs^\intercal\a\equiv0$ $(\rm{mod}\,2)$ for all $\a\in\sA$.
\end{definition}
%For any $\a\in\N^n$, we define $(\a)_2\coloneqq(\a_1 (\text{mod } 2), \dots, \a_n (\text{mod } 2))\in\Z_2^n$.
%We also use the same notation for any subset $\sA\subseteq\N^n$, i.e., $(\sA)_2\coloneqq\{(\a)_2\mid\a\in\mathscr{A}\}\subseteq\Z_2^n$.
%
%For a subset $S\subseteq\Z_2^n$, the {\em subspace} spanned by $S$ in $\Z_2^n$, denoted by $\overline{S}$, is the set $\{(\sum_i\bs_i)_2\mid\bs_i\in S\}$ and the {\em orthogonal complement space} of $S$ in $\Z_2^n$, denoted by $S^{\perp}$, is the set $\{\a\in\Z_2^n\mid\a^\intercal\bs\equiv0\,(\text{mod }2),\forall\bs\in S\}$.
%\begin{remark}
%By definition, the set of sign symmetries of $\sA$ is just the orthogonal complement space $(\sA)_2^{\perp}$ in $\Z_2^n$. Hence the sign symmetries of $\sA$ can be essentially represented by a basis of the subspace $(\sA)_2^{\perp}$ in $\Z_2^n$.
%\end{remark}

%\begin{lemma}\label{sec4-lm}
%Let $S\subseteq\Z_2^n$. Then $(S^{\perp})^{\perp}=\overline{S}$.
%\end{lemma}

%\begin{lemma}\label{sec4-lm2}
%Suppose $G$ is a graph with $V(G)\subseteq\N^n$. Let $G'$ be the maximal chordal extension of $G$. Then $(\supp(G'))_2\subseteq\overline{(\supp(G))_2}$.
%\end{lemma}

Assume that the maximal chordal extension is chosen for the chordal extension operation in Chapter~\ref{sec:tssos-cons}.
As mentioned earlier, for any $j$ the sequence of graphs $(G_{r,j}^{(s)})_{s\ge1}$ ends up with $G_{r,j}^{(\bullet)}$ in finitely many steps. Note that the graph $G_{r,j}^{(\bullet)}$ induces a partition of the monomial basis $\N^n_{r-d_j}$: two monomials $\b,\g\in\N^n_{r-d_j}$ belong to the same block if and only if they belong to the same connected component of $G_{r,j}^{(\bullet)}$. The following theorem provides an interpretation of this partition in terms of sign symmetries.
\begin{theoremf}\label{sec6-thm2}
Notations are as in the previous sections.
Fix the relaxation order $r\ge r_{\min}$.
Assume that the maximal chordal extension is chosen for the chordal extension operation and the sign symmetries of $\sA$ are given by the columns of a binary matrix denoted by $\Rb$.
Then for each $j\in\{0\}\cup[m]$, $\b,\g$ belong to the same block in the partition of $\N^n_{r-d_j}$ induced by $G_{r,j}^{(\bullet)}$ if and only if $\Rb^\intercal(\b+\g)\equiv0$ $(\rm{mod}\,2)$. In other words, for a fixed relaxation order the block structures arising from the \ac{TSSOS} hierarchy converge to the block structure determined by the sign symmetries of the \ac{POP} assuming that the maximal chordal extension is used for the chordal extension operation.
\end{theoremf}

Theorem \ref{sec6-thm2} is applied for the standard monomial basis $\N^n_{r-d_j}$. If a smaller monomial basis is chosen, then we only have the "only if" part of the conclusion in Theorem \ref{sec6-thm2}.
\begin{example}\label{sec6-ex1}
Let $f=1+x_1^2x_2^4+x_1^4x_2^2+x_1^4x_2^4-x_1x_2^2-3x_1^2x_2^2$ and $\sA=\supp(f)$. The monomial basis given by the Newton polytope method is $\Basis=\{1,x_1x_2,x_1x_2^2,x_1^2x_2,$ $x_1^2x_2^2\}$.
The sign symmetries of $\sA$ consist of two elements: $(0,0)$ and $(0,1)$.
According to the sign symmetries, $\Basis$ is partitioned into $\{1,x_1x_2^2,$ $x_1^2x_2^2\}$ and $\{x_1x_2,x_1^2x_2\}$. On the other hand, the partition of $\Basis$ induced by $G^{(\bullet)}$ is $\{1,x_1x_2^2,x_1^2x_2^2\}$, $\{x_1x_2\}$ and $\{x_1^2x_2\}$, which is a refinement of the partition determined by the sign symmetries.
\end{example}

%By virtue of Theorem \ref{sec6-thm2}, the partition of the monomial basis $\N^n_{r-d_j}$ induced by $G_{r,j}^{(*)}$, $j\in[m+l]$, can also be characterized using sign symmetries.
%%
%\begin{corollary}\label{sec6-col}
%Notations are as in Theorem \ref{sec6-thm2} and assume $\sA=\supp(f)\cup\bigcup_{j=1}^{m+l}\supp(g_j)$.
%Then $\b,\g$ belong to the same block in the partition of $\N^n_{r-d_j}$ induced by $G_{r,j}^{(*)}$ if and only if $R^\intercal(\b+\g)\equiv0$ $(\text{mod }2)$, $j\in[m+l]$.
%\end{corollary}

%Though it is guaranteed that at the final step of the TSSOS hierarchy, an equivalent \ac{SDP} (with block structure determined by sign symmetries if the standard monomial bases are used) is retrieved, in practice it frequently happens that the same optimal value as the dense \ac{moment-\ac{SOS}} relaxation is achieved at an earlier step, even at the first step, but with a much cheaper computational cost.

As a corollary of Theorem \ref{sec6-thm2}, we can prove a sparse representation theorem for positive polynomials on compact basic semialgebraic sets.
\begin{theoremf}\label{sec6-thm3}
Let $\X$ be defined as in \eqref{eq:defeX}. Assume that the quadratic module $\cM(\frakg)$ is Archimedean and that the polynomial $f$ is positive on $\X$. Let $\sA=\supp(f)\cup\bigcup_{j=1}^{m}\supp(g_j)$ and let the sign symmetries of $\sA$ be given by the columns of the binary matrix $R$.
Then $f$ can be decomposed as
\begin{equation*}
f= \sigma_0+\sum_{j=1}^m \sigma_j g_j,
\end{equation*}
for some \ac{SOS} polynomials $\sigma_0,\sigma_1,\ldots,\sigma_m$ satisfying $R^\intercal\a\equiv0$ $(\rm{mod}\,2)$ for any $\a\in\supp(\sigma_j),j=0,\ldots,m$.
\end{theoremf}
\begin{proof}
By Putinar's Positivstellensatz (Theorem \ref{th:putinar}), there exist \ac{SOS} polynomials $\sigma_0',\sigma_1',\ldots,\sigma_m'$ such that
\begin{equation}\label{sec7-eq1}
f= \sigma_0'+\sum_{j=1}^m \sigma_j' g_j.
\end{equation}
Let $d_j=\lceil\deg(g_j)/2\rceil,j=0,1,\ldots,m$ and
$$r=\max\,\{\lceil\deg(\sigma_j' g_j)/2\rceil:j=0,1,\ldots,m\}$$ with $g_0=1$. Let $\G_j$ be a Gram matrix associated with $\sigma_j'$ and indexed by the monomial basis $\N^n_{r-d_j},j=0,1,\ldots,m$. Then define $\sigma_j=(\x^{\N^n_{r-d_j}})^\intercal(\B_{G_{r,j}^{(\bullet)}}\circ \G_j)\x^{\N^n_{r-d_j}}$ for $j=0,1,\ldots,m$, where $G_{r,j}^{(\bullet)}$ is defined in Chapter~\ref{sec:tssos-cons}. For any $j\in\{0\}\cup[m]$, since $\B_{G_{r,j}^{(\bullet)}}\circ \G_j$ is block-diagonal (up to permutation) and $\G_j$ is positive semidefinite, we see that $\sigma_j$ is an \ac{SOS} polynomial.

Suppose $\a\in\supp(\sigma_j)$ with $j\in\{0\}\cup[m]$. Then we can write $\a=\a'+\b+\g$ for some $\a'\in\supp(g_j)$ and some $\b,\g$ belonging to the same connected component of $G_{r,j}^{(\bullet)}$. By Theorem \ref{sec6-thm2}, we have $\Rb^\intercal(\b+\g)\equiv0$ $(\rm{mod}\,2)$ and therefore, $\Rb^\intercal\a\equiv0$ $(\rm{mod}\,2)$.
Moreover, for any $\a'\in\supp(g_j)$ and $\b,\g$ not belonging to the same connected component of $G_{r,j}^{(\bullet)}$, we have $\Rb^\intercal(\b+\g)\not\equiv0$ $(\rm{mod}\,2)$ by Theorem \ref{sec6-thm2} and so $\Rb^\intercal(\a'+\b+\g)\not\equiv0$ $(\rm{mod}\,2)$. From these facts we deduce that substituting $\sigma_j'$ with $\sigma_j$ in \eqref{sec7-eq1} is just removing the terms whose exponents $\a$ do not satisfy $\Rb^\intercal\a\equiv0$ $(\rm{mod}\,2)$ from the right-hand side of \eqref{sec7-eq1}. Doing so, one does not change the match of coefficients on both sides of the equality. Thus we have
\begin{equation*}
f= \sigma_0+\sum_{j=1}^m \sigma_j g_j,
\end{equation*}
with the desired property.
\qed
\end{proof}

\section{Numerical experiments}\label{sec:benchtssos}
We present some numerical results of the proposed \ac{TSSOS} hierarchy in this section.
The related algorithms were implemented in the Julia package {\tt TSSOS}\footnote{\url{https://github.com/wangjie212/TSSOS}}. For the numerical experiments, we use $\mosek$ \citets{mosek} as an \ac{SDP} solver.
%
%Our $\tssos$ tool can be downloaded at  .}{github:TSSOS}.
All examples were computed on an Intel Core i5-8265U@1.60GHz CPU with 8GB RAM memory. The timing (in seconds) includes the time for pre-processing (to get the block structure), the time for modeling \ac{SDP} and the time for solving \ac{SDP}.

%\begin{table}[htbp]
%\caption{Notation.}\label{table1}
%\renewcommand\arraystretch{1.2}
%\centering
%\begin{tabular}{|c|c|}
%\hline
%min&approximately smallest chordal extension\\
%\hline
%max&maximal chordal extension\\
%\hline
%mb&maximal size of PSD blocks\\
%\hline
%opt&optimum\\
%\hline
%\end{tabular}
%\end{table}

\subsection{Unconstrained polynomial optimization}
We first consider Lyapunov functions emerging from some networked systems.
%In \citets{han}, the authors proposed structured \ac{SOS} decompositions for those systems, which allows them to handle structured Lyapunov function candidates up to $50$ variables.
The following polynomial is from Example 2 in \citets{han}:
\begin{equation*}
f=\sum_{i=1}^n a_i(x_i^2+x_i^4)-\sum_{i=1}^n\sum_{k=1}^n b_{ik}x_i^2x_k^2,
\end{equation*}
where $a_i$ are randomly chosen from $[1,2]$ and $b_{ik}$ are randomly chosen from $[\frac{0.5}{n},\frac{1.5}{n}]$. Here, $n$ is the number of nodes in the network. The task is to determine whether $f$ is globally nonnegative.

The sizes of \ac{SDP}s corresponding to the \ac{TSSOS} (with sparse order $s=1$) and dense relaxations are listed in Table \ref{cc}. In the column ``\#PSD blocks'', $i\times j$ means $j$ PSD blocks of size $i$. The column ``\#equality constraints'' indicates the number of equality constraints involved in \ac{SDP}s.
\begin{table}[htbp]
\caption{The sizes of \ac{SDP}s for the sparse and dense relaxations.}\label{cc}
\renewcommand\arraystretch{1.2}
\centering
\begin{tabular}{|c|c|c|}
\hline
&\#PSD blocks&\#equality constraints\\
\hline
TSSOS&$3\times\frac{n(n-1)}{2},1\times n,(n+1)\times 1$&$\frac{3n(n-1)}{2}+2n+1$\\
\hline
Dense&$\binom{n+2}{2}\times 1$&$\binom{n+4}{4}$\\
\hline
\end{tabular}
\end{table}

We solve the \ac{TSSOS} relaxation with $s=1$ for $n\in\{10,20,\ldots,80\}$. The results are displayed in Table \ref{tb:network1} in which ``mb'' stands for the maximal size of PSD blocks.
\begin{table}[htbp]
\caption{Results for the first networked system.}\label{tb:network1}
\renewcommand\arraystretch{1.2}
\centering
\begin{tabular}{|c|c|c|c|c|c|c|c|c|}
\hline
$n$&$10$&$20$&$30$&$40$&$50$&$60$&$70$&$80$\\
\hline
mb&$11$&$31$&$31$&$41$&$51$&$61$&$71$&$81$\\
\hline
time&$0.006$&$0.03$&$0.10$&$0.34$&$0.92$&$1.9$&$4.7$&$12$\\
\hline
\end{tabular}
\end{table}

For this example, the size of the system that can be handled in \citets{han} is up to $n=50$ nodes while {\tt TSSOS} can easily handle the system with up to $n=80$ nodes.

The following polynomial is from Example 3 in \citets{han}:
\begin{equation}\label{sec8-eq1}
V=\sum_{i=1}^na_i(\frac{1}{2}x_i^2-\frac{1}{4}x_i^4)+\frac{1}{2}\sum_{i=1}^n\sum_{k=1}^nb_{ik}\frac{1}{4}(x_i-x_k)^4,
\end{equation}
where $a_i$ are randomly chosen from $[0.5,1.5]$ and $b_{ik}$ are randomly chosen from $[\frac{0.5}{n},\frac{1.5}{n}]$.
%Here, $n$ is the number of nodes in the network.
The task is to analyze the domain on which the Hamiltonian function $V$ for a network of Duffing oscillators is positive
definite. We use the following condition to establish an inner approximation of the domain on which $V$ is positive definite:
\begin{equation}\label{sec8-eq2}
f=V-\sum_{i=1}^n\lambda_ix_i^2(g-x_i^2)\ge0 ,
\end{equation}
where $\lambda_i>0$ are scalar decision variables and $g$ is a fixed positive scalar.
Clearly, the condition \eqref{sec8-eq2} ensures that $V$ is positive definite when $x_i^2<g$.

We illustrate the \ac{tsp} graph $G^{(0)}$ of this example in Figure \ref{cost}, which has $1$ maximal clique of size $n+1$ (involving the nodes $1,x_1^2,\ldots,x_n^2$), $\frac{n(n-1)}{2}$ maximal cliques of size $3$ (involving the nodes $x_i^2,x_j^2,x_ix_j$ for each pair $\{i,j\},i\ne j$) and $n$ maximal cliques of size $1$ (involving the node $x_i$ for each $i$). Since $G^{(0)}$ is already a chordal graph, we have $G^{(1)}=G^{(0)}$.

\begin{figure}[!ht]
	\centering
	{\tiny
		\begin{tikzpicture}[main node/.style={circle, draw=blue!50, thick, minimum size=6mm}]
			\node[main node] (n1) at (1,0) {$x_j^2$};
			\node[main node] (n2) at (-1,0) {$x_k^2$};
			\node[main node] (n3) at (0,-1.73) {$x_i^2$};
			\node[main node] (n10) at (0,-0.58) {$1$};
			\draw (n10)--(n2);
			\draw (n10)--(n3);
			\draw (n10)--(n1);
			\draw (n1)--(n2);
			\draw (n1)--(n3);
			\draw (n2)--(n3);
			\node[main node] (n4) at (2,-1.73) {$x_ix_j$};
			\draw (n1)--(n4);
			\draw (n3)--(n4);
			\node[main node] (n5) at (-2,-1.73) {$x_ix_k$};
			\draw (n2)--(n5);
			\draw (n3)--(n5);
			\node[main node] (n6) at (0,1.73) {$x_jx_k$};
			\draw (n1)--(n6);
			\draw (n2)--(n6);
			\node[main node] (n7) at (3,0) {$x_1$};
			\node[main node] (n8) at (4,0) {$x_2$};
			\node[main node] (n9) at (6,0) {$x_n$};
			\path (n8) -- node[auto=false]{$\cdots$} (n9);
	\end{tikzpicture}}
	\caption{The \ac{tsp} graph $G^{(0)}$ for the second networked system. What is displayed is merely a subgraph of $G^{(0)}$. The whole graph $G^{(0)}$ can be obtained by putting all such subgraphs together.}\label{cost}
\end{figure}
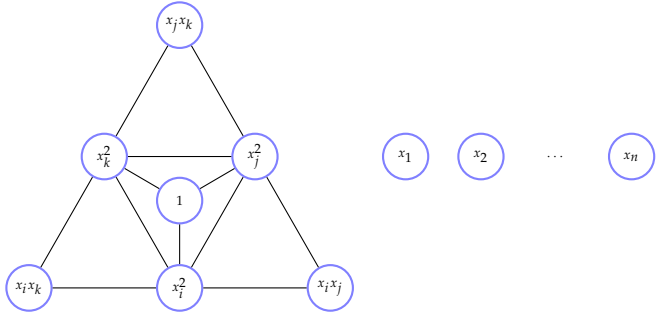

Here we solve the \ac{TSSOS} relaxation with $s=1$ for $n\in\{10,20,\ldots,50\}$.
%For this example, graphs arising in the TSSOS hierarchy are naturally chordal, so we simply exploit chordal decompositions.
This example was also examined in \citets{maj} to demonstrate the advantage of SDSOS programming compared to dense \ac{SOS} programming. The approach based on SDSOS programming was implemented in $\spot$ \citets{me} with $\mosek$ as a second-order cone programming solver. We display the results for both $\tssos$ and SDSOS in Table \ref{tb:network2} in which the row ``\#\ac{SDP} vars" indicates the number of decision variables used in \ac{SDP}s.

\begin{table}[htbp]
\caption{Results for the second networked system.}\label{tb:network2}
\renewcommand\arraystretch{1.2}
\centering
\resizebox{\linewidth}{!}{
\begin{tabular}{|c|c|c|c|c|c|c|}
\hline
\multicolumn{2}{|c|}{$n$}&$10$&$20$&$30$&$40$&$50$\\
\hline
\multirow{4}*{\#PSD blocks}&\multirow{3}*{TSSOS}&$3\times45,$&$3\times190,$&$3\times435,$&$3\times780,$&$3\times1225,$\\
&&$1\times10,$&$1\times20,$&$1\times30,$&$1\times40,$&$1\times50,$\\
&&$11\times1$&$21\times1$&$31\times1$&$41\times1$&$51\times1$\\
\cline{2-7}
&SDSOS&$2\times2145$&$2\times26565$&$2\times122760$&$2\times370230$&$2\times878475$\\
\hline
\multirow{2}*{\#SDP vars}&TSSOS&$346$&$1391$&$3136$&$5581$&$8726$\\
\cline{2-7}
&SDSOS&$6435$&$79695$&$368280$&$1110690$&$2635425$\\
\hline
\multirow{2}*{time}&TSSOS&$0.01$&$0.06$&$0.17$&$0.50$&$0.89$\\
\cline{2-7}
&SDSOS&$0.47$&$1.14$&$5.47$&$20$&$70$\\
\hline
\end{tabular}}
\end{table}

From the table we see that the $\tssos$ approach uses much less decision variables than the SDSOS approach, and hence is more efficient. On the other hand,
the $\tssos$ approach computes a positive definite form $V$ after selecting a value for $g$ up to $2$ (which is the same as the maximal value obtained by the dense \ac{SOS} approach) while the method in \citets{han} can select $g$ up to $1.8$ and the SDSOS approach only works out for a maximal value of $g$ up to around $1.5$.

\subsection{Constrained polynomial optimization}
Now we present the numerical results for constrained polynomial optimization problems.
As a first example, we minimize randomly generated\footnote{These polynomials can be downloaded at https://wangjie212.github.io/jiewang/code.html.} sparse polynomials $H_2,H_4,H_6$ over the unit ball $$\mathbf{X}=\left\{(x_1,\ldots,x_n)\in\R^n\,\middle|\, g_1=1-(x_1^2+\cdots+x_n^2)\ge0\right\}.$$
We solve these instances with {\tt TSSOS} as well as $\gloptipoly$. The related results are shown in Table \ref{unitball}. Note that approximately smallest chordal extensions are used for \ac{TS} and only the results of the first three steps of the \ac{TSSOS} hierarchy (for a fixed relaxation order) are displayed. From the table it can be seen that for each instance {\tt TSSOS} is significantly faster than $\gloptipoly$\footnote{$\gloptipoly$ also uses $\mosek$ as an \ac{SDP} solver.} without compromising accuracy.

\begin{table}[!ht]
\caption{Results for minimizing randomly generated polynomials over the unit ball; $d$ denotes the polynomial degree and $t$ denotes the number of terms.}\label{unitball}
\renewcommand\arraystretch{1.2}
\centering
\begin{tabular}{|c|c|c|c|c|c|c|c|c|c|c|}
\hline
&\multirow{2}*{$(n,d,t)$}&\multirow{2}*{$r$}&\multicolumn{4}{c|}{{\tt TSSOS}}&\multicolumn{3}{c|}{$\gloptipoly$}\\
\cline{4-10}
&&&$s$&mb&opt&time&mb&opt&time\\
\hline
%\multirow{6}*{$H_1$}&$\multirow{6}*{(6,8,10)}$&\multirow{3}*{$4$}&$1$&$28$&\multirow{6}*{$0.1362$}&$0.20$&\multirow{3}*{$210$}&\multirow{6}*{$0.1362$}&\multirow{3}*{$8.0$}\\
%\cline{4-5}\cline{7-7}
%&&&$2$&$32$& &$0.52$&&&\\
%\cline{4-5}\cline{7-7}
%&&&$3$&$37$& &$0.86$&&&\\
%\cline{3-5} \cline{7-8} \cline{10-10}
%&&\multirow{3}*{$5$}&$1$&$29$& &$0.91$&\multirow{3}*{$462$}& &\multirow{3}*{$80$}\\
%\cline{4-5}\cline{7-7}
%&&&$2$&$35$& &$3.0$&&&\\
%\cline{4-5}\cline{7-7}
%&&&$3$&$48$& &$9.0$&&&\\
%\hline
\multirow{6}*{$H_2$}&$\multirow{6}*{(7,8,12)}$&\multirow{3}*{$4$}&$1$&$36$&\multirow{6}*{$0.1373$}&$0.36$&\multirow{3}*{$330$}&\multirow{3}*{$0.1373$}&\multirow{3}*{$34$}\\
\cline{4-5}\cline{7-7}
&&&$2$&$36$&&$0.52$&&&\\
\cline{4-5}\cline{7-7}
&&&$3$&$38$&&$1.6$&&&\\
\cline{3-5} \cline{7-10}
&&\multirow{3}*{$5$}&$1$&$36$&&$1.9$&\multirow{3}*{$792$}&\multirow{3}*{-}&\multirow{3}*{-}\\
\cline{4-5}\cline{7-7}
&&&$2$&$45$&&$3.9$&&&\\
\cline{4-5}\cline{7-7}
&&&$3$&$59$&&$34$&&&\\
\hline
%\multirow{6}*{$H_3$}&$\multirow{6}*{(8,8,15)}$&\multirow{3}*{$4$}&$1$&$45$&\multirow{6}*{$0.1212$}&$0.75$&\multirow{3}*{$495$}&\multirow{3}*{$0.1212$}&\multirow{3}*{$225$}\\
%\cline{4-5}\cline{7-7}
%&&&$2$&$45$&&$1.3$&&&\\
%\cline{4-5}\cline{7-7}
%&&&$3$&$53$&&$20$&&&\\
%\cline{3-5} \cline{7-10}
%&&\multirow{3}*{$5$}&$1$&$45$&&$5.3$&\multirow{3}*{$1287$}&\multirow{3}*{-}&\multirow{3}*{-}\\
%\cline{4-5}\cline{7-7}
%&&&$2$&$45$&&$7.5$&&&\\
%\cline{4-5}\cline{7-7}
%&&&$3$&$59$&&$94$&&&\\
%\hline
\multirow{6}*{$H_4$}&$\multirow{6}*{(9,6,15)}$&\multirow{3}*{$3$}&$1$&$10$&\multirow{6}*{$0.8704$}&$0.15$&\multirow{3}*{$220$}&\multirow{3}*{$0.8704$}&\multirow{3}*{$16$}\\
\cline{4-5}\cline{7-7}
&&&$2$&$10$&&$0.22$&&&\\
\cline{4-5}\cline{7-7}
&&&$3$&$10$&&$0.25$&&&\\
\cline{3-5} \cline{7-10}
&&\multirow{3}*{$4$}&$1$&$55$&&$1.3$&\multirow{3}*{$715$}&\multirow{3}*{-}&\multirow{3}*{-}\\
\cline{4-5}\cline{7-7}
&&&$2$&$55$&&$2.0$&&&\\
\cline{4-5}\cline{7-7}
&&&$3$&$56$&&$2.8$&&&\\
\hline
%\multirow{6}*{$H_5$}&$\multirow{6}*{(10,6,20)}$&\multirow{3}*{$3$}&$1$&$12$&\multirow{6}*{$0.5966$}&$0.22$&\multirow{3}*{$286$}&\multirow{3}*{$0.5966$}&\multirow{3}*{$48$}\\
%\cline{4-5}\cline{7-7}
%&&&$2$&$13$&&$0.42$&&&\\
%\cline{4-5}\cline{7-7}
%&&&$3$&$19$&&$0.95$&&&\\
%\cline{3-5} \cline{7-10}
%&&\multirow{3}*{$4$}&$1$&$66$&&$2.5$&\multirow{3}*{$1001$}&\multirow{3}*{-}&\multirow{3}*{-}\\
%\cline{4-5}\cline{7-7}
%&&&$2$&$66$&&$10$&&&\\
%\cline{4-5}\cline{7-7}
%&&&$3$&$75$&&$88$&&&\\
%\hline
\multirow{6}*{$H_6$}&$\multirow{6}*{(11,6,20)}$&\multirow{3}*{$3$}&$1$&$12$&\multirow{6}*{$0.1171$}&$0.28$&\multirow{3}*{$364$}&\multirow{3}*{$0.1171$}&\multirow{3}*{$115$}\\
\cline{4-5}\cline{7-7}
&&&$2$&$15$&&$0.36$&&&\\
\cline{4-5}\cline{7-7}
&&&$3$&$16$&&$0.60$&&&\\
\cline{3-5} \cline{7-10}
&&\multirow{3}*{$4$}&$1$&$78$&&$4.4$&\multirow{3}*{$1365$}&\multirow{3}*{-}&\multirow{3}*{-}\\
\cline{4-5}\cline{7-7}
&&&$2$&$78$&&$4.7$&&&\\
\cline{4-5}\cline{7-7}
&&&$3$&$78$&&$7.5$&&&\\
\hline
\end{tabular}
\end{table}

Next we present the numerical results (Table \ref{tb:gRfunction}) for minimizing the generalized Rosenbrock function over the unit ball:
\begin{equation*}
    f_{\text{gR}}(\x)=1+\sum_{i=1}^n\left(100(x_i-x_{i-1}^2)^2+(1-x_i)^2\right).
\end{equation*}

%$\bullet$ The Broyden tridiagonal function
%\begin{align*}
%    f_{\text{Bt}}(\x)=&((3-2x_1)x_1-2x_2+1)^2+\sum_{i=2}^{n-1}((3-2x_i)x_i-x_{i-1}-2x_{i+1}+1)^2\\&+((3-2x_n)x_n-x_{n-1}+1)^2.
%\end{align*}

We approach this problem by solving the \ac{TSSOS} hierarchy with $r=2$. Here we compare the results obtained with approximately smallest chordal extensions ({\bf min}) and maximal chordal extensions ({\bf max}). In Table \ref{tb:gRfunction}, we can see that {\tt TSSOS} scales much better with approximately smallest chordal extensions than with maximal chordal extensions while providing the same optimum.

\begin{table}[htbp]
\caption{Results for the generalized Rosenbrock function.}\label{tb:gRfunction}
\renewcommand\arraystretch{1.2}
\centering
\begin{tabular}{c|cccc|cccc}
\hline
\multirow{2}*{$n$}
&\multicolumn{4}{c|}{{\bf min}}&\multicolumn{4}{c}{{\bf max}}\\
\cline{2-9}
&$s$&mb&opt&time&$s$&mb&opt&time\\
\hline
%\multirow{2}*{$10$}&\multirow{2}*{$1$}&\multirow{2}*{$11$}&\multirow{2}*{$8.35$}&\multirow{2}*{$0.05$}&$1$&$28$&$8.35$&$0.22$\\
%\cline{6-9}
%&&&&&$2$&$56$&$8.35$&$0.32$\\
%\hline
\multirow{2}*{$20$}&\multirow{2}*{$1$}&\multirow{2}*{$21$}&\multirow{2}*{$18.25$}&\multirow{2}*{$0.19$}&$1$&$58$&$18.25$&$8.2$\\
% \cline{6-9}
&&&&&$2$&$211$&$18.25$&$45$\\
\hline
%\multirow{2}*{$30$}&\multirow{2}*{$1$}&\multirow{2}*{$31$}&\multirow{2}*{$28.15$}&\multirow{2}*{$0.49$}&$1$&$88$&$28.15$&$203$\\
%\cline{6-9}
%&&&&&$2$&$466$&-&-\\
%\hline
%\multirow{2}*{$40$}&\multirow{2}*{$1$}&\multirow{2}*{$41$}&\multirow{2}*{$38.05$}&\multirow{2}*{$1.3$}&$1$&$118$&-&-\\
%\cline{6-9}
%&&&&&$2$&$821$&-&-\\
%\hline
%\multirow{2}*{$50$}&\multirow{2}*{$1$}&\multirow{2}*{$51$}&\multirow{2}*{$47.95$}&\multirow{2}*{$4.0$}&$1$&$148$&-&-\\
%\cline{6-9}
%&&&&&$2$&$1276$&-&-\\
%\hline
\multirow{2}*{$60$}&\multirow{2}*{$1$}&\multirow{2}*{$61$}&\multirow{2}*{$57.85$}&\multirow{2}*{$6.6$}&$1$&$178$&-&-\\
% \cline{6-9}
&&&&&$2$&$1831$&-&-\\
\hline
%\multirow{2}*{$70$}&\multirow{2}*{$1$}&\multirow{2}*{$71$}&\multirow{2}*{$67.75$}&\multirow{2}*{$18$}&$1$&$218$&-&-\\
%\cline{6-9}
%&&&&&$2$&$2486$&-&-\\
%\hline
%\multirow{2}*{$80$}&\multirow{2}*{$1$}&\multirow{2}*{$81$}&\multirow{2}*{$77.65$}&\multirow{2}*{$26$}&$1$&$248$&-&-\\
%\cline{6-9}
%&&&&&$2$&$3241$&-&-\\
%\hline
%\multirow{2}*{$90$}&\multirow{2}*{$1$}&\multirow{2}*{$91$}&\multirow{2}*{$87.55$}&\multirow{2}*{$50$}&$1$&$278$&-&-\\
%\cline{6-9}
%&&&&&$2$&$4096$&-&-\\
%\hline
\multirow{2}*{$100$}&\multirow{2}*{$1$}&\multirow{2}*{$101$}&\multirow{2}*{$97.45$}&\multirow{2}*{$85$}&$1$&$308$&-&-\\
% \cline{6-9}
&&&&&$2$&$5051$&-&-\\
\hline
%\multirow{2}*{$120$}&\multirow{2}*{$1$}&\multirow{2}*{$121$}&\multirow{2}*{$117.25$}&\multirow{2}*{$186$}&$1$&$368$&-&-\\
%\cline{6-9}
%&&&&&$2$&$7261$&-&-\\
%\hline
\multirow{2}*{$140$}&\multirow{2}*{$1$}&\multirow{2}*{$141$}&\multirow{2}*{$137.05$}&\multirow{2}*{$448$}&$1$&$428$&-&-\\
% \cline{6-9}
&&&&&$2$&$9871$&-&-\\
\hline
%\multirow{2}*{$160$}&\multirow{2}*{$1$}&\multirow{2}*{$161$}&\multirow{2}*{$156.85$}&\multirow{2}*{$841$}&$1$&$488$&-&-\\
%\cline{6-9}
%&&&&&$2$&$12881$&-&-\\
%\hline
\multirow{2}*{$180$}&\multirow{2}*{$1$}&\multirow{2}*{$181$}&\multirow{2}*{$176.65$}&\multirow{2}*{$1495$}&$1$&$548$&-&-\\
% \cline{6-9}
&&&&&$2$&$16291$&-&-\\
\hline
\end{tabular}
\end{table}

\section{Notes and sources}
Besides the Newton polytope method and the approach given in Chapter~\ref{sec:tssos-basis}, there are also other algorithms that provide a possibly smaller monomial basis; see for instance \citets{kojima2005sparsity} and \citets{yang2020one}.

The results on the \ac{TSSOS} hierarchy presented in this chapter have been published in \citets{tssos,chordaltssos}.
The idea of exploiting \ac{TS} in \ac{SOS} decompositions was initially proposed in \citets{wang2019new} for the unconstrained case and sooner after extended to the constrained case in \citets{tssos,chordaltssos}.
The exploitation of sign symmetries in \ac{SOS} decompositions was firstly discussed in \citets{lo1} in the unconstrained setting. Theorem \ref{sec6-thm2}, stated in \citets[Theorem 6.5]{tssos}, relates the convergence of block structures arising from the \ac{TSSOS} hierarchy (for a fixed relaxation order) to sign symmetries.
For more extensive comparison of {\tt TSSOS} with the polynomial optimization solvers $\gloptipoly$ \citets{gloptipoly}, $\yalmip$ \citets{YALMIP}, and $\sparsepop$ \citets{waki08}, the reader is referred to \citets{tssos,chordaltssos}.

%\bibliographystylets{alpha}
%\bibliographyts{preface,sdp,sparsemat,pop,cs,roundoff,lip,ncsparse,ts,cstssos,opf,jsr,misc,app}
\providecommand{\etalchar}[1]{$^{#1}$}

\chapter{Exploiting both correlative and term sparsity}
\label{chap:cstssos}

In previous chapters, we have seen that how \ac{CS} or \ac{TS} of \ac{POP}s helps to reduce the size of \ac{SDP} relaxations arising from the \ac{moment-SOS} hierarchy. For large-scale \ac{POP}s, it is natural to ask whether one can exploit \ac{CS} and \ac{TS} simultaneously to further reduce the size of \ac{SDP} relaxations. As we shall see in this chapter, the answer is affirmative.

\section{The CS-TSSOS hierarchy}\label{chap8:sec1}
The underlying idea to exploit \ac{CS} and \ac{TS} simultaneously in the \ac{moment-SOS} hierarchy consists of the following two steps:

\begin{enumerate}[(1)]
\item decomposing the set of variables into a tuple of cliques $\{I_k\}_{k\in[p]}$ by exploiting \ac{CS} as in Chapter~\ref{chap:cs};
\item applying the iterative procedure for exploiting \ac{TS} to each decoupled subsystem involving variables $\x(I_k)$ as in Chapter~\ref{chap:tssos}.
\end{enumerate}

More concretely, let us fix a relaxation order $r\ge r_{\min}$. Suppose that $G^{\text{csp}}$ is the \ac{csp} graph associated to \ac{POP} \eqref{eq:pop} defined as in Chapter~\ref{chap:cs}, $(G^{\text{csp}})'$ is a chordal extension of $G^{\text{csp}}$, and $I_k,k\in[p]$ are the maximal cliques of $(G^{\text{csp}})'$ with cardinality being denoted by $n_k,k\in[p]$. As in Chapter~\ref{chap:cs}, the set of variables $\x$ is decomposed into $\x(I_1), \x(I_2), \ldots, \x(I_p)$ by exploiting \ac{CS}. In addition, assume that the constraints are assigned to the variable cliques according to $J_1,\ldots,J_p,J'$ as defined in Chapter~\ref{chap:cs}.

Now we apply the iterative procedure for exploiting \ac{TS} to each subsystem involving variables $\x(I_k)$, $k\in[p]$ in the following way. Let
\begin{equation}\label{sec4-eq0cstssos}
    \sA\coloneqq \supp(f)\cup\bigcup_{j=1}^m\supp(g_j)\text{ and }
    \sA_k\coloneqq \{\a\in\sA\mid\supp(\a)\subseteq I_k\}
\end{equation}
for $k\in[p]$. Let $\N^{n_k}_{r-d_j}$ be the standard monomial basis for $j\in\{0\}\cup J_k,k\in[p]$. 
Let $G_{r,k}^{\text{tsp}}$ be the \ac{tsp} graph with nodes $\N^{n_k}_{r}$ associated to the support $\sA_k$ defined as in Chapter~\ref{chap:tssos}, i.e., its node set is $\N^{n_k}_{r}$ and $\{\b,\g\}$ is an edge if $\b+\g\in\sA_k\cup2\N^{n_k}_{r}$. 
Note that here we embed $\N^{n_k}_{r}$ into $\N^{n}_{r}$ via the map
$\a=(\alpha_i)\in\N^{n_k}_{r}\mapsto\a'=(\alpha'_i)\in\N^{n}_{r}$ satisfying
\begin{equation*}
    \alpha'_i=\begin{cases}
    \alpha_i,\quad&\text{if } i\in I_k,\\
    0,\quad&\text{otherwise.}
    \end{cases}
\end{equation*}

Let us define $G_{r,k,0}^{(0)}\coloneqq G_{r,k}^{\text{tsp}}$ and $G_{r,k,j}^{(0)},j\in J_k, k\in[p]$ are all empty graphs. Next for each $j\in\{0\}\cup J_k$ and each $k\in[p]$, we iteratively define an ascending chain of graphs $(G_{r,k,j}^{(s)}(V_{r,k,j},E_{r,k,j}^{(s)}))_{s\ge1}$ with $V_{r,k,j}\coloneqq \N^{n_k}_{r-d_j}$ via two successive operations:\\
(1) {\bf support extension}. Define $F_{r,k,j}^{(s)}$ to be the graph with nodes $V_{r,k,j}$ and with edges
\begin{equation}\label{sec4-eqcstssos}
E(F_{r,k,j}^{(s)})=\left\{\{\b,\g\}\mid\b\ne\g\in V_{r,k,j},\left(\b+\g+\supp(g_j)\right)\cap\sC_{r}^{(s-1)}\ne\emptyset\right\},	
\end{equation}
where	
\begin{equation}\label{sec4-eq11cstssos}
\sC_{r}^{(s-1)}\coloneqq \bigcup_{k=1}^p\left(\bigcup_{j\in \{0\}\cup J_k}\left(\supp(g_j)+\supp(G_{r,k,j}^{(s-1)})\right)\right).
\end{equation}
(2) {\bf chordal extension}. Let
\begin{equation}\label{sec4-graphcstssos}
G_{r,k,j}^{(s)}\coloneqq (F_{r,k,j}^{(s)})',\quad j\in\{0\}\cup J_k, k\in[p].
\end{equation}
It is clear by construction that the sequences of graphs $(G_{r,k,j}^{(s)})_{s\ge1}$ stabilize for all $j\in\{0\}\cup J_k,k\in[p]$ after finitely many steps.

\begin{example}\label{ex:ex1cstssos}
Let $f=1+x_1^2+x_2^2+x_3^2+x_1x_2+x_2x_3+x_3$ and consider the unconstrained \ac{POP}: $\min\{f(\x): \x\in\R^n\}$. Take the relaxation order $r=r_{\min}=1$. The variables are decomposed into two cliques: $\{x_1,x_2\}$ and $\{x_2,x_3\}$. The \ac{tsp} graphs with respect to these two cliques are illustrated in Figure \ref{fig:ex1cstssos}. The left graph corresponds to the first clique: $x_1$ and $x_2$ are connected because of the term $x_1x_2$. The right graph corresponds to the second clique: $1$ and $x_3$ are connected because of the term $x_3$; $x_2$ and $x_3$ are connected because of the term $x_2x_3$. It is not hard to see that the graph sequences $(G_{1,k}^{(s)})_{s\ge1},k=1,2$ (the subscript $j$ is omitted here since there is no constraint) stabilize at $s=2$ if the maximal chordal extension is used in \eqref{sec4-graphcstssos}.
	
\begin{figure}[htbp]
\centering
\begin{minipage}{0.3\linewidth}
{\tiny
\begin{tikzpicture}[every node/.style={circle, draw=blue!50, thick, minimum size=6mm}]
\node (n1) at (90:1) {$1$};
\node (n2) at (330:1) {$x_2$};
\node (n3) at (210:1) {$x_1$};
\draw (n2)--(n3);
\end{tikzpicture}}
\end{minipage}
\begin{minipage}{0.3\linewidth}
{\tiny
\begin{tikzpicture}[every node/.style={circle, draw=blue!50, thick, minimum size=6mm}]
\node (n1) at (90:1) {$1$};
\node (n2) at (330:1) {$x_3$};
\node (n3) at (210:1) {$x_2$};
\draw (n1)--(n2);
\draw (n2)--(n3);
\end{tikzpicture}}
\end{minipage}
\caption{The \ac{tsp} graphs of Example \ref{ex:ex1cstssos}.}
\label{fig:ex1cstssos}
\end{figure}
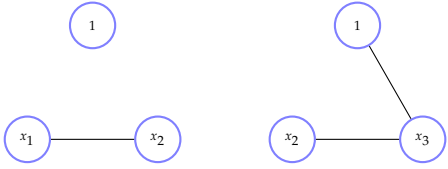
\end{example}

Let $t_{k,j}\coloneqq |\N^{n_k}_{r-d_j}|=\binom{n_k+r-d_j}{r-d_j}$ for all $k,j$. Then with $s\ge1$, the moment relaxation based on correlative-term sparsity for \ac{POP} \eqref{eq:pop} is given by
\begin{equation}\label{eq:cstsmom}
\P_{\csts}^{r,s}:\quad
\begin{cases}
\inf\limits_{\y}&L_{\y}(f)\\
\rm{ s.t.}& \B_{G_{r,k,0}^{(s)}}\circ\M_{r}(\y, I_k)\in\Pi_{G_{r,k,0}^{(s)}}(\Sbb^+_{t_{k,0}}),\quad k\in[p]\\
& \B_{G_{r,k,j}^{(s)}}\circ\M_{r-d_j}(g_j\y, I_k)\in\Pi_{G_{r,k,j}^{(s)}}(\Sbb^+_{t_{k,j}}),\quad j\in J_k, k\in[p]\\
&L_{\y}(g_j)\ge0,\quad j\in J'\\
&y_{\mathbf{0}}=1
\end{cases}
\end{equation}
with optimum denoted by $f_{\csts}^{r,s}$.

For all $k,j$, let us write $\M_{r-d_j}(g_j\y, I_k)=\sum_{\a}\D_{\a}^{k,j}y_{\a}$ for appropriate symmetry matrices $\{\D_{\a}^{k,j}\}$ and $g_j=\sum_{\a}g_{j,\a}\x^{\a}$. Then for each $s\ge1$, the dual of $\P_{\csts}^{r,s}$ \eqref{eq:cstsmom} reads as
\begin{equation}\label{sec4-eq3cstssos}
(\P_{\csts}^{r,s})^*:
\begin{cases}
\sup\limits_{\G_{k,j},\lambda_j,b}&b\\
\,\,\,\,\rm{ s.t.}&\sum_{k=1}^p\sum_{j\in \{0\}\cup J_k}\langle \G_{k,j},\D_{\a}^{k,j}\rangle+\sum_{j\in J'}\lambda_j g_{j,\a}\\
&\quad\quad\quad\quad\quad\quad\quad\quad\quad\quad\quad\quad+b\delta_{\mathbf{0}\a}=f_{\a},\quad \forall\a\in\sC_{r}^{(s)}\\
&\G_{k,j}\in\mathbf{S}_+^{t_{k,j}}\cap\mathbf{S}_{G_{r,k,j}^{(s)}},\quad j\in\{0\}\cup J_k, k\in[p]\\
&\lambda_j\ge0,\quad j\in J'
\end{cases}
\end{equation}
where $\sC_{r}^{(s)}$ is defined in \eqref{sec4-eq11cstssos}.

The primal-dual \ac{SDP} relaxations \eqref{eq:cstsmom}--\eqref{sec4-eq3cstssos} is called the \ac{CS-TSSOS} hierarchy associated with $\P$ \eqref{eq:pop}, which is indexed by two parameters: the relaxation order $r$ and the sparse order $s$.

\begin{example}\label{ex2cstssos}
Let $f=1+\sum_{i=1}^6x_i^4+x_1x_2x_3+x_3x_4x_5+x_3x_4x_6+x_3x_5x_6+x_4x_5x_6$, and consider the unconstrained \ac{POP}: $\min\{f(\x): \x\in\R^n\}$.
Let us apply the \ac{CS-TSSOS} hierarchy (using the maximal chordal extension in \eqref{sec4-graphcstssos}) to this problem by taking $r=r_{\min}=2,s=1$. First, according to the \ac{csp} graph (Figure \ref{ex2-3cstssos}), we decompose the variables into two cliques: $\{x_1,x_2,x_3\}$ and $\{x_3,x_4,x_5,x_6\}$. The \ac{tsp} graphs for the first clique and the second clique are shown in Figure \ref{ex2-1cstssos} and Figure \ref{ex2-2cstssos}, respectively. For the first clique one obtains four blocks of \ac{SDP} matrices with respective sizes $4,2,2,2$. For the second clique one obtains two blocks of \ac{SDP} matrices with respective sizes $5,10$. As a result, the original \ac{SDP} matrix of size $28$ has been reduced to six blocks of maximal size $10$.
	
Alternatively, if one applies the \ac{TSSOS} hierarchy (using the maximal chordal extension in \eqref{sec4-graphtssos}) directly to this problem by taking $r=r_{\min}=2,s=1$ (i.e., without decomposing variables), then the \ac{tsp} graph is shown in Figure \ref{ex2-4cstssos} and one thereby obtains $11$ PSD blocks with respective sizes $7,2,2,2,1,1,1,1,1,1,10$. Compared to the \ac{CS-TSSOS} case, there are six additional blocks of size one and the two blocks with respective sizes $4,5$ are replaced by a single block of size $7$.
	
\begin{figure}[!t]
\centering
{\tiny
\begin{tikzpicture}[every node/.style={circle, draw=blue!50, thick, minimum size=6mm}]
\node (n1) at (-1.732,1) {$1$};
\node (n2) at (-1.732,-1) {$2$};
\node (n3) at (0,0) {$3$};
\node (n4) at (1.414,1.414) {$4$};
\node (n5) at (1.4147,-1.414) {$5$};
\node (n6) at (2.828,0) {$6$};
\draw (n1)--(n2);
\draw (n1)--(n3);
\draw (n2)--(n3);
\draw (n3)--(n4);
\draw (n3)--(n5);
\draw (n3)--(n6);
\draw (n4)--(n5);
\draw (n4)--(n6);
\draw (n5)--(n6);
\end{tikzpicture}\caption{The \ac{csp} graph of Example \ref{ex2cstssos}.}\label{ex2-3cstssos}}
\end{figure}
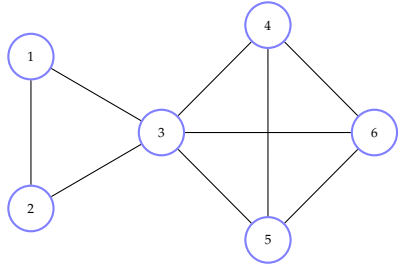
	
\begin{figure}[!t]
\centering
{\tiny
\begin{tikzpicture}[every node/.style={circle, draw=blue!50, thick, minimum size=6mm}]
\node (n1) at (0,0) {$1$};
\node (n8) at (-2,0) {$x_3^2$};
\node (n9) at (-2,-2) {$x_2^2$};
\node (n10) at (0,-2) {$x_1^2$};
\node (n2) at (2,0) {$x_1$};
\node (n3) at (4,0) {$x_2$};
\node (n4) at (6,0) {$x_3$};
\node (n5) at (2,-2) {$x_2x_3$};
\node (n6) at (4,-2) {$x_1x_3$};
\node (n7) at (6,-2) {$x_1x_2$};
\draw (n1)--(n8);
\draw (n1)--(n9);
\draw (n1)--(n10);
\draw (n8)--(n9);
\draw (n8)--(n10);
\draw (n9)--(n10);
\draw (n2)--(n5);
\draw (n3)--(n6);
\draw (n4)--(n7);
\end{tikzpicture}\caption{The \ac{tsp} graph for the first clique of Example \ref{ex2cstssos}.}\label{ex2-1cstssos}}
\end{figure}
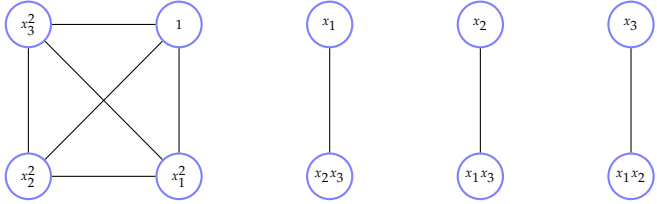
	
\begin{figure}[!t]
\centering
{\tiny
\begin{tikzpicture}[every node/.style={circle, draw=blue!50, thick, minimum size=6mm}]
\node (n1) at (90:2) {$1$};
\node (n2) at (162:2) {$x_6^2$};
\node (n3) at (234:2) {$x_5^2$};
\node (n4) at (306:2) {$x_4^2$};
\node (n5) at (18:2) {$x_3^2$};
\draw (n2)--(n3);
\draw (n2)--(n4);
\draw (n2)--(n5);
\draw (n3)--(n4);
\draw (n3)--(n5);
\draw (n3)--(n1);
\draw (n4)--(n5);
\draw (n4)--(n5);
\draw (n4)--(n1);
\draw (n5)--(n1);
\draw (n1)--(n2);
\node[xshift=150] (n6) at (90:2) {$x_3$};
\node[xshift=150] (n7) at (126:2) {$x_5x_6$};
\node[xshift=150] (n8) at (162:2) {$x_4x_6$};
\node[xshift=150] (n9) at (198:2) {$x_4x_5$};
\node[xshift=150] (n10) at (234:2) {$x_3x_6$};
\node[xshift=150] (n12) at (270:2) {$x_3x_5$};
\node[xshift=150] (n13) at (306:2) {$x_3x_4$};
\node[xshift=150] (n14) at (342:2) {$x_6$};
\node[xshift=150] (n15) at (54:2) {$x_4$};
\node[xshift=150] (n16) at (18:2) {$x_5$};
\draw (n6)--(n9);
\draw (n15)--(n12);
\draw (n16)--(n13);
\draw (n6)--(n8);
\draw (n15)--(n10);
\draw (n14)--(n13);
\draw (n6)--(n7);
\draw (n14)--(n12);
\draw (n16)--(n10);
\draw (n15)--(n7);
\draw (n16)--(n8);
\draw (n14)--(n9);
\end{tikzpicture}\caption{The \ac{tsp} graph for the second clique of Example \ref{ex2cstssos}}\label{ex2-2cstssos}.}
\end{figure}
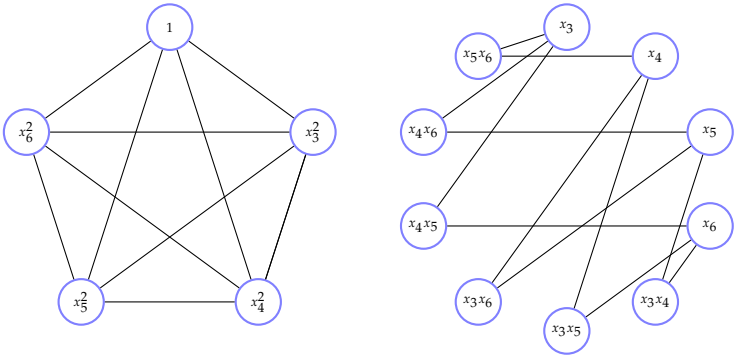 
	
\begin{figure}[!t]
\centering
\begin{minipage}{0.4\linewidth}
{\tiny
\begin{tikzpicture}[every node/.style={circle, draw=blue!50, thick, minimum size=6mm}]
\node (n1) at (90:2) {$1$};
\node (n2) at (38.6:2) {$x_1^2$};
\node (n3) at (347:2) {$x_2^2$};
\node (n4) at (295.6:2) {$x_3^2$};
\node (n5) at (244.2:2) {$x_4^2$};
\node (n6) at (192.8:2) {$x_5^2$};
\node (n7) at (141.4:2) {$x_6^2$};
\draw (n2)--(n3);
\draw (n2)--(n4);
\draw (n2)--(n5);
\draw (n3)--(n4);
\draw (n3)--(n5);
\draw (n3)--(n1);
\draw (n4)--(n5);
\draw (n4)--(n5);
\draw (n4)--(n1);
\draw (n5)--(n1);
\draw (n1)--(n2);
\draw (n1)--(n6);
\draw (n2)--(n6);
\draw (n3)--(n6);
\draw (n4)--(n6);
\draw (n5)--(n6);
\draw (n1)--(n7);
\draw (n2)--(n7);
\draw (n3)--(n7);
\draw (n4)--(n7);
\draw (n5)--(n7);
\draw (n6)--(n7);
\end{tikzpicture}}
\end{minipage}%
\begin{minipage}{0.25\linewidth}
{\tiny
\begin{tikzpicture}[every node/.style={circle, draw=blue!50, thick, minimum size=6mm}]
\node (n2) at (1,2) {$x_1$};
\node (n3) at (2,2) {$x_2$};
\node (n4) at (3,2) {$x_3$};
\node (n5) at (1,0.5) {$x_2x_3$};
\node (n6) at (2,0.5) {$x_1x_3$};
\node (n7) at (3,0.5) {$x_1x_2$};
\draw (n2)--(n5);
\draw (n3)--(n6);
\draw (n4)--(n7);
\node (n8) at (1,-0.5) {$x_1x_4$};
\node (n9) at (2,-0.5) {$x_1x_5$};
\node (n10) at (3,-0.5) {$x_1x_6$};
\node (n11) at (1,-1.5) {$x_2x_4$};
\node (n12) at (2,-1.5) {$x_2x_5$};
\node (n13) at (3,-1.5) {$x_2x_6$};
\end{tikzpicture}}
\end{minipage}%
\begin{minipage}{0.42\linewidth}
{\tiny
\begin{tikzpicture}[every node/.style={circle, draw=blue!50, thick, minimum size=6mm}]
\node (n6) at (90:2) {$x_3$};
\node (n7) at (126:2) {$x_5x_6$};
\node (n8) at (162:2) {$x_4x_6$};
\node (n9) at (198:2) {$x_4x_5$};
\node (n10) at (234:2) {$x_3x_6$};
\node (n12) at (270:2) {$x_3x_5$};
\node (n13) at (306:2) {$x_3x_4$};
\node (n14) at (342:2) {$x_6$};
\node (n15) at (54:2) {$x_4$};
\node (n16) at (18:2) {$x_5$};
\draw (n6)--(n9);
\draw (n15)--(n12);
\draw (n16)--(n13);
\draw (n6)--(n8);
\draw (n15)--(n10);
\draw (n14)--(n13);
\draw (n6)--(n7);
\draw (n14)--(n12);
\draw (n16)--(n10);
\draw (n15)--(n7);
\draw (n16)--(n8);
\draw (n14)--(n9);
\end{tikzpicture}}
\end{minipage}\caption{The \ac{tsp} graph without decomposing variables of Example \ref{ex2cstssos}.}\label{ex2-4cstssos}
\end{figure}
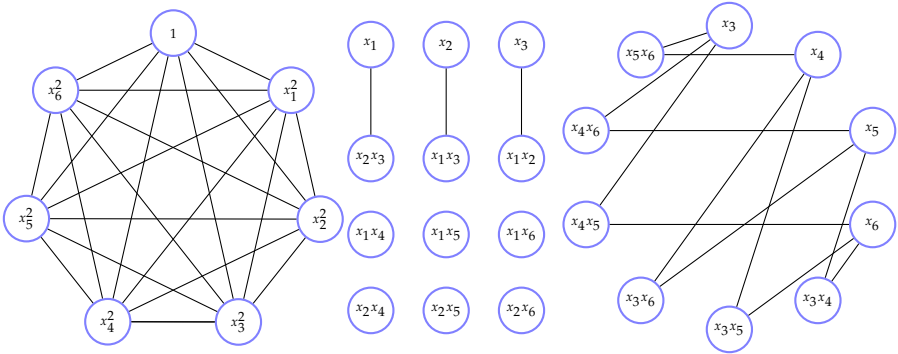
\end{example}

We summarize the basic properties of the \ac{CS-TSSOS} hierarchy in the next theorem.
\begin{theoremf}\label{sec4-prop1cstssos}
Let $f\in\R[\x]$ and $\X$ be defined as in \eqref{eq:defX}. Then the following hold:
\begin{enumerate}
\item If $\X$ has a nonempty interior, then there is no duality gap between $\P_{\csts}^{r,s}$ and $(\P_{\csts}^{r,s})^*$ for any $r\ge r_{\min}$ and $s\ge1$.
\item For any $r\ge r_{\min}$, the sequence $(f_{\csts}^{r,s})_{s\ge1}$ is monotonically non-decreasing and $ f_{\csts}^{r,s}\le f_{\cs}^{r}$ for all $s$ with $f_{\cs}^{r}$ being defined in Section~\ref{sec:cshierarchy}.
\item For any $s\ge 1$, the sequence $(f_{\csts}^{r,s})_{r\ge r_{\min}}$ is monotonically non-decreasing.
\end{enumerate}
\end{theoremf}
\begin{proof}
1. By the duality theory of convex programming, this easily follows from Theorem 3.6 of \citecstssos{Las06} and Theorem \ref{th:sparsesdpsum}.

2. By construction, we have $G_{r,k,j}^{(s)}\subseteq G_{r,k,j}^{(s+1)}$ for all $r,k,j$ and for all $s$. It follows that each maximal clique of $G_{r,k,j}^{(s)}$ is contained in some maximal clique of $G_{r,k,j}^{(s+1)}$. Hence by Theorem \ref{th:sparsesdpsum}, $\P_{\csts}^{r,s}$ is a relaxation of $\P_{\csts}^{r,s+1}$ and is clearly also a relaxation of $\P_{\cs}^{r}$. Therefore, $(f_{\csts}^{r,s})_{s\ge1}$ is monotonically non-decreasing and $f_{\csts}^{r,s}\le f_{\cs}^{r}$ for all $s$.

3. The conclusion follows if we can show that the inclusion $G_{r,k,j}^{(s)}\subseteq G_{r+1,k,j}^{(s)}$ holds for all $r,k,j,s$, since by Theorem \ref{th:sparsesdpsum} this implies that $\P_{\csts}^{r,s}$ is a relaxation of $\P_{\csts}^{r+1,s}$. Let us prove $G_{r,k,j}^{(s)}\subseteq G_{r+1,k,j}^{(s)}$ by induction on $s$. For $s=1$, we have $G_{r,k,0}^{(0)}=G_{r,k}^{\text{tsp}}\subseteq G_{r+1,k}^{\text{tsp}}=G_{r+1,k,0}^{(0)}$, which together with $\eqref{sec4-eqcstssos}$-$\eqref{sec4-eq11cstssos}$ implies that $F_{r,k,j}^{(1)}\subseteq F_{r+1,k,j}^{(1)}$ for $j\in\{0\}\cup J_k,k\in[p]$. It then follows that $G_{r,k,j}^{(1)}=(F_{r,k,j}^{(1)})'\subseteq (F_{r+1,k,j}^{(1)})'=G_{r+1,k,j}^{(1)}$. Now assume that $G_{r,k,j}^{(s)}\subseteq G_{r+1,k,j}^{(s)}$, $j\in\{0\}\cup J_k,k\in[p]$, hold for some $s\geq 1$. Then by $\eqref{sec4-eqcstssos}$-$\eqref{sec4-eq11cstssos}$ and by the induction hypothesis, we have $F_{r,k,j}^{(s+1)}\subseteq F_{r+1,k,j}^{(s+1)}$ for $j\in\{0\}\cup J_k,k\in[p]$. Thus $G_{r,k,j}^{(s+1)}=(F_{r,k,j}^{(s+1)})'\subseteq (F_{r+1,k,j}^{(s+1)})'=G_{r+1,k,j}^{(s+1)}$ which completes the induction. 
\qed
\end{proof}

From Theorem \ref{sec4-prop1cstssos}, we deduce the following two-level hierarchy of lower bounds for the optimum $f_{\min}$ of $\P$ \eqref{eq:pop}:

\begin{equation}\label{mixhierc}
\begin{matrix}
f_{\csts}^{r_{\min},1}&\le&f_{\csts}^{r_{\min},2}&\le&\cdots&\le&f_{\csts}^{r_{\min}}\\
\vge&&\vge&&&&\vge\\
f_{\csts}^{r_{\min}+1,1}&\le&f_{\csts}^{r_{\min}+1,2}&\le&\cdots&\le&f_{\csts}^{r_{\min}+1}\\
\vge&&\vge&&&&\vge\\
\vdots&&\vdots&&\vdots&&\vdots\\
\vge&&\vge&&&&\vge\\
f_{\csts}^{r,1}&\le&f_{\csts}^{r,2}&\le&\cdots&\le&f_{\csts}^{r}\\
\vge&&\vge&&&&\vge\\
\vdots&&\vdots&&\vdots&&\vdots\\
\end{matrix}
\end{equation}

As we have known for the \ac{TSSOS} hierarchy, the block structure arising from the \ac{CS-TSSOS} hierarchy is consistent with the sign symmetries of the \ac{POP}. More precisely, we have the following theorem.

\begin{theoremf}\label{prop-sscstssos}
Let $\sA$ be defined as in \eqref{sec4-eq0cstssos}, $\sC_{r}^{(s)}$ be defined as in \eqref{sec4-eq11cstssos}, and assume that the sign symmetries of $\sA$ are represented by the column vectors of the binary matrix $\Rb$. Then for any $r\ge r_{\min}$, $s\ge1$ and any $\a\in\sC_{r}^{(s)}$, it holds $\Rb^\intercal\a\equiv\mathbf{0}\,(\rm{mod}\,2)$. As a consequence, if $\b,\g$ belong to the same block in the \ac{CS-TSSOS} relaxations, then $\Rb^\intercal(\b+\g)\equiv\mathbf{0}\,(\rm{mod}\,2)$.
\end{theoremf}

\section{Global convergence}
We next show that if the chordal extension in \eqref{eq:cstsmom} is chosen to be \emph{maximal}, then 
for any relaxation order $r\ge r_{\min}$, the sequence of optima $(f_{\csts}^{r,s})_{s\ge1}$ arising from the \ac{CS-TSSOS} hierarchy converges to the optimum $f^{r}_{\cs}$ of the \ac{CSSOS} relaxation. 

It is clear by construction that the sequences of graphs $(G_{r,k,j}^{(s)})_{s\ge1}$ stabilize for all $j\in\{0\}\cup J_k,k\in[p]$ after finitely many steps. We denote the resulting stabilized graphs by $G_{r,k,j}^{(\bullet)},j\in \{0\}\cup J_k,k\in[p]$ and the corresponding \ac{SDP} \eqref{eq:cstsmom} by $\P_{\csts}^{r,\bullet}$.

\begin{theoremf}\label{sec4-thm2cstssos}
If one uses the maximal chordal extension in \eqref{sec4-graphcstssos}, then for any $r\ge r_{\min}$, the sequence $(f_{\csts}^{r,s})_{s\ge1}$ converges to $f^{r}_{\cs}$ in finitely many steps.
\end{theoremf}
\begin{proof}
Let $\y=(y_{\a})$ be an arbitrary feasible solution of $\P_{\csts}^{r,\bullet}$ and $f_{\csts}^{r,\bullet}$ be the optimum of $\P_{\csts}^{r,\bullet}$. Note that $\{y_{\a}\mid\a\in\bigcup_{k=1}^p(\cup_{j\in\{0\}\cup J_k}(\supp(g_j)+\supp(G_{r,k,j}^{(\bullet)})))\}$ is the set of decision variables involved in $\P_{\csts}^{r,\bullet}$. Let $\RR$ be the set of decision variables involved in $\P_{\cs}^{r}$ \eqref{eq:csmom}.
We then define a vector $\overline{\y}=(\overline{y}_{\a})_{\a\in\RR}$ as follows:
$$\overline{y}_{\a}=\begin{cases}y_{\a},\quad&\text{if }\a\in\bigcup_{k=1}^p(\cup_{j\in\{0\}\cup J_k}(\supp(g_j)+\supp(G_{r,k,j}^{(\bullet)}))),\\
0,\quad\quad&\text{otherwise}.
\end{cases}$$
By construction and since $G_{r,k,j}^{(\bullet)}$ stabilizes under support extension for all $k,j$, we have $\M_{r-d_j}(g_j\overline{\y},I_k)=\B_{G_{r,k,j}^{(\bullet)}}\circ \M_{r-d_j}(g_j\y,I_k)$.
As the maximal chordal extension is chosen for \eqref{sec4-graphcstssos}, the matrix $\B_{G_{r,k,j}^{(\bullet)}}\circ \M_{r-d_j}(g_j\y,I_k)$ is block diagonal up to permutation. It follows from $\B_{G_{r,k,j}^{(\bullet)}}\circ \M_{r-d_j}(g_j\y, I_k)\in\Pi_{G_{r,k,j}^{(\bullet)}}(\mathbf{S}_+^{t_{k,j}})$ that $\M_{r-d_j}(g_j\overline{\y},I_k)\succeq0$ for $j\in \{0\}\cup J_k, k\in[p]$. Therefore $\overline{\y}$ is a feasible solution of $\P_{\cs}^{r}$ and so $L_{\y}(f)=L_{\overline{\y}}(f)\ge f^{r}_{\cs}$.
Hence $f_{\csts}^{r,\bullet}\ge f^{r}_{\cs}$ since $\y$ is an arbitrary feasible solution of $\P_{\csts}^{r,\bullet}$. 
By Theorem \ref{sec4-prop1cstssos}, we already have $f_{\csts}^{r,\bullet}\le f^{r}_{\cs}$. Therefore, $f_{\csts}^{r,\bullet}=f^{r}_{\cs}$.
\qed
\end{proof}

By Theorem 3.6 in \citecstssos{Las06}, the sequence $(f^{r}_{\cs})_{r\ge r_{\min}}$ converges to the global optimum $f_{\min}$ of \ac{POP} \eqref{eq:pop} (after adding some redundant quadratic constraints). Therefore, this together with Theorem \ref{sec4-thm2cstssos} offers the global convergence of the \ac{CS-TSSOS} hierarchy.

Proceeding along Theorem \ref{sec4-prop1cstssos}, we are able to provide a \emph{sparse representation} theorem based on both \ac{CS} and \ac{TS} for a polynomial positive on a compact basic semialgebraic set. 
\begin{theoremf}\label{th:cstssos}
Let $f\in\R[\x]$, $\X\subseteq\R^n$ and $\{I_k\}_{k=1}^p,\{J_k\}_{k=1}^p$ be defined in Assumption \eqref{hyp:cs}.
Assume that the sign symmetries of $\sA=\supp(f)\cup\bigcup_{j=1}^m\supp(g_j)$ are represented by the columns of the binary matrix $\Rb$. If $f$ is positive on $\X$, then $f$ admits a representation of form
\begin{equation}\label{sparse-certificate}
f=\sum_{k=1}^p\left(\sigma_{k,0}+\sum_{j\in J_k}\sigma_{k,j}g_j\right),
\end{equation}
for some polynomials $\sigma_{k,j}\in\Sigma[\x(I_k)],j\in \{0\}\cup J_k, k\in[p]$, satisfying $\Rb^{\intercal}\a\equiv\mathbf{0}$ $(\rm{mod}\,2)$ for any $\a\in\supp(\sigma_{k,j})$. 
\end{theoremf}
\begin{proof}
By Corollary 3.9 of \citecstssos{Las06} (or Theorem 5 of \citecstssos{grimm2007note}), there exist polynomials $\sigma'_{k,j}\in\Sigma[\x(I_k)],j\in\{0\}\cup J_k,k\in[p]$ such that
\begin{equation}\label{sec4-eq7cstssos}
	f=\sum_{k=1}^p\left(\sigma'_{k,0}+\sum_{j\in J_k}\sigma'_{k,j}g_j\right).
\end{equation}
Let $r=\max\{\lceil\deg(\sigma'_{k,j}g_j)/2\rceil: j\in \{0\}\cup J_k, k\in[p]\}$. Let $\G'_{k,j}$ be a \ac{PSD} Gram matrix associated with $\sigma'_{k,j}$ and indexed by the monomial basis $\N^{n_k}_{r-d_j}$. Then for all $k,j$, we define $\G_{k,j}\in\mathbf{S}^{t_{k,j}}$ (indexed by $\N^{n_k}_{r-d_j}$) by
\begin{equation*}
	[\G_{k,j}]_{\b\g}\coloneqq \begin{cases}
	[Q'_{k,j}]_{\b\g}, \quad&\text{if }\Rb^\intercal(\b+\g)\equiv\mathbf{0}$ $(\rm{mod}\,2),\\
	0, \quad&\text{otherwise,}
\end{cases}
\end{equation*}
and let $\sigma_{k,j}=(\x^{\N^{n_k}_{r-d_j}})^\intercal \G_{k,j}\x^{\N^{n_k}_{r-d_j}}$.
One can easily verify that $\G_{k,j}$ is block diagonal up to permutation (see also \citecstssos{tssos}) and each block is a principal submatrix of $\G'_{k,j}$. Then the positive semidefiniteness of $\G'_{k,j}$ implies that $\G_{k,j}$ is also positive semidefinite. Thus $\sigma_{k,j}\in\Sigma[\x(I_k)]$.
	
By construction, substituting $\sigma'_{k,j}$ with $\sigma_{k,j}$ in \eqref{sec4-eq7cstssos} boils down to removing the terms with exponents $\a$ that do not satisfy $\Rb^\intercal\a\equiv\mathbf{0}$ $(\rm{mod}\,2)$ from the right hand side of \eqref{sec4-eq7cstssos}. Since any $\a\in\supp(f)$ satisfies $\Rb^\intercal\a\equiv\mathbf{0}$ $(\rm{mod}\,2)$, this does not change the match of coefficients on both sides of the equality. Thus we obtain
\begin{equation*}
	f=\sum_{k=1}^p\left(\sigma_{k,0}+\sum_{j\in J_k}\sigma_{k,j}g_j\right)
\end{equation*}
with the desired property.
\qed
\end{proof}

\section{Extracting a solution}

In the case of the dense \ac{moment-SOS} hierarchy, there is a standard procedure described in \citecstssos{Henrion05} to extract globally optimal solutions when the moment matrix satisfies the so-called flatness condition. This procedure was generalized to the correlative sparse setting in \citecstssos[\S~3.3]{Las06} and \citecstssos{nie2009sparse}. In the term sparse setting, however, the corresponding procedure cannot be applied because the information on the moment matrix is incomplete. 
In order to extract a solution in this case, we may add an order-one (dense) moment matrix for each clique in \eqref{eq:cstsmom}:
\begin{equation}\label{sec4-eq8cstssos}
\begin{cases}
\inf\limits_{\y}&L_{\y}(f)\\
\rm{ s.t.}&\M_{r}(\y, I_k)\in\Pi_{G_{r,k,0}^{(s)}}(\Sbb^+_{t_{k,0}}),\quad k\in[p]\\
&\M_{1}(\y, I_k)\succeq0,\quad k\in[p]\\
&\M_{r-d_j}(g_j\y, I_k)\in\Pi_{G_{r,k,j}^{(s)}}(\Sbb^+_{t_{k,j}}),\quad j\in J_k, \quad k\in[p]\\
&L_{\y}(g_j)\ge0,\quad j\in J'\\
&y_{\mathbf{0}}=1
\end{cases}
\end{equation}

Let $\y^{\opt}$ be an optimal solution of \eqref{sec4-eq8cstssos}.
Typically, $\M_{1}(\y^{\opt}, I_k)$ (after identifying sufficiently small entries with zeros) is a block diagonal matrix (up to permutation). If for all $k$ every block of $\M_{1}(\y^{\opt}, I_k)$ is of rank one, then a globally optimal solution $\x^{\opt}$ to $\P$ \eqref{eq:pop} which is unique up to sign symmetries can be extracted (\citecstssos[Theorem~3.7]{Las06}), and the global optimality is certified. Otherwise, the relaxation might be not exact or yield multiple global solutions. 
%In the latter case, adding a small perturbation to the objective function, as in \citecstssos{Waki06}, may yield a unique global solution.

\begin{remark}
Note that \eqref{sec4-eq8cstssos} is a tighter relaxation of $\P$ \eqref{eq:pop} than $\P_{\csts}^{r,s}$ \eqref{eq:cstsmom}, and so might provide a better lower bound for $\P$. In particular,
if $\P$ is a \ac{QCQP}, then \eqref{sec4-eq8cstssos} is always tighter than Shor's relaxation of $\P$.
\end{remark}

\section{A minimal initial relaxation step}\label{mir}

For \ac{POP} \eqref{eq:pop}, suppose that $f$ is not a homogeneous polynomial or the constraint polynomials $\{g_j\}_{j\in[m]}$ are of different degrees. Then instead of using the uniform minimum relaxation order $r_{\min}$, it might be more beneficial, from the computational point of view, to assign different relaxation orders to different subsystems obtained from the \ac{csp} for the initial relaxation step of the \ac{CS-TSSOS} hierarchy. To this end, we redefine the \ac{csp} graph $G^{\text{icsp}}(V,E)$ as follows: $V=[n]$ and $\{i,j\}\in E$ whenever there exists $\a\in\sA$ such that $\{i,j\}\subseteq\supp(\a)$. This is clearly a subgraph of $G^{\text{csp}}$ defined in Chapter~\ref{chap:cs} and hence typically admits a smaller chordal extension. Let $(G^{\text{icsp}})'$ be a chordal extension of $G^{\text{icsp}}$ and $\{I_k\}_{k\in[p]}$ be the list of maximal cliques of $(G^{\text{icsp}})'$ with
$n_k\coloneqq |I_k|$. Now we partition the constraint polynomials $\{g_j\}_{j\in[m]}$ into groups $\{g_j\mid j\in J_k\}_{k\in[p]}$ and $\{g_j\mid j\in J'\}$ which satisfy
\begin{enumerate}[(1)]
	\item $J_1,\ldots,J_p,J'\subseteq[m]$ are pairwise disjoint and $\bigcup_{k=1}^pJ_k\cup J'=[m]$;
	\item for any $j\in J_k$, $\bigcup_{\a\in\supp(g_j)}\supp(\a)\subseteq I_k$, $k\in[p]$;
	\item for any $j\in J'$, $\bigcup_{\a\in\supp(g_j)}\supp(\a)\nsubseteq I_k$ for all $k\in[p]$.
\end{enumerate}

Suppose $f$ decomposes as $f=\sum_{k\in[p]}f_k$ such that $\bigcup_{\a\in\supp(f_k)}\supp(\a)\subseteq I_k$ for $k\in[p]$. We define the vector of minimum relaxation orders $\o=(o_k)_k\in\N^{p}$ with $o_k\coloneqq \max\{\{d_j:j\in J_k\}\cup\{\lceil\deg(f_k)/2\rceil\}\}$. Then with $s\ge1$, we define the following minimal initial relaxation step of the \ac{CS-TSSOS} hierarchy:
\begin{equation}\label{cts}
	\begin{cases}
		\inf\limits_{\y} &L_{\y}(f)\\
		\rm{ s.t.}&\B_{G_{o_k,k,0}^{(s)}}\circ \M_{o_k}(\y, I_k)\in\Pi_{G_{o_k,k,0}^{(s)}}(\Sbb_+^{t_{k,0}}),\quad k\in[p]\\
		&\B_{G_{o_k,k,j}^{(s)}}\circ \M_{o_k-d_j}(g_j\y, I_k)\in\Pi_{G_{o_k,k,j}^{(s)}}(\Sbb_+^{t_{k,j}}),\quad j\in J_k, k\in[p]\\
		&L_{\y}(g_j)\ge0,\quad j\in J'\\
		&y_{\mathbf{0}}=1
	\end{cases}
\end{equation}
where $G_{o_k,k,j}^{(s)},j\in J_k,k\in[p]$ are defined in the same spirit with Chapter~\ref{chap8:sec1} and $t_{k,j}\coloneqq \binom{n_k+o_k-d_j}{o_k-d_j}$ for all $k,j$.

\section{Numerical experiments}
In this section, we report some numerical results of the proposed \ac{CS-TSSOS} hierarchy. All numerical examples were computed on an Intel Core i5-8265U@1.60GHz CPU with 8GB RAM memory.

\subsection{Benchmarks for constrained POPs}

Consider the following \ac{POP}:
\begin{equation}\label{cons:cstssos}
\begin{cases}
\inf\limits_{\x}&f_{\text{gR}}\quad(\text{resp. } f_{\text{Bt}}\text{ or } f_{\text{cW}})\\
\rm{ s.t.}&1-(\sum_{i=20j-19}^{20j}x_i^2)\ge0,\quad j=1,2,\ldots,n/20
	\end{cases}
\end{equation}
with $20|n$, where the objective function is respectively given by\\
$\bullet$ the generalized Rosenbrock function
\begin{equation*}
	f_{\text{gR}}(\x)=1+\sum_{i=2}^n(100(x_i-x_{i-1}^2)^2+(1-x_i)^2),
\end{equation*}
$\bullet$ the Broyden tridiagonal function
\begin{align*}
	f_{\text{Bt}}(\x)=\,&((3-2x_1)x_1-2x_2+1)^2+\sum_{i=2}^{n-1}((3-2x_i)x_i-x_{i-1}-2x_{i+1}+1)^2\\&+((3-2x_n)x_n-x_{n-1}+1)^2,
\end{align*}
$\bullet$ the chained Wood function
\begin{align*}
	f_{\text{cW}}(\x)=\,&1+\sum_{i\in J}(100(x_{i+1}-x_{i}^2)^2+(1-x_i)^2+90(x_{i+3}-x_{i+2}^2)^2\\
	&+(1-x_{i+2})^2+10(x_{i+1}+x_{i+3}-2)^2+0.1(x_{i+1}-x_{i+3})^2),
\end{align*}
where $J=\{1,3,5,\ldots,n-3\}$ and $4|n$.

We solve the \ac{CSSOS} relaxation with $r=2$, the \ac{TSSOS} relaxation with $r=2,s=1$, and the \ac{CS-TSSOS} relaxation with $r=2,s=1$, where approximately smallest chordal extensions are used for \ac{TS}. The results are presented in Tables \ref{gr}--\ref{cw}, in which ``mb'' denotes the maximal size of PSD blocks involved in the relaxations, ``opt'' denotes the optimum, ``time'' denotes running time in seconds, and ``-'' indicates an out of memory error. We see that \ac{CSSOS}, \ac{TSSOS} and \ac{CS-TSSOS} yield almost the same optimum while \ac{CS-TSSOS} is the most efficient and scalable approach among them.

\begin{table}[htbp]
	\caption{Results for the generalized Rosenbrock function ($r=2$).}\label{gr}
	\renewcommand\arraystretch{1.2}
	\centering
		\begin{tabular}{c|ccc|ccc|ccc}
			\hline
			\multirow{2}*{$n$}&\multicolumn{3}{c|}{CSSOS}&\multicolumn{3}{c|}{TSSOS}&\multicolumn{3}{c}{CS-TSSOS}\\
			\cline{2-10}
			&mb&opt&time&mb&opt&time&mb&opt&time\\
			\hline
			100&$231$&97.445&377&101&97.436&31.3&$21$&$97.436$&$0.54$\\
			\hline
			200&$231$&-&-&201&196.41&1327&$21$&$196.41$&$1.27$\\
			\hline
			300&$231$&-&-&-&-&-&$21$&$295.39$&$2.26$\\
			\hline
			400&$231$&-&-&-&-&-&$21$&$394.37$&$3.36$\\
			\hline
			500&$231$&-&-&-&-&-&$21$&$493.35$&$4.65$\\
			\hline
			1000&$231$&-&-&-&-&-&$21$&$988.24$&$15.8$\\
			\hline
	\end{tabular}
\end{table}

\begin{table}[htbp]
	\caption{Results for the Broyden tridiagonal function ($r=2$).}\label{bt}
	\renewcommand\arraystretch{1.2}
	\centering
		\begin{tabular}{c|ccc|ccc|ccc}
			\hline
			\multirow{2}*{$n$}&\multicolumn{3}{c|}{CSSOS}&\multicolumn{3}{c|}{TSSOS}&\multicolumn{3}{c}{CS-TSSOS}\\
			\cline{2-10}
			&mb&opt&time&mb&opt&time&mb&opt&time\\
			\hline
			100&$231$&79.834&519&103&79.834&104&$23$&$79.834$&$1.96$\\
			\hline
			200&$231$&-&-&-&-&-&$23$&$160.83$&$4.88$\\
			\hline
			300&$231$&-&-&-&-&-&$23$&$241.83$&$8.67$\\
			\hline
			400&$231$&-&-&-&-&-&$23$&$322.83$&$13.3$\\
			\hline
			500&$231$&-&-&-&-&-&$23$&$403.83$&$19.9$\\
			\hline
			1000&$231$&-&-&-&-&-&$23$&$808.83$&$57.5$\\
			\hline
	\end{tabular}
\end{table}

\begin{table}[htbp]
	\caption{Results for the chained Wood function ($r=2$).}\label{cw}
	\renewcommand\arraystretch{1.2}
	\centering
		\begin{tabular}{c|ccc|ccc|ccc}
			\hline
			\multirow{2}*{$n$}&\multicolumn{3}{c|}{CSSOS}&\multicolumn{3}{c|}{TSSOS}&\multicolumn{3}{c}{CS-TSSOS}\\
			\cline{2-10}
			&mb&opt&time&mb&opt&time&mb&opt&time\\
			\hline
			100&$231$&1485.8&505&101&1485.8&43.2&$21$&$1485.8$&$0.73$\\
			\hline
			200&$231$&-&-&201&3004.5&1238&$21$&$3004.5$&$1.91$\\
			\hline
			300&$231$&-&-&-&-&-&$21$&$4523.6$&$3.39$\\
			\hline
			400&$231$&-&-&-&-&-&$21$&$6042.0$&$5.72$\\
			\hline
			500&$231$&-&-&-&-&-&$21$&$7560.7$&$7.88$\\
			\hline
			1000&$231$&-&-&-&-&-&$21$&$15155$&$23.0$\\
			\hline
	\end{tabular}
\end{table}

\subsection{The Max-Cut problem}
The Max-Cut problem is one of the basic combinatorial optimization problems, which is known to be NP-hard. Let $G(V, E)$ be an undirected graph with $V=\{1,\ldots,n\}$ and with edge weights $w_{ij}$ for $\{i,j\}\in E$. Then the Max-Cut problem for $G$ can be formulated as a \ac{QCQP} in binary variables:
\begin{equation}\label{maxcut}
	\begin{cases}
		\max\limits_{\x}&\frac{1}{2}\sum_{\{i,j\}\in E}w_{ij}(1-x_ix_j)\\
		\rm{ s.t.}&1-x_i^2=0,\quad i=1,\ldots,n.
	\end{cases}
\end{equation}

For the numerical experiments, we construct random Max-Cut instances with a block-band sparsity pattern (illustrated in Figure \ref{block-band}) which consists of $l$ blocks of size $25$ and two bands of width $5$. 
There are ten such Max-Cut instances with $l=20,40,60,80,100,120,140,160,180,200$, respectively\footnote{The instances are available at https://wangjie212.github.io/jiewang/code.html.}.

\begin{figure}[htbp]
\centering
\begin{tikzpicture}
\draw (0,0) rectangle (4,4);
\filldraw[fill=blue, fill opacity=0.3] (0,3) rectangle (1,4);
\filldraw[fill=blue, fill opacity=0.3] (1,2) rectangle (2,3);
\fill (2.375,1.625) circle (0.3ex);
\fill (2.75,1.25) circle (0.3ex);
\fill (3.125,0.875) circle (0.3ex);
\draw (0,0.5)--(3.5,0.5);
\draw (3.5,4)--(3.5,0.5);
\fill[fill=blue, fill opacity=0.3] (0,0) rectangle (3.5,0.5);
\fill[fill=blue, fill opacity=0.3] (3.5,0) rectangle (4,4);
\draw[<->] (-0.1,0) -- (-0.1,0.5);
\node[left] at (-0.1,0.25) {$h$};
\draw[<->] (3.5,4.1) -- (4,4.1);
\node[above] at (3.75,4.1) {$h$};
\draw[<->] (-0.1,3) -- (-0.1,4);
\node[left] at (-0.1,3.5) {$b$};
\draw[<->] (0,4.1) -- (1,4.1);
\node[above] at (0.5,4.1) {$b$};
\node[left] at (2.6,1.25) {$l$ blocks};
\end{tikzpicture}\caption{The block-band sparsity pattern. $l$: the number of blocks, $b$: the size of blocks, $h$: the width of bands.}\label{block-band}
\end{figure}
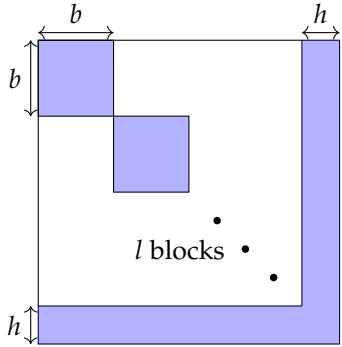

For each instance, we solve Shor's relaxation, the \ac{CSSOS} hierarchy with $r=2$, and the \ac{CS-TSSOS} hierarchy with $r=2,s=1$, where the maximal chordal extension is used for \ac{TS}. The results are reported in Table \ref{max-cut}. We see that for each instance, both \ac{CSSOS} and \ac{CS-TSSOS} significantly improve the bound obtained from Shor's relaxation. Meanwhile, \ac{CS-TSSOS} is several times faster than \ac{CSSOS} at the cost of possibly providing a sightly weaker bound.

\begin{table}[htbp]
\caption{Results for Max-Cut instances. Only integer parts of optima are preserved. ``mc'' denotes the maximal size of variable cliques.}\label{max-cut}
\renewcommand\arraystretch{1.2}
\centering
\resizebox{\linewidth}{!}{
\begin{tabular}{c|c|c|c|c|ccc|ccc}
\hline
\multirow{2}*{name}&\multirow{2}*{node}&\multirow{2}*{edge}&\multirow{2}*{mc}&Shor&\multicolumn{3}{c|}{CSSOS}&\multicolumn{3}{c}{CS-TSSOS}\\
\cline{5-11}
&&&&opt&mb&opt&time&mb&opt&time\\
\hline
g20&$505$&$2045$&$14$&$570$&$120$&$488$&$51.2$&$92$&$488$&$19.6$\\
\hline
g40&$1005$&$3441$&$14$&$1032$&$120$&$885$&$134$&$92$&$893$&$41.1$\\
\hline
g60&$1505$&$4874$&$14$&$1439$&$120$&$1227$&$183$&$92$&$1247$&$71.3$\\
\hline
g80&$2005$&$6035$&$15$&$1899$&$136$&$1638$&$167$&$106$&$1669$&$84.8$\\
\hline
g100&$2505$&$7320$&$14$&$2398$&$120$&$2073$&$262$&$92$&$2128$&$112$\\
\hline
g120&$3005$&$8431$&$14$&$2731$&$120$&$2358$&$221$&$79$&$2443$&$127$\\
\hline
g140&$3505$&$9658$&$13$&$3115$&$105$&$2701$&$250$&$79$&$2812$&$153$\\
\hline
g160&$4005$&$10677$&$14$&$3670$&$120$&$3202$&$294$&$79$&$3404$&$166$\\
\hline
g180&$4505$&$12081$&$13$&$4054$&$105$&$3525$&$354$&$79$&$3666$&$246$\\
\hline
g200&$5005$&$13240$&$13$&$4584$&$105$&$4003$&$374$&$79$&$4218$&$262$\\
\hline
\end{tabular}}
\end{table}

\section{Notes and sources}
The material from this chapter is issued from \citecstssos{cstssos}.
A proof of Theorem \ref{prop-sscstssos} can be found in Section 3.2 of \citecstssos{cstssos}. 

Newton and Papachristodoulou have used the \ac{CS-TSSOS} hierarchy for neural network verification in \citecstssos{newton2022sparse}.

%\bibliographystylecstssos{alpha}
%\bibliographycstssos{preface,sdp,sparsemat,pop,cs,roundoff,lip,ncsparse,ts,cstssos,opf,jsr,misc,app}

%!TEX root = main.tex
\chapter{Application in optimal power flow}\label{chap:opf}
In this chapter, we apply the \ac{CS-TSSOS} hierarchy to the famous alternating current optimal power flow (AC-OPF) problem, which can be formulized as a \ac{POP} either with real variables \citeopf{bienstock2020,ghaddar2015optimal} or with complex variables \citeopf{josz2018lasserre}. To tackle \ac{POP}s in complex variables, we first provide ingredients for extending the \ac{moment-SOS} hierarchy to the complex case.

\section{Extension to complex polynomial optimization}
We start by introducing some notations. Let $\z=(z_1,\ldots,z_n)$ be a tuple of complex variables and $\bar{\z}=(\bar{z}_1,\ldots,\bar{z}_n)$ be its conjugate.
We denote by $\C[\z]\coloneqq \C[z_1,\ldots,z_n]$, $\C[\z,\bar{\z}]\coloneqq \C[z_1,\ldots,z_n,\bar{z}_1,\ldots,\bar{z}_n]$ the complex polynomial ring in $\z$, the complex polynomial ring in $\z,\bar{\z}$, respectively. A polynomial $f\in\C[\z,\bar{\z}]$ can be written as $f=\sum_{(\b,\g)\in\sA}f_{\b,\g}\z^{\b}\bar{\z}^{\g}$ with $\sA\subseteq\N^n\times\N^n$ and $f_{\b,\g}\in\C, \z^{\b}=z_1^{\beta_1}\cdots z_n^{\beta_n},\bar{\z}^{\g}=\bar{z}_1^{\gamma_1}\cdots \bar{z}_n^{\gamma_n}$. The \emph{support} of $f$ is defined by $\supp(f)=\{(\b,\g)\in\sA\mid f_{\b,\g}\ne0\}$. The \emph{conjugate} of $f$ is $\bar{f}=\sum_{(\b,\g)\in\sA}\bar{f}_{\b,\g}\z^{\g}\bar{\z}^{\b}$. A polynomial $\sigma=\sum_{(\b,\g)}\sigma_{\b,\g}\z^{\b}\bar{\z}^{\g}\in\C[\z,\bar{\z}]$ is called an \emph{Hermitian sum of squares (HSOS)} if there exist polynomials $f_i\in\C[\z], i\in[t]$ such that $\sigma=\sum_{i=1}^tf_i\bar{f}_i$. For a positive integer $m$, the set of $m\times m$ Hermitian matrices is denoted by $\Hbb_m$ and the set of $m\times m$ \ac{PSD} Hermitian matrices is denoted by $\Hbb_m^+$.

A \ac{CPOP} is given by
\begin{equation}\label{cpop}
\begin{cases}
\inf\limits_{\z\in\C^n} &f(\z,\bar{\z})\coloneqq \sum_{\a,\b}f_{\a,\b}\z^{\a}\bar{\z}^{\b}\\
\,\,\rm{ s.t.}&g_j(\z,\bar{\z})\coloneqq \sum_{\a,\b}g_{j,\a,\b}\z^{\a}\bar{\z}^{\b}\ge0,\quad j\in[m]
\end{cases}
\end{equation}
where the functions $f,g_1,\ldots,g_m$ are real-valued polynomials and their coefficients satisfy $f_{\a,\b}=\bar{f}_{\b,\a}$, and $g_{j,\a,\b}= \bar{g}_{j,\b,\a}$. There are two ways to construct a ``\ac{moment-SOS}'' hierarchy for \ac{CPOP} \eqref{cpop}. The first way is introducing real variables for both real and imaginary parts of each complex variable in \eqref{cpop}, i.e., letting $z_i=x_i+x_{i+n}\i$ for $i\in[n]$. Then one can convert \ac{CPOP} \eqref{cpop} to a \ac{POP} involving only real variables at the price of doubling the number of variables. Hence the usual real \ac{moment-SOS} hierarchy applies to the resulting real \ac{POP}. On the other hand, as the second way, it might be advantageous to handle \ac{CPOP} \eqref{cpop} directly with the complex moment-HSOS hierarchy introduced in \citeopf{josz2018lasserre}. To this end, we define the \emph{complex} moment matrix $\M^{\text{c}}_{r}(\y)$ ($r\in\N$) by
\begin{equation*}
	[\M^{\text{c}}_r(\y)]_{\b,\g}\coloneqq L^{\text{c}}_{\y}(\z^{\b}\bar{\z}^{\g})=y_{\b,\g}, \quad\forall\b,\g\in\N^n_{r},
\end{equation*}
where $\y=(y_{\b,\g})_{(\b,\g)\in\N^n\times\N^n}\subseteq\C$ is a sequence indexed by $(\b,\g)\in\N^n\times\N^n$ satisfying $y_{\b,\g}=\bar{y}_{\g,\b}$, and $L^{\text{c}}_{\y}:\C[\z,\bar{\z}]\rightarrow\R$ is the linear functional such that
\begin{equation*}
	f=\sum_{(\b,\g)}f_{\b,\g}\z^{\b}\bar{\z}^{\g}\mapsto L^{\text{c}}_{\y}(f)=\sum_{(\b,\g)}f_{\b,\g}y_{\b,\g}.
\end{equation*}
Suppose that $g=\sum_{(\b',\g')}g_{\b',\g'}\z^{\b'}\bar{\z}^{\g'}\in\C[\z,\bar{\z}]$ is an Hermitian polynomial, i.e., $\bar{g}=g$. The \emph{complex} localizing matrix $\M^{\text{c}}_{r}(g\y)$ associated with $g$ and $\y$ is defined by
\begin{equation*}
	[\M^{\text{c}}_{r}(g\,\y)]_{\b,\g}\coloneqq L^{\text{c}}_{\y}(g\,\z^{\b}\bar{\z}^{\g})=\sum_{(\b',\g')}g_{\b',\g'}y_{\b+\b',\g+\g'}, \quad\forall\b,\g\in\N^n_{r}.
\end{equation*}
Both the complex moment matrix and the complex localizing matrix are Hermitian matrices.
Note that a distinguished difference between the usual (real) moment matrix and the complex moment matrix is that the former has the Hankel property (i.e., $[\M_{r}(\y)]_{\b,\g}$ is a function of $\b+\g$), whereas the latter does not have.

Let $d_j=\lceil\deg(g_j)/2\rceil,j\in[m]$ and $r_{\min}=\max\{\lceil\deg(f)/2\rceil,d_1,\ldots,d_m\}$ as before. Then the complex moment hierarchy indexed by $r\ge r_{\min}$ for \ac{CPOP} \eqref{cpop} is given by
\begin{equation}\label{sec2-eq1opf}
\begin{cases}
\inf\limits_{\y}& L^{\text{c}}_{\y}(f)\\
\rm{ s.t.}&\M^{\text{c}}_{r}(\y)\succeq0\\
&\M^{\text{c}}_{r-d_j}(g_j\y)\succeq0,\quad j\in[m]\\
&y_{\mathbf{0},\mathbf{0}}=1
\end{cases}
\end{equation}
The dual of \eqref{sec2-eq1opf} can be formulized as the following HSOS relaxation:
\begin{equation}\label{sec2-eq2opf}
\begin{cases}
\sup\limits_{\sigma_j,b}&b\\
\rm{ s.t.}&f-b=\sigma_0+\sigma_1g_1+\ldots+\sigma_mg_m\\
&\sigma_j\text{ is an HSOS},\quad j\in\{0\}\cup[m]\\
&\deg(\sigma_0)\le2r,\deg(\sigma_jg_j)\le2r,\quad j\in[m]
\end{cases}
\end{equation}

\begin{remark}
In \eqref{sec2-eq1opf}, the expression ``$X\succeq0$" means an Hermitian matrix $X$ to be \ac{PSD}. Since popular \ac{SDP} solvers deal with only real \ac{SDP}s, it is then necessary to convert this constraint to a constraint involving only real matrices. This can be done by introducing the real part $A$ and the imaginary part $B$ of $X$ respectively such that $X=A+B\i$. Then,
\begin{equation*}
X\succeq0\quad \iff \quad\begin{bmatrix}A&-B\\B&A
\end{bmatrix}\succeq0.
\end{equation*}
\end{remark}

\begin{remark}
The first-order moment-(H)SOS relaxation for \ac{QCQP}s is also known as Shor's relaxation. It was proved in \citeopf{josz2015moment} that the real Shor's relaxation and the complex Shor's relaxation for homogeneous \ac{QCQP}s yield the same bound. However, in general the complex hierarchy is weaker (i.e., producing looser bounds) than the real hierarchy at the same relaxation order $r>1$ as Hermitian sums of squares are a special case of real sums of squares; see \citeopf{josz2018lasserre}.
\end{remark}

\begin{remark}
By the complex Positivstellensatz theorem due to D'Angelo and Putinar \citeopf{d2009polynomial}, global convergence of the complex hierarchy is guaranteed when a sphere constraint is present.
\end{remark}

As for the usual \ac{moment-SOS} hierarchy, we can reduce the size of \ac{SDP} relaxations arising from the complex moment-HSOS hierarchy by exploiting \ac{CS} and/or \ac{TS}. The procedures are quite similar. The only significant difference is on the definitions of \ac{tsp} graphs: in the real case, we use $\sA\cup2\N^n_{r}$ while in the complex case we use $\sA$ instead due to the absence of the Hankel structure of complex moment matrices; see \citeopf{cpop}.

\section{Applications to optimal power flow}
The AC-OPF problem aims to minimize the generation cost of an alternating current transmission network under the physical constraints (Kirchhoff’s laws, Ohm’s law) as well as operational constraints, which can be formulated as the following \ac{POP} in complex variables:
\begin{equation}\label{opf}
	\begin{cases}
		\inf\limits_{V_i,S_k^g\in\C}&\sum_{k\in G}(\mathbf{c}_{2k}(\Re(S_{k}^g))^2+\mathbf{c}_{1k}\Re(S_{k}^g)+\mathbf{c}_{0k})\\
		\,\,\,\,\,\rm{s.t.}&\angle V_{\text{ref}}=0\\
		&\mathbf{S}_{k}^{gl}\le S_{k}^{g}\le \mathbf{S}_{k}^{gu},\quad\forall k\in G\\
		&\boldsymbol{\upsilon}_{i}^l\le|V_i|\le \boldsymbol{\upsilon}_{i}^u,\quad\forall i\in N\\
		&\sum_{k\in G_i}S_k^g-\mathbf{S}_i^d-\mathbf{Y}_i^s|V_{i}|^2=\sum_{(i,j)\in E_i\cup E_i^R}S_{ij},\quad\forall i\in N\\
		&S_{ij}=(\bar{\mathbf{Y}}_{ij}-\mathbf{i}\frac{\mathbf{b}_{ij}^c}{2})\frac{|V_i|^2}{|\mathbf{T}_{ij}|^2}-\bar{\mathbf{Y}}_{ij}\frac{V_i\bar{V}_j}{\mathbf{T}_{ij}},\quad\forall (i,j)\in E\\
		&S_{ji}=(\bar{\mathbf{Y}}_{ij}-\mathbf{i}\frac{\mathbf{b}_{ij}^c}{2})|V_j|^2-\bar{\mathbf{Y}}_{ij}\frac{\bar{V_i}V_j}{\bar{\mathbf{T}}_{ij}},\quad\forall (i,j)\in E\\
		&|S_{ij}|\le \mathbf{s}_{ij}^u,\quad\forall (i,j)\in E\cup E^R\\
		&\boldsymbol{\theta}_{ij}^{\Delta l}\le \angle (V_i\bar{V_j})\le \boldsymbol{\theta}_{ij}^{\Delta u},\quad\forall (i,j)\in E\\
	\end{cases}
\end{equation}
In \eqref{opf}, $V_i$ denotes the voltage, $S_k^g$ denotes the power generation, $N$ denotes the set of buses, and $G$ denotes the set of generators. Besides, $\Re(\cdot)$, $\angle(\cdot)$, $|\cdot|$ stand for the real part, the angle, the magnitude of a complex number, respectively. All symbols in boldface are constants. For a full description on the AC-OPF problem, the reader is referred to \citeopf{baba2019}. Note that by introducing real variables for both real and imaginary parts of each complex variable, we can convert the AC-OPF problem to a \ac{POP} involving only real variables\footnote{The expressions involving angles of complex variables can be converted to polynomials by using $\tan(\angle z)=y/x$ for $z=x+\mathbf{i}y\in\C$.}.

To tackle an AC-OPF instance, we first compute a locally optimal solution with nonlinear programming tools whose global optimality is however unknown. And we then rely on certain convex relaxation of \eqref{opf} to certify global optimality of the local solution. Suppose that the optimum reported by the local solver is AC and the optimum of the convex relaxation is opt. The \emph{optimality gap} between the locally optimal solution and the convex relaxation is defined by
\begin{equation*}
	\text{gap}\coloneqq \frac{\text{AC}-\text{opt}}{\text{AC}}\times100\%.
\end{equation*}
If the optimality gap is less than $1.00\%$, then we accept the locally optimal solution as globally optimal.

We perform two classes of numerical experiments on AC-OPF instances issued from \href{https://github.com/power-grid-lib/pglib-opf}{PGLiB}. For the first class, we compare the complex hierarchy with the real hierarchy in terms of strength and efficiency. The results are reported in Table \ref{c-opf} where ``mb'' denotes the maximal size of PSD blocks involved in the relaxations, ``opt'' denotes the optimum, ``time'' denotes running time in seconds, and ``-'' indicates an out of memory error. We refer to Shor's relaxation (which applies when we convert \eqref{opf} to a \ac{QCQP}) as the 1st order relaxation and refer to the minimal initial relaxation defined in Chapter~\ref{mir} as the 1.5th order relaxation.

As one can see from Table \ref{c-opf}, the complex 1st order relaxation and the real 1st order relaxation give the same lower bound (up to a given precision) while the former runs slightly faster. The complex 1.5th order relaxation typically gives a looser bound than the real 1.5th order relaxation whereas it runs faster by a factor of $1\sim8$. In addition, the 1st order relaxation is able to certify global optimality for 4 out of all 9 instances. For the remaining 5 instances, the complex 1.5th order relaxation is able to certify global optimality for 3 instances and the real 1.5th order relaxation is able to certify global optimality for 4 instances.

\begin{table}[htbp]
\caption{The complex versus real hierarchy on AC-OPF instances under typical operating conditions.}\label{c-opf}
\renewcommand\arraystretch{1.2}
\centering
%\footnotesize
\resizebox{\linewidth}{!}{
\begin{tabular}{c|c|cccc|cccc}
		\hline
		\multirow{2}*{case name}&\multirow{2}*{order}&\multicolumn{4}{c|}{{\bf Complex}}&\multicolumn{4}{c}{{\bf Real}}\\
		\cline{3-10}
		&&mb&opt&time&gap&mb&opt&time&gap\\
%		\hline
%		14\_ieee&1st&6&$2.1781\text{e}3$&0.07&$0.00\%$&6&$2.1781\text{e}3$&0.09&$0.00\%$\\
		\hline
		\multirow{2}*{30\_ieee}&1st&8&$7.5472\text{e}3$&0.12&$8.06\%$&8&$7.5472\text{e}3$&0.15&$8.06\%$\\
		%\cline{2-10}
		&1.5th&12&$8.2073\text{e}3$&0.66&$0.02\%$&22&$8.2085\text{e}3$&0.97&$0.00\%$\\
		\hline
		\multirow{2}*{39\_epri}&1st&8&$1.3565\text{e}4$&0.17&$2.00\%$&8&$1.3565\text{e}4$&0.22&$2.00\%$\\
		\cline{2-10}
		&1.5th&14&$1.3765\text{e}4$&1.08&$0.55\%$&25&$1.3842\text{e}4$&1.12&$0.00\%$\\
		\hline
%		57\_ieee&1st&12&$3.7588\text{e}4$&0.27&0.00\%&12&$3.7588\text{e}4$&0.32&0.00\%\\
%		\hline
		\multirow{2}*{89\_pegase}&1st&24&1.0670e5&0.72&0.55\%&24&1.0670e5&0.74&0.55\%\\
		%\cline{2-10}
		&1.5th&96&$1.0709\text{e}5$&263&0.19\%&184&$1.0715\text{e}5$&1232&0.13\%\\
		\hline
		\multirow{2}*{118\_ieee}&1st&10&9.6900e4&0.49&0.32\%&10&9.6901e4&0.57&0.32\%\\
		%\cline{2-10}
		&1.5th&20&$9.7199\textrm{e}4$&5.22&0.02\%&37&9.7214e4&8.78&0.00\%\\
		\hline
		\multirow{2}*{162\_ieee\_dtc}&1st&28&$1.0164\text{e}5$&1.49&$5.96\%$&28&$1.0164\text{e}5$&1.51&$5.96\%$\\
		%\cline{2-10}
		&1.5th&40&$1.0249\text{e}5$&17.1&$5.17\%$&74&$1.0645\text{e}5$&87.5&$1.51\%$\\
%		\cline{2-10}
%		&2nd&146&-&-&-&282&-&-&-\\
		\hline
		\multirow{2}*{179\_goc}&1st&10&7.5016e5&0.72&0.55\%&10&7.5016e5&0.77&0.55\%\\
		%\cline{2-10}
		&1.5th&20&$7.5078\text{e}5$&6.77&0.46\%&37&$7.5382\text{e}5$&10.6&0.06\%\\
		\hline
		\multirow{2}*{300\_ieee}&1st&14&5.5424e5&1.41&1.94\%&16&5.5424e5&1.49&1.94\%	\\
		%\cline{2-10}
		&1.5th&22&$5.6455\text{e}5$&19.1&0.12\%&40&$5.6522\text{e}5$&27.3&0.00\%\\
		\hline
		\multirow{2}*{1354\_pegase}&1st&26&1.2172e6&10.9&3.30\%&26&1.2172e6&13.1&3.30\%\\
		%\cline{2-10}
		&1.5th&26&$1.2304\text{e}6$&255&2.29\%&49&1.2514e6&392&0.59\%\\
		\hline
%		2383wp\_k&1st&48&$1.8620\text{e}6$&36.2&0.33\%&50&1.8617e6&43.4&0.35\%\\
%		\hline
		\multirow{2}*{2869\_pegase}&1st&26&2.4387e6&47.2&0.98\%&26&2.4388e6&67.3&0.97\%\\
		%\cline{2-10}
		&1.5th&98&$2.4586\text{e}6$&1666&0.17\%&191&-&-&-\\
		\hline
		% 9241\_pegase&1st&32&6.1306e6&613&1.80\%&64&$6.1330\text{e}6^*$&982&1.76\%\\
		% \hline
		% 13659\_pegase&1st&32&8.8706e6&1098&0.87\%&64&$8.8707\text{e}6^*$&1808&0.86\%\\
		% \hline
\end{tabular}}
\end{table}

As the 1st order relaxation is already able to certify global optimality for a large number of AC-OPF instances, we now focus on more challenging AC-OPF instances for which the 1st order relaxation yields an optimality gap greater than $1.00\%$.
The related data of these selected AC-OPF instances are displayed in Table \ref{ac-opf1}, in which ``var'' denotes the number of variables, ``cons'' denotes the number of constraints, and ``mc'' denotes the maximal size of variable cliques.

Since the real relaxations typically yield tighter lower bounds when the relaxation order is greater than one, we only investigate the real relaxations here.
Particularly, we solve the \ac{CSSOS} hierarchy with $r=2$ and the \ac{CS-TSSOS} hierarchy with $r=2,s=1$ for these AC-OPF instances, and report the results in Table \ref{ac-opf2}. As the table shows,
\ac{CS-TSSOS} is more efficient and scales much better with the problem size than \ac{CSSOS}.
In particular, \ac{CS-TSSOS} succeeds in reducing the optimality gap to less than $1.00\%$ for all instances.

\begin{table}[htbp]
\caption{The data of selected AC-OPF instances.}\label{ac-opf1}
\renewcommand\arraystretch{1.2}
\centering
\resizebox{\linewidth}{!}{
\begin{tabular}{c|c|c|c|c|cc}
\hline
\multirow{2}*{case name}&\multirow{2}*{var}&\multirow{2}*{cons}&\multirow{2}*{mc}&\multirow{2}*{AC}&\multicolumn{2}{c}{Shor}\\
\cline{6-7}
&&&&&opt&gap\\
\hline
3\_lmbd\_api&$12$&$28$&$6$&$1.1242\text{e}4$&$1.0417\text{e}4$&$7.34\%$\\
\hline
5\_pjm&$20$&$55$&$6$&$1.7552\text{e}4$&$1.6634\text{e}4$&$5.22\%$\\
\hline
24\_ieee\_rts\_sad&$114$&$315$&$14$&$7.6943\text{e}4$&$7.3592\text{e}4$&$4.36\%$\\
\hline
30\_as\_api&$72$&$297$&$8$&$4.9962\text{e}3$&$4.9256\text{e}3$&$1.41\%$\\
\hline
73\_ieee\_rts\_sad&$344$&$971$&$16$&$2.2775\text{e}5$&$2.2148\text{e}5$&$2.75\%$\\
\hline
% 118\_ieee\_api&$344$&$1325$&$21$&$2.4205\text{e}5$&$2.1504\text{e}5$&$11.16\%$\\
% \Hbbline
% 118\_ieee\_sad&$344$&$1325$&$21$&$1.0522\text{e}5$&$1.0181\text{e}5$&$3.24\%$\\
% \Hbbline
162\_ieee\_dtc\_api&$348$&$1809$&$21$&$1.2100\text{e}5$&$1.1928\text{e}5$&$1.42\%$\\
\hline
240\_pserc&$766$&$3322$&$16$&$3.3297\text{e}6$&$3.2818\text{e}6$&$1.44\%$\\
\hline
500\_tamu\_api&$1112$&$4613$&$20$&$4.2776\text{e}4$&$4.2286\text{e}4$&$1.14\%$\\
\hline
793\_goc&$1780$&$7019$&$18$&$2.6020\text{e}5$&$2.5636\text{e}5$&$1.47\%$\\
\hline
1888\_rte&$4356$&$18257$&$26$&$1.4025\text{e}6$&$1.3748\text{e}6$&$1.97\%$\\
\hline
3022\_goc&$6698$&$29283$&$50$&$6.0143\text{e}5$&$5.9278\text{e}5$&$1.44\%$\\
\hline
\end{tabular}}
\end{table}

\begin{table}[htbp]
\caption{The CSSOS versus CS-TSSOS hierarchy on AC-OPF instances.}\label{ac-opf2}
\renewcommand\arraystretch{1.2}
\centering
%\footnotesize
\resizebox{\linewidth}{!}{
\begin{tabular}{c|cccc|cccc}
\hline
\multirow{2}*{case name}&\multicolumn{4}{c|}{CSSOS}&\multicolumn{4}{c}{CS-TSSOS}\\
\cline{2-9}
&mb&opt&time&gap&mb&opt&time&gap\\
\hline
3\_lmbd\_api&$28$&$1.1242\text{e}4$&$0.21$&$0.00\%$&$22$&$1.1242\text{e}4$&$0.09$&$0.00\%$\\
\hline
5\_pjm&$28$&$1.7543\text{e}4$&$0.56$&$0.05\%$&$22$&$1.7543\text{e}4$&$0.30$&$0.05\%$\\
\hline
24\_ieee\_rts\_sad&$120$&$7.6943\text{e}4$&$94.9$&$0.00\%$&$39$&$7.6942\text{e}4$&$14.8$&$0.00\%$\\
\hline
30\_as\_api&$45$&$4.9927\text{e}3$&$4.43$&$0.07\%$&$22$&$4.9920\text{e}3$&$2.69$&$0.08\%$\\
\hline
73\_ieee\_rts\_sad&$153$&$2.2775\text{e}5$&$504$&$0.00\%$&$44$&$2.2766\text{e}5$&$71.5$&$0.04\%$\\
\hline
% 118\_ieee\_api&$253$&$-$&$-$&$-$&$31$&$2.4180\text{e}5$&$82.7$&$0.11\%$&min\\
% \Hbbline
% 118\_ieee\_sad&$253$&$-$&$-$&$-$&$73$&$1.0470\text{e}5$&$169$&$0.50\%$&max\\
%\Hbbline
162\_ieee\_dtc\_api&$253$&-&-&-&$34$&$1.2096\text{e}5$&$201$&$0.03\%$\\
\hline
240\_pserc&$153$&$3.3072\text{e}6$&$585$&$0.68\%$&$44$&$3.3042\text{e}6$&$33.9$&$0.77\%$\\
\hline
500\_tamu\_api&$231$&$4.2413\text{e}4$&$3114$&$0.85\%$&$39$&$4.2408\text{e}4$&$46.6$&$0.86\%$\\
\hline
793\_goc&$190$&$2.5938\text{e}5$&$563$&$0.31\%$&$33$&$2.5932\text{e}5$&$66.1$&$0.34\%$\\
\hline
1888\_rte&$378$&-&-&-&$27$&$1.3953\text{e}6$&$934$&$0.51\%$\\
\hline
3022\_goc&$1326$&-&-&-&$76$&$5.9858\text{e}5$&$1886$&$0.47\%$\\
\hline
\end{tabular}}
\end{table}

\section{Notes and sources}
The complex moment-HSOS hierarchy was initially introduced and studied
in \citeopf{josz2015moment,josz2018lasserre}, which has been shown to have advantages over the real hierarchy for certain \ac{CPOP}s (e.g., a simplified version of the AC-OPF problem).

The AC-OPF is a fundamental problem in power systems, which has been extensively studied in recent years; for a detailed introduction and recent developments, the reader is referred to the survey \citeopf{bienstock2020} and references therein. Since 2006, several convex relaxation schemes (e.g., second-order cone relaxations (SOCR) \citeopf{jabr2006radial}, quadratic convex relaxations (QCR) \citeopf{cof}, tight-and-cheap conic
relaxations (TCR) \citeopf{bingane2018tight} and semidefinite relaxations (SDR) \citeopf{bai2008semidefinite}) have been proposed to provide lower bounds for the AC-OPF which can be then used to certify global optimality of local optimal solutions. While these relaxations (SOCR, QCR, TCR, SDR) could be scalable to problems of large size and prove to be tight for quite a few cases \citeopf{baba2019,cof,eltved2019robustness}, they yield significant optimality gaps on a large number of cases\footnote{The reader may find related results on benchmarking SOCR and QR at \href{https://github.com/power-grid-lib/pglib-opf/blob/master/BASELINE.md}{https://github.com/power-grid-lib/pglib-opf/blob/master/BASELINE.md}.}. To tackle these more challenging cases, it is then mandatory to go to higher steps of the \ac{moment-SOS} hierarchy which can provide tighter lower bounds.
Along with this line recently in \citeopf{gopinath2020proving}, Gopinath et al. certified $1$\% global optimality for all AC-OPF instances with up to $300$ buses from the AC-OPF library \href{https://github.com/power-grid-lib/pglib-opf}{PGLiB} using an \ac{SDP}-based bound tightening approach.
Relying on the complex \ac{moment-SOS} hierarchy combined with a multi-order technique, Josz and Molzahn certified $0.05$\% global optimality for certain $2000$-bus cases on a simplified AC-OPF model \citeopf{josz2018lasserre}.
A comprehensive numerical study on AC-OPF instances from \href{https://github.com/power-grid-lib/pglib-opf}{PGLiB} with up to tens of thousands of variables and constraints via the \ac{CS-TSSOS} hierarchy could be found in \citeopf{WANG2022108683}.

%\bibliographystyleopf{alpha}
%\bibliographyopf{preface,sdp,sparsemat,pop,cs,roundoff,lip,ncsparse,ts,cstssos,opf,jsr,misc,app}
\providecommand{\etalchar}[1]{$^{#1}$}

\chapter[Exploiting term sparsity in noncommutative optimization]{Exploiting term sparsity in noncommutative polynomial optimization}\label{chap:ncts}
In this chapter, we generalize the methodology of exploiting \ac{TS} to noncommutative polynomial optimization. For the sake of conciseness, we consider the problem of eigenvalue optimization over noncommutative polynomials and omit the proofs.

\section{Eigenvalue optimization with term sparsity}\label{sec3:ncts}
Recall that the eigenvalue optimization problem is defined by
\begin{align}\label{constr_eigmin:ncts}
\lambda_{\min}(f, \frakg)\coloneqq \inf\{\langle f(\underline{A}) \vb,\vb \rangle:\underline{A}\in{\cD_{\frakg}^\infty},\|\vb\|=1\},
\end{align}
for $f \in \SymRX$ and $\frakg = \{g_1,\dots,g_{m} \} \subseteq \SymRX$. Let
\begin{equation}\label{csupp:ncts}
	\sA = \supp(f)\cup\bigcup_{j=1}^m\supp(g_j).
\end{equation}
As before, we set $g_0\coloneqq 1$, and let $d_j=\lceil\deg(g_j)/2\rceil$ for $j\in\{0\}\cup[m]$ and $r_{\min}=\max\{\lceil\deg(f)/2\rceil,d_1,\ldots,d_m\}$. Fixing a relaxation order $r\ge r_{\min}$, we define a graph $G_{r}^{\text{tsp}}$ with nodes $\W_{r}$\footnote{If $\frakg=\emptyset$, then we may replace the monomial basis $\W_{r}$ with the one returned by the Newton chip method; see \citencts[\textsection 2.3]{burgdorf16}.} and edges
\begin{equation}\label{sec3-eq1:ncts}
	E(G_{r}^{\text{tsp}})=\{\{u,v\}\mid(u,v)\in \W_{r}\times \W_{r},\,u\ne v,\,u^{\star}v\in\sA\cup\W_{r}^2\},
\end{equation}
where $\W_{r}^2\coloneqq \{u^{\star}u\mid u\in\W_{r}\}$. We call $G_{r}^{\text{tsp}}$ the \ac{tsp} graph associated with the support $\sA$.

For a graph $G(V,E)$ with $V\subseteq\langle\underline{x}\rangle$ and $g\in\R\langle\underline{x}\rangle$, let us define
\begin{equation}
	\supp_{g}(G)\coloneqq \{u^{\star}wv\mid u=v\in V\text{ or }\{u,v\}\in E, w\in\supp(g)\}.
\end{equation}
Let $G_{r,0}^{(0)}=G_{r}^{\text{tsp}}$ and $G_{r,j}^{(0)}$ be the empty graph with nodes $V_{r,j}\coloneqq \W_{r-d_j}$ for $j\in[m]$. Then for each $j\in\{0\}\cup[m]$, we iteratively define a sequence of graphs $(G_{r,j}^{(s)}(V_{r,j},E_{r,j}^{(s)}))_{s\ge1}$ via two successive operations:\\
(1) {\bf support extension}. Let $F_{r,j}^{(s)}$ be the graph with nodes $V_{r,j}$ and
\begin{equation}\label{sec3-eq2:ncts}
	\begin{split}
	E(F_{r,j}^{(s)})=&\{\{u,v\}\mid(u,v)\in V_{r,j}\times V_{r,j},\,u\ne v,\\
		&u^{\star}\supp(g_j)v\cap\bigcup_{j=0}^m\supp_{g_j}(G_{r,j}^{(s-1)})\ne\emptyset\},
	\end{split}
\end{equation}
where $u^{\star}\supp(g_j)v\coloneqq \{u^{\star}wv\mid w\in\supp(g_j)\}$.\\
(2) {\bf chordal extension}. Let
\begin{equation}\label{sec3-graph:ncts}
	G_{r,j}^{(s)}\coloneqq (F_{r,j}^{(s)})'.
\end{equation}
By construction, one has $G_{r,j}^{(s)}\subseteq G_{r,j}^{(s+1)}$ for all $j,s$. Therefore, for every $j$, the sequence of graphs $(G_{r,j}^{(s)})_{s\ge1}$ stabilizes after a finite number of steps.

Let $t_j=|\W_{r-d_j}|$ for $j\in\{0\}\cup[m]$.
Then by replacing the \ac{PSD} constraint $\M_{r-d_j}(g_j\y)\succeq0$ with the weaker constraint $\B_{G_{r,j}^{(s)}}\circ \M_{r-d_j}(g_j\y)\in\Pi_{G_{r,j}^{(s)}}(\Sbb^+_{t_j})$ for $j\in\{0\}\cup[m]$ in \eqref{eq:constr_eigmin_primal}, we obtain the following series of sparse moment relaxations for \eqref{constr_eigmin:ncts} indexed by $s\ge1$:
\begin{equation}\label{cpop-seigen1:ncts}
\begin{array}{rll}
\lambda_{\text{ts}}^{r,s}(f, \frakg)\coloneqq &\inf\limits_{\y} &L_{\y}(f)\\
&\rm{ s.t.}&\B_{G_{r,0}^{(s)}}\circ \M_{r}(\y)\in\Pi_{G_{r,0}^{(s)}}(\Sbb^+_{t_0})\\
&&\B_{G_{r,j}^{(s)}}\circ \M_{r-d_j}(g_j\y)\in\Pi_{G_{r,j}^{(s)}}(\Sbb^+_{t_j}),\quad j\in[m]\\
&&y_{1}=1
\end{array}
\end{equation}
We call $s$ the {\em sparse order}. For each $s\ge1$, the dual of \eqref{cpop-seigen1:ncts} reads as
\begin{equation}\label{cpop-seigen2:ncts}
\begin{cases}
\sup\limits_{\G_j,b}&b\\
\rm{ s.t.}&\sum_{j=0}^m\langle \G_j,\D_{w}^j\rangle+b\delta_{1w}=f_{w},\quad\forall w\in\bigcup_{j=0}^m\supp_{g_j}(G_{r,j}^{(s)})\\
&\G_j\in\Sbb^+_{t_j}\cap\Sbb_{G_{r,j}^{(s)}},\quad j\in\{0\}\cup[m]
\end{cases}
\end{equation}
where $\{\D_{w}^j\}_{j,w}$ are appropriate matrices satisfying $\M_{r-d_j}(g_j\y)=\sum_{w}\D_{w}^jy_{w}$.
We call the \ac{TS}-adapted moment-\ac{SOHS} relaxations \eqref{cpop-seigen1:ncts}--\eqref{cpop-seigen2:ncts} the \emph{NCTSSOS hierarchy} associated with \eqref{constr_eigmin:ncts}.

%For $\varepsilon>0$, we define the {\em nc $\varepsilon$-neighborhood} of $0$ to be
%\begin{equation}
%\mathcal{N}_{\varepsilon}\coloneqq \bigcup_{k\in\N^*}\{\underline{A}=(A_1,\ldots,A_n)\in\Sbb_k^n\mid\varepsilon^2-\sum_{i=1}^nA_i^2\succeq0\}.
%\end{equation}

\begin{theoremf}\label{cts-thm1:ncts}
Let $\{f\}\cup\frakg\subseteq\Sym\,\R\langle\underline{x}\rangle$. Then the following hold:
\begin{enumerate}[(1)]
\item Suppose that $\cD_{\frakg}$ contains an nc $\varepsilon$-neighborhood of $0$. Then for all $r,s$, there is no duality gap between \eqref{cpop-seigen1:ncts} and its dual \eqref{cpop-seigen2:ncts}.
\item Fixing a relaxation order $r\ge r_{\min}$, the sequence $(\lambda_{\text{ts}}^{r,s}(f, \frakg))_{s\ge1}$ is monotonically non-decreasing and $\lambda_{\text{ts}}^{r,s}(f, \frakg)\le\lambda^{r}(f, \frakg)$ for all $s$ (with $\lambda^{r}(f, \frakg)$ being defined in \eqref{eq:constr_eigmin_primal}).
\item Fixing a sparse order $s\ge1$, the sequence $(\lambda_{\text{ts}}^{r,s}(f, \frakg))_{r\ge r_{\min}}$ is monotonically non-decreasing.
\item If the maximal chordal extension is chosen in \eqref{sec3-graph:ncts}, then $(\lambda_{\text{ts}}^{r,s}(f, \frakg))_{s\ge 1}$ converges to $\lambda^{r}(f, \frakg)$ in finitely many steps.
\end{enumerate}
\end{theoremf}

Following from Theorem \ref{cts-thm1:ncts}, we have the following two-level hierarchy of lower bounds for the optimum $\lambda_{\min}(f, \frakg)$ of \eqref{constr_eigmin:ncts}:
\begin{equation}\label{cliquehierc:ncts}
\begin{matrix}
\lambda_{\text{ts}}^{r_{\min},1}(f, \frakg)&\le&\lambda_{\text{ts}}^{r_{\min},2}(f, \frakg)&\le&\cdots&\le&\lambda^{r_{\min}}(f, \frakg)\\
\vge&&\vge&&&&\vge\\
\lambda_{\text{ts}}^{r_{\min}+1,1}(f, \frakg)&\le&\lambda_{\text{ts}}^{r_{\min}+1,2}(f, \frakg)&\le&\cdots&\le&\lambda^{r_{\min}+1}(f, \frakg)\\
\vge&&\vge&&&&\vge\\
\vdots&&\vdots&&\vdots&&\vdots\\
\vge&&\vge&&&&\vge\\
\lambda_{\text{ts}}^{r,1}(f, \frakg)&\le&\lambda_{\text{ts}}^{r,2}(f, \frakg)&\le&\cdots&\le&\lambda^{r}(f, \frakg)\\
\vge&&\vge&&&&\vge\\
\vdots&&\vdots&&\vdots&&\vdots\\
\end{matrix}
\end{equation}

\begin{example}\label{sec3-ex4:ncts}
Consider $f=2-x^2+xy^2x-y^2+xyxy+yxyx+x^3y+yx^3+xy^3+y^3x$ and $\frakg=\{1-x^2,1-y^2\}$. The graph sequence $(G_{2,0}^{(s)})_{s\ge1}$ for $f$ and $\frakg$ is given in Figure~\ref{ex4-1:ncts}. In fact the graph sequence $(G_{2,j}^{(s)})_{s\ge1}$ stabilizes at $s=2$ for $j=0,1,2$ (with approximately smallest chordal extensions). Using {\tt TSSOS}, we obtain that $\lambda_{\text{ts}}^{2,1}(f, \frakg)\approx-2.55482$, $\lambda_{\text{ts}}^{2,2}(f, \frakg)=\lambda^{2}(f, \frakg)\approx-2.05111$.

\begin{figure}[htbp]
\centering
\begin{minipage}{0.45\linewidth}
{\tiny
\begin{tikzpicture}[every node/.style={circle, draw=blue!50, thick, minimum size=6mm}]
\node (n1) at (90:2) {$1$};
\node (n2) at (162:2) {$x^2$};
\node (n3) at (234:2) {$xy$};
\node (n4) at (306:2) {$yx$};
\node (n5) at (18:2) {$y^2$};
\draw (n1)--(n2);
\draw (n2)--(n3);
\draw (n3)--(n4);
\draw (n4)--(n5);
\draw (n5)--(n1);
\draw[dashed] (n3)--(n5);
\draw[dashed] (n2)--(n5);
\node[yshift=90] (n6) at (180:1) {$x$};
\node[yshift=90] (n7) at (0:1) {$y$};
\end{tikzpicture}}
\end{minipage}
\begin{minipage}{0.45\linewidth}
{\tiny
\begin{tikzpicture}[every node/.style={circle, draw=blue!50, thick, minimum size=6mm}]
\node (n1) at (90:2) {$1$};
\node (n2) at (162:2) {$x^2$};
\node (n3) at (234:2) {$xy$};
\node (n4) at (306:2) {$yx$};
\node (n5) at (18:2) {$y^2$};
\draw (n1)--(n2);
\draw (n2)--(n3);
\draw (n3)--(n4);
\draw (n4)--(n5);
\draw (n5)--(n1);
\draw (n3)--(n5);
\draw (n2)--(n5);
\draw (n1)--(n3);
\draw (n1)--(n4);
\node[yshift=90] (n6) at (180:1) {$x$};
\node[yshift=90] (n7) at (0:1) {$y$};
\draw (n6)--(n7);
\end{tikzpicture}}
\end{minipage}
\caption{The graph sequence $(G_{2,0}^{(s)})_{s\ge1}$ in Example \ref{sec3-ex4:ncts}: left for $s=1$; right for $s=2$. The dashed edges are added after a chordal extension.}\label{ex4-1:ncts}
\end{figure}
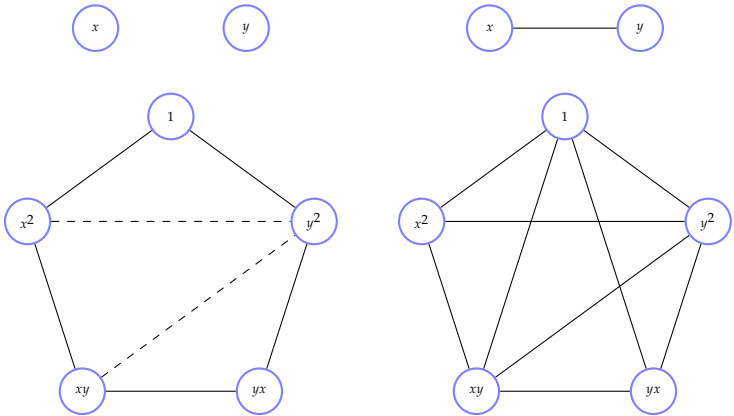
\end{example}

%\section{Eigenvalue optimization for noncommutative polynomials with correlative-term sparsity}
\section{Combining correlative and term sparsity}
\label{cts:ncts}

Combining \ac{CS} with \ac{TS} for eigenvalue optimization proceeds in a similar manner as for the commutative case in Chapter~\ref{chap8:sec1}.

Let $f=\sum_{w}f_{w}w\in\Sym\,\R\langle\underline{x}\rangle$ and $\frakg=\{g_1,\ldots,g_m\}\subseteq\Sym\,\R\langle\underline{x}\rangle$. Suppose that $G^{\text{csp}}$ is the \ac{csp} graph associated with $f$ and $\frakg$, and $(G^{\text{csp}})'$ is a chordal extension of $G^{\text{csp}}$. Let $\{I_k\}_{k\in[p]}$ be the maximal cliques of $(G^{\text{csp}})'$ with cardinality being denoted by $n_k,k\in[p]$. Then the set of variables $\underline{x}$ is decomposed into $\underline{x}(I_1), \underline{x}(I_2), \ldots, \underline{x}(I_p)$. Let $J_1,\ldots,J_p$ be defined as in Chapter~\ref{chap6:sec2}.

Now we consider the \ac{tsp} for each subsystem involving the variables $\underline{x}(I_k)$, $k\in[p]$ respectively as follows. Let
\begin{equation}\label{cts-eq1:ncts}
\sA\coloneqq  \supp(f)\cup\bigcup_{j=1}^m\supp(g_j)\text{ and }
\sA_k\coloneqq  \{w\in\sA\mid\var(w)\subseteq \underline{x}(I_k)\} ,
\end{equation}
for $k\in[p]$. As before, let $g_0=1$, $d_j=\lceil\deg(g_j)/2\rceil$, $j\in\{0\}\cup[m]$ and $r_{\min}=\max\{\lceil\deg(f)/2\rceil,d_1,\ldots,d_m\}$. Fix a relaxation order $r\ge r_{\min}$. Let $\W_{r-d_j,k}$ be the standard monomial basis of degree $\le r-d_j$ with respect to the variables $\underline{x}(I_k)$ and $G_{r,k}^{\text{tsp}}$ be the \ac{tsp} graph with nodes $\W_{r,k}$ associated with $\sA_k$ defined as in Chapter~\ref{sec3:ncts}.
Let $G_{r,k,0}^{(0)}=G_{r,k}^{\text{tsp}}$ and $G_{r,k,j}^{(0)}$ be the empty graph with nodes $V_{r,k,j}\coloneqq \W_{r-d_j,k}$ for $j\in J_k, k\in[p]$. Letting
\begin{equation}\label{cts-eq2:ncts}
\sC_{r}^{(s)}\coloneqq \bigcup_{k=1}^p\bigcup_{j\in \{0\}\cup J_k}\supp_{g_j}(G_{r,k,j}^{(s)}),
\end{equation}
we iteratively define a sequence of graphs $(G_{r,k,j}^{(s)}(V_{r,k,j},E_{r,k,j}^{(s)}))_{s\ge1}$ for each $j\in\{0\}\cup J_k,k\in[p]$ by
\begin{equation}\label{cts-eq3:ncts}
	G_{r,k,j}^{(s)}\coloneqq (F_{r,k,j}^{(s)})',
\end{equation}
where $F_{r,k,j}^{(s)}$ is the graph with nodes $V_{r,k,j}$ and edges
\begin{equation}\label{cts-eq4:ncts}
E(F_{r,k,j}^{(s)})=\{\{u,v\}\mid u\ne v\in V_{r,k,j}, u^{\star}\supp(g_j)v\cap\sC_{r}^{(s-1)}\ne\emptyset\}.
\end{equation}

Let $t_{k,j}=|\W_{r-d_j,k}|$ for all $k,j$. Then for each $s\ge1$ (called the {\em sparse order}), the moment relaxation based on correlative-term sparsity for \eqref{constr_eigmin:ncts} is given by
\begin{equation}\label{cts-eq5:ncts}
\begin{array}{rll}
\lambda_{\text{cs-ts}}^{r,s}(f, \frakg)\coloneqq 
&\inf\limits_{\y}&L_{\y}(f)\\
&\rm{ s.t.}&\B_{G_{r,k,0}^{(s)}}\circ \M_{r}(\y, I_k)\in\Pi_{G_{r,k,0}^{(s)}}(\Sbb^+_{r_{k,0}}), \quad k\in[p]\\
&&\B_{G_{r,k,j}^{(s)}}\circ \M_{r-d_j}(g_j\y, I_k)\in\Pi_{G_{r,k,j}^{(s)}}(\Sbb^+_{r_{k,j}}), \quad j\in J_k,k\in[p]\\
&&y_1=1
\end{array}
\end{equation}

For all $k,j$, let us write $\M_{r-d_j}(g_j\y, I_k)=\sum_{w}\D_{w}^{k,j}y_{w}$ for appropriate matrices $\{\D_{w}^{k,j}\}_{k,j,w}$. Then for each $s\ge1$, the dual of \eqref{cts-eq5:ncts} reads as
\begin{equation}\label{cts-eq6:ncts}
\begin{cases}
\sup\limits_{\G_{k,j},b}&b\\
\,\rm{ s.t.}\, &\sum_{k=1}^p\sum_{j\in \{0\}\cup J_k}\langle \G_{k,j},\D_{w}^{k,j}\rangle+b\delta_{1w}=f_{w},\quad\forall w\in\sC_{r}^{(s)}\\
&\G_{k,j}\in\Sbb^+_{t_{k,j}}\cap\Sbb_{G_{r,k,j}^{(s)}},\quad j\in \{0\}\cup J_k, k\in[p]
\end{cases}
\end{equation}
where $\sC_{r}^{(s)}$ is defined in \eqref{cts-eq2:ncts}.

The properties of the relaxations \eqref{cts-eq5:ncts}--\eqref{cts-eq6:ncts} are summarized in the following theorem.
\begin{theoremf}\label{cts-prop1:ncts}
Assume that $\{f\}\cup \frakg\subseteq\Sym\,\R\langle\underline{x}\rangle$. Then the following hold:
\begin{enumerate}[(1)]
\item Fixing a relaxation order $r\ge r_{\min}$, the sequence $(\lambda_{\text{cs-ts}}^{r,s}(f, \frakg))_{s\ge1}$ is monotonically non-decreasing and $\lambda_{\text{cs-ts}}^{r,s}(f, \frakg)\le\lambda_{\text{cs}}^{r}(f, \frakg)$ for all $s\ge1$ (with $\lambda_{\text{cs}}^{r}(f, \frakg)$ being defined in Chapter~\ref{eq:sparse_constr_eigmin_primal}).
\item Fixing a sparse order $s\ge 1$, the sequence $(\lambda_{\text{cs-ts}}^{r,s}(f, \frakg))_{r\ge r_{\min}}$ is monotonically non-decreasing.
\item If the maximal chordal extension is chosen in \eqref{cts-eq3:ncts}, then $(\lambda_{\text{cs-ts}}^{r,s}(f, \frakg))_{s\ge 1}$ converges to $\lambda_{\text{cs}}^{r}(f, \frakg)$ in finitely many steps.
\end{enumerate}
\end{theoremf}

\section{Numerical experiments}
In this section, we present numerical results of the proposed NCTSSOS hierarchies. The tool {\tt NCTSSOS} to implement these hierarchies is available at 
\vspace{2pt}
\centerline{https://github.com/wangjie212/NCTSSOS}
\vspace{2pt}
{\tt NCTSSOS} employs $\mosek$ as an \ac{SDP} solver.
All numerical examples were computed on an Intel Core i5-8265U@1.60GHz CPU with 8GB RAM memory. In the following, ``mb'' denotes the maximal size of PSD blocks, ``opt'' denotes the optimum, ``time'' denotes running time in seconds, and ``-'' indicates an out of memory error.

Let $\D$ be the semialgebraic set defined by $\frakg=\{1-X_1^2,\ldots,1-X_n^2,X_1-1/3,\ldots,X_n-1/3\}$, and consider the optimization problem of minimizing the eigenvalue of the nc Broyden banded function on $\D$, where the nc Broyden banded function is defined by
\begin{equation*}
f_{\text{Bb}}(\x)=\sum_{i=1}^n(2x_i+5x_i^3+1-\sum_{j\in J_i}(x_j+x_j^2))^{\star}(2x_i+5x_i^3+1-\sum_{j\in J_i}(x_j+x_j^2)),
\end{equation*}
where $J_i=\{j\mid j\ne i, \max(1,i-5)\le j\le\min(n,i+1)\}$.

We compute $\lambda_{\text{cs-ts}}^{3,1}(f,\frakg)$ using approximately smallest chordal extensions for \ac{TS} and present the results in Table \ref{ceigen_Bb}, indicated by ``CS+TS''.
To show the benefits of NCTSSOS against the \ac{CS}-based approach developed in Chapter~\ref{chap:ncsparse}, we also display the results for the latter approach in the table, indicated by ``CS''.
% The results are reported in Table \ref{eigen_Bb}. 
% For the nc Broyden banded function, we use the minimum relaxation order $\hat{d}=3$; for the nc generalized Rosenbrock function and the nc modified chained singular function, we use the minimum relaxation order $\hat{d}=2$. 
% For the sparse approach, we use approximate minimum chordal extensions and fix the sparse order $k=1$. 
It is evident from the table that NCTSSOS is much more scalable than the \ac{CS}-based approach. Actually, the latter can never be executed due to the memory limit even when the problem has only $6$ variables.
% for the nc Broyden banded function or has only $20$ variables for the nc generalized Rosenbrock function and the nc modified chained singular function.

\begin{table}[htbp]
\caption{The eigenvalue minimization for the nc Broyden banded function on $\D$ with $r=3,s=1$.}\label{ceigen_Bb}
\renewcommand\arraystretch{1.2}
\centering
\begin{tabular}{c|ccc|ccc}
\hline
\multirow{2}*{$n$}&\multicolumn{3}{c|}{CS+TS}&\multicolumn{3}{c}{CS}\\
\cline{2-7}
&mb&opt&time&mb&opt&time\\
\hline
$5$&$11$&$3.113$&$0.50$&$156$&$3.113$&$70.7$\\
\hline
$10$&$15$&$3.011$&$2.78$&$400$&-&-\\
\hline
$20$&$15$&$9.658$&$11.4$&$400$&-&-\\
\hline
$40$&$15$&$22.93$&$38.1$&$400$&-&-\\
\hline
$60$&$15$&$36.21$&$80.5$&$400$&-&-\\
\hline
$80$&$15$&$49.49$&$138$&$400$&-&-\\
\hline
$100$&$15$&$62.77$&$180$&$400$&-&-\\
\hline
\end{tabular}
\end{table}

Now we construct randomly generated examples whose \ac{csp} graph consists of $p$ maximal cliques of size $15$ as follows: let $f=\sum_{k=1}^p(h_k+h_k^{\star})/2$ where $h_k\in\R\langle x_{10k-9},\ldots,x_{10k+5}\rangle$ is a
random quartic nc polynomials with $15$ terms and coefficients being taken from $[-1,1]$, and let $\frakg=\{g_k\}_{k=1}^p$ where $g_k=1-x_{10k-9}^2-\cdots-x_{10k+5}^2$. We consider the eigenvalue minimization problem for $f$ on the multi-ball defined by $\frakg$. Let $p=100,200,300,400$ so that we obtain $4$ such instances\footnote{The polynomials are available at \url{https://wangjie212.github.io/jiewang/code.html}.}. 
We compute the sequence $(\lambda_{\text{cs-ts}}^{r,s}(f, \frakg))_{s\ge1}$ with $r=2$ and present the results of the first three steps (where we use the maximal chordal extension for the first step and use approximate smallest chordal extensions for the second and third steps, respectively) in Table \ref{ceigen_rge}. 
Again we see that NCTSSOS is much more scalable than the \ac{CS}-based approach.

\begin{table}[htbp]
\caption{The eigenvalue minimization for randomly generated examples over multi-balls with $r=2$.}\label{ceigen_rge}
\renewcommand\arraystretch{1.2}
\centering
\begin{tabular}{c|cccc|ccc}
\hline
\multirow{2}*{$n$}&\multicolumn{4}{c|}{CS+TS}&\multicolumn{3}{c}{CS}\\
\cline{2-8}
&$s$&mb&opt&time&mb&opt&time\\
\hline
\multirow{3}*{$1005$}&$1$&$25$&$-32.58$&$9.71$&\multirow{3}*{$241$}&\multirow{3}*{-}&\multirow{3}*{-}\\
&$2$&$25$&$-31.91$&$24.5$&&&\\
&$3$&$25$&$-31.71$&$40.9$&&&\\
\hline
\multirow{3}*{$2005$}&$1$&$25$&$-63.58$&$33.7$&\multirow{3}*{$241$}&\multirow{3}*{-}&\multirow{3}*{-}\\
&$2$&$25$&$-62.05$&$85.8$&&&\\
&$3$&$25$&$-61.76$&$149$&&&\\
\hline
\multirow{3}*{$3005$}&$1$&$23$&$-95.73$&$74.8$&\multirow{3}*{$241$}&\multirow{3}*{-}&\multirow{3}*{-}\\
&$2$&$23$&$-93.13$&$212$&&&\\
&$3$&$23$&$-92.71$&$396$&&&\\
\hline
\multirow{3}*{$4005$}&$1$&$25$&$-131.1$&$122$&\multirow{3}*{$241$}&\multirow{3}*{-}&\multirow{3}*{-}\\
&$2$&$25$&$-127.5$&$375$&&&\\
&$3$&$25$&$-126.8$&$687$&&&\\
\hline
\end{tabular}\\
\end{table}

\section{Polynomial Bell inequalities}
\label{sec:polbell}
The above framework can be extended to minimize the (normalized) trace of either noncommutative polynomials or a so-called \emph{trace polynomial}.
Let us denote by $\Trace$ the normalized trace operator.
A trace polynomial is a polynomial in symmetric noncommutative variables $x_1,\dots,x_n$ and traces of their products.
Thus naturally each trace polynomial $f$ has an adjoint $f^\star$.
A {\em pure trace polynomial} is a trace polynomial that is made only of traces, i.e., has no free variables $x_j$. For instance,
the trace of a trace polynomial
is a pure trace polynomial, e.g.,
\[
\begin{split}
f&=x_1x_2x_1^2-\Trace(x_2)\Trace(x_1x_2)\Trace(x_1^2x_2)x_2x_1, \\
\Trace(f)&=\Trace(x_1^3x_2)-\Trace(x_2)\Trace(x_1x_2)^2\Trace(x_1^2x_2),\\
f^\star&= x_1^2x_2x_1-\Trace(x_2)\Trace(x_1x_2)  \Trace(x_1^2x_2) x_1x_2.
\end{split}
\]
In this section we connect trace polynomial optimization to violations of nonlinear Bell inequalities, outline a few examples and prove the optimal bound on maximal violation of a covariance Bell inequality considered in the quantum information literature.

We already introduced classical (linear) Bell inequalities in Chapter \ref{sec:bell}, and saw that  $v^* (A_1\otimes B_1+A_1\otimes B_2+A_2\otimes B_1-A_2\otimes B_2)v$
is at most $2$
for all separable states $v \in \C^k\otimes \C^k$ and
$A_j,B_j \in \C^{k\times k}$ satisfying $A_j^*=A_j$, $A_j^2=I$, $B_j^*=B_j$, $B_j^2=I$.
Tsirelson's bound implies that the value is at most $2\sqrt{2}$, which is attained in particular
when $k=2$ and
$v= \frac{1}{\sqrt{2}}(e_1\otimes e_1+e_2\otimes e_2)$, with $(e_1, e_2)$ being an orthonormal basis of $\C^2$.
In general, if $v_k$ is the generalized Bell state,
$$v_k= \frac{1}{\sqrt{k}}\sum_{j=1}^k e_j\otimes e_j \in \R^k\otimes \R^k,$$
which is a maximally entangled bipartite state on $\C^k\otimes \C^k$, unique up to bipartite unitary equivalence,
then
\begin{equation}\label{e:ghz}
v_k^*(X\otimes Y) v_k = \Trace(XY)
\end{equation}
for all $X,Y\in \Sbb_k$.

While linear Bell inequalities are linear in expectation values of (products of) observables, polynomial Bell inequalities contain multivariate polynomials in expectation values of (products of) observables.
For this reason, noncommutative polynomial optimization is not suitable for studying violations of nonlinear Bell inequalities.
In contrast, trace polynomial optimization gives upper bounds on violations of polynomial Bell inequalities, at least for certain families of states, e.g., the maximally entangled bipartite states via \eqref{e:ghz}.
%We demonstrate this with the following examples.
Consider a simple quadratic Bell inequality
\begin{equation}\label{e:bell2}
\big(v^* (A_1\otimes B_2+A_2\otimes B_1)v\big)^2+
\big(v^* (A_2\otimes B_1-A_2\otimes B_2)v \big)^2
\le4 .
\end{equation}
An automatized proof of \eqref{e:bell2} for maximally entangled states of arbitrary dimension can be obtained by solving the optimization problem:
\begin{equation}\label{e:bell3}
	\begin{cases}
		\sup 
		&\left(\Trace(a_1b_2+a_2b_1)\right)^2+
		\left(\Trace(a_1b_1-a_2b_2)\right)^2 \\
		\rm{ s.t.}&a_j^2=1,b_j^2=1 \text{ for }j=1,2.
	\end{cases}
\end{equation}
We compare the value of the dense relaxation of \eqref{e:bell3} with the ones obtained after exploiting \ac{TS}. 
At relaxation order $r = 2$, we obtain a bound of $4$ (the optimal one) in the dense setting. 
The number of \ac{SDP} equality constraints is $222$ and the size of the \ac{SDP} matrix is $53$.
At the sparse order $s=1$ and using the maximal chordal extension, we obtain the same bound but the maximal block size is only 6 and the number of equality constraints is only $30$.

Another class of polynomial Bell inequalities arises from covariances of quantum correlations. Let
$\cov_v(X,Y) \coloneqq 
v^*(X\otimes Y) v - v^*(X\otimes I) v \cdot v ^*(I\otimes Y) v$.
and let us show that the value of
\begin{equation}\label{e:bell4}
\begin{split}
&\cov_v(A_1,B_1)+\cov_v(A_1,B_2)+\cov_v(A_1,B_3) \\
+&\cov_v(A_2,B_1)+\cov_v(A_2,B_2)-\cov_v(A_2,B_3)\\
+&\cov_v(A_3,B_1)-\cov_v(A_3,B_2)
\end{split}
\end{equation}
is at most 5 for every maximally entangled state.
Let
\begin{align*}
t=\,&\Trace(a_1b_1)-\Trace(a_1)\Trace(b_1)+\Trace(a_1b_2)-\Trace(a_1)\Trace(b_2)+\Trace(a_1b_3)+\Trace(a_2b_1)\\
&-\Trace(a_1)\Trace(b_3)-\Trace(a_2)\Trace(b_1)+\Trace(a_2b_2)-\Trace(a_2)\Trace(b_2)-\Trace(a_2b_3)\\
&+\Trace(a_2)\Trace(b_3)+\Trace(a_3b_1)-\Trace(a_3)\Trace(b_1)-\Trace(a_3b_2)+\Trace(a_3)\Trace(b_2).
\end{align*}
The dense relaxation of
\begin{equation}\label{e:bell5}
\sup\ t\ \text{ s.t. }\  a_j^2=1,b_j^2=1 \text{ for }j=1,2,3
\end{equation}
with $r=2$ returns 5. Therefore the value of \eqref{e:bell4} is at most 5 for every maximally entangled state, regardless of the local dimension $k$. 
This dense relaxation involves $1010$ \ac{SDP} equality constraints and an \ac{SDP} matrix of size $115$.
Exploiting \ac{TS} at the sparse order $s=1$ with the maximal chordal extension yields the same bound but requires to solve an \ac{SDP} with maximal block size $83$ and $797$ equality constraints. 
We can obtain an even better computational gain by relying on approximately smallest chordal extensions as it yields an \ac{SDP} with maximal block size $25$ and only $115$ equality constraints, $10$ times less compared to the dense case.
We refer the interested programmer to the Julia scripts displayed in Appendix \ref{sec:tssos_nc}, allowing one to retrieve these results.

\section{Notes and sources}
% Eigenvalue Minimization
The material from this chapter is mainly issued from \citencts{nctssos}.
Our framework can be extended to minimize the trace of a noncommutative polynomial over a noncommutative semialgebraic set; see \citencts[\S~5]{nctssos}.
% Polynomial Bell
Optimization over trace polynomials has been recently developed in \citencts{klep2022optimization}, where the authors present a novel Positivstellensatz  certifying positivity of trace polynomials subject to trace constraints, and
a hierarchy of semidefinite relaxations
converging monotonically to the optimum of a trace polynomial subject to tracial constraints is provided.
Trace polynomial optimization can be used to detect entanglement in multipartite Werner states  \citencts{huber2021dimension}.
The interested reader can find more information about bilocal models in \citencts{BRGP,Chaves}, covariance of quantum correlations in \citencts{PHBB} and detection of partial separability in \citencts{Uffink}.
Inequality \eqref{e:bell2} is given in \citencts{Uffink}, where it is shown to hold for all separable states, and for all 2-dimensional states.
In \citencts{NKI}, \eqref{e:bell2} is shown to hold for arbitrary states, meaning it admits no quantum violations.

In \citencts{PHBB} it is shown that while the value of \eqref{e:bell4} is at most $\frac92$ for separable states, it attains the value 5 with the Bell state $v_2$.
The authors also performed extensive numerical search within entangled states for local dimensions $k\le 5$, but no higher value of \eqref{e:bell4} was found.
They leave it as an open question whether higher dimensional entangled states could lead to larger violations \citencts[Appendix D.1(b)]{PHBB}.

%\bibliographystylencts{alpha}
%\bibliographyncts{preface,sdp,sparsemat,pop,cs,roundoff,lip,ncsparse,ts,cstssos,opf,jsr,misc,app}

%!TEX root = main.tex
\chapter{Application in stability of control-systems}\label{chap:sparsejsr}
The concept of \ac{JSR} can be viewed as a generalization of the usual spectral radius to the case of multiple matrices. 
The exact computation and even the approximation of the \ac{JSR} for a set of matrices are, however, notoriously difficult. In this chapter, we discuss how to efficiently compute upper bounds on \ac{JSR} via the \ac{SOS} approach when the matrices possess certain sparsity.

\section{Approximating JSR via SOS relaxations}
The \ac{JSR} for a set of matrices $\AA=\{A_1,\ldots,A_m\}\subseteq\R^{n\times n}$ is given by
\begin{equation}\label{jsf:def}
	\rho(\AA)\coloneqq \lim_{k\rightarrow\infty}\max_{\sigma\in\{1,\ldots,m\}^k}\|A_{\sigma_1}A_{\sigma_2}\cdots A_{\sigma_k}\|^{\frac{1}{k}} .
\end{equation}
Note that the value of $\rho(\AA)$ is independent of the choice of the norm used in \eqref{jsf:def}. 
Parrilo and Jadbabaie proposed to compute a sequence of upper bounds for $\rho(\AA)$ via \ac{SOS} relaxations. The underlying idea is based on the following theorem.
\begin{theorem}[\citejsr{parrilo}, Theorem 2.2]\label{sec2-thm1jsr}
Let $\AA=\{A_1,\ldots,A_m\}\subseteq\R^{n\times n}$ be a set of matrices, and let $p$ be a strictly positive form of degree $2r$ that satisfies
\begin{equation*}
	p(A_i\x)\le\gamma^{2r}p(\x),\quad\forall\x\in\R^n,\quad i=1,\ldots,m.
\end{equation*}
Then, $\rho(\AA)\le\gamma$.
\end{theorem}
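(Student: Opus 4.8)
The plan is to manufacture a norm-like ``gauge'' on $\R^n$ out of the form $p$, to observe that the hypothesis says precisely that each $A_i$ contracts this gauge by the factor $\gamma$, and then to transfer this contraction property to the Euclidean operator norm so as to read off a bound on $\rho(\AA)$ directly from its definition \eqref{jsf:def}. First I would dispose of the degenerate case $\gamma=0$ separately: then $p(A_i\x)\le 0$ for all $\x$, and strict positivity of $p$ forces $A_i\x=\zerob$ for every $\x$, i.e.\ $A_i=\zerob$ for each $i\in[m]$, whence $\rho(\AA)=0=\gamma$. So from now on I would assume $\gamma>0$.

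Next I would set $v(\x)\coloneqq p(\x)^{1/(2r)}$. Since $p$ is a form of degree $2r$, $v$ is positively homogeneous of degree $1$, and since $p$ is strictly positive, $v$ is continuous on $\R^n$ with $v(\x)>0$ for $\x\neq\zerob$. Compactness of the unit sphere $\sphere^{n-1}=\{\x:\|\x\|_2=1\}$ then furnishes constants $0<c_1\le c_2<\infty$ with $c_1\|\x\|_2\le v(\x)\le c_2\|\x\|_2$ for all $\x\in\R^n$. The inequality $p(A_i\x)\le\gamma^{2r}p(\x)$ is exactly $v(A_i\x)\le\gamma\,v(\x)$, so iterating along any word $\sigma=(\sigma_1,\dots,\sigma_k)\in[m]^k$ gives $v(A_{\sigma_1}\cdots A_{\sigma_k}\x)\le\gamma^k v(\x)$, and hence $\|A_{\sigma_1}\cdots A_{\sigma_k}\x\|_2\le (c_2/c_1)\,\gamma^k\,\|\x\|_2$ by the norm comparison. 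Taking the supremum over $\|\x\|_2=1$ bounds the spectral norm of every length-$k$ product by $(c_2/c_1)\gamma^k$; plugging this into \eqref{jsf:def} (which, as noted there, is independent of the chosen norm) yields $\rho(\AA)\le\lim_{k\to\infty}(c_2/c_1)^{1/k}\,\gamma=\gamma$.

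The argument is essentially routine, so there is no real obstacle; the only point that needs a little care is the passage from the ``$p$-gauge'' $v$ back to the Euclidean operator norm appearing in \eqref{jsf:def} — namely, that the equivalence-of-norms constants $c_1,c_2$ wash out after the $k$-th root limit, which they do since $(c_2/c_1)^{1/k}\to 1$. If one preferred $v$ itself to be a genuine vector norm (so that the extremal-norm interpretation is literal), one could instead invoke convexity of a suitable sublevel set of $p$, but the comparison route above sidesteps this and keeps the proof short.
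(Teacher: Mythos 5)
Your proof is correct, and since the paper itself gives no argument for this statement (it simply cites Theorem 2.2 of Parrilo–Jadbabaie), your gauge construction $v=p^{1/(2r)}$, the norm-equivalence constants from compactness of the sphere, and the iteration along words followed by taking $k$-th roots reproduce essentially the standard proof from that reference. The only (harmless) implicit point is that $\gamma\ge 0$ is assumed, which your separate treatment of $\gamma=0$ and the context of the theorem already cover.
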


By replacing positive forms with more tractable \ac{SOS} forms, Theorem \ref{sec2-thm1jsr} immediately suggests the following \ac{SOS} relaxations indexed by $r\in\N^*$ to compute a sequence of upper bounds for $\rho(\AA)$: 
\begin{equation}\label{densesosjsr}
\begin{array}{rll}
\rho_{\text{SOS},2r}(\AA)\coloneqq &\inf\limits_{p\in\R[\x]_{2r},\gamma} &\gamma\\
&\quad\,\,\rm{ s.t.} &p(\x)-\|\x\|_2^{2r}\in\Sigma_{n,2r}\\
&&\gamma^{2r}p(\x)-p(A_i\x)\in\Sigma_{n,2r},\quad i=1,\ldots,m
\end{array}
\end{equation}
The term ``$\|\x\|_2^{2r}$'' appearing in the first constraint of \eqref{densesosjsr} is added to make sure that $p$ is strictly positive.
The optimization problem \eqref{densesosjsr} can be solved via \ac{SDP} by bisection on $\gamma$. It was shown in \citejsr{parrilo} that the upper bound $\rho_{\text{SOS},2r}(\AA)$ satisfies the inequalties stated in the following theorem.
\begin{theorem}[\citejsr{parrilo}]\label{sec2-thm2jsr}
Let $\AA=\{A_1,\ldots,A_m\}\subseteq\R^{n\times n}$. For any integer $r\ge1$, one has $m^{-\frac{1}{2r}}\rho_{\text{SOS},2r}(\AA)\le\rho(\AA)\le\rho_{\text{SOS},2r}(\AA)$.
\end{theorem}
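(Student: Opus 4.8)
The plan is to prove the two inequalities in Theorem~\ref{sec2-thm2jsr} separately, with the right-hand inequality $\rho(\AA)\le\rho_{\text{SOS},2r}(\AA)$ being essentially a restatement of the earlier results, and the left-hand inequality requiring the actual work.

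First I would dispatch the upper bound. Let $(p,\gamma)$ be any feasible point of \eqref{densesosjsr}. Since $p-\|\x\|_2^{2r}\in\Sigma_{n,2r}$, the form $p$ is strictly positive (it is an \ac{SOS} form plus a strictly positive form, hence positive on $\R^n\setminus\{\mathbf{0}\}$). Likewise $\gamma^{2r}p(\x)-p(A_i\x)\in\Sigma_{n,2r}$ implies $p(A_i\x)\le\gamma^{2r}p(\x)$ for all $\x\in\R^n$ and all $i$. Thus $p$ satisfies exactly the hypotheses of Theorem~\ref{sec2-thm1jsr}, so $\rho(\AA)\le\gamma$. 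Taking the infimum over feasible $\gamma$ gives $\rho(\AA)\le\rho_{\text{SOS},2r}(\AA)$.

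The harder direction is $m^{-\frac{1}{2r}}\rho_{\text{SOS},2r}(\AA)\le\rho(\AA)$, equivalently $\rho_{\text{SOS},2r}(\AA)\le m^{\frac{1}{2r}}\rho(\AA)$. The strategy is to exhibit, for every $\gamma>\rho(\AA)$, a feasible solution of \eqref{densesosjsr} with objective value at most $m^{\frac{1}{2r}}\gamma$. The natural candidate is built from a Barabanov-type / extremal construction: for $\gamma>\rho(\AA)$ the scaled set $\AA/\gamma=\{A_i/\gamma\}$ has \ac{JSR} strictly less than $1$, so all products converge to zero geometrically; define
\begin{equation*}
q(\x)\coloneqq\sum_{k=0}^{\infty}\sum_{\sigma\in\{1,\dots,m\}^k}\gamma^{-2rk}\,\|A_{\sigma_1}\cdots A_{\sigma_k}\x\|_2^{2r},
\end{equation*}
which is a finite, positive, degree-$2r$ form because the $k$-th layer is bounded by $m^k(\rho(\AA)/\gamma+\epsilon)^{2rk}\|\x\|_2^{2r}$ and $m(\rho(\AA)/\gamma)^{2r}<1$ after possibly enlarging $\gamma$ slightly — so convergence needs $\gamma^{2r}>m\,\rho(\AA)^{2r}$, i.e.\ $\gamma>m^{1/(2r)}\rho(\AA)$. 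One checks $q(\x)\ge\|\x\|_2^{2r}$ (the $k=0$ term) and the one-step recursion $q(A_i\x)\le\gamma^{2r}q(\x)$, obtained by peeling off the first index and re-indexing the sum. The remaining subtlety is that $q$ is only guaranteed to be \emph{positive}, not \emph{\ac{SOS}}; however $q$ is visibly a sum of $2r$-th powers of linear forms composed with linear maps, i.e.\ a sum of terms $\|L\x\|_2^{2r}$ with $\|L\x\|_2^{2r}=\big((L\x)^\intercal(L\x)\big)^r$, which is itself an \ac{SOS} form. Hence $q\in\Sigma_{n,2r}$, and similarly each $\gamma^{2r}q(\x)-q(A_i\x)$ is a nonnegative tail of the same \ac{SOS} series, again \ac{SOS}. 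Thus $(q,\gamma)$ is feasible, giving $\rho_{\text{SOS},2r}(\AA)\le\gamma$ for every $\gamma>m^{1/(2r)}\rho(\AA)$, and letting $\gamma\downarrow m^{1/(2r)}\rho(\AA)$ yields the claim.

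The main obstacle I anticipate is making the convergence/summation argument for $q$ fully rigorous: one must handle the case $\rho(\AA)=0$ separately, justify term-by-term manipulation of the infinite series defining $q$, and argue carefully that the partial-sum truncation does not affect the \ac{SOS} property (a finite truncation of $q$ still dominates $\|\x\|_2^{2r}$ and still satisfies the contraction inequality up to a harmless $\gamma$-perturbation, by absorbing the neglected geometric tail). An alternative, cleaner route for this step is to invoke the known fact that $\AA/\gamma$ with $\rho<1$ admits an extremal ellipsoidal-type or polynomial Lyapunov function of degree $2r$ after scaling, but the explicit geometric-series construction above is self-contained and avoids appealing to results not in the excerpt.
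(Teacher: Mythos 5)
The paper does not actually prove this theorem; it is quoted from the reference [parrilo], so the comparison is with the argument there. Your proof is correct. The easy direction is handled exactly as intended: feasibility of $(p,\gamma)$ in \eqref{densesosjsr} gives strict positivity of $p$ and the pointwise contraction, so Theorem~\ref{sec2-thm1jsr} yields $\rho(\AA)\le\gamma$. For the factor $m^{-\frac{1}{2r}}$, Parrilo and Jadbabaie derive it by lifting the matrices to their action on symmetric tensors (so that degree-$2r$ forms become quadratic forms on the lifted space, with $\rho$ raised to the power $r$) and then invoking the classical $\sqrt{m}$ quality bound for a common quadratic Lyapunov function, which is itself proved by a geometric series of quadratic forms. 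Your construction runs the same geometric series directly at degree $2r$, using that $\|L\x\|_2^{2r}=(\x^\intercal L^\intercal L\,\x)^r$ is SOS; this is a legitimate, self-contained shortcut that avoids the Veronese lifting, at the price of having to justify that the infinite sum is SOS. That justification is exactly where you should be explicit: either pass to partial sums and use that the cone $\Sigma_{n,2r}$ is a closed convex cone in the finite-dimensional space of degree-$2r$ forms (coefficientwise convergence of the series then gives $q\in\Sigma_{n,2r}$ and likewise for $\gamma^{2r}q(\x)-q(A_i\x)$, which you correctly identify as $\gamma^{2r}\|\x\|_2^{2r}$ plus the sub-series over words ending in letters $i'\neq i$), or work with a finite truncation as you sketch. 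Two minor points of care: the convergence threshold is really $\gamma>m^{1/(2r)}(\rho(\AA)+\epsilon)$ for the $\epsilon$ entering the bound $\max_{|\sigma|=k}\|A_\sigma\|\le C(\rho(\AA)+\epsilon)^k$, which is harmless since $\epsilon$ is arbitrary and you let $\gamma\downarrow m^{1/(2r)}\rho(\AA)$ at the end; and the case $\rho(\AA)=0$ needs no separate treatment, as the same estimate makes the series converge for every $\gamma>0$.
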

It is immediate from Theorem \ref{sec2-thm2jsr} that $(\rho_{\text{SOS},2r}(\AA))_{r\ge1}$ converges to $\rho(\AA)$ as $r$ goes to infinity.

\section{The SparseJSR Algorithm}\label{sec:sparsejsr}
In this section, we propose an algorithm SparseJSR for bounding \ac{JSR} from above based on the sparse \ac{SOS} decomposition when the matrices $\AA$ possess certain sparsity. The first step is to establish a hierarchy of sparse supports for the auxiliary form $p(\x)$ used in the \ac{SOS} program \eqref{densesosjsr}.

Let us fix a relaxation order $r$. Let $p_0(\x)=\sum_{i=1}^nc_ix_i^{2r}$ with random coefficients $c_{i}\in (0,1)$ and let $\sA^{(0)}=\supp(p_0)$. Then for $s\in\N^*$, we iteratively define
\begin{equation}\label{suppjsr}
	\sA^{(s)}\coloneqq \sA^{(s-1)}\cup\bigcup_{i=1}^m\supp(p_{s-1}(A_i\x)),
\end{equation}
where $p_{s-1}(\x)=\sum_{\a\in\sA^{(s-1)}}c_{\a}\x^{\a}$ with random coefficients $c_{\a}\in (0,1)$. Note that here the particular form of $p_0(\x)$ is chosen such that $\sA^{(s)}$ contains all possible homogeneous monomials of degree $2r$ that are ``compatible'' with the couplings between variables $x_1,\ldots, x_n$ introduced by the mappings $\x\mapsto A_i\x$ for all $i$. It is clear that
\begin{equation}\label{hsuppjsr}
	\sA^{(1)}\subseteq\cdots\subseteq\sA^{(s)}\subseteq\sA^{(s+1)}\subseteq\cdots\subseteq\N^n_{2r}
\end{equation}
and the sequence $(\sA^{(s)})_{s\ge1}$ stabilizes in finitely many steps. We emphasis that it is not guaranteed a hierarchy of sparse supports is always retrieved by \eqref{hsuppjsr} even if all $A_i$ are sparse. For instance,
if some matrix $A_i\in\AA$ has a fully dense row, then by definition, one immediately has $\sA^{(1)}=\N^n_{2r}$. In this case, the sparsity of $\AA$ cannot be exploited by the present method.

On the other hand, if the matrices in $\AA$ have some common zero columns, then a hierarchy of sparse supports must be retrieved by \eqref{hsuppjsr} as shown in the next proposition.

\begin{proposition}\label{propjsr}
Let $\AA=\{A_1,\ldots,A_m\}\subseteq\R^{n\times n}$ and assume that the matrices in $\AA$ have common zero columns indexed by $J\subseteq[n]$. Let $\tilde{\N}^{n-|J|}_{2r}\coloneqq \{(\alpha_i)_{i\in[n]}\in\N^n\mid(\alpha_i)_{i\in[n]\setminus J}\in\N^{n-|J|}_{2r},\alpha_i=0\text{ for }i\in J\}$ and $\mathbf{b}_j\coloneqq \{(\alpha_i)_{i\in[n]}\in\N^n\mid\alpha_j=2r,\alpha_i=0\text{ for }i\ne j\}$ for $j\in[n]$. Then $\sA^{(s)}\subseteq\tilde{\N}^{n-|J|}_{2r}\cup\{\mathbf{b}_j\}_{j\in J}$ for all $s\ge1$. 
\end{proposition}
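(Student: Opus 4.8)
The plan is to prove the inclusion $\sA^{(s)}\subseteq\tilde{\N}^{n-|J|}_{2r}\cup\{\mathbf{b}_j\}_{j\in J}$ by induction on $s$. The base case $s=0$ (or rather the initial support $\sA^{(0)}$) is immediate: by definition $p_0(\x)=\sum_{i=1}^n c_i x_i^{2r}$, so $\sA^{(0)}=\{\mathbf{b}_1,\dots,\mathbf{b}_n\}$. Each $\mathbf{b}_i$ with $i\notin J$ clearly lies in $\tilde{\N}^{n-|J|}_{2r}$ (it has $\alpha_i=2r$ with $i\in[n]\setminus J$, and all other coordinates zero), while each $\mathbf{b}_j$ with $j\in J$ lies in $\{\mathbf{b}_j\}_{j\in J}$. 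Hence $\sA^{(0)}\subseteq\tilde{\N}^{n-|J|}_{2r}\cup\{\mathbf{b}_j\}_{j\in J}$. (If one prefers to start the induction at $s=1$, the same argument shows $\sA^{(1)}$ is covered once the inductive step below is applied with $s-1=0$.)

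For the inductive step, assume $\sA^{(s-1)}\subseteq\tilde{\N}^{n-|J|}_{2r}\cup\{\mathbf{b}_j\}_{j\in J}$. Recall that $\sA^{(s)}=\sA^{(s-1)}\cup\bigcup_{i=1}^m\supp\big(p_{s-1}(A_i\x)\big)$ where $p_{s-1}(\x)=\sum_{\a\in\sA^{(s-1)}}c_{\a}\x^{\a}$. Since the first piece $\sA^{(s-1)}$ is covered by the hypothesis, it suffices to show that for each $i\in[m]$ and each $\a\in\sA^{(s-1)}$, the support of the monomial $(A_i\x)^{\a}=\prod_{k=1}^n (A_i\x)_k^{\alpha_k}$ is contained in $\tilde{\N}^{n-|J|}_{2r}\cup\{\mathbf{b}_j\}_{j\in J}$. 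The key observation is that for any $j\in J$, the $j$-th column of $A_i$ is zero, so the linear form $(A_i\x)_k=\sum_\ell (A_i)_{k\ell} x_\ell$ involves only the variables $x_\ell$ with $\ell\notin J$; consequently every monomial appearing in $(A_i\x)^{\a}$ is a monomial purely in the variables $\{x_\ell : \ell\notin J\}$, hence has all $J$-coordinates equal to zero. Since $\a$ is a degree-$2r$ exponent (as $\sA^{(s-1)}\subseteq\N^n_{2r}$ and all generating supports are homogeneous of degree $2r$), every such monomial also has degree $2r$. Therefore its exponent vector lies in $\tilde{\N}^{n-|J|}_{2r}$, which completes the induction.

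I do not expect a genuine obstacle here; the argument is essentially bookkeeping. The one point requiring a little care is the \emph{homogeneity claim}: one must check that all supports arising in the recursion $\eqref{suppjsr}$ stay within $\N^n_{2r}$, i.e.\ remain homogeneous of degree exactly $2r$. This follows because $p_0$ is homogeneous of degree $2r$, each map $\x\mapsto A_i\x$ is linear and hence degree-preserving on homogeneous polynomials, and the union operation does not change degrees; a short inductive remark establishes $\sA^{(s)}\subseteq\N^n_{2r}$ for all $s$, which we then use above. A second minor subtlety is that the random coefficients $c_\a$ could in principle vanish, shrinking the support; but since we only need an \emph{upper} bound (an inclusion) on $\sA^{(s)}$, this causes no problem---worst case, the stated inclusion still holds with room to spare. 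With these two remarks in place, the proof is complete.
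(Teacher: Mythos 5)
Your proof is correct and follows essentially the same route as the paper: induction on $s$, with the key observation that the common zero columns indexed by $J$ force every linear form $(A_i\x)_k$, and hence every monomial of $p_{s-1}(A_i\x)$, to involve only the variables $\{x_\ell:\ell\notin J\}$, so its support lands in $\tilde{\N}^{n-|J|}_{2r}$. Your extra remarks on homogeneity and on possibly vanishing random coefficients are fine (indeed, since $\N^{n}_{2r}$ already allows degree at most $2r$, exact homogeneity is more than is needed), but they do not change the argument.
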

\begin{proof}
Let us do induction on $s$. It is obvious that $\sA^{(0)}\subseteq\tilde{\N}^{n-|J|}_{2r}\cup\{\mathbf{b}_j\}_{j\in J}$.
Now assume $\sA^{(s)}\subseteq\tilde{\N}^{n-|J|}_{2r}\cup\{\mathbf{b}_j\}_{j\in J}$ for some $s\ge0$. Since the variables effectively involved in $p_s(A_j\x)$ are contained in $\{x_i\}_{i\in[n]\setminus J}$, we have $\supp(p_s(A_j\x))\subseteq\tilde{\N}^{n-|J|}_{2r}$ for $j=1,\ldots,m$. This combined with the induction hypothesis yields $\sA^{(s+1)}\subseteq\tilde{\N}^{n-|J|}_{2r}\cup\{\mathbf{b}_j\}_{j\in J}$ as desired.
\qed
\end{proof}

For each $s\ge1$, by restricting $p(\x)$ to forms with the sparse support $\sA^{(s)}$, \eqref{densesosjsr} now reads as
\begin{equation}\label{ssosjsr}
	\begin{cases}
	\inf\limits_{p\in\R[\sA^{(s)}],\gamma}&\gamma\\
	\quad\,\,\,\,\rm{ s.t.}&p(\x)-\|\x\|_2^{2r}\in\Sigma_{n,2r}\\
		&\gamma^{2r}p(\x)-p(A_i\x)\in\Sigma_{n,2r},\quad i\in[m]
	\end{cases}
\end{equation}

Let $\sA_i^{(s)}=\sA^{(s)}\cup\supp(p_s(A_i\x)$ for $i=1,\ldots,m$. In order to exploit the sparsity present in \eqref{ssosjsr}, for a sparse support $\sA\subseteq\N^n_{2r}$ we define
\begin{equation}
	\Sigma(\sA)\coloneqq \left\{f\in\R[\sA]\mid\exists \G\in\Sbb^+_{|\Basis|}\cap\Sbb_{G^{\text{tsp}}}\,\text{ s.t. }f=(\x^{\Basis})^\intercal \G\x^{\Basis}\right\},
\end{equation}
where $\Basis$ is a monomial basis and $G^{\text{tsp}}$ is the \ac{tsp} graph with respect to $\Basis$ defined as in Chapter~\ref{sec:tssos-uncons}. Then by replacing $\Sigma_{n,2d}$ with $\Sigma(\sA^{(s)})$ or $\Sigma(\sA_i^{(s)})$ in \eqref{ssosjsr}, we therefore obtain a hierarchy of sparse \ac{SOS} relaxations indexed by $s$ for a fixed $r$:
\begin{equation}\label{tssosjsr}
\begin{array}{rll}
	\rho_{s,2r}(\AA)\coloneqq &\inf\limits_{p\in\R[\sA^{(s)}],\gamma}&\gamma \\
	&\quad\,\,\,\,\rm{ s.t.}& p(\x)-\|\x\|_2^{2r}\in\Sigma({\sA^{(s)}})\\
	&&	\gamma^{2r}p(\x)-p(A_i\x)\in\Sigma({\sA^{(s)}_i}),\quad i\in[m]
\end{array}
\end{equation}
As in the dense case, the optimization problem \eqref{tssosjsr} can be solved via \ac{SDP} by bisection on $\gamma$.
We call the index $s$ the \emph{sparse order} of \eqref{tssosjsr}, and we get a hierarchy of upper bounds on the \ac{JSR} indexed by the sparse order $s$ when the relaxation order $r$ is fixed.
\begin{theoremf}\label{thm2jsr}
Let $\AA=\{A_1,\ldots,A_m\}\subseteq\R^{n\times n}$. For any integer $r\ge1$, one has $\rho_{\text{SOS},2r}(\AA)\le\cdots\le\rho_{s,2r}(\AA)\le\cdots\le\rho_{2,2r}(\AA)\le\rho_{1,2r}(\AA)$.
\end{theoremf}
\begin{proof}
For any fixed $r\in\N^*$, because of \eqref{hsuppjsr}, it is clear that the feasible set of \eqref{tssosjsr} with the sparse order $s$ is contained in the feasible set of \eqref{tssosjsr} with the sparse order $s+1$, which is in turn contained in the feasible set of \eqref{densesosjsr}. This yields the desired conclusion.
\end{proof}

Therefore, the algorithm SparseJSR computes a non-increasing sequence of upper bounds for the \ac{JSR} of a tuple of matrices via solving \eqref{tssosjsr} for any fixed $r$. By tuning the relaxation order $r$ and the sparse order $s$, SparseJSR offers a trade-off between the computational cost and the quality of the obtained upper bound.

% \begin{algorithm}
	% \renewcommand{\algorithmicrequire}{\textbf{Input:}}
	% \renewcommand{\algorithmicensure}{\textbf{Output:}}
	% \caption{${\tt SparseJSR}$}\label{sparsejsr}
	% \begin{algorithmic}[1]
		% \Require
		% A set of matrices $\AA=\{A_1,A_2,\ldots,A_m\}\subseteq\R^{n\times n}$, an initial lower bound $lb$ for $\rho(\AA)$, an initial upper bound $ub$ for $\rho(\AA)$, a relaxation order $d$ and a tolerance $\epsilon$ for the bisection
		% \Ensure
		% An upper bound $\rho_{\text{TSSOS},2d}(\AA)$ for $\rho(\AA)$
		% \State Let $\A$ be as in \eqref{supp};
		% \State Compute monomial bases for \eqref{tssos} with the ${\tt GenerateBasis}$ algorithm;
		% \State Let $\xi=lb$ and $\eta=ub$;
		% \While{$|\xi-\eta|>\epsilon$}
		% \State Let $\gamma=\frac{\xi+\eta}{2}$;
		% \If{\eqref{tssos} is feasible}
		%  $\eta=\gamma$;
		% \Else 
		% $\ \xi=\gamma$;
		% \EndIf
		% \EndWhile
		% \State \Return{$\eta$};
		% \end{algorithmic}
	% \end{algorithm}

%\section{Applications to Control-System Stability}

\section{Numerical Experiments}\label{sec:benchs}
In this section, we present numerical experiments for the proposed algorithm SparseJSR, which was implemented in the Julia package also named \href{https://github.com/wangjie212/SparseJSR}{\tt SparseJSR}.
In Appendix \ref{sec:tssosdyn}, we provide a Julia script to illustrate how to use {\tt SparseJSR}.
% Our tool {\tt SparseJSR} is available at
% \begin{center}
	% \href{https://github.com/wangjie212/SparseJSR}{https://github.com/wangjie212/SparseJSR}.
% \end{center}

The examples were computed on an Intel Core i5-8265U@1.60GHz CPU with 8GB RAM memory, where the sparse order $s$ was set to $1$, the tolerance for bisection was set to $\epsilon=1\times10^{-5}$, and the initial interval for bisection was set to $[0,2]$. To measure the quality of upper bounds (ub) provided by {\tt SparseJSR}, we also compute lower bounds (lb) on \ac{JSR} using Gripenberg's algorithm \citejsr{gripenberg1996computing}.
In the following, ``mb'' denotes the maximal size of PSD blocks, ``time'' denotes running time in seconds, $*$ indicates running time exceeding one hour, and ``-'' indicates an out of memory error.

\subsection{Randomly generated examples}
We generate random sparse matrices as follows\footnote{Available at https://wangjie212.github.io/jiewang/code.html.}: generate a random directed graph $G$ with $n$ nodes and $n+10$ edges; for each edge $(i,j)$ of $G$, put a random number in $[-1,1]$ on the position $(i,j)$ of the matrix and put zeros on the other positions. We compute an upper bound on \ac{JSR} for pairs of such matrices using the first-order \ac{SOS} relaxation and report the results in Table \ref{random}. It is evident that the sparse approach is much more efficient than the dense approach. 
For instance, the dense approach takes over $3600\,$s when the size of matrices is greater than $100$ while the sparse approach can handle matrices of size $120$ within $12\,$s. The upper bound produced by the sparse approach is slightly weaker, but is still close to the lower bound.

% \begin{table}[htbp]
	% \caption{The results for randomly generated examples}\label{random1}
	% \begin{center}
		% \begin{tabular}{|c|c|c|c|c|c|c|c|c|c|c|}
			% \hline
			% \multirow{2}*{$m$}&\multirow{2}*{$n$}&\multirow{2}*{$l$}&\multirow{2}*{$lb$}&\multirow{2}*{$ub$ ($d=1$)}&\multicolumn{3}{c|}{Sparse ($d=2$)}&\multicolumn{3}{c|}{Dense ($d=2$)}\\
			% \cline{6-11}
			% &&&&&time&$ub$&$mb$&time&$ub$&$mb$\\
			% \hline
			% $2$&$10$&$15$&&$0.90477$&$1.25$&$0.83739$&$15$&$27.3$&$0.79207$&$55$\\
			% \hline
			% $2$&$10$&$20$&$1.04050$&$1.19905$&$7.23$&$1.06779$&$36$&$29.1$&$1.04050$&$55$\\
			% \hline
			% $2$&$10$&$25$&&$1.06986$&$14.8$&$1.01969$&$55$&$33.6$&$1.00803$&$55$\\
			% \hline
			% $2$&$11$&$22$&$1.34499$&$1.48725$&$9.79$&$1.36611$&$36$&$60.5$&$1.34477$&$66$\\
			% \hline
			% $2$&$15$&$30$&$0.95153$&$1.11105$&$85.4$&$1.06149$&$96$&$862$&$0.98045$&$120$\\
			% \hline
			% \end{tabular}
		% \end{center}
	% \end{table}

\begin{table}[htbp]
\caption{Randomly generated examples with $r=1$ and $m = 2$.}\label{random}
\renewcommand\arraystretch{1.2}
\centering
\begin{tabular}{cc|ccc|ccc}
\hline
&&\multicolumn{3}{c|}{Sparse}&\multicolumn{3}{c}{Dense}\\
\cline{3-8}
\multirow{-2}*{$n$}&\multirow{-2}*{lb}&time&ub&mb&time&ub&mb\\
\hline
$20$&$0.7894$&$0.74$&$0.8192$&$10$&$1.88$&$0.7967$&$20$\\
\hline
%$30$&$0.8502$&$1.65$&$0.8666$&$10$&$7.79$&$0.8523$&$30$\\
%\hline
$40$&$0.9446$&$2.68$&$0.9446$&$14$&$25.6$&$0.9446$&$40$\\
\hline
%$50$&$0.8838$&$2.97$&$0.9102$&$14$&$55.9$&$0.8838$&$50$\\
%\hline
$60$&$0.7612$&$3.64$&$0.7843$&$13$&$171$&$0.7612$&$60$\\
\hline
%$70$&$0.9629$&$4.35$&$0.9629$&$11$&$308$&$0.9629$&$70$\\
%\hline
$80$&$0.9345$&$5.95$&$0.9399$&$15$&$743$&$0.9345$&$80$\\
\hline
%$90$&$0.8020$&$6.27$&$0.8465$&$14$&$1282$&$0.8020$&$90$\\
%\hline
$100$&$0.8642$&$8.15$&$0.9132$&$13$&$2568$&$0.8659$&$100$\\
\hline
%$110$&$0.8355$&$9.59$&$0.8839$&$15$&$*$&$*$&$*$\\
%\hline
$120$&$0.7483$&$11.7$&$0.7735$&$16$&$*$&$*$&$*$\\
\hline
\end{tabular}
\end{table}

\subsection{Examples from control systems}

Here we consider examples from \citejsr{maggio2020control}, where the dynamics of closed-loop systems are given by the combination of a plant and a one-step delay controller that stabilizes the plant. The closed-loop system evolves according to either a completed or a missed computation. In the case of a deadline hit, the closed-loop state matrix is $A_H$. In the case of a deadline miss, the associated closed-loop state matrix is $A_M$. The computational platform (hardware and software) ensures that no more than $m-1$ deadlines are missed consecutively.
The set of possible realisations $\AA$ of such a system contains either a single hit or at most $m-1$ misses followed by a hit, namely $\AA \coloneqq  \{A_H A_M^i \mid 0 \leq  i \leq m-1 \}$.
Then, the closed-loop system that can switch between the realisations included in $\AA$  is asymptotically stable if and only if $\rho (\AA) < 1$. This gives an indication for scheduling and control co-design, in which the hardware and software platform must guarantee that the maximum number of deadlines missed consecutively does not interfere with stability requirements.

%\ac{JSR} with at most $n$ consecutive misses. 
In Tables \ref{control1} and \ref{control2}, we report the results obtained for various control systems with $n$ states, under $m-1$ deadline misses, by applying the dense and sparse \ac{SOS} approaches with relaxation orders $r=1$ and $r=2$, respectively.
The examples are randomly generated, i.e., our script generates a
random system and then tries to control it\footnote{Available at https://wangjie212.github.io/jiewang/code.html.}. 

In Table \ref{control1}, we fix $m=5$ and vary $n$ from $20$ to $110$. For these examples, surprisingly the dense and sparse approaches with the relaxation order $r=1$ always produce the same upper bounds. As one can see from the table, the sparse approach is more efficient and scalable than the dense one.

In Table \ref{control2}, we vary $m$ from $3$ to $11$ and vary $n$ from $8$ to $24$.
The column ``ub'' indicates the upper bound given by the dense approach with the relaxation order $r=1$. For these examples, with the relaxation order $r=2$, the sparse approach produces upper bounds that are very close to those given by the dense approach. And again the sparse approach is more efficient and scalable than the dense one.

\begin{table}[htbp]
\caption{Results for control systems with $r=1$ and $m = 5$.}\label{control1}
\renewcommand\arraystretch{1.2}
\centering
\begin{tabular}{cc|ccc|ccc}
\hline
&&\multicolumn{3}{c|}{Sparse}&\multicolumn{3}{c}{Dense}\\
\cline{3-8}
\multirow{-2}*{$n$}&\multirow{-2}*{lb}&time&ub&mb&time&ub&mb\\
\hline
			% SH\_4\_5\_10&$5$&$35$&$1.13750$&$5.96$&$1.17303$&$15$&$122$&$1.17303$&$35$\\
			% \hline
			% SZ\_4\_5\_10&$5$&$35$&$0.95360$&$5.97$&$0.95361$&$15$&$132$&$0.95361$&$35$\\
			% \hline
			% SZ\_4\_2\_10
%$20$&$0.9058$&$1.78$&$0.9316$&$12$&$9.92$&$0.9316$&$20$\\
%\hline
			%SZ\_4\_2\_10 
%$20$&$0.8142$&$1.62$&$0.8142$&$12$&$9.08$&$0.8142$&$20$\\
%\hline
			%SH\_4\_4\_10 
$30$&$1.4682$&$4.30$&$1.5132$&$14$&$57.8$&$1.5131$&$30$\\
\hline
			%SZ\_4\_4\_10 
$30$&$1.0924$&$4.42$&$1.0961$&$14$&$65.4$&$1.0961$&$30$\\
\hline
			%SH\_4\_6\_10 
%$40$&$1.1648$&$9.29$&$1.1977$&$16$&$249$&$1.1977$&$30$\\
%\hline
			%SZ\_4\_6\_10 
%$40$&$0.9772$&$9.69$&$0.9804$&$16$&$259$&$0.9804$&$30$\\
%\hline
			%SH\_4\_8\_10 
$50$&$1.3153$&$17.3$&$1.3248$&$18$&$660$&$1.3248$&$50$\\
\hline
			%SZ\_4\_8\_10 
$50$&$1.1884$&$17.5$&$1.1884$&$18$&$680$&$1.1884$&$50$\\
\hline
			%SH\_4\_10\_10 
%$60$&$1.8366$&$29.7$&$1.8820$&$20$&$2049$&$1.8820$&$60$\\
%\hline
			%SZ\_4\_10\_10 
%$60$&$1.3259$&$30.7$&$1.3259$&$20$&$1776$&$1.3259$&$60$\\
%\hline
			% SH\_4\_12\_10&$5$&$70$&$1.81359$&$54.2$&$1.85783$&$22$&$4375$&$1.85783$&$70$\\
			% \hline
			% SZ\_4\_12\_10&$5$&$70$&$1.27279$&$53.9$&$1.27279$&$22$&$3658$&$1.27279$&$70$\\
			% \hline
			%SH\_4\_12\_10 
$70$&$1.8135$&$54.2$&$1.8578$&$22$&$*$&$*$&$*$\\
\hline
			%SZ\_4\_12\_10 
$70$&$1.2727$&$53.9$&$1.2727$&$22$&$*$&$*$&$*$\\
\hline
			%SH\_4\_14\_10 
%$80$&$2.3005$&$85.3$&$2.3445$&$24$&$*$&$*$&$*$\\
%\hline
			%SZ\_4\_14\_10 
%$80$&$1.4262$&$85.6$&$1.4262$&$24$&$*$&$*$&$*$\\
%\hline
			%SH\_4\_16\_10 
$90$&$1.8745$&$133$&$1.9020$&$26$&$*$&$*$&$*$\\
\hline
			%SZ\_4\_16\_10 
$90$&$1.4452$&$132$&$1.4452$&$26$&$*$&$*$&$*$\\
\hline
			%SH\_4\_18\_10 
%$100$&$2.2316$&$196$&$2.2733$&$28$&-&-&-\\
%\hline
			%SZ\_4\_18\_10 
%$100$&$1.5267$&$195$&$1.5267$&$28$&-&-&-\\
%\hline
			%SH\_4\_20\_10 
$110$&$2.3597$&$280$&$2.3943$&$30$&-&-&-\\
\hline
			%SZ\_4\_20\_10 
$110$&$1.5753$&$287$&$1.5753$&$30$&-&-&-\\
\hline
\end{tabular}
\end{table}

\begin{table}[htbp]
\caption{Results for control systems with $r=2$.}\label{control2}
\renewcommand\arraystretch{1.2}
\centering
\begin{tabular}{p{0.2cm}p{0.2cm}p{1cm}p{1cm}|p{0.5cm}p{1cm}c|p{0.5cm}p{1cm}c}
\hline
&&&&\multicolumn{3}{c|}{Sparse}&\multicolumn{3}{c}{Dense}\\
\cline{5-10}
\multirow{-2}*{$m$}&\multirow{-2}*{$n$}&\multirow{-2}*{\quad lb}&\multirow{-2}*{\quad ub}&time&\multicolumn{1}{c}{ub}&mb&time&\multicolumn{1}{c}{ub}&mb\\
\hline
			%SH\_1\_2\_2&
%$2$&$6$&$0.9464$&$0.9782$&$0.42$&$0.9547$&$10$&$1.87$&$0.9539$&$21$\\
%\hline
			%SH\_2\_2\_2&
$3$&$8$&$0.7218$&$0.7467$&$0.60$&$0.7310$&$10$&$13.4$&$0.7305$&$36$\\
\hline
			%SH\_3\_2\_2&
$4$&$10$&$0.7458$&$0.7738$&$0.75$&$0.7564$&$10$&$107$&$0.7554$&$55$\\
\hline
			%SH\_4\_2\_2&
$5$&$12$&$0.8601$&$0.8937$&$1.08$&$0.8706$&$10$&$1157$&$0.8699$&$78$\\
\hline
			%SH\_5\_2\_2&
$6$&$14$&$0.7875$&$0.8107$&$1.32$&$0.7958$&$10$&$*$&$*$&$*$\\
\hline
			%SH\_6\_2\_2&
$7$&$16$&$1.1110$&$1.1531$&$1.81$&$1.1182$&$10$&-&-&-\\
\hline
			%SH\_7\_2\_2&
$8$&$18$&$1.0487$&$1.0881$&$2.05$&$1.0569$&$10$&-&-&-\\
\hline
			%SH\_8\_2\_2&
$9$&$20$&$0.7570$&$0.7808$&$2.52$&$0.7660$&$10$&-&-&-\\
\hline
			%SH\_9\_2\_2&
$10$&$22$&$0.9911$&$1.0315$&$2.70$&$1.0002$&$10$&-&-&-\\
\hline
			%SH\_10\_2\_2&
$11$&$24$&$0.7339$&$0.7530$&$3.67$&$0.7418$&$10$&-&-&-\\
\hline
\end{tabular}
\end{table}

\section{Notes and sources}
The material from this chapter is issued from \citejsr{wang2021sparsejsr}. The \ac{JSR} was first introduced by Rota and Strang in \citejsr{rota} and since then has found applications in many areas such as the stability of switched linear dynamical systems, the continuity of wavelet functions, combinatorics and language theory, the capacity of some codes, the trackability of graphs. We refer the reader to \citejsr{jungers} for a survey of the theory and applications of \ac{JSR}.

%\bibliographystylejsr{alpha}
%\bibliographyjsr{preface,sdp,sparsemat,pop,cs,roundoff,lip,ncsparse,ts,cstssos,opf,jsr,misc,app}

\chapter{Miscellaneous}\label{chap:misc}
The goal of this chapter is to propose alternative schemes to methods based on sparse \ac{SOS} polynomials. 
First, we focus in Section \ref{sec:sonc} on \ac{SONC} polynomials, a set of new nonnegativity  certificates, independent of the set of \ac{SOS} polynomials described earlier in this book.
We present a characterization of \ac{SONC} polynomials in terms of sums of binomial squares with rational
exponents. Next, we present in Section \ref{sec:firstsdp} a framework to speed-up the resolution of \ac{SDP} relaxations arising from the \ac{moment-SOS} hierarchy. This framework is based on the use of first-order methods. 
\section{Nonnegative circuits and binomial squares}\label{sec:sonc}
A lattice point $\a\in\N^n$ is said to be {\em even} if it is in $(2\N)^n$.
A subset $\TT=\{\a_1,\ldots,\a_m\} \subseteq(2\N)^n$ is called a {\em trellis} when $\TT$ comprises the vertices of a simplex. 
Given a trellis $\TT$, a {\em circuit polynomial} is of the form $\sum_{\a\in\TT}c_{\a}\x^{\a}-d\x^{\b}\in\R[\x]$, 
where $c_{\a}>0$ for all $\a\in\TT$, and $\b$ lies in the relative interior of the simplex associated to $\TT$. The name ``circuit polynomial'' stems from the fact that $(\TT,\b)$ consists of a circuit.
Given a circuit polynomial $f=\sum_{\a\in\TT}c_{\a}\x^{\a}-d\x^{\b}\in\R[\x]$, there exist unique barycentric coordinates $(\lambda)_{j=1}^m$ satisfying
\begin{align}\label{equ:BarycentricCoordinates}
\beta =\sum_{j=1}^m\lambda_j\alpha(j)\text{ with }\lambda_j>0\text{ and }\sum_{j=1}^m \lambda_j = 1.
\end{align}
We then define the related circuit number as $\Theta_f=\prod_{j=1}^m\left(c_{\alpha(j)}/\lambda_j\right)^{\lambda_j}$.

Circuit polynomials are proper building blocks for nonnegativity certificates since the circuit number alone determines whether they are nonnegative.
\begin{theorem}\label{theorem:CircuitPolynomialNonnegativity}
A circuit polynomial $f$ is nonnegative if and only if $f$ is a sum of monomial squares or $|d| \leq \Theta_f$.
\end{theorem}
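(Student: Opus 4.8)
The statement is an ``if and only if,'' so I would prove the two directions separately. The easy direction is: if $f$ is a sum of monomial squares, or if $|d|\le\Theta_f$, then $f\ge0$ on $\R^n$. When $f$ is a sum of monomial squares this is immediate, since each $\x^\a$ with $\a$ even is a square. For the case $|d|\le\Theta_f$, the natural tool is the weighted AM--GM inequality applied pointwise. First I would reduce to the case $\x\in(\R_{>0})^n$ by noting that all exponents in the trellis $\TT$ are even, so $\x^{\alpha(j)}\ge0$ always, and the only term that can change sign is $-d\x^\b$; replacing $\x$ by its coordinatewise absolute value only decreases $|{-d\x^\b}|$ cannot happen unless $\b$ is also even, but in any case $|d\x^\b|\le|d|\cdot|\x|^\b$, so it suffices to bound $\sum_j c_{\alpha(j)}\x^{\alpha(j)}\ge|d|\cdot|\x|^\b$ on the positive orthant. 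Then, with the barycentric coordinates $(\lambda_j)$ from \eqref{equ:BarycentricCoordinates}, the weighted AM--GM inequality gives
\begin{align*}
\sum_{j=1}^m c_{\alpha(j)}\x^{\alpha(j)} = \sum_{j=1}^m \lambda_j\cdot\frac{c_{\alpha(j)}}{\lambda_j}\x^{\alpha(j)} \ge \prod_{j=1}^m\left(\frac{c_{\alpha(j)}}{\lambda_j}\x^{\alpha(j)}\right)^{\lambda_j} = \Theta_f\cdot\x^{\sum_j\lambda_j\alpha(j)} = \Theta_f\,\x^\b,
\end{align*}
using $\sum_j\lambda_j=1$ and $\sum_j\lambda_j\alpha(j)=\b$. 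Hence $f(\x)\ge(\Theta_f-|d|)\x^\b\ge0$ whenever $|d|\le\Theta_f$.

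For the converse, I would assume $f$ is nonnegative and $f$ is \emph{not} a sum of monomial squares — so the term $-d\x^\b$ is genuinely present with $d\ne0$ and $\b$ in the relative interior of the simplex — and deduce $|d|\le\Theta_f$. The idea is to evaluate $f$ at a cleverly chosen point to extract the sharpest possible inequality. Working on the positive orthant, I would look for $\x$ of the form $x_i=e^{t_i}$ and minimize $\sum_j c_{\alpha(j)}\x^{\alpha(j)} / \x^\b$; equivalently, since everything is a posynomial, this is a geometric-programming-type minimization whose optimum is exactly $\Theta_f$, attained at the point dictated by the barycentric coordinates $\lambda_j$. Concretely, one solves $c_{\alpha(j)}\x^{\alpha(j)} = \lambda_j\,\x^\b\cdot(\text{common value})$ for all $j$; such an $\x^*\in(\R_{>0})^n$ exists because $\b$ is in the interior of $\conv(\TT)$ (this is where interiority is essential — it guarantees the log-linear system for $t_i$ is solvable), and there $\sum_j c_{\alpha(j)}(\x^*)^{\alpha(j)} = \Theta_f\,(\x^*)^\b$. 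If $\b$ is even, plugging $\x^*$ in gives $f(\x^*)=(\Theta_f-d)(\x^*)^\b$; nonnegativity forces $d\le\Theta_f$. If $\b$ is not even, I would additionally flip signs of appropriate coordinates of $\x^*$ to make $\x^\b<0$ while keeping all $\x^{\alpha(j)}$ unchanged (possible since the $\alpha(j)$ are even), which yields $f=(\Theta_f+d)(\x^*)^\b$-type expressions and forces $-d\le\Theta_f$; combining, $|d|\le\Theta_f$.

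\textbf{Main obstacle.} The routine parts are the AM--GM estimate and bookkeeping with barycentric coordinates. The step that needs genuine care is the converse: showing that the infimum of the posynomial ratio $\sum_j c_{\alpha(j)}\x^{\alpha(j)}/\x^\b$ over the positive orthant is \emph{exactly} $\Theta_f$ and is \emph{attained}. Attainment relies crucially on $\b$ lying in the \emph{relative interior} of the simplex $\conv(\TT)$: this is what makes the system $\alpha(j)\cdot t = \log(\lambda_j\Theta_f/c_{\alpha(j)})$ (for the optimal $t=(t_1,\dots,t_n)$) consistent, since the affine span of $\TT$ contains $\b$ and the $\lambda_j$ are strictly positive. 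I would either invoke a standard geometric programming duality argument or give a direct Lagrange-multiplier/logarithmic-convexity computation; the convexity of $t\mapsto\sum_j c_{\alpha(j)}e^{\alpha(j)\cdot t - \b\cdot t}$ after the substitution $x_i=e^{t_i}$ makes the critical point a global minimum. Handling the parity of $\b$ (the sign-flip trick to realize $\x^\b<0$) is a small additional case that must not be overlooked, since it is exactly what upgrades $d\le\Theta_f$ to the full $|d|\le\Theta_f$.
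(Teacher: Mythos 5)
The book does not actually prove this statement: it is quoted from the literature, and the \emph{Notes and sources} section of the chapter refers the reader to Iliman and de Wolff (Theorem 3.8 of the cited work) for the proof. Your proposal reconstructs essentially that standard argument and is correct in substance. The sufficiency direction is exactly the classical one: since every $\alpha(j)\in\TT$ is even, $\x^{\alpha(j)}=|\x|^{\alpha(j)}$ and $|\x^\b|=|\x|^\b$, so the weighted AM--GM inequality with the barycentric weights $\lambda_j$ gives $f(\x)\ge(\Theta_f-|d|)\,|\x|^\b\ge 0$ when $|d|\le\Theta_f$ (your sentence about passing to coordinatewise absolute values is garbled, but this is all that is needed). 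Your necessity argument is also the standard route: after $x_i=e^{t_i}$ the quotient $\sum_j c_{\alpha(j)}\x^{\alpha(j)}/\x^\b$ becomes a convex exponential sum, its critical-point system $c_{\alpha(j)}e^{\langle\alpha(j)-\b,t\rangle}=\lambda_j s$ forces $s=\Theta_f$, and the log-linear system is solvable because the image of $t\mapsto(\langle\alpha(j)-\b,t\rangle)_j$ is precisely the hyperplane $\{\bv:\sum_j\lambda_j v_j=0\}$; note that this solvability rests on the affine independence of the trellis points (they are simplex vertices) together with $\lambda_j>0$, the latter being what relative interiority of $\b$ supplies. The sign-flip in odd coordinates of $\b$ to get the bound on $-d$ is likewise the standard device.

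One small point should be made explicit. In the case $\b\in(2\N)^n$ your evaluation at the minimizer only yields $d\le\Theta_f$, not $|d|\le\Theta_f$; to conclude you must invoke your standing assumption that $f$ is \emph{not} a sum of monomial squares, which in this case forces $d>0$ (if $d<0$ then $-d\x^\b$ is itself a monomial square and $f$ is a sum of monomial squares, so nothing is to be proved), whence $|d|=d\le\Theta_f$. With that one line added, the proof is complete and matches the argument in the cited source.
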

\begin{example}\label{ex:motzkin}
Let $f=x_1^4x_2^2+x_1^2x_2^4+1-3x_1^2x_2^2$ be the Motzkin polynomial and $\TT=\{\a_1=(0,0),\a_2=(4,2),\a_3=(2,4)\}$, $\b=(2,2)$. 
Then $\b=\frac{1}{3}\a_1+\frac{1}{3}\a_2+\frac{1}{3}\a_3$. 
\begin{center}
\begin{tikzpicture}
\draw (0,0)--(2,4);
\draw (0,0)--(4,2);
\draw (2,4)--(4,2);
\fill (0,0) circle (2pt);
\node[above left] (1) at (0,0) {$(0,0)$};
\node[below left] (1) at (0,0) {$\a_1$};
\fill (2,4) circle (2pt);
\node[above right] (2) at (2,4) {$(2,4)$};
\node[above left] (2) at (2,4) {$\a_3$};
\fill (4,2) circle (2pt);
\node[above right] (3) at (4,2) {$(4,2)$};
\node[below right] (3) at (4,2) {$\a_2$};
\fill (2,2) circle (2pt);
\node[above right] (4) at (2,2) {$(2,2)$};
\node[below left] (4) at (2,2) {$\b$};
\end{tikzpicture}
\end{center}
One easily checks that $|-3| \leq  \Theta_{f} = 3$, proving that $f$ is nonnegative. 
\end{example}

An explicit representation of a polynomial being a \ac{SONC} provides a certificate for its nonnegativity, which is called a \ac{SONC} decomposition. Polynomial optimization via \ac{SONC} decompositions can be solved by means of geometric programming.
Here we introduce a potentially cheaper approach based on second-order cone programming.

For a subset of points $M\subseteq\N^n$, let
$$\overline{A}(M)\coloneqq \left\{\frac{1}{2}(\bv+\bw)\mid\bv\ne\bw,\bv,\bw\in M\cap(2\N)^n\right\}$$
be the set of averages of distinct even points in $M$. %if $\TT$ comprises the vertices of a simplex. 
For a trellis $\TT$, we call $M$ a {\em $\TT$-mediated set} if $\TT\subseteq M\subseteq\overline{A}(M)\cup\TT$.
\begin{theoremf}\label{th:mediated}
Let $f=\sum_{\a\in\TT} c_{\a}\x^{\a}-d\x^{\b}\in\R[\x]$ with $d\ne0$ be a nonnegative circuit polynomial. Then $f$ is a sum of binomial squares if and only if there exists a $\TT$-mediated set containing $\b$. Moreover, suppose that $\b$ belongs to a $\TT$-mediated set $M$ and for each $\bu\in M\setminus\TT$, let us write $\bu=\frac{1}{2}(\bv_\bu+\bw_\bu)$ for some $\bv_\bu \ne\bw_\bu \in M\cap(2\N)^n$. 
Then we can rewrite $f$ as $f=\sum_{\bu\in M\setminus\TT}(a_{\bu}\x^{\frac{1}{2}\bv_\bu}-b_{\bu}\x^{\frac{1}{2}\bw_\bu})^2$ for some $a_{\bu},b_{\bu}\in\R$.
\end{theoremf}
By Theorem \ref{th:mediated}, to represent a nonnegative circuit polynomial as a sum of binomial squares, we need to first decide if there exists a $\TT$-mediated set containing a given lattice point, and then compute one if there exists. However, such a $\TT$-mediated set may not exist in general.
In order to circumvent this obstacle, we introduce the concept of $\TT$-rational mediated sets as a replacement of $\TT$-mediated sets by admitting rational numbers in coordinates.

Concretely, for a subset of points $M\subseteq\Q^n$, let us define
$$\widetilde{A}(M)\coloneqq \left\{\frac{1}{2}(\bv+\bw)\mid\bv\ne\bw,\bv,\bw\in M\right\}$$
as the set of averages of distinct rational points in $M$. 
For a trellis $\TT\subseteq\N^n$, we say that $M$ is a {\em $\TT$-rational mediated set} if $\TT\subseteq M\subseteq\widetilde{A}(M)\cup\TT$. Given a trellis $\TT$ and a lattice point $\b\in\Conv(\TT)^{\circ}$, there always exists a $\TT$-rational mediated set containing $\b$ as stated in the following Proposition.

\begin{proposition}\label{lemma:mediated}
Given a trellis $\TT$ and a lattice point $\b\in\Conv(\TT)^{\circ}$, there exists a $\TT$-rational mediated set $M_{\TT\b}$ containing $\b$ such that the denominators (resp. numerators) of coordinates of points in $M_{\TT\b}$ are odd (resp. even) numbers.
\end{proposition}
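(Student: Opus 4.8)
The plan is to reduce the statement to a one-dimensional (segment) construction and then assemble the general case via the barycentric coordinates of $\b$. Throughout, the invariant I would maintain is that every point introduced into the mediated set has coordinates of the form $2k/q$ with $q$ a fixed odd integer and $k\in\N$; this is exactly the ``even numerator, odd denominator'' condition, it is already satisfied by the vertices of $\TT$ (which lie in $(2\N)^n$, so one may take $q=1$), and it is preserved under taking midpoints of two such points. First I would record the barycentric representation $\b=\sum_{j=1}^m\lambda_j\a(j)$ with $\lambda_j=a_j/N$, $a_j\in\N^*$, $N=\sum_j a_j$; since $\b$ and the $\a(j)$ are lattice points, these coordinates are rational. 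I would then fix an odd integer $q$ that is a multiple of the odd part of $N$ (absorbing any powers of two via the dyadic steps below) and work on the rational grid $\tfrac{2}{q}\Z^n$, where $\b$ sits as an interior point of $\Conv(\TT)$ whose barycentric coordinates have denominator dividing $q$. The problem then becomes: starting from $\TT$, reach $\b$ by a finite sequence of midpoint insertions, all inside $\Conv(\TT)\cap\tfrac{2}{q}\Z^n$.

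For the core univariate lemma I would prove: given odd $q$, a pair of even endpoints on a line (affinely normalized to $\{0,2a\}$), and a target point strictly between them lying on the grid $\tfrac{2}{q}\Z$, there is a finite $\{0,2a\}$-rational mediated set that contains the target and all of whose points lie on $\tfrac{2}{q}\Z$. The proof is by induction on $q$ (equivalently, on the number of $2$-adic bisection steps required): because $q$ is odd, $2$ is a unit modulo $q$, so a suitably ``off-center'' bisection eventually hits every residue class. Concretely I would exhibit, for each target, a nested sequence of sub-intervals whose endpoints are midpoints of already-constructed points and whose lengths shrink geometrically, terminating once the target appears; finiteness follows because denominators never exceed $q$, so only finitely many grid points lie in the starting interval. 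This is where essentially all of the arithmetic work sits.

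To globalize, I would induct on $N$, the common denominator of the barycentric coordinates of $\b$. When $N$ is a power of $2$, the usual dyadic mediated-set construction along a triangulation of $\Conv(\TT)$ already works with integer points. In general, I would peel off one odd factor at a time: choose an auxiliary point $\b'$ of strictly smaller denominator together with an already-available point (a vertex of $\TT$, or a point constructed at an earlier stage) such that $\b$ lies on the open segment between $\b'$ and that point; apply the univariate core lemma on that segment to bring $\b$ into the set; and recurse on $\b'$. Concatenating the finitely many segment constructions yields the desired $M_{\TT\b}$, and the parity invariant is inherited piecewise.

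The main obstacle I anticipate is the univariate core lemma together with the bookkeeping needed to keep the construction finite while simultaneously keeping every point on the odd-denominator grid: naive bisection only produces dyadic points, and forcing the process onto $\tfrac{2}{q}\Z$ requires carefully engineered subdivisions whose correctness relies on $2$ being invertible modulo $q$. A secondary difficulty is organizing the reduction from the full simplex to one-dimensional segments so that no extra factors of $2$ accumulate in the denominators at the joints. I would also check the degenerate cases carefully — when $\b$ lies on a facet of a sub-simplex, or coincides with a previously constructed point — so that every insertion is genuinely a midpoint of two \emph{distinct} points, as the definition of $\widetilde{A}(M)$ demands.
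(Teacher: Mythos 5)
Your overall architecture --- reduce to a univariate lemma on segments via the barycentric coordinates of $\b$, exploit that $2$ is invertible modulo an odd denominator, and recurse on auxiliary points of smaller denominator --- matches the constructive proof in the reference cited by the paper, but two load-bearing steps are missing or wrong. The core univariate lemma, which you yourself acknowledge carries ``essentially all of the arithmetic work,'' is never actually established, and the mechanism you sketch (nested sub-intervals shrinking geometrically toward the target) is not the one that works: bisecting toward the target produces dyadic parameters, and it is not explained how ``carefully engineered'' off-center subdivisions both stay on $\tfrac1q\Z$ and keep every intermediate point a midpoint of two points already constructed. The construction that does work runs in the opposite direction: starting from the target parameter $p/q$ with $q$ odd, iterate the doubling map $p_{i+1}=2p_i \bmod q$ on $\{1,\dots,q-1\}$; each $p_i/q$ is then the midpoint of an endpoint ($0$ or $1$) and of $p_{i+1}/q$, and the set $\{0,1\}\cup\{p_i/q : i\ge 0\}$ is finite and mediated because doubling modulo an odd $q$ permutes $\{1,\dots,q-1\}$, so the orbit is a finite cycle. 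Without this (or an equivalent) explicit construction, the heart of the proof is absent.

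Second, your parity invariant is false as stated: the midpoint of $\tfrac23$ and $\tfrac43$ is $1$, so ``even numerator over an odd denominator'' is \emph{not} preserved under midpoints of two such points. The invariant that actually survives is that every constructed point is an odd-denominator convex combination of the vertices of $\TT$ (which lie in $(2\N)^n$); this is a property of the particular midpoints the construction takes, not of midpoints in general, and must be verified along the recursion rather than invoked. Relatedly, a lattice point $\b$ with an odd coordinate (e.g.\ $\b=(1,1)$ inside $\Conv\{(0,0),(4,0),(0,4)\}$) cannot satisfy the even-numerator condition at all, so your invariant necessarily breaks for such $\b$; the construction has to be arranged so that $\b$ is only ever a midpoint and never a constituent of one (it is the constituents that get halved in Theorem~\ref{th:ratmediated}), with the parity claim pertaining to $M_{\TT\b}\setminus\{\b\}$. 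Finally, the denominator bookkeeping in your global recursion is inverted: the univariate lemma handles odd denominators directly, so it is the powers of $2$ in the common denominator $N$ that must be eliminated first, by writing $\b=\tfrac12(\b_1+\b_2)$ with $\b_1,\b_2$ of denominator $N/2$ and recursing --- one peels off factors of $2$, not ``one odd factor at a time.''
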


The proof of Proposition \ref{lemma:mediated} is constructive, and yields an algorithm to compute such a $\TT$-rational mediated set $M_{\TT\b}$. We refer the reader to \citemisc{magron2020sonc} for the details. By virtue of Proposition \ref{lemma:mediated}, we are able to decompose \ac{SONC} polynomials into sums of binomial squares with rational exponents.

\begin{theoremf}\label{th:ratmediated}
Let $f=\sum_{\a\in\TT} c_{\a}\x^{\a}-d\x^{\b}\in\R[\x]$ with $d\ne0$ be a circuit polynomial. Assume that $M_{\TT\b}$ is a $\TT$-rational mediated set containing $\b$ provided by Proposition \ref{lemma:mediated}.
For each $\bu\in M_{\TT\b}\setminus\TT$, let $\bu=\frac{1}{2}(\bv_{\bu}+\bw_{\bu})$ for some $\bv_{\bu}\ne\bw_{\bu}\in M_{\TT\b}$. Then $f$ is nonnegative if and only if $f$ can be written as $f=\sum_{\bu\in M_{\TT\b}\setminus\TT}(a_{\bu}\x^{\frac{1}{2}\bv_{\bu}}-b_{\bu}\x^{\frac{1}{2}\bw_{\bu}})^2$ for some $a_{\bu},b_{\bu}\in\R$.
\end{theoremf}

\begin{example}
As in Example \ref{ex:motzkin}, let $f=x_1^4x_2^2+x_1^2x_2^4+1-3x_1^2x_2^2$ be the Motzkin polynomial, $\TT=\{\a_1=(0,0),\a_2=(4,2),\a_3=(2,4)\}$ and $\b=(2,2)$. 
%Then $\b=\frac{1}{3}\a_1+\frac{1}{3}\a_2+\frac{1}{3}\a_3=\frac{1}{2}(\frac{1}{3}\a_1+\frac{2}{3}\a_2)+\frac{1}{2}(\frac{1}{3}\a_1+\frac{2}{3}\a_3)$. 
Let $\b_1=\frac{1}{3}\a_1+\frac{2}{3}\a_2$ and $\b_2=\frac{1}{3}\a_1+\frac{2}{3}\a_3$ such that $\b=\frac{1}{2}\b_1+\frac{1}{2}\b_2$. 
Let $\b_3=\frac{2}{3}\a_1+\frac{1}{3}\a_2$ and $\b_4=\frac{2}{3}\a_1+\frac{1}{3}\a_3$.
Then  $M=\{\a_1,\a_2,\a_3,\b,\b_1,\b_2,\b_3,\b_4\}$ is a $\TT$-rational mediated set containing $\b$.
By Theorem \ref{th:ratmediated}, one has  
\begin{align*}
	f=\,&(a_1 x_1^{\frac{2}{3}} x_2^{\frac{4}{3}}-b_1x_1^{\frac{4}{3}}x_2^{\frac{2}{3}})^2+(a_2 x_1 x_2^2-b_2x_1^{\frac{1}{3}}x_2^{\frac{2}{3}})^2+(a_3x_1^{\frac{2}{3}}x_2^{\frac{4}{3}}-b_3)^2\\
	&+(a_4x_1^2 x_2-b_4x_1^{\frac{2}{3}}x_2^{\frac{1}{3}})^2+(a_5x_1^{\frac{4}{3}}x_2^{\frac{2}{3}}-b_5)^2.
\end{align*}
\begin{center}
\begin{tikzpicture}
\draw (0,0)--(2,4);
\draw (0,0)--(4,2);
\draw (2,4)--(4,2);
\draw (4/3,8/3)--(8/3,4/3);
\fill (0,0) circle (2pt);
\node[above left] (1) at (0,0) {$(0,0)$};
\node[below left] (1) at (0,0) {$\a_1$};
\fill (2,4) circle (2pt);
\node[above right] (2) at (2,4) {$(2,4)$};
\node[above left] (2) at (2,4) {$\a_3$};
\fill (4,2) circle (2pt);
\node[above right] (3) at (4,2) {$(4,2)$};
\node[below right] (3) at (4,2) {$\a_2$};
\fill (2,2) circle (2pt);
\node[above right] (4) at (2,2) {$(2,2)$};
\node[below left] (4) at (2,2) {$\b$};
\fill (4/3,8/3) circle (2pt);
\node[above left] (5) at (4/3,8/3) {$(\frac{4}{3},\frac{8}{3})$};
\node[right] (5) at (4/3,8/3) {$\b_2$};
\fill (8/3,4/3) circle (2pt);
\node[below right] (6) at (8/3,4/3) {$(\frac{8}{3},\frac{4}{3})$};
\node[above] (6) at (8/3,4/3) {$\b_1$};
\fill (2/3,4/3) circle (2pt);
\node[above left] (5) at (2/3,4/3) {$(\frac{2}{3},\frac{4}{3})$};
\node[right] (5) at (2/3,4/3) {$\b_4$};
\fill (4/3,2/3) circle (2pt);
\node[below right] (6) at (4/3,2/3) {$(\frac{4}{3},\frac{2}{3})$};
\node[above] (6) at (4/3,2/3) {$\b_3$};
\end{tikzpicture}
\end{center}
Comparing coefficients yields
\begin{align*}
	f=\,&\frac{3}{2}(x_1^{\frac{2}{3}}x_2^{\frac{4}{3}}-x_1^{\frac{4}{3}}x_2^{\frac{2}{3}})^2+(x_1 x_2^2-x_1^{\frac{1}{3}}x_2^{\frac{2}{3}})^2+\frac{1}{2}(x_1^{\frac{2}{3}}x_2^{\frac{4}{3}}-1)^2\\
	&+(x_1^2 x_2-x_1^{\frac{2}{3}}x_2^{\frac{1}{3}})^2+\frac{1}{2}(x_1^{\frac{4}{3}}x_2^{\frac{2}{3}}-1)^2,
\end{align*}
a sum of five binomial squares with rational exponents.
\end{example}

For a polynomial $f=\sum_{\a\in\sA}f_{\a}\x^{\a}\in\R[\x]$, let
$\Lambda(f)\coloneqq \{\a\in\sA\mid\a\in(2\N)^n\textrm{ and }f_{\a}>0\}$
and $\Gamma(f)\coloneqq \supp(f)\setminus\Lambda(f)$ so that
$f=\sum_{\a\in\Lambda(f)}c_{\a}\x^{\a}-\sum_{\b\in\Gamma(f)}d_{\b}\x^{\b}.$
For each $\b\in\Gamma(f)$, let
\begin{equation}\label{fb}
\CC(\b)\coloneqq \{\TT\mid\TT\subseteq\Lambda(f)\textrm{ and }(\TT,\b)\textrm{ consists of a circuit}\}.
\end{equation}
As a consequence of Theorem 5.5 from \citemisc{wangsiaga}, if $f$ is a \ac{SONC} polynomial, then it admits a decomposition
\begin{equation}\label{sec2-eq1}
f=\sum_{\b\in\Gamma(f)}\sum_{\TT\in\CC(\b)}f_{\TT\b}+\sum_{\a\in \tilde{\sA}}c_{\a}\x^{\a},
\end{equation}
where $f_{\TT\b}$ is a nonnegative circuit polynomial supported on $\TT\cup\{\b\}$ and $\tilde{\sA}=\{\a\in\Lambda(f)\mid\a\notin\cup_{\b\in\Gamma(f)}\cup_{\TT\in\CC(\b)}\TT\}$.

\begin{theoremf}\label{th:soncsocp}
Let $f=\sum_{\a\in\Lambda(f)}c_{\a}\x^{\a}-\sum_{\b\in\Gamma(f)}d_{\b}\x^{\b}\in\R[\x]$. For every $\b\in\Gamma(f)$ and every trellis $\TT\in\CC(\b)$, let $M_{\TT\b}$ be a $\TT$-rational mediated set containing $\b$ provided by Proposition \ref{lemma:mediated}.
Let $M=\cup_{\b\in\Gamma(f)}\cup_{\TT\in\CC(\b)}M_{\TT\b}$. 
For each $\bu\in M\setminus\Lambda(f)$, let $\bu=\frac{1}{2}(\bv_{\bu}+\bw_{\bu})$ for some $\bv_{\bu}\ne\bw_{\bu}\in M$.
Then $f$ is a \ac{SONC} polynomial if and only if $f$ can be written as $f=\sum_{\bu\in M\setminus\Lambda(f)}(a_{\bu}\x^{\frac{1}{2}\bv_{\bu}}-b_{\bu}\x^{\frac{1}{2}\bw_{\bu}})^2+\sum_{\a\in \tilde{\sA}}c_{\a}\x^{\a}$ for some $a_{\bu},b_{\bu}\in\R$.
\end{theoremf}

In order to obtain a \ac{SONC} decomposition of $f$, we use all simplices covering $\b$ for each $\b\in\Gamma(f)$ in Theorem \ref{th:ratmediated}. 
In practice, we do not need that many simplices as illustrated by the following example.
\begin{example}
Let $f=50x_1^4x_2^4+x_1^4+3x_2^4+800-100x_1x_2^2
-100x_1^2x_2$. Let $\a_1=(0,0),\a_2=(4,0),\a_3=(0,4),\a_4=(4,4)$ and $\b_1=(2,1),\b_2=(1,2)$. 
There are two simplices covering $\b_1$: the one with vertices $\{\a_1,\a_2\,\a_3\}$ (denoted by $\Delta_1$), and the one with vertices $\{\a_1,\a_2,\a_4\}$ (denoted by $\Delta_2$). 
There are two simplices covering $\b_2$: $\Delta_1$ and the one with vertices $\{\a_1,\a_3,\a_4\}$ (denoted by $\Delta_3$). One can check that $f$ admits a \ac{SONC} decomposition $f=g_1+g_2$, where $g_1=20x_1^4x_2^4+x_1^4+400-100x_1^2x_2$ supported on $\Delta_2$ and $g_2=30x_1^4x_2^4+3x_2^4+400-100x_1x_2^2$ supported on $\Delta_3$ are both nonnegative circuit polynomials. So the simplex $\Delta_1$ is not needed in this \ac{SONC} decomposition of $f$.
\begin{center}
\begin{tikzpicture}
\draw (0,0)--(0,2);
\draw (0,0)--(2,0);
\draw (2,0)--(2,2);
\draw (0,2)--(2,2);
\draw (0,0)--(2,2);
\draw (0,2)--(2,0);
\fill[fill=green,fill opacity=0.3] (0,0)--(2,0)--(2,2)--(0,0);
\fill[fill=blue,fill opacity=0.3] (0,0)--(0,2)--(2,2)--(0,0);
\fill (0,0) circle (2pt);
\node[below left] (1) at (0,0) {$\a_1$};
\fill (2,0) circle (2pt);
\node[below right] (2) at (2,0) {$\a_2$};
\fill (0,2) circle (2pt);
\node[above left] (3) at (0,2) {$\a_3$};
\fill (2,2) circle (2pt);
\node[above right] (4) at (2,2) {$\a_4$};
\fill (1,0.5) circle (2pt);
\node[right] (5) at (1,0.5) {$\b_1$};
\fill (0.5,1) circle (2pt);
\node[above] (6) at (0.5,1) {$\b_2$};
\end{tikzpicture}
\end{center}
\end{example}
Here we rely on a heuristics to compute a set of simplices with vertices coming from $\Lambda(f)$ and that covers $\Gamma(f)$.  
For $\b\in\Gamma(f)$ and $\a_0\in\Lambda(f)$, let us define an auxiliary linear program:
\begin{equation*}
	\text{SimSel}(\b,\Lambda(f),\a_0)\coloneqq\begin{cases}
		\argmax\limits_{\lambda_{\a}}&\lambda_{\a_0}\\
		\rm{s.t.}&\sum_{\a\in\Lambda(f)}  \lambda_{\a}\cdot\a=\b\\
		&\sum_{\a\in\Lambda(f)}\lambda_{\a}=1\\
		&\lambda_{\a}\ge0, \quad\forall\a\in\Lambda(f)
	\end{cases}
\end{equation*}
If $\b$ and $\a_0$ lie on the same face of $\New(f)$, then the output of $\text{SimSel}(\b,\Lambda(f),\a_0)$ corresponds to a trellis which contains $\a_0$ and covers $\b$. 

Suppose $f=\sum_{\a\in\Lambda(f)}c_{\a}\x^{\a}-\sum_{\b\in\Gamma(f)}d_{\b}\x^{\b}\in\R[\x]$ and assume that $\{\a\in\Lambda(f)\mid\a\notin\cup_{\b\in\Gamma(f)}\cup_{\TT\in\CC(\b)}\TT\}=\varnothing$.
We first compute a simplex cover $\{(\TT_k,\b_k)\}_{k=1}^l$ for $f$ by repeatedly running the program $\rm{SimSel}$ with appropriate $\b$ and $\a_0$. 
Then, for each $k$ let $M_{k}$ be a $\TT_{k}$-rational mediated set containing $\b_k$ and $s_k$ be the cardinality of $M_k\setminus\TT_{k}$. 
For each $\bu_i^k\in M_k\setminus\TT_{k}$, let us write  $\bu_i^k=\frac{1}{2}(\bv_i^k+\bw_i^k)$. 
Let $\mathbf{K}$ be the $3$-dimensional rotated second-order cone, i.e.,
\begin{equation}
\mathbf{K}\coloneqq \{(a,b,c)\in\R^3\mid2ab\ge c^2,a\ge0,b\ge0\}.
\end{equation}
Then we can approximate $f_{\min}$ from below with the following  second-order cone program:
\begin{equation}\label{eq:soncsocp}
\begin{cases}
\sup&b\\
\rm{s.t.}&f(\x)-b=\sum_{k=1}^l\sum_{i=1}^{s_k}(2a_i^k\x^{\bv_i^k}+b_i^k\x^{\bw_i^k}-2c_i^k\x^{\bu_i^k})\\
&(a_i^k,b_i^k,c_i^k)\in\mathbf{K},\quad\forall i,k
\end{cases}
\end{equation}
%Let us denote by $f_{\text{socp}}$ the optimal value of (\ref{eq:soncsocp}). Then, we have $\xi_{socp}\le\xi_{sonc}\le\xi^*$.

\section{First-order SDP solvers}\label{sec:firstsdp}
In the previous chapters, we have explained how to reduce the size of the \ac{moment-SOS} relaxations by
exploiting certain sparsity structures induced by the input polynomials. 
A complementary framework consists of exploiting the specific properties of the matrices involved in the moment \ac{SDP} relaxations, in order to speed-up their resolution via specific first-order algorithms.
Here, we prove that every moment relaxation of a \ac{POP} with a sphere or ball constraint can be reformulated as an \ac{SDP}  involving a \ac{PSD} matrix with \ac{CTP}. 
As a result, such moment relaxations can be solved efficiently by first-order methods that exploit \ac{CTP}, e.g., the conditional gradient-based augmented Lagrangian method.

First let us define \ac{CTP} for a \ac{POP}. 
Given $f,g_1,\dots,g_m, h_1, \dots, h_l\in\R[\x]$, let us consider the following \ac{POP} with $n$ variables, $m$ inequality constraints and $l$ equality constraints:
\begin{equation}\label{eq:POP.def.intro}
    f_{\min} = \min\,\{f(\mathbf x):g_j(\mathbf x)\ge 0,j\in[m],h_i(\mathbf x)= 0,i\in[l]\}.
\end{equation}
Let $r_{\min}\coloneqq\max\,\{\lceil \frac{\deg(f)}{2} \rceil, \lceil \frac{\deg(g_1)}{2} \rceil,\dots, \lceil \frac{\deg(g_m)}{2} \rceil, \lceil \frac{\deg(h_1)}{2} \rceil,\dots, \lceil \frac{\deg(h_l)}{2} \rceil\}$.
For each $r \geq r_{\min}$, recall that the moment relaxation associated to \ac{POP} \eqref{eq:POP.def.intro} is
\begin{equation}\label{eq:moment.hierarchy}
f^r=\inf\limits_{\y} \left\{ L_{\mathbf y}(f)\ \left|\begin{array}{rl}
& \mathbf M_r(\mathbf y)\succeq 0,y_{\mathbf{0}}=1\\
&\mathbf M_{r - \lceil \deg(g_j)/2 \rceil }(g_j\mathbf y)   \succeq 0,j\in[m]\\
&\mathbf M_{r - \lceil \deg(h_i)/2 \rceil }(h_i\mathbf y)= 0,i\in[l]
\end{array}
\right. \right\}.
\end{equation}
%
%To simplify notation, for every relaxation order $r \geq r_{\min}$, let $s(r)=\binom{n+2r}{n}$ and let $\mathcal{S}_r$ (resp.~$\mathcal{S}_r^+$) be the set of real symmetric (resp.~\ac{PSD}) matrices
%
%- of size $s(r)+\sum_{j\in[m]}s(r-\lceil \deg(g_j)/2 \rceil)$,
%
%- in a block diagonal form 
%$\mathbf Y=\diag(\mathbf Y_0,\dots,\mathbf Y_m)$, and such that
%
%- $\mathbf Y_0$ (resp. $\mathbf Y_j$) is of size $s(r)$ (resp. $s(r-\lceil \deg(g_j)/2\rceil)$ for $j\in [m]$).\\
If we denote
$$\mathbf D_r(\mathbf y)\coloneqq \diag(\mathbf M_r(\mathbf y),\mathbf M_{r-\lceil \deg(g_1)/2\rceil}(g_1\mathbf y),\dots, \mathbf M_{r-\lceil \deg(g_m)/2\rceil}(g_m\mathbf y)),$$ 
then \ac{SDP} \eqref{eq:moment.hierarchy} can be rewritten as
\begin{equation}\label{eq:dual.diag.moment.mat}
f^r=\inf\limits_\y\left\{L_{\mathbf y}(f)\left|\begin{array}{rl}
&\mathbf D_r(\mathbf y)\succeq0,y_{\mathbf{0}}=1\\
&\mathbf M_{r-\lceil \deg(h_i) / 2\rceil}(h_i\mathbf y)=0,i\in[l]
\end{array}
\right.\right\}.
\end{equation}
\begin{definition}\label{def:ctp}
(\ac{CTP} for a \ac{POP})
We say that \ac{POP} \eqref{eq:POP.def.intro} has \ac{CTP} if for every $r \geq r_{\min}$, there exists $a_r>0$ and a positive definite matrix $\mathbf T_{r}$ such that
\begin{equation}
    \left.
\begin{array}{rl}
&\mathbf M_{r-\lceil \deg(h_i)/2\rceil}(h_i\mathbf y)=0,i\in[l]\\
&y_{\mathbf{0}}=1
\end{array}
\right\}\Rightarrow  \trace(\mathbf T_{r} \mathbf D_r(\mathbf y) \mathbf T_{r})=a_r.
\end{equation}
\end{definition}
In other words, we say that a \ac{POP} has \ac{CTP} if each moment relaxation \eqref{eq:dual.diag.moment.mat} has an equivalent form involving a \ac{PSD} matrix whose trace is constant. 
In this case, we call $a_r$ the constant trace and $\mathbf T_r$ the basis transformation matrix.
%In the next subsection, we provide a sufficient condition for \ac{POP} \eqref{eq:POP.def} to have CTP. 
%
We illustrate this conversion to an \ac{SDP} with \ac{CTP}.
\begin{example}
Consider the following univariate \ac{POP}
\[-1=\inf\,\{x:1-x^2=0\}.\]
Then the second-order moment relaxation is
\[\begin{array}{rll}
f^2 = &\inf \limits_{\mathbf y} & y_1\\
&\rm{s.t.}
&\begin{bmatrix}
y_0 & y_1 & y_2\\
y_1 & y_2 & y_3\\
y_2 & y_3 & y_4
\end{bmatrix}\succeq 0,
\begin{bmatrix}
y_0-y_2 & y_1-y_3 \\
y_1-y_3 & y_2-y_4 
\end{bmatrix}= 0,
y_{0}=1.
\end{array}\]
It can be rewritten as
\[\begin{array}{rll}
f^2 = &\inf \limits_{\mathbf y} & y_1\\
&\rm{s.t.}
&\begin{bmatrix}
1 & y_1 & 1\\
y_1 & 1 & y_1\\
1 & y_1 & 1
\end{bmatrix}\succeq 0,\\
\end{array}\]
by removing equality constraints, in which the \ac{PSD} matrix has trace 3.
Alternatively, with $\D_2(\y)=\M_2(\y)$ and 
\[
\mathbf T_2=\begin{bmatrix}
1 & 0 & 0\\
0 & \sqrt{2} & 0\\
0 & 0 & 1
\end{bmatrix}, 
\quad 
\mathbf Y= \mathbf T_2 \D_2(\y) \mathbf T_2 = \mathbf T_2 \M_2(\y) \mathbf T_2,\]
we have
\[-f^2 = \sup_{\mathbf Y}\,\{ \left< \mathbf C,\mathbf Y\right>:\left< \mathbf A_i,\mathbf Y\right>=b_i,i\in [5], \mathbf Y \succeq 0\},\]
where $b_1=\dots=b_4=0$, $b_5=1$ and
\[\begin{array}{rl}
&\mathbf C=-\frac{\sqrt{2}}4\begin{bmatrix}
0 &1& 0\\
1 &0& 0\\ 
0& 0& 0
\end{bmatrix},
\mathbf A_1=\frac{\sqrt{2}}2\begin{bmatrix}
0& 0& 1\\
0& -1& 0\\
1& 0& 0
\end{bmatrix},
\mathbf A_2=\frac{1}2\begin{bmatrix}
2& 0& -1\\
0& 0& 0\\ 
-1& 0& 0
\end{bmatrix},\\
&\mathbf A_3=\frac{\sqrt{2}}4\begin{bmatrix}
0& 1& 0\\
1& 0& -1\\
0& -1& 0
\end{bmatrix},
\mathbf A_4=\frac{1}2\begin{bmatrix}
0& 0& 1\\
0& 0& 0\\
1& 0& -2
\end{bmatrix},
\mathbf A_5=\begin{bmatrix}
1& 0& 0\\
0& 0& 0\\
0& 0& 0
\end{bmatrix}.
\end{array}\]
We then obtain that 
\[\left< \mathbf A_i,\mathbf Y\right>=b_i,i\in [5]\Rightarrow\trace(\mathbf Y)=4.\]
\end{example}

For the minimization of a polynomial on the unit sphere, one can show that \ac{POP}~\eqref{eq:POP.def.intro} has \ac{CTP} 
with $a_r=2^r$ and $\mathbf T_{r}= \diag((\theta^{1/2}_{r,\alpha})_{\alpha\in\N^n_r})$,
where $(\theta_{r,\alpha})_{\alpha\in\N^n_r}\subseteq \R^{>0}$ satisfies $(1+\|\mathbf
x\|_2^2)^r=\sum_{\alpha\in\N^n_r}\theta_{r,\alpha}\mathbf x^{2\alpha}$, for all $r \geq 1$.

Next, we provide a sufficient condition for \ac{POP} \eqref{eq:POP.def.intro} to have \ac{CTP}.
For $r \geq r_{\min}$, let $\cM(\frakg)_r$ be the truncated quadratic module associated to $\frakg = \{g_1,\dots,g_m\}$. 
Let $\cM^\circ(\frakg)_r$ be the interior of the truncated quadratic module $\cM(\frakg)_r$, which is defined by
\begin{align*}
\cM^\circ(\frakg)_r\coloneqq \{\mathbf v_r^\intercal \mathbf G_0 \mathbf v_r+&\sum_{j\in[m]} g_j \mathbf v_{r-\lceil \deg(g_j)/2\rceil}^\intercal  \mathbf G_j \mathbf v_{r-\lceil \deg(g_j)/2\rceil}\\
&\mid\mathbf G_j\succ0, j\in\{0\}\cup[m]\}.
\end{align*}

\begin{theoremf}\label{theo:suff.cond.CTP}
If $1\in \cM^\circ(\frakg)_r$ for all $r \geq r_{\min}$, then \ac{POP} \eqref{eq:POP.def.intro} has \ac{CTP}.
This condition holds in particular when the set of constraints includes either ball or annulus constraints.
\end{theoremf}
Once we have the knowledge of the constant $a_r$, the $r$-th order moment relaxation can be cast as follows:
\begin{equation}\label{eq:moment.hierarchy.intro}
- f^r = \sup _{\mathbf Y} \{ \left< \mathbf C_r,\mathbf Y\right>:\mathcal{A}_r \mathbf Y=\mathbf b_r,\mathbf Y \succeq 0, \trace (\Y) = a_r \},
\end{equation}
where $\mathcal{A}_r$ is a linear operator (used here to encode affine constraints of the \ac{SDP}).
Afterwards, it turns out that \ac{SDP} \eqref{eq:moment.hierarchy.intro} is equivalent to minimizing the largest eigenvalue of a matrix pencil:
\begin{equation}\label{eq:nonsmooth.hierarchy.intro}
\begin{array}{r}
-f^r = \inf\limits_{\mathbf z}\,\{a_r\lambda_{\max}(\mathbf C_r-\mathcal{A}_r^\intercal \mathbf z)+\mathbf b_r^\intercal \mathbf z\},
\end{array}
\end{equation}
where $\mathcal A_k^\intercal $ denotes the adjoint operator of $\mathcal A_k$.
Hence \eqref{eq:nonsmooth.hierarchy.intro} forms what we call a hierarchy of (nonsmooth, convex) spectral relaxations.

To solve large-scale instances of this maximal eigenvalue minimization problem, a plethora of first-order methods are available, including subgradient descent or variants of the mirror-prox algorithm, spectral bundle methods, the conditional gradient based augmented Lagrangian (CGAL) algorithm, and variants relying on limited memory and arithmetic.

%Eventually, we present an overview of the related numerical experiments on randomly generated \ac{POP}s performed with the software \href{https://github.com/maihoanganh/ctpPOP}{github:ctpPOP}.
%We emphasize that these experiments illustrate the efficiency and scalability of {\tt ctpPOP} with the CGAL solver. In all randomly generated \ac{POP}s (either dense or sparse), the relative gap of the optimal value provided by CGAL with resepct to the optimal value provided by Mosek is below
%1\%. 
%For instance for minimizing a dense quadratic polynomial on the unit ball with
%100 variables, CGAL returns the optimal value of the second order moment \ac{SDP}
%relaxation within 6 hours on a standard laptop while Mosek runs out of memory. 
%Similarly, for
%minimizing a sparse quadratic polynomial involving a thousand variables with a ball constraint on each clique of variables, CGAL spends around two thousand seconds to solve the second order moment \ac{SDP}-relaxation while Mosek runs again out of memory.
%The largest clique of this \ac{POP} involves 42 variables.

\section{Notes and sources}
The material presented in Chapter \ref{sec:sonc} has been published in \citemisc{wang2020second,magron2020sonc}. 
The set of \ac{SONC} polynomials was introduced by \citemisc{iw}.
The condition linking nonnegativity of a circuit polynomial with its circuit number, stated in Theorem \ref{theorem:CircuitPolynomialNonnegativity}, can be found in \citemisc[Theorem 3.8]{iw}. 
The interested reader can find more details on mediated sets in \citemisc{pow,re}.
Theorem \ref{th:mediated}, Proposition \ref{lemma:mediated}, Theorem \ref{th:ratmediated} correspond to Theorem 5.2 in \citemisc{iw}, Lemma 3.5 and Theorem 3.6 in \citemisc{wang2020second}, respectively.
The heuristics used to compute the simplex cover can be found, e.g., in \citemisc{se}.
%and we can ensure that the output corresponds to a proper trellis (containing the inner point). 
Detailed numerical experiments comparing the usual geometric programming approach and the second-order cone programming approach stated in Chapter \ref{sec:sonc} can be found in \citemisc[\S~6]{wang2020second}.
The corresponding Julia package is available at \href{https://github.com/wangjie212/SONCSOCP}{github:{\tt SONCSOCP}}.
A recent study by \citemisc{papp} proposes a systematic method to compute an optimal simplex cover.

The fact that \ac{SONC} cones admit a second-order cone characterization was firstly stated in \citemisc[Theorem 17]{abe}, but the related proof does not provide an explicit construction.
Another recently introduced alternative certificates by \citemisc{ca} are sums of arithmetic geometric exponentials (SAGE), which can be obtained via relative entropy programming.
The connection between \ac{SONC} and SAGE polynomials has been recently studied in \citemisc{mu,wangsiaga,ka19}. 
It happens that \ac{SONC} polynomials and SAGE polynomials are actually equivalent, and that both have a cancellation-free representation in terms of generators.

The framework by \citemisc{dressler2020global} relies on the dual \ac{SONC} cone to compute lower bounds of polynomials by means of linear programming instead of geometric programming. 
Note that there are no general guarantees that the bounds obtained with this framework are always more or less accurate than the approach based on geometric programming from \citemisc{se}, and the same holds for performance.
One of the similar features shared by \ac{SOS}/\ac{SONC}-based frameworks is their intrinsic connections with conic programming: \ac{SOS} decompositions are computed via semidefinite programming and \ac{SONC} decompositions via geometric programming or second-order cone programming. 
In both cases, the resulting optimization problems are solved with interior-point algorithms, thus output approximate nonnegativity certificates. 
However, one can still obtain an exact certificate from such output via hybrid numerical-symbolic algorithms when the input polynomial lies in the interior of the \ac{SOS}/\ac{SONC} cone.
One way is to rely on rounding-projection algorithms adapted to the \ac{SOS} cone by \citemisc{pe} and the \ac{SONC} cone by \citemisc{ma19}, or alternatively on perturbation-compensation schemes as in \citemisc{univsos,multivsos18,csos,gradsos,magron2020sonc}. 

The material presented in Chapter \ref{sec:firstsdp} is issued from  \citemisc{mai2020hierarchy,mai2020exploiting}.
Theorem \ref{theo:suff.cond.CTP} is proved in  \citemisc[\S~3]{mai2020exploiting}.
The equivalence between \ac{SDP} \eqref{eq:moment.hierarchy.intro} and \eqref{eq:nonsmooth.hierarchy.intro} follows from the framework by Helmberg and Rendl \citemisc{helmberg2000spectral}.
The \ac{CTP} has been also studied in the context of eigenvalue optimization; see \citemisc{mai2021constant}.

CGAL is a first-order method that exploits \ac{CTP}. 
In \citemisc{yurtsever2021scalable}, the authors combined CGAL with the Nystr\"om sketch (named SketchyCGAL), which requires dramatically less storage than other methods and is very efficient for solving the first-order relaxation of large-scale Max-Cut instances.

When solving the second and higher order relaxations, \ac{SDP} solvers often encounter the following issues:
\begin{itemize}
\item\textbf{Storage}: {Interior-point methods} are often chosen by users because of their highly accurate output. 
These methods are efficient for solving medium-scale \ac{SDP}s.
However they frequently fail due to lack of memory when solving large-scale \ac{SDP}s. 
{First-order methods} (e.g., ADMM, SBM, CGAL) provide alternatives to interior-point methods to avoid the memory issue.
This is due to the fact that the cost per iteration of first-order methods is much {lower} than that of {interior-point methods}. 
At the price of losing convexity one can also rely on heuristic methods and  replace the full matrix $\mathbf Y$ in the moment relaxation by a simpler one, in order to save memory. 
For instance, the Burer-Monteiro method \citemisc{burer2005local} considers a low rank factorization of $\mathbf Y$. However, to get correct results the rank cannot be too low \citemisc{waldspurger2020rank} and therefore this limitation makes it useless for the second and higher order relaxations of \ac{POP}s.
Not suffering from such a limitation, CGAL not only maintains the convexity of the moment relaxation but also possibly runs with an implicit matrix $\Y$; see Remarks A.12 and A.17 in \citemisc{mai2020exploiting}.
\item \textbf{Accuracy}: Nevertheless, first-order methods have low convergence rates compared to interior-point methods.
Their performance depends heavily on the problem scaling and conditioning.
As a result, when solving large-scale \ac{SDP}s with first-order methods it is often difficult to obtain numerical results with high accuracy.         
\end{itemize}
Extensive numerical comparisons between some of these methods have been performed in \S~4 and Appendix A.3 from \citemisc{mai2020exploiting}.

We close this chapter by emphasizing that there are other issues to be addressed to improve the scalability of polynomial optimization methods.
A first complementary research track is to overcome the issue that \ac{SDP} relaxations arising from the \ac{moment-SOS} hierarchy can often be ill-conditioned. 
Possible remedies include the use of other bases of vector spaces of polynomials, for instance Chebyshev polynomials instead of the standard monomial basis as done in the univariate case in \citemisc[\S~5]{henrion2012semidefinite}, and encoding polynomial identities by evaluating them on suitably (perhaps randomly) chosen points, seeing \citemisc{lofberg2004coefficients,lasserre2017bounded} for preliminary attempts. 

Another complementary research track is to exploit symmetries in polynomial optimization. 
For instance the  invariance of all input polynomials under the action of a subgroup of the general linear group has been studied in \citemisc{riener2013exploiting}.
Previous works focused on exploiting the knowledge of the group action at the \ac{SDP} level \citemisc{gatermann2004symmetry}.
These frameworks have been applied to compute correlation bounds for finite-dimensional quantum systems \citemisc{tavakoli2019enabling} and bounds of packing problems  \citemisc{de2015semidefinite,dostert2017new,dostert2021exact}. 
Exploiting symmetries has also been investigated for polynomial optimization based on sums of arithmetic-geometric-exponentials in \citemisc{moustrou2021symmetry}.
It would be definitely worth extending these frameworks to other (non-)discrete groups, dynamical systems and to noncommutative polynomial optimization in the future.
This research direction of symmetry exploitation is still to be pursued and shall hopefully lead to publishing another complementary book in the upcoming years!
%
%\bibliographystylemisc{alpha}
%\bibliographymisc{preface,sdp,sparsemat,pop,cs,roundoff,lip,ncsparse,ts,cstssos,opf,jsr,misc,app}
\providecommand{\etalchar}[1]{$^{#1}$}

% Software
\appendix
\part{Appendix: software libraries}
\chapter{Programming with MATLAB}\label{chap:matlab}
%Didier Henrion, Jean-Bernard Lasserre and Johan L\"ofberg, Hayato Waki

\section{Sparse moment relaxations with $\gloptipoly$}\label{sec:gloptipoly}

$\gloptipoly$ \citeapp{gloptipoly} is a MATLAB library designed to solve the generalized problem of moments. 
We can rely on this library to solve the (primal) moment relaxation arising after exploiting \ac{CS} for a given \ac{POP}. 
Next we give two scripts to obtain the dense and sparse bounds given in Example~\ref{ex:sparsepop}.
In this example, we approximate the minimum of $f = f_1 + f_2 + f_3$ on the basic compact semialgebraic set $\X$ with
\begin{align*}
f_1 &= - x_1 x_4,\\
f_2 &= -x_1^2 + x_1 x_2 + x_1 x_3 - x_2 x_3 + x_2 x_5,\\
f_3 &= - x_5 x_6 + x_1 x_5 + x_1 x_6 + x_3 x_6,
\end{align*} 
and 
\[\X = \{\x \in \R^n \mid (6.36 - x_1) (x_1 - 4) \geq 0, \dots, (6.36 - x_6) (x_6 - 4) \geq 0\}.\]
By exploiting \ac{CS}, the set of variables is decomposed as $I_1 = \{1, 4\}$, $I_2 = \{1, 2, 3, 5\}$, $I_3 = \{1, 3, 5, 6\}$.
The first script below allows one to retrieve the dense bound $f^2$ of the dense primal moment relaxation \eqref{primalj} at the second relaxation order $r=2$. 
Our problem involves six polynomial variables and a measure $\mu$ (defined in Line 3) supported on $\X$ (defined in Lines 8-13).
At Line 15, $\gloptipoly$ calls the \ac{SDP} solver SeDuMi \citeapp{sedumi} and returns the objective value $f^2 = 20.8608$. 
In addition, $\gloptipoly$ is able to extract a minimizer of $f$ on $\X$ through Algorithm \ref{alg:extract}, which then certifies that $f_{\min} = f^2$. 
\begin{lstlisting}
r = 2 % relaxation order
mpol x1 x2 x3 x4 x5 x6
mu = meas([x1 x2 x3 x4 x5 x6]);
f1 = mom(-x1*x4);
f2 = mom(-x1^2 + x1*x2 + x1* x3 - x2*x3 + x2*x5);
f3 = mom(-x5*x6 + x1*x5 + x1*x6 + x3*x6);
f = f1 + f2 + f3; % objective function
X = [(6.36 - x1)*(x1-4)>=0,...
(6.36 - x2)*(x2-4)>=0,...
(6.36 - x3)*(x3-4)>=0,...
(6.36 - x4)*(x4-4)>=0,...
(6.36 - x5)*(x5-4)>=0,...
(6.36 - x6)*(x6-4)>=0]; % support
Pdense = msdp(min(f), X, mass(mu)==1, r);
[stat,obj] = msol(Pdense);
\end{lstlisting}
For more details on the six $\gloptipoly$ commands (\texttt{mpol}, \texttt{meas}, \texttt{mom}, \texttt{msdp}, \texttt{mass}, \texttt{msol}), we refer the interested reader to the online tutorial:
\begin{center}
 \href{https://homepages.laas.fr/henrion/papers/gloptipoly3.pdf}{https://homepages.laas.fr/henrion/papers/gloptipoly3.pdf}.
\end{center}
The second script below allows one to retrieve the sparse bound $f_{\cs}^2$ corresponding to the second relaxation order $r=2$.
By contrast with the dense case, one needs to define three measures $\mu_1, \mu_2, \mu_3$ (Lines 6-8), associated to $I_1$, $I_2$ and $I_3$, respectively, and ensure that their marginals satisfy the equality constraints given in \eqref{eq:csmultimom}. 
For instance, the marginals of $\mu_1$ and $\mu_2$ must have equal moments for monomials in $x_1$ since $I_{12} = I_1 \cap I_2 = \{1\}$ (see Lines 14-15).
At Line 34, $\gloptipoly$ returns the objective value $f_{\cs}^2 = f^2 = 20.8608$ and is again able to  extract a minimizer of $f$ on $\X$.
\begin{lstlisting}
r = 2 % relaxation order
mpol x1 3
mpol x2 x4 x6
mpol x3 2
mpol x5 2
mu(1) = meas([x1(1) x4]); % first measure on I1 = {1, 4}
mu(2) = meas([x1(2) x2 x3(1) x5(1)]); % second measure on I2 = {1, 2, 3, 5}
mu(3) = meas([x1(3) x3(2) x5(2) x6]); % third measure on I3 = {1, 3, 5, 6}
f1 = mom(-x1(1)*x4) ;
f2 = mom(-x1(2)^2 + x1(2)*x2(1) + x1(2)* x3(1) - x2(1)*x3(1) + x2(1)*x5(1));
f3 = mom(-x5(2)*x6 + x1(3)*x5(2) + x1(3)*x6 + x3(2)*x6);
f = f1 + f2 + f3; % objective function

m12_1 = mom(mmon(x1(1),2*r)); % moments of the marginal of mu1 on monomials in x1, corresponding to I12 = {1}
m12_2 = mom(mmon(x1(2),2*r)); % moments of the marginal of mu2 on monomials in x1, corresponding to I12 = {1}

m23_2 = mom(mmon([x1(2) x3(1) x5(1)],2*r)); % moments of the marginal of mu2 on monomials in x1, x3, x5, corresponding to I23 = {1, 3, 5}
m23_3 = mom(mmon([x1(3) x3(2) x5(2)],2*r)); % moments of the marginal of mu3 on monomials in x1, x3, x5, corresponding to I23 = {1, 3, 5}

m13_1 = mom(mmon(x1(1),2*r)); % moments of the marginal of mu1 on monomials in x1, corresponding to I13 = {1} 
m13_3 = mom(mmon(x1(3),2*r)); % moments of the marginal of mu3 on monomials in x1, corresponding to I13 = {1}

K = [(6.36 - x1(1))*(x1(1)-4)>=0,...
(6.36 - x1(2))*(x1(2)-4)>=0,...
(6.36 - x1(3))*(x1(3)-4)>=0,...
(6.36 - x2)*(x2-4)>=0,...
(6.36 - x4)*(x4-4)>=0,...
(6.36 - x6)*(x6-4)>=0,...
(6.36 - x3(1))*(x3(1)-4)>=0,... 
(6.36 - x3(2))*(x3(2)-4)>=0,...
(6.36 - x5(1))*(x5(1)-4)>=0,...
(6.36 - x5(2))*(x5(2)-4)>=0]; % supports
Psparse = msdp(min(f),m12_1-m12_2==0,m23_2-m23_3==0, m13_1-m13_3==0, K,mass(mu)==1);
[stat,obj] = msol(Psparse);
\end{lstlisting}
Other similar scripts for approximating minima of rational functions are given in \citeapp[\S~7]{bugarin2016minimizing}.

\section{Sparse SOS relaxations with Yalmip}
\label{sec:yalmip}

Yalmip \citeapp{yalmip} is a MATLAB toolbox to model optimization problems and to solve them using external solvers.
In particular, we can rely on this toolbox to solve the \ac{SOS} program \eqref{eq:cssos}, which is the dual of the moment relaxation \eqref{eq:csmom}  encoded in the prior section. 
Next we give two scripts to obtain the dense and sparse \ac{SOS} bounds given in Example~\ref{ex:sparsepop}.

The first script below allows to retrieve the dense bound, corresponding to \ac{SDP} \eqref{dualjsos} at the second relaxation order $r=2$. 
For each of the six box constraints, we need to define an \ac{SOS} multiplier \texttt{si} (see Line 11), corresponding to $\sigma_i$ in \eqref{dualjsos}. 
Note that the first argument \texttt{si} refers to the \ac{SOS} polynomial multiplier itself while the second argument \texttt{ci} refers to its (unknown) vector of coefficients. 
Here, we want to maximize the lower bound $b$ of $f$ such that $f-b$ has a Putinar's representation of degree $2 r = 4$, and thus the degree of each \ac{SOS} multiplier is $2 r -2 = 2$. 
As with $\gloptipoly$, Yalmip calls at Line 15 the same \ac{SDP} solver SeDuMi and returns the objective value $f^2 = 20.8608$. 
\begin{lstlisting}
r = 2; % relaxation order
sdpvar x1 x2 x3 x4 x5 x6 b;
x = [x1; x2; x3; x4; x5; x6];
f1 = -x1*x4;
f2 = -x1^2 + x1*x2 + x1* x3 - x2*x3 + x2*x5;
f3 = -x5*x6 + x1*x5 + x1*x6 + x3*x6;
f = f1 + f2 + f3; % objective function
g = (6.36 - x).*(x - 4); % the 6 polynomials used to define the box constraints
s = []; c = [b]; F = [];
for i=1:6
  [si,ci] = polynomial(x,2*r-2); % SOS multiplier si of degree 2 in the dense Putinar's representation
  s = [s si]; c = [c; ci]; F = [F, sos(si)];
end
F = [F, sos(f - b - s*g)]; % SOS multiplier s0 = f - b - sum_i si gi of degree 4
solvesos(F,-b,[],c); % solves the SOS program: maximize b such that f - b = s0 + sum_i si gi
\end{lstlisting}
For more details on the four Yalmip commands (\texttt{sdpvar}, \texttt{polynomial}, \texttt{sos}, \texttt{solvesos}), we refer the interested reader to the online tutorial
\href{https://yalmip.github.io/tutorial/sumofsquaresprogramming}{github:yalmip}.

The second script below allows one to retrieve the sparse bound $f_{\cs}^2$ corresponding to the second relaxation order $r=2$. 
By contrast with the dense case, one needs to define three \ac{SOS} multipliers \texttt{s01}, \texttt{s02}, \texttt{s03} (Lines 20-22), corresponding to $\sigma_{01}, \sigma_{02}, \sigma_{03}$ in \eqref{dualjsos}, associated to $I_1$, $I_2$ and $I_3$, respectively.

As in the dense case, one needs a single \ac{SOS} multiplier for each constraint but the support is more restricted.
For instance, the \ac{SOS} multiplier associated to the polynomial $(6.36 - x_1) (4 - x_1)$ (defined in Line 13) depends only on the variables $x_1, x_4$, related to the index subset $I_1 = \{1,4\}$.
At Line 28, Yalmip returns the objective value $f_{\cs}^2 = f^2 = 20.8608$.
\begin{lstlisting}
r = 2; % relaxation order
sdpvar x1 x2 x3 x4 x5 x6 b;
x = [x1; x2; x3; x4; x5; x6];
xI1 = [x1; x4];
xI2 = [x1; x2; x3; x5];
xI3 = [x1; x3; x5; x6];
f1 = -x1*x4;
f2 = -x1^2 + x1*x2 + x1* x3 - x2*x3 + x2*x5;
f3 = -x5*x6 + x1*x5 + x1*x6 + x3*x6;
f = f1 + f2 + f3; % objective function
g = (6.36 - x).*(x - 4); % the 6 polynomials used to define the box constraints

[s1,c1] = polynomial(xI1,2*r-2); % SOS multiplier of degree 2 in the sparse Putinar's representation, depending only on x1, x4
[s2,c2] = polynomial(xI2,2*r-2); % SOS multiplier of degree 2 in the sparse Putinar's representation, depending only on x1, x2, x3, x5
[s3,c3] = polynomial(xI2,2*r-2); % SOS multiplier of degree 2 in the sparse Putinar's representation, depending only on x1, x2, x3, x5
[s4,c4] = polynomial(xI1,2*r-2); % SOS multiplier of degree 2 in the sparse Putinar's representation, depending only on x1, x4
[s5,c5] = polynomial(xI2,2*r-2); % SOS multiplier of degree 2 in the sparse Putinar's representation, depending only on x1, x2, x3, x5
[s6,c6] = polynomial(xI3,2*r-2); % SOS multiplier of degree 2 in the sparse Putinar's representation, depending only on x1, x3, x5, x6

[s01, c01] = polynomial(xI1,2*r); % SOS multiplier of degree 4 in the sparse Putinar's representation, depending only on x1, x4
[s02, c02] = polynomial(xI2,2*r); % SOS multiplier of degree 4 in the sparse Putinar's representation, depending only on x1, x2, x3, x5
[s03, c03] = polynomial(xI3,2*r); % SOS multiplier of degree 4 in the sparse Putinar's representation, depending only on x1, x3, x5, x6

s = [s1 s2 s3 s4 s5 s6]; c = [b; c1; c2; c3; c4; c5; c6; c01; c02; c03];
F = [sos(s1), sos(s2), sos(s3), sos(s4), sos(s5), sos(s6), sos(s01), sos(s02), sos(s03)];

F = [F, coefficients(f - b - s01 - s02 - s03 - s*g,x) == 0]; 
solvesos(F,-b,[],c); % solves the SOS program: maximize b such that f - b = s01 + s02 + s03 + sum_i si gi
\end{lstlisting}

\chapter{Programming with Julia}\label{chap:julia}

The goal of this chapter is to present our Julia library, called \texttt{TSSOS}, which aims at helping polynomial optimizers to solve large-scale problems with sparse input data. 
The underlying algorithmic framework is based on exploiting \ac{TS} (see Chapter \ref{chap:tssos}) as well as the combination of \ac{CS} and \ac{TS} (see Chapter \ref{chap:cstssos}). 
As emphasized in the different chapters of Part \ref{part:ts}, \ac{TS} can be applied to numerous problems ranging from power networks to eigenvalue optimization of noncommutative polynomials, involving up to tens of thousands of variables and constraints.
A complete documentation of the \texttt{TSSOS} library is available at  \href{https://github.com/wangjie212/TSSOS}{github:{\tt TSSOS}}.
\texttt{TSSOS} depends on the following Julia packages:
\begin{itemize}
    \item {\tt MultivariatePolynomials} to manipulate multivariate polynomials;
    \item {\tt JuMP} \citeapp{jump} to model the \ac{SDP} problem;
    \item {\tt Graphs} \citeapp{Graphs2021} to handle graphs;
    \item {\tt MetaGraphs} to handle weighted graphs;
    \item {\tt ChordalGraph} \citeapp{ChordalGraph} to generate approximately smallest chordal extensions;
    \item {\tt SemialgebraicSets} to compute Gr\"obner bases. 
\end{itemize}
Besides, {\tt TSSOS} requires an \ac{SDP} solver, which can be {\tt MOSEK} \citeapp{mosek} or {\tt COSMO} \citeapp{cosmo}. 
Once one of the \ac{SDP} solvers has been installed, the installation of {\tt TSSOS} is straightforward:

\begin{minted}[breaklines,escapeinside=||,mathescape=true, numbersep=3pt, gobble=0, frame=lines, fontsize=\small, framesep=2mm]{csharp}
Pkg.add("https://github.com/wangjie212/TSSOS")
\end{minted}

\section{{\tt TSSOS} for polynomial optimization}
\label{sec:tssos_pop}
\texttt{TSSOS} provides an easy way to define a \ac{POP} and to solve it by sparsity-adapted \ac{SDP} relaxations, including the relaxation $\P^{r,s}_{\ts}$ given in \eqref{chap7:seq1} exploiting \ac{TS}, the relaxation $\P^{r}_{\cs}$ given in \eqref{eq:csmom} exploiting \ac{CS}, as well as the relaxation $\P^{r,s}_{\csts}$ given in \eqref{eq:cstsmom} exploiting both \ac{CS} and \ac{TS}.
The tunable parameters (e.g., the relaxation order $r$, the sparse order $s$, the types of chordal extensions) allow the user to find the best compromise between the computational cost and the solution accuracy.
The following Julia script is a simple example to illustrate the usage of \texttt{TSSOS}.

\begin{minted}[breaklines,escapeinside=||,mathescape=true, linenos, numbersep=3pt, gobble=0, frame=lines, fontsize=\small, framesep=2mm]{csharp}
using TSSOS
using DynamicPolynomials
r = 2 /* relaxation order */
@polyvar x[1:6]
f = x[1]^4 + x[2]^4 - 2x[1]^2*x[2] - 2x[1] + 2x[2]*x[3] - 2x[1]^2*x[3] - 2x[2]^2*x[3] - 2x[2]^2*x[4] - 2x[2] + 2x[1]^2 + 2.5x[1]*x[2] - 2x[4] + 2x[1]*x[4] + 3x[2]^2 + 2x[2]*x[5] + 2x[3]^2 + 2x[3]*x[4] + 2x[4]^2 + x[5]^2 - 2x[5] + 2 /* objective function */
g = 1 - sum(x[1:2].^2) /* inequality constraints */
h = 1 - sum(x[3:5].^2) /* equality constraints */
nh = 1 /* number of equality constraints */
\end{minted}
To solve the first step of the TSSOS hierarchy with approximately smallest chordal extensions (option {\tt TS="MD"}), run
\begin{minted}[mathescape,               
               numbersep=5pt,
               gobble=0,
               frame=lines,
               framesep=2mm]{csharp}
opt,sol,data = tssos_first([f;g;h], x, r, numeq=nh, TS="MD")
\end{minted}
We obtain $f^{2,1}_{\ts}= 0.2096$.\\
To solve higher steps of the TSSOS hierarchy, repeatedly run
\begin{minted}[mathescape,               
               numbersep=5pt,
               gobble=0,
               frame=lines,
               framesep=2mm]{csharp}
opt,sol,data = tssos_higher!(data, TS="MD")
\end{minted}
For instance, at the second step of the TSSOS hierarchy we obtain $f^{2,2}_{\ts}=  0.2123$.\\
To solve the first step of the CS-TSSOS hierarchy, run
\begin{minted}[mathescape,               
               numbersep=5pt,
               gobble=0,
               frame=lines,
               framesep=2mm]{csharp}
opt,sol,data = cs_tssos_first([f;g;h], x, r, numeq=nh, TS="MD")
\end{minted}
to obtain the lower bound $f^{2,1}_{\csts}=  0.2092$. \\
To solve higher steps of the CS-TSSOS hierarchy, repeatedly run
\begin{minted}[mathescape,               
               numbersep=5pt,
               gobble=0,
               frame=lines,
               framesep=2mm]{csharp}
opt,sol,data = cs_tssos_higher!(data, TS="MD")
\end{minted}
For instance, at the second step of the CS-TSSOS hierarchy we obtain the lower bound $f^{2,2}_{\csts}=  0.2097$.\\
\texttt{TSSOS} also employs other techniques to gain more speed-up.
\subsection*{Binary variables}
{\tt TSSOS} supports binary variables. By setting ${\tt nb}=a$, one can specify that the first $a$ variables (i.e., $x_1,\dots,x_a$) are binary variables which satisfy the equations $x_i^2=1$, $i \in [a]$. The specification is helpful to reduce the number of decision variables of \ac{SDP} relaxations since one can identify $x^j$ with $x^{j\,(\textrm{mod }2)}$ for a binary variable $x$.

\subsection*{Equality constraints}
If there are equality constraints in the description of POP \eqref{eq:pop}, then one can reduce the number of decision variables of \ac{SDP} relaxations by working in the quotient ring $\R[\x]/(h_1,\ldots,h_t)$, where $\{h_1=0,\ldots,h_t=0\}$ is the set of equality constraints. To conduct the elimination, we need to compute a Gr\"obner basis $GB$ of the ideal $(h_1,\ldots,h_t)$. Then any monomial $\x^{\alpha}$ can be replaced by its normal form $\NF(\x^{\alpha}, GB)$ with respect to the Gr\"obner basis $GB$ when constructing \ac{SDP} relaxations. 
%This reduction is conducted by default for the \ac{TS} hierarchy in {\tt TSSOS}.

\subsection*{Adding extra first-order moment matrices}
When POP \eqref{eq:pop} is a \ac{QCQP}, the first-order \ac{moment-SOS} relaxation is also known as Shor's relaxation. 
In this case, $\P^1$, $\P^1_{\cs}$ and $\P^{1,1}_{\ts}$ yield the same optimum. 
To ensure that any higher order \ac{CS-TSSOS} relaxation (i.e., $\P^{r,s}_{\csts}$ with $r>1$) provides a tighter lower bound compared to the one given by Shor's relaxation, we may add an extra first-order moment matrix for each variable clique in $\P^{r,s}_{\csts}$. 
In {\tt TSSOS}, this is accomplished by setting ${\tt MomentOne=true}$.

\subsection*{Chordal extensions} 
For \ac{TS}, {\tt TSSOS} supports two types of chordal extensions: the maximal chordal extension (option {\tt TS="block"}) and approximately smallest chordal extensions. 
{\tt TSSOS} generates approximately smallest chordal extensions through two heuristics: the Minimum Degree heuristic (option {\tt TS="MD"}) and the Minimum Fillin heuristic (option {\tt TS="MF"}).
To use relaxations without \ac{TS}, set {\tt TS = false}.
Similarly for \ac{CS} there are options for the field {\tt CS} which can be {\tt "MF"} by default, or {\tt "MD"}, or {\tt "NC"} (without chordal extension), or {\tt false} (without \ac{CS}).

See \citeapp{treewidth} for a full description of the two chordal extension heuristics. The Minimum Degree heuristic is slightly faster in practice, whereas the Minimum Fillin heuristic yields on average slightly smaller clique numbers. Hence for \ac{CS}, the Minimum Fillin heuristic is recommended and for \ac{TS}, the Minimum Degree heuristic is recommended.

\subsection*{Merging PSD blocks}
In case that two \ac{PSD} blocks have a large portion of overlaps, it might be beneficial to merge these two blocks into a single block for efficiency. See Figure~\ref{fig:merge} for such an example. {\tt TSSOS} supports \ac{PSD} block merging inspired by the strategy proposed in \citeapp{cosmo}. To activate the merging process, one just needs to set the option ${\tt Merge=True}$. The parameter ${\tt md}$ ($=3$ by default) can be used to tune the merging strength.

\begin{figure}
\begin{equation*}
    \begin{bmatrix}
    \begin{array}{ccccc}
    \bullet&\bullet&\bullet&\bullet&\\
    \bullet&\bullet&\bullet&\bullet&\bullet\\
    \bullet&\bullet&\bullet&\bullet&\bullet\\
    \bullet&\bullet&\bullet&\bullet&\bullet\\
    &\bullet&\bullet&\bullet&\bullet\\
    \end{array}
    \end{bmatrix}
    \quad\longrightarrow\quad
    \begin{bmatrix}
    \begin{array}{ccccc}
    \bullet&\bullet&\bullet&\bullet&\bullet\\
    \bullet&\bullet&\bullet&\bullet&\bullet\\
    \bullet&\bullet&\bullet&\bullet&\bullet\\
    \bullet&\bullet&\bullet&\bullet&\bullet\\
    \bullet&\bullet&\bullet&\bullet&\bullet\\
    \end{array}
    \end{bmatrix}
\end{equation*}
\caption{Merge two $4\times4$ blocks into a single $5\times5$ block.}\label{fig:merge}
\end{figure}

\subsection*{Representing polynomials with supports and coefficients}
The Julia package {\tt DynamicPolynomials} provides an efficient way to define polynomials symbolically. But for large-scale polynomial optimization (say, $n>500$), it is more efficient to represent polynomials by their supports and coefficients. For instance, we can represent   $f=x_1^4+x_2^4+x_3^4+x_1x_2x_3$  in terms of its support and coefficients as follows:

\begin{minted}[mathescape,               
               numbersep=5pt,
               gobble=0,
               frame=lines,
               framesep=2mm]{csharp}
supp = [[1; 1; 1; 1], [2; 2; 2 ;2], [3; 3; 3; 3], [1; 2; 3]] 
/* support array of f */
coe = [1; 1; 1; 1] /* coefficient vector of f */
\end{minted}
The above representation of polynomials is natively supported by {\tt TSSOS}. Hence the user can define the polynomial optimization problem directly by the support data and the coefficient data to speed up the modeling process.\\
\subsection*{SOS + sparse + RIP $\nRightarrow$ sparse SOS}
To finish this section, we provide a {\tt TSSOS} script showing that the relaxations based on \ac{CS} can be strictly more conservative than the dense ones, even when the \ac{RIP} holds.
Let $f_1 = x_1^4 + (x_1 x_2 - 1)^2$, $f_2 = x_2^2 x_3^2 + (x_3^2 - 1)^2$ and $f = f_1+ f_2$. 
Here the \ac{RIP} trivially holds as we have only two subsets of variables. 
After running the following commands:
\begin{minted}[breaklines,escapeinside=||,mathescape=true, linenos, numbersep=3pt, gobble=0, frame=lines, fontsize=\small, framesep=2mm]{csharp}
using TSSOS, DynamicPolynomials
@polyvar x[1:3]
f1 = x[1]^4 + (x[1]*x[2] - 1)ˆ2 
f2 = x[2]^2*x[3]^2 + (x[3]^2 - 1)^2
f = f1 + f2
opt,sol,data = cs_tssos_first([f], x, 2, CS=false, TS=false)
opt,sol,data = cs_tssos_first([f], x, 2, TS=false)
\end{minted}
we obtain $f^2_{\cs} = 0.0005 < 0.8498 = f^2$.
One can also compute lower bounds based on \ac{TS} as follows:
\begin{minted}[breaklines,escapeinside=||,mathescape=true, linenos, numbersep=3pt, gobble=0, frame=lines, fontsize=\small, framesep=2mm]{csharp}
opt,sol,data = tssos_first([f], x, 2, TS="block")
opt,sol,data = tssos_higher!(data, TS="block")
\end{minted}
to obtain $f^{2,1}_{\ts} = 0.0004 < 0.8498 = f^{2,2}_{\ts} = f^2$.
\section{{\tt TSSOS} for noncommutative optimization}\label{sec:tssos_nc}
As seen previously in Chapters  \ref{chap:ncsparse} and \ref{chap:ncts}, the whole framework of exploiting \ac{CS} and \ac{TS} for (commutative) polynomial optimization can be extended to handle noncommutative polynomial optimization (including eigenvalue and trace optimization), which leads to the submodule {\tt NCTSSOS} in {\tt TSSOS}, available at  \href{https://github.com/wangjie212/NCTSSOS}{github:{\tt NCTSSOS}}.
The corresponding commands are similar to {\tt TSSOS}. 
To illustrate the use of {\tt NCTSSOS}, we consider the eigenvalue minimization problem given in Example \ref{ex:nosparseEigBall}.
After running the following commands:
\begin{minted}[breaklines,escapeinside=||,mathescape=true, linenos, numbersep=3pt, gobble=0, frame=lines, fontsize=\small, framesep=2mm]{csharp}
@ncpolyvar x1 x2 x3 x4
x = [x1; x2; x3; x4]
f1 = 4 - x1 + 3 * x2 - 3 * x3 - 3 * x1^2 - 7 * x1 * x2 + 6 * x1 * x3 - x2 * x1 -5 * x3 * x1 + 5 * x3 * x2 - 5 * x1^3 - 3 * x1^2 * x3 + 4 * x1 * x2 * x1 - 6 * x1 * x2 * x3 + 7 * x1 * x3 * x1 + 2 * x1 * x3 * x2 - x1 * x3^2 - x2 * x1^2 + 3 * x2 * x1 * x2 - x2 * x1 * x3 - 2 * x2^3 - 5 * x2^2 * x3 - 4 * x2 * x3^2 - 5 * x3 * x1^2 + 7 * x3 * x1 * x2 + 6 * x3 * x2 * x1 - 4 * x3 * x2 * x2 - x3^2 * x1 - 2 * x3^2 * x2 + 7 * x3^3
f2 = -1 + 6 * x2 + 5 * x3 + 3 * x4 - 5 * x2^2 + 2 * x2 * x3 + 4 * x2 * x4 - 4 * x3 * x2 + x3^2 - x3 * x4 + x4 * x2 - x4 * x3 + 2 * x4^2 - 7 * x2^3 + 4 * x2 * x3^2 + 5 * x2 * x3 * x4 - 7 * x2 * x4 * x3 - 7 * x2 * x4^2 + x3 * x2^2 + 6 * x3 * x2 * x3 - 6 * x3 * x2 * x4 - 3 * x3^2 * x2 - 7 * x3^2 * x4 + 6 * x3 * x4 * x2  - 3 * x3 * x4 * x3 - 7 * x3 * x4^2 + 3 * x4 * x2^2 - 7 * x4 * x2 * x3 - x4 * x2 * x4 - 5 * x4 * x3^2 + 7 * x4 * x3 * x4 + 6 * x4^2 * x2 - 4 * x4^3
f = f1 + f2
ncball = [1 - x1^2 - x2^2 - x3^2; 1 - x2^2 - x3^2 - x4^2] 
cs_nctssos_first([f; ncball], x, 2, CS=false, TS=false, obj="eigen")
cs_nctssos_first([f; ncball], x, 2, TS=false, obj="eigen")
cs_nctssos_first([f; ncball], x, 3, TS=false, obj="eigen")
\end{minted}
we obtain $
\lambda^2_{\sparse} (f, \frakg ) \simeq
-13.7680 <  -13.7333 \simeq \lambda^3_{\sparse} (f,\frakg)
\simeq \lambda^2 (f,\frakg) = \lambda_{\min} (f,\frakg)$. \\
One can also compute lower bounds based on \ac{TS} as follows:
\begin{minted}[breaklines,escapeinside=||,mathescape=true, linenos, numbersep=3pt, gobble=0, frame=lines, fontsize=\small, framesep=2mm]{csharp}
opt,data = nctssos_first([f; ncball], x, 2, TS="MD", obj="eigen")
opt,data = nctssos_higher!(data, TS="MD")
\end{minted}
After repeating the last command twice, we obtain $\lambda_{\ts}^{2,1}(f, \frakg ) \simeq -14.0922$,  $\lambda_{\ts}^{2,2}(f, \frakg ) \simeq-13.7618$,  $\lambda_{\ts}^{2,3}(f, \frakg ) \simeq-13.7393$, $\lambda^{2,4}_{\ts}(f, \frakg ) = \lambda^2 (f,\frakg) = \lambda_{\min} (f,\frakg)\simeq-13.7333$.

{\tt NCTSSOS} can also handle optimization over trace polynomials. 
To illustrate this, let us consider the two  polynomial Bell inequalities given in Chapter \ref{sec:polbell}. 
For the first one, we use the set of noncommutative variables $(x_1,x_2,x_3,x_4) \coloneqq (a_1,a_2,b_1,b_2)$, and so the objective function is 
\begin{align*}
&\left(\Trace(a_1b_2+a_2b_1)\right)^2+
\left(\Trace(a_1b_1-a_2b_2)\right)^2 \\
 = & \left(\Trace(x_1x_4+x_2x_3)\right)^2+
\left(\Trace(x_1x_3-x_2x_4)\right)^2\\
 = & \left(\Trace(x_1x_4)\right)^2 
+ \left(\Trace(x_2x_3)\right)^2
+ 2 \Trace(x_1x_4) \Trace(x_2x_3) \\
& + \left(\Trace(x_1x_3)\right)^2
+ \left(\Trace(x_2x_4)\right)^2
+ 2 \Trace(x_1x_3) \Trace(x_2x_4).
\end{align*}
As seen earlier in the commutative setting, we represent trace polynomials by their supports and coefficients. 
Note that we minimize the opposite expression, so the resulting vector of coefficients is $[-1; -1; -2; -1; -1; 2]$ (see Line 4). 
The option {\tt constraint="unipotent"} (Line 5) allows us to encode the constraints $x_i^2=1$ for $i\in[4]$.
The next script provides an upper bound for the maximization problem given in \eqref{e:bell3} at relaxation order $r=2$ and sparse order $s=1$ with maximal chordal extensions.
\begin{minted}[breaklines,escapeinside=||,mathescape=true, linenos, numbersep=3pt, gobble=0, frame=lines, fontsize=\small, framesep=2mm]{csharp}
n = 4
r = 2
tr_supp = [[[1;4], [1;4]], [[2;3], [2;3]], [[1;4], [2;3]], [[1;3], [1;3]], [[2;4], [2;4]], [[1;3], [2;4]]]
coe = [-1; -1; -2; -1; -1; 2]
opt,data = ptraceopt_first(tr_supp, coe, n, r, TS="block", constraint="unipotent")
\end{minted}
Similarly, the next script provides an upper bound for the maximization problem \eqref{e:bell5} at relaxation order $r=2$ and sparse order $s=1$ with approximately smallest chordal extensions.
\begin{minted}[breaklines,escapeinside=||,mathescape=true, linenos, numbersep=3pt, gobble=0, frame=lines, fontsize=\small, framesep=2mm]{csharp}
n = 6
r = 2
tr_supp = [[[1;4]], [[1], [4]], [[1;5]], [[1], [5]], [[1;6]], [[1], [6]], [[2;4]], [[2], [4]], [[2;5]], [[2], [5]], [[2;6]], [[2], [6]], [[3;4]], [[3], [4]], [[3;5]], [[3], [5]]]
coe = [-1; 1; -1; 1; -1; 1; -1; 1; -1; 1; 1; -1; -1; 1; 1; -1]
opt,data = ptraceopt_first(tr_supp, coe, n, r, TS="MD", constraint="unipotent")
\end{minted}
To obtain bounds at higher sparse orders, one should run repeatedly
\begin{minted}[mathescape,               
               numbersep=5pt,
               gobble=0,
               frame=lines,
               framesep=2mm]{csharp}
opt,data = ptraceopt_higher!(data, TS="block")
\end{minted}

\section{{\tt TSSOS} for dynamical systems}\label{sec:tssosdyn}

{\tt SparseJSR} is an efficient tool to compute bounds on the \ac{JSR} of a set of matrices, based on the sparse \ac{SOS} relaxations provided in Chapter \ref{chap:sparsejsr}. 
To use it in Julia, run
\begin{minted}[mathescape,               
               numbersep=5pt,
               gobble=0,
               frame=lines,
               framesep=2mm]{csharp}
add https://github.com/wangjie212/SparseJSR
\end{minted}
The following simple example illustrates how to compute upper bounds:
\begin{minted}[breaklines,escapeinside=||,mathescape=true, linenos, numbersep=3pt, gobble=0, frame=lines, fontsize=\small, framesep=2mm]{csharp}
A = [[1 -1 0; -0.5 1 0; 1 1 0], [0.5 1 0; -1 1 0; -1 -0.5 0]]
r = 2 /* relaxation order */
ub = SparseJSR(A, r, TS = "block") /* computing an upper bound on the JSR of A via sparse SOS */
ub = JSR(A, r) /* computing an upper bound on the JSR of A via dense SOS */
\end{minted}
As explained in Chapter \ref{sec:sparsejsr}, the \ac{JSR} upper bounds are obtained via a bisection procedure. 
By default, the initial lower bound and the initial upper bound for bisection are 0 and 2,  respectively.
The default tolerance is $1\text{e-}5$.
The default sparse order is 1.
One can also set {\tt TS="MD"} to use approximately smallest chordal extensions, which is recommended for the relaxation order $r = 1$.
In addition, Gripenberg's algorithm can be employed to produce a lower bound and an upper bound on the \ac{JSR} with difference at most $\delta$.
\begin{minted}[breaklines,escapeinside=||,mathescape=true, linenos, numbersep=3pt, gobble=0, frame=lines, fontsize=\small, framesep=2mm]{csharp}
lb,ub = gripenberg(A, |$\delta=0.2$|)
\end{minted}
Finally, the {\tt SparseDynamicSystem} library allows one to approximate regions of attraction, maximum positively invariant sets, global attractors for polynomial dynamic systems via the \ac{TS}-based \ac{moment-SOS} hierarchy. 
For more details, the reader is referred to the dedicated webpage: 
\href{https://github.com/wangjie212/SparseDynamicSystem}{github:{\tt SparseDynamicSystem}}.

%\bibliographystyleapp{alpha}
%\bibliographyapp{preface,sdp,sparsemat,pop,cs,roundoff,lip,ncsparse,ts,cstssos,opf,jsr,misc,app}
\providecommand{\etalchar}[1]{$^{#1}$}


\begin{thebibliography}{Las01}

\bibitem[Las01]{Las01sos}
Jean-Bernard Lasserre.
\newblock {Global Optimization with Polynomials and the Problem of Moments}.
\newblock {\em SIAM Journal on Optimization}, 11(3):796--817, 2001.

\end{thebibliography}


\begin{thebibliography}{WKKM06}

\bibitem[Ber75]{bernshtein1975number}
David~N Bernshtein.
\newblock The number of roots of a system of equations.
\newblock {\em Functional Analysis and its applications}, 9(3):183--185, 1975.

\bibitem[BT09]{beck2009fast}
Amir Beck and Marc Teboulle.
\newblock A fast iterative shrinkage-thresholding algorithm for linear inverse
  problems.
\newblock {\em SIAM journal on imaging sciences}, 2(1):183--202, 2009.

\bibitem[Las01]{Las01sos}
Jean-Bernard Lasserre.
\newblock {Global Optimization with Polynomials and the Problem of Moments}.
\newblock {\em SIAM Journal on Optimization}, 11(3):796--817, 2001.

\bibitem[Las06]{Las06}
Jean~B Lasserre.
\newblock Convergent sdp-relaxations in polynomial optimization with sparsity.
\newblock {\em SIAM Journal on Optimization}, 17(3):822--843, 2006.

\bibitem[Put93]{Putinar1993positive}
M.~Putinar.
\newblock {Positive polynomials on compact semi-algebraic sets}.
\newblock {\em Indiana University Mathematics Journal}, 42(3):969--984, 1993.

\bibitem[Rez78]{Reznick78}
B.~Reznick.
\newblock {Extremal PSD forms with few terms}.
\newblock {\em Duke Mathematical Journal}, 45(2):363--374, 1978.

\bibitem[WKKM06]{Waki06}
Hayato Waki, Sunyoung Kim, Masakazu Kojima, and Masakazu Muramatsu.
\newblock Sums of squares and semidefinite program relaxations for polynomial
  optimization problems with structured sparsity.
\newblock {\em SIAM Journal on Optimization}, 17(1):218--242, 2006.

\end{thebibliography}

\begin{thebibliography}{BEGFB94}

\bibitem[ACP87]{arnborg1987complexity}
Stefan Arnborg, Derek~G Corneil, and Andrzej Proskurowski.
\newblock Complexity of finding embeddings in a k-tree.
\newblock {\em SIAM Journal on Algebraic Discrete Methods}, 8(2):277--284,
  1987.

\bibitem[AHMR88]{agler1988positive}
Jim Agler, William Helton, Scott McCullough, and Leiba Rodman.
\newblock Positive semidefinite matrices with a given sparsity pattern.
\newblock {\em Linear algebra and its applications}, 107:101--149, 1988.

\bibitem[ART03]{mosek}
Erling~D Andersen, Cornelis Roos, and Tamas Terlaky.
\newblock On implementing a primal-dual interior-point method for conic
  quadratic optimization.
\newblock {\em Mathematical Programming}, 95(2):249--277, 2003.

\bibitem[BEGFB94]{boyd1994linear}
Stephen Boyd, Laurent El~Ghaoui, Eric Feron, and Venkataramanan Balakrishnan.
\newblock {\em Linear matrix inequalities in system and control theory}.
\newblock SIAM, 1994.

\bibitem[BK10]{treewidth}
Hans~L Bodlaender and Arie~MCA Koster.
\newblock Treewidth computations i. upper bounds.
\newblock {\em Information and Computation}, 208(3):259--275, 2010.

\bibitem[Bor99]{csdp}
Brian Borchers.
\newblock Csdp, ac library for semidefinite programming.
\newblock {\em Optimization methods and Software}, 11(1-4):613--623, 1999.

\bibitem[BP93]{blair1993introduction}
Jean~RS Blair and Barry Peyton.
\newblock An introduction to chordal graphs and clique trees.
\newblock In {\em Graph theory and sparse matrix computation}, pages 1--29.
  Springer, 1993.

\bibitem[BTN01]{ben2001lectures}
Aharon Ben-Tal and Arkadi Nemirovski.
\newblock {\em Lectures on modern convex optimization: analysis, algorithms,
  and engineering applications}.
\newblock SIAM, 2001.

\bibitem[Gav72]{gavril1972algorithms}
F{\u{a}}nic{\u{a}} Gavril.
\newblock Algorithms for minimum coloring, maximum clique, minimum covering by
  cliques, and maximum independent set of a chordal graph.
\newblock {\em SIAM Journal on Computing}, 1(2):180--187, 1972.

\bibitem[GJSW84]{grone1984}
Robert Grone, Charles~R Johnson, Eduardo~M S{\'a}, and Henry Wolkowicz.
\newblock Positive definite completions of partial hermitian matrices.
\newblock {\em Linear algebra and its applications}, 58:109--124, 1984.

\bibitem[GLV09]{gvozdenovic2009block}
Neboj{\v{s}}a Gvozdenovi{\'c}, Monique Laurent, and Frank Vallentin.
\newblock Block-diagonal semidefinite programming hierarchies for 0/1
  programming.
\newblock {\em Operations Research Letters}, 37(1):27--31, 2009.

\bibitem[Lau09]{laurent2009matrix}
Monique Laurent.
\newblock Matrix completion problems.
\newblock {\em Encyclopedia of Optimization}, 3:221--229, 2009.

\bibitem[NN94]{nesterov1994interior}
Yurii Nesterov and Arkadii Nemirovskii.
\newblock {\em Interior-point polynomial algorithms in convex programming}.
\newblock SIAM, 1994.

\bibitem[Stu99]{sedumi}
Jos~F Sturm.
\newblock Using sedumi 1.02, a matlab toolbox for optimization over symmetric
  cones.
\newblock {\em Optimization methods and software}, 11(1-4):625--653, 1999.

\bibitem[VA{\etalchar{+}}15a]{va}
Lieven Vandenberghe, Martin~S Andersen, et~al.
\newblock Chordal graphs and semidefinite optimization.
\newblock {\em Foundations and Trends{\textregistered} in Optimization},
  1(4):241--433, 2015.

\bibitem[VA{\etalchar{+}}15b]{Vandenberghe15}
Lieven Vandenberghe, Martin~S Andersen, et~al.
\newblock Chordal graphs and semidefinite optimization.
\newblock {\em Foundations and Trends{\textregistered} in Optimization},
  1(4):241--433, 2015.

\bibitem[VB96]{vandenberghe1996semidefinite}
Lieven Vandenberghe and Stephen Boyd.
\newblock Semidefinite programming.
\newblock {\em SIAM review}, 38(1):49--95, 1996.

\bibitem[YFN{\etalchar{+}}10]{sdpa}
M.~Yamashita, K.~Fujisawa, K.~Nakata, M.~Nakata, M.~Fukuda, K.~Kobayashi, and
  K.~Goto.
\newblock {A high-performance software package for semidefinite programs :
  SDPA7}.
\newblock Technical report, Dept. of Information Sciences, Tokyo Inst.~Tech.,
  2010.

\end{thebibliography}

\begin{thebibliography}{KPV18}

\bibitem[Bar02]{alexander2002course}
A.~Barvinok.
\newblock {\em {A Course in Convexity}}.
\newblock Graduate studies in mathematics. American Mathematical Society, 2002.

\bibitem[HL05]{Henrion05}
D.~Henrion and Jean-Bernard Lasserre.
\newblock {\em Detecting Global Optimality and Extracting Solutions in
  GloptiPoly}, pages 293--310.
\newblock Springer Berlin Heidelberg, Berlin, Heidelberg, 2005.

\bibitem[KPV18]{klep2018minimizer}
Igor Klep, Janez Povh, and Jurij Volcic.
\newblock Minimizer extraction in polynomial optimization is robust.
\newblock {\em SIAM Journal on Optimization}, 28(4):3177--3207, 2018.

\bibitem[Las01]{Las01sos}
Jean-Bernard Lasserre.
\newblock {Global Optimization with Polynomials and the Problem of Moments}.
\newblock {\em SIAM Journal on Optimization}, 11(3):796--817, 2001.

\bibitem[LL01]{lieb2001analysis}
Elliott~H Lieb and Michael Loss.
\newblock Analysis, graduate stud. math., vol. 14.
\newblock In {\em Amer. Math. Soc}, 2001.

\bibitem[LLR08]{lasserre2008semidefinite}
Jean~Bernard Lasserre, Monique Laurent, and Philipp Rostalski.
\newblock Semidefinite characterization and computation of zero-dimensional
  real radical ideals.
\newblock {\em Foundations of Computational Mathematics}, 8(5):607--647, 2008.

\bibitem[LM19]{lasserre2019sdp}
Jean-Bernard Lasserre and Victor Magron.
\newblock In sdp relaxations, inaccurate solvers do robust optimization.
\newblock {\em SIAM Journal on Optimization}, 29(3):2128--2145, 2019.

\bibitem[Lue97]{Luenberger97}
D.~G. Luenberger.
\newblock {\em {Optimization by Vector Space Methods}}.
\newblock John Wiley \& Sons, Inc., New York, NY, USA, 1st edition, 1997.

\bibitem[Nie13]{nie2013certifying}
Jiawang Nie.
\newblock Certifying convergence of lasserre’s hierarchy via flat truncation.
\newblock {\em Mathematical Programming}, 142(1):485--510, 2013.

\bibitem[Nie14]{nie2014optimality}
Jiawang Nie.
\newblock Optimality conditions and finite convergence of lasserre’s
  hierarchy.
\newblock {\em Mathematical programming}, 146(1):97--121, 2014.

\bibitem[Put93]{Putinar1993positive}
M.~Putinar.
\newblock {Positive polynomials on compact semi-algebraic sets}.
\newblock {\em Indiana University Mathematics Journal}, 42(3):969--984, 1993.

\bibitem[RF10]{Royden}
H.L. Royden and P.~Fitzpatrick.
\newblock {\em {Real Analysis}}.
\newblock Featured Titles for Real Analysis Series. Prentice Hall, 2010.

\end{thebibliography}

\begin{thebibliography}{WKKM06}

\bibitem[BHL16]{bugarin2016minimizing}
Florian Bugarin, Didier Henrion, and Jean~Bernard Lasserre.
\newblock Minimizing the sum of many rational functions.
\newblock {\em Mathematical Programming Computation}, 8(1):83--111, 2016.

\bibitem[CMM98]{czyzyk1998neos}
Joseph Czyzyk, Michael~P Mesnier, and Jorge~J Mor{\'e}.
\newblock The neos server.
\newblock {\em IEEE Computational Science and Engineering}, 5(3):68--75, 1998.

\bibitem[GNS07]{grimm2007note}
David Grimm, Tim Netzer, and Markus Schweighofer.
\newblock A note on the representation of positive polynomials with structured
  sparsity.
\newblock {\em Archiv der Mathematik}, 89(5):399--403, 2007.

\bibitem[Las06]{Las06}
Jean~B Lasserre.
\newblock Convergent sdp-relaxations in polynomial optimization with sparsity.
\newblock {\em SIAM Journal on Optimization}, 17(3):822--843, 2006.

\bibitem[SK20]{schlosser2020sparse}
Corbinian Schlosser and Milan Korda.
\newblock Sparse moment-sum-of-squares relaxations for nonlinear dynamical
  systems with guaranteed convergence.
\newblock {\em arXiv preprint arXiv:2012.05572}, 2020.

\bibitem[TCHL20]{tacchi2020approximating}
Matteo Tacchi, Carmen Cardozo, Didier Henrion, and Jean~Bernard Lasserre.
\newblock Approximating regions of attraction of a sparse polynomial
  differential system.
\newblock {\em IFAC-PapersOnLine}, 53(2):3266--3271, 2020.

\bibitem[TS05]{tawarmalani2005polyhedral}
Mohit Tawarmalani and Nikolaos~V Sahinidis.
\newblock A polyhedral branch-and-cut approach to global optimization.
\newblock {\em Mathematical programming}, 103(2):225--249, 2005.

\bibitem[TWLH21]{tacchi2021exploiting}
Matteo Tacchi, Tillmann Weisser, Jean~Bernard Lasserre, and Didier Henrion.
\newblock Exploiting sparsity for semi-algebraic set volume computation.
\newblock {\em Foundations of Computational Mathematics}, pages 1--49, 2021.

\bibitem[WKKM06]{Waki06}
Hayato Waki, Sunyoung Kim, Masakazu Kojima, and Masakazu Muramatsu.
\newblock Sums of squares and semidefinite program relaxations for polynomial
  optimization problems with structured sparsity.
\newblock {\em SIAM Journal on Optimization}, 17(1):218--242, 2006.

\bibitem[WLT18]{weisser2018sparse}
Tillmann Weisser, Jean~B Lasserre, and Kim-Chuan Toh.
\newblock Sparse-bsos: a bounded degree sos hierarchy for large scale
  polynomial optimization with sparsity.
\newblock {\em Mathematical Programming Computation}, 10(1):1--32, 2018.

\end{thebibliography}

\begin{thebibliography}{MAGW15}

\bibitem[ART03]{mosek}
Erling~D Andersen, Cornelis Roos, and Tamas Terlaky.
\newblock On implementing a primal-dual interior-point method for conic
  quadratic optimization.
\newblock {\em Mathematical Programming}, 95(2):249--277, 2003.

\bibitem[BdA{\etalchar{+}}12]{Borges12Test}
Mateus Borges, Marcelo d'Amorim, Saswat Anand, David Bushnell, and Corina~S.
  Pasareanu.
\newblock Symbolic execution with interval solving and meta-heuristic search.
\newblock In {\em Proceedings of the 2012 IEEE Fifth International Conference
  on Software Testing, Verification and Validation}, ICST '12, pages 111--120,
  Washington, DC, USA, 2012. IEEE Computer Society.

\bibitem[CGRS14]{Chiang14s3fp}
Wei-Fan Chiang, Ganesh Gopalakrishnan, Zvonimir Rakamaric, and Alexey Solovyev.
\newblock Efficient search for inputs causing high floating-point errors.
\newblock In {\em Proceedings of the 19th ACM SIGPLAN Symposium on Principles
  and Practice of Parallel Programming}, PPoPP '14, pages 43--52, New York, NY,
  USA, 2014. ACM.

\bibitem[DGP{\etalchar{+}}09]{fluctuat}
David Delmas, Eric Goubault, Sylvie Putot, Jean Souyris, Karim Tekkal, and
  Franck Védrine.
\newblock Towards an industrial use of fluctuat on safety-critical avionics
  software.
\newblock In María Alpuente, Byron Cook, and Christophe Joubert, editors, {\em
  Formal Methods for Industrial Critical Systems}, volume 5825 of {\em Lecture
  Notes in Computer Science}, pages 53--69. Springer Berlin Heidelberg, 2009.

\bibitem[DK14]{Darulova14Popl}
Eva Darulova and Viktor Kuncak.
\newblock {Sound Compilation of Reals}.
\newblock In {\em Proceedings of the 41st ACM SIGPLAN-SIGACT Symposium on
  Principles of Programming Languages}, POPL '14, pages 235--248, New York, NY,
  USA, 2014. ACM.

\bibitem[Hal06]{Flyspeck06}
Thomas~C. Hales.
\newblock Introduction to the flyspeck project.
\newblock In Thierry Coquand, Henri Lombardi, and Marie-Fran{\c{c}}oise Roy,
  editors, {\em Mathematics, Algorithms, Proofs}, number 05021 in Dagstuhl
  Seminar Proceedings, Dagstuhl, Germany, 2006.

\bibitem[HGBK12]{hgbk2012fmcad}
Leopold Haller, Alberto Griggio, Martin Brain, and Daniel Kroening.
\newblock Deciding floating-point logic with systematic abstraction.
\newblock In {\em Formal Methods in Computer-Aided Design (FMCAD)}, pages
  131--140, 2012.

\bibitem[Hig02]{higham2002accuracy}
N.J. Higham.
\newblock {\em Accuracy and Stability of Numerical Algorithms: Second Edition}.
\newblock Society for Industrial and Applied Mathematics, 2002.

\bibitem[IEE08]{IEEE}
IEEE.
\newblock {IEEE Standard for Floating-Point Arithmetic}.
\newblock {\em IEEE Std 754-2008}, pages 1--70, 2008.

\bibitem[Las01]{Las01sos}
Jean-Bernard Lasserre.
\newblock {Global Optimization with Polynomials and the Problem of Moments}.
\newblock {\em SIAM Journal on Optimization}, 11(3):796--817, 2001.

\bibitem[Las06]{Las06}
Jean~B Lasserre.
\newblock Convergent sdp-relaxations in polynomial optimization with sparsity.
\newblock {\em SIAM Journal on Optimization}, 17(3):822--843, 2006.

\bibitem[MAGW15]{Magron15sdp}
Victor Magron, Xavier Allamigeon, Stéphane Gaubert, and Benjamin Werner.
\newblock Certification of real inequalities: templates and sums of squares.
\newblock {\em Mathematical Programming}, 151(2):477--506, 2015.

\bibitem[MCD17]{toms17}
Victor Magron, George Constantinides, and Alastair Donaldson.
\newblock Certified roundoff error bounds using semidefinite programming.
\newblock {\em ACM Trans. Math. Software}, 43(4):Art. 34, 31, 2017.

\bibitem[SJRG15]{fptaylor15}
Alexey Solovyev, Charles Jacobsen, Zvonimir Rakamaric, and Ganesh
  Gopalakrishnan.
\newblock {Rigorous Estimation of Floating-Point Round-off Errors with Symbolic
  {Taylor} Expansions}.
\newblock In Nikolaj Bjorner and Frank de~Boer, editors, {\em Proceedings of
  the 20th International Symposium on Formal Methods (FM)}, volume 9109 of {\em
  Lecture Notes in Computer Science}, pages 532--550. Springer, 2015.

\bibitem[YFN{\etalchar{+}}10]{sdpa}
M.~Yamashita, K.~Fujisawa, K.~Nakata, M.~Nakata, M.~Fukuda, K.~Kobayashi, and
  K.~Goto.
\newblock {A high-performance software package for semidefinite programs :
  SDPA7}.
\newblock Technical report, Dept. of Information Sciences, Tokyo Inst.~Tech.,
  2010.

\end{thebibliography}

\begin{thebibliography}{WZC{\etalchar{+}}18b}

\bibitem[BP21]{bolte2019conservative}
J{\'e}r{\^o}me Bolte and Edouard Pauwels.
\newblock Conservative set valued fields, automatic differentiation, stochastic
  gradient methods and deep learning.
\newblock {\em Mathematical Programming}, 188(1):19--51, 2021.

\bibitem[BWC{\etalchar{+}}19]{boopathy2019cnn}
Akhilan Boopathy, Tsui-Wei Weng, Pin-Yu Chen, Sijia Liu, and Luca Daniel.
\newblock Cnn-cert: An efficient framework for certifying robustness of
  convolutional neural networks.
\newblock In {\em Proceedings of the AAAI Conference on Artificial
  Intelligence}, volume~33, pages 3240--3247, 2019.

\bibitem[CLMPa]{LipPOP20}
T.~Chen, J.-B. Lasserre, V.~Magron, and E.~Pauwels.
\newblock {Semialgebraic Optimization for Bounding Lipschitz Constants of ReLU
  Networks}.
\newblock {\em Proceeding of Advances in Neural Information Processing Systems
  33 (NeurIPS 2020)}.

\bibitem[CLMPb]{mondeq21}
T.~Chen, J.-B. Lasserre, V.~Magron, and E.~Pauwels.
\newblock {Semialgebraic Representation of Monotone Deep Equilibrium Models and
  Applications to Certification}.
\newblock {\em Proceeding of Advances in Neural Information Processing Systems
  34 (NeurIPS 2021)}.

\bibitem[CLMP22]{chen2022sublevel}
Tong Chen, Jean-Bernard Lasserre, Victor Magron, and Edouard Pauwels.
\newblock A sublevel moment-sos hierarchy for polynomial optimization.
\newblock {\em Computational Optimization and Applications}, 81(1):31--66,
  2022.

\bibitem[CP20]{combettes2019lipschitz}
Patrick~L Combettes and Jean-Christophe Pesquet.
\newblock Lipschitz certificates for layered network structures driven by
  averaged activation operators.
\newblock {\em SIAM Journal on Mathematics of Data Science}, 2(2):529--557,
  2020.

\bibitem[DSG{\etalchar{+}}18]{dvijotham2018dual}
Krishnamurthy Dvijotham, Robert Stanforth, Sven Gowal, Timothy~A Mann, and
  Pushmeet Kohli.
\newblock A dual approach to scalable verification of deep networks.
\newblock In {\em UAI}, pages 550--559, 2018.

\bibitem[FMP20]{fazlyab2019safety}
Mahyar Fazlyab, Manfred Morari, and George~J Pappas.
\newblock Safety verification and robustness analysis of neural networks via
  quadratic constraints and semidefinite programming.
\newblock {\em IEEE Transactions on Automatic Control}, 2020.

\bibitem[FRH{\etalchar{+}}19]{fazlyab2019efficient}
Mahyar Fazlyab, Alexander Robey, Hamed Hassani, Manfred Morari, and George~J.
  Pappas.
\newblock Efficient and accurate estimation of lipschitz constants for deep
  neural networks, 2019.

\bibitem[HCC18]{huster2018limitations}
Todd Huster, Cho-Yu~Jason Chiang, and Ritu Chadha.
\newblock Limitations of the lipschitz constant as a defense against
  adversarial examples.
\newblock In {\em Joint European Conference on Machine Learning and Knowledge
  Discovery in Databases}, pages 16--29. Springer, 2018.

\bibitem[KL18]{kakade2018provably}
Sham~M Kakade and Jason~D Lee.
\newblock Provably correct automatic sub-differentiation for qualified
  programs.
\newblock In {\em Advances in neural information processing systems}, pages
  7125--7135, 2018.

\bibitem[LRC20]{latorre2020lipschitz}
Fabian Latorre, Paul Rolland, and Volkan Cevher.
\newblock Lipschitz constant estimation of neural networks via sparse
  polynomial optimization.
\newblock In {\em International Conference on Learning Representations}, 2020.

\bibitem[Lö04]{YALMIP}
J.~Löfberg.
\newblock Yalmip : A toolbox for modeling and optimization in {MATLAB}.
\newblock In {\em Proceedings of the CACSD Conference}, Taipei, Taiwan, 2004.

\bibitem[RSL18a]{raghunathan2018certified}
Aditi Raghunathan, Jacob Steinhardt, and Percy Liang.
\newblock Certified defenses against adversarial examples.
\newblock In {\em International Conference on Learning Representations}, 2018.

\bibitem[RSL18b]{Raghuathan18}
Aditi Raghunathan, Jacob Steinhardt, and Percy Liang.
\newblock Semidefinite relaxations for certifying robustness to adversarial
  examples.
\newblock In {\em Advances in Neural Information Processing Systems}, pages
  10877--10887, 2018.

\bibitem[TXT17]{tjeng2017evaluating}
Vincent Tjeng, Kai Xiao, and Russ Tedrake.
\newblock Evaluating robustness of neural networks with mixed integer
  programming.
\newblock {\em arXiv preprint arXiv:1711.07356}, 2017.

\bibitem[VS18]{virmaux2018lipschitz}
Aladin Virmaux and Kevin Scaman.
\newblock Lipschitz regularity of deep neural networks: Analysis and efficient
  estimation.
\newblock In {\em Advances in Neural Information Processing Systems}, pages
  3835--3844, 2018.

\bibitem[WK18]{wong2017provable}
Eric Wong and Zico Kolter.
\newblock Provable defenses against adversarial examples via the convex outer
  adversarial polytope.
\newblock In {\em International Conference on Machine Learning}, pages
  5286--5295. PMLR, 2018.

\bibitem[WZC{\etalchar{+}}18a]{weng2018towards}
Lily Weng, Huan Zhang, Hongge Chen, Zhao Song, Cho-Jui Hsieh, Luca Daniel,
  Duane Boning, and Inderjit Dhillon.
\newblock Towards fast computation of certified robustness for relu networks.
\newblock In {\em International Conference on Machine Learning}, pages
  5276--5285. PMLR, 2018.

\bibitem[WZC{\etalchar{+}}18b]{weng2018evaluating}
Tsui-Wei Weng, Huan Zhang, Pin-Yu Chen, Jinfeng Yi, Dong Su, Yupeng Gao,
  Cho-Jui Hsieh, and Luca Daniel.
\newblock Evaluating the robustness of neural networks: An extreme value theory
  approach.
\newblock {\em arXiv preprint arXiv:1801.10578}, 2018.

\bibitem[ZWC{\etalchar{+}}18]{zhang2018efficient}
Huan Zhang, Tsui-Wei Weng, Pin-Yu Chen, Cho-Jui Hsieh, and Luca Daniel.
\newblock Efficient neural network robustness certification with general
  activation functions.
\newblock In {\em Advances in neural information processing systems}, pages
  4939--4948, 2018.

\end{thebibliography}

\begin{thebibliography}{PKRR{\etalchar{+}}19}

\bibitem[AL12]{anjos2011handbook}
Miguel~F. Anjos and Jean-Bernard Lasserre, editors.
\newblock {\em Handbook on semidefinite, conic and polynomial optimization},
  volume 166 of {\em International Series in Operations Research \& Management
  Science}.
\newblock Springer, New York, 2012.

\bibitem[BCKP13]{nctrace}
Sabine Burgdorf, Kristijan Cafuta, Igor Klep, and Janez Povh.
\newblock The tracial moment problem and trace-optimization of polynomials.
\newblock {\em Math. Program.}, 137(1-2, Ser. A):557--578, 2013.

\bibitem[Bel64]{bell1964einstein}
John~S Bell.
\newblock On the {E}instein {P}odolsky {R}osen paradox.
\newblock {\em Physics Physique Fizika}, 1(3):195, 1964.

\bibitem[BKP16]{burgdorf16}
Sabine Burgdorf, Igor Klep, and Janez Povh.
\newblock {\em Optimization of polynomials in non-commuting variables}.
\newblock SpringerBriefs in Mathematics. Springer, [Cham], 2016.

\bibitem[Bla78]{Amalgam78}
Bruce~E. Blackadar.
\newblock Weak expectations and nuclear {$C^{\ast} $}-algebras.
\newblock {\em Indiana Univ. Math. J.}, 27(6):1021--1026, 1978.

\bibitem[BMV75]{bessis1975monotonic}
Daniel Bessis, Pierre Moussa, and Matteo Villani.
\newblock Monotonic converging variational approximations to the functional
  integrals in quantum statistical mechanics.
\newblock {\em J. Math. Phys.}, 16(11):2318--2325, 1975.

\bibitem[CGT88]{conn1988testing}
Andrew~R. Conn, Nicholas I.~M. Gould, and Philippe~L. Toint.
\newblock Testing a class of methods for solving minimization problems with
  simple bounds on the variables.
\newblock {\em Math. Comp.}, 50(182):399--430, 1988.

\bibitem[CHSH69]{clauser1969proposed}
John~F. Clauser, Michael~A. Horne, Abner Shimony, and Richard~A. Holt.
\newblock Proposed experiment to test local hidden-variable theories.
\newblock {\em Phys. rev. lett.}, 23(15):880, 1969.

\bibitem[CKP11]{cafuta2011ncsostools}
Kristijan Cafuta, Igor Klep, and Janez Povh.
\newblock N{CSOS}tools: a computer algebra system for symbolic and numerical
  computation with noncommutative polynomials.
\newblock {\em Optim. Methods Softw.}, 26(3):363--380, 2011.

\bibitem[CKP12]{cafuta2012constrained}
Kristijan Cafuta, Igor Klep, and Janez Povh.
\newblock Constrained polynomial optimization problems with noncommuting
  variables.
\newblock {\em SIAM J. Optim.}, 22(2):363--383, 2012.

\bibitem[dOHMP09]{engineeringFRAG}
Mauricio~C. de~Oliveira, J.~William Helton, Scott~A. McCullough, and Mihai
  Putinar.
\newblock Engineering systems and free semi-algebraic geometry.
\newblock In {\em Emerging applications of algebraic geometry}, volume 149 of
  {\em IMA Vol. Math. Appl.}, pages 17--61. Springer, New York, 2009.

\bibitem[GdLL18]{Gribling18}
Sander Gribling, David de~Laat, and Monique Laurent.
\newblock Bounds on entanglement dimensions and quantum graph parameters via
  noncommutative polynomial optimization.
\newblock {\em Math. Program.}, 170(1, Ser. B):5--42, 2018.

\bibitem[GDLL19]{Gribling19}
Sander Gribling, David De~Laat, and Monique Laurent.
\newblock Lower bounds on matrix factorization ranks via noncommutative
  polynomial optimization.
\newblock {\em Foundations of Computational Mathematics}, 19(5):1013--1070,
  2019.

\bibitem[Hel02]{Helton02}
J.~William Helton.
\newblock ``{P}ositive'' noncommutative polynomials are sums of squares.
\newblock {\em Ann. of Math. (2)}, 156(2):675--694, 2002.

\bibitem[HKM12]{hkmConvex}
J.~William Helton, Igor Klep, and Scott McCullough.
\newblock The convex {P}ositivstellensatz in a free algebra.
\newblock {\em Adv. Math.}, 231(1):516--534, 2012.

\bibitem[HKT17]{GTineq2}
Fumio Hiai, Robert K\"{o}nig, and Marco Tomamichel.
\newblock Generalized log-majorization and multivariate trace inequalities.
\newblock {\em Ann. Henri Poincar\'{e}}, 18(7):2499--2521, 2017.

\bibitem[HLL09]{gloptipoly}
D.~Henrion, Jean-Bernard Lasserre, and J.~Löfberg.
\newblock {GloptiPoly 3: moments, optimization and semidefinite programming}.
\newblock {\em Optimization Methods and Software}, 24(4-5):pp. 761--779, August
  2009.

\bibitem[HM04]{Helton04}
J.~William Helton and Scott~A. McCullough.
\newblock A {P}ositivstellensatz for non-commutative polynomials.
\newblock {\em Trans. Amer. Math. Soc.}, 356(9):3721--3737, 2004.

\bibitem[KMP21]{ncsparse}
Igor Klep, Victor Magron, and Janez Povh.
\newblock Sparse noncommutative polynomial optimization.
\newblock {\em Mathematical Programming}, pages 1--41, 2021.

\bibitem[KMV22]{klep2022optimization}
Igor Klep, Victor Magron, and Jurij Vol{\v{c}}i{\v{c}}.
\newblock Optimization over trace polynomials.
\newblock In {\em Annales Henri Poincar{\'e}}, volume~23, pages 67--100.
  Springer, 2022.

\bibitem[Las06]{Las06}
Jean~B Lasserre.
\newblock Convergent sdp-relaxations in polynomial optimization with sparsity.
\newblock {\em SIAM Journal on Optimization}, 17(3):822--843, 2006.

\bibitem[Lax58]{lax1957differential}
Peter~D. Lax.
\newblock Differential equations, difference equations and matrix theory.
\newblock {\em Comm. Pure Appl. Math.}, 11:175--194, 1958.

\bibitem[LS04]{lieb2004equivalent}
Elliott~H. Lieb and Robert Seiringer.
\newblock Equivalent forms of the {B}essis-{M}oussa-{V}illani conjecture.
\newblock {\em J. Statist. Phys.}, 115(1-2):185--190, 2004.

\bibitem[McC01]{McCullSOS}
Scott McCullough.
\newblock Factorization of operator-valued polynomials in several non-commuting
  variables.
\newblock {\em Linear Algebra Appl.}, 326(1-3):193--203, 2001.

\bibitem[MP05]{mccullough2005noncommutative}
Scott McCullough and Mihai Putinar.
\newblock Noncommutative sums of squares.
\newblock {\em Pacific J. Math.}, 218(1):167--171, 2005.

\bibitem[Nie14]{NieRand14}
Jiawang Nie.
\newblock The {$\mathcal{A}$}-truncated {$K$}-moment problem.
\newblock {\em Found. Comput. Math.}, 14(6):1243--1276, 2014.

\bibitem[NPA08]{navascues2008convergent}
Miguel Navascu{\'e}s, Stefano Pironio, and Antonio Ac\'{i}n.
\newblock A convergent hierarchy of semidefinite programs characterizing the
  set of quantum correlations.
\newblock {\em New J. Phys.}, 10(7):073013, 2008.

\bibitem[NT14]{netzer2014hyperbolic}
Tim Netzer and Andreas Thom.
\newblock Hyperbolic polynomials and generalized {C}lifford algebras.
\newblock {\em Discrete Comput. Geom.}, 51(4):802--814, 2014.

\bibitem[PKRR{\etalchar{+}}19]{pozas2019bounding}
Alejandro Pozas-Kerstjens, Rafael Rabelo, \L{}ukasz Rudnicki, Rafael Chaves,
  Daniel Cavalcanti, Miguel Navascu\'{e}s, and Antonio Ac\'{i}n.
\newblock Bounding the sets of classical and quantum correlations in networks.
\newblock {\em Phys. Rev. Lett.}, 123(14):140503, 6, 2019.

\bibitem[PNA10]{pironio2010convergent}
Stefano Pironio, Miguel Navascu{\'e}s, and Antonio Ac\'{i}n.
\newblock Convergent relaxations of polynomial optimization problems with
  noncommuting variables.
\newblock {\em SIAM J. Optim.}, 20(5):2157--2180, 2010.

\bibitem[PV09]{pal2009quantum}
K\'{a}roly~F. P\'{a}l and Tam\'{a}s V\'{e}rtesi.
\newblock Quantum bounds on {B}ell inequalities.
\newblock {\em Phys. Rev. A (3)}, 79(2):022120, 12, 2009.

\bibitem[SBT17]{GTineq1}
David Sutter, Mario Berta, and Marco Tomamichel.
\newblock Multivariate trace inequalities.
\newblock {\em Comm. Math. Phys.}, 352(1):37--58, 2017.

\bibitem[SIG98]{skelton1997unified}
Robert~E. Skelton, Tetsuya Iwasaki, and Karolos~M. Grigoriadis.
\newblock {\em A unified algebraic approach to linear control design}.
\newblock The Taylor \& Francis Systems and Control Book Series. Taylor \&
  Francis, Ltd., London, 1998.

\bibitem[Sta13]{stahl2013proof}
Herbert~R. Stahl.
\newblock Proof of the {BMV} conjecture.
\newblock {\em Acta Math.}, 211(2):255--290, 2013.

\bibitem[Voi85]{Voi83}
Dan-Virgil Voiculescu.
\newblock Symmetries of some reduced free product {$C^\ast$}-algebras.
\newblock In {\em Operator algebras and their connections with topology and
  ergodic theory ({B}u{s}teni, 1983)}, volume 1132 of {\em Lecture Notes in
  Math.}, pages 556--588. Springer, Berlin, 1985.

\bibitem[Wit15]{wittek2015algorithm}
Peter Wittek.
\newblock Algorithm 950: {N}cpol2sdpa-sparse semidefinite programming
  relaxations for polynomial optimization problems of noncommuting variables.
\newblock {\em ACM Trans. Math. Software}, 41(3):Art. 21, 12, 2015.

\end{thebibliography}

\begin{thebibliography}{WMLM20}

\bibitem[AM14]{ahmadi2014dsos}
Amir~Ali Ahmadi and Anirudha Majumdar.
\newblock Dsos and sdsos optimization: Lp and socp-based alternatives to sum of
  squares optimization.
\newblock In {\em 2014 48th annual conference on information sciences and
  systems (CISS)}, pages 1--5. IEEE, 2014.

\bibitem[ART03]{mosek}
Erling~D Andersen, Cornelis Roos, and Tamas Terlaky.
\newblock On implementing a primal-dual interior-point method for conic
  quadratic optimization.
\newblock {\em Mathematical Programming}, 95(2):249--277, 2003.

\bibitem[HLL09]{gloptipoly}
D.~Henrion, Jean-Bernard Lasserre, and J.~Löfberg.
\newblock {GloptiPoly 3: moments, optimization and semidefinite programming}.
\newblock {\em Optimization Methods and Software}, 24(4-5):pp. 761--779, August
  2009.

\bibitem[HP11]{han}
Edward~J Hancock and Antonis Papachristodoulou.
\newblock Structured sum of squares for networked systems analysis.
\newblock In {\em 2011 50th IEEE Conference on Decision and Control and
  European Control Conference}, pages 7236--7241. IEEE, 2011.

\bibitem[KKW05]{kojima2005sparsity}
Masakazu Kojima, Sunyoung Kim, and Hayato Waki.
\newblock Sparsity in sums of squares of polynomials.
\newblock {\em Mathematical Programming}, 103(1):45--62, 2005.

\bibitem[Las01]{Las01sos}
Jean-Bernard Lasserre.
\newblock {Global Optimization with Polynomials and the Problem of Moments}.
\newblock {\em SIAM Journal on Optimization}, 11(3):796--817, 2001.

\bibitem[Lof09]{lo1}
Johan Lofberg.
\newblock Pre-and post-processing sum-of-squares programs in practice.
\newblock {\em IEEE transactions on automatic control}, 54(5):1007--1011, 2009.

\bibitem[Lö04]{YALMIP}
J.~Löfberg.
\newblock Yalmip : A toolbox for modeling and optimization in {MATLAB}.
\newblock In {\em Proceedings of the CACSD Conference}, Taipei, Taiwan, 2004.

\bibitem[MAT14]{maj}
Anirudha Majumdar, Amir~Ali Ahmadi, and Russ Tedrake.
\newblock Control and verification of high-dimensional systems with dsos and
  sdsos programming.
\newblock In {\em 53rd IEEE Conference on Decision and Control}, pages
  394--401. IEEE, 2014.

\bibitem[Meg10]{me}
A~Megretski.
\newblock Systems polynomial optimization tools (spot).
\newblock {\em Massachusetts Inst. Technol., Cambridge, MA, USA}, 2010.

\bibitem[Rez78]{reznick}
Bruce Reznick.
\newblock Extremal psd forms with few terms.
\newblock {\em Duke mathematical journal}, 45(2):363--374, 1978.

\bibitem[WKK{\etalchar{+}}08]{waki08}
Hayato Waki, Sunyoung Kim, Masakazu Kojima, Masakazu Muramatsu, and Hiroshi
  Sugimoto.
\newblock Algorithm 883: Sparsepop---a sparse semidefinite programming
  relaxation of polynomial optimization problems.
\newblock {\em ACM Transactions on Mathematical Software (TOMS)}, 35(2):1--13,
  2008.

\bibitem[WLX19]{wang2019new}
Jie Wang, Haokun Li, and Bican Xia.
\newblock A new sparse sos decomposition algorithm based on term sparsity.
\newblock In {\em Proceedings of the 2019 on International Symposium on
  Symbolic and Algebraic Computation}, pages 347--354, 2019.

\bibitem[WML21a]{chordaltssos}
Jie Wang, Victor Magron, and Jean-Bernard Lasserre.
\newblock Chordal-tssos: a moment-sos hierarchy that exploits term sparsity
  with chordal extension.
\newblock {\em SIAM Journal on Optimization}, 31(1):114--141, 2021.

\bibitem[WML21b]{tssos}
Jie Wang, Victor Magron, and Jean-Bernard Lasserre.
\newblock Tssos: A moment-sos hierarchy that exploits term sparsity.
\newblock {\em SIAM Journal on Optimization}, 31(1):30--58, 2021.

\bibitem[WMLM20]{cstssos}
Jie Wang, Victor Magron, Jean~B Lasserre, and Ngoc Hoang~Anh Mai.
\newblock Cs-tssos: Correlative and term sparsity for large-scale polynomial
  optimization.
\newblock {\em arXiv preprint arXiv:2005.02828}, 2020.

\bibitem[YC20]{yang2020one}
Heng Yang and Luca Carlone.
\newblock One ring to rule them all: Certifiably robust geometric perception
  with outliers.
\newblock {\em arXiv preprint arXiv:2006.06769}, 2020.

\end{thebibliography}

\begin{thebibliography}{WMLM20}

\bibitem[GNS07]{grimm2007note}
David Grimm, Tim Netzer, and Markus Schweighofer.
\newblock A note on the representation of positive polynomials with structured
  sparsity.
\newblock {\em Archiv der Mathematik}, 89(5):399--403, 2007.

\bibitem[HL05]{Henrion05}
D.~Henrion and Jean-Bernard Lasserre.
\newblock {\em Detecting Global Optimality and Extracting Solutions in
  GloptiPoly}, pages 293--310.
\newblock Springer Berlin Heidelberg, Berlin, Heidelberg, 2005.

\bibitem[Las06]{Las06}
Jean~B Lasserre.
\newblock Convergent sdp-relaxations in polynomial optimization with sparsity.
\newblock {\em SIAM Journal on Optimization}, 17(3):822--843, 2006.

\bibitem[ND09]{nie2009sparse}
Jiawang Nie and James Demmel.
\newblock Sparse sos relaxations for minimizing functions that are summations
  of small polynomials.
\newblock {\em SIAM Journal on Optimization}, 19(4):1534--1558, 2009.

\bibitem[NP22]{newton2022sparse}
Matthew Newton and Antonis Papachristodoulou.
\newblock Sparse polynomial optimisation for neural network verification.
\newblock {\em arXiv preprint arXiv:2202.02241}, 2022.

\bibitem[WML21]{tssos}
Jie Wang, Victor Magron, and Jean-Bernard Lasserre.
\newblock Tssos: A moment-sos hierarchy that exploits term sparsity.
\newblock {\em SIAM Journal on Optimization}, 31(1):30--58, 2021.

\bibitem[WMLM20]{cstssos}
Jie Wang, Victor Magron, Jean~B Lasserre, and Ngoc Hoang~Anh Mai.
\newblock Cs-tssos: Correlative and term sparsity for large-scale polynomial
  optimization.
\newblock {\em arXiv preprint arXiv:2005.02828}, 2020.

\end{thebibliography}

\begin{thebibliography}{BWFW08}

\bibitem[BALD18]{bingane2018tight}
Christian Bingane, Miguel~F Anjos, and S{\'e}bastien Le~Digabel.
\newblock Tight-and-cheap conic relaxation for the ac optimal power flow
  problem.
\newblock {\em IEEE Transactions on Power Systems}, 33(6):7181--7188, 2018.

\bibitem[BBC{\etalchar{+}}19]{baba2019}
Sogol Babaeinejadsarookolaee, Adam Birchfield, Richard~D Christie, Carleton
  Coffrin, Christopher DeMarco, Ruisheng Diao, Michael Ferris, Stephane
  Fliscounakis, Scott Greene, Renke Huang, et~al.
\newblock The power grid library for benchmarking {AC} optimal power flow
  algorithms.
\newblock {\em arXiv preprint arXiv:1908.02788}, 2019.

\bibitem[BEGL20]{bienstock2020}
Dan Bienstock, Mauro Escobar, Claudio Gentile, and Leo Liberti.
\newblock Mathematical programming formulations for the alternating current
  optimal power flow problem.
\newblock {\em 4OR}, 18(3):249--292, 2020.

\bibitem[BWFW08]{bai2008semidefinite}
Xiaoqing Bai, Hua Wei, Katsuki Fujisawa, and Yong Wang.
\newblock Semidefinite programming for optimal power flow problems.
\newblock {\em International Journal of Electrical Power \& Energy Systems},
  30(6-7):383--392, 2008.

\bibitem[CHVH15]{cof}
Carleton Coffrin, Hassan~L Hijazi, and Pascal Van~Hentenryck.
\newblock The {QC} relaxation: A theoretical and computational study on optimal
  power flow.
\newblock {\em IEEE Transactions on Power Systems}, 31(4):3008--3018, 2015.

\bibitem[DP09]{d2009polynomial}
John~P D’Angelo and Mihai Putinar.
\newblock Polynomial optimization on odd-dimensional spheres.
\newblock In {\em Emerging applications of algebraic geometry}, pages 1--15.
  Springer, 2009.

\bibitem[EDA19]{eltved2019robustness}
Anders Eltved, Joachim Dahl, and Martin~S Andersen.
\newblock On the robustness and scalability of semidefinite relaxation for
  optimal power flow problems.
\newblock {\em Optimization and Engineering}, pages 1--18, 2019.

\bibitem[GHW{\etalchar{+}}20]{gopinath2020proving}
S~Gopinath, Hassan~L Hijazi, Tillmann Weisser, Harsha Nagarajan, Mertcan
  Yetkin, Kaarthik Sundar, and Russel~W Bent.
\newblock Proving global optimality of {ACOPF} solutions.
\newblock {\em Electric Power Systems Research}, 189, 2020.

\bibitem[GMM15]{ghaddar2015optimal}
Bissan Ghaddar, Jakub Marecek, and Martin Mevissen.
\newblock Optimal power flow as a polynomial optimization problem.
\newblock {\em IEEE Transactions on Power Systems}, 31(1):539--546, 2015.

\bibitem[Jab06]{jabr2006radial}
Rabih~A Jabr.
\newblock Radial distribution load flow using conic programming.
\newblock {\em IEEE transactions on power systems}, 21(3):1458--1459, 2006.

\bibitem[JM15]{josz2015moment}
C{\'e}dric Josz and Daniel~K Molzahn.
\newblock Moment/sum-of-squares hierarchy for complex polynomial optimization.
\newblock {\em arXiv preprint arXiv:1508.02068}, 2015.

\bibitem[JM18]{josz2018lasserre}
C{\'e}dric Josz and Daniel~K Molzahn.
\newblock Lasserre hierarchy for large scale polynomial optimization in real
  and complex variables.
\newblock {\em SIAM Journal on Optimization}, 28(2):1017--1048, 2018.

\bibitem[WM22]{cpop}
Jie Wang and Victor Magron.
\newblock Exploiting sparsity in complex polynomial optimization.
\newblock {\em Journal of Optimization Theory and Applications},
  192(1):335--359, 2022.

\bibitem[WML22]{WANG2022108683}
Jie Wang, Victor Magron, and Jean~B. Lasserre.
\newblock Certifying global optimality of ac-opf solutions via sparse
  polynomial optimization.
\newblock {\em Electric Power Systems Research}, 213:108683, 2022.

\end{thebibliography}

\begin{thebibliography}{HKMV21}

\bibitem[BKP16]{burgdorf16}
Sabine Burgdorf, Igor Klep, and Janez Povh.
\newblock {\em Optimization of polynomials in non-commuting variables}.
\newblock SpringerBriefs in Mathematics. Springer, [Cham], 2016.

\bibitem[BRGP12]{BRGP}
Cyril Branciard, Denis Rosset, Nicolas Gisin, and Stefano Pironio.
\newblock Bilocal versus nonbilocal correlations in entanglement-swapping
  experiments.
\newblock {\em Phys. Rev. A}, 85:032119, Mar 2012.

\bibitem[Cha16]{Chaves}
Rafael Chaves.
\newblock Polynomial {B}ell inequalities.
\newblock {\em Phys. Rev. Lett.}, 116(1):010402, 6, 2016.

\bibitem[HKMV21]{huber2021dimension}
Felix Huber, Igor Klep, Victor Magron, and Jurij Vol{\v{c}}i{\v{c}}.
\newblock Dimension-free entanglement detection in multipartite werner states.
\newblock {\em arXiv preprint arXiv:2108.08720}, 2021.

\bibitem[KMV22]{klep2022optimization}
Igor Klep, Victor Magron, and Jurij Vol{\v{c}}i{\v{c}}.
\newblock Optimization over trace polynomials.
\newblock In {\em Annales Henri Poincar{\'e}}, volume~23, pages 67--100.
  Springer, 2022.

\bibitem[NKI02]{NKI}
Koji Nagata, Masato Koashi, and Nobuyuki Imoto.
\newblock Configuration of separability and tests for multipartite entanglement
  in {B}ell-type experiments.
\newblock {\em Phys. Rev. Lett.}, 89(26):260401, 4, 2002.

\bibitem[PHBB17]{PHBB}
Victor Pozsgay, Flavien Hirsch, Cyril Branciard, and Nicolas Brunner.
\newblock Covariance {B}ell inequalities.
\newblock {\em Phys. Rev. A}, 96(6):062128, 13, 2017.

\bibitem[Uff02]{Uffink}
Jos Uffink.
\newblock Quadratic {B}ell inequalities as tests for multipartite entanglement.
\newblock {\em Phys. Rev. Lett.}, 88(23):230406, 4, 2002.

\bibitem[WM21]{nctssos}
Jie Wang and Victor Magron.
\newblock Exploiting term sparsity in noncommutative polynomial optimization.
\newblock {\em Computational Optimization and Applications}, 80(2):483--521,
  2021.

\end{thebibliography}

\begin{thebibliography}{MHMJZ20}

\bibitem[Gri96]{gripenberg1996computing}
Gustaf Gripenberg.
\newblock Computing the joint spectral radius.
\newblock {\em Linear Algebra and its Applications}, 234:43--60, 1996.

\bibitem[Jun09]{jungers}
Rapha{\"e}l Jungers.
\newblock {\em The joint spectral radius: theory and applications}, volume 385.
\newblock Springer Science \& Business Media, 2009.

\bibitem[MHMJZ20]{maggio2020control}
Martina Maggio, Arne Hamann, Eckart Mayer-John, and Dirk Ziegenbein.
\newblock Control-system stability under consecutive deadline misses
  constraints.
\newblock In {\em 32nd Euromicro Conference on Real-Time Systems (ECRTS 2020)}.
  Schloss Dagstuhl-Leibniz-Zentrum f{\"u}r Informatik, 2020.

\bibitem[PJ08]{parrilo}
Pablo~A Parrilo and Ali Jadbabaie.
\newblock Approximation of the joint spectral radius using sum of squares.
\newblock {\em Linear Algebra and its Applications}, 428(10):2385--2402, 2008.

\bibitem[RS60]{rota}
Gian-Carlo Rota and W~Strang.
\newblock A note on the joint spectral radius.
\newblock In {\em Gian-Carlo Rota on Analysis and Probability: Selected Papers
  and Commentaries}, 1960.

\bibitem[WMM21]{wang2021sparsejsr}
Jie Wang, Martina Maggio, and Victor Magron.
\newblock Sparsejsr: A fast algorithm to compute joint spectral radius via
  sparse sos decompositions.
\newblock In {\em 2021 American Control Conference (ACC)}, pages 2254--2259.
  IEEE, 2021.

\end{thebibliography}

\begin{thebibliography}{DHNdW20}

\bibitem[Ave19]{abe}
G.~Averkov.
\newblock Optimal size of linear matrix inequalities in semidefinite approaches
  to polynomial optimization.
\newblock {\em SIAM Journal on Applied Algebra and Geometry}, 3(1):128--151,
  2019.

\bibitem[BM05]{burer2005local}
Samuel Burer and Renato~DC Monteiro.
\newblock Local minima and convergence in low-rank semidefinite programming.
\newblock {\em Mathematical Programming}, 103(3):427--444, 2005.

\bibitem[CS16]{ca}
V.~Chandrasekaran and P.~Shah.
\newblock Relative {E}ntropy {R}elaxations for {S}ignomial {O}ptimization.
\newblock {\em SIAM J. Optim.}, 26(2):1147--1173, 2016.

\bibitem[DDLM21]{dostert2021exact}
Maria Dostert, David De~Laat, and Philippe Moustrou.
\newblock Exact semidefinite programming bounds for packing problems.
\newblock {\em SIAM Journal on Optimization}, 31(2):1433--1458, 2021.

\bibitem[DGV{\etalchar{+}}17]{dostert2017new}
Maria Dostert, Crist{\'o}bal Guzm{\'a}n, Frank Vallentin, et~al.
\newblock New upper bounds for the density of translative packings of
  three-dimensional convex bodies with tetrahedral symmetry.
\newblock {\em Discrete \& Computational Geometry}, 58(2):449--481, 2017.

\bibitem[DHNdW20]{dressler2020global}
Mareike Dressler, Janin Heuer, Helen Naumann, and Timo de~Wolff.
\newblock Global optimization via the dual sonc cone and linear programming.
\newblock In {\em Proceedings of the 45th International Symposium on Symbolic
  and Algebraic Computation}, pages 138--145, 2020.

\bibitem[dLV15]{de2015semidefinite}
David de~Laat and Frank Vallentin.
\newblock A semidefinite programming hierarchy for packing problems in discrete
  geometry.
\newblock {\em Mathematical Programming}, 151(2):529--553, 2015.

\bibitem[GP04]{gatermann2004symmetry}
Karin Gatermann and Pablo~A Parrilo.
\newblock Symmetry groups, semidefinite programs, and sums of squares.
\newblock {\em Journal of Pure and Applied Algebra}, 192(1-3):95--128, 2004.

\bibitem[Hen12]{henrion2012semidefinite}
Didier Henrion.
\newblock Semidefinite characterisation of invariant measures for
  one-dimensional discrete dynamical systems.
\newblock {\em Kybernetika}, 48(6):1089--1099, 2012.

\bibitem[HR00]{helmberg2000spectral}
Christoph Helmberg and Franz Rendl.
\newblock A spectral bundle method for semidefinite programming.
\newblock {\em SIAM Journal on Optimization}, 10(3):673--696, 2000.

\bibitem[IDW16]{iw}
S.~Iliman and T.~De~Wolff.
\newblock Amoebas, nonnegative polynomials and sums of squares supported on
  circuits.
\newblock {\em Research in the Mathematical Sciences}, 3(1):9, 2016.

\bibitem[KNT21]{ka19}
Lukas Katth{\"a}n, Helen Naumann, and Thorsten Theobald.
\newblock A unified framework of sage and sonc polynomials and its duality
  theory.
\newblock {\em Mathematics of Computation}, 90(329):1297--1322, 2021.

\bibitem[LP04]{lofberg2004coefficients}
Johan Lofberg and Pablo~A Parrilo.
\newblock From coefficients to samples: a new approach to sos optimization.
\newblock In {\em 2004 43rd IEEE Conference on Decision and Control (CDC)(IEEE
  Cat. No. 04CH37601)}, volume~3, pages 3154--3159. IEEE, 2004.

\bibitem[LTY17]{lasserre2017bounded}
JeanB Lasserre, Kim-Chuan Toh, and Shouguang Yang.
\newblock A bounded degree sos hierarchy for polynomial optimization.
\newblock {\em EURO Journal on Computational Optimization}, 5(1-2):87--117,
  2017.

\bibitem[MBM21]{mai2021constant}
Ngoc Hoang~Anh Mai, Abhishek Bhardwaj, and Victor Magron.
\newblock The constant trace property in noncommutative optimization.
\newblock In {\em Proceedings of the 2021 on International Symposium on
  Symbolic and Algebraic Computation}, pages 297--304, 2021.

\bibitem[MCW21]{mu}
Riley Murray, Venkat Chandrasekaran, and Adam Wierman.
\newblock Newton polytopes and relative entropy optimization.
\newblock {\em Foundations of Computational Mathematics}, pages 1--35, 2021.

\bibitem[MDSV22]{csos}
Victor Magron, Mohab Safey~El Din, Markus Schweighofer, and Trung~Hieu Vu.
\newblock Exact sohs decompositions of trigonometric univariate polynomials
  with gaussian coefficients.
\newblock {\em arXiv preprint arXiv:2202.06544}, 2022.

\bibitem[MDV21]{gradsos}
Victor Magron, Mohab Safey~El Din, and Trung-Hieu Vu.
\newblock Sum of squares decompositions of polynomials over their gradient
  ideals with rational coefficients.
\newblock {\em arXiv preprint arXiv:2107.11825}, 2021.

\bibitem[MLMW22]{mai2020exploiting}
Ngoc Hoang~Anh Mai, Jean-Bernard Lasserre, Victor Magron, and Jie Wang.
\newblock Exploiting constant trace property in large-scale polynomial
  optimization.
\newblock {\em ACM Trans. Math. Softw.}, 2022.

\bibitem[MML20]{mai2020hierarchy}
Ngoc Hoang~Anh Mai, Victor Magron, and Jean-Bernard Lasserre.
\newblock A hierarchy of spectral relaxations for polynomial optimization.
\newblock {\em arXiv preprint arXiv:2007.09027}, 2020.

\bibitem[MNR{\etalchar{+}}21]{moustrou2021symmetry}
Philippe Moustrou, Helen Naumann, Cordian Riener, Thorsten Theobald, and Hugues
  Verdure.
\newblock Symmetry reduction in am/gm-based optimization.
\newblock {\em arXiv preprint arXiv:2102.12913}, 2021.

\bibitem[MSdW19]{ma19}
V.~Magron, H.~Seidler, and T.~de~Wolff.
\newblock Exact optimization via sums of nonnegative circuits and
  arithmetic-geometric-mean-exponentials.
\newblock In {\em Proceedings of the 2019 on International Symposium on
  Symbolic and Algebraic Computation}, ISSAC ’19, page 291–298, New York,
  NY, USA, 2019.

\bibitem[MSED18]{multivsos18}
V.~Magron and M.~Safey El~Din.
\newblock {On Exact Polya and Putinar's Representations}.
\newblock In {\em ISSAC'18: Proceedings of the 2018 {ACM} International
  Symposium on Symbolic and Algebraic Computation}. ACM, New York, NY, USA,
  2018.

\bibitem[MSEDS19]{univsos}
V.~Magron, M.~Safey El~Din, and M.~Schweighofer.
\newblock Algorithms for weighted sum of squares decomposition of non-negative
  univariate polynomials.
\newblock {\em Journal of Symbolic Computation}, 93:200--220, 2019.

\bibitem[MW22]{magron2020sonc}
Victor Magron and Jie Wang.
\newblock Sonc optimization and exact nonnegativity certificates via
  second-order cone programming.
\newblock {\em Journal of Symbolic Computation}, 2022.

\bibitem[Pap19]{papp}
D.~Papp.
\newblock {Duality of sum of nonnegative circuit polynomials and optimal SONC
  bounds}.
\newblock {\em arXiv preprint arXiv:1912.04718}, 2019.

\bibitem[PP08]{pe}
H.~Peyrl and P.A. Parrilo.
\newblock Computing sum of squares decompositions with rational coefficients.
\newblock {\em Theoretical Computer Science}, 409(2):269--281, 2008.

\bibitem[PR21]{pow}
Victoria Powers and Bruce Reznick.
\newblock A note on mediated simplices.
\newblock {\em Journal of Pure and Applied Algebra}, 225(7):106608, 2021.

\bibitem[Rez89]{re}
B.~Reznick.
\newblock Forms derived from the arithmetic-geometric inequality.
\newblock {\em Mathematische Annalen}, 283(3):431--464, 1989.

\bibitem[RTAL13]{riener2013exploiting}
Cordian Riener, Thorsten Theobald, Lina~Jansson Andr{\'e}n, and Jean~B
  Lasserre.
\newblock Exploiting symmetries in sdp-relaxations for polynomial optimization.
\newblock {\em Mathematics of Operations Research}, 38(1):122--141, 2013.

\bibitem[SdW18]{se}
H.~Seidler and T.~de~Wolff.
\newblock An experimental comparison of sonc and sos certificates for
  unconstrained optimization.
\newblock {\em arXiv preprint arXiv:1808.08431}, 2018.

\bibitem[TRR19]{tavakoli2019enabling}
Armin Tavakoli, Denis Rosset, and Marc-Olivier Renou.
\newblock Enabling computation of correlation bounds for finite-dimensional
  quantum systems via symmetrization.
\newblock {\em Physical review letters}, 122(7):070501, 2019.

\bibitem[Wan22]{wangsiaga}
Jie Wang.
\newblock Nonnegative polynomials and circuit polynomials.
\newblock {\em SIAM Journal on Applied Algebra and Geometry}, 6(2):111--133,
  2022.

\bibitem[WM20]{wang2020second}
Jie Wang and Victor Magron.
\newblock A second order cone characterization for sums of nonnegative
  circuits.
\newblock In {\em Proceedings of the 45th International Symposium on Symbolic
  and Algebraic Computation}, pages 450--457, 2020.

\bibitem[WW20]{waldspurger2020rank}
Irene Waldspurger and Alden Waters.
\newblock Rank optimality for the burer--monteiro factorization.
\newblock {\em SIAM Journal on Optimization}, 30(3):2577--2602, 2020.

\bibitem[YTF{\etalchar{+}}21]{yurtsever2021scalable}
Alp Yurtsever, Joel~A Tropp, Olivier Fercoq, Madeleine Udell, and Volkan
  Cevher.
\newblock Scalable semidefinite programming.
\newblock {\em SIAM Journal on Mathematics of Data Science}, 3(1):171--200,
  2021.

\end{thebibliography}

\begin{thebibliography}{FBS{\etalchar{+}}21}

\bibitem[ART03]{mosek}
Erling~D Andersen, Cornelis Roos, and Tamas Terlaky.
\newblock On implementing a primal-dual interior-point method for conic
  quadratic optimization.
\newblock {\em Mathematical Programming}, 95(2):249--277, 2003.

\bibitem[BHL16]{bugarin2016minimizing}
Florian Bugarin, Didier Henrion, and Jean~Bernard Lasserre.
\newblock Minimizing the sum of many rational functions.
\newblock {\em Mathematical Programming Computation}, 8(1):83--111, 2016.

\bibitem[BK10]{treewidth}
Hans~L Bodlaender and Arie~MCA Koster.
\newblock Treewidth computations i. upper bounds.
\newblock {\em Information and Computation}, 208(3):259--275, 2010.

\bibitem[DHL17]{jump}
Iain Dunning, Joey Huchette, and Miles Lubin.
\newblock Jump: A modeling language for mathematical optimization.
\newblock {\em SIAM review}, 59(2):295--320, 2017.

\bibitem[FBS{\etalchar{+}}21]{Graphs2021}
James Fairbanks, Mathieu Besan{\c{c}}on, Sch{\"o}lly Simon, J{\'u}lio Hoffiman,
  Nick Eubank, and Stefan Karpinski.
\newblock Juliagraphs/graphs.jl: an optimized graphs package for the julia
  programming language, 2021.

\bibitem[GCG19]{cosmo}
Michael Garstka, Mark Cannon, and Paul Goulart.
\newblock Cosmo: A conic operator splitting method for large convex problems.
\newblock In {\em 2019 18th European Control Conference (ECC)}, pages
  1951--1956. IEEE, 2019.

\bibitem[HLL09]{gloptipoly}
D.~Henrion, Jean-Bernard Lasserre, and J.~Löfberg.
\newblock {GloptiPoly 3: moments, optimization and semidefinite programming}.
\newblock {\em Optimization Methods and Software}, 24(4-5):pp. 761--779, August
  2009.

\bibitem[Lö04]{yalmip}
J.~Löfberg.
\newblock Yalmip : A toolbox for modeling and optimization in {MATLAB}.
\newblock In {\em Proceedings of the CACSD Conference}, Taipei, Taiwan, 2004.

\bibitem[Stu99]{sedumi}
Jos~F Sturm.
\newblock Using sedumi 1.02, a matlab toolbox for optimization over symmetric
  cones.
\newblock {\em Optimization methods and software}, 11(1-4):625--653, 1999.

\bibitem[Wan20]{ChordalGraph}
Jie Wang.
\newblock {ChordalGraph: A Julia Package to Handle Chordal Graphs}.
\newblock 2020.

\end{thebibliography}
\end{document}